\font \manual=manfnt at 7pt
\font \smallrm=cmr10 at 10truept
\font \smallsl=cmsl10 at 9pt
\font \smallbf=cmbx10 at 9pt
\numberwithin{equation}{section}
\newcommand{\subu}[2]{{#1}_{\raise-2pt\hbox{$ \scriptstyle #2 $}}}
\newcommand{\subd}[3]{{#1}_{\raise-2pt\hbox{$ \scriptstyle #2 #3 $}}}
\newtheorem{lema}{Lemma}[subsection]
\newtheorem{theorem}[lema]{Theorem}
\newtheorem{prop}[lema]{Proposition}
\theoremstyle{definition}
\newtheorem{definition}[lema]{Definition}
\newtheorem{rmk}[lema]{Remark}
\newtheorem{rmks}[lema]{Remarks}
\newtheorem{free text}[lema]{}
\theoremstyle{remark}
\newcommand{\dbend} {$ {}^{\hbox{\manual \char127}} $}
\newcommand\id{\operatorname{id}}
\newcommand\ad{\operatorname{ad}}
\newcommand\co{\operatorname{co}}
\newcommand\cop{\operatorname{cop}}
\newcommand\End{\operatorname{End}}
\newcommand\Hom{\operatorname{Hom}}
\newcommand\Ker{\operatorname{Ker}}
\newcommand\copr{\operatorname{co}}
\newcommand \smallast {{\operatorname{\raise1,5pt\hbox{$ \scriptscriptstyle \ast $}}}}
\newcommand{\eps}{\varepsilon}
\newcommand{\ot}{\otimes}
\newcommand{\com}{\Delta}
\newcommand{\E}{{\mathcal E}}
\newcommand{\F}{{\mathcal F}}
\newcommand{\Kc}{{\mathcal K}}
\newcommand{\Lc}{{\mathcal L}}
\newcommand{\R}{{\mathcal R}}
\newcommand{\Zc}{{\mathcal Z}}
\newcommand{\bE}{\boldsymbol{E}}
\newcommand{\Ebar}{\overline{E}}
\newcommand{\Fbar}{\overline{F}}
\newcommand{\ebar}{\overline{e}}
\newcommand{\fbar}{\overline{f}}
\newcommand{\Oc}{{\mathcal O}}
\newcommand{\ydh}{{}^H_H\mathcal{YD}}
\newcommand\Lie{\operatorname{Lie}}
\def \bq {\mathbf{q}}
\def \bx {\mathbf{x}}
\def \by {\mathbf{y}}
\def\NN{\mathbb{N}}
\def\ZZ {\mathbb{Z}}
\def\QQ{\mathbb{Q}}
\def\CC{\mathbb{C}}
\def \Zppm {{\mathbb{Z}\big[\hskip1pt p,p^{-1}\big]}}
\def \Zqqm {{\mathbb{Z}\big[\hskip1pt q,q^{-1}\big]}}
\def \Zqqmsq {{\mathbb{Z}\big[\hskip1pt q^{1/2}, \hskip1pt q^{-1/2\,}\big]}}
\def \Zabqpm {{\ZZ_A\big[\bq^{\pm 1}\big]}}
\def \Qabq {\QQ_A(\bq\hskip1,7pt)}
\def \Rbq {{\mathcal{R}_{\,\bq}}}
\def \RbqB {{\mathcal{R}_{\,\bq}^{\scriptscriptstyle {B}}}}
\def \Rbqsq {{\mathcal{R}_{\,\bq}^{\scriptscriptstyle {\sqrt{\phantom{I}}}}}}
\def \Rbqsquno {{\mathcal{R}_{\,\bq\,,1}^{\scriptscriptstyle {\sqrt{\phantom{I}}}}}}
\def \RbqBsq {{\mathcal{R}_{\,\bq}^{\scriptscriptstyle {B,{\sqrt{\phantom{I}}}}}}}
\def \RbqBsquno {{\mathcal{R}_{\,\bq\,,1}^{\scriptscriptstyle {B,{\sqrt{\phantom{I}}}}}}}
\def \Rbquno {{\mathcal{R}_{\,\bq\hskip1pt,1}}}
\def \RbqBuno {{\mathcal{R}_{\,\bq\hskip1pt,1}^{\scriptscriptstyle {B}}}}
\def \Rbqeps {{\mathcal{R}_{\,\bq\hskip1pt,\hskip1pt{}\varepsilon}}}
\def \Rbqsqeps {{\mathcal{R}_{\,\bq\hskip1pt,\hskip1pt{}\varepsilon}^{\scriptscriptstyle {\sqrt{\phantom{I}}}}}}
\def \RbqBeps {{\mathcal{R}_{\,\bq\hskip1pt,\hskip1pt{}\varepsilon}^{\scriptscriptstyle {B}}}}
\def \RbqBsqeps {{{\mathcal{R}_{\,\bq\hskip1pt,\hskip1pt{}\varepsilon}^{\scriptscriptstyle {B,{\sqrt{\phantom{I}}}}}}}}
\def \Fbq {{\mathcal{F}_{\,\bq}}}
\def \FbqB {{\mathcal{F}_{\,\bq}^{\,\scriptscriptstyle {B}}}}
\def \Fbqsq {{\mathcal{F}_{\,\bq}^{\,\scriptscriptstyle {\sqrt{\phantom{I}}}}}}
\def \FbqBsq {{\mathcal{F}_{\,\bq}^{\,\scriptscriptstyle {B,{\sqrt{\phantom{I}}}}}}}
\def\II{\mathcal{I}}
\def\SS{\mathcal{S}}
\def\k{\Bbbk}
\def\J{\mathfrak{J}}
\def \Bpicc {{\scriptscriptstyle B}}
\def \Dpicc {{\scriptscriptstyle D}}
\def \erm {\mathrm{e}}
\def \etilderm {\tilde{\mathrm{e}}}
\def \frm {\mathrm{f}}
\def \ftilderm {\tilde{\mathrm{f}}}
\def \krm {\mathrm{k}}
\def \kdotrm {\dot{\mathrm{k}}}
\def \lrm {\mathrm{l}}
\def \ldotrm {\dot{\mathrm{l}}}
\def \hrm {\mathrm{h}}
\def \trm {\mathrm{t}}
\def \lieg{\mathfrak{g}}
\def \liegd{\mathfrak{g}_{\raise-2pt\hbox{$ {\scriptscriptstyle (D)} $}}}
\def \liegb{\mathfrak{g}_{\raise-2pt\hbox{$ \Bpicc $}}}
\def \liegdotb{\dot{\lieg}_{\raise-2pt\hbox{$ \Bpicc $}}}
\def \liegtildeb{\tilde{\lieg}_{\raise-2pt\hbox{$ \Bpicc $}}}
\def \lieghatb{\hat{\lieg}_{\raise-2pt\hbox{$ \Bpicc $}}}
\def \liegdqcheck{\lieg_{\raise-2pt\hbox{$ \Dpicc , \hskip-0,9pt{\scriptstyle \check{\bq}} $}}}
\def \Gtildebstar {\widetilde{G}^{\,\raise2pt\hbox{$ \scriptstyle * $}}_{\!\Bpicc}}
\def \liea{\mathfrak{a}}
\def \lieb{\mathfrak{b}}
\def \lieh{\mathfrak{h}}
\def \liehd{\mathfrak{h}_{\raise-2pt\hbox{$ \Dpicc $}}}
\def \lien{\mathfrak{n}}
\def \QEq{U_\bq(\hskip0,8pt\lieg)}
\def \QEqsq{U_\bq^{\scriptscriptstyle {\sqrt{\ }}}(\hskip0,8pt\lieg)}
\def \QEqcheck {U_{\check{\bq}}(\hskip0,8pt\lieg)}
\def \QEqchecksq {U_{\check{\bq}}^{\scriptscriptstyle {\sqrt{\ }}}(\hskip0,8pt\lieg)}
\def \Uhat{\widehat{U}}
\def \Uhatdot{\dot{\widehat{U}}}
\def \Utilde{\widetilde{U}}
\def \Uhatdotqgd {\Uhatdot_\bq(\hskip0,8pt\lieg)}
\def \Uhatdotqunogd {\Uhatdot_{\bq\hskip1pt,1}(\hskip0,8pt\lieg)}
\def \UhatdotQqunogd {\Uhatdot^{\raise-5pt\hbox{$ \, \scriptstyle \QQ $}}_{\bq\hskip1pt,1}(\hskip0,8pt\lieg)}
\def \Uhatdotqepsgd {\Uhatdot_{\bq\hskip1pt,\hskip1pt\varepsilon}(\hskip0,8pt\lieg)}
\def \UhatdotQqeps {\Uhatdot^{\raise-5pt\hbox{$ \, \scriptstyle \QQ $}}_{\bq\hskip1pt,\hskip1pt\varepsilon}}
\def \Uhatqgd {\Uhat_\bq(\hskip0,8pt\lieg)}
\def \Uhatqunogd {\Uhat_{\bq\hskip1pt,1}(\hskip0,8pt\lieg)}
\def \UhatQqunogd {\Uhat^{\raise-5pt\hbox{$ \, \scriptstyle \QQ $}}_{\bq\hskip1pt,1}(\hskip0,8pt\lieg)}
\def \Uhatqepsgd {\Uhat_{\bq\hskip1pt,\hskip1pt\varepsilon}(\hskip0,8pt\lieg)}
\def \UhatQqeps {\Uhat^{\raise-5pt\hbox{$ \, \scriptstyle \QQ $}}_{\bq\hskip1pt,\hskip1pt\varepsilon}(\hskip0,8pt\lieg)}
\def \Utildeqgd {\Utilde_\bq(\hskip0,8pt\lieg)}
\def \UtildeqQgd {\Utilde^{\raise-5pt\hbox{$ \, \scriptstyle \QQ $}}_\bq(\hskip0,8pt\lieg)}
\def \Utildequnogd {\Utilde_{\bq\hskip1pt,1}(\hskip0,8pt\lieg)}
  \def \UtildeqQunogd {\Utilde^{\raise-3pt\hbox{$ \, \scriptstyle \QQ $}}_{\bq\hskip1pt,1}(\hskip0,8pt\lieg)}
\def \Utildeqepsgd {\Utilde_{\bq\hskip1pt,\hskip1pt\varepsilon}(\hskip0,8pt\lieg)}
  \def \UtildeqQepsgd {\Utilde^{\raise-5pt\hbox{$ \, \scriptstyle \QQ $}}_{\bq\hskip1pt,\hskip1pt\varepsilon}(\hskip0,8pt\lieg)}
\def \uhatdotqeps {{\dot{\widehat{\mathfrak{u}}}}_{\,\bq\hskip1pt,\hskip1pt\varepsilon}}
  \def \uhatdotQqeps {{\dot{\widehat{\mathfrak{u}}}}^{\raise-7pt\hbox{$ \; \scriptstyle \QQ $}}_{\,\bq\hskip1pt,\hskip1pt\varepsilon}}
\def \uhatdotqepsgd {{\dot{\widehat{\mathfrak{u}}}}_{\,\bq\hskip1pt,\hskip1pt\varepsilon}(\hskip0,8pt\lieg)}
  \def \uhatdotQqepsgd {{\dot{\widehat{\mathfrak{u}}}}^{\raise-7pt\hbox{$ \; \scriptstyle \QQ $}}_{\,\bq\hskip1pt,\hskip1pt\varepsilon}(\hskip0,8pt\lieg)}
\def \uhatqeps {{\widehat{\mathfrak{u}}}_{\,\bq\hskip1pt,\hskip1pt\varepsilon}}
\def \uhatqcheckeps {{\widehat{\mathfrak{u}}}_{\,\check{\bq}\hskip1pt,\hskip1pt\varepsilon}}
\def \uhatqQeps {{\widehat{\mathfrak{u}}}^{\raise-5pt\hbox{$ \; \scriptstyle \QQ $}}_{\,\bq\hskip1pt,\hskip1pt\varepsilon}}
\def \uhatqepsgd {{\widehat{\mathfrak{u}}}_{\,\bq\hskip1pt,\hskip1pt\varepsilon}(\hskip0,8pt\lieg)}
  \def \uhatqQepsgd {{\widehat{\mathfrak{u}}}^{\raise-5pt\hbox{$ \; \scriptstyle \QQ $}}_{\,\bq\hskip1pt,\hskip1pt\varepsilon}(\hskip0,8pt\lieg)}
\def \utildeqeps {{\widetilde{\mathfrak{u}}}_{\,\bq\hskip1pt,\hskip1pt\varepsilon}}
\def \utildeqcheckeps {{\widetilde{\mathfrak{u}}}_{\,\check{\bq}\hskip1pt,\hskip1pt\varepsilon}}
  \def \utildeqQeps {{\widetilde{\mathfrak{u}}}^{\raise-5pt\hbox{$ \; \scriptstyle \QQ $}}_{\,\bq\hskip1pt,\hskip1pt\varepsilon}}
\def \utildeqepsgd {{\widetilde{\mathfrak{u}}}_{\,\bq\hskip1pt,\hskip1pt\varepsilon}(\hskip0,8pt\lieg)}
  \def \utildeqQepsgd {{\widetilde{\mathfrak{u}}}^{\raise-5pt\hbox{$ \; \scriptstyle \QQ $}}_{\,\bq\hskip1pt,\hskip1pt\varepsilon}(\hskip0,8pt\lieg)}
\def \Uhatdotquno {\Uhatdot_{\bq\hskip1pt,1}}
\def \Uhatdotqeps {\Uhatdot_{\bq\hskip1pt,\hskip1pt\varepsilon}}
\def \Uhatqeps {\Uhat_{\bq\hskip1pt,\hskip1pt\varepsilon}}
  \def \UtildeqQuno {\Utilde^{\raise-3pt\hbox{$ \scriptstyle \QQ $}}_{\bq\hskip1pt,1}}
\def \Utildeqeps {\Utilde_{\bq\hskip1pt,\hskip1pt\varepsilon}}
  \def \UtildeqQeps {\Utilde^{\raise-5pt\hbox{$ \, \scriptstyle \QQ $}}_{\bq\hskip1pt,\hskip1pt\varepsilon}}
\def\QEA{U_{\mathbf{q}}(\lieg)}
\def\QE{U_{\mathbf{q}}(\lieg)}
\def\pf{\begin{proof}}
\def\epf{\end{proof}}
\theoremstyle{plain}
\begin{document}

{\ }
 \vskip-35pt





\title[Multiparameter quantum groups at roots of unity]
{Multiparameter quantum  \\
 groups at roots of unity}

\author[G.~A.~Garc{\'\i}a \ , \ \  F.~Gavarini]
{Gast{\'o}n Andr{\'e}s Garc{\'\i}a \ ,  \ \ Fabio Gavarini}

\address{\newline\noindent Departamento de Matem\'atica, Facultad de Ciencias Exactas
   \newline
 Universidad Nacional de La Plata   ---   CONICET
   \newline
 C. C. 172   \, --- \,   1900 La Plata, ARGENTINA
   \newline
  {\ } \quad  {\tt ggarcia@mate.unlp.edu.ar}
 \vspace*{0.5cm}
\newline \noindent Dipartimento di Matematica, Universit\`a degli Studi di Roma ``Tor Vergata''
   \newline
 Via della ricerca scientifica 1   \, --- \,   I\,-00133 Roma, ITALY
   \newline
  {\ } \quad  {\tt gavarini@mat.uniroma2.it}}

\thanks{\noindent 2020 \emph{Mathematics Subject Classification:}\,  17B37 (primary); 16T05, 16T20 (secondary).  \newline
   \emph{Keywords:} Quantum Groups, Quantum Enveloping Algebras.  \newline
   \indent   Partially supported by CONICET, ANPCyT, Secyt (Argentina) and INdAM/GNSAGA (Italy),  and by the MIUR  {\sl Excellence Department Project\/}  awarded to the Department of Mathematics of the University of Rome ``Tor Vergata'', CUP E83C18000100006.}


\vskip1pt

\begin{abstract}
 We address the study of multiparameter quantum groups (=MpQG's) at roots of unity,
 namely quantum universal enveloping algebras  $ \QEq $  depending on a matrix of parameters  $ \, \bq = {\big(\, q_{ij} \big)}_{i,j \in I} \, $.  This is performed via the construction of quantum root vectors and suitable ``integral forms'' of  $ \QEq \, $,  a  {\sl restricted\/}  one   --- generated by quantum divided powers and quantum binomial coefficients ---   and an  {\sl unrestricted\/}  one   --- where quantum root vectors are suitably renormalized.  The specializations at roots of unity of either form are the ``MpQG's at roots of unity'' we
look for.
 In particular, we study special subalgebras and quotients of our MpQG's at roots of unity   --- namely, the multiparameter version of small quantum groups ---   and suitable associated quantum Frobenius morphisms, that link
the MpQG's
 at roots of 1 with MpQG's at 1, the latter being classical Hopf algebras bearing a well precise Poisson-geometrical content.
                                                            \par
   A key point in the discussion, often at the core of our strategy, is that every MpQG is actually a  $ 2 $--cocycle  deformation of the algebra structure of (a lift of) the ``canonical'' one-parameter quantum group by Jimbo-Lusztig, so that we can often rely on already established results available for the latter.  On the other hand, depending on the chosen multiparameter  $ \bq $  our quantum groups yield (through the choice of integral forms and their specializations) different semiclassical structures, namely different Lie coalgebra structures and Poisson structures on the Lie algebra and algebraic group underlying the canonical one-parameter quantum group.
\end{abstract}

{\ } \vskip-41pt

   \centerline{ \smallrm  {\smallsl Journal of Noncommutative Geometry}  {\smallbf 16}  (2022), no.\ 3, 839--926 }
                                                 \par
   \centerline{ \smallrm  {\smallbf DOI:}  10.4171/JNCG/471  \ \ --- \ \  preprint  {\smallsl arXiv:1708.05760 [math.QA] (2020)} }
 \vskip1pt
   \centerline{\smallrm {\smallsl The original publication is available at\/} \  https://ems.press/journals/jncg/articles/7406255 }
 {\ }
\vskip15pt

\maketitle

\vskip-45pt

\tableofcontents



\vskip-25pt

\section{Introduction}  \label{sec:intro}

\vskip5pt

   In literature, by ``quantum group'' one usually means some deformation of an algebraic object that in turn
   encodes a geometrical object describing symmetries (such as a Lie or algebraic group or a Lie algebra):
   we are interested now in the case when the geometrical object is a Lie bialgebra  $ \lieg \, $,  and the algebraic
   one its universal enveloping algebra  $ U(\lieg) \, $,  with its full structure of co-Poisson Hopf algebra.
                                                           \par
   In most cases, such a deformation depends on one single parameter, in a ``formal'' version, like with Drinfeld's
   $ U_\hbar(\lieg) \, $,  or in a ``polynomial'' one, for Jimbo-Lusztig's  $ U_q(\lieg) \, $.  But since the dawn of the theory,
   more general deformations depending on many parameters have been considered too: one then talks of
   ``multiparameter quantum groups'' (in short, MpQG's) that again exist both in formal and polynomial versions;
   see for instance  \cite{BGH},  \cite{BW1,BW2},  \cite{CM},  \cite{CV1},  \cite{Hay},  \cite{HLT},
   \cite{HP},  \cite{HPR},  \cite{Ko},  \cite{KT},  \cite{Man}, \cite{OY},  \cite{Re},  \cite{Su},  \cite{Ta}
  --- and the list might be quite longer.
                                                           \par
   In the previously mentioned papers, multiparameter quantum enveloping algebras where often introduced via
   {\it ad hoc\/}  constructions.  A very general recipe, instead, was that devised by Reshetikhin  (cf.\ \cite{Re}),
   that consists in performing a so-called  {\sl deformation by twist\/}  on a ``standard'' one-parameter quantum group.
                                                           \par
   Similarly, a dual method was developed, that starts again from a usual one-parameter
   quantum group and then performs on it a deformation by a  $ 2 $--cocycle.
   In addition, as the usual uniparameter quantum group is a quotient of the Drinfeld's quantum double
   of two Borel quantum (sub)groups, one can start by deforming (e.g., by a  $ 2 $--cocycle)
   the Borel quantum subgroups and then look at their quantum double and its quotient.
   This is the point of view adopted, for instance, in  \cite{AA2},  \cite{AAR1,AAR2},
   \cite{An1,An2,An3,An4},  \cite{AS1,AS2},  \cite{AY},  \cite{Gar},  \cite{He1,He2},  \cite{HK},
   \cite{HY}  and  \cite{Mas},  where in addition the Borel quantum (sub)groups are always thought
   of as bosonizations of Nichols algebras.
                                                           \par
   In our forthcoming papers  \cite{GG1,GG2}  we thoroughly compare deformations by twist or by  $ 2 $--cocycles
   on the standard uniparameter quantum group; up to technicalities, it turns out that the two methods yield the same results.
   Taking this into account, we adopt the point of view of deformations by  $ 2 $--cocycles,  implemented on uniparameter
   quantum groups, that are realized as (quotients of) quantum doubles of Borel quantum (sub)groups.  With this method,
   the multiparameter  $ \bq $  encoding our MpQG is used from scratch as the core datum to construct the Borel quantum
   (sub)groups and eventually remains in the description of our MpQG by generators and relations.
   In this approach, a natural constraint arises for  $ \bq \, $,  namely that it be  {\sl of Cartan type},
   to guarantee that our MpQG have finite Gelfand-Kirillov dimension.
 \vskip2pt
   In order to have meaningful specializations of a MpQG, one needs to choose a suitable
   integral form of that MpQG, and then specialize the latter: indeed, by ``specialization of a MpQG''
   one means in short the specialization of such an integral form of it.  The outcome of the specialization
   process then can strongly depend on the choice of the integral form.  For the usual case of uniparameter
   ``canonical'' quantum groups, one usually considers two types of integral forms, namely  {\sl restricted\/}
   ones (after Lusztig's) and  {\sl unrestricted\/}  ones (after De Concini and Procesi),
   whose specialization yield entirely different outcomes   --- dual to each other, in a sense.
   There also exist  {\sl mixed\/}  integral forms (due to Habiro and Thang Le) that are very interesting for
   applications in algebraic topology.
                                                         \par
   For general MpQG's, we introduce integral forms of restricted, unrestricted and mixed type,
   by directly extending the construction of the canonical setup: this is quite a natural step, yet
   (to the best of the authors' knowledge) it had not been considered so far.  Moreover, for restricted forms   ---
   for which the multiparameter has to be ``integral'', i.e.\ made of powers
   (with integral exponents) of just one single, ``basic'' parameter  $ q $  ---
   we consider two possible variants, which gives something new even in the canonical case.
   For these integral forms (of either type) we state and prove all those fundamental structure results
   (triangular decompositions, PBW Theorems, etc.) that one needs to work with them.
 \vskip2pt
   When taking specialization at  $ \, q = 1 \, $  (where  ``$ \, q \, $''  is again sort of a ``basic parameter''
   underlying the multiparameter  $ \bq \, $),  co-Poisson and Poisson Hopf structures pop up, yielding
   classical objects that bear some Poisson geometrical structure.  In detail, when specializing the restricted
   form one gets the enveloping algebra of a Lie bialgebra, and when specializing the unrestricted one the
   function algebra of a Poisson group is found: this shows some duality phenomenon, which is not surprising
   because the two integral forms are in a sense related by Hopf duality.  This feature already occurs in the
   uniparameter, canonical case: but in the present, multiparameter setup, the additional relevant fact is that the
   involved (co)Poisson structures directly depend on the multiparameter  $ \, \bq \, $.
 \vskip2pt
   Now consider instead a non-trivial root of 1, say  $ \varepsilon \, $.
   Then the specialization of a MpQG at  $ \, q = \varepsilon \, $  is tightly related with its
   specialization at  $ \, q = 1 \, $:  this link is formalized in a so-called  {\it quantum Frobenius morphism}
   --- a Hopf algebra morphism with several remarkable properties between these two specialized MpQG's ---
   moving to opposite directions in the restricted and the unrestricted case.  We complete these morphisms to
short exact sequences, whose middle objects are our MpQG's at  $ \, q = \varepsilon \, $;
the new Hopf algebras we add to complete the sequences are named {\it small MpQG's}.
                                                           \par
   Remarkably enough, we prove that the above mentioned short exact sequences
   have the additional property of being  {\it cleft\/};  thus, our specialized MpQG's at
   $ \, q = \varepsilon \, $  are  {\it cleft extensions\/}  of the corresponding small MpQG's
   and the corresponding specialized MpQG's at  $ \, q = 1 \, $   --- which are  {\sl classical\/}
   geometrical objects, see above.  Furthermore, implementing this construction in both cases
   --- with restricted and with unrestricted forms --- literally yields  {\sl two\/}  small MpQG's: nevertheless,
   we eventually prove that they do coincide indeed.
                                                           \par
   To some extent, these results (at roots of 1) are a direct generalisation of what happens
   in the uniparameter case (i.e., for the canonical multiparameter).
   However, some of our results seem to be entirely new even for the uniparameter context.
 \vskip3pt
   Finally, here is the plan of the paper.
 \vskip2pt
   In section  \ref{gen-Hopf-defs}  we set some basic facts about Hopf algebras, the bosonization process,
   cocycle deformations, etc.\   --- along with all the related notation.
                                                           \par
   Section  \ref{mpqgroups}  introduces our MpQG's: we define them by generators and relations, and we recall that we can get them as  $ 2 $--cocycle  deformations of the canonical one.
                                                           \par
   We collect in section  \ref{q-root_vects & PBW}  some fundamental results on MpQG's,
   such as the construction of
   quantum root vectors and PBW-like theorems (and related facts).  In addition, we compare the
   multiplicative structure in the canonical MpQG with that in a general MpQG,
   the latter being thought of as  $ 2 $--cocycle  deformation of the former.
                                                           \par
   In section  \ref{int-forms_mpqgs}  we introduce integral forms of our MpQG's
   --- of restricted type and of unrestricted type ---   providing all the basic results one
   needs when working with them.  We also shortly discuss mixed integral forms.
                                                           \par
   Section  \ref{Spec-roots-1}  focuses on specializations at 1, and the semiclassical structures
   arising from MpQG's by means of this process.
                                                           \par
   At last, in section  \ref{sec:spec-eps}  we finally harvest our main results.
   Namely, we deal with specializations at non-trivial roots of 1, with quantum
   Frobenius morphisms and with small MpQG's, for both the restricted version and the unrestricted one.

%
%
%

\bigskip


\section{Generalities on Hopf algebras and deformations}  \label{gen-Hopf-defs}

\smallskip

   Throughout the paper, by  $ \k $  we denote a field of characteristic zero and by  $ \k^\times $
   we denote the group of units of  $ \k \, $.
   By convention,  $ \, \NN = \{0, 1,\ldots\} \, $  and  $ \, \NN_+ := \NN \setminus \{0\} \, $.

\smallskip

 \subsection{Conventions for Hopf algebras} \label{conv-Hopf}  \
 \vskip7pt
   Our main references for the theory of Hopf algebras are  \cite{Mo},  \cite{Sw}  and  \cite{Ra},
   for Lie algebras  \cite{Hu}  and for quantum groups
   \cite{Ja}  and  \cite{BG}.  We use standard notation for Hopf algebras; the comultiplication is denoted
   $ \com $  and the antipode  $ \SS \, $.
   For the first, we use the Heyneman-Sweedler notation, namely  $ \, \com(x) = x_{(1)} \otimes x_{(2)} \, $.
                                                              \par
   Let  $ H $  be a Hopf algebra.  The  {\sl left adjoint representation\/}  of  $ H $  is the algebra morphism
%
  $ \; \ad_\ell : H \longrightarrow \End(H) \; $  given by
  $ \; \ad_\ell(x)(y) := x_{(1)} \, y \, \SS\big(x_{(2)}\big) \; $  for  $ \, x, y \in H \, $;
  ,we drop the subscript  $ \ell $  unless needed; the  {\sl right\/}  adjoint action
%
  $ \; \ad_r : H \longrightarrow \End(H) $  \;is given by
  $ \; \ad_r(x)(y) := \SS\big(x_{(1)}\big) \, y \, x_{(2)} \; $  for  $ \, x, y \in H \, $.
  Any subalgebra  $ K $  of  $ H $  is said to be  {\sl normal\/}  if
  $ \, \ad_{\ell}(h)(k) \in K \, $,  $ \, \ad_{r}(h)(k) \in K \, $  for all  $ \, h \in H \, $,  $ \, k \in K \, $.
                                                              \par
   In any coalgebra  $ C \, $,  the set of group-like elements of a coalgebra is denoted by  $ G(C) \, $;
   also, we denote by  $ \, C^+ := \Ker(\epsilon) \, $  the augmentation ideal of  $ C \, $,  where
   $ \, \epsilon : C \longrightarrow \k \, $  is the counit map of  $ C \, $.  If  $ \, g, h \in G(H) \, $,
   the set of  $ (g,h) $--primitive  elements is defined to be
  $$  P_{g,h}(H)  \; := \;  \big\{\, x \in H \,\vert\, \com(x) = x \ot g + h \ot x \,\big\}  $$
In particular, we call  $ \, P(H) := P_{1,1}(H) \, $  the set of primitive elements.

\vskip9pt

   It is convenient to recall the notions of exact sequence and of  {\sl cleft extension}:

\vskip9pt

\begin{definition}  \label{def:exseq}
 {\sl (cf.\  \cite{AD})\/}  A sequence of Hopf algebras maps over a field  $ \Bbbk $
  $$  1 \relbar\joinrel\relbar\joinrel\longrightarrow B \;{\buildrel \iota \over {\relbar\joinrel\relbar\joinrel\longrightarrow}}\,
  A \;{\buildrel \pi \over {\relbar\joinrel\relbar\joinrel\longrightarrow}}\, H \relbar\joinrel\relbar\joinrel\relbar\joinrel\longrightarrow 1  $$
where  $ 1 $  denotes the Hopf algebra  $ \Bbbk \, $,  is called  {\it exact\/}  if  $ \iota $  is injective,
$ \pi $  is surjective,  $ \, \Ker(\pi) = AB^+ \, $  and
$ \, B = {}^{\copr \pi}\!A \, := \, \big\{ a \in A \,\big|\, (\pi \otimes \id)\big(\Delta(a)\big) = 1 \otimes a \big\} \, $.
%
%
 We say that  $ A $  is a  {\it cleft exten\-sion\/}  of  $ B $  by  $ H $  if there exists an
 $ H $--colinear,  convolution-invertible section  $ \gamma $  of  $ \pi \, $.   \hfill  $ \diamondsuit $
\end{definition}

\vskip5pt

%
%
   Finally, we recall the notions of  {\it Hopf pairing\/}  and  {\it skew-Hopf pairing\/}  of Hopf algebras:

\smallskip

\begin{definition}  \label{def_(skew-)Hopf-pairing}
 {\sl (cf.\ \cite[\S 2.1]{AY})}\,
Given two Hopf algebras  $ H $  and  $ K $  with bijective antipode over a ring  $ R $,  an
$ R $--linear  map  $ \, \eta : H \otimes_R K \longrightarrow R \, $  is called
 \vskip3pt
   --- \  {\sl Hopf pairing\/}  (between  $ H $  and  $ K \, $)  if, for all  $ \, h \in H \, $,  $ \, k \in K \, $,  one has
  $$  \displaylines{
   \eta\big( h \, , \, k_1 \, k_2 \big) \, = \, \eta\big( h_{(1)} \, , \, k_1 \big) \,
   \eta\big( h_{(2)} \, , \, k_2 \big)  \;\; ,  \qquad
 \eta\big( h_1 \, h_2 \, , \, k \big) \, = \, \eta\big( h_1 \, , \, k_{(1)} \big) \, \eta\big( h_2 \, , \, k_{(2)} \big)  \cr
   \eta\big( h \, , 1 \big) \, = \, \epsilon(h)  \;\; ,  \;\quad  \eta\big( 1 \, , k \big) \, = \, \epsilon(k)  \quad ,  \;\;\qquad
   \eta\big( S^{\pm 1}(h) \, , k \big) \, = \, \eta\big( h \, , S^{\pm 1}(k) \big)  }  $$
 \vskip3pt
   --- \  {\sl skew-Hopf pairing\/}  (between  $ H $  and  $ K \, $)  if, for all  $ \, h \in H \, $,  $ \, k \in K \, $,  one has
  $$  \displaylines{
   \eta\big( h \, , \, k_1 \, k_2 \big) \, = \, \eta\big( h_{(1)} \, , \, k_1 \big) \,
   \eta\big( h_{(2)} \, , \, k_2 \big)  \;\; ,  \qquad
 \eta\big( h_1 \, h_2 \, , \, k \big) \, = \, \eta\big( h_2 \, , \, k_{(1)} \big) \, \eta\big( h_1 \, , \, k_{(2)} \big)  \cr
 \hfill
   \eta\big( h \, , 1 \big) \, = \, \epsilon(h)  \;\; ,  \;\quad  \eta\big( 1 \, , k \big) \, =
   \, \epsilon(k)  \quad ,  \;\;\qquad  \eta\big( S^{\pm 1}(h) \, , k \big) \, =
   \, \eta\big( h \, , S^{\mp 1}(k) \big)   \hfill  \diamondsuit  }  $$
\end{definition}

\vskip3pt

   Recall that, given two Hopf  $ R $--algebras  $ H_+ $  and  $ H_- $,  and a Hopf pairing among them,
   say  $ \, \pi: H_-^{\,\cop} \otimes_{R} H_+ \longrightarrow \k \, $,  the
   {\it Drinfeld double\/}  $ D(H_-,H_+,\pi) $  is the quotient algebra  $ \, T(H_- \oplus H_+) \big/ \II \, $
   where  $ \II $  is the (two-sided) ideal generated by the relations
  $$  \displaylines{
   \qquad   1_{H_-}  = \,  1  \, = \,  1_{H_+}  \,\; ,  \qquad  a \otimes b  \, = \, a\,b  \qquad \qquad  \forall
   \;\;  a \, , b \in H_+  \text{\;\;\ or\;\;\ }  a \, , b \in H_- \;\; ,  \cr
   x_{(1)} \otimes y_{(1)} \ \pi\big(y_{(2)},x_{(2)}\big)  \, = \,  \pi\big(y_{(1)},x_{(1)}\big) \ y_{(2)} \otimes x_{(2)}
   \qquad  \forall \;\; x \in H_+ \, ,  \; y \in H_-  \;\; ;  }  $$
such a quotient  $R$--algebra  is also endowed with a standard Hopf algebra structure,
which is {\sl consistent},  in that both
$ H_+ $  and  $ H_- $  are Hopf  $ R $--subalgebras  of it.

\vskip9pt

\begin{free text}{\bf Yetter-Drinfeld modules, bosonization and Hopf algebras with a projection.}
 Let  $ H $  be a Hopf algebra with bijective antipode.  A Yetter-Drinfeld module over  $ H $
 is a left  $ H $--module  and a left  $ H $--comodule  $ V $,  with comodule structure denoted by
 $ \, \delta: V \longrightarrow H \otimes V \, $,  $ \, v \mapsto v_{(-1)} \otimes v_{(0)} \, $,  \,such that
  $$  \delta(h\cdot v)  \, = \,  h_{(1)} v_{(-1)} \SS\big(h_{(3)}\big) \otimes h_{(2)} \cdot v_{(0)}  \quad
\text{ for all }\;  v \in V \, , \, h \in H \; .  $$
 Let  $ \ydh $  be the category of Yetter-Drinfeld modules over  $ H $  with  $ H $--linear  and
$H$--colinear maps as morphisms.  The category  $ \ydh $  is monoidal and braided.
A Hopf algebra in the category  $ \ydh $  is called a  {\it braided Hopf algebra\/}  for short.
                                                         \par
   Let  $ R $  be a Hopf algebra in  $ \ydh \, $.  The procedure to obtain a usual Hopf algebra from the
   (braided) Hopf algebras  $ R $  and  $ H $  is called  {\it bosonization\/}  or  {\it Radford-Majid product},
   and it is usually denoted by  $ \, R \,\#\, H \, $.  As a vector space  $ \, R \,\#\, H := R\otimes H \, $,
   and the multiplication and comultiplication are given by the smash-product and smash-coproduct, respectively.
   That is, for all  $ \, r, s \in R \, $  and  $ \, g, h \in H \, $,  we have
\begin{align*}
(r \# g)(s \#h)  &  \; := \;  r\big(g_{(1)}\cdot s\big) \,\#\, g_{(2)} h  \\
\com(r \# g)  &  \; := \;  r^{(1)} \,\#\, {\big(r^{(2)}\big)}_{(-1)} g_{(1)} \otimes
{\big(r^{(2)}\big)}_{(0)} \,\#\, g_{(2)}  \\
\SS(r\# g)  &  \; := \;  \big(1 \,\#\, \SS_{H}\big(r_{(-1)} g\big) \big) \big( \SS_R\big(r_{(0)}\big) \,\#\, 1 \big)
\end{align*}
where  $ \, \com_R(r) = r^{(1)}\ot r^{(2)} \, $  is the comultiplication in
$ \, R \in \ydh \, $  and  $ \SS_R $  the antipode.
%
%
 The map  $ \; \iota : H \longrightarrow R\# H \, \big( h \mapsto 1 \# h \big) \, $,  \;resp.\
 $ \; \pi : R \# H \longrightarrow H \; \big( r\,\#\,h \mapsto \epsilon_R(r)\,h \big) \, $,  \,is a Hopf algebra
monomorphism, resp.\ epimorhism, and  $ \, \pi \circ \iota = \id_H \, $.  Moreover, we have
$ \, R = (R\#H)^{\co\pi} \, = \big\{\, x \in R\,\#\,H \,\big|\, (\id \otimes \pi)\com(x) = x \otimes 1 \,\big\} \, $.
%
                                              \par
   Conversely, let  $ A $  be a Hopf algebra with bijective antipode and  $ \, \pi : A \longrightarrow H \, $
a Hopf algebra epimorphism. If there is a Hopf algebra map  $ \, \iota : H \longrightarrow A \, $,  such that
$ \, \pi \circ \iota = \id_H \, $,  then  $ \, R := A^{\co\pi} \, $  is a braided Hopf algebra in  $ \ydh \, $, called the
{\it diagram\/}  of  $ A \, $,  and we have  $ \, A \cong R \# H \, $  as Hopf algebras.
See  \cite[11.6]{Ra}  for further details.
\end{free text}

\medskip

 \subsection{Cocycle deformations}  \label{cocyc-defs}  \
 \vskip7pt
   We recall now the standard procedure that, starting from a given Hopf algebra and a suitable
   $ 2 $--cocycle  on it, gives us
   a new Hopf algebra structure on it, with the same coproduct and a new, ``deformed'' product.
   We shall then see the special form that this construction may take when the Hopf algebra is bigraded by some
   Abelian group and the  $ 2 $--cocycle  is induced by one of that group.

\medskip

\begin{free text}{\bf First construction.}  \label{cotwist-defs_1}
 Let $ \, \big( H, m, 1, \Delta, \epsilon \,\big) \, $  be a bialgebra over a ring  $ R \, $.
%
 A  {\it normalized Hopf 2-cocycle\/}  (see  \cite[Sec.\ 7.1]{Mo})  is a map  $ \sigma $  in  $ \, \Hom_{\Bbbk}(H \otimes H, R\,) \, $  which is convolution invertible and such that, for all  $ \, a, b, c \in H \, $,  we have
%
  $$  \sigma(b_{(1)},c_{(1)}) \, \sigma(a,b_{(2)}c_{(2)}) \, = \, \sigma(a_{(1)},b_{(1)}) \, \sigma(a_{(2)}b_{(2)},c)  $$
and  $ \, \sigma (a,1) = \eps(a) = \sigma(1,a) \, $.
%
 We simply call it a  $ 2 $--cocycle  if no confusion arises.
                                                                        \par
   Using a  $ 2 $--cocycle  $ \sigma $  it is possible to define a new algebra structure on  $ H $
   by deforming the multiplication: indeed, define
   $ \, m_{\sigma} = \sigma * m * \sigma^{-1} : H \otimes H \longrightarrow H \, $  by
  $$  m_{\sigma}(a,b)  \, = \,  a \cdot_{\sigma} b  \, = \,
  \sigma(a_{(1)},b_{(1)}) \, a_{(2)} \, b_{(2)} \, \sigma^{-1}(a_{(3)},b_{(3)})  \eqno \forall \;\, a, b \in H  \quad  $$
   \indent   If in addition  $ H $  is a Hopf algebra with antipode  $ \SS \, $,  then define also
$ \, \SS_\sigma : H \longrightarrow H \, $
as
$ \, \SS_{\sigma} = \sigma * \SS * \sigma^{-1} : H \longrightarrow H \, $  where
  $$  \SS_{\sigma}(a)  \, = \,  \sigma(a_{(1)},\SS(a_{(2)})) \, \SS(a_{(3)}) \, \sigma^{-1}(\SS(a_{(4)}),a_{(5)})
  \eqno \forall \;\, a \in H  \quad  $$
It is then known   --- see \cite{DT} ---   that  $ \, \big( H, m_\sigma, 1, \Delta, \epsilon \,\big) \, $  is in turn a bialgebra, and
 also that
 $ \, \big( H, m_\sigma, 1, \Delta, \epsilon, \SS_\sigma \big) \, $  is a Hopf algebra:
we shall call such a new structure on  $ H $  a  {\sl cocycle deformation\/}  of the old one, and
we shall graphically denote it by  $ H_\sigma \, $.
                                                              \par
   When dealing with a Hopf algebra  $ H $  and its deformed counterpart  $ H_\sigma $  as above, we denote by
   $ \ad_\ell $  and  $ \ad_r $  the adjoint actions in  $ H $  and by  $ \ad^\sigma_\ell $  and  $ \ad^\sigma_r $
   those in  $ H_\sigma \, $.
\end{free text}

\vskip5pt

\begin{free text}{\bf Second construction.}   \label{cotwist-defs_2}
 There is a second type of cocycle twisting   --- of algebras, bialgebras and Hopf algebras ---
 that we shall need  (cf.\ \cite{AST}  and references therein).  Let  $ \varGamma \, $  be an Abelian group,
 for which we adopt multiplicative notation, and  $ H $  an algebra over a ring  $ R $  that is  $ \varGamma $--bigraded
 (i.e., graded by  $ \, \varGamma \times \varGamma \, $):
 so  $ \, H = \bigoplus_{(\gamma,\eta)\in \varGamma\times \varGamma} H_{\gamma,\eta} \, $  with
 $ \, R \subseteq H_{1,1} \, $
 and  $ \, H_{\gamma,\eta}H_{\gamma',\eta'} \subseteq H_{\gamma\gamma',\eta\eta'} \, $.
 Given any group  $ 2 $--cocycle  $ \, c : \varGamma \times \varGamma \longrightarrow R^\times \, $  where
 $ R^\times $  is the group of units of  $ R \, $,  define a new product on  $ H \, $,  denoted by
 $ \mathop{\star}\limits_c \, $,  by
%
%
  $ \; h \mathop{\star}\limits_c k \, := \, c\big(\eta',\kappa'\big) \, {c(\eta,\kappa)}^{-1} \, h \cdot k \; $
 for all homogeneous  $ \, h \, , k \in H \, $  with degrees
 $ \, \big(\eta\,,\eta'\big), \big(\kappa\,,\kappa'\big) \in \varGamma \times \varGamma \, $.
Then  $ \, \big( H \, ; \, \mathop{\star}\limits_c \,\big) \, $  is (again) an associative algebra,
with the same unit as  $ H $  before.
                                                     \par
   As  $ \varGamma $  is free Abelian, each element of  $ H^2(\varGamma, R^\times) $  has a representative, say
   $ c \, $,  which is  {\sl bimultiplicative\/}  and such that  $ \, c\big(\eta,\eta^{-1}\big) = 1 \, $  for all
   $ \, \eta \in \varGamma \, $  (see  \cite[Proposition 1 and Lemma 4]{AST});  so we may assume that
   $ \, c: \varGamma \times \varGamma \longrightarrow R^\times \, $  is such a cocycle.  Thus
  $$  c(\gamma ,\eta^{-1})  \, = \,  c(\gamma^{-1},\eta)  \, = \,  c(\gamma ,\eta)^{-1} \;\; ,  \qquad
c(\gamma,1) \, = \, c(1,\gamma) \, = \, 1   \;\;\;   \eqno \forall \;\, \gamma, \eta \in \varGamma  \quad  $$
   \indent   Now assume  $ H $  is a bialgebra, with
   $ \, \Delta\big( H_{\alpha,\beta} \big) \subseteq \sum_{\gamma \in \varGamma} H_{\alpha,\gamma}
   \otimes_R H_{\gamma,\beta} \, $  for all  $ \, (\alpha,\beta) \in \varGamma \times \varGamma \, $  and
   $ \; \epsilon\big( H_{\alpha,\beta} \big) = 0 \; $  if  $ \, \alpha \not= \beta \, $.
   Then  $ H $  with the new product  $ \mathop{\star}\limits_c $  and the old coproduct  $ \Delta $  is a bialgebra too.
   If in addition  $ H $  is a Hopf algebra, whose antipode obeys
   $ \, \SS\big( H_{\alpha,\beta} \big) \subseteq \big( H_{\beta^{-1},\alpha^{-1}} \big) \, $   ---
   for  $ \, (\alpha,\beta) \in \varGamma \times \varGamma \, $  ---   then the new bialgebra structure on  $ H $
   (with the new product and the old coproduct) makes it again into a Hopf algebra with antipode
   $ \, \SS^{(c)} := \SS \, $  (the old one).  In all cases, we will graphically denote by  $ H^{(c)} $
   the new structure on  $ H $ obtained by this (second) cocycle twisting.

\smallskip

   In the sequel we shall compare computations in  $ H $  with computations in  $ H^{(c)} $,
   in particular regarding the adjoint action(s); in such cases, we shall denote by  $ \ad_\ell $  and  $ \ad_r $
   the adjoint actions in  $ H $  and by  $ \ad^{(c)}_\ell $  and  $ \ad^{(c)}_r $  those in  $ H^{(c)} \, $.
                                                         \par
   We shall make use of the following result (whose proof is straightforward):
\end{free text}

\vskip5pt

\begin{lema}  \label{lemma-adj}   (cf.\ \cite[Lemma 3.2]{CM})
 Let a  $ 2 $--cocycle  $ \, c : \varGamma \times \varGamma \longrightarrow R^\times \, $  as above be given,
 and assume in addition
 (with no loss of generality) that  $ c $  is bimultiplicative.  Let  $ \, e, b \in H \, $  be homogeneous with degrees
 $ (\gamma,1) $
 and  $ (\eta,1) $  respectively, and assume  $ e $  is  $ (1,h) $--primitive  with  $ \, h \in H \, $
 homogeneous of degree  $ (\gamma,\gamma) \, $.  Then
 \begin{align*}
   \ad^{(c)}_\ell(e)(b) \;  &  = \;  {c(\gamma,\eta)}^{-1} \, \ad_\ell(e)(b)  \\
   \ad^{(c)}_r(e)(b) \;  &  = \;  c(\gamma,\gamma) \big( -h^{-1} e\,b \, + \, c(\gamma,\eta) \,
   c(\eta,\gamma)^{-1} h^{-1} b\,e \big)
 \end{align*}
 In particular, if  $ \, c(\gamma,\eta) \, c(\eta,\gamma)^{-1} \, = \, 1 \, $,  then
 $ \, \ad^{(c)}_r(e)(b) \, = \, c(\gamma,\gamma) \, \ad_r(e)(b) \, $.
\end{lema}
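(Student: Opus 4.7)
The plan is to verify both identities by a direct computation, using three ingredients: the defining twist relation~\eqref{c-deformed-product}, the fact that the coproduct is unchanged so that $\Delta(e) = e \otimes 1 + h \otimes e$ in $H^{(c)}$ as in $H$, and the antipode $\SS^{(c)}$ of $H^{(c)}$, computed from the antipode axiom. As a preliminary step I record the bidegrees that will appear: $e \in H_{\gamma,1}$, $b \in H_{\eta,1}$, $h \in H_{\gamma,\gamma}$, $h^{-1} \in H_{\gamma^{-1},\gamma^{-1}}$, and $\SS(e) = -h^{-1}e \in H_{1,\gamma^{-1}}$ (using $\SS(H_{\alpha,\beta}) \subseteq H_{\beta^{-1},\alpha^{-1}}$); since $h$ is group-like one has $\SS^{(c)}(h) = h^{-1}$, whereas the antipode axiom applied to $e$ forces $\SS^{(c)}(e) = -h^{-1} \mathop{\star}\limits_c e = c(\gamma,\gamma)\,\SS(e)$.

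For the first identity, I expand
\[
\ad^{(c)}_\ell(e)(b) \, = \, e \mathop{\star}\limits_c b \mathop{\star}\limits_c \SS^{(c)}(1) \, + \, h \mathop{\star}\limits_c b \mathop{\star}\limits_c \SS^{(c)}(e)
\]
and process each summand via~\eqref{c-deformed-product}. The first summand gives $c(\gamma,\eta)^{-1}\,eb$; the second, after factoring out the scalar $c(\gamma,\gamma)$ from $\SS^{(c)}(e)$, applying the twist formula twice (note that $h \mathop{\star}\limits_c b$ has bidegree $(\gamma\eta,\gamma)$) and invoking bimultiplicativity in the form $c(\gamma,\gamma^{-1}) = c(\gamma,\gamma)^{-1}$, collapses to $c(\gamma,\eta)^{-1}\,hb\,\SS(e)$, the two $c(\gamma,\gamma)$ factors cancelling. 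Summing yields $c(\gamma,\eta)^{-1}\,\ad_\ell(e)(b)$, as claimed. For the second identity I expand
\[
\ad^{(c)}_r(e)(b) \, = \, \SS^{(c)}(e) \mathop{\star}\limits_c b \, + \, h^{-1} \mathop{\star}\limits_c b \mathop{\star}\limits_c e
\]
and proceed analogously. The first contribution becomes $-c(\gamma,\gamma)\,h^{-1}eb$, and the second, after two twist applications and the bimultiplicative identity $c(\gamma^{-1}\eta,\gamma)^{-1} = c(\gamma,\gamma)\,c(\eta,\gamma)^{-1}$, becomes $c(\gamma,\gamma)\,c(\gamma,\eta)\,c(\eta,\gamma)^{-1}\,h^{-1}be$; together these give exactly the asserted formula.

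The ``in particular'' clause then follows by substituting $c(\gamma,\eta)\,c(\eta,\gamma)^{-1} = 1$ and recognising $-h^{-1}eb + h^{-1}be = \ad_r(e)(b)$. The only genuine difficulty in the proof is bookkeeping: carefully tracking the bidegrees of the intermediate products $h \mathop{\star}\limits_c b$, $h^{-1} \mathop{\star}\limits_c b$, etc., keeping in mind that $\SS^{(c)}$ differs from $\SS$ on $e$ by the scalar $c(\gamma,\gamma)$, and combining the accumulated cocycle values in the correct order via bimultiplicativity; once these ingredients are lined up there is no conceptual obstacle.
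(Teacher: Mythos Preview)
Your proof is correct and follows essentially the same approach as the paper's: compute $\SS^{(c)}(h)=h^{-1}$ and $\SS^{(c)}(e)=-c(\gamma,\gamma)\,h^{-1}e$ from the antipode axiom, then expand the adjoint actions term by term via the twist formula~\eqref{c-deformed-product} and simplify using bimultiplicativity. The only cosmetic difference is that the paper writes out the $\ad^{(c)}_r$ computation in full and dismisses $\ad^{(c)}_\ell$ as ``similar'', whereas you treat both explicitly.
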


\smallskip

\begin{free text}{\bf A relation between the two constructions}  \label{subsubsec:relation-cocycles}
 Let  $ H $  be a Hopf algebra with bijective antipode,  $ R $  a braided Hopf algebra in  $ \ydh $  and
 $ \, A = R \# H \, $  its bosonization (see  \cite{Gar}  for details).  For any  $ \, a \in R \, $,  set
 $ \, \delta(a) = a_{(-1)} \ot a_{(0)} \, $  for the left coaction of  $ H \, $.
                                                    \par
   Any Hopf  $ 2 $--cocycle  on  $ H $  gives rise to a Hopf  $ 2 $--cocycle  on  $ A $  which may deform the
$H$-module structure of  $ R $  and consequently its braided structure as well.
Specifically, let  $ \, \sigma \in \Zc^2(H,\k) \, $:  then the map $ \, \tilde{\sigma} : A \otimes A \longrightarrow \k \, $
given by
  $$  \tilde{\sigma}(r \# h , s \# k)  \, = \,  \sigma(h,k) \, \epsilon_R(r) \, \epsilon_R(s)   \eqno \forall\;\, r, s \in R \, ,
  \; h, k \in H  $$
is a normalized Hopf  $ 2 $--cocycle  such that  $ \, \tilde{\sigma}\big|_{H \otimes H} = \sigma \, $.
By \cite[Prop. 5.2]{Mas}  we have  $ \, A_{\tilde{\sigma}} = R_{\sigma} \# H_{\sigma} \, $,  where
$ \, R_{\sigma} = R \, $  as coalgebras, and the product is given by
  $$  a \cdot_{\sigma} b  \, := \,  \sigma(a_{(-1)},b_{(-1)}) \, a_{(0)} \, b_{(0)}   \eqno \text{for all}
  \quad a, b \in R \;\; .  \quad  $$
 Therefore,  $ H_\sigma $  is a Hopf subalgebra of  $ A_{\tilde{\sigma}} $  and the map
$ \, \Zc^2(H,\k) \longrightarrow \Zc^2(A,\k) \, $  given by  $ \, \sigma \mapsto \tilde{\sigma} \, $
is a section of the map  $ \, \Zc^2(A,\k) \longrightarrow \Zc^2(H,\k) \, $
induced by the restriction; in particular, it is injective.
                                                    \par
   Now assume  $ \, H = \k\varGamma \, $,  with  $ \varGamma $  a group.
   Then a normalized Hopf  $ 2 $--cocycle  on  $ H $  is equivalent to a  $ 2 $--cocycle
   $ \, \varphi \in \Zc^2(\varGamma,\k) \, $,  i.e.\ a map
   $ \, \varphi : \varGamma \times \varGamma \longrightarrow \k^\times \, $  such that
  $$  \varphi(g,h) \, \varphi(g\,h,t)  \, = \,  \varphi(h,t) \, \varphi(g,h\,t) \;\; ,  \;\quad  \varphi(g,e) \, =
  \, 1 \, = \, \varphi (e,g)  \qquad \forall \;\; g, h, t \in \varGamma \;\; .  $$
                                                    \par
   Assume  $ \, A = R \,\#\, \k\varGamma \, $  is given by a bosonization over a free Abelian group  $ \varGamma \, $.
   Then the coaction of  $ \k\varGamma $  on the elements of  $ R $  induces a
   $ \, (\varGamma \times \varGamma) $--grading  on  $ A $  with  $ \, \deg(g) := (g,g) \, $  for all
   $ \, g \in \varGamma \, $  and  $ \, \deg(a) := (g,1) \, $  if  $ \, \delta(a) = g \otimes a \, $  with  $ \, a \in R \, $
   a homogeneous element; in particular,  $ a $  is  $ (1,g) $--primitive,  since
   $ \, \com(a) = a \ot 1 + a_{(-1)} \ot a_{(0)} \, $.  If  $ \, \varphi \in \Zc^2(\varGamma,\k) \, $,  then
   $ \, A^{(\varphi^{-1})} = A_{\tilde{\varphi}} \, $,  where  $ \tilde{\varphi} $  is the Hopf  $ 2 $--cocycle  on  $ A $
   induced by  $ \varphi \, $.  Indeed, this holds true because, for  $ a $,  $ b $  homogeneous in  $ R $
   of degree  $ (g,1) $  and  $ (h,1) $  respectively, we have that
  $$  a \mathop{\star}_{\varphi^{-1}} b  \, = \, \varphi(1,1)^{-1} \, \varphi(g,h) \, a \, b \,  = \,
  \varphi(a_{(-1)},b_{(-1)}) \, a_{(0)} \, b_{(0)}  \, = \,  a \cdot_{\sigma} b.  $$
\end{free text}

\medskip

 \subsection{Basic constructions from multiparameters}  \label{basics-mprmtrs}  \
 \vskip7pt
   The definition of multiparameter quantum groups requires a whole package of related material,
   involving root data, weight lattices, etc.  This entails several different constructions,
   depending on ``multiparameters'', that we now go and present.

\medskip

\begin{free text}{\bf Root data.}  \label{root-data}
 Hereafter we fix  $ \, \theta \in \NN_+ \, $  and  $ \, I := \{1,\dots,\theta\} \, $ as before.
 Let  $ \, A := {\big(\, a_{ij} \big)}_{i, j \in I} \, $
 be a Cartan matrix of finite type; then there exists a unique diagonal matrix
 $ \, D := {\big(\hskip0,7pt d_i \, \delta_{ij} \big)}_{i, j \in I} \, $
 with positive integral, pairwise coprime entries such that  $ \, D A \, $  is symmetric.
 Let  $ \, \lieg \, $  be the finite dimensional simple Lie algebra over
 $ \CC $  associated with  $ A \, $,  let  $ \Phi $  be the (finite) root system of  $ \lieg \, $,
 with  $ \, \Pi = \big\{\, \alpha_i \,\vert\, i\in I \,\big\} \, $
 as a set of simple roots,  $ \, Q = \bigoplus_{i \in I} \ZZ \alpha_i \, $  the associated root lattice,
 $ \Phi^+ $  the set of positive roots with respect to
 $ \Pi \, $,  $ \, Q^+ = \bigoplus_{i \in I} \NN \alpha_i \, $  the positive root (semi)lattice.
 We denote by  $ P $  the associated weight lattice, with basis
 $ \, {\big\{\, \omega_i \,\big\}}_{i \in I} \, $  dual to  $ \, {\big\{\, \alpha_j \,\big\}}_{j \in I} \, $,
 namely  $ \, \omega_i(\alpha_j) = \delta_{ij} \; $  for all  $ \, i, j \in I \, $.
 Using an invariant non-degenerate bilinear form on the dual  $ \lieh^* $  of a Cartan subalgebra
 $ \lieh $  of  $ \lieg \, $,  we identify  $ Q $
 with a sublattice of  $ P \, $;  in particular, we have  $ \; \alpha_i = \sum_{j \in I} a_{ji} \, \omega_j \, $
 for all  $ \, i \in I \, $.
                                                               \par
   In this setup, we have two natural  $ \ZZ $--bilinear  pairings  $ \, P \times Q \relbar\joinrel\longrightarrow \ZZ \, $,
   that we denote by
   $ \, \langle \,\ , \ \rangle \, $  and  $ \, ( \,\ ,\,\ ) \, $,  one given by the evaluation (of weights onto roots),
   and the other one by
   $ \, (\omega_i , \alpha_ j) := d_i \delta_{ij} \, $  for all  $ \, i, j \in I \, $.  In particular, the restriction of
   $ ( \,\ ,\,\ ) $  to  $ \, Q \times Q \, $
   is a  {\sl symmetric\/}  bilinear pairing on  $ Q \, $;  moreover, both the given pairings uniquely extend to
   $ \QQ $--bilinear  pairings, still denoted by
   $ \, \langle \,\ , \ \rangle \, $  and  $ \, ( \,\ ,\,\ ) \, $,  onto
   $ \, \big( \QQ \otimes_{\raise-1pt\hbox{$ \scriptstyle \, \ZZ $}} P \, \big) \times \big( \QQ \otimes_{\raise-1pt\hbox{$ \scriptstyle \, \ZZ $}} Q \,\big) =
   \big(\, \QQ \otimes_{\raise-1pt\hbox{$ \scriptstyle \, \ZZ\! $}} P \,\big) \times \big(\, \QQ \otimes_{\raise-1pt\hbox{$ \scriptstyle \, \ZZ\! $}} P \,\big) \; $.
 Then we define
  $$  Q^\circ  \; := \;  \big\{\, \lambda \in \QQ{}Q \;\big|\, (\lambda \, , \gamma) \in \ZZ \;\; \forall \; \gamma \in Q \,\big\}  \; =
  \;  \big\{\, \rho \in \QQ{}Q \;\big|\, (\gamma \, , \rho) \in \ZZ \;\; \forall \; \gamma \in Q \,\big\}  $$
By construction  $ \, P \subseteq Q^\circ \, $,  and equality holds true if and only if  $ \lieg $ is simply-laced.
                                                               \par
   Note that, in terms of the above symmetric pairing on  $ Q \, $,  one has
   $ \, d_i = (\alpha_i,\alpha_i) \big/ 2 \, $  for all  $ \, i \in I \, $.
   More in general, we shall use the notation  $ \, d_\alpha := (\alpha,\alpha) \big/ 2 \, $  for every
   $ \, \alpha \in \Phi^+ \, $;  in particular
   $ \, d_{\alpha_i} = d_i \, $  ($ \, i \in I \, $).  We denote by  $ W $  the Weyl group associa\-ted with the root data
   $ (\Phi,\Pi) \, $;
   it is generated by the simple reflections  $ s_i $  given by
   $ \; s_i(\beta) := \beta - \frac{2(\beta,\alpha_i)}{(\alpha_i,\alpha_i)} \, \alpha_i \; $  ($ \, i \in I \, $);
   in particular  $ \, s_i(\alpha_j) = \alpha_j - a_{ij}\,\alpha_i \, $  for  $ \, i, j \in I \, $.
\end{free text}

\smallskip

\begin{free text}{\bf Multiparameters.}  \label{multiparameters}
 Let  $ \k $  be our fixed ground field, and let  $ \, I := \{1,\dots,\theta\} \, $  be as in  \S \ref{root-data}  above.
 We fix a matrix  $ \, \bq := {\big(\, q_{ij} \big)}_{i,j \in I} \, $,  whose entries belong to  $ \k^\times $,
 that will play the role of ``parameters'' of our quantum groups.
 These can be used to construct diagonal braidings and braided spaces, see for example
 \cite{ARS}, \cite{An4}, \cite{Gar}, \cite{He2}.
                                                                  \par
%
%
   We assume that  $ \, \bq := {\big(\, q_{ij} \big)}_{i,j \in I} \, $  is of  {\sl finite Cartan type  $ A \, $}
%
%
 i.e.\ there is a Cartan matrix  $ \, A = {\big( a_{ij} \big)}_{i,j \in I} \, $  of finite type such that
\begin{equation}   \label{qij-ident}
  \qquad \qquad \quad   q_{ij} \, q_{ji}  \; = \;  q_{ii}^{\,a_{ij}}
  \qquad \qquad \quad   \forall \;\; i, j \in I  \quad
      \end{equation}
                                                         \par
   To avoid some irrelevant technicalities,  {\sl we assume that  $ A $  is indecomposable}.
                                                         \par
   For later use we fix in  $ \k $  some ``square roots'' of all the  $ q_{ii} $'s,  as follows.
   From the relations in  (\ref{qij-ident})  one finds (since the Cartan matrix  $ A $  is indecomposable) that there exists
   $ \, j_{\raise-2pt\hbox{$ \scriptstyle 0 $}} \in I \, $  such that
   $ \, q_{ii} = q_{j_{\raise-2pt\hbox{$ \scriptscriptstyle 0 $}} j_{\raise-2pt\hbox{$ \scriptscriptstyle 0 $}}}^{\,e _i} \, $
   for some  $ \, e_i \in \NN \, $,  for all  $ \, i \in I \, $.  Now  {\it we assume hereafter that  $ \k $
   contains a square root of
   $ q_{j_{\raise-2pt\hbox{$ \scriptscriptstyle 0 $}} j_{\raise-2pt\hbox{$ \scriptscriptstyle 0 $}}} \, $,
   which we fix throughout and denote by
   $ \, q_{j_{\raise-2pt\hbox{$ \scriptscriptstyle 0 $}}} := \sqrt{\,q_{j_{\raise-2pt\hbox{$ \scriptscriptstyle 0 $}} j_{\raise-2pt\hbox{$ \scriptscriptstyle 0 $}}}^{\phantom{o}}} \, $.}
   Then we set  $ \, q_i := q_{j_{\raise-2pt\hbox{$ \scriptscriptstyle 0 $}}}^{\,e _i} \, $
   (a square root of  $ q_{ii} $)  for all  $ \, i \in I \, $.
 \vskip5pt
   As recorded in  \S \ref{root-data}  above, the Cartan matrix  $ A $  is diagonalizable,
   hence we fix positive, relatively prime integers  $ d_1 $,  $ \dots $,  $ d_\theta $
   such that the diagonal matrix  $ \, D=\text{\sl diag}\,(d_1,\dots,d_\theta) \, $  symmetrizes
   $ A \, $,  i.e.\  $ \, D\,A \, $  is symmetric; in fact, each of these  $ d_i $'s  coincides
   with the corresponding exponent  $ e_i $  mentioned above.
                                               \par
   We introduce now some special cases of Cartan type multiparameter matrices.
 \vskip5pt
   {\sl  $ \underline{\hbox{Integral type}} $:}\,  We say that  $ \, \bq := {\big(\hskip0,7pt q_{ij} \big)}_{i,j \in I} \, $
   is {\it of integral type\/}  if it is of Cartan type and there exist  $ \, b_{ij} \in \ZZ \, $  such that
   $ \, q_{ij} = q^{\,b_{ij}} \, $  for  $ \, i, j \in I \, $;  then we may assume  $ \; b_{i{}i} = 2 \, d_i \; $  and
   $ \; b_{ij} + \, b_{ji} = 2 \, d_i \, a_{ij} \, $
($ \, i, j \in I \, $),  with  $ \, q = q_{j_0} \, $  and the  $ d_i $'s  as above.
  To be precise,  {\it we say also that  $ \, \bq $  is ``of integral type  $ B $''},  with
$ \, B := {\big(\hskip0,7pt b_{ij} \big)}_{i,j \in I} \in M_\theta(\ZZ) \; $.
 \vskip3pt
   {\sl  $ \underline{\hbox{Strongly integral type}} $:}\,
   We say that  $ \, \bq := {\big(\hskip0,7pt q_{ij} \big)}_{i,j \in I} \, $  is {\it of strongly integral type\/}
   if it is of integral type and in addition one has  $ \; b_{ij} \in d_i\,\ZZ \, \bigcap \, d_j\,\ZZ \; $  for all
   $ \, i, j \in I \, $.  In other words,  $ \, \bq := {\big(\hskip0,7pt q_{ij} \big)}_{i,j \in I} \, $  of Cartan type is
   {\sl strongly integral\/}  if and only if there exist integers  $ \, t^+_{ij} , t^-_{ij} \in \ZZ \, $  such that
   $ \, q_{ij} = q^{\,d_i t^+_{ij}} = q^{\,d_j t^-_{ij}} \, $  for all  $ \, i, j \in I \, $;  then we may assume
   $ \; t^\pm_{i{}i} = 2 = a_{i{}i} \; $  and  $ \; t^+_{ij} + \, t^-_{ji} = 2 \, a_{ij} \, $,  \, for  $ \, i, j \in I \, $.
 \vskip3pt
   {\sl  $ \underline{\hbox{Canonical multiparameter}} $:}\,  As a last (very) special case,
   given  $ \, q \in \k^\times \, $  consider
\begin{equation}  \label{qij-canon}
  \qquad \qquad \qquad \qquad \quad   \check{q}_{ij}  \; := \;  q^{\, d_i a_{ij}}   \qquad \qquad
  \forall \;\; i, j \in I  \quad
\end{equation}
with  $ \, d_i \; (\,i \in I\,) \, $  given as above.  These  $ \, q_{ij} = \check{q}_{ij} \, $'s
obey condition  (\ref{qij-ident}),  hence the matrix  $ \, \bq = \check{\bq} \, $  is of Cartan type  $ A \, $:
\,we shall refer to it as to  {\it the ``canonical'' case}.
 \vskip5pt
   Overall we have the following relations among different types of multiparameters:
 \vskip3pt
   \centerline{ \sl ``canonical''  $ \Longrightarrow $
 ``strongly integral''  $ \Longrightarrow $  ``integral''  $ \Longrightarrow $  ``Cartan'' }
 \vskip7pt
   By the way, when the multiparameter matrix  $ \, \bq := {\big(\, q_{ij} \big)}_{i,j \in I} \, $  is  {\sl symmetric},
   i.e.\  $ \, q_{ij} = q_{ji} \, $  (for all  $ \, i, j \in I \, $),  then the conditions  $ \, q_{ij} \, q_{ji} = q_{ii}^{a_{ij}} \, $
   read  $ \, q_{ij}^{\,2} = q^{\, 2 \, d_i a_{ij}} \, $,  hence  $ \, q_{ij} = \pm q^{\, d_i a_{ij}} \, $  (for all  $ \, i, j \in I \, $).
   This means that every symmetric multiparameter is ``almost the canonical one'', as
indeed it is the canonical one ``up to sign(s)''.
 \vskip7pt
   Finally, we assume that for each  $ \, i, j \in I \, $  there exists in the ground field  $ \k $  a square root of  $ q_{ij} \, $,
   which we fix once and for all and denote hereafter by  $ \, q_{ij}^{\,1/2} \, $;
   in addition, we require that these square roots satisfy
   the ``compatibility constraints''  $ \, q_{ii}^{\,1/2} \! = q_i \; \big(\! := q^{d_i} \big) \, $  and
   $ \; q_{ij}^{\,1/2} \, q_{ji}^{\,1/2} \, = \, {\big( q_{ii}^{\,1/2} \big)}^{\,a_{ij}} \; $  for all  $ \, i, j \in I \, $
   --- in short, we assume that ``the signs of all square roots  $ q_{ij}^{\,1/2} $  are chosen in an overall consistent way''.
                                                     \par
   Even more,  {\sl when  $ \, \bq := {\big(\hskip0,7pt q_{ij} \big)}_{i,j \in I} \, $  in particular is of integral type, say
   $ \, q_{ij} = q^{\,b_{ij}} \, $,  we fix a square root  $ \, q^{1/2} $  of  $ \, q $  in  $ \k $  and we set
   $ \, q_{i{}j}^{1/2} \! := {\big( q^{1/2} \big)}^{b_{i{}j}} \in \, \k \; $  for all  $ \, i \, , j \in I $}.
\end{free text}

\medskip

\begin{free text}{\bf Multiparameter Lie bialgebras.}  \label{MP_lie-bialgebras}
 Consider the complex Lie algebra  $ \lieg $  associated with the Cartan matrix  $ A $  as in  \S \ref{root-data},
 and let  $ \lieb_+ $  and  $ \lieb_- $  be opposite Borel subalgebras in it, containing a Cartan subalgebra  $ \lieh $
 whose associated set of roots is identified with  $ \Phi \, $.
 There is a canonical, non-degenerate pairing between  $ \lieb_+ $  and  $ \lieb_- \, $,  and using it one can construct a
 {\sl Manin double\/}  $ \, \liegd = \mathfrak{b}_+ \oplus \mathfrak{b}_- \, $,
 which is automatically endowed with a structure of Lie bialgebra.  Roughly,  $ \liegd $  is like  $ \lieg $ {\sl but\/}  with
 {\sl two copies of\/}  $ \lieh $  inside it; see  \cite{Hal}  for details (in particular Proposition 4.5 therein, with  $ \liegd $
 denoted  $ \mathfrak{c} \, $).
                                                                            \par
   Now fix in  $ \mathfrak{b}_+ $  and  $ \mathfrak{b}_- $  generators  $ \, \erm_i , \hrm_i^+ \, (\, i \in I \,) \, $
   and  $ \, \frm_i , \hrm_i^- \, (\, i \in I \,) \, $  respectively as in the usual Serre's presentation of  $ \lieg \, $.
   Then, thinking of these elements as living in  $ \liegd \, $,  the latter is just the Lie algebra over  $ \Bbbk $
   with generators  $ \, \erm_i , \hrm_i^+ , \hrm_i^- , \frm_i \, (\, i \in I \,) \, $  and relations
  $$  \begin{matrix}
     \big[ \hrm_i^+ , \erm_j \big] = +d_i\,a_{ij}\, \erm_j \; ,  \;\;
       \big[ \hrm_i^+ , \frm_j \big] = -d_i\,a_{ij}\, \frm_j \; ,  \;\;  \big[ \hrm_i^- , \erm_j \big] = +d_j\,a_{ji}\, \erm_j \; ,
       \;\;  \big[ \hrm_i^- , \frm_j \big] = -d_j\,a_{ji}\, \frm_j  \\
     \big[ \hrm_i^+ , \hrm^+_j \big] = 0 \; ,  \;\;\;  \big[ \hrm_i^- , \hrm^-_j \big] = 0 \; ,
       \;\;\;  \big[ \hrm_i^+ , \hrm^-_j \big] = 0 \; ,  \;\;\;
       \big[ \erm_i , \frm_j \big] = \delta_{ij} \, 2^{-1} \big(\, \hrm^+_i \! + \hrm^-_i \big)^{\phantom{\big|}}  \\
       {\ad(\erm_i)}^{1-a_{ij}}(\erm_j) = 0  \quad ,   \qquad   {\ad(\frm_i)}^{1-a_{ij}}(\frm_j) = 0
       \quad \qquad \big(\, i \not= j \,\big)
      \end{matrix}  $$
 Moreover  (cf.\ \cite{Hal}),  $ \liegd $  bears the unique Lie bialgebra structure given by the formulas
  $$  \begin{matrix}
   \delta(\erm_i) \, = \, \big( d_i \hrm^+_i \big) \otimes \erm_i - \erm_i \otimes \big( d_i \hrm^+_i \big)  \; ,  &
   \delta\big(\hrm^+_i\big) \, = \, 0  \\
   \delta\big(\hrm^-_i\big) \, = \, 0  \; ,  &  \delta(\frm_i) \, = \,
   \frm_i \otimes \big( d_i \hrm^-_i \big) - \big( d_i \hrm^-_i \big) \otimes \frm_i
      \end{matrix}  $$
 \vskip4pt
   Now, all this construction can be extended as follows.  Instead of the symmetric matrix  $ \, DA \, $,
   consider any square matrix  $ \, B = {\big(b_{ij}\big)}_{i, j \in I} \in M_\theta(\ZZ) \, $  such that
   $ \, B+ B^t = 2\,DA \, $.  Then one can repeat the construction in  \cite{Hal}  and then find a new Lie bialgebra
   $ \liegb $  given as follows: it is the Lie algebra over  $ \CC $  with generators
   $ \, \erm_i , \kdotrm_i , \ldotrm_i , \frm_i \, (\, i \in I \,) \, $  and relations
\begin{equation}  \label{eq: rel's_g(B)}
   \begin{matrix}
      \big[\, \kdotrm_i \, , \erm_j \big] = +b_{ij}\,\erm_j  \; ,  \;\;\;  \big[\, \kdotrm_i \, , \frm_j \big] = -b_{ij}\, \frm_j \; ,
      \;\;\;  \big[\, \ldotrm_i \, , \erm_j \big] = +b_{ji} \, \erm_j\; ,  \;\;\;  \big[\, \ldotrm_i \, , \frm_j \big] = -b_{ji} \, \frm_j  \\
      \big[\, \kdotrm_i \, , \kdotrm_j \big] = 0 \; ,  \;\;\;  \big[\, \ldotrm_i \, , \ldotrm_j \big] = 0 \; ,
      \;\;\;  \big[\, \kdotrm_i \, , \ldotrm_j \big] = 0 \; ,  \;\;\;  \big[\, \erm_i \, , \frm_j \big] =
      \delta_{ij} \, {(2\,d_i)}^{-1} \big(\, \kdotrm_i + \ldotrm_i \big)^{\phantom{\big|}}  \\
      {\ad(\erm_i)}^{1-a_{ij}}(\erm_j) = 0  \quad ,   \qquad   {\ad(\frm_i)}^{1-a_{ij}}(\frm_j) = 0
      \quad \qquad \big(\, i \not= j \,\big)
   \end{matrix}
\end{equation}
 and it bears the Lie bialgebra structure whose Lie cobracket is uniquely given by
\begin{equation}  \label{eq: form's_Lie-cobra_g(B)}
   \begin{matrix}
     \delta(\erm_i) \, = \, \kdotrm_i \otimes \erm_i - \erm_i \otimes \kdotrm_i  \; ,  &  \delta\big(\kdotrm_i\big) \, = \, 0  \\
     \delta\big(\ldotrm_i\big) \, = \, 0  \; ,  &  \delta(\frm_i) \, = \, \frm_i \otimes \ldotrm_i - \ldotrm_i \otimes \frm_i
   \end{matrix}   \qquad \qquad
\end{equation}
 Note that the Lie bialgebra  $ \liegd $  above is simply the special case of  $ \liegb $  for  $ \, B := DA \, $.
                                                                    \par
   A more detailed, thorough construction of these Lie bialgebras is presented in  \cite{GG2}.
 \vskip4pt
   Basing upon the  $ \erm_i $'s  and the  $ \frm_i $'s  we construct root vectors  $ \, \erm_\alpha \in \mathfrak{b}_+ \, $
   and  $ \, \frm_\alpha \in \mathfrak{b}_- \, $  (for all  $ \, \alpha \in \Phi^+ \, $);
   this construction takes place inside the nilpotent part of  $ \mathfrak{b}_+ $  and of  $ \mathfrak{b}_- \, $,
   hence these new elements are well-defined for each Lie bialgebra  $ \liegb $  as above.
   All these root vectors, together with the  $ \kdotrm_i $'s  and the  $ \ldotrm_i $'s,  form a
   {\sl Chevalley-type basis\/}  of  $ \liegb \, $,  \,with  $ \, \erm_{\alpha_i} = \erm_i \, $  and
   $ \, \frm_{\alpha_i} = \frm_i \, $  for all  $ \, i \in I \, $:
   indeed, up to signs this basis (hence the  $ \erm_\alpha $'s  and the  $ \frm_\alpha $'s)  is unique.
   We also recall  (cf.\ \S \ref{root-data})  the notation  $ \, d_\alpha := (\alpha,\alpha) \big/ 2 \, $  for all
   $ \, \alpha \in \Phi^+ \, $.
 \vskip4pt
   We introduce now some  {\sl $ \ZZ $--integral  forms\/} of  $ \liegb \, $.
\end{free text}

\vskip7pt

\begin{definition}  {\ }   \label{def_liegdotb-lieghatb-liegtildeb}
 Keep notation as above, in particular
 $ \, B+ B^t = 2\,DA \, $.  Then:
 \vskip3pt
    {\it (a)}\,  We call  $ \liegdotb $  the Lie subalgebra over  $ \ZZ $  of  $ \liegb $  generated by the elements
    $ \; \erm_i \, $,  $ \, \frm_i \, $,  $ \, \kdotrm_i \, $,  $ \, \ldotrm_i \, $  and
    $ \, \hrm^\circ_i := {(2\,d_i)}^{-1} \big(\, \kdotrm_i +  \ldotrm_i \big) \, $  (for all  $ \, i \in I \, $);
    \,indeed, this is a  {\sl Lie bialgebra\/}  over  $ \ZZ \, $,  with
  $$  \displaylines{
   \delta(\,\erm_i) \, = \, \kdotrm_i \otimes \erm_i - \erm_i \otimes \kdotrm_i  \quad ,  \qquad
      \delta(\,\frm_i) \, = \, \frm_i \otimes \ldotrm_i - \ldotrm_i \otimes \frm_i  \cr
   \delta\big(\,\kdotrm_i\big) = \, 0  \;\;\quad ,  \;\;\qquad
      \delta\big(\,\ldotrm_i\big) = \, 0  \;\;\quad ,  \;\;\qquad
         \delta(\,\hrm^\circ_i) \, = \, 0  }  $$
 \vskip1pt
    {\it (b)}\,  We call  $ \liegtildeb $  the Lie subalgebra over  $ \ZZ $  of  $ \liegb $  generated by the elements
    $ \; \etilderm_\alpha := 2 \, d_\alpha \, \erm_\alpha \, $,  $ \, \ftilderm_\alpha := 2 \, d_\alpha \, \frm_\alpha \, $
    ($ \, \alpha \in \Phi^+ \, $),  $ \, \kdotrm_i \, $  and  $ \, \ldotrm_i \, $  ($ \, i \in I \, $);
    \,indeed, this is a  {\sl Lie bialgebra\/}  over  $ \ZZ \, $,  with
  $$  \displaylines{
   \delta(\,\etilderm_i) \, = \, \kdotrm_i \otimes \etilderm_i - \etilderm_i \otimes \kdotrm_i  \quad ,  \qquad
      \delta\big(\,\ftilderm_i\big) \, = \, \ftilderm_i \otimes \ldotrm_i - \ldotrm_i \otimes \ftilderm_i  \cr
    \delta\big(\,\kdotrm_i\big) = \, 0  \;\;\quad ,  \;\; \qquad \qquad \qquad
     \delta\big(\,\ldotrm_i\big) = \, 0    }  $$
 \vskip3pt
    {\it (c)}\,  {\it Assume in addition that  $ \, b_{ij} = d_i t^+_{ij} = d_j t^-_{ij} \, $  for some
    $ \, t^\pm_{ij} \in \ZZ \, $  ($ \, i , j \in I \, $)}.  Then we call  $ \lieghatb $  the Lie subalgebra over
    $ \ZZ $  of  $ \liegb $ generated  by the elements  $ \; \erm_i \, $,
    $ \, \frm_i \, $,  $ \, \krm_i := d^{-1}_i \kdotrm_i \, $,
    $ \, \lrm_i := d^{-1}_i \ldotrm_i \, $,  $ \, \hrm^\circ_i = 2^{-1} \big(\, \krm_i +  \lrm_i \big) \, $  (for all  $ \, i \in I \, $);
    \,indeed, this is a  {\sl Lie bialgebra\/}  over  $ \ZZ \, $,  with
  $$  \displaylines{
   \delta(\,\erm_i) \, = \, d_i (\, \krm_i \otimes \erm_i - \erm_i \otimes \krm_i \,)  \quad ,  \qquad
      \delta(\,\frm_i) \, = \, d_i (\, \frm_i \otimes \lrm_i - \lrm_i \otimes \frm_i \,)  \cr
      \hfill   \delta(\,\krm_i) \, = \, 0  \;\;\quad ,  \;\;\qquad
  \delta(\,\lrm_i) \, = \, 0  \;\;\quad ,  \;\;\qquad
         \delta\big(\,\hrm^\circ_i\big) \, = \, 0  \hfill  \diamondsuit }  $$
\end{definition}

\medskip

\begin{rmks}  \label{int-forms_of_g_B}
 \vskip3pt
   {\it (a)}\,  It is clear by definition that  $ \liegdotb \, $,  $ \liegtildeb $  and  $ \lieghatb $  are all
   $ \ZZ $--integral  forms of the Lie algebra  $ \liegb $  in  \S \ref{MP_lie-bialgebras},  i.e.\
   $ \; \CC \otimes_{\raise-1pt\hbox{$ \scriptstyle \, \ZZ\! $}} \liea \cong \liegb \, $  {\sl as Lie algebras\/}  for
   $ \, \liea \in \big\{\, \liegdotb \, , \, \liegtildeb \, , \, \lieghatb \big\} \; $.
                                                           \par
   We also remark that the elements  $ \, \etilderm_i \, $,  $ \, \ftilderm_i \, $,  $ \, \kdotrm_i \, $  and
   $ \, \ldotrm_i \, $  (with  $ \, i \in I \, $)  are enough to generate the Lie algebra
   $ \, \QQ \otimes_\ZZ \liegtildeb \, $  over  $ \QQ \, $;  therefore, the formulas given in
   Definition \ref{def_liegdotb-lieghatb-liegtildeb}{\it (b)\/}  are enough, though they do not display the values
   $ \delta\big(\,\ftilderm_\alpha\big) $  nor  $ \delta(\,\etilderm_\alpha) \, $,  to determine a unique Lie cobracket on
   $ \, \QQ \otimes_\ZZ \liegtildeb \, $,  so by restriction on  $ \, \liegtildeb \, $  too.
 \vskip3pt
   {\it (b)}\,  The fact that each of  $ \liegdotb \, $,  $ \liegtildeb $  and  $ \lieghatb $  be a Lie  {\sl sub-bialgebra\/}  of
   $ \liegb $  (hence a  $ \ZZ $--integral  form of it  {\sl as a Lie bialgebra\/}) is a direct check. It is also a consequence,
   though, of our results in  \S \ref{spec-1}  later on about specialization of suitable multiparameter quantum groups.
 \vskip3pt
   {\it (c)}\,  Definitions imply that in each Lie bialgebra  $ \liegb $   --- as well as in its  $ \ZZ $--integral  forms
   $ \liegdotb \, $,  $ \liegtildeb $  and  $ \lieghatb $  ---   the Lie  {\sl algebra\/}  structure  {\sl does\/}  depend on
   $ B \, $,  whereas the Lie  {\sl coalgebra\/}  structure  {\sl does not}.  This follows from simple observations,
   namely that the root vectors $ \erm_\alpha $  and  $ \frm_\alpha $  are independent of  $ B $,
   and that the formulas for the Lie cobracket of the  $ \kdotrm_i $'s,  the  $ \ldotrm_i $'s,
   the  $ \erm_\alpha $'s  and the  $ \frm_\alpha $'s  are independent of  $ B $  as well; this second fact
   requires a quick computation for non-simple  $ \alpha $'s,  where the condition  $ \, B + B^t = 2\,DA \, $  makes the job.
                                                           \par
   This implies that if we consider two such Lie bialgebras  $ \lieg_{\Bpicc'} $  and  $ \lieg_{\Bpicc''} \, $,
   and their corresponding basis elements (over  $ \QQ $)  $ \erm'_\alpha \, $,  $ \erm''_\alpha \, $,  etc.,
   {\it mapping  $ \; \erm'_\alpha \mapsto \erm''_\alpha \, $,  $ \; \kdotrm'_i \mapsto \kdotrm''_i \; $,
   $ \; \ldotrm'_i \mapsto \ldotrm''_i \; $  and  $ \; \frm'_\alpha \mapsto \frm''_\alpha \; $
   defines an isomorphism of Lie coalgebras}  $ \, \lieg_{\Bpicc'} \cong \lieg_{\Bpicc''} \; $,
   \,that on the other hand is  {\sl not\/}  one of Lie algebras.  The same occurs for the  $ \ZZ $--integral  forms as well.
\end{rmks}

\vskip5pt

   For later use we need yet another definition:

\vskip5pt

\begin{definition}  \label{def_Kostant-forms_U(g)}
 Given  $ \, B = {\big(b_{ij}\big)}_{i, j \in I} \in M_\theta(\ZZ) \, $  such that  $ \, B+ B^t = 2\,DA \, $,
 \,let  $ \liegb $  be the complex Lie algebra mentioned in  \S \ref{MP_lie-bialgebras}  above, and
 $ U\big(\,\liegb\big) $  its universal enveloping algebra.  We define  $ \, U_\ZZ\big(\,\liegdotb\big) \, $, resp.\
 $ \, U_\ZZ\big(\,\liegtildeb\big) \, $,  the  $ \ZZ $--subalgebra  of  $ \, U\big(\,\lieghatb\big) \, $  generated by
  $$  \displaylines{
   \phantom{resp.}  \qquad  \bigg\{ {\bigg(\, {\kdotrm_i \atop n} \bigg)} \, ,
\, {\bigg(\, {\ldotrm_i \atop n} \bigg)} \, , \, {\bigg(\, {\hrm^\circ_i \atop n} \bigg)} \, ,
\, \erm_i^{\,(n)} \, , \, \frm_i^{\,(n)} \;\bigg|\;\, i \in I , \, n \in \NN \,\bigg\} \; ,  \cr
   \text{resp.}  \,\qquad  \bigg\{ {\bigg(\, {\krm_i \atop n} \bigg)} \, , \, {\bigg(\, {\lrm_i \atop n} \bigg)} \, ,
   \, {\bigg(\, {\hrm^\circ_i \atop n} \bigg)} \, ,  \, \erm_i^{\,(n)} \, , \, \frm_i^{\,(n)} \;\bigg|\;\, i \in I , \, n \in \NN \,\bigg\} \; ,  }  $$
 where  $ \, \displaystyle{\bigg( {\trm \atop n} \bigg)} \, $  and  $ \, \text{a}^{(n)} \, $
 denote standard binomial coefficients and divided powers, and in the second case we are assuming that
 $ \, b_{ij} = d_i t^+_{ij} = d_j t^-_{ij} \, $  for some  $ \, t^\pm_{ij} \in \ZZ \, $  ($ \, i , j \in I \, $).   \hfill  $ \diamondsuit $
\end{definition}

\vskip5pt

\begin{rmks}  \label{int-forms_of_U(g_D)}
 {\it (a)}\,  By  Remarks \ref{int-forms_of_g_B}  above, it is easily seen that
 $ \, U_\ZZ\big(\,\liegdotb\big) \, $  and  $ U_\ZZ\big(\,\lieghatb\big) \, $  are  $ \ZZ $--integral forms of
 $ \, U\big(\liegb\big) \, $;  one can also find a  {\sl presentation\/}  of each of them by generators
 (the given ones) and relations.  Indeed, for both  $ \, U_\ZZ\big(\,\liegdotb\big) \, $  and  $ U_\ZZ\big(\,\lieghatb\big) \, $
 this is a simple variation of the well-known presentation of the Kostant  $ \ZZ $--integral  form of  $ U(\lieg) \, $,
 generated are binomial coefficients and divided powers of the Chevalley generators.
                                                           \par
   Moreover, as  $ \liegd $  is a Lie bialgebra,  $ U\big(\,\liegd\big) $  is in fact a  {\sl co-Poisson\/}  Hopf algebra;
   then  $ \, U_\ZZ\big(\,\liegdotb\,\big) \, $  and  $ U_\ZZ\big(\,\lieghatb\,\big) \, $  are in fact  $ \ZZ $--integral forms of
   $ \, U\big(\,\liegd\big) \, $  {\sl as co-Poisson Hopf algebras}.
 \vskip3pt
   {\it (b)}\,  By a standard fact in the ``arithmetic of binomial coefficients'',  $ \, U_\ZZ \big(\,\liegdotb\big) \, $
   contains also all ``translated'' binomial coefficients, of the form
   $ \, \displaystyle{\bigg(\, {{\kdotrm_i \! + \! z} \atop n} \bigg)} \, $,
   $ \, \displaystyle{\bigg(\, {{\ldotrm_i + z} \atop n} \bigg)} \, $  and
   $ \, \displaystyle{\bigg(\, {{\hrm_i + z} \atop n} \bigg)} \, $  for  $ \, i \in I \, $,  $ \, n \in \NN \, $,  $ \, z \in \ZZ \, $;
   then one has also a presentation of  $ U_\ZZ \big(\,\liegdotb\big) $  including these extra generators, and
   corresponding extra relations too.
%
%
 And similarly for  $ \, U_\ZZ\big(\,\lieghatb\big) \, $  as well.
 \vskip5pt
    {\it (c)}\,  The definition of the  $ \ZZ $--integral  forms  $ \, \liegdotb \, $,  $ \, \lieghatb \, $  and  $ \, \liegtildeb \, $
    --- of  $ \liegb $  ---   and of the forms  $ \, U_\ZZ \big(\,\liegdotb\big) \, $  and  $ \, U_\ZZ \big(\,\lieghatb\big) \, $
    --- of  $ \, U\big(\liegb\big) \, $  ---   may seem to come out of the blue, somehow.  Nevertheless, we will show in
    \S \ref{spec-1}  that they occur as direct output of a ``specialization process'' of multiparameter quantum groups
    {\sl once suitable integral forms of them are chosen}.
\end{rmks}

\vskip5pt

\begin{free text}{\bf Some  $ q $--numbers.}  \label{q-numbers}
 Throughout the paper we shall need to consider several kinds of  ``$ q $--numbers''.
 Let  $ \Zqqm $  be the ring of Laurent polynomials with integral coefficients in the indeterminate  $ q \, $.
 For every  $ \, n \in \NN \, $ we define
  $$  \displaylines{
   {(0)}_q  \, := \;  1 \;\; ,  \quad
    {(n)}_q  \, := \;  \frac{\,q^n -1\,}{\,q-1\,}  \; = \;  1 + q + \cdots + q^{n-1}  \; =
    \; {\textstyle \sum\limits_{s=0}^{n-1}} \, q^s  \qquad  \big(\, \in \, \ZZ[q] \,\big)  \cr
   {(n)}_q!  \, := \;  {(0)}_q {(1)}_q \cdots {(n)}_q  := \,  {\textstyle \prod\limits_{s=0}^n} {(s)}_q  \;\; ,  \quad
    {\binom{n}{k}}_{\!q}  \, := \;  \frac{\,{(n)}_q!\,}{\;{(k)}_q! {(n-k)}_q! \,}  \qquad  \big(\, \in \, \ZZ[q] \,\big) \cr
   {[0]}_q  := \,  1  \; ,  \;\;
    {[n]}_q  := \,  \frac{\,q^n -q^{-n}\,}{\,q-q^{-1}\,}  =  \, q^{-(n-1)} + \cdots + q^{n-1}  =
    {\textstyle \sum\limits_{s=0}^{n-1}} \, q^{2\,s - n + 1}  \!\quad  \big( \in \Zqqm \,\big)  \cr
   {[n]}_q!  \, := \;  {[0]}_q {[1]}_q \cdots {[n]}_q  = \,  {\textstyle \prod\limits_{s=0}^n} {[s]}_q  \;\; ,  \quad
    {\bigg[\, {n \atop k} \,\bigg]}_q  \, := \;  \frac{\,{[n]}_q!\,}{\;{[k]}_q! {[n-k]}_q!\,}  \qquad  \big(\, \in \, \Zqqm \,\big)  }  $$
 \vskip3pt
%
%
\noindent
 Moreover, we have
  $ \; \displaystyle{{(n)}_{q^2} = q^{n-1} {[n]}_q} \; $,  $ \; \displaystyle{{(n)}_{q^2}! = q^{\frac{n(n-1)}{2}} {[n]}_q} \; $,
  $ \; \displaystyle{{\binom{n}{k}}_{\!\!q^{2}} \!\! = q^{k(n-k)} {\Big[\, {n \atop k} \,\Big]}_q} \, $.
 \vskip5pt
%
   Furthermore, thinking of Laurent polynomials as functions on  $ \k^\times \, $,  for any  $ \, q \in \k^\times \, $
   we shall read every symbol above as representing the corresponding element in  $ \, \k \, $.
\end{free text}

\bigskip

\section{Multiparameter quantum groups}  \label{mpqgroups}

\smallskip

   In this section we present the notion of  {\it multiparameter quantum group},  or  {\it MpQG\/}  for short.
   We introduce it by a direct definition by generators and relations as it suits better for our purposes.
   There exists also a realization in terms of Nichols algebras of diagonal type, see for example
   \cite{ARS}, \cite{An4}. \cite{Gar}, \cite{He2}.
   Finally, we connect them with cocycle deformations of their simplest example, the ``canonical'' one.

\smallskip

 \subsection{Defining multiparameter quantum groups (=MpQG's)}  \label{def-MpQG}  \
 \vskip7pt
   In this subsection we introduce the multiparameter quantum group  $ \QE \, $,  or  {\sl MpQG\/}
   for short, associated with a matrix of parameters  $ \, \bq := {\big(\, q_{ij} \big)}_{i, j \in I} \, $
   of Cartan type (cf.\  \S \ref{multiparameters}).  We fix also scalars  $ q_i $  ($ \, i \in I \, $)
   as in  \S \ref{multiparameters},  {\it with the additional assumption that
   $ \, q_{ii}^{\,k} = q_i^{2k} \not= 1 \, $}  for all  $ \, k = 1, \dots, 1-a_{ij} \, $,  with  $ \, i, j \in I \, $  and  $ \, i \not= j \, $.

\medskip

\begin{definition}  \label{def:multiqgroup_ang}
 (cf.\ \cite{HPR})  We denote by  $ U_\bq(\hskip0,8pt\lieg) $  the unital associative  $ \k $--algebra
 generated by elements  $ \, E_i \, , \, F_i \, , \, K_i^{\pm 1} \, , \, L^{\pm 1}_i \, $  with  $ \, i \in I \, $
 obeying the following relations:
\begin{align*}
  (a)  \qquad  &  K_i^{\pm 1} L^{\pm 1}_j  \, = \,  L^{\pm 1}_j K^{\pm 1}_i \;\; ,
\qquad K_i^{\pm 1} K^{\mp 1}_i  \, = \,  1  \, = \,  L^{\pm 1}_i L^{\mp 1}_i  \\
  (b)  \qquad  &  K_i^{\pm 1} K^{\pm 1}_j  \, = \,  K^{\pm 1}_j K^{\pm 1}_i  \;\; ,
\qquad  L_i^{\pm 1} L_j^{\pm 1}  \, = \,  L_j^{\pm 1} L_i^{\pm 1}  \\
  (c)  \qquad  &  K_i \, E_j \, K_i^{-1}  \, = \,  q_{ij} \, E_j  \;\; ,  \qquad
L_i \, E_j \, L_i^{-1}  \, = \,  q_{ji}^{-1} \, E_j  \\
  (d)  \qquad  &  K_i \, F_j \, K_i^{-1}  \, = \,  q_{ij}^{-1} \, F_j  \;\; ,  \qquad
L_i \, F_j \, L_i^{-1}  \, = \,  q_{ji} \, F_j  \\
  (e)  \qquad  &  [E_i , F_j]  \, = \,  \delta_{i,j} \, q_{ii} \, \frac{\, K_i - L_i \,}{\, q_{ii} - 1 \,}  \\
  (f)  \qquad  &  \sum_{k=0}^{1-a_{ij}} (-1)^k \,
  {\bigg( {1-a_{ij} \atop k} \bigg)}_{\!\!q_{ii}} q_{ii}^{{k \choose 2}} \, q_{ij}^k \,
E_i ^{\,1-a_{ij}-k} E_j \, E_i^{\,k}  \; = \;  0   \;\; \qquad (\, i \neq j \,)  \\
  (g)  \qquad  &  \sum_{k=0}^{1-a_{ij}} (-1)^k \,
  {\bigg( {1-a_{ij} \atop k} \bigg)}_{\!\!q_{ii}} q_{ii}^{{k \choose 2}} \, q_{ij}^k \,
F_i^{\,k} F_j \, F_i^{\,1-a_{ij}-k}  \; = \;  0   \;\; \qquad (\, i \neq j \,)  \\
\end{align*}

   \indent   Moreover,  $ U_\bq(\hskip0,8pt\lieg) $  is a Hopf algebra with coproduct, counit and antipode
   determined for all  $ \, i, j \in I \, $  by
\begin{align*}
  \com(E_i) \,  &  = \,  E_i \otimes 1 + K_i \otimes E_i  \;\; ,  &
 \epsilon(E_i) \,  &  = \,  0  \;\; ,  &  \SS(E_i) \,  &  = \,  -K_i^{-1} E_i  \\
  \com(F_i) \,  &  = \,  F_i \otimes L_i + 1 \otimes F_i  \;\; ,  &
 \epsilon(F_i) \,  &  = \,  0  \;\; ,  &  \SS(F_i) \,  &  = \,  - F_i{L_i}^{-1}  \\
  \com\big(K_i^{\pm 1}\big)  &  = \,  K_i^{\pm 1} \otimes K_i^{\pm 1}  \;\;  ,  &
 \epsilon\big(K_i^{\pm 1}\big)  &  = \,  1  \;\; ,  &  \SS\big(K_i^{\pm 1}\big)  &  = \,  K_i^{\mp 1}  \\
  \com\big(L_i^{\pm 1}\big)  &  = \,  L_i^{\pm 1} \otimes L_i^{\pm 1}  \;\; ,  &
 \epsilon\big(L_i^{\pm 1}\big)  &  = \,  1  \;\; ,  &  \SS\big(L_i^{\pm 1}\big)  &  = \,  L_i^{\mp 1}
\end{align*}
 \vskip5pt
   Finally, for later use we introduce also,  for every  $ \, \lambda = \sum_{i \in I} \lambda_i \, \alpha_i \, \in \, Q \, $,
   the notation  $ \; K_\lambda := \prod_{i \in I} K_i^{\lambda_i} \; $  and
   $ \; L_\lambda := \prod_{i \in I} {L_i}^{\lambda_i} \; $.   \hfill  $ \diamondsuit $
\end{definition}

\medskip

\begin{rmk}  \label{link_QEq-QE & symm-case}
 Assume that  $ \, q \in \k^\times \, $  is not a root of unity and fix the ``canonical'' multiparameter
 $ \check{\bq} := {\big(\, \check{q}_{ij} = q^{d_i a_{ij}} \big)}_{i, j \in I} \, $  like in  (\ref{qij-canon}).
 Then we can define the corresponding MpQG, denoted  $ \QEqcheck \, $:
 the celebrated one-parameter quantum group  $ U_q(\lieg) $  by Jimbo and Lusztig is
 (up to a minimal, irrelevant change of generators) just the quotient of  $ \QEqcheck $  by the (Hopf)
 ideal generated by  $ \, \big\{ L_i - K_i^{-1} \,\big|\; i = 1, \dots, \theta \, \big\} \, $.
                                                                  \par
   As a matter of fact, that we shall deeply exploit in the present work,  {\sl most constructions usually
   carried on for  $ U_q(\lieg) $   --- like construction of (quantum) root vectors, of integral forms, etc.\ ---
   actually makes sense and apply the same to  $ \QEqcheck $  as well}.
\end{rmk}

\smallskip

   We introduce now a family of subalgebras of any MpQG, say  $ U_\bq(\hskip0,8pt\lieg) \, $,  as follows:

\medskip

\begin{definition}  \label{def:q-Bor-sbgr}
  Given  $ \, \bq := {\big(\, q_{ij} \big)}_{i,j \in I} \, $  and  $ \, U_\bq(\hskip0,8pt\lieg) \, $  as in \S \ref{def-MpQG},
  we define  $ \, U_\bq^{\,0} := U_\bq(\lieh \oplus \lieh) \, $,  $ \, U_\bq^{+,0} \, $,  $ \, U_\bq^{-,0} \, $,
  $ \; U_\bq^- := U_\bq(\lien_-) \, $,  $ \; U_\bq^+ := U_\bq(\lien_+) \, $,  $ \; U_\bq^\leq :=  U_\bq(\lieb_-) \; $
  and  $ \; U_\bq^\geq := U_\bq(\lieb_+) \; $  to be the  $ \k $--subalgebra  of  $ \QEq $  respectively generated as
  $$  \displaylines{
   U_\bq^{\,0}  \; := \;  \Big\langle\, K_i^{\pm 1} \, , \, L_i^{\pm 1} \,\Big\rangle_{i \in I}  \quad ,
\qquad   U_\bq^{+,0}  \; := \;  \Big\langle\, K_i^{\pm 1} \,\Big\rangle_{i \in I}  \quad ,
\qquad   U_\bq^{-,0}  \; := \;  \Big\langle\, L_i^{\pm 1} \,\Big\rangle_{i \in I}  \cr
%
%
   U_\bq^-  := \,  \big\langle F_i \big\rangle_{\! i \in I}  \; ,
\quad   U_\bq^\leq  := \,  \Big\langle F_i \, , \, L_i^{\pm 1} \Big\rangle_{\! i \in I}  \; ,
\quad   U_\bq^\geq  := \,  \Big\langle K_i^{\pm 1} \, , \, E_i \Big\rangle_{\! i \in I}  \; ,
\quad   U_\bq^+  := \,  \big\langle E_i \big\rangle_{\! i \in I}  }  $$
   \indent   We shall refer to  $ U_\bq^\leq $  and  $ U_\bq^\geq $  as to the  {\sl positive\/}  and  {\sl negative\/}
   multiparameter quantum Borel (sub)algebras, and  $ U_\bq^{\,0} \, $,  $ U_\bq^{+,0} $  and  $ U_\bq^{-,0} $
   as to the  {\sl global},  {\sl positive\/}  and  {\sl negative\/}  multiparameter Cartan (sub)algebras.   \hfill  $ \diamondsuit $
\end{definition}

\vskip5pt

   Recall the notion of ``skew-Hopf pairing'' (cf.\  Definition \ref{def_(skew-)Hopf-pairing}).
   From \cite[Proposition 4.3]{He2}   --- see also \cite[Theorem 20]{HPR}  and  \cite[Propostion 2.4]{AY}  ---  we have:

\vskip9pt

\begin{prop}  \label{sk-H_pair}
 With the assumptions above, assume in addition that  $ \, q_{ii} \not= 1 \, $  for all indices  $ \, i \in I \, $.
 Then there exists a unique skew-Hopf pairing
 $ \; \eta : U_\bq^\geq \mathop{\otimes}\limits_\k \, {\big(U_\bq^\leq\big)}^{\cop} \relbar\joinrel\longrightarrow \k \; $
 that is non-degenerate and such that, for all  $ \, 1 \leq i, j \leq \theta \, $,  one has
 \vskip-11pt
  $$  \eta(K_i,L_j) \, = \, q_{ij} \;\; ,  \qquad  \eta(E_i,F_j) \, = \, \delta_{i,j} \, {{\, - \, q_{ii} \,} \over {\, q_{ii} - 1 \,}} \;\; ,
  \qquad  \eta(E_i,L_j) \, = \, 0 \, = \, \eta(K_i,F_j)  $$
   \indent   Moreover, for every  $ \, E \in U_\bq^+\, $,  $ \, F \in U_\bq^- \, $,
   and every Laurent monomials  $ K $  in the  $ K_i $'s  and  $ L $  in the  $ L_j $'s,  we have
  $$  \eta\big( E \, K , F \, L \big)  \; = \;  \eta(E\,,F) \, \eta(K,L)  $$
\end{prop}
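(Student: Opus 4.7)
\textbf{Uniqueness} follows directly from the pairing axioms. The two multiplicativity properties in Definition \ref{def_(skew-)Hopf-pairing}, combined with the explicit coproducts of the generators given in Definition \ref{def:multiqgroup_ang}, allow one to compute $\eta(x,y)$ for any pair of monomials in the generators by simultaneous induction on their lengths. Since the values on generators are prescribed, the entire pairing is forced; in particular the ``moreover'' factorisation $\eta(E\,K, F\,L) = \eta(E,F)\,\eta(K,L)$ becomes a consequence rather than a separate datum, because $\eta(K_i, F_j) = 0 = \eta(E_i, L_j)$ propagates through the coproducts of $K_i$ and $L_j$.

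\textbf{Existence} I would establish in stages that mirror the triangular decomposition of $U_\bq^\geq$ and $U_\bq^\leq$. Step (i): define $\eta$ on the toral pieces by declaring $\eta(K_\lambda, L_\mu) := \prod_{i,j} q_{ij}^{\lambda_i \mu_j}$ as a bicharacter on the free abelian groups of the $K_i$'s and $L_j$'s; well-definedness is immediate since the toral generators commute. Step (ii): on the nilpotent pieces $U_\bq^+ \otimes U_\bq^-$, use the standard construction of the canonical pairing between a Nichols algebra of diagonal Cartan-braided type and its ``opposite''. Setting $\eta(E_i, F_j) := \delta_{ij}\,(-q_{ii})/(q_{ii}-1)$ on generators and extending via the multiplicativity axioms, the key verification is that $\eta$ vanishes on the quantum Serre relations of $U_\bq^\pm$; this reduces to a $q$-binomial identity in which the coefficients depend only on $q_{ii}$ and on the braiding factors $q_{ij}$, and which is standard in the Nichols-algebra framework of \cite{He2}. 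Step (iii): extend to $U_\bq^\geq \otimes U_\bq^\leq$ by the factorisation formula of the ``moreover'' part, and check the cross-axioms (mixing $K$'s with $F$'s and $L$'s with $E$'s) using the orthogonality $\eta(K_i,F_j) = 0 = \eta(E_i,L_j)$, which forces those mixed terms to collapse.

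\textbf{Non-degeneracy and main obstacle.} Thanks to the triangular factorisation just constructed, non-degeneracy reduces to non-degeneracy on each factor. On the toral part, the bicharacter $(\lambda,\mu) \mapsto \prod q_{ij}^{\lambda_i \mu_j}$ is non-degenerate precisely because $\bq$ is of finite Cartan type with $q_{ii} \ne 1$ for all $i \in I$: the matrix of exponents of $q$ (after writing $q_{ij}$ in terms of a single base parameter as in \S \ref{multiparameters}) is, up to the symmetriser $D$, a Cartan matrix of finite type, hence non-singular. On the nilpotent part, non-degeneracy of the Nichols-algebra pairing in the standard finite Cartan-braiding case is classical, see \cite{He2,ARS}. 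The main obstacle, in my view, lies in Step (ii): verifying that $\eta$ annihilates the Serre relations on both sides simultaneously. Because the coproduct of $F_i$ involves $L_i$ while that of $E_i$ involves $K_i$, the relevant identities have to be balanced between the two halves, and the $\cop$ twist on $U_\bq^\leq$ is precisely what makes this balancing succeed; the bookkeeping is intricate but the final identity is a purely combinatorial statement about $q$-binomials governed by the diagonal entries $q_{ii}$, reducing the multiparameter case to the canonical one already treated in \cite{HPR,AY}.
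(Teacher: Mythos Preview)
The paper does not prove this proposition at all; it simply recalls it from the literature, citing Heckenberger \cite[Proposition~4.3]{He2}, Hu--Pei--Rosso \cite[Theorem~20]{HPR}, and Angiono--Yamane \cite[Proposition~2.4]{AY}. Your outline --- bicharacter on the toral part, canonical Nichols-algebra pairing on the nilpotent part, patched together via the factorisation formula --- is essentially the standard route taken in those references, so in that sense there is nothing to compare.

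There is, however, a genuine gap in your non-degeneracy argument for the toral piece. You argue by ``writing $q_{ij}$ in terms of a single base parameter as in \S\ref{multiparameters}'' and then invoking non-singularity of the resulting exponent matrix. But the proposition is stated for an arbitrary Cartan-type multiparameter $\bq$, not an integral one; for general $\bq$ the off-diagonal entries $q_{ij}$ are \emph{not} powers of a single $q$, so no such exponent matrix exists. Even when $\bq$ is of integral type, say $q_{ij}=q^{b_{ij}}$, non-singularity of $B$ does not by itself yield non-degeneracy of the bicharacter $(\lambda,\mu)\mapsto q^{\sum b_{ij}\lambda_i\mu_j}$ on $\ZZ^\theta$: if $q$ has finite order, any $\lambda$ with $B\lambda$ in the appropriate sublattice lies in the radical. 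Indeed, the paper itself runs the logic in the \emph{opposite} direction: in \S\ref{MpQG-larg-tor_int-case} it \emph{deduces} the invertibility of $B$ from the non-degeneracy asserted in this proposition, rather than using $B$ to prove it. The cited sources secure toral non-degeneracy by working either with parameters that are not roots of unity or in a generic (function-field) setup; your sketch would need to invoke the same hypothesis explicitly rather than appeal to a matrix argument.
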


\vskip7pt

   The following result states that there exist special ``tensor product factorizations'' of MpQG's
   (the last ones are usually referred to as ``triangular decompositions''):

\medskip

\begin{prop}  \label{prop:triang-decomps_U}
 {\sl (cf.\ \cite[Corollary 22]{HPR}, \cite[Corollary 2.6]{BGH})}
                                                         \par
   The multiplication in  $ U_\bq(\hskip0,8pt\lieg) $  provides  $ \, \k $--linear  isomorphisms
  $$  \displaylines{
   U_\bq^- \mathop{\otimes}_\k U_\bq^{\,0} \,\; \cong \;\, U_\bq^\leq \,\;
   \cong \;\, U_\bq^{\,0} \mathop{\otimes}_\k U_\bq^-  \quad ,
 \qquad  U_\bq^ + \mathop{\otimes}_\k U_\bq^{\,0} \,\; \cong \;\, U_\bq^\geq \,\;
 \cong \;\, U_\bq^{\,0} \mathop{\otimes}_\k U_\bq^+  \cr
   U_\bq^{\,+,0} \mathop{\otimes}_\k U_\bq^{\,-,0}  \, \cong \;  U_\bq^{\,0}  \,
   \cong \;  U_\bq^{\,-,0} \mathop{\otimes}_\k U_\bq^{\,+,0}  \quad ,
 \qquad
  U_\bq^\leq \, \mathop{\otimes}_\k \, U_\bq^\geq  \, \cong \;  U_\bq  \, \cong \;
  U_\bq^\geq \, \mathop{\otimes}_\k \, U_\bq^\leq
  \cr
   U_\bq^+ \mathop{\otimes}_\k U_\bq^{\,0} \mathop{\otimes}_\k U_\bq^-  \,
 \cong \;  U_\bq  \, \cong \;  U_\bq^- \mathop{\otimes}_\k U_\bq^{\,0} \mathop{\otimes}_\k U_\bq^+ }  $$
\end{prop}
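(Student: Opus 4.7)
The plan is to prove the five isomorphisms in a cascade, each new one built upon the previous ones. In every case the multiplication map is surjective by construction, since the target algebra is generated by the union of the generators of the tensor factors appearing on the source; the substance lies in proving injectivity, i.e.\ in exhibiting a basis that manifestly factorizes as the claimed tensor product. The innermost isomorphism $U_\bq^{+,0} \otimes_\k U_\bq^{-,0} \cong U_\bq^{\,0} \cong U_\bq^{-,0} \otimes_\k U_\bq^{+,0}$ is immediate from relations $(a)$ and $(b)$ of Definition \ref{def:multiqgroup_ang}: the elements $K_i^{\pm 1}$ and $L_j^{\pm 1}$ commute pairwise and satisfy no further relations beyond being group-like inverses, so $U_\bq^{\,0}$ identifies with the group algebra of the free abelian group $\ZZ^\theta \oplus \ZZ^\theta$, the two summands being generated by the $K_i$'s and the $L_j$'s respectively.

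For the Borel-type factorizations $U_\bq^+ \otimes_\k U_\bq^{+,0} \cong U_\bq^\geq \cong U_\bq^{+,0} \otimes_\k U_\bq^+$ and $U_\bq^- \otimes_\k U_\bq^{-,0} \cong U_\bq^\leq \cong U_\bq^{-,0} \otimes_\k U_\bq^-$, I would invoke the bosonization identifications of Remark \ref{rmks:mpqg's-vs-Nichols}(a), according to which $U_\bq^\geq \cong \B(V_E) \,\#\, \k[\ZZ^\theta]$ and $U_\bq^\leq \cong \B(V_F) \,\#\, \k[\ZZ^\theta]$ as Hopf algebras. Since a bosonization $R \,\#\, H$ is, as a vector space, literally the tensor product $R \otimes H$, this yields the two factorizations directly; the relations $(c),(d)$ of Definition \ref{def:multiqgroup_ang} then allow one to swap the order of the factors. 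The non-trivial input here is the identification $U_\bq^\pm \cong \B(V_{E/F})$, which amounts to checking that the Serre relations $(f),(g)$ together with the braided commutation relations implicit in the bosonization generate the full ideal of relations among the $E_i$'s (respectively the $F_i$'s): for $\bq$ of Cartan type, and under the standing assumption $q_{ii}^k \neq 1$ for $k = 1,\dots, 1-a_{ij}$, this is the theorem of Heckenberger on Nichols algebras of diagonal type.

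For the global Borel-times-Borel factorizations $U_\bq^\leq \otimes_\k U_\bq^\geq \cong U_\bq \cong U_\bq^\geq \otimes_\k U_\bq^\leq$, I would invoke the Drinfeld-double realization recalled in Remark \ref{rmks:mpqg's-vs-Nichols}(b): the skew-Hopf pairing $\eta$ of Proposition \ref{sk-H_pair} is non-degenerate, so $U_\bq$ is isomorphic to the Drinfeld double $D\big(U_\bq^\leq, U_\bq^\geq, \eta\big)$, whose underlying vector space is by construction $U_\bq^\leq \otimes_\k U_\bq^\geq$. The passage to the opposite order $U_\bq^\geq \otimes_\k U_\bq^\leq$ is effected by the cross-commutation relation of the double, which in the present setup specializes precisely to relation $(e)$ of Definition \ref{def:multiqgroup_ang}. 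The finest decompositions $U_\bq^+ \otimes_\k U_\bq^{\,0} \otimes_\k U_\bq^- \cong U_\bq \cong U_\bq^- \otimes_\k U_\bq^{\,0} \otimes_\k U_\bq^+$ then follow by concatenation: split $U_\bq$ via the Drinfeld double, split each Borel via bosonization, and glue the two Cartan halves using the first step. The main obstacle is precisely the Nichols-algebra identification used in the second step; once that is granted, everything else reduces to purely formal Hopf-algebraic manipulation already packaged inside the bosonization and Drinfeld-double constructions.
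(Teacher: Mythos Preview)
The paper does not supply its own proof of this proposition; it is stated with references to \cite[Corollary 22]{HPR} and \cite[Corollary 2.6]{BGH}. Your argument via the bosonization identifications $U_\bq^\geq \cong \B(V_E)\,\#\,\k[\ZZ^\theta]$, $U_\bq^\leq \cong \B(V_F)\,\#\,\k[\ZZ^\theta]$ and the Drinfeld-double realization of $U_\bq$ is correct and is in fact precisely the structural machinery behind those cited results; the paper itself records both facts in Remarks \ref{rmks:mpqg's-vs-Nichols}(a) and (b), so your proof is self-contained within the paper's framework.

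One small remark: in the first displayed line the paper writes $U_\bq^{\,0}$ where, given Definition \ref{def:q-Bor-sbgr}, it should read $U_\bq^{-,0}$ and $U_\bq^{+,0}$ respectively (as you tacitly corrected), since $U_\bq^\leq$ by definition contains only the $L_i^{\pm 1}$'s and $U_\bq^\geq$ only the $K_i^{\pm 1}$'s. This is evidently a typographical slip in the paper; your reading is the intended one and is consistent with the analogous statements for the integral forms in Propositions \ref{prop:triang-decomps_Uhat} and \ref{prop:triang-decomps_Utilde}.
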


\medskip

\begin{rmk}  \label{split-dec}
 It is clear from definitions that  $ \, U_\bq^0 = U_\bq(\lieh \oplus \lieh) \, $  has the set of monomials in the
 $ K_i^{\pm 1} $'s  and the  $ {L_i}^{\pm 1} $'s  as  $ \k $--basis.  It follows then that each triangular decompositions of
 $ \QE $  as above induces also a splitting
  $ \; U_\bq(\hskip0,8pt\lieg) = U_\bq(\lieh \oplus \lieh) \oplus {U_\bq(\hskip0,8pt\lieg)}^\oplus $ \;
 where
                                            \par
   \centerline{ $ {U_\bq(\hskip0,8pt\lieg)}^\oplus  \, := \;
   \big(\, {U_\bq(\lien_-)}^+ \!\cdot U_\bq(\liehd) \cdot U_\bq(\lien_+) \, +
   \, U_\bq(\lien_-) \cdot U_\bq(\liehd) \cdot {U_\bq(\lien_+)}^+ \, $. }
%
%
\end{rmk}

\smallskip

 \subsection{MpQG's as cocycle deformations}  \label{deform-MpQG}  \
 \vskip7pt
   Now we want to perform on the Hopf algebras  $ U_\bq(\hskip0,8pt\lieg) $  a cocycle deformation process,
   via special types of  $ 2 $--cocycles,  like in  \S \ref{cocyc-defs},  following  \cite{AST},  \cite{DT}  and  \cite{Mo}.

\smallskip

   Let us consider  $ \, \bq :=  {\big(\, q_{ij} \big)}_{i,j \in I} \, $  and  $ U_\bq(\hskip0,8pt\lieg) $  as in  \S \ref{def-MpQG}.
   As explained in  \S \ref{multiparameters},  we fix a special element
   $ \, q_{j_{\raise-2pt\hbox{$ \scriptscriptstyle 0 $}}} \in \k^\times \, $,  also denoted by
   $ \, q := q_{j_{\raise-2pt\hbox{$ \scriptscriptstyle 0 $}}} \, $;  for this choice of  $ \, q \, $,
   we consider the canonical ``one parameter'' quantum group  $ \QEqcheck $  as in
   Remark \ref{link_QEq-QE & symm-case}.
                                                       \par
   Recall from  Definition \ref{def:multiqgroup_ang}  the notation  $ \, K_\lambda := \prod_{i \in I} K_i^{\lambda_i} \, $
   and  $ \, L_\lambda := \prod_{i \in I} {L_i}^{\lambda_i} \, $  for every
   $ \, \lambda = \sum_{i \in I} \lambda_i \alpha_i \, \in \, Q \, $.  Similarly, we shall also write
   $$  q_{\mu\,\nu}  \, :=  \! {\textstyle \prod\limits_{i,j \in I}} \, q_{ij}^{\, \mu_i \nu_j}  \; ,
  \;\;\;  q^{\;1/2}_{\mu\,\nu}  \, :=  \! {\textstyle \prod\limits_{i,j \in I}} {\big(\, q_{ij}^{\;1/2\,} \big)}^{\mu_i \nu_j}
   \qquad  \forall \;\;  \mu = {\textstyle \sum\limits_{i \in I}} \, \mu_i \, \alpha_i \, , \; \nu =
   {\textstyle \sum\limits_{j \in I}} \, \nu_j \, \alpha_j \, \in \, Q  $$
Likewise, we define also  $ \; \subu{q}{\beta} \! := q_i \, $  for every positive root  $ \, \beta \in \Phi^+ \, $
which belongs to the same orbit as the simple root  $ \alpha_i $  for the action of the Weyl group of  $ \lieg $
onto  $ Q $
(which is well-defined, by standard theory of root systems).

\medskip

\begin{definition}  \label{def-sigma}
 With the above conventions, let  $ \QEqcheck $  be the MpQG of  Remark \ref{link_QEq-QE & symm-case},
 and let  $ \, \sigma : \QEqcheck \otimes \QEqcheck \relbar\joinrel\longrightarrow \k \, $  be the unique
 $ \k $--linear  map given by
  $$  \displaylines{
   \sigma(x,y)  \; := \;  q^{\;1/2}_{\mu\,\nu}   \qquad  \text{if}  \quad
         x = K_\mu  \text{\;\ or \ }  x = L_\mu \; ,  \!\quad  y = K_\nu  \text{\;\ or \ }  y = L_\nu  \cr
   \sigma\big( \QEqcheck \, , {\QEqcheck}^\oplus \big) \; := \; 0 \; =: \; \sigma\big( {\QEqcheck}^\oplus \, , \QEqcheck \big)  }  $$
 (by  Remark \ref{split-dec}  above, this is enough to determine a unique  $ \sigma $  as requested).   \hfill  $ \diamondsuit $
\end{definition}

\medskip

   The key result that we shall rely  upon in the sequel is the following:

\medskip

\begin{theorem}  \label{thm:sigma_2-cocy}
 (cf.\ \cite[Theorem 28.]{HPR})
 Let  $ \, \bq := {\big(\, q_{i j} \,\big)}_{i, j \in I} \, $  and  $ \, q $  be as above.
 Then the map  $ \sigma $  in  Definition \ref{def-sigma}  is a normalized  $ 2 $--cocycle  of the Hopf algebra
 $ \, \QEqcheck \, $  and there exists a Hopf algebra isomorphism (with notation of\,  \S \ref{cotwist-defs_1})
  $$  U_\bq(\hskip0,8pt\lieg)  \; \cong \;  {\big(\, \QEqcheck \big)}_\sigma  $$
\end{theorem}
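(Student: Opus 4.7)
The statement splits into two independent claims: that $\sigma$ is a normalized Hopf $2$-cocycle on $\QEqcheck$, and that $(\QEqcheck)_\sigma \cong U_\bq(\liegd)$ as Hopf algebras. I would tackle them in that order.

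For the cocycle property, the key observation is that $\sigma$ is supported on pairs of group-like monomials $(K_\mu \text{ or } L_\mu,\, K_\nu \text{ or } L_\nu)$ with value $q^{1/2}_{\mu\,\nu}$, and this value is by construction bi-multiplicative in the lattice arguments $(\mu,\nu) \in Q \times Q$. Since $\com(g) = g \ot g$ for any group-like $g$, the cocycle equation on triples of group-likes reduces to this bi-multiplicativity. When at least one of $a,b,c$ involves an $E$- or $F$-factor, the triangular decomposition (Remark \ref{split-dec}) together with the explicit coproducts on the generators shows that every Sweedler term relevant for $\sigma$-evaluation places an $E_i$ or $F_j$ in at least one tensor slot, forcing both sides of the cocycle equation to vanish. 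Normalization $\sigma(a,1) = \epsilon(a) = \sigma(1,a)$ holds because $q^{1/2}_{\mu,0} = 1 = q^{1/2}_{0,\nu}$ on group-likes and because $\sigma$ vanishes on non-group-likes, which themselves have $\epsilon = 0$. Convolution-invertibility is obtained with the analogous formula $\sigma^{-1}(K_\mu \text{ or } L_\mu,\, K_\nu \text{ or } L_\nu) := q^{-1/2}_{\mu\,\nu}$ (and $0$ elsewhere).

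For the isomorphism, I would construct an algebra map $\Phi: U_\bq(\liegd) \to (\QEqcheck)_\sigma$ sending each generator to its namesake in $\QEqcheck$, and verify the defining relations of Definition \ref{def:multiqgroup_ang} under the new product $\cdot_\sigma$. Relations (a)--(b) are preserved since $g \cdot_\sigma h = gh$ on group-likes. For (c), a direct Sweedler computation using $\com^{(2)}(E_j) = E_j \ot 1 \ot 1 + K_j \ot E_j \ot 1 + K_j \ot K_j \ot E_j$ yields $K_i \cdot_\sigma E_j = q_{ij}^{1/2}\,K_i E_j$ and $E_j \cdot_\sigma K_i = q_{ji}^{1/2}\,E_j K_i$; combined with the canonical relation $K_i E_j = q^{d_i a_{ij}}\,E_j K_i$ in $\QEqcheck$ and the identity $q_{ij}^{1/2}\,q_{ji}^{1/2} = q^{d_i a_{ij}}$ (a direct consequence of the Cartan-type constraint $q_{ij} q_{ji} = q_{ii}^{a_{ij}}$), this gives precisely $K_i \cdot_\sigma E_j = q_{ij}\,E_j \cdot_\sigma K_i$, and (d) follows analogously. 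For (e), a Sweedler-level computation of $E_i \cdot_\sigma F_j$ and $F_j \cdot_\sigma E_i$ shows that all $\sigma$-contributions collapse to $\epsilon$-evaluations and cancel, so the bracket in $(\QEqcheck)_\sigma$ equals the original bracket in $\QEqcheck$, which matches (e) thanks to $\check q_{ii} = q_{ii}$.

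The main obstacle is the deformed Serre relations (f), (g). I would establish by induction a closed-form identity $E_i^{\cdot_\sigma a} \cdot_\sigma E_j \cdot_\sigma E_i^{\cdot_\sigma b} = \varrho(a,b)\,E_i^a E_j E_i^b$ in $(\QEqcheck)_\sigma$, where $\varrho(a,b)$ is an explicit monomial in the $q^{1/2}_{\alpha_i\alpha_i}$ and $q^{1/2}_{\alpha_i\alpha_j}$ symbols, computable by iterating the Sweedler-level formula for $\cdot_\sigma$ (the coproducts of powers $E_i^n$ involve the $q_{ii}$-binomial coefficients). Substituting into the $\bq$-Serre sum, the coefficients $q_{ii}^{\binom{k}{2}}\,q_{ij}^k$ combine with $\varrho(1-a_{ij}-k,k)$ so that, up to a single common nonzero prefactor, one recovers the canonical $\check\bq$-Serre sum inside $\QEqcheck$, which vanishes there; hence (f) holds, and (g) is handled analogously. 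Once $\Phi$ is well-defined as a map of algebras, it is a Hopf morphism because the coalgebra and antipode structures on $(\QEqcheck)_\sigma$ coincide, on generators, with those prescribed in Definition \ref{def:multiqgroup_ang}. Finally, bijectivity follows by comparing triangular decompositions: Proposition \ref{prop:triang-decomps_U} provides parallel PBW-type bases for both sides, and $\Phi$ sends basis to basis, hence is a Hopf algebra isomorphism.
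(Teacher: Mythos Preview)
The paper does not prove this theorem; it is quoted from \cite[Theorem~28]{HPR} and simply stated as input. Your proposal therefore supplies a proof where the paper offers none, and your direct verification strategy is sound. The cocycle verification, the commutation checks for relations (a)--(e), and the Serre-relation argument via the factorization $E_i^{\,\cdot_\sigma a}\cdot_\sigma E_j\cdot_\sigma E_i^{\,\cdot_\sigma b}=\varrho(a,b)\,E_i^aE_jE_i^b$ all go through; indeed the explicit product formulas recorded later in the paper (\S\ref{subsubsec:comp-formulas}) are exactly the closed forms you would derive, and a short computation confirms that the $k$-dependent part of $\varrho(1-a_{ij}-k,k)\cdot q_{ii}^{\binom{k}{2}}q_{ij}^k$ reduces to $q_{ii}^{\binom{k}{2}}\check q_{ij}^{\,k}$ up to a common nonzero prefactor, matching the canonical Serre relation.

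One streamlining worth noting: for the cocycle property you can bypass the case analysis entirely. The paper observes (in the paragraph preceding Proposition~\ref{prop:def-sigma=def-c}) that $\sigma=\tilde\varphi$, where $\varphi$ is the group bicharacter on $\varGamma$ and $\tilde\varphi$ is its canonical lift to the bosonization as in \S\ref{subsubsec:relation-cocycles}. Since any group $2$-cocycle lifts to a Hopf $2$-cocycle on a bosonization by that general mechanism, this immediately gives normalization, convolution-invertibility, and the cocycle identity without separate treatment of the mixed $E/F$ cases. Your approach and this one are equivalent in content; the bicharacter viewpoint is just more packaged.
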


\medskip

\begin{rmk}
 A similar result is given in  \cite[Theorem 4.5]{HLR},  but using another  $ \sigma \, $.
\end{rmk}

\medskip

   As a last result in this section, we can show that the  $ 2 $--cocycle  deformation considered in
   Theorem \ref{thm:sigma_2-cocy}  can be also realized as a cocycle deformation in the sense of  \S \ref{cotwist-defs_2}
   as well. Indeed, let  $ \, \varGamma := \ZZ^{2\theta} \, $  be the free Abelian group generated by the
   $ K_i $'s  and  $ L_i $'s ($ \, i \in I \, $),  and  $ V_E \, $,  resp.\  $ V_F $,  be the  $ \k $--vector
   space generated by the  $ E_i $'s, resp.\ the  $ F_i $'s  ($ \, i \in I \, $).
   Then, by \cite{Gar},  we know that  $ \QEqcheck $  is a quotient of
   $ \, T\big( V_E \oplus V_F \big) \,\#\, \k\varGamma \, $  by the two-sided ideal generated by the relations
   $ (e) $,  $ (f) $  and  $ (g) \, $  in  Definition \ref{def:multiqgroup_ang}.  We have a  $ (Q \times Q) $--grading  on
   $ \, T\big( V_E \oplus V_F \big) \,\#\, \Bbbk\varGamma \, $  given by
  $$  \deg(K_i)  \, = \,  (\alpha_i,\alpha_i)  \, = \,  \deg\big(L_i\big)  \;\; ,   \qquad \deg(E_i) \, = \, (1,\alpha_i)  \;\; ,
  \qquad  \deg(F_i)  \, = \, (\alpha_i,1)  $$
 for all  $ \, i \in I \, $;  \,it coincides with the grading induced by the coaction on the Yetter-Drinfeld modules  $ V_E $
 and  $ V_F $  such that  $ \, \deg(K_i) = \deg(L_i) \, $.  As the defining relations are homogeneous with
respect to this grading, we get a  $ (Q \times Q) $--grading  on  $ \QEqcheck \, $.
                                                            \par
   Consider now the group  $ 2 $--cocycle  $ \, \varphi \in \Zc^2(\varGamma,\k) \, $  given by
   $ \, \varphi := \sigma\big|_{\varGamma \times \varGamma} \; $,  \, that is
  $$  \varphi(h,k)  \, := \,  q^{1/2}_{\mu\,\nu}  \;\qquad \text{if}  \quad  h = K_\mu  \text{\ \ or \ }
  h = L_\mu \; ,  \quad  k = K_\nu  \text{\ \ or \ }  k = L_\nu  $$
 and let  $ \tilde{\varphi} $  be the  $ 2 $--cocycle  defined on  $ \, T(V \oplus W) \,\#\, \k\varGamma \, $  as
in  \S \ref{subsubsec:relation-cocycles}.  Since  $ \varGamma $  is Abelian and
$ \; E_i \, \mathop{\cdot}\limits_{\tilde{\varphi}} \, F_j \, = \, E_i \, F_j \; $  for all  $ \, i, j \in I \, $,  we have that
  $ \;  E_i \,\mathop{\cdot}_{\tilde{\varphi}} F_j \, - \, F_j \,\mathop{\cdot}_{\tilde{\varphi}} E_i \, = \, [\,E_i \, , F_j \,] \, $,  \;
 hence  $ \tilde{\varphi} $  defines a Hopf  $ 2 $--cocycle  $ \widehat{\varphi} $  on  $ \QEqcheck \, $.
 Finally, a direct comparison shows that  $ \, \widehat{\varphi} = \sigma \, $.
 Thus, using  \S \ref{subsubsec:relation-cocycles},  we conclude that the following holds:

\vskip15pt

\begin{prop}  \label{prop:def-sigma=def-c}
 There exists a Hopf algebra identification
  $$  {\big(\, \QEqcheck \big)}_\sigma  \; = \;  {\big(\, \QEqcheck \big)}^{(\tilde{\varphi})}  $$
hence, by  Theorem \ref{thm:sigma_2-cocy},  a Hopf algebra isomorphism
$ \, \QEq \, \cong {\big(\, \QEqcheck \big)}^{(\tilde{\varphi})} \, $.
\end{prop}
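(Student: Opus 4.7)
My plan is to chain two identifications that are essentially already implicit in the paragraph immediately preceding the proposition: first, that the Hopf $2$-cocycle $\sigma$ of Definition \ref{def-sigma} coincides, as a function on $\QEqcheck \otimes \QEqcheck$, with the Hopf $2$-cocycle $\tilde{\varphi}$ on $\QEqcheck$ lifted from the group $2$-cocycle $\varphi := \sigma|_{\varGamma \times \varGamma}$; and second, the general identification $A_{\tilde{\varphi}} = A^{(\varphi^{-1})}$ for a bosonization $A = R \# \k\varGamma$ recorded in \S \ref{subsubsec:relation-cocycles}.

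For the first step, using bilinearity together with the triangular decomposition of Proposition \ref{prop:triang-decomps_U}, I would reduce to comparing the two cocycles on pairs of the form $(r \# h,\, s \# k)$ with $h, k \in \varGamma$ and $r, s$ in the image of $T(V_E \oplus V_F)$. On such pairs the very definition of the lift gives $\tilde{\varphi}(r \# h,\, s \# k) = \varphi(h, k)\,\epsilon(r)\,\epsilon(s)$, while $\sigma$ by construction vanishes whenever either entry fails to be a Laurent monomial in the $K_i$'s and $L_i$'s, and restricts to $\varphi$ on $\varGamma \times \varGamma$. A straightforward case analysis then yields $\sigma = \tilde{\varphi}$.

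For the second step, I would invoke the general result of \S \ref{subsubsec:relation-cocycles} applied first to the free bosonization $T(V_E \oplus V_F) \# \k\varGamma$, and then verify that this identification descends to the quotient $\QEqcheck$. The Serre relations $(f)$ and $(g)$ are preserved because they are homogeneous in a single copy of $V_E$ or $V_F$, so that both deformed products differ from the undeformed one by a common scalar on each term. The nontrivial check is the commutator relation $(e)$: here the bimultiplicativity of $\varphi$ together with the Abelian character of $\varGamma$ force $E_i \star_{\tilde{\varphi}} F_j - F_j \star_{\tilde{\varphi}} E_i = [E_i, F_j]$, exactly as noted in the preamble to the proposition. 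Combining both steps yields the Hopf algebra identification $\big(\QEqcheck\big)_\sigma = \big(\QEqcheck\big)^{(\tilde{\varphi})}$, and the final isomorphism $\QEq \cong \big(\QEqcheck\big)^{(\tilde{\varphi})}$ follows by composition with Theorem \ref{thm:sigma_2-cocy}.

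The main bookkeeping subtlety is reconciling the conventions: the first construction gives the deformed product as the three-term convolution $\sigma * m * \sigma^{-1}$, while the second uses the single scalar $c(\eta',\kappa')\,c(\eta,\kappa)^{-1}$ in front of the original product. Matching these on every bihomogeneous pair relies crucially on the bimultiplicativity of $\varphi$ and on the consistent choice of square roots $q_{ij}^{1/2}$ fixed in \S \ref{multiparameters}; once these two ingredients are in place, the two formulas yield literally identical deformed products, and the Hopf structure (which involves only the unchanged coproduct on either side) is automatically identified as well.
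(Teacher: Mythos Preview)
Your proposal is correct and follows essentially the same route as the paper: the proposition is stated as an immediate consequence of the paragraph preceding it, and your two steps --- identifying $\sigma$ with the lifted cocycle $\tilde{\varphi}$ via the vanishing of $\sigma$ off the group-like part, and then invoking the general identification $A_{\tilde{\varphi}} = A^{(\varphi^{-1})}$ from \S\ref{subsubsec:relation-cocycles} together with the check that relation $(e)$ is compatible --- are exactly the paper's argument.

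One small remark on phrasing: when you say the Serre relations are ``preserved because \dots\ both deformed products differ from the undeformed one by a common scalar on each term,'' the point that actually matters is that each Serre relation is $(Q\times Q)$-\emph{homogeneous} as a whole, so that left- or right-multiplying it by any homogeneous element in either deformed product yields a scalar multiple of the undeformed product; this is what keeps the ideal stable. The individual monomials $E_i^{1-a_{ij}-k}E_jE_i^{k}$, when themselves rewritten in the deformed product, pick up scalars that \emph{do} depend on $k$ --- but that is irrelevant for the descent argument, which only needs homogeneity of the relation as an element. With that reading your justification is fine.
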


\medskip

\subsection{Multiparameter quantum groups with larger torus} \label{MpQGs-larger-torus}  \
 \vskip7pt
   The MpQG's  $ \QEq $  that we considered so far have a toral part (i.e., the subalgebra  $ U_\bq^0 $
   generated by the  $ K_i^{\pm 1} $'s  and the  $ L_j^{\pm 1} $'s)  that is nothing but the group algebra of a
   double copy of the root lattice  $ Q $  of  $ \lieg \, $,  much like in the one-parameter case
   --- but for the duplication of  $ Q \, $,  say.  Now, in that (uniparameter) case, one also considers MpQG's
   with a larger toral part, namely the group algebra of any intermediate lattice between  $ Q $  and  $ P \, $;
   similarly, we can introduce MpQG's whose toral part is the group algebra of any lattice
   $ \, \varGamma_\ell \times \varGamma_r \, $  with  $ \, Q \subseteq \varGamma_\ell \, $  and
   $ \, Q \subseteq \varGamma_r \, $.

\vskip11pt

\begin{free text}  \label{tori_in_MpQG's}
 {\bf Larger tori for MpQG's.}
 Recall that the definition of the ``toral parts'' of  $ \, U_\bq(\hskip0,8pt\lieg) \, $
 --- cf.\  Definition \ref{def:q-Bor-sbgr}  ---   is independent of  $ \bq \, $:  indeed,  $ U_\bq^{+,0} $
 is the group algebra over  $ \k $  for the group  $ Q $   --- identifying  $ \, \pm\alpha_i \simeq K_i^{\pm 1} \, $
 and  $ \, \alpha \simeq K_\alpha \, $  ($ \, i \in I \, $,  $ \, \alpha \in Q \, $);  similarly,  $ U_\bq^{-,0} $
 is the group algebra (over  $ \k \, $)  of  $ Q $  again with  $ \, \alpha \simeq L_\alpha \, $,  and
 $ \, U_\bq(\lieh \oplus \lieh) := U_\bq^{\,0} \, $  is the group algebra (over  $ \k \, $)  of  $ \, Q \times Q \, $
 --- with  $ \, \big(\alpha',\alpha''\big) \simeq K_{\alpha'} \, L_{\alpha''} \, $.
                                                        \par
   Let us denote by  $ \QQ{}Q $  and  $ \QQ{}P $  the scalar extension from  $ \ZZ $  to  $ \QQ $
   of the lattices  $ \ZZ{}Q $  and  $ \ZZ{}P \, $,  respectively; note that  $ \QQ{}Q = \QQ{}P \, $.
   For any other sublattice  $ \varGamma $  in  $ \, \QQ{}Q \, \big( = \QQ{}P \big) \, $  of rank  $ \theta $
   --- the same as  $ Q $  and  $ P $  ---   we can define toral quantum groups  $ U_{\bq\,,\varGamma}^{\pm,0} $
   akin to  $ U_\bq^{\pm,0} $  but now associated with the lattice  $ \varGamma \, $,  again as group algebras;
   similarly, we have an analogue  $ U_{\bq\,,\varLambda}^0 $  of  $ U_\bq^0 $  associated with any sublattice
   $ \varLambda $  in  $ \, \QQ{}Q \times \QQ{}Q \, $  of rank  $ \, 2\,\theta \, $.
   Moreover, all these bear a natural Hopf algebra structure.  Any sublattice inclusion
   $ \, \varGamma' \leq \varGamma'' \, $  yields a unique Hopf embedding
   $ \, U_{\bq\,,\varGamma'}^{\pm,0} \subseteq U_{\bq\,,\varGamma''}^{\pm,0} \, $,
   and similar embeddings exist for the  $ U_{\bq\,,\varLambda}^0 $'s.
   We aim to use these ``larger toral MpQG's'' as toral parts of larger MpQG's; this requires some
   compatibility constraints on  $ \bq \, $, and some preliminary facts that we now settle.
                                                      \par
   Let  $ \varGamma $  be a sublattice of  $ \QQ{}Q $  of rank  $ \theta $  with  $ \, Q \leq \varGamma \, $.
   For any basis  $ \big\{ \gamma_1 , \dots \gamma_\theta \big\} $  of  $ \varGamma \, $,  let
   $ \, C := {\big( c_{ij} \big)}_{i, j \in I} \, $  be the matrix, with entries in  $ \ZZ \, $,
   that describes the change of basis (for  $ \QQ{}Q $  as a  $ \QQ $--vector  space) from
   $ {\big\{ \gamma_i \big\}}_{i \in I} $  to  $ {\big\{ \alpha_i \big\}}_{i \in I} \, $,  so
   $ \, \alpha_i = \sum_{j=1}^\theta c_{ij}\,\gamma_j \, $  for each  $ \, i \in I = \{1,\dots,\theta\} \, $.
   Let also  $ \, c := \big| \text{\sl det}(C) \big| \in \NN_+ \, $  be the absolute value of the determinant of  $ C \, $;
   this is equal to the index (as a subgroup) of  $ Q $  in  $ \varGamma \, $,
   hence it is independent of any choice of basis.  If  $ \, C^{-1} = {\big( c'_{ij} \big)}_{i, j \in I} \, $
   is the inverse matrix to  $ C \, $,  then  $ \, \gamma_i = \sum_{j=1}^\theta c'_{ij}\,\alpha_j \, $  and
   $ \, c''_{ij} := c \cdot c'_{ij} \in \ZZ \, $  for all  $ \, i, j \in I = \{1,\dots,\theta\} \, $.
                                                      \par
   Let now  $ U_{\bq\,,\varGamma}^{+,0} $  be given, as group algebra of  $ \varGamma $  over  $ \k $
   with generators  $ K_{\gamma_i}^{\pm 1} $  corresponding to the basis elements  $ \gamma_i $
   (and their opposite) in  $ \varGamma \, $  (for  $ \, i \in I \, $).  Define
   $ \, K_{\alpha_i} := \prod_{j \in I} K_{\gamma_j}^{\,c_{ij}} \, $  for all  $ \, i \in I \, $:
   then the  $ \k $--subalgebra  of  $ U_{\bq\,,\varGamma}^{+,0} $  generated by the  $ K_{\alpha_i}^{\pm 1} $'s
   is an isomorphic copy of  $ U_{\bq\,,Q}^{+,0} \, $,  which provides a realization of the Hopf algebra embedding
   $ \, U_{\bq\,,Q}^{+,0} \subseteq U_{\bq\,,\varGamma}^{+,0} \, $  corresponding to the group embedding
   $ \, Q \leq \varGamma \, $.
                                                      \par
   In the obvious symmetric way we define also the ``negative counterpart''  $ U_{\bq\,,\varGamma}^{-,0} \, $
   of  $ U_{\bq\,,\varGamma}^{+,0} \, $,  generated by elements  $ L_{\gamma_i}^{\pm 1} $  corresponding to the
   $ \pm \gamma_i $'s  in  $ \varGamma \, $,  along with a suitable embedding
   $ \, U_{\bq\,,Q}^{-,0} \subseteq U_{\bq\,,\varGamma}^{-,0} \, $  corresponding to the group embedding
   $ \, Q \leq \varGamma \, $.
                                                      \par
   Finally, given any two sublattices  $ \varGamma_\pm $  of rank  $ \theta $  in  $ \QQ{}Q $  containing
   $ Q \, $,  letting  $ \, \varGamma_\bullet := \varGamma_+ \times \varGamma_- \, $  we define
   $ \; U_{\bq\,,\varGamma_\bullet}^0 :=
   \, U_{\bq\,,\varGamma_+}^{+,0} \!\mathop{\otimes}\limits_\k \, U_{\bq\,,\varGamma_-}^{-,0} \; $;
   in this case, the basis elements for  $ \varGamma_\pm $  will be denoted by
   $ \, \gamma_i^\pm \, $  ($ \, i \in I \, $).  The previous embeddings of  $ U_{\bq\,,Q}^{\pm,0} $  into
   $ U_{\bq\,,\varGamma_\pm}^{\pm,0} $  then induces a similar embedding of
   $ \, U_\bq^0 = U_{\bq\,,Q}^{+,0} \mathop{\otimes}\limits_\k U_{\bq\,,Q}^{-,0} \, $  into
   $ U_{\bq\,,\varGamma_\bullet}^0 $  as well.
\end{free text}

\vskip5pt

\begin{free text}  \label{MpQG-larg-tor_int-case}
 {\bf Special root pairings (in the integral case).}  {\sl Let us now assume that the multiparameter
 $ \, \bq := {\big(\, q_{ij} = q^{b_{ij}} \big)}_{i,j \in I} \, $  is of  {\it integral type}\/};  we therefore use notation
 $ \, B := {\big( b_{ij} \big)}_{i,j \in I} \in M_\theta(\ZZ) \, $.  Then a  $ \ZZ $--bilinear  pairing
 $ \; {(\,\ ,\ )}_{\!{}_B} : Q \times Q \relbar\joinrel\longrightarrow \ZZ \; $  is defined via the matrix  $ B $  by
 $ \, {( \alpha_i \, , \alpha_j )}_{\!{}_B} := b_{ij} \, $  for all  $ \, i, j \in I \, $.  Moreover, by
 Proposition \ref{sk-H_pair},  we know that the pairing
 $ \, U_\bq^\geq \mathop{\otimes}\limits_\k U_\bq^\leq \relbar\joinrel\longrightarrow \k \, $
 is non-degenerate; but then (by the special properties of this pairing) its restriction to
 $ \, U_\bq^{\,0} \mathop{\otimes}\limits_\k U_\bq^{\,0} \, $  is non-degenerate too.
 Finally, from  $ \, \big\langle K_\alpha \, , L_\beta \big\rangle = q^{{(\alpha , \, \beta)}_{\!B}} \, $
 (for all  $ \, \alpha , \beta \in Q \, $)  we get that
 $ \, {(\,\ ,\ )}_{\!{}_B} \! : Q \times Q \relbar\joinrel\longrightarrow \ZZ \, $
 is non-degenerate as well, which forces  $ B $  to be invertible.

\vskip9pt

   By scalar extension,
 $ {(\,\ ,\ )}_{\!{}_B} $  yields also
 a  $ \QQ $--bilinear  pairing on  $ \, \QQ{}Q \, $,  which again is non-degenerate; we denote it also by
 $ {(\,\ ,\ )}_{\!{}_B} \, $.
 It is then meaningful to consider, for any sublattice  $ \varGamma $  in  $ \, \QQ{}Q \, $,  its  {\it left-dual}
 $ \dot{\varGamma}^{(\ell)} $
 and its  {\it right-dual}  $ \dot{\varGamma}^{(r)} $,  defined by
\begin{equation}  \label{dual-lattice}
  \begin{matrix}
    \dot{\varGamma}^{(\ell)}  \, := \,  \big\{\, \lambda \in \QQ{}Q \;\big|\, {(\lambda \, , \gamma)}_{\!{}_B} \in \ZZ \, ,
    \, \forall \; \gamma \in \varGamma \,\big\}_{\phantom{|}}  \\
    \dot{\varGamma}^{(r)}  \, := \,  \big\{\, \rho \in \QQ{}Q \;\big|\, {(\gamma , \rho)}_{\!{}_B} \in \ZZ \, ,
    \, \forall \; \gamma \in \varGamma \,\big\}^{\phantom{\big|}}
  \end{matrix}
\end{equation}
 that are sublattices in  $ \, \QQ{}Q \, $  and coincide iff  $ B $  is symmetric; then restricting the
 $ \QQ $--bilinear  pairing  $ \, {(\,\ , \ )}_{\!{}_B} : \QQ{}Q \times \QQ{}Q \relbar\joinrel\longrightarrow \QQ \, $
 to  $ \, \dot{\varGamma}^{(\ell)} \times \varGamma \, $  and  $ \, \varGamma \times \dot{\varGamma}^{(r)} \, $
 one gets  $ \ZZ $-valued  pairings  $ \, \dot{\varGamma}^{(\ell)} \times \varGamma \relbar\joinrel\longrightarrow \ZZ \, $
 and  $ \, \varGamma \times \dot{\varGamma}^{(r)} \relbar\joinrel\longrightarrow \ZZ \, $,  still denoted by
 $ \, {(\,\ , \ )}_{\!{}_B} \, $.
 \vskip7pt
   Using the matrix  $ \, B^{-1} = {\big( b'_{ij} \big)}_{i,j \in I} \, $,  we define in  $ \QQ{}Q $  the elements
\begin{equation}  \label{def-dotvarpi^l's}
  \qquad \hskip85pt   \dot{\varpi}_i^{(\ell)}  \, := \,  {\textstyle \sum}_{k \in I} \, b'_{i{}k} \, \alpha_k
  \qquad \qquad \qquad   \forall \;\; i \in I  \qquad
\end{equation}
which are characterized by the property that  $ \; {\big( \dot{\varpi}_i^{(\ell)} , \alpha_j \big)}_{\!{}_B} \, = \, \delta_{i{}j} \; $;
in short,  $ {\big\{ \dot{\varpi}_i^{(\ell)} \big\}}_{i \in I} $  is the  $ \QQ $--basis  of  $ \QQ{}Q $  which is
{\sl left}-dual  to the basis
$ {\big\{ \alpha_j \big\}}_{j \in I} $  w.r.t.\  $ \, {\big(\,\ , \ \big)}_{\!{}_B} \, $;  in particular,
$ {\big\{ \dot{\varpi}_i^{(\ell)} \big\}}_{i \in I} $
is a  $ \ZZ $--basis  of  $ \dot{Q}^{(\ell)} \, $,  the left-dual to  $ Q $  w.r.t.\  $ {\big(\,\ , \ \big)}_{\!{}_B} \, $.
Definitions give also
$ \, Q \subseteq \dot{Q}^{(\ell)} \, $  with  $ \; \alpha_i = \sum_{k \in I} b_{i{}k} \, \dot{\varpi}_k^{(\ell)} \, $
for all  $ \, i \in I \, $.
                                                  \par
   The left-right symmetrical counterpart is given once we define the elements
\begin{equation}  \label{def-dotvarpi^r's}
  \qquad \hskip85pt   \dot{\varpi}_i^{(r)}  \, := \,  {\textstyle \sum}_{k \in I} \, b'_{k{}i} \, \alpha_k
  \qquad \qquad \qquad   \forall \;\; i \in I  \qquad
\end{equation}
characterized by the property that  $ \; {\big( \alpha_j \, , \dot{\varpi}_i^{(r)} \big)}_{\!{}_B} \, = \, \delta_{j{}i} \; $;  thus
$ {\big\{ \dot{\varpi}_i^{(r)} \big\}}_{i \in I} $  is the  $ \QQ $--basis  of  $ \QQ{}Q $  which is  {\sl right}-dual  to the basis
$ {\big\{ \alpha_j \big\}}_{j \in I} $  with respect to  $ \, {\big( \,\ , \ \big)}_{\!{}_B} \, $;  in particular,
$ {\big\{ \dot{\varpi}_i^{(r)} \big\}}_{i \in I} $  is a  $ \ZZ $--basis  of  $ \dot{Q}^{(r)} \, $,  the right-dual to
$ Q $  w.r.t.\  $ {\big(\,\ , \ \big)}_{\!{}_B} \, $.  Furthermore, definitions give also  $ \, Q \subseteq \dot{Q}^{(r)} \, $
with  $ \; \alpha_i = \sum_{k \in I} b_{k{}i} \, \dot{\varpi}_k^{(r)} \, $  for all  $ \, i \in I \, $.

\vskip9pt

   {\sl  $ \underline{\text{The  {\it strongly}  integral case}} $:} \
   The previous construction has a sort of ``refinement'' when the integral type multiparameter
   $ \, \bq := {\big(\, q_{ij} = q^{b_{ij}} \big)}_{i, j \in I} \, $  is actually  {\sl  {\it strongly}  integral},
   with  $ \, b_{ij} = d_i \, t_{ij}^+ = d_j \, t_{ij}^- \, $  for all $ \, i, j \in I \, $  (cf.\  \S \ref{multiparameters}).
   In this case, consider the two  $ \ZZ $--bilinear  pairings
 $ \; {\langle\,\ ,\ \rangle}_{\!{}_{T^\pm}} : Q \times Q \relbar\joinrel\longrightarrow \ZZ \; $
defined by the matrices  $ T^+ $  and  $ T^- $   --- thus given by
$ \, {\langle \alpha_i \, , \alpha_{j\,} \rangle}_{\!{}_{T^\pm}} := t^\pm_{ij} \, $  for all  $ \, i, j \in I \, $
---   that are obviously non-degenerate (as  $ {(\,\ ,\ )}_{\!{}_B} $  is, and  $ \, D\,T^+ = B = T^-\,D \, $),
and extend them to same-name  $ \QQ $--bilinear  pairings on  $ \, \QQ{}Q \times \QQ{}Q \, $  by scalar extension.
Then define, for any sublattice  $ \varGamma $  in  $ \QQ{}Q \, $,  its  {\it left-dual\/}
and  {\it right-dual\/}  (w.r.t.\  $ T^- $  and  $ T^+ $  respectively) as
\begin{equation}  \label{dual-lattice_T+/-}
  \begin{matrix}
    \varGamma^{(\ell)}  \, := \,  \big\{\, \lambda \in \QQ{}Q \;\big|\, {\langle \lambda \, ,
    \gamma \rangle}_{\!{}_{T^-}} \!\! \in \ZZ \, , \,
    \forall \; \gamma \in \varGamma \,\big\}_{\phantom{|}}  \\
    \varGamma^{(r)}  \, := \,  \big\{\, \rho \in \QQ{}Q \;\big|\, {\langle \gamma , \rho \rangle}_{\!{}_{T^+}} \!\! \in \ZZ \, , \,
    \forall \; \gamma \in \varGamma \,\big\}^{\phantom{\big|}}
  \end{matrix}
\end{equation}
 that both are sublattices in  $ \QQ{}Q \, $;  the pairings  $ \, {\langle \,\ , \ \rangle}_{\!{}_{T^\pm}} \, $
 then restrict to  $ \ZZ $-valued  pairings
 $ \, {\langle \,\ , \ \rangle}_{\!{}_{T^-}} : \varGamma^{(\ell)} \times \varGamma \relbar\joinrel\longrightarrow \ZZ \, $
 and  $ \, {\langle \,\ , \ \rangle}_{\!{}_{T^+}} \varGamma \times \varGamma^{(r)} \relbar\joinrel\longrightarrow \ZZ \, $.
 Now consider the elements
\begin{equation}  \label{def-varpi^l/r's}
  \varpi_i^{(\ell)}  := \,  d_i\,\dot{\varpi}_i^{(\ell)}  =  {\textstyle \sum}_{k \in I} \, t^{-,\,\prime}_{i{}k} \, \alpha_k  \,\; ,
  \;\;\;  \varpi_i^{(r)} := \,  d_i\,\dot{\varpi}_i^{(r)}  =  {\textstyle \sum}_{k \in I} \, t^{+,\,\prime}_{k{}i} \, \alpha_k   \qquad   \forall \,\; i \in I  \;
\end{equation}
where  $ \, {\big( t^{\pm,\,\prime}_{ij} \big)}_{i \in I}^{j \in I} = {\big( T^\pm \big)}^{-1} \, $,  \, which are
characterized by the properties  $ \; {\big\langle \varpi_i^{(\ell)} , \alpha_j \big\rangle}_{\!{}_{T^-}} \! = \delta_{i{}j} \; $
and  $ \; {\big\langle \alpha_i \, , \varpi_j^{(r)} \big\rangle}_{\!{}_{T^+}} \! = \delta_{i{}j} \; $; in a nutshell,
$ {\big\{ \varpi_i^{(\ell)} \big\}}_{i \in I} $  is the  $ \QQ $--basis  of  $ \QQ{}Q $
which is  {\sl left}-dual  to the basis  $ {\big\{ \alpha_j \big\}}_{j \in I} $  w.r.t.\
$ \, {\big\langle \,\ , \ \big\rangle}_{\!\scriptscriptstyle T^-} \, $,  while  $ {\big\{ \varpi_i^{(r)} \big\}}_{i \in I} $
is the  {\sl right}-dual  to  $ {\big\{ \alpha_j \big\}}_{j \in I} $  w.r.t.\
$ \, {\big\langle \,\ , \ \big\rangle}_{\!\scriptscriptstyle T^+} \, $.  In particular,  $ {\big\{ \varpi_i^{(\ell)} \big\}}_{i \in I} $
is a  $ \ZZ $--basis  of  $ Q^{(\ell)} \, $,  and  $ {\big\{ \varpi_i^{(r)} \big\}}_{i \in I} $  is a  $ \ZZ $--basis  of
$ Q^{(r)} \, $  with notation of \eqref{dual-lattice_T+/-}.
Note also that definitions give  $ \, Q \subseteq Q^{(\ell)} \bigcap Q^{(r)} \, $  with
$ \; \alpha_i = \sum_{k \in I} t^-_{i{}k} \, \varpi_k^{(\ell)} \, $  and  $ \; \alpha_i = \sum_{k \in I} t^+_{k{}i} \, \varpi_k^{(r)} \, $
for all  $ \, i \in I \, $.
\end{free text}

\vskip11pt

\begin{free text}  \label{larger-MpQG's}
 {\bf MpQG's with larger tori.}  Let  $ \varGamma_+ $  and  $ \varGamma_- $  be any two lattices in
 $ \QQ{}Q $ such that  $ \, Q \leq \varGamma_\pm \, $;  then set
 $ \, \varGamma_{\!\bullet} := \varGamma_+ \times \varGamma_- \, $.  From \S \ref{tori_in_MpQG's},
 with notation fixed therein, we can consider the corresponding ``multiparameter quantum torus''
 $ U_{\bq\,,\varGamma_\bullet}^0 \, $,  that contains  $ U_\bq^0 = U_{\bq\,,Q}^0 \, $.
 For either lattice  $ \varGamma_\pm \, $,  we have a matrix  $ \, C_\pm = {\big( c_{ij}^\pm \big)}_{i,j \in I} \, $  and
 $ \, C_\pm^{\,-1} = {\big( c_{ij}^{\pm,\prime\,} \big)}_{i,j \in I} \, $,  \,with
 $ \, c_\pm := \big| \text{\sl det}(C_\pm) \big| \in \ZZ_+ \, $  and
 $ \, c_{ij}^{\pm,\prime\prime} := c_\pm \cdot c_{ij}^{\pm,\prime} \in \ZZ \, $  ($ \, i , j \in I \, $).
 \vskip3pt
   {\sl For the rest of this subsection, we make now the following assumption concerning the ground field
   $ \k $ and the multiparameter (of Cartan type)  $ \, \bq := {\big(\, q_{ij} \big)}_{i,j \in I} \, $:  for every  $ \, i, j \in I \, $,
   the field  $ \k $  contains a  $ c_\pm $--th  root of  $ \, q_{ij} \, $,  hereafter denoted by  $ q_{ij}^{\,1/c_{{}_\pm}} \, $;
   moreover, we assume that  $ \, \bq^{\,1/c_{{}_\pm}} := {\Big(\, q_{ij}^{\,1/c_{{}_\pm}} \Big)}_{i,j \in I} \, $
   {\it is of Cartan type}  too}.
 \vskip1pt
 The natural (adjoint) action of  $ U_\bq^0 $  onto  $ U_\bq $  extends, in a unique manner, to a
 $ U_{\bq\,,\varGamma_{\!\bullet}}^0 $--action
 $ \,\; \cdot :\, U_{\bq\,,\varGamma_{\!\bullet}}^0 \times U_\bq \longrightarrow U_\bq \, $,  \,given by
 \vskip-13pt
%
%
%
  $$  \displaylines{
   K_{\gamma^+_i} \cdot E_j = q_{ij}^{\,\varGamma_{\!+}} E_j  \; ,  \quad
 L_{\gamma^-_i} \cdot E_j = {\big( q_{ji}^{\,\varGamma_{\!-}} \big)}^{-1} E_j  \; ,  \quad
  K_{\gamma^+_i} \cdot K_{\alpha_{j}} = K_{\alpha_{j}}  \; ,  \quad
    L_{\gamma^-_i} \cdot K_{\alpha_{j}} = K_{\alpha_{j}}  \cr
   K_{\gamma^+_i} \cdot L_{\alpha_j} = L_{\alpha_j}  \; ,  \quad
      L_{\gamma^-_i} \cdot L_{\alpha_j} = L_{\alpha_j}  \; ,  \quad
  K_{\gamma^+_i} \cdot F_j = {\big( q_{ij}^{\,\varGamma_{\!+}} \big)}^{-1} F_j  \; ,  \quad
    L_{\gamma^-_i} \cdot F_j = q_{ji}^{\,\varGamma_{\!-}} F_j  }  $$
 where  $ \; q_{rs}^{\,\varGamma_{\!+}} := \prod_{k \in I} {\big( q_{ks}^{\,1/c_{{}_+}} \big)}^{\!c_{rk}^{+,\prime\prime}} \; $
 and  $ \,\; q_{ae}^{\,\varGamma_{\!-}} := \prod_{k \in I} {\big( q_{ak}^{\,1/c_{{}_-}} \big)}^{\!c_{ek}^{-,\prime\prime}} \; $;
 this makes  $ U_\bq $  into a  $ U_{\bq\,,\varGamma_{\!\bullet}}^0 $--module  Hopf algebra.
 This allows us to consider the Hopf algebra
$ \, U_{\bq\,,\varGamma_{\!\bullet}}^0 \ltimes U_\bq \, $  given by the {\it smash product\/}  of
$ U_{\bq\,,\varGamma_{\!\bullet}}^0 $  and  $ U_\bq \, $:  the underlying vector space is just
$ \, U_{\bq\,,\varGamma_{\!\bullet}}^0 \otimes U_\bq \, $,  the coalgebra structure is the one
given by the tensor product of the corresponding coalgebras, and the product is given by the formula
  $$  (h \ltimes x) \, (k\ltimes y)  \; = \;  h \, k_{(1)} \ltimes \big( \SS(k_{(1)}) \cdot x \big) \, y \eqno \text{for all}
  \;\  h, k \in U_{\bq\,,\varGamma_{\!\bullet}}^0 \, , \; x, y \in U_{\bq} \;\; .  $$
Since  $ U_{\bq\,,\varGamma_{\!\bullet}}^0 $  contains  $ \, U_\bq^0 \; \big(\! =: U_{\bq\,,Q \times Q}^0 \big) \, $
as a Hopf subalgebra, it follows that  $ U_{\bq\,,\varGamma_{\!\bullet}}^0 $  itself is a right  $ U_\bq^0 $--module
Hopf algebra with respect to the adjoint action.  It is not difficult to see that, under these hypotheses, the smash
product  $ \, U_{\bq\,,\varGamma_{\!\bullet}}^0 \ltimes U_\bq \, $  maps onto a Hopf algebra structure on the
vector space  $ \, U_{\bq\,,\varGamma_{\!\bullet}}^0 \mathop{\otimes}\limits_{U_\bq^0} U_\bq \, $,
which hereafter we denote by  $ \, U_{\bq\,,\varGamma_{\!\bullet}}^0 \mathop{\ltimes}\limits_{U_\bq^0} U_\bq \, $,
see \cite[Theorem 2.8]{Le}.  We define then
\begin{equation}  \label{def-MpQG_wt_lrg-torus}
  U_{\bq\,,\varGamma_{\!\bullet}}(\hskip0,8pt\lieg)  \, \equiv \,  U_{\bq\,,\varGamma_{\!\bullet}}  \,
  := \,  U_{\bq\,,\varGamma_{\!\bullet}}^0 \mathop{\ltimes}\limits_{U_\bq^0} U_\bq  \, = \,
  U_{\bq\,,\varGamma_{\!\bullet}}^0 \mathop{\ltimes}\limits_{U_\bq^0} \QEq
\end{equation}
   \indent   It is easy to check that  $ U_{\bq\,,\varGamma_{\!\bullet}}(\hskip0,8pt\lieg) $
   and its Hopf algebra structure can be described with a presentation by generators and relations like that for
   $ \QEq $  in  Definition \ref{def:multiqgroup_ang}.
   Indeed, since the coalgebra structure is the one given by the tensor product, one has to describe only the
   algebra structure. For this, first one has to replace the generators  $ \, K_i^{\pm 1} = K_{\pm \alpha_i} \, $  and
   $ \, L_i^{\pm 1} = L_{\pm \alpha_i} \, $  with the generators  $ \, \Kc_i^{\pm 1} = K_{\pm \gamma^+_i} \, $  and
   $ \, \Lc_i^{\pm 1} = L_{\pm \gamma^-_i} \, $.  Second, replace relations  {\it (c)\/}  and  {\it (d)\/}  of
   Definition \ref{def:multiqgroup_ang}  with the following, generalized relations:
\begin{align*}
  \big(c'\big)  \qquad  &  \hskip3pt  K_{\gamma^+_i\,} E_j \, K_{\gamma^+_i}^{-1}  \, = \,  q_{ij}^{\,\varGamma_{\!+}} \, E_j  \;\; ,  \qquad
      L_{\gamma^-_i\,} E_j \, L_{\gamma^-_i}^{-1}  \, = \,  {\big( q_{ji}^{\,\varGamma_{\!-}} \big)}^{-1} \, E_j  \\
  \big(d'\big)  \qquad  &  \hskip5pt  K_{\gamma^+_i\,} F_j \, K_{\gamma^+_i}^{-1}  \, = \,
  {\big( q_{ij}^{\,\varGamma_{\!+}} \big)}^{-1} \, F_j  \;\; ,  \qquad \hskip1pt
      L_{\gamma^-_i\,} F_j \, L_{\gamma^-_i}^{-1}  \, = \,  q_{ji}^{\,\varGamma_{\!-}} \, F_j
\end{align*}
 Then, in relation  {\it (e)\/}  write each  $ \, K_i^{\pm 1} = K_{\alpha_i} \, $,  resp.\  $ \, L_i^{\pm 1} = L_{\pm \alpha_i} \, $,
 in terms of the  $ \, \Kc_j^{\pm 1} \! := K_{\pm \gamma^+_j} $'s,  resp.\  $ \, \Lc_j^{\pm 1} \! := L_{\pm \gamma^-_j} $'s;
 finally, leave relations  {\it (f)\/}  and  {\it (g)\/} unchanged.
                                 \par
   With much the same approach, one defines also the ``(multiparameter) quantum subgroups'' of
   $ U_{\bq\,,\varGamma_{\!\bullet}}(\hskip0,8pt\lieg) $  akin to those of  $ \QEq $
   (cf.\  Definition \ref{def:q-Bor-sbgr}),  that we denote by adding a subscript  $ \varGamma_{\!\bullet} \, $,
   namely  $ U_{\bq\,,\varGamma_{\!\bullet}}^+ \, $,  $ U_{\bq\,,\varGamma_\bullet}^- \, $,
   $ U_{\bq\,,\varGamma_{\!\bullet}}^\geq \, $,  $ U_{\bq\,,\varGamma_{\!\bullet}}^\leq \, $,
   $ U_{\bq\,,\varGamma_{\!\bullet}}^{+,0} $  and  $ U_{\bq\,,\varGamma_{\!\bullet}}^{-,0} \, $.

\vskip9pt

   {\sl  $ \underline{\text{The integral case}} $:}  When  $ \bq $  is of integral type, the above construction may
   have a simpler description.
   Indeed,  {\sl assume also that the lattices  $ \varGamma_+ $  and  $ \varGamma_- $
   (both containing  $ Q \, $)  are such that
   $ \, \varGamma_+ \leq \dot{Q}^{(\ell)} \, $  and  $ \, \varGamma_- \leq \dot{Q}^{(r)} \, $},  that is
   $ \; {\big( \varGamma_+ \, , Q \,\big)}_{\!{}_B} \subseteq \ZZ \; $  and
   $ \; {\big( Q \, , \varGamma_- \big)}_{\!{}_B} \subseteq \ZZ \; $
   --- notation of  \S \ref{MpQG-larg-tor_int-case}.  Then in the presentation of the MpQG
   $ U_{\bq\,,\varGamma_\bullet} $  of  \eqref{def-MpQG_wt_lrg-torus}
 the modified relations  {\it (c')\/}  and  {\it (d')\/}  mentioned above take the form
\begin{align*}
  \big(c'\big)  \qquad  &  \hskip3pt  K_{\gamma^+_i\,} E_j \, K_{\gamma^+_i}^{-1}  \, = \,  q^{+{( \gamma^+_i \! , \, \alpha_j )}_{\!B}} \, E_j  \;\; ,  \qquad
      L_{\gamma^-_i\,} E_j \, L_{\gamma^-_i}^{-1}  \, = \,  q^{-{( \alpha_j , \, \gamma^-_i )}_{\!B}} \, E_j  \\
  \big(d'\big)  \qquad  &  \hskip5pt  K_{\gamma^+_i\,} F_j \, K_{\gamma^+_i}^{-1}  \, = \,  q^{-{( \gamma^+_i \! , \, \alpha_j )}_{\!B}} \, F_j  \;\; ,  \qquad \hskip1pt
      L_{\gamma^-_i\,} F_j \, L_{\gamma^-_i}^{-1}  \, = \,  q^{+{( \alpha_j , \, \gamma^-_i )}_{\!B}} \, F_j
\end{align*}
   \indent   In particular, this means that  $ U_{\bq\,,\varGamma_\bullet} $
   {\sl is actually well-defined over the (possibly smaller) ground field generated in  $ \k $  by  $ q \, $
   --- and similarly for  $ U_{\bq\,,\varGamma_{\!\bullet}}^+ \, $,  $ U_{\bq\,,\varGamma_{\!\bullet}}^\geq \, $,  etc}.
   Therefore, the assumption that  $ \k $  contain  $ c_+ $--th  and  $ c_- $--th  roots of  $ \, q_{ij} \, $,
   that is required in the non-integral case, is not necessary in the integral one.
\end{free text}

\vskip7pt

\begin{free text}  \label{duality x larger MpQG's}
 {\bf Duality among MpQG's with larger tori.}  Let again  $ \, \varGamma_\pm \, $
 be two lattices of rank  $ \theta $  in  $ \QQ{}Q $  containing  $ Q \, $,  and set
 $ \, \varGamma_\bullet := \varGamma_+ \times \varGamma_- \, $;  then we have ``toral MpQG's''
 $ U_{\bq\,,\varGamma_\pm}^{\pm,0} $ and  $ U_{\bq\,,\varGamma_\bullet}^0 $  as in  \S \ref{tori_in_MpQG's}.
 Moreover, we have bases  $ {\big\{\, \gamma_s^\pm \,\big\}}_{s \in I} $  of  $ \varGamma_\pm \, $
 and corresponding matrices  $ \, C_\pm = {\big( c_{ij}^\pm \big)}_{i,j \in I} \, $  and
 $ \, C_\pm^{\,-1} = {\big( c_{ij}^{\pm,\prime\,} \big)}_{i,j \in I} \, $,  and the integers
 $ \, c_\pm := \big| \text{\sl det}(C_\pm) \big| \, $  and
 $ \, c_{ij}^{\pm,\prime\prime} := c_\pm \cdot c_{ij}^{\pm,\prime} \, $  ($ \, i, j \in I \, $)  as in  \S \ref{larger-MpQG's}.
 In addition, we assume that  $ \k $  contain a  $ (c_+\,c_-) $--th  root of  $ \, q_{ij} \, $,  say
 $ q_{ij}^{\,1/(c_{{}_+}c_{{}_-})} $,  and that overall the multiparameter
 $ \, \bq^{\,1/(c_{{}_+}c_{{}_-})} := {\Big(\, q_{ij}^{\,1/(c_{{}_+}c_{{}_-})} \,\Big)}_{i,j \in I}\, $  be of Cartan type.
                                                               \par
   It is straightforward to check that the skew-Hopf pairing
   $ \; \eta : U_\bq^\geq \mathop{\otimes}\limits_\k U_\bq^\leq \!\relbar\joinrel\relbar\joinrel\longrightarrow \k \; $
   in  Proposition \ref{sk-H_pair}
%
%
 actually extends to a similar pairing
 $ \; U_{\bq\,,\varGamma_+}^\geq \mathop{\otimes}\limits_\k U_{\bq\,,\varGamma_-}^\leq
 \!\! \relbar\joinrel\relbar\joinrel\longrightarrow \k \; $  given by
 $ \, E_i \otimes L_{\gamma^-} \!\mapsto 0 \, $,  $ \, K_{\gamma^+} \otimes F_j \!\mapsto 0 \, $,
 $ \, E_i \otimes F_j \!\mapsto - \delta_{ij} {{\,q_{ii}\,} \over {\,q_{ii} - 1\,}} \, $,
 $ \, K_{\gamma_i^+} \otimes L_{\gamma_j^-} \mapsto \prod_{h, k \in I} \!
 {\Big( q_{hk}^{\,1/(c_{{}_+}c_{{}_-})} \Big)}^{\! c_{ih}^{+,\prime\prime} c_{jk}^{-,\prime\prime}} $
 \,for all  $ \, i, j \in I \, $,  and still denoted  $ \eta \, $.  In particular, this
 $ \; \eta : U_{\bq\,,\varGamma_+}^\geq \!\mathop{\otimes}\limits_\k
 U_{\bq\,,\varGamma_-}^\leq \!\!\! \relbar\joinrel\relbar\joinrel\longrightarrow \k \, $
 is still non-degenerate, like its restrictions
 $ \; U_{\bq\,,\varGamma_+}^+ \!\mathop{\otimes}\limits_\k U_{\bq\,,\varGamma_-}^- \!\!\!
 \relbar\joinrel\relbar\joinrel\longrightarrow \k \; $  and
 $ \; U_{\bq\,,\varGamma_+}^0 \!\mathop{\otimes}\limits_\k
 U_{\bq\,,\varGamma_-}^0 \!\!\! \relbar\joinrel\relbar\joinrel\longrightarrow \k \; $.

\vskip5pt

   {\sl When  $ \, \bq := {\big(\, q_{ij} = q^{b_{ij}} \big)}_{i \in I}^{j \in I} \, $  is of  {\it integral type}},
   and $ \, {(\,\ , \ )}_{\!{}_B} \! : Q \times Q \longrightarrow \ZZ \, $  is the associated pairing
   --- cf.\ \S \ref{MpQG-larg-tor_int-case}  ---   {\sl the previous construction may have a simpler description,
   under the additional assumption that  $ \, {\big( \varGamma_+ \, , \varGamma_- \big)}_{\!{}_B} \!\subseteq \ZZ \, $}
   --- that is equivalent to either of  $ \, \varGamma_+ \subseteq \dot{\varGamma}_-^{\,(\ell)} \, $
   and  $ \, \varGamma_- \subseteq \dot{\varGamma}_+^{\,(r)} \, $  ---   so that  $ {(\,\ , \ )}_{\!{}_B} $
   induces a pairing  $ \, {(\,\ , \ )}_{\!{}_B} \! : \varGamma_+ \times \varGamma_- \relbar\joinrel\longrightarrow \ZZ \, $.
   In the following, we shall briefly refer to such a situation by saying that
   {\it  $ \big( \varGamma_+ \, , \varGamma_- \big) $  is a pair in duality}  (w.r.t.\  $ B \, $),
   or that  {\it the lattices  $ \varGamma_+ $  and  $ \varGamma_- $  are in duality}  (w.r.t.\  $ B \, $).
   Indeed, under these assumptions we have
   $ \; \eta\big( K_{\gamma_i^+\!} \, , L_{\gamma_j^-} \big) \, = \,
   \prod_{h, k \in I} {\Big( q_{hk}^{\,1/(c_{{}_+}c_{{}_-})} \Big)}^{\! c_{ih}^{+,\prime\prime} c_{jk}^{-,\prime\prime}} \! =
   \, q^{{( \gamma_i^+ , \gamma_j^- )}_{\!B}} \; $;  in particular, requiring a  $ (c_{{}_+}c_{{}_-}) $--th  root in
   $ \k $  of every  $ q_{hk} $  is no longer necessary.
\end{free text}

\vskip4pt

\begin{rmk}  \label{MpQG-largetor=qDouble}
 It is easy to see that, using the skew-Hopf pairing  $ \eta $  between (suitably chosen) quantum Borel subgroups
 $ U_{\bq\,,\varGamma_+}^{\,\geq} $  and  $ U_{\bq\,,\varGamma_-}^{\,\leq} $  mentioned in
 \S \ref{duality x larger MpQG's} above,
 {\it every MpQG with larger torus, say  $ \, U_{\bq\,,\varGamma_{\!\bullet}}(\hskip0,8pt\lieg) \, $,
 can be realized as a Drinfeld double (of those quantum Borel subgroups)},
 so extending what happens for MpQG's with ``standard'' torus.
\end{rmk}

\bigskip

\section{Quantum root vectors and PBW theorems for MpQG's}  \label{q-root_vects & PBW}

   The first purpose of this section is to introduce root vectors for MpQG's.
   Second, we show that PBW theorems hold true for an MpQG
 and all its relevant subalgebras.

\medskip

\subsection{Quantum root vectors in MpQG's}  \label{rvec-MpQG}  \
 \vskip7pt
   For the one-parameter quantum group  $ U_q(\lieg) $  of Lusztig several authors introduced quantum
   analogues of root vectors   --- or ``quantum root vectors'' ---   in different ways, the most common ones
   being via iterated  $ q $--brackets  or iterated adjoint action.  Lusztig gave  (cf.\ \cite{Lu})
   a general procedure, using an action on  $ U_q(\lieg) $  of the braid group associated with  $ \lieg \, $;
   later, it was extended to the multiparameter case in  \cite{He2}.

\smallskip

   To begin with, let  $ W $  the Weyl group of  $ \lieg \, $,  generated by reflections  $ \, s_i = s_{\alpha_i} \, $
   associated with the simple roots  $ \alpha_i $  of  $ \lieg $  ($ \, i \in I \, $),  and let  $ \, w_0 \in W \, $
   be the longest element in  $ W \, $.  Then the number  $ \, N := \big| \Phi^+ \big| \,  $  of positive roots
   (cf.\ \ref{root-data})  of  $ \lieg $  is also the length of any reduced expression of  $ \, w_0 \, $.
   Let us fix now one such reduced expression, say  $ \, w_0 = s_{i_1} s_{i_2} s_{i_3} \cdots s_{i_{N-1}} s_{i_N} \, $,
   so that all the following constructions will actually depend on this specific choice.
                                                           \par
   Set  $ \; \beta^k := s_{i_1} s_{i_2} \cdots s_{i_{k-1}}\big(\alpha_{i_k}\big) \; $  for all  $ \, k = 1, \dots, N \, $:
   then one has  $ \, {\big\{ \beta^k \big\}}_{k=1,2,\dots,N} \! = \Phi^+ \, $;
   in particular, all positive roots are recovered starting from the fixed reduced expression of  $ w_0 \, $,
   and in addition this also endows  $ \Phi^+ $  with a total order, namely  $ \; \beta^k \preceq \beta^h \iff k \leq h \; $.
   The same method of course can be applied to negative roots.
                                                           \par
   A similar procedure allows to construct a root vector in  $ \lieg $  for each positive root.
   First consider the braid group  $ \mathbb{B} $  associated with  $ W $,  generated by elements  $ \overline{T}_i $
   which lift the simple reflections  $ \, s_i = s_{\alpha_i} \, $  ($ \, i \in I \, $).
   There is a standard way  (cf.\ for instance  \cite{Hu})  to define a group action of  $ \mathbb{B} $  onto
   $ \lieg $  that on root space yields  $ \, \overline{T}_i(\,\lieg_\beta) = \lieg_{s_i(\beta)} \, $;
   \,using this action one can define root vectors via
  $$  x_{\beta^k}  \, := \;  \overline{T}_{i_1} \overline{T}_{i_2} \cdots \overline{T}_{i_{k-1}}\!\big(\,x_{i_k}\big)
  \; \in \;  \lieg_{\beta^k}
  \eqno  \forall \;\; k = 1, 2, \dots, N   \qquad  $$
where each  $ \, x_i \, $  is a Chevalley generator in  $ \lieg_{\alpha_i} \, $.  It is worth remarking that if
$ \beta^k $  is a simple root, say  $ \, \beta^k = \alpha_j \, $,  then the root vector  $ x_{\beta^k} $
defined above actually coincides with the generator  $ x_j $  given from scratch, so the entire
construction is overall consistent.  The same argument can be used to construct negative root vectors.

\smallskip

   This type of procedure was ``lifted'' to the one-parameter quantum case by Lusztig  (cf.\ \cite{Lu}),
   who did it introducing a suitable braid group action on  $ U_q(\lieg) \, $;  his construction was later
   extended by Heckenberger to the multiparameter case, that is to  $ \QEq $,  as we shall now shortly
   recall.  One defines   --- see  \cite{HY},  formulas (4.3--4) ---
   isomorphisms  $ T_1 $,  $ \dots $,  $ T_\theta $  which yield a  $ \mathbb{B} $--action  that lifts that on
   $ U(\lieg) \, $;  using this action one defines ``quantum root vectors''  $ E_{\beta^k} $  as given by
\begin{equation}  \label{q-root_vects-1}
  \qquad \qquad   E_{\beta^k}  \, := \;  T_{i_1} T_{i_2} \cdots T_{i_{k-1}}\!\big(E_{i_k}\big)  \;\; \in \;\;  U_\bq^+   \qquad \quad   \forall \;\; k = 1, 2, \dots, N  \quad
\end{equation}
where one finds that  $ \, E_{\beta^k} = E_j \, $  whenever  $ \, \beta^k = \alpha_j \, $;  similarly one also
constructs ``(quantum) negative root vectors''  $ F_{\beta^k} \in U_\bq^- \, $.  In the following, we shall refer to
the  $ E_{\beta^k} $'s  or the  $ F_{\beta^k} $'s  by loosely calling them ``(quantum) root vectors''.

\vskip3pt

%
%
   {\it It is also remarkable that these quantum root vectors can be realized as iterated braided brackets
   (e.g., like in  \cite[Section 4]{HY})}.  This will be of key importance, by the following:

\medskip

\begin{prop}  \label{prop: proport_root-vects}
 Every quantum root vector in  $ \QEq $  is proportional to the corresponding quantum root vector in
 $ \QEqcheck $  by a coeffi\-cient that is a
 monomial in the  $ q_{ij}^{\pm 1/2} $'s.
\end{prop}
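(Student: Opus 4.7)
The strategy is to exploit the cocycle deformation perspective developed in \S \ref{deform-MpQG}. By Proposition \ref{prop:def-sigma=def-c}, there is a Hopf algebra identification $\, \QEq \cong \big(\QEqcheck\big)^{(\tilde{\varphi})} \, $, where the second-type cocycle-deformation of \S \ref{cotwist-defs_2} uses the natural $(Q \times Q)$-grading with $\, \deg(E_i) = (\alpha_i, 1) \, $, $\, \deg(F_i) = (1, \alpha_i) \, $ and $\, \deg(K_i) = \deg(L_i) = (\alpha_i, \alpha_i) \, $. Moreover, by the construction of $\tilde{\varphi}$ (recalled in Definition \ref{def-sigma} and \S \ref{subsubsec:relation-cocycles}), the values of $\tilde{\varphi}$ on any pair of $(Q \times Q)$-homogeneous elements are (Laurent) monomials in the fixed square roots $\, q_{ij}^{\pm 1/2} \, $. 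Crucially, the simple root generators $\, E_i \, $ and $\, F_i \, $ coincide in $\, \QEq \, $ and $\, \QEqcheck \, $, so everything reduces to comparing the iterated operations through which the root vectors are built from them.

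As explicitly noted just before the statement, each $\, E_{\beta^k} \, $ admits a realization via iterated $c$-brackets: using the identity $\, T_j(E_i) = \ad_c^{-a_{ji}}(E_j)(E_i) \, $ from \cite[Theorem 4.1]{HY} and recursively unfolding \eqref{q-root_vects-1}, one expresses $\, E_{\beta^k} \, $ as a fixed finite composition $\, \Phi_k \, $ of left adjoint actions by the elements $\, E_{i_1}, \ldots, E_{i_{k-1}} \, $ applied to $\, E_{i_k} \, $, with the adjoint action computed inside $\, \QEq \, $. An identical symbolic composition $\, \Phi_k \, $ computed inside $\, \QEqcheck \, $ produces the canonical root vector $\, \check{E}_{\beta^k} \, $. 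Hence it will suffice to compare, step by step, $\, \ad^{(\tilde{\varphi})}_\ell(E_{i_s}) \, $ in $\, \QEq \, $ with $\, \ad_\ell(E_{i_s}) \, $ in $\, \QEqcheck \, $.

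The plan is to proceed by induction on the length of the iterated bracket. At every intermediate stage the element obtained is $(Q \times Q)$-homogeneous of some degree $\, (\eta, 1) \, $ with $\, \eta \in Q^+ \, $, because each single application of $\, \ad_\ell(E_{i_s}) \, $ raises the first component of the degree by $\, \alpha_{i_s} \, $ and preserves the second. Consequently the hypotheses of Lemma \ref{lemma-adj} are satisfied at each step, and that lemma gives $\, \ad^{(\tilde{\varphi})}_\ell(E_{i_s})(b) = \tilde{\varphi}(\alpha_{i_s}, \eta)^{-1} \, \ad_\ell(E_{i_s})(b) \, $ for every homogeneous $\, b \, $ of degree $\, (\eta, 1) \, $. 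Multiplying the scalar factors accumulated through the iteration yields $\, E_{\beta^k} = \lambda_k \, \check{E}_{\beta^k} \, $, where $\, \lambda_k \, $ is a finite product of such values of $\, \tilde{\varphi} \, $; by the first paragraph, each factor, and hence $\, \lambda_k \, $ itself, is a (Laurent) monomial in the $\, q_{ij}^{\pm 1/2} \, $'s. The argument for the $\, F_{\beta^k} \, $ is entirely symmetric, invoking the second formula of Lemma \ref{lemma-adj}.

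The main obstacle, and essentially the only one, is bookkeeping: one must verify that every intermediate element remains $(Q \times Q)$-homogeneous of the prescribed shape so that Lemma \ref{lemma-adj} applies uniformly, and that the recursive unfolding of $\, T_{i_1} \cdots T_{i_{k-1}}(E_{i_k}) \, $ into an iterated $c$-bracket (as described in \cite[Section 4]{HY}) produces the \emph{formally} same composition $\, \Phi_k \, $ in both $\, \QEq \, $ and $\, \QEqcheck \, $, with only the multiplication used in the bracket changing between them. Once these routine points are in place, the proportionality together with the monomial nature of the coefficient follows at once.
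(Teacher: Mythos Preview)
Your proposal is correct and follows essentially the same approach as the paper: identify $\QEq$ with the cocycle deformation $\big(\QEqcheck\big)^{(\tilde{\varphi})}$, realize each root vector as an iterated adjoint action via $T_j(E_i) = \ad^{-a_{ji}}(E_j)(E_i)$, and then invoke Lemma~\ref{lemma-adj} step by step to produce a monomial proportionality factor. The only presentational difference is that the paper first works out the single-step comparison explicitly in the $\sigma$-deformed setting (computing $K_j \rightharpoonup_\sigma E_i$ and then $\ad_{c^\sigma}(E_j)(E_i) = q_{ji}^{1/2}\,\ad_c(\check{E}_j)(\check{E}_i)$) before appealing to Lemma~\ref{lemma-adj} for the iteration, whereas you go directly to Lemma~\ref{lemma-adj} in the second-type framework; your treatment of the negative root vectors via ``the second formula of Lemma~\ref{lemma-adj}'' is a bit imprecise (that formula concerns $\ad_r$ for $(1,h)$-primitives rather than $\ad_\ell$ for the $(h,1)$-primitive $F_i$), but the paper is equally brief on this symmetric case.
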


\pf
 By  Theorem \ref{thm:sigma_2-cocy}  and  Proposition  \ref{prop:def-sigma=def-c}  together we know that
  $$  \QEq  \,\; \cong \,\;  {\QEqcheck}_\sigma  \; = \;\,  {\QEqcheck}^{(\tilde{\varphi})}  $$
 for the  $ 2 $--cocycle  $ \sigma $  of  $ \QEqcheck $  and a suitable group bicharacter
 $ \varphi $  of  $ Q \, $.  Now denote by  $ E_\alpha $  a quantum root vector in  $ \QEq $
 and by  $ \check{E}_\alpha $  the corresponding (i.e., built in the same way, for the same root)
 quantum root vector in  $ \QEqcheck \, $.  Since  $ \, \check{q}_{ij} = q^{d_i a_{ij}} = q^{d_j a_{ji}} = \check{q}_{ji} \, $
 for all  $ \, i , j \in I \, $,  in  $ {\QEqcheck}_\sigma $  we have that
\begin{align*}
   &  \ad(E_j)(E_i)  \, = \,  \ad_{\sigma}(\check{E}_j)(\check{E}_i)  \; = \;  {(\check{E}_j)}_{(1)} \,\cdot_\sigma \check{E}_i \,\cdot_\sigma \SS_{\sigma}\big({(\check{E}_j)}_{(2)} \big)  \; =  \\
   &  \qquad  = \;  \check{E}_j \,\cdot_\sigma \check{E}_i \, + \, K_j \,\cdot_\sigma \, \check{E}_i \,\cdot_\sigma \big(K^{-1}_j \,\cdot_\sigma \check{E}_j \big)  \; =  \\
   &  \qquad  = \;  \sigma(K_j \,, K_i) \, \check{E}_j \, \check{E}_i \, + \, \big(\, \sigma(K_j \, , K_i) \, K_j \, \check{E}_i \big) \,\cdot_\sigma \big(\, \sigma(K_j^{-1} \, , K_j) \, K^{-1}_j \, \check{E}_j \big)  \; =  \\
   &  \qquad  = \;  \sigma(K_j \,, K_i) \, \big( \check{E}_j \, \check{E}_i \, + \, \sigma\big(K_j^{-1} \, , K_j\big) \, \sigma(K_j\,K_i \, , 1) \, K_j \, \check{E}_i \, K_j^{-1} \check{E}_j \, \sigma^{-1\!}\big( K_j \, , K_j^{-1} \big) \big) \; =  \\
   &  \qquad  = \;  q_{ji}^{1/2} \big(\check{E}_j \, \check{E}_i  \, + \, \check{q}_{ij}\, \check{E}_i \check{E}_j \,\big) \; = \;  q_{ji}^{1/2} \big( (\check{E}_j)_{(1)} \,\cdot\, \check{E}_{i} \,\cdot \, \SS\big( {(\check{E}_j)}_{(2)} \big) \big)  \; = \;  q_{ji}^{1/2} \ad(\check{E}_j) (\check{E}_i)
    \end{align*}
 Therefore, although the adjoint action is not preserved under the  $ 2 $--cocycle  deformation,
 both elements differ only by a coefficient which is a monomial in the  $ q_{ij}^{\pm 1/2} $'s.
 Since both quantum root vectors are defined by an iteration of adjoint actions (because of the very definition of the
 $ T_i $'s)  by  Lemma \ref{lemma-adj}  we infer, taking into account the explicit form of  $ \sigma \, $
 (whose values are monomials in the  $ q_{ij}^{\pm 1/2} $'s),  that the quantum root vectors
 $ E_\alpha $  and  $ \check{E}_\alpha $  associated with any root  $ \alpha $  in  $ \QEq $
 and in  $ {\QEqcheck}_\sigma \, $,  respectively, are linked by an identity
 $ \; E_\alpha \, = \, m^+_\alpha\big( \bq^{\pm 1/2} \big) \, \check{E}_\alpha \; $
 for some monomial  $ \, m^+_\alpha\big( \bq^{\pm 1/2} \big) \, $  in the  $ q_{ij}^{\pm 1/2} $'s,  as claimed.
                                                     \par
   The above accounts for all (quantum)  {\sl positive\/}  root vectors.
   A similar argument proves the claim for  {\sl negative\/}  root vectors as well.
\epf

\medskip

\subsection{Poincar{\'e}-Birkhoff-Witt (=PBW) theorems for MpQG's}  \label{PBW-MpQG} \
 \vskip7pt
   Once we have quantum root vectors, some Poincar{\'e}-Birkhoff-Witt (=PBW) theorems hold too,
   stating that suitable ordered products of quantum root vectors and/or toral generators do form a  $ \k $--basis
   of  $ \QEq $  itself.  Here is the exact claim:

\medskip

\begin{theorem}  \label{thm:PBW_MpQG}
 {\sl (``PBW Theorem'' for  $ \, \QEq $   --- cf.\ \cite[Theorem 3.6]{An4}, \cite[Theorem 4.5]{HY},  and references therein)}
   Assume quantum root vectors in  $ \QEq $  have been defined as above.  Then the set of ordered monomials
  $$  \bigg\{\; {\textstyle \prod\limits_{k=N}^1} F_{\beta^k}^{\,f_k} \,
{\textstyle \prod\limits_{j \in I}} \, L_j^{\,a_j} \, {\textstyle \prod\limits_{i \in I}}
\, K_i^{\,b_i} \, {\textstyle \prod\limits_{h=1}^N} E_{\beta^h}^{\,e_h} \;\bigg|\; f_k, a_j, b_i, e_h \in \NN \;\bigg\}  $$
is a  $ \, \k $--basis  of  $ \, \QEq \, $,  and similarly if we take the opposite order in  $ \, \Phi^+ \, $.
                                                             \par
   Similar results hold for the subalgebras  $ \, U_\bq^\geq \, $,  $ U_\bq^\leq \, $,
   $ U_\bq^+ \, $,  $ U_\bq^- $,  $ \, U_\bq^{\,+,0} \, $,  $ \, U_\bq^{\,-,0} \, $  and  $ \, U_\bq^0 \, $.
\end{theorem}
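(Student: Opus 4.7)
The plan is to reduce the statement to the analogous PBW theorem for the canonical one-parameter quantum group $\QEqcheck$, which is essentially Lusztig's classical result (applied to $\QEqcheck$, one obtains it from Lusztig's PBW theorem for $U_q(\lieg)$ combined with the ``doubled toral'' triangular decomposition of Proposition \ref{prop:triang-decomps_U} applied to $\QEqcheck$). The key input is that, by Theorem \ref{thm:sigma_2-cocy} and Proposition \ref{prop:def-sigma=def-c}, we have a Hopf algebra identification $\QEq \cong {(\QEqcheck)}_{\sigma} = {(\QEqcheck)}^{(\tilde{\varphi})}$, where the two sides share the same underlying $\k$-vector space and the same coproduct, only the product being deformed.

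To carry this out, first I would recall the $(Q \times Q)$-grading on $\QEqcheck$ described at the end of \S\ref{deform-MpQG}. The braid-group operators $T_i$ permute weight spaces via the $W$-action on the root lattice, so each quantum root vector $\check{E}_{\beta^k}$ (respectively $\check{F}_{\beta^k}$) is homogeneous of degree $\big(\beta^k,1\big)$ (resp.\ $\big(1,\beta^k\big)$) with respect to this bigrading. By Proposition \ref{prop: proport_root-vects}, the root vectors $E_{\beta^k}$, $F_{\beta^k}$ in $\QEq$ coincide with $\check{E}_{\beta^k}$, $\check{F}_{\beta^k}$ up to a nonzero scalar monomial in $\bq^{\pm 1/2}$, and the toral generators $K_i$, $L_i$ are literally the same on both sides. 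Applying the deformation rule \eqref{c-deformed-product} iteratively to an ordered monomial
$$
\prod_{k=N}^{1} F_{\beta^k}^{\,f_k} \prod_{j \in I} L_j^{\,a_j} \prod_{i \in I} K_i^{\,b_i} \prod_{h=1}^N E_{\beta^h}^{\,e_h}
$$
in $\QEq$, the bimultiplicativity of $\tilde{\varphi}$ guarantees that this monomial equals a nonzero element of $\k^\times$ (a monomial in $\bq^{\pm 1/2}$ depending on the exponent vector) times the analogous ordered monomial formed with the $\check{E}$'s and $\check{F}$'s in $\QEqcheck$. Hence the PBW $\k$-basis of $\QEqcheck$ is rescaled factor-by-factor into the family displayed in the theorem, and the latter is a $\k$-basis of $\QEq$ as well.

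The statements for the subalgebras $U_\bq^\geq$, $U_\bq^\leq$, $U_\bq^\pm$, $U_\bq^{\pm,0}$ and $U_\bq^0$ are obtained analogously: each is itself a $2$-cocycle deformation of the corresponding subalgebra of $\QEqcheck$ (by restricting $\sigma$), and the corresponding PBW result in the canonical case is either classical or trivial (the toral parts are plainly group algebras of free abelian groups). Alternatively, once the PBW theorem for the full $\QEq$ is established, the triangular decompositions of Proposition \ref{prop:triang-decomps_U} extract these partial PBW statements immediately.

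The main technical point, and the one I expect to require the most care, is to verify that the product of two homogeneous elements under $\cdot_\sigma$ differs from the original product by exactly one nonzero scalar and never introduces cross-terms — i.e.\ that quantum root vectors genuinely lie in a single bigrading component for $\tilde{\varphi}$. This is clear for the generators, but for general $E_{\beta^k}$ it requires an induction on the length of the braid-group factor $T_{i_1} \cdots T_{i_{k-1}}$; the inductive step uses the identity $T_j(E_i) = \ad_c^{-a_{ji}}(E_j)(E_i)$ recalled just before Proposition \ref{prop: proport_root-vects}, together with the fact that each iterated $c$-bracket of homogeneous elements remains homogeneous. Once this bigrading-homogeneity is established, the cocycle scalar accumulated in rewriting an arbitrary ordered monomial into the standard form of the theorem is automatically a nonzero element of $\k$, and linear independence, as well as spanning, transfer directly from $\QEqcheck$ to $\QEq$.
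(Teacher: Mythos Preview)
Your argument is correct, but the paper's own ``proof'' is merely a one-line citation to \cite[Theorem 3.6]{An4}; it does not reprove the result at all.  Your route --- identifying $\QEq$ with ${(\QEqcheck)}_\sigma$ as a vector space and then observing that every ordered PBW monomial in the deformed product is a nonzero scalar multiple of the corresponding PBW monomial in the original product --- is exactly the technique the paper deploys \emph{later}, in the proofs of Theorems \ref{thm:PBW_hat-MpQG} and \ref{thm:PBW_tilde-MpQG} for the integral forms (see also the comparison formulas of \S\ref{subsubsec:comp-formulas}).  So you have anticipated the paper's main deformation strategy and applied it here, whereas the paper instead takes the PBW theorem over $\k$ as an external input from the literature and reserves the cocycle-comparison argument for the integral setting where no such citation is available.

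One minor slip: in the paper's $(Q \times Q)$-grading convention (end of \S\ref{deform-MpQG}) one has $\deg(E_i)=(1,\alpha_i)$ and $\deg(F_i)=(\alpha_i,1)$, the reverse of what you wrote.  This is harmless for your argument --- all that matters is that each root vector is bihomogeneous and that the cocycle values are units in $\k$ --- but it is worth getting the convention consistent with the paper if you intend to splice in the explicit comparison formulas of \S\ref{subsubsec:comp-formulas}.
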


\pf
 This is proved in  \cite[Theorem 3.6]{An4} (also for  $ \bq $  not of Cartan type).
\epf

\vskip7pt

\begin{rmk}  \label{rmk:q-root_vects_lrg-MpQG's}
 It is easy to see that a suitable ``PBW Theorem'' holds as well for any generalized MpQG with larger torus
 $ U_{\bq\,,\varGamma_\bullet}(\hskip0,8pt\lieg) \, $   --- cf.\ \S \ref{MpQGs-larger-torus}.
\end{rmk}

\medskip

\subsection{Hopf duality among quantum Borel subgroups}  \label{H-duality}  \
 \vskip7pt
   Proposition \ref{sk-H_pair}  provides a skew-Hopf pairing between the two MpQG's of Borel type
   $ \, U_\bq^\geq \, $  and  $ \, U_\bq^\leq \, $,  that we denote by  $ \eta \, $.
   Again, from  \cite[Proposition 4.6]{AY},  we have a complete description of this pairing, in terms
   of PBW bases (of both sides), namely the following:

\smallskip

\begin{prop}  \label{duality_x_PBW-bases}
 Keep notation as above.  Then
  $$  \eta\bigg(\, {\textstyle \prod\limits_{k=1}^M} E_{\beta_k}^{\,e_k} \, K \, ,
  \, {\textstyle \prod\limits_{k=1}^M} F_{\beta_k}^{\,f_k} \, L \,\bigg)  \; = \;
  {\textstyle \prod\limits_{k=1}^M} \delta_{e_k,f_k} \bigg(\! {{\; {(-1)}^{h(\beta_k)} \,
  \subd{q}{\beta^k}{\beta^k} \;} \over
  {\, \subd{q}{\beta^k}{\beta^k} - 1 \,}} \!\bigg)^{\!e_k} {(e_k)}_{q_{\raise-2pt\hbox{$ \scriptscriptstyle \beta^k \beta^k $}}}{\!}!
  \cdot \eta(K,L)  $$
for all  $ \, e_k, f_k \in \NN \, $  and all  $ \, K \in U_\bq^{+,0} \, $,  $ \, L \in U_\bq^{-,0} \, $,
where  $ h(\beta_k) $  is the height of the root  $ \beta_k $  and  $ \subd{q}{\beta^k}{\beta^k} $
is defined as in \S \ref{deform-MpQG}.
\end{prop}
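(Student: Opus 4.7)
The plan is to first use the multiplicativity of $\eta$ to peel off the toral factor, and then carry out the main computation on pure PBW monomials of root vectors.

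By the last identity of Proposition~\ref{sk-H_pair}, we have $\eta(EK,FL)=\eta(E,F)\,\eta(K,L)$ for $E\in U_\bq^+$, $F\in U_\bq^-$ and toral $K,L$. This factors $\eta(K,L)$ off the right-hand side of the claim and reduces the task to computing
$$
\eta\!\left(\prod_{k=1}^M E_{\beta^k}^{e_k}\,,\;\prod_{k=1}^M F_{\beta^k}^{f_k}\right).
$$
For this, I would first exploit the $Q^+$-grading on $U_\bq^+$ and $U_\bq^-$ (with $\deg E_i=\deg F_i=\alpha_i$) coming from the Nichols algebra realization recorded in Remarks~\ref{rmks:mpqg's-vs-Nichols}. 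Since the generating values $\eta(E_i,F_j)\propto\delta_{ij}$ are supported on matching degrees, the skew-Hopf identities propagate this support to every PBW monomial: the pairing above vanishes whenever the total multidegrees $\sum_k e_k\beta^k$ and $\sum_k f_k\beta^k$ differ. Combined with a convexity/height argument on the PBW ordering induced by the fixed reduced expression of $w_0$, this yields the Kronecker product $\prod_k\delta_{e_k,f_k}$ appearing in the statement.

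Assuming the multi-indices match, the core task becomes the single-root computation $\eta(E_{\beta^k}^n,F_{\beta^k}^n)$. I would argue inductively on $n$, applying $\eta(a,bc)=\eta(a_{(1)},b)\,\eta(a_{(2)},c)$ with $b=F_{\beta^k}$ and $c=F_{\beta^k}^{n-1}$: the ``braided-binomial'' part of $\Delta\!\big(E_{\beta^k}^n\big)$, of the form $\sum_j\binom{n}{j}_{q_{\beta^k\beta^k}}E_{\beta^k}^{n-j}K_{\beta^k}^j\otimes E_{\beta^k}^j$, contributes the factor $(n)_{q_{\beta^k\beta^k}}$, while the base case $\eta(E_{\beta^k},F_{\beta^k})=(-1)^{h(\beta^k)}q_{\beta^k\beta^k}/(q_{\beta^k\beta^k}-1)$ has to be established separately from the iterated-bracket construction of $E_{\beta^k}$, the sign $(-1)^{h(\beta^k)}$ tracking the parity of nested commutators needed to reach $\beta^k$ from the simple generators. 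Iterating the recursion yields the $(e_k)_{q_{\beta^k\beta^k}}!$ factor and the $e_k$-th power of $(-1)^{h(\beta^k)}q_{\beta^k\beta^k}/(q_{\beta^k\beta^k}-1)$ in the final formula.

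The main obstacle is controlling the ``error terms'' in $\Delta\!\big(E_{\beta^k}^n\big)$: by Levendorski{\u\i}--Soibelman-type formulas, the coproduct of a quantum root vector generally contains tensorands involving the other root vectors $E_{\beta^j}$ for $j\neq k$. One must show these tensorands pair to zero against the pure power $F_{\beta^k}^n$, which is already encoded in the grading/height argument above but requires careful bookkeeping to implement rigorously. The cleanest organizational device is to transfer the entire computation, via Proposition~\ref{prop: proport_root-vects} and the cocycle-deformation identification $\QEq\cong(\QEqcheck)_\sigma$ of Theorem~\ref{thm:sigma_2-cocy}, to the canonical case $\QEqcheck$ --- where this is essentially Lusztig's classical pairing calculation --- and then transport the answer back to $\QEq$, tracking how the monomial rescalings in the $q_{ij}^{\pm 1/2}$ conspire to produce exactly the $q_{\beta^k\beta^k}$-factors and the signs $(-1)^{h(\beta^k)}$ in the statement.
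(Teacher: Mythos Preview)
The paper does not supply its own proof of this proposition: it is stated with the preamble ``Again, from \cite[Proposition 4.6]{AY}, we have a complete description of this pairing\dots'' and no proof environment follows. So there is nothing to compare your argument against inside the paper itself.

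Your outline is a reasonable reconstruction of how such formulas are actually established (and is in the spirit of the cited source). The factoring off of $\eta(K,L)$ via Proposition~\ref{sk-H_pair} is correct and immediate. The degree argument gives vanishing when $\sum e_k\beta^k\neq\sum f_k\beta^k$, but you are right that this alone does not yield the full product $\prod_k\delta_{e_k,f_k}$; the step you label ``convexity/height argument on the PBW ordering'' is precisely the nontrivial Levendorski\u\i--Soibelman-type input needed here, and it is the part of your sketch that would require the most work to make rigorous. Your inductive computation of $\eta(E_{\beta^k}^n,F_{\beta^k}^n)$ is the standard route; the base case $\eta(E_{\beta^k},F_{\beta^k})$ with its sign $(-1)^{h(\beta^k)}$ is again a genuine computation (carried out in \cite{AY}) rather than something one reads off from the generating values. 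Your suggested alternative of transferring everything to $\QEqcheck$ via Theorem~\ref{thm:sigma_2-cocy} and Proposition~\ref{prop: proport_root-vects} is consistent with the paper's overall methodology, though one must check that the pairing $\eta$ itself is compatible with the cocycle deformation in the needed sense.
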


\vskip-9pt

\begin{rmk}  \label{rmk:duality-x-Borel-mpqsbgrp's}
 It is straightforward to see that the result above actually extends to the case when
 --- under suitable assumptions --- one considers the pairing  $ \eta $
 between two multiparameter quantum Borel subgroups
 $ U_{\bq\,,\varGamma_\bullet}^{\,\geq} $  and  $ U_{\bq\,,\varGamma_\bullet}^{\,\leq} $  like in
 \S \ref{duality x larger MpQG's}.
\end{rmk}

\medskip

\subsection{Special products in  $ \, U_\bq(\hskip0,8pt\lieg) = {\big( \QEqcheck \big)}_\sigma \, $}  \label{subsec:spec-prod_def-mult}  \
 \vskip7pt
   When performing calculations in our MpQG's, a convenient strategy is to reduce ourselves to similar
   calculations in the simpler framework of uniparameter quantum groups.
   The basic point to start from is the existence of a Hopf algebra isomorphism
  $$  U_\bq(\hskip0,8pt\lieg)  \; \cong \;  {\big(\, \QEqcheck \big)}_\sigma  $$
 (cf.\  Theorem \ref{thm:sigma_2-cocy})  where  $ \sigma $  is the  $ 2 $--cocycle  given in
 Definition \ref{def-sigma}.  Therefore, we can describe  $ U_\bq(\hskip0,8pt\lieg) $  as being the coalgebra
 $ {\big(\, \QEqcheck \big)}_\sigma $  {\sl endowed with the new, deformed product}
 $ \, \smallast := \cdot_\sigma \, $  (defined as in \S \ref{cotwist-defs_1})  and the corresponding, deformed antipode
 $ \SS_\sigma \, $.  The ``old'' product in  $ \QEqcheck $  instead will be denoted by  $ \, \check{\cdot} \; $.
 So hereafter by  $ Y^{*z} $  or  $ Y^{\,\check{\cdot}\, z} $  we shall denote the  $ z $--th  power of any
 $ \, Y \in \QEqcheck \, $  with respect to either the deformed product  $ \, \smallast \, $  or the old product
 $ \, \check{\cdot} \, $,  respectively, for any exponent  $ \, z \in \NN \, $,  or even  $ \, z \in \ZZ \, $  when  $ Y $
 is invertible.
 \vskip3pt
   For later use, we need to introduce some more notation:

\vskip11pt

\begin{definition}  \label{def:q-bin-coeff & q-div_pows}  {\ }
 \vskip3pt
   {\it (a)}\,  Let  $ \mathcal{A} $  be an algebra over a field  $ \mathbb{F} \, $,  and let
   $ \, p \in \mathbb{F} \, $ be {\sl not\/}  a root of unity.  For every
   $ \, H \in \mathcal{A} \, $,  $ \, n \in \NN \, $  and  $ \, c \in \ZZ \, $,  define the elements
\begin{equation}
  \label{q-bin_coeff}
     {\bigg( {{H\,; c} \atop n} \bigg)}_{\!\!p} := \; \prod_{s=1}^n {{\; p^{\,c+1-s} H - 1 \;} \over {\, p^s - 1 \,}}  \qquad ,
     \qquad \quad  {\bigg( {H\, \atop n} \bigg)}_{\!\!p} := {\bigg( {H\,; 0 \atop n} \bigg)}_{\!\!p}  \qquad
\end{equation}
 that are called  {\it  $ p $--binomial  coefficients (or just  ``$ \, p $--binomials'')  in  $ H $}.
 \vskip3pt
   {\it (b)}\,  For every  $ \, i \in I \, $,  $ \, \alpha \in \Phi^+ \, $,  $ \, X_i \in \big\{ E_i \, , F_i \,\big\} \, $,
   $ \, Y_\alpha \in \big\{ E_\alpha \, , F_\alpha \,\big\} \, $   --- notation as in  \S \ref{rvec-MpQG}  ---
   and all  $ \, n \in \NN \, $,  the elements in  $ \QEq $
\begin{equation}   \label{q-div_pows}
   {\ } \hskip25pt   X_i^{\,(n)} \, := \, {{\,X_i^n\,} \over {\;{(n)}_{q_{ii}\!}!\,}}  \hskip35pt ,
 \hskip43pt  Y_\alpha^{\,(n)} \, := \, {{\,Y_\alpha^n\,} \over {\;{(n)}_{q_{\raise-2pt\hbox{$ \scriptscriptstyle \alpha \alpha $}}\!}!\,}}
\end{equation}
are called  {\it quantum divided powers},  or  {\it  $ q $--divided  powers}.   \hfill  $ \diamondsuit $
\end{definition}

\smallskip

  {\sl Note that\/}  if in  $ \QEqcheck $  we consider the two products  $ \, \cdot \, $  and
  $ \, \smallast \, $  we have  {\sl two\/}  corresponding types of  $ q $--binomial  coefficients,
  hereafter denoted by  $ \, {\Big( {{X ; \, 0} \atop n} \Big)}_{\!\!p}^{\!\check{\cdot}} \, $  and
  $ \, {\Big( {{X ; \, 0} \atop n} \Big)}_{\!\!p}^{\!\!*} \, $.  Similarly, we shall consider two types of  $ q $--divided  powers,
  for which we use notation  $ Y^{\,\check{\cdot}\, (n)} \, $  and  $ Y^{\,*(n)} \, $;  indeed, the first type denotes a
  $ q $--divided  power in  $ \; \big( \QEqcheck \, , \;\check{\cdot}\; \big) \; $,  and the second one a $ q $--divided
  power in
  $ \; U_\bq(\hskip0,8pt\lieg) \, = \, {\big(\, \QEqcheck \big)}_\sigma \, = \, \big(\, \QEqcheck \, , \;\smallast\, \big) \; $.

\vskip13pt

\begin{free text}{\bf Comparison formulas.}  \label{subsubsec:comp-formulas}
 Some elementary calculations lead to explicit formulas linking same type objects in
 $ \; \big( \QEqcheck \, , \;\check{\cdot}\; \big) \; $  and in
 $ \; U_\bq(\hskip0,8pt\lieg) \, = \, \big(\, \QEqcheck \, , \;\smallast\, \big) \; $;
 we shall use them later on when studying integral forms of  $ \QEq \, $.
 \vskip3pt
   Concretely, for all  $ \, i \in \{1,\dots,\theta\} \, $,
   $ \, n, m \in \NN \, $,  $ \, z, z', z''  \in \ZZ \, $  $ \, p_s \in \big\{ q , q_s \big\} \, $,  $ \, X, Y \in \{K,L\} \, $  and
   $ \, G_i^{\pm 1} := K_i^{\pm 1} L_i^{\mp 1} \, $  we have   --- cf.\  \eqref{q-bin_coeff}  for notation ---
  $$  \displaylines{
   E_i^{\,*(n)} \, = \,  q_i^{+{n \choose 2}} E_i^{\;\check{\cdot}\, (n)}  \qquad ,
 \quad \qquad  F_i^{\,*(n)} \, = \,  q_i^{-{n \choose 2}} F_i^{\;\check{\cdot}\, (n)}  \cr
   K_i^{*z} \, = \, K_i^{\,\check{\cdot}\, z}  \quad ,  \qquad  L_i^{*z} \, = \, L_i^{\,\check{\cdot}\, z}  \quad ,  \qquad  G_i^{\,* z} \, = \, G_i^{\;\check{\cdot}\, z}  \cr
   {\bigg( {X_i \atop n} \bigg)}_{\!\!p_i}^{\!\!*}  = \,  {\bigg( {X_i \atop n} \bigg)}_{\!\!p_i}^{\!\check{\cdot}}  \quad ,
   \qquad  {\bigg( {G_i^{\pm 1} \atop n} \bigg)}_{\!\!q_{ii}}^{\!\!*} \! = \,  {\bigg( {G_i^{\pm 1} \atop n} \bigg)}_{\!\!q_{ii}}^{\!\check{\cdot}}  \cr
   E_i^{\,*(n)} \smallast E_j^{\,*(m)}  =  q_i^{+{n \choose 2}} q_{n \alpha_i , \, m \alpha_j}^{\, +1/2} q_j^{+{m \choose 2}}
   E_i^{\;\check{\cdot}\, (n)} \,\check{\cdot}\, E_j^{\;\check{\cdot}\,(m)}  =  q_i^{+{n \choose 2}} {\big( q_{ij}^{\, +1/2} \big)}^{n{}m} q_j^{+{m \choose 2}}
   E_i^{\;\check{\cdot}\, (n)} \,\check{\cdot}\, E_j^{\;\check{\cdot}\, (m)}  \cr
   E_{i_{1}}^{\,*(n_{1})} \smallast E_{i_{2}}^{\,*(n_{2})} \smallast \cdots \smallast E_{i_{s}}^{\,*(n_{s})} =
  \bigg(\, {\textstyle \prod\limits_{j=1}^s} q_{i_{j}}^{+{n_j \choose 2}} \bigg)
\bigg(\, {\textstyle \prod\limits_{j<k}} q_{n_{i_j} \alpha_{i_j} , \, n_k \alpha_{i_k}}^{+1/2} \!\bigg)
E_{i_1}^{\,\check{\cdot}\,(n_1)} \,\check{\cdot}\, E_{i_2}^{\,\check{\cdot}\,(n_2)} \,\check{\cdot} \cdots
\,\check{\cdot}\, E_{i_s}^{\,\check{\cdot}\, (n_s)} \cr
   F_i^{\,*(n)} \smallast F_j^{\,*(m)}  = \,  q_i^{-{n \choose 2}} q_{n \alpha_i , \, m \alpha_j}^{\, -1/2} q_j^{-{m \choose 2}}
   F_i^{\;\check{\cdot}\, (n)} \,\check{\cdot}\, F_j^{\;\check{\cdot}\, (m)}  = \,  q_i^{-{n \choose 2}} {\big( q_{ij}^{\, -1/2} \big)}^{n{}m} q_i^{-{m \choose 2}}
   F_i^{\;\check{\cdot}\, (n)} \,\check{\cdot}\, F_j^{\;\check{\cdot}\, (m)}  \cr
   F_{i_{1}}^{\,*(n_{1})} \smallast F_{i_{2}}^{\,*(n_{2})} \smallast \cdots \smallast F_{i_{s}}^{\,*(n_{s})} =
  \bigg(\, {\textstyle \prod\limits_{j=1}^s} q_{i_{j}}^{-{n_{j} \choose 2}} \bigg)
\bigg(\, {\textstyle \prod\limits_{j<k}} q_{n_{i_j} \alpha_{i_j} , \, n_{i_k}
\alpha_{i_k}}^{-1/2} \!\bigg) F_{i_1}^{\,\check{\cdot}\,(n_1)} \,\check{\cdot}\, F_{i_2}^{\,\check{\cdot}\,(n_2)} \,
\check{\cdot} \cdots \,\check{\cdot}\, F_{i_s}^{\,\check{\cdot}\,(n_s)}  \cr
   X_i^{*z'} \smallast\, Y_j^{*z''}  = \,  X_i^{\;\check{\cdot}\, z'} \;\check{\cdot}\; Y_j^{\;\check{\cdot}\, z''}  \; ,
 \quad  X_i^{*z'} \smallast\, G_j^{*z''}  = \,  X_i^{\;\check{\cdot}\, z'} \;\check{\cdot}\; Y_j^{\;\check{\cdot}\, z''}  \; ,
 \quad  G_i^{*z'} \smallast\, Y_j^{*z''}  = \,  G_i^{\;\check{\cdot}\, z'} \;\check{\cdot}\; Y_j^{\;\check{\cdot}\, z''}  \cr
   {\bigg( {X_i \atop n} \bigg)}_{\!\!p_i}^{\!\!*} \,\smallast\, {\bigg( {Y_j \atop m} \bigg)}_{\!\!p_j}^{\!\!*}  \, =
   \,  {\bigg( {X_i \atop n} \bigg)}_{\!\!p_i}^{\!\check{\cdot}} \;\check{\cdot}\; {\bigg( {Y_j \atop m} \bigg)}_{\!\!p_j}^{\!\check{\cdot}}  \cr
   {\bigg( {G_i^{\pm 1} \atop n} \bigg)}_{\!\!q_{i{}i}}^{\!\!*} \!\smallast\, {\bigg( {G_j^{\pm 1} \atop m} \bigg)}_{\!\!q_{j{}j}}^{\!\!*}
   = \;  {\bigg( {G_i^{\pm 1} \atop n} \bigg)}_{\!\!q_{i{}i}}^{\!\check{\cdot}} \check{\cdot}\; {\bigg( {G_j^{\pm 1} \atop m} \bigg)}_{\!\!q_{j{}j}}^{\!\check{\cdot}}  \cr
   {\bigg( {X_i \atop n} \bigg)}_{\!\!p_i}^{\!\!*} \,\smallast\, {\bigg( {G_j^{\pm 1} \atop m} \bigg)}_{\!\!q_{j{}j}}^{\!\!*}
   = \;  {\bigg( {X_i \atop n} \bigg)}_{\!\!p_i}^{\!\check{\cdot}} \;\check{\cdot}\; {\bigg( {G_j^{\pm 1} \atop m} \bigg)}_{\!\!q_{j{}j}}^{\!\check{\cdot}}  \cr
   E_i^{\,*(n)} \smallast\, F_j^{\,*(m)}  = \,  q_i^{+{n \choose 2}} q_j^{-{m \choose 2}} \, E_i^{\;\check{\cdot}\, (n)} \;\check{\cdot}\; F_j^{\;\check{\cdot}\, (m)}  \, ,
 \!\!\quad  F_j^{\,*(m)} \smallast\, E_i^{\,*(n)}  = \,  q_j^{-{m \choose 2}} q_i^{+{n \choose 2}} \, F_j^{\;\check{\cdot}\, (m)} \;\check{\cdot}\; E_i^{\;\check{\cdot}\, (n)}  \cr
   X_i^{\,*z} \,\smallast\, E_j^{\,*(n)}  \, = \;  q_{i{}j}^{\,+z\,n/2} \, X_i^{\;\check{\cdot}\, z} \;\check{\cdot}\; E_j^{\;\check{\cdot}\, (n)}  \quad ,
 \qquad  E_j^{\,*(n)} \,\smallast\, X_i^{\,*z}  \, = \;  q_{j{}i}^{\,+z\,n/2} \, E_j^{\;\check{\cdot}\, (n)} \;\check{\cdot}\; X_i^{\;\check{\cdot}\, z}  \cr
   G_i^{\,*z} \,\smallast\, E_j^{\,*(n)}  \, = \;  G_i^{\;\check{\cdot}\, z} \,\;\check{\cdot}\; E_j^{\;\check{\cdot}\, (n)}  \quad ,
 \qquad  E_j^{\,*(n)} \,\smallast\, G_i^{\,*z}  \, = \;  E_j^{\;\check{\cdot}\, (n)} \;\check{\cdot}\; G_i^{\;\check{\cdot}\, z}  \cr
   X_i^{\,*z} \,\smallast\, F_j^{\,*(m)}  \, = \;  q_{i{}j}^{\,-z\,m/2} \, X_i^{\;\check{\cdot}\, z} \;\check{\cdot}\; F_j^{\;\check{\cdot}\, (m)}  \quad ,
 \qquad  F_j^{\,*(m)} \,\smallast\, X_i^{\,*z}  \, = \;  q_{j{}i}^{\,-z\,m/2} \, F_j^{\;\check{\cdot}\, (m)} \;\check{\cdot}\; X_i^{\;\check{\cdot}\, z}  \cr
   G_i^{\,*z} \,\smallast\, F_j^{\,*(m)}  \, = \;  G_i^{\;\check{\cdot}\, z} \,\;\check{\cdot}\; F_j^{\;\check{\cdot}\, (m)}  \quad ,
 \qquad  F_j^{\,*(m)} \,\smallast\, G_i^{\,*z}  \, = \;  F_j^{\;\check{\cdot}\, (m)} \;\check{\cdot}\; G_i^{\;\check{\cdot}\, z}  \cr
   E_i^{\,*(n)} \smallast {\bigg( {X_j \atop m} \bigg)}_{\!\!p_j}^{\!\!*}  \, = \;  q_i^{+{n \choose 2}} \sum_{c=0}^m \, p_j^{-c(m-c)} \,
   \prod_{s=1}^c {{\, p_j^{1-s} \, {\big( q_{i{}j}^{+1/2} \big)}^n \! - 1 \,} \over {\, p_j^{\,s} - 1 \,}} \, E_i^{\;\check{\cdot}\, (n)} {\bigg( {X_j
   \atop {m\!-\!c}} \bigg)}_{\!\!p_j}^{\!\check{\cdot}} X_j^{\;\check{\cdot}\, c}  \cr
%
 }  $$
  $$  \displaylines{
   {\bigg( {X_j \atop m} \bigg)}_{\!\!p_j}^{\!\!*} \! \smallast \, E_i^{\,*(n)}  \, = \;  q_i^{+{n \choose 2}} \sum_{c=0}^m \, p_j^{-c(m-c)} \,
   \prod_{s=1}^c {{\, p_j^{1-s} \, {\big( q_{j{}i}^{+1/2} \big)}^n \! - 1 \,} \over {\, p_j^{\,s} - 1 \,}} \, E_i^{\;\check{\cdot}\, (n)}
   {\bigg( {X_j \atop {m\!-\!c}} \bigg)}_{\!\!p_j}^{\!\check{\cdot}} X_j^{\;\check{\cdot}\, c}  \cr
   E_i^{\,*(n)} \smallast {\bigg( {G_j \atop m} \bigg)}_{\!\!q_{j{}j}}^{\!\!*}  \! = \;  q_i^{+{n \choose 2}} \, E_i^{\;\check{\cdot}\, (n)} \,
   \check{\cdot}\, {\bigg( {G_j \atop m} \bigg)}_{\!\!q_{j{}j}}^{\!\check{\cdot}}  \;\; ,  \quad
   {\bigg( {G_j \atop m} \bigg)}_{\!\!q_{j{}j}}^{\!\!*} \smallast E_i^{\,*(n)}  \! = \;  q_i^{+{n \choose 2}} \,
   {\bigg( {G_j \atop m} \bigg)}_{\!\!q_{j{}j}}^{\!\check{\cdot}} \!\check{\cdot}\, E_i^{\;\check{\cdot}\, (n)}  \cr
   F_i^{\,*(n)} \smallast {\bigg( {X_j \atop m} \bigg)}_{\!\!p_j}^{\!\!*}  \, = \;  q_i^{-{n \choose 2}} \sum_{c=0}^m \, p_j^{-c(m-c)} \,
   \prod_{s=1}^c {{\, p_j^{1-s} \, {\big( q_{i{}j}^{-1/2} \big)}^n \! - 1 \,} \over {\, p_j^{\,s} - 1 \,}} \, F_i^{\;\check{\cdot}\, (n)}
   {\bigg( {X_j \atop {m\!-\!c}} \bigg)}_{\!\!p_j}^{\!\check{\cdot}} X_j^{\;\check{\cdot}\, c}  \cr
   {\bigg( {X_j \atop m} \bigg)}_{\!\!p_j}^{\!\!*} \! \smallast \, F_i^{\,*(n)}  \, = \;  q_i^{-{n \choose 2}} \sum_{c=0}^m \, p_j^{-c(m-c)} \,
   \prod_{s=1}^c {{\, p_j^{1-s} \, {\big( q_{j{}i}^{-1/2} \big)}^n \! - 1 \,} \over {\, p_j^{\,s} - 1 \,}} \,
   {\bigg( {X_j \atop {m\!-\!c}} \bigg)}_{\!\!p_j}^{\!\check{\cdot}} X_j^{\;\check{\cdot}\, c} \, F_i^{\;\check{\cdot}\, (n)} \cr
   F_i^{\,*(n)} \smallast {\bigg( {G_j \atop m} \bigg)}_{\!\!q_{j{}j}}^{\!\!*}  \! = \;  q_i^{-{n \choose 2}} \,
   F_i^{\;\check{\cdot}\, (n)} \, \check{\cdot}\, {\bigg( {G_j \atop m} \bigg)}_{\!\!q_{j{}j}}^{\!\check{\cdot}}  \;\; , \quad
   {\bigg( {G_j \atop m} \bigg)}_{\!\!q_{j{}j}}^{\!\!*} \!\smallast\, F_i^{\,*(n)}  \! = \;  q_i^{-{n \choose 2}} \,
   {\bigg( {G_j \atop m} \bigg)}_{\!\!q_{j{}j}}^{\!\check{\cdot}} \!\check{\cdot}\, F_i^{\;\check{\cdot}\, (n)}  }  $$

\vskip5pt

   In addition, more in general for root vectors we have the following.
   From  Proposition \ref{prop: proport_root-vects}  and its proof, recall that
   --- keeping notation from there ---   if we denote by  $ E_\alpha $  a quantum root vector in  $ \QEq $
   and by  $ \check{E}_\alpha $  the corresponding (i.e., for the same root) quantum root vector in
   $ \QEqcheck $  we have
  $$  E_\alpha \, = \, m^+_\alpha \, \check{E}_\alpha  \quad\qquad \text{and} \qquad\quad
 F_\alpha \, = \, m^-_\alpha \, \check{F}_\alpha  $$
for some Laurent monomials  $ \, m^+_\alpha = m^+_\alpha\big( \bq^{\pm 1/2} \big) \, $  and
$ \, m^-_\alpha = m^-_\alpha\big( \bq^{\pm 1/2} \big) \, $  in the  $ q_{ij}^{\pm 1/2} $'s.
Then an analysis like above (just a bit finer), for $ \, \alpha , \beta,
\in \Phi^+ \, $,  $ \, j \in \{1,\dots,\theta\} \, $,  yields
  $$  \displaylines{
   E_\alpha^{\,*(n)} \smallast E_\beta^{\,*(m)}  \, = \;
   q_\alpha^{+{n \choose 2}} {\big( q_{\alpha{}\beta}^{\, +1/2} \,\big)}^{n{}m} q_\beta^{+{m \choose 2}}
   {\big( m^+_\alpha \big)}^n \, {\big( m^+_\beta \big)}^m \, \check{E}_\alpha^{\;\check{\cdot}\, (n)}
   \;\check{\cdot}\; \check{E}_\beta^{\;\check{\cdot}\, (m)}  \cr
   E_{\alpha_1}^{\,*(n_1)} \smallast \cdots \smallast E_{\alpha_s}^{\,*(n_s)}  \, =
 \bigg(\, {\textstyle \prod\limits_{j=1}^s} q_{\alpha_j}^{+{n_j \choose 2}} \!\bigg)
   \bigg(\, {\textstyle \prod\limits_{j<k}} q_{n_j \alpha_j , \, n_k \alpha_k}^{+1/2} \!\bigg)
   \bigg(\, {\textstyle \prod\limits_{j=1}^s} \big( m^+_{\alpha_j} \big)^{n_j} \!\bigg)
 \check{E}_{\alpha_1}^{\;\check{\cdot}\, (n_1)}
  \,\check{\cdot}\, \cdots \,\check{\cdot}\,
   \check{E}_{\alpha_s}^{\;\check{\cdot}\, (n_s)}  \cr
   F_\alpha^{\,*(n)} \smallast F_\beta^{\,*(m)}  \, =
   \;  q_\alpha^{-{n \choose 2}} {\big( q_{\alpha{}\beta}^{\, -1/2} \big)}^{n{}m} q_\beta^{-{m \choose 2}}
   {\big( m^-_\alpha \big)}^n \, {\big( m^-_\beta \big)}^m \, \check{F}_\alpha^{\;\check{\cdot}\, (n)} \;\check{\cdot}\;
   \check{F}_\beta^{\;\check{\cdot}\, (m)}  \cr
   F_{\alpha_1}^{\,*(n_1)} \smallast \cdots \smallast F_{\alpha_s}^{\,*(n_s)}  \, =
 \bigg(\, {\textstyle \prod\limits_{j=1}^s} q_{\alpha_j}^{-{n_j \choose 2}} \!\bigg)
   \bigg(\, {\textstyle \prod\limits_{j<k}} q_{n_j \alpha_j , \, n_k \alpha_k}^{-1/2} \!\bigg)
   \bigg(\, {\textstyle \prod\limits_{j=1}^s} \big( m^-_{\alpha_j} \big)^{n_j} \!\bigg)
 \check{F}_{\alpha_1}^{\;\check{\cdot}\, (n_1)}
  \,\check{\cdot}\, \cdots \,\check{\cdot}\,
   \check{F}_{\alpha_s}^{\;\check{\cdot}\, (n_s)}  \cr
   E_\alpha^{\,*(n)} \smallast F_\beta^{\,*(m)}  = \,  q_\alpha^{+{n \choose 2}} q_\beta^{-{m \choose 2}} \, {\big( m^+_\alpha \big)}^n \,
   {\big( m^-_\beta \big)}^m \, \check{E}_\alpha^{\;\check{\cdot}\, (n)} \;\check{\cdot}\; \check{F}_\beta^{\;\check{\cdot}\, (m)}  \cr
   F_\beta^{\,*(m)} \smallast E_\alpha^{\,*(n)}  = \,  q_\beta^{-{m \choose 2}} q_\alpha^{+{n \choose 2}} \, {\big( m^-_\beta \big)}^m \,
   {\big( m^+_\beta \big)}^n \, \check{F}_\beta^{\;\check{\cdot}\, (m)} \;\check{\cdot}\; \check{E}_\alpha^{\;\check{\cdot}\, (n)}  \cr
   X_i^{\,*z} \smallast E_\beta^{\,*(n)}  = \,  q_{\alpha_i\,\beta}^{\,+z\,n/2} \, {\big( m_\beta^+ \big)}^n \, X_i^{\;\check{\cdot}\, z} \;\check{\cdot}\;
   \check{E}_\beta^{\;\check{\cdot}\, (n)}  \; ,
 \!\!\quad  E_\beta^{\,*(n)} \smallast X_i^{\,*z}  = \,  q_{\beta\,\alpha_i}^{\,+z\,n/2} \, {\big( m_\beta^+ \big)}^n \, \check{E}_\beta^{\;\check{\cdot}\, (n)}
 \;\check{\cdot}\; X_i^{\;\check{\cdot}\, z}  \cr
   G_i^{\,*z} \,\smallast\, E_\beta^{\,*(n)}  \, = \;  {\big( m_\beta^+ \big)}^n \, G_i^{\;\check{\cdot}\, z} \;\check{\cdot}\; \check{E}_\beta^{\;\check{\cdot}\, (n)}  \quad ,
 \qquad  E_\beta^{\,*(n)} \,\smallast\, G_i^{\,*z}  \, = \;  {\big( m_\beta^+ \big)}^n \, \check{E}_\beta^{\;\check{\cdot}\, (n)} \;\check{\cdot}\; G_i^{\;\check{\cdot}\, z}  \cr
   X_i^{\,*z} \smallast F_\beta^{\,*(m)}  = \,  q_{\alpha_i\,\beta}^{\,-z\,m/2} {\big( m_\beta^- \big)}^m X_i^{\;\check{\cdot}\, z} \;\check{\cdot}\;
   \check{F}_\beta^{\;\check{\cdot}\, (m)}  \, ,
 \!\!\!\quad  F_\beta^{\,*(m)} \smallast X_i^{\,*z}  = \,  q_{\beta\,\alpha_i}^{\,+z\,m/2} {\big( m_\beta^- \big)}^m \check{F}_\beta^{\;\check{\cdot}\, (m)}
 \;\check{\cdot}\; X_i^{\;\check{\cdot}\, z}  \cr
   G_i^{\,*z} \,\smallast\, F_\beta^{\,*(m)}  \, = \;  {\big( m_\beta^- \big)}^m \, G_i^{\;\check{\cdot}\, z} \;\check{\cdot}\; \check{F}_\beta^{\;\check{\cdot}\, (m)}  \quad ,
 \qquad  F_\beta^{\,*(m)} \,\smallast\, G_i^{\,*z}  \, = \;  {\big( m_\beta^- \big)}^m \, \check{F}_\beta^{\;\check{\cdot}\, (m)} \;\check{\cdot}\; G_i^{\;\check{\cdot}\, z}  \cr
   E_\alpha^{\,*(n)} \smallast {\bigg(\! {X_j \atop m} \bigg)}_{\!\!p_j}^{\!\!*}  \! =  q_\alpha^{+{n \choose 2}} {\big( m^+_\alpha \big)}^{\!n}
   \sum_{c=0}^m p_j^{-c(m-c)} \prod_{s=1}^c {{\, p_j^{1-s} {\big( q_{\alpha\,\alpha_j}^{+1/2} \big)}^{\!n} \! - 1 \,} \over {\, p_j^{\,s} - 1 \,}} \,
   \check{E}_\alpha^{\;\check{\cdot}\, (n)} {\bigg(\! {X_j \atop {m\!-\!c}} \bigg)}_{\!\!p_j}^{\!\check{\cdot}} \! X_j^{\;\check{\cdot}\, c}  \cr
   {\bigg(\! {X_j \atop m} \bigg)}_{\!\!p_j}^{\!\!*} \! \smallast E_\alpha^{\,*(n)}  = \,  q_\alpha^{+{n \choose 2}} {\big( m^+_\alpha \big)}^{\!n}
   \sum_{c=0}^m p_j^{-c(m-c)} \prod_{s=1}^c {{\, p_j^{1-s} \, {\big( q_{\alpha_j\,\alpha}^{+1/2} \big)}^{\!n} \! - 1 \,} \over {\, p_j^{\,s} - 1 \,}}
   {\bigg(\! {X_j \atop {m\!-\!c}} \bigg)}_{\!\!p_j}^{\!\check{\cdot}} \! X_j^{\;\check{\cdot}\, c} \check{E}_\alpha^{\;\check{\cdot}\, (n)}  \cr
   E_\alpha^{\,*(n)} \;\smallast\; {\bigg( {G_j \atop m} \bigg)}_{\!\!q_{j{}j}}^{\!\!*}  \,\; = \;\; q_\alpha^{+{n \choose 2}} {\big( m_\alpha^+ \big)}^n
   \, E_\alpha^{\;\check{\cdot}\, (n)} \;\check{\cdot}\; {\bigg( {G_j \atop m} \bigg)}_{\!\!q_{j{}j}}^{\!\check{\cdot}}  \cr
   {\bigg( {G_j \atop m} \bigg)}_{\!\!q_{j{}j}}^{\!\!*} \,\smallast\; E_\alpha^{\,*(n)}  \,\; = \;\; q_\alpha^{+{n \choose 2}}   {\big( m_\alpha^+ \big)}^n
   \, {\bigg( {G_j \atop m} \bigg)}_{\!\!q_{j{}j}}^{\!\check{\cdot}} \,\check{\cdot}\; E_\alpha^{\;\check{\cdot}\, (n)}
 }  $$
  $$  \displaylines{
   F_\alpha^{\,*(n)} \smallast {\bigg(\! {X_j \atop m} \bigg)}_{\!\!p_j}^{\!\!*}  \! = \,  q_\alpha^{-{n \choose 2}} {\big( m^-_\alpha \big)}^{\!n}
   \sum_{c=0}^m p_j^{-c(m-c)} \prod_{s=1}^c {{\, p_j^{1-s} \, {\big( q_{\alpha\,\alpha_j}^{-1/2} \big)}^{\!n} \! - 1 \,} \over {\, p_j^{\,s} - 1 \,}}
   \check{F}_\alpha^{\;\check{\cdot}\, (n)} {\bigg(\! {X_j \atop {m\!-\!c}} \bigg)}_{\!\!p_j}^{\!\check{\cdot}} X_j^{\;\check{\cdot}\, c}  \cr
   {\bigg(\! {X_j \atop m} \bigg)}_{\!\!p_j}^{\!\!*} \! \smallast F_\alpha^{\,*(n)}  = \,  q_\alpha^{-{n \choose 2}} {\big( m^-_\alpha \big)}^{\!n}
   \sum_{c=0}^m p_j^{-c(m-c)} \prod_{s=1}^c \! {{\, p_j^{1-s} {\big( q_{\alpha_j\,\alpha}^{-1/2} \big)}^{\!n} \! - 1 \,} \over {\, p_j^{\,s} - 1 \,}} \,
   {\bigg(\! {X_j \atop {m\!-\!c}} \bigg)}_{\!\!p_j}^{\!\check{\cdot}} X_j^{\;\check{\cdot}\, c} \, \check{F}_\alpha^{\;\check{\cdot}\, (n)}  \cr
   F_\alpha^{\,*(n)} \;\smallast\; {\bigg( {G_j \atop m} \bigg)}_{\!\!q_{j{}j}}^{\!\!*}  \,\; = \;\; q_\alpha^{-{n \choose 2}}  {\big( m_\alpha^- \big)}^n
   \, F_\alpha^{\;\check{\cdot}\, (n)} \;\check{\cdot}\; {\bigg( {G_j \atop m} \bigg)}_{\!\!q_{j{}j}}^{\!\check{\cdot}}  \cr
   {\bigg( {G_j \atop m} \bigg)}_{\!\!q_{j{}j}}^{\!\!*} \,\smallast\; F_\alpha^{\,*(n)}  \,\; = \;\; q_\alpha^{-{n \choose 2}}   {\big( m_\alpha^- \big)}^n
   \, {\bigg( {G_j \atop m} \bigg)}_{\!\!q_{j{}j}}^{\!\check{\cdot}} \,\check{\cdot}\; F_\alpha^{\;\check{\cdot}\, (n)}  }  $$
\end{free text}

\bigskip

\section{Integral forms of MpQG's}  \label{int-forms_mpqgs}

\smallskip

   The main purpose of the present section is to introduce integral forms
 of our MpQG's; in particular, we shall also provide suitable PBW-like theorems for them.

\vskip15pt

\subsection{Preliminaries on integral forms}  \label{prel_int-forms}  \
 \vskip9pt
   In this subsection we fix the ground for our discussion of integral forms of MpQG's.

\vskip11pt

\begin{free text}{\bf Integral forms.}
 Let  $ S $  be any ring, and  $ M $  any  $ S $--module.  If  $ R $  is any subring of  $ S \, $,  we call
 {\sl  $ R $--integral  form\/}  (or ``integral form over  $ R \, $'')  of  $ M $
 any  $ R $--submodule  $ M_R $  of  $ M $  whose scalar extension from  $ R $  to  $ S $  is  $ M \, $,
 i.e.\  $ \,  M_R  \otimes_R \!  S= M \, $.  When  $ M $  has some richer structure (than the  $ S $--module  one)
 by  ``$ R $--integral  form'' we mean an  $ R $--integral  form that in addition respects the additional structure; in other words,
 the definition is like above but one has to replace the words ``module'' and ``submodule'' with the words referring to the
 additional, richer structure.  For instance, if  $ H $  is a  {\sl Hopf algebra\/}  over  $ S $
 by  ``$ R $--integral  form'' of it we mean any  {\sl Hopf subalgebra}  $ H_R $  over  $ R $
 such that  $ \, S \otimes_R \! H_R = H \, $.
\end{free text}

\vskip15pt

\begin{free text}{\bf The ground ring.}  \label{ground-ring}
 The integral forms of our MpQG's will be defined over a suitable ground ring.  To define it, we
 begin fixing a multiparameter matrix  $ \bq $  of Cartan type with entries in the field  $ \k \, $,
 assuming again that the Cartan matrix is indecomposable.  Starting from  $ \bq \, $,  we fix in  $ \k $
 an element  $ \, q_{j_{\raise-2pt\hbox{$ \scriptscriptstyle 0 $}}} \in \k^\times \, $,  now denoted by
 $ \, q := q_{j_{\raise-2pt\hbox{$ \scriptscriptstyle 0 $}}} \, $,  like in  \S \ref{multiparameters},  and
 square roots  $ q_{ij}^{\;1/2} $  of all the  $ q_{ij} $'s,  like in  \S \ref{multiparameters}.
 \vskip4pt
   We denote by  $ \Fbq \, $  the subfield of  $ \k $  generated by all the  $ q_{ij}^{\pm 1} $'s  ($ \, i, j \in I \, $)
   along with  $ q^{\pm 1} \, $;  moreover, we denote by  $ \Fbqsq $  the subfield of  $ \k $  generated by all the
   $ q_{ij}^{\pm 1/2} $'s  ($ \, i, j \in I \, $)  and  $ q^{\pm 1/2} \, $:  then  $ \Fbqsq $  is a field extension of  $ \Fbq \, $,
   that contains also all the square roots  $ q_i^{\pm 1/2} $'s  and  $ q_\beta^{\pm 1/2} $'s  ($ \, \beta \in \Phi^+ \, $),
   for all the  $ q_i $'s  and  $ q_\beta $'s  defined at the beginning of  \S \ref{deform-MpQG}.
   As ground ring for our integral forms, we fix the subring  $ \, \Rbq \, $  of  $ \k $  generated by all the
   $ q_{ij}^{\pm 1} $'s (for all  $ \, i, j \in I \, $)  and  $ q^{\pm 1} \, $;  moreover, we denote by  $ \Rbqsq $
   the subring of  $ \k $  generated by all the  $ q_{ij}^{\pm 1/2} $'s  ($ \, i, j \in I \, $)  and  $ q^{\pm 1/2} \, $:
   this is a ring extension of  $ \Rbq \, $,  that contains all the square roots  $ q_i^{\pm 1/2} $'s  and
   $ q_\beta^{\pm 1/2} $'s  ($ \, \beta \in \Phi^+ \, $).  The field of fractions of  $ \Rbq $  is just  $ \Fbq \, $,
   and similarly that of  $ \Rbqsq $  is just  $ \Fbqsq \, $.
                                                                \par
   When  $ \bq $  is of integral type we have that  $ \, \Rbq $  and  $ \Fbq $  are generated (as a ring and as a field,
   respectively) by  $ q^{\pm 1} $  alone, while  $ \Rbqsq $  and  $ \Fbqsq $  are generated by  $ q^{\pm 1/2} \, $.
                                                                \par
   Finally, if we consider MpQG's with larger tori, then we take a ground field  $ \F_{\bq_c} $
   and a ground ring $ \R_{\bq_c} $  defined like  $ \Fbq $  and  $ \Rbq $  but replacing the  $ q_{ij}^{-1} $'s
   with the $ q_{ij}^{\pm 1/c} $'s  and  $ q^{\pm 1} $  by  $ q^{\pm 1/c} \, $,  with
   $ \, c_\pm := \big| \text{\sl det}(C_\pm) \big| \, $  and  $ \; c := c_+ \, c_- \; $
   (cf.\ \S \ref{larger-MpQG's},  \S \ref{duality x larger MpQG's}).
\end{free text}

\medskip

\subsection{Integral forms of ``restricted'' type}  \label{Uhat}  \
 \vskip7pt

   Following  \S \ref{mpqgroups},  we consider the multiparameter quantum group  $ \QEq \, $
   associated with  $ \bq \, $,  defined over  $ \k \, $;  also, for the special value of  $ \, q \in \k \, $
   fixed above (depending on  $ \bq \, $),  we pick the MpQG of ``canonical type''  $ \QEqcheck $  as in
   Remark \ref{link_QEq-QE & symm-case}.  Moreover, for each  $ \, \beta \in \Phi^+ \, $
   we consider quantum root vectors  $ E_\beta $  and  $ F_\beta $   --- within  $ \QEq $
   \hbox{and within  $ \QEqcheck \, $  ---   as in  \S \ref{rvec-MpQG}.}
                                                                \par
   Lusztig's quantum groups of ``restricted type'' were introduced  (cf.\ \cite{Lu})
   as special integral forms of his uniparameter quantum group   --- which is ``almost''
   $ \, \QEqcheck \, $  ---   defined in terms of the so-called  ``$ q $--binomial  coefficients'' and
   ``$ q $--divided powers''.  We shall now perform a similar construction in the multiparameter case.

\medskip

\begin{free text}{\bf  $ q $--binomial  coefficients and their arithmetic.}  \label{q-bin_coeff_&_arithmetic}
 Let  $ p $  be any formal indeterminate,  $ \, m \in \NN \, $  and  $ \, J := \{1,2,\dots,m\} \, $.
 We consider the two algebras
  $$  \bE_m  \; := \;  \QQ(p)\big[{\big\{X_i^{\pm 1}\big\}}_{i \in J}\big]  \quad ,  \qquad
     \E_m  \; := \;  \QQ(p)\big[{\big\{\chi_i^{\pm 1}\big\}}_{i \in J}\big]  $$
of Laurent polynomials in the set of indeterminates  $ {\big\{X_i^{\pm 1}\big\}}_{i \in J} $  and
$ {\big\{\chi_i^{\pm 1}\big\}}_{i \in J} $  respectively on the field  $ \QQ(p) $  of rational functions in
$ p $ with coefficients in  $ \QQ \, $.  Both these bear unique Hopf algebra structures   --- over  $ \QQ(p) $
---   for which the  $ X_i^{\pm 1} $'s and the  $ \chi_i^{\pm 1} $'s  are group-like,
i.e.\  $ \, \Delta\big(X_i^{\pm 1}\big) = X_i^{\pm 1} \otimes X_i^{\pm 1} \, $,  $ \, \epsilon\big(X_i^{\pm 1}\big) = 1 \, $,
$ \, S\big(X_i^{\pm 1}\big) = X_i^{\mp 1} = 1 \, $  for  $ \bE_m $  and
$ \, \Delta\big(\chi_i^{\pm 1}\big) = \chi_i^{\pm 1} \otimes \chi_i^{\pm 1} \, $,  $ \, \epsilon\big(\chi_i^{\pm 1}\big) = 1 \, $,
$ \, S\big(\chi_i^{\pm 1}\big) = \chi_i^{\mp 1} = 1 \, $  for  $ \E_m \, $.
\end{free text}

\smallskip

   The following result lists some properties of  $ p $--binomial  coefficients
   (cf.\  Definition \ref{def:q-bin-coeff & q-div_pows}{\it (a)\/}),
   taken from  \cite[\S 3]{DL} (anyway, everything comes easily by induction):

\medskip

\begin{lema}  \label{commut_q-bin-coeff}
 Let  $ \mathcal{A} $  be any algebra over a field  $ \mathbb{F} \, $,  and let  $ \, p \in \mathbb{F} \, $  be {\sl not\/}
 a root of unity.  Let  $ \, X, Y, M^{\pm 1} \in \mathcal{A} \, $  with  $ \, X Y = Y X \, $.  Then for  $ \, t, s \in \NN\, $,  $ \, c \in \ZZ \, $  we have
  $$  \hfill   {\bigg( {{X \, Y \, ; \, c} \atop t} \bigg)}_{\!\!p}  \,\; = \;\,  \sum_{s=0}^t \, p^{\,(s-c_y)(s-t)} \,
  {\bigg( {{X \, ; \, c_{{}_X}} \atop {t - s}} \!\bigg)}_{\!\!p} \, Y^{t-s} \, {\bigg( {{Y \, ; \, c_{{}_Y}} \atop s} \!\bigg)}_{\!\!p}
  \qquad \hfill  \forall \;\; c_{{}_X} \! + c_{{}_Y} = c  $$
  $$  M \, M^{-1} \, = \, 1 \, = \, M^{-1} \, M  \;\; , \qquad  {\left( M \, ; c \atop 0 \right)}_{\!\!p} \, = \, 1  \;\; ,
  \qquad  (\,p - 1) {\left( M \, ; 0 \atop 1 \right)}_{\!\!p} \, = \, M - 1  $$
  $$  M^{\pm 1} {\left( M \, ; c \atop t \right)}_{\!\!p} \, = \, {\left( M \, ; c \atop t \right)}_{\!\!p} M^{\pm 1}  \; ,
  \;\quad  {\left( M \, ; c \atop t \right)}_{\!\!p} {\left( M \, ; c-t \atop s \right)}_{\!\!p} \, = \, {\bigg( {t+s \atop t} \bigg)}_{\!p} \,
  {\left( M \, ; c \atop t\!+\!s \right)}_{\!\!p}  $$
  $$  {\left( M \, ; c+1 \atop t \right)}_{\!\!p} - p^{\,t} {\left( M \, ; c \atop t \right)}_{\!\!p} \, = \, {\left( M \, ; c \atop t\!-\!1 \right)}_{\!\!p}
  \qquad   \eqno  \forall \;\; t \geq 1   \quad  $$
  $$  \phantom{OOOOOO}   {\left( M \, ; c+1 \atop t \right)}_{\!\!p} - {\left( M \, ; c \atop t \right)}_{\!\!p} \, = \, p^{\,c-t+1} \, M \,
  {\left( M \, ; c \atop t\!-\!1 \right)}_{\!\!p}   \qquad \qquad \qquad   \eqno  \forall \, t \geq 1   \quad  $$
%
  $$  {\left( M \, ; c \atop t \right)}_{\!\!p} \, = \, \sum_{s \geq 0}^{s \leq c, t} p^{\,(c-s)(t-s)} {\bigg( {c \atop s} \bigg)}_{\!\!p}
  {\left( M \, ; 0 \atop t\!-\!s \right)}_{\!\! p}   \quad   \eqno  \forall \;\; c \geq 0   \quad  $$
%
  $$  {\left( M \, ; -c \atop t \right)}_{\!\!p} \, = \, \sum_{s=0}^t {(-1)}^s p^{-t(c+s) + {{s+1}\choose{2}}}
  {\bigg( {s+c-1 \atop s} \bigg)}_{\!\!p} {\left( M \, ; 0 \atop t\!-\!s \right)}_{\!\!p}   \eqno  \forall \;\; c \geq 1   \;\;  $$
   If in addition  $ \mathcal{A} $  is a Hopf algebra and  $ M^{\pm 1} $  is group-like, then
  $$  \Delta \left(\! {\bigg( {{M \, ; \, c} \atop t} \bigg)}_{\!\!p} \,\right)  \; = \;  \sum_{r+s=t} \, p^{\,-r\,(s-c_2)} \, {\bigg( {{M \, ; \, c_{{}_1}} \atop r}
  \!\bigg)}_{\!\!p} \otimes \, M^r {\bigg( {{M \, ; \, c_{{}_2}} \atop s} \!\bigg)}_{\!\!p}  \;\;
  \eqno  \forall \;\; c_{{}_1} \! + c_{{}_2} = c   \quad  $$
%
  $$  \epsilon\left(\! {\bigg( {{M \, ; \, c} \atop t} \bigg)}_{\!\!p} \,\right) = \, {\bigg( {c \atop t} \bigg)}_{\!\!p}  \; ,
  \quad  \SS\left(\! {\bigg( {{M \, ; \, c} \atop t} \bigg)}_{\!\!p} \,\right) = \, {(-1)}^t \, p^{\, c\,t - {t \choose 2}} \, M^{-t} \,
  {\bigg( {{M \, ; \, t\!-\!c\!-\!1} \atop t} \bigg)}_{\!\!p}  $$
  $$  \Delta\big( M^{\pm 1} \big) \, = \, M^{\pm 1} \otimes M^{\pm 1}  \;\; ,  \qquad  \epsilon\left( M^{\pm 1} \right) \, =
  \, 1 \;\; ,  \qquad  \SS\left( M^{\pm 1} \right) \, = \, M^{\mp 1} \phantom{\bigg|^{|}}   \eqno  $$
\end{lema}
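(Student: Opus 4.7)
The plan is to prove all the listed identities by a combination of induction on the integer parameter $t$ (or $n$) and direct algebraic manipulation from the defining formula \eqref{q-bin_coeff}, using only that $p$ is not a root of unity so that denominators $p^s - 1$ are invertible. Since all identities are at heart identities in the two-variable commutative polynomial ring $\mathbb{F}[M^{\pm 1}]$ (or in $\mathbb{F}[X^{\pm 1},Y^{\pm 1}]$ in the Vandermonde-type identity), the full strength of the algebra $\mathcal{A}$ is irrelevant: everything reduces to polynomial identities in one or two variables with coefficients in $\mathbb{F}$, which can be verified by comparing expansions in a suitable basis.

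First I would record the basic recurrences that fall straight out of the definition: starting from
$$\binom{M\,;\,c}{t}_{\!p} \;=\; \frac{p^{c+1-t}M - 1}{p^t - 1}\,\binom{M\,;\,c}{t-1}_{\!p}$$
one immediately obtains the two ``Pascal'' identities
$$\binom{M\,;\,c+1}{t}_{\!p} - p^{\,t}\binom{M\,;\,c}{t}_{\!p} \,=\, \binom{M\,;\,c}{t-1}_{\!p}\;,\qquad \binom{M\,;\,c+1}{t}_{\!p} - \binom{M\,;\,c}{t}_{\!p} \,=\, p^{\,c-t+1}M\,\binom{M\,;\,c}{t-1}_{\!p}$$
by isolating $p^{c+1-t}M-1$ in two ways. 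From these two recurrences an induction on $t$ (treating $c$ as a parameter) gives both of the expansion formulas for $\binom{M\,;\,\pm c}{t}_{\!p}$ in terms of $\binom{M\,;\,0}{t-s}_{\!p}$; the two sums are formally dual, so once one proves, say, the positive-$c$ case by iterating the first Pascal identity, the negative-$c$ case follows by iterating the second and reorganizing via the standard $p$-Vandermonde identity for the scalar binomials $\binom{c}{s}_{\!p}$. The identity $\binom{M\,;\,c}{t}_{\!p}\binom{M\,;\,c-t}{s}_{\!p} = \binom{t+s}{t}_{\!p}\binom{M\,;\,c}{t+s}_{\!p}$ I would prove by induction on $s$, again starting from the defining one-step recurrence.

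Next comes the Vandermonde-like formula for $\binom{XY\,;\,c}{t}_{\!p}$. The plan is to induct on $t$, use the first Pascal identity on the left-hand side to pass from $t$ to $t-1$, and on the right-hand side shift the summation index, using $XY = YX$ to freely move $Y^{t-s}$ past the $X$-binomial. The matching of the two sides reduces, after cancellation, to the scalar identity $\binom{t}{s}_{\!p} - p^s\binom{t-1}{s}_{\!p} = \binom{t-1}{s-1}_{\!p}$, which is a classical fact. The splitting condition $c_X + c_Y = c$ gives the correct weight $p^{(s-c_Y)(s-t)}$ in the exponent, and the proof is essentially a bookkeeping exercise once the induction is set up; this is the step where I expect the most careful accounting, as several exponents have to line up simultaneously.

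Finally, for the Hopf-algebraic formulas, I would apply $\Delta$, $\epsilon$, $S$ termwise to the product defining $\binom{M\,;\,c}{t}_{\!p}$, using that $M^{\pm 1}$ is group-like. For $\Delta$, write $\Delta(p^{c+1-s}M - 1) = (p^{c+1-s}M \otimes M) - (1 \otimes 1)$ and rewrite this as a sum of two factorizable pieces using any decomposition $c = c_1 + c_2$; then the Vandermonde formula just proved (applied in the commutative algebra $\mathcal{A} \otimes \mathcal{A}$ to $X = M \otimes 1$, $Y = 1 \otimes M$) produces the desired sum. For $\epsilon$ the claim is immediate since $\epsilon(M) = 1$ reduces each factor to $(p^{c+1-s}-1)/(p^s-1)$. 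For $S$, the formula $S(M) = M^{-1}$ turns the product over $s = 1,\dots,t$ into the product $\prod_{s=1}^t (p^{c+1-s}M^{-1}-1)/(p^s-1)$; factoring out $(-1)\,p^{c+1-s}\,M^{-1}$ from each numerator collects the sign $(-1)^t$, the power $M^{-t}$, and the scalar $p^{\,ct - \binom{t}{2}}$, and rewrites the remaining product as $\binom{M\,;\,t-c-1}{t}_{\!p}$ after reindexing $s \mapsto t+1-s$. No step is conceptually difficult; the main obstacle is simply keeping the exponents of $p$ consistent throughout, and this can be handled by doing the Vandermonde identity once carefully and then deriving the rest formally.
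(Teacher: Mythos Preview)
Your proposal is correct and aligns with the paper's approach: the paper gives no proof at all, merely citing \cite{DL}, \S 3 and remarking that ``everything comes easily by induction.'' Your plan supplies precisely those inductive details --- the Pascal recurrences from the defining product, the Vandermonde-type identity by induction on $t$, and the Hopf structure formulas via termwise application of $\Delta$, $\epsilon$, $S$ combined with the Vandermonde identity in $\mathcal{A}\otimes\mathcal{A}$ --- and each step is sound.
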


\medskip

   Inside the  $ \QQ (p) $--vector  spaces   $ \bE_m $  and  $ \E_m $  we consider the  $ \Zppm $--integral
   form of Laurent polynomials with coefficients in  $ \Zppm \, $,  namely
  $$  \bE_{m,\ZZ}  \; := \;  \Zppm\big[{\big\{X_i^{\pm 1}\big\}}_{i \in J}\big] \quad ,   \qquad
     \E_{m,\ZZ}  \; := \;  \Zppm\Big[{\big\{\chi_i^{\pm 1}\big\}}_{i \in J}\Big]  $$
which in fact are both Hopf subalgebras  (of  $ \bE_m $  and  $ \E_m \, $)  over  $ \Zppm \, $.
 \vskip5pt
   Fix some  $ \, d_i \in \ZZ \setminus \{0\} \, $  and powers  $ \, p_i := p^{d_i} \, $  for each  $ \, i \in J \, $.
   Then a unique  $ \QQ(p) $--bilinear  pairing  $ \; \big\langle\,\ ,\ \big\rangle : \bE_m \times \E_m \relbar\joinrel\longrightarrow \QQ(p) \; $
   exists, given by  $ \; \big\langle X_i^{z_i} , \chi_j^{\zeta_j} \big\rangle := p_i^{\delta_{ij} z_i \zeta_j} \; $ (for all  $ \, z_i , \zeta_j \in \ZZ \, $
   and  $ \, i, j \in J \, $).  By restriction, this clearly yields a similar  $ \Zppm $--valued  pairing between
the  $ \Zppm $--integral  forms  $ {\bE}_{m,\ZZ} $  and  $ \E_{m,\ZZ} \; $;
  indeed, this is even a Hopf pairing (cf.\  Definition \ref{def_(skew-)Hopf-pairing}).
 Finally, define
  $$  {\big( \E_{m,\ZZ} \big)}^\circ  \, := \,  \Big\{\, f \in \bE_m \,\Big|\, \big\langle f \,, \E_{m,\ZZ} \big\rangle \subseteq \Zppm \,\Big\}.  $$

\vskip7pt

%
%
   It follows from definitions and  $ \; {\displaystyle {n \choose k}_{\!\!p_i}} \! \in \Zppm \; $  ($ \, n, k \in \NN \, $),  cf.\  \ref{q-numbers},  that
\begin{equation}   \label{q-binom-coeff_int-val}
  \qquad \qquad   X_i^{\pm 1} \, , \; {\left({{X_i^{\pm 1} \, ; \, c} \atop n} \right)}_{\!\!p_i} \, \in \; {\big( \E_{m,\ZZ} \big)}^\circ
  \qquad \qquad \quad  \forall \,\; i \in J \, , \; c \in \ZZ \, , \; n \in \NN
\end{equation}

\vskip5pt

   Now set  $ \; {\left({{X_J \, ; \, 0} \atop \underline{n}} \right)}_{\!\underline{p}} \, X_J^{-\lfloor\, \underline{n}/2 \rfloor}
   := \prod_{i \in J} {\Big({{X_i \, ; \, 0} \atop n} \Big)}_{\!p_i} \, X_i^{-\lfloor n_i/2 \rfloor} \; $  for every
   $ \, \underline{n} := {\big( n_i \big)}_{i \in J} \in \NN^J \, $,  where  $ \, \big\lfloor n_i/2 \big\rfloor \, $
   is the greatest natural number less or equal than  $ n/2 \; $.  Then we have

\vskip13pt

\begin{prop}  \label{prop:basis_int-on-laur-mons}
 \cite[Theorem 3.1]{DL}
 \vskip4pt
   (a)\,  $ \; {\big( \E_{m,\ZZ} \big)}^\circ \, $  is a free  $ \, \Zppm $--module,
with basis
  $$  {\mathbb B}_m  \; := \;  \Bigg\{ {\left({X_J \atop \underline{n}} \right)}_{\!\!\underline{p}} \,  X_J^{-\lfloor\, \underline{n}/2 \rfloor} \;\Bigg|\;\,
  \underline{n} \in \NN^J \,\Bigg\}  $$
 \vskip4pt
   (b)\,  $ \; {\big( \E_{m,\ZZ} \big)}^\circ \, $  is the  $ \Zppm $--subalgebra  of  $ \bE_m $ generated by all the
   $ {\Big(\! {{X_i \, ; \, c} \atop n} \Big)}_{\!p_i} $'s  and the  $ X_i^{-1} $'s,  or by all the  $ {\Big(\! {{X_i^{-1} \, ; \, c} \atop n} \Big)}_{\!p_i} $'s
   and the  $ X_i $'s.  In fact, it can be presented as the Hopf\/  $ \Zppm $--algebra  with generators
   $ \; {\Big(\! {{X_i \, ; \, c} \atop n} \Big)}_{\!p_i} \, $,  $ \, X_i^{\pm 1} \, $   --- for all  $ \, i \in I \, $,  $ \, n \in \NN \, $,  $ \ c \in \ZZ \, $
   ---   and relations stating that all generators commute with each other plus all relations like in
   Lemma \ref{commut_q-bin-coeff}  but with  $ {\Big(\! {{X_i \, ; \, c} \atop n} \Big)}_{\!p_i} \, $,  $ X_i^{\pm 1} $
   and  $ p_i $  replacing  $ {\Big(\! {{M \, ; \, c} \atop n} \Big)}_{\!p} \, $,  $ M^{\pm 1} $  and  $ p $
   respectively, for all  $ \, i \in I \, $;  the Hopf structure then is given again by the same formulas as in
   Lemma \ref{commut_q-bin-coeff}  now applied to the given generators.
\end{prop}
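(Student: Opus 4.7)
My plan is to reduce to the single-variable case via the tensor product decompositions $\bE_m = \bigotimes_{i \in J} \QQ(p)\big[X_i^{\pm 1}\big]$ and $\E_m = \bigotimes_{i \in J} \QQ(p)\big[\chi_i^{\pm 1}\big]$: the given pairing is the tensor product of the rank-one pairings (with parameter $p_i$ on the $i$-th factor), the integral form $\E_{m,\ZZ}$ splits as the $\Zppm$-tensor product of the rank-one integral forms, and by freeness over $\Zppm$ the dual $(\E_{m,\ZZ})^\circ$ identifies with the tensor product of the rank-one analogues. Both (a) and (b) then reduce to the case $m=1$ (this is essentially \cite[Thm.\ 3.1]{DL}).

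\emph{The rank-one case.} Writing $X = X_1$ and $p = p_1$, the pairing identifies with evaluation at $X = p^\zeta$, so $f \in (\E_{1,\ZZ})^\circ$ if and only if $f(p^\zeta) \in \Zppm$ for every $\zeta \in \ZZ$. A direct computation will give
$$
\left\langle\, {\bigg( {{X; 0} \atop n} \bigg)}_{\!\!p}\, X^{-\lfloor n/2 \rfloor}\,,\ \chi^\zeta \,\right\rangle \;=\; p^{-\lfloor n/2\rfloor\,\zeta}\, {\bigg( {\zeta \atop n} \bigg)}_{\!p}\, ,
$$
and I would check that this lies in $\Zppm$ for all $\zeta$: for $\zeta \geq 0$ this is a usual $p$-binomial, while for $\zeta < 0$ the shift $p^{-\lfloor n/2 \rfloor \zeta}$ is chosen precisely to cancel the negative $p$-powers appearing in $\binom{\zeta}{n}_p$. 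Hence $\mathbb{B}_1 \subseteq (\E_{1,\ZZ})^\circ$; linear independence is clear because each basis element, written as a Laurent polynomial in $X$, has support in a symmetric interval about the origin whose width grows with $n$, yielding a triangular change of basis against $\{X^z\}_{z \in \ZZ}$. For spanning, given $f \in (\E_{1,\ZZ})^\circ$ of Laurent support $[-M,N]$, the values $f(p^\zeta)$ for $\zeta = 0, 1, \ldots, N+M$ will form a linear system in the coefficients of $f$ whose determinant is a Vandermonde of the form $\pm\, p^{\bullet} \prod_{s=1}^{N+M}(p^s - 1)$; applying Cramer's rule and the condition $f(p^\zeta) \in \Zppm$ forces the extreme Laurent coefficients of $f$ to absorb precisely the denominator appearing in the matching basis element of $\mathbb{B}_1$. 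Subtracting the corresponding $\Zppm$-multiple of that basis element shrinks the support, and induction completes the proof.

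\emph{Part (b) and the main obstacle.} Granted (a), the algebra $(\E_{m,\ZZ})^\circ$ is generated over $\Zppm$ by the $\binom{X_i;0}{n}_{p_i}$ together with the $X_i^{\pm 1}$, which are themselves recovered via $X_i = 1 + (p_i - 1)\, {\bigg({{X_i;\, 0} \atop 1}\bigg)}_{\!p_i}$; the general $\binom{X_i; c}{n}_{p_i}$ for $c \neq 0$ are expressed in terms of the $\binom{X_i;0}{k}_{p_i}$ and $X_i^{\pm 1}$ via the identities of Lemma \ref{commut_q-bin-coeff}. For the presentation, I would construct the abstract $\Zppm$-algebra $\widetilde A$ defined by the listed generators and relations, show via iterated use of those same identities that $\widetilde A$ is spanned by the image of $\mathbb{B}_m$, and use the canonical surjection $\widetilde A \twoheadrightarrow (\E_{m,\ZZ})^\circ$ together with the free basis from (a) to deduce injectivity via rank comparison in each $\Zppm$-module summand. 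The hard part will be the Vandermonde-plus-Cramer divisibility argument above: one must match the cyclotomic denominators $\prod_{s}(p^s - 1)$ of the $p$-binomials with the divisibility forced by the evaluation condition at $p^\zeta$, carefully balancing the $\zeta \to +\infty$ and $\zeta \to -\infty$ asymptotics to control both extremes of the Laurent support simultaneously.
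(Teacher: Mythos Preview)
Your overall strategy is sound, but you are making the rank-one step much harder than necessary, and the paper's argument bypasses your ``hard part'' entirely. The paper works directly in the multivariable case (no tensor reduction) and exploits a single observation: pairing the candidate basis element $\binom{X_J}{\underline{n}}_{\!\underline p}\,X_J^{-\lfloor \underline n/2\rfloor}$ against the monomial $\chi_{\underline\nu}$ (for $\underline\nu\in\NN^J$) gives $\prod_i \binom{\nu_i}{n_i}_{p_i}\,p_i^{-\nu_i\lfloor n_i/2\rfloor}$, which is nonzero iff $\underline n \preceq \underline\nu$ componentwise, and equals a \emph{unit} of $\Zppm$ on the diagonal $\underline n = \underline\nu$. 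So the pairing matrix of $\mathbb B_m$ against $\{\chi_{\underline\nu}\}_{\underline\nu\in\NN^J}$ is lower-unitriangular up to units. Given $f\in(\E_{m,\ZZ})^\circ$, expand it in the $\QQ(p)$-basis $\mathbb B_m$, pick a minimal $\underline n^{(\sigma)}$ in the support, and pair with $\chi_{\underline n^{(\sigma)}}$: only the $\sigma$-term survives, forcing its coefficient into $\Zppm$. Induct on the number of terms. No Vandermonde, no Cramer, no cyclotomic-denominator matching.

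Two further comments. First, your tensor reduction step ``by freeness over $\Zppm$ the dual identifies with the tensor product of the rank-one analogues'' is not automatic for infinite-rank free modules; to justify it you would end up reproving exactly the triangularity argument above, so the reduction buys nothing. Second, your support-shrinking induction in rank one is delicate because the Laurent supports of the $b_n$'s are $[-\lfloor n/2\rfloor,\lceil n/2\rceil]$, so consecutive basis elements share either a top or a bottom degree; peeling off an arbitrary support $[-M,N]$ requires alternating between top and bottom, which is doable but fiddly. The pairing-triangularity route sidesteps all of this, since it never looks at Laurent supports at all --- only at which $\chi_{\underline\nu}$ detects which basis element.
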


\begin{proof}
 Due to  (\ref{q-binom-coeff_int-val}),  the  $ \Zppm $--subalgebra  of  $ \bE_m $ generated by all the
 $ {\Big({{X_i \, ; \, c} \atop n} \Big)}_{\!p_i} $'s  and the  $ X_i^{-1} $'s  is contained in  $ {\big( \E_{m,\ZZ} \big)}^\circ $
 --- and similarly if we replace each  $ X_i^{\pm 1} $  with its inverse  $ X_i^{\mp 1} \, $.
 Thus to prove the whole claim it is enough to show that  $ {\big( \E_{m,\ZZ} \big)}^\circ $  admits
 $ \mathbb{B}_m $  as  $ \Zppm $--basis:  indeed, we already have that the  $ \Zppm $--span  of  $ \mathbb{B}_m $
 is contained in  $ {\big( \E_{m,\ZZ} \big)}^\circ $,
so to prove  {\it (a)\/}
 it is enough to show that any  $ \, f \in {\big( \E_{m,\ZZ} \big)}^\circ \, $  can be written uniquely as a
 $ \Zppm $--linear  combination of elements in  $ \mathbb{B}_m \, $.
                                                        \par
   To begin with,  $ \bE_m $  over  $ \QQ(p) $  has basis the set
 $ \, \Big\{ X_J^{\underline{z}} := {\textstyle \prod\limits_{i \in J}} X_i^{z_i} \,\Big|\, \underline{z} := {\big( z_i \big)}_{i \in J} \in \ZZ^J \Big\} \, $
%
 and from this one easily sees that the set
 $ \, {\mathbb B}_m := \Big\{ {\textstyle {\left({X_J \atop \underline{n}} \right)}_{\!\underline{p}}
 X_J^{-\lfloor\, \underline{n}/2 \rfloor}} \,\Big|\; \underline{n} \in \NN^J \,\Big\} \, $  is a  $ \QQ(p) $--basis  too,
 which is contained in  $ {\big( \E_{m,\ZZ} \big)}^\circ $  by  (\ref{q-binom-coeff_int-val}).
 Now consider the monomials  $ \, \chi_{\underline{\nu}} := {\textstyle \prod_{j \in J}} \chi_j^{\nu_j} \, $
 (with  $ \, \underline{\nu} := {\big( \nu_j \big)}_{j \in J} \in \NN^J \, $)  in the  $ \chi_i $'s.
 By construction one has
\begin{equation}   \label{pair_q-bin_Laur-mon}
  \qquad   \left\langle {\left({X_J \atop \underline{n}} \right)}_{\!\!\underline{p}} X_J^{-\lfloor\, \underline{n}/2 \rfloor} \, ,
  \, \chi_{\underline{\nu}} \,\right\rangle  \; = \;  \prod_{i \in J} {\bigg( {\nu_i \atop n_i} \bigg)}_{\!\!p_i} \, p_i^{-\nu_i \, \lfloor n_i/2 \rfloor}
  \qquad   \forall \,\; \underline{n} \, , \, \underline{\nu} \, \in \, \NN^J
\end{equation}
   Let  $ \preceq $  be the order relation in  $ \NN^J $  given by the product of the standard order in  $ \NN \, $,
   so  $ \;\; \underline{n} \preceq \underline{\nu} \iff n_i \leq \nu_i \;\; \forall \; i \in \J \; $.  As
   $ \, {\bigg( {\displaystyle {\nu_i \atop n_i}} \bigg)}_{\!\!p_i} \! \not= 0 \iff n_i \leq \nu_i \, $,  by
   (\ref{pair_q-bin_Laur-mon})  one has
\begin{equation}   \label{pair_q-bin_non-zero}
  \qquad   \left\langle {\left({X_J \atop \underline{n}} \right)}_{\!\!\underline{p}} X_J^{-\lfloor\, \underline{n}/2 \rfloor} \, ,
  \, \chi_{\underline{\nu}} \,\right\rangle  \; \not= \;  0   \;\;\; \iff \;\;\;  \underline{n} \leq \underline{\nu}
  \qquad \quad   \big(\, \underline{n} \, , \, \underline{\nu} \, \in \, \NN^J \,\big)
\end{equation}
   \indent   Now pick an  $ \, f \in {\big( \E_{m,\ZZ} \big)}^\circ \setminus \{0\} \, $,
   and expand it (uniquely)  as a  $ \QQ(p) $--linear  combination of elements in  $ {\mathbb B}_m \, $,
   say  $ \; f = \sum_{s=1}^N c_s \, {\left( {X_J \atop {\underline{n}}^{(s)}} \!\right)}_{\!\!\underline{p}} X^{-\lfloor\, {\underline{n}}^{(s)}/2 \rfloor} \; $
   for some  $ \, c_s \in \QQ(p) \setminus \{0\} \, $.  Choose any index  $ \, \sigma \in \{1,\dots,N\} \, $
   such that  $ \, {\underline{n}}_{\,\sigma} \, $  is  {\sl minimal\/}  in  $ \, \big\{ {\underline{n}}^{(1)} , \dots , {\underline{n}}^{(N)} \big\} \, $:
   then by  (\ref{pair_q-bin_non-zero})  above and by  $ \, {\bigg( {\displaystyle {n \atop n}} \bigg)}_{\!\!p_i} \! = 1 \, $
   we get
  $$  \Big\langle f \, , \underline{\chi}_{\,{\underline{n}}^{(\sigma)}} \Big\rangle  \; = \;
  \sum_{s=1}^N c_s \, \left\langle {\left( {X_J \atop {\underline{n}}^{(s)}} \right)}_{\!\!\underline{p}}
  X_J^{-\lfloor\, {\underline{n}}^{(s)}/2 \rfloor} \, , \, \underline{\chi}_{\,{\underline{n}}^{(\sigma)}} \right\rangle  \;
  = \;  c_\sigma \, \prod_{i \in J} p_i^{-{{\underline{n}}^{(\sigma)}_i} \, \lfloor {{\underline{n}}^{(\sigma)}_i}/2 \rfloor}  $$
so that  $ \, \Big\langle f \, , \underline{\chi}_{\,{\underline{n}}^{(\sigma)}} \Big\rangle \in \Zppm \, $
--- because  $ \, f \in {\big( \E_{m,\ZZ} \big)}^\circ \setminus \{0\} \, $
---   implies at once  $ \, c_\sigma \in \Zppm \, $.  By induction on  $ N $,
one then concludes that  {\sl all\/}  coefficients  $ \, c_s \, (s=1, \dots, N) \, $
in the expansion of  $ f $  actually lie in  $  \Zppm \, $,  q.e.d.
 \vskip4pt
   Finally, the presentation mentioned in claim  {\it (b)\/}  follows from  \cite[\S 3.4]{DL}.
\end{proof}

\medskip

   As a direct consequence of the above Lemma, we have the following:

\medskip

\begin{prop}
  $ {\big( \E_{m,\ZZ} \big)}^\circ $  is a Hopf\/  $ \Zppm $--subalgebra  of  $ \bE_m \, $.
  Therefore, the former is a\/  $ \Zppm $--integral  form of the latter.
\end{prop}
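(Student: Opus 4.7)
The plan has two independent parts: showing the ``integral form'' statement and showing the ``Hopf subalgebra'' statement.

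First, the integrality claim follows almost immediately from part~(a) of Proposition~\ref{prop:basis_int-on-laur-mons}: the set $\mathbb{B}_m$ is simultaneously a $\Zppm$--basis of $(\E_{m,\ZZ})^\circ$ and a $\QQ(p)$--basis of $\bE_m$ (the latter is essentially the observation, already used in the previous proof, that the change of basis from the Laurent monomials $X_J^{\underline z}$ to $\mathbb{B}_m$ is triangular with invertible diagonal entries over $\QQ(p)$). Hence the canonical map $(\E_{m,\ZZ})^\circ \otimes_{\Zppm} \QQ(p) \longrightarrow \bE_m$ sends a $\Zppm$--basis to a $\QQ(p)$--basis, so it is an isomorphism. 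This is also where I would record that $(\E_{m,\ZZ})^\circ$ is torsion--free over $\Zppm$, which we need for free use of base change.

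Next, to prove that $(\E_{m,\ZZ})^\circ$ is a Hopf $\Zppm$--subalgebra it suffices, thanks to part~(b) of Proposition~\ref{prop:basis_int-on-laur-mons}, to check that $\Delta$, $\epsilon$ and $S$ send the generators $X_i^{\pm 1}$ and ${\binom{X_i\,;\,c}{n}}_{\!p_i}$ into (tensor products of) elements of $(\E_{m,\ZZ})^\circ$ with coefficients in $\Zppm$. For the group-like generators $X_i^{\pm 1}$ this is trivial. For the $q$--binomials, I would apply verbatim the three Hopf formulas listed at the end of Lemma~\ref{commut_q-bin-coeff}: the comultiplication formula expresses $\Delta\!\bigl({\binom{X_i\,;\,c}{n}}_{\!p_i}\bigr)$ as a finite sum of terms of the form ${\binom{X_i\,;\,c_1}{r}}_{\!p_i} \otimes X_i^{r}{\binom{X_i\,;\,c_2}{s}}_{\!p_i}$ with coefficients that are powers of $p_i$, which lie in $\Zppm$; the counit formula gives the ordinary $p_i$--binomial $\binom{c}{n}_{p_i}\in\Zppm$; and the antipode formula produces $\pm p_i^{cn-\binom{n}{2}}X_i^{-n}{\binom{X_i\,;\,n-c-1}{n}}_{\!p_i}$, again a product of generators times an element of $\Zppm$. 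Because all the individual tensor factors are among the generators listed in Proposition~\ref{prop:basis_int-on-laur-mons}(b), all three structure maps land in $(\E_{m,\ZZ})^\circ$ (respectively in $(\E_{m,\ZZ})^\circ\otimes_{\Zppm}(\E_{m,\ZZ})^\circ$ for $\Delta$).

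The only subtle point is the tensor product statement for $\Delta$: one must know that the natural map $(\E_{m,\ZZ})^\circ\otimes_{\Zppm}(\E_{m,\ZZ})^\circ \longrightarrow \bE_m\otimes_{\QQ(p)}\bE_m$ is injective, so that ``landing in the tensor square'' is meaningful. This is automatic here because $(\E_{m,\ZZ})^\circ$ is $\Zppm$--free by Proposition~\ref{prop:basis_int-on-laur-mons}(a), and $\Zppm\hookrightarrow\QQ(p)$ is flat. This is the place where, had freeness failed, a real obstacle would appear; as it is, the proof reduces to bookkeeping on the formulas of Lemma~\ref{commut_q-bin-coeff}.
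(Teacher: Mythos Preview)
Your proof is correct and is exactly the argument the paper has in mind: the paper states the proposition without proof, introducing it merely as ``a direct consequence of the above Lemma'' (together with Proposition~\ref{prop:basis_int-on-laur-mons}). You have simply spelled out that consequence in detail --- using the basis from Proposition~\ref{prop:basis_int-on-laur-mons}(a) for the integral form claim and the Hopf structure formulas from Lemma~\ref{commut_q-bin-coeff} on the generators of Proposition~\ref{prop:basis_int-on-laur-mons}(b) for the Hopf subalgebra claim --- and your remark on the injectivity of the tensor square map, while not mentioned in the paper, is a legitimate point to make explicit.
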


\medskip

   For later use, we finish the present discussion with another result that gives the dual, somehow, of what we found for
   $ {\big( \E_{m,\ZZ} \big)}^\circ \, $:  it concerns the ``bidual'' space
  $$  {\Big(\! {\big( \E_{m,\ZZ} \big)}^\circ \Big)}^{\!\circ}  \, := \;  \Big\{\, t \in \E_m \,\Big|\,
  \big\langle \big(\E_{m,\ZZ}\big)^{\circ} , t \,\big\rangle \subseteq \Zppm \,\Big\}  $$

\medskip

\begin{prop}  \label{prop:bidual}
 The ``bidual space''  $ \, {\Big(\! {\big( \E_{m,\ZZ} \big)}^\circ \Big)}^{\!\circ} $  coincides with
$ \E_{m,\ZZ} \, $.
\end{prop}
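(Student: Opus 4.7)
The inclusion $\E_{m,\ZZ}\subseteq\big((\E_{m,\ZZ})^\circ\big)^\circ$ is immediate, since the Hopf-subalgebra statement that follows Proposition \ref{prop:basis_int-on-laur-mons} already records that the pairing sends $(\E_{m,\ZZ})^\circ\times \E_{m,\ZZ}$ into $\Zppm$. For the reverse inclusion, I would pick $t\in\big((\E_{m,\ZZ})^\circ\big)^\circ$ and expand it uniquely in the Laurent-monomial basis of $\E_m$ as $t=\sum_{\underline{\lambda}\in S}c_{\underline{\lambda}}\,\chi_{\underline{\lambda}}$ with $S\subset\ZZ^J$ finite and $c_{\underline{\lambda}}\in\QQ(p)\setminus\{0\}$, and prove $c_{\underline{\lambda}}\in\Zppm$ for every $\underline{\lambda}\in S$.

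As a preliminary reduction, I would shift the support into $\NN^J$. Choose $\underline{\mu}\in\NN^J$ large enough that $S+\underline{\mu}\subseteq\NN^J$ and set $t':=\chi_{\underline{\mu}}\,t$; since $\chi_{\underline{\mu}}$ is a unit of $\E_{m,\ZZ}$, proving $t'\in \E_{m,\ZZ}$ is equivalent to $t\in \E_{m,\ZZ}$. That $t'$ still lies in $\big((\E_{m,\ZZ})^\circ\big)^\circ$ follows from the Hopf-pairing axiom: for any $f\in(\E_{m,\ZZ})^\circ$,
\[
\langle f,\,t'\rangle \;=\; \langle f,\,\chi_{\underline{\mu}}\, t\rangle \;=\; \textstyle\sum \langle f_{(1)},\chi_{\underline{\mu}}\rangle\,\langle f_{(2)},t\rangle\,.
\]
Because $(\E_{m,\ZZ})^\circ$ is a Hopf $\Zppm$-subalgebra of $\bE_m$ (the unnamed proposition immediately preceding the current one), $\Delta(f)\in (\E_{m,\ZZ})^\circ\otimes_{\Zppm}(\E_{m,\ZZ})^\circ$, so each $\langle f_{(1)},\chi_{\underline{\mu}}\rangle$ and $\langle f_{(2)},t\rangle$ belongs to $\Zppm$.

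The main step is then a descending induction on $\mathrm{supp}(t')\subseteq \NN^J$, equipped with the coordinate-wise partial order. I would pick any maximal $\underline{\lambda}_0\in\mathrm{supp}(t')$ and test $t'$ against the basis element $\beta_{\underline{\lambda}_0}:={\left({X_J\atop \underline{\lambda}_0}\right)}_{\!\underline{p}} X_J^{-\lfloor \underline{\lambda}_0/2\rfloor}\in(\E_{m,\ZZ})^\circ$. By the pairing formula \eqref{pair_q-bin_Laur-mon} together with the vanishing criterion \eqref{pair_q-bin_non-zero} (both established in the proof of Proposition \ref{prop:basis_int-on-laur-mons}), $\langle\beta_{\underline{\lambda}_0},\chi_{\underline{\lambda}}\rangle$ vanishes unless $\underline{\lambda}_0\le\underline{\lambda}$; maximality of $\underline{\lambda}_0$ then forces the only surviving contribution to come from $\underline{\lambda}=\underline{\lambda}_0$, giving
\[
\langle\beta_{\underline{\lambda}_0},\,t'\rangle \;=\; c'_{\underline{\lambda}_0}\,\prod_{i\in J}p_i^{-(\lambda_0)_i\lfloor(\lambda_0)_i/2\rfloor}\,.
\]
The prefactor is a unit of $\Zppm$, hence $c'_{\underline{\lambda}_0}\in\Zppm$. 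Subtracting $c'_{\underline{\lambda}_0}\chi_{\underline{\lambda}_0}\in\E_{m,\ZZ}$ from $t'$ yields an element that still belongs to $\big((\E_{m,\ZZ})^\circ\big)^\circ$ but has strictly smaller support, and the induction terminates when the support becomes empty.

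The only point requiring any care is the Hopf-subalgebra property invoked in the shift step, which is however precisely what is recorded right after Proposition \ref{prop:basis_int-on-laur-mons}. Conceptually the whole argument is the triangularity of the $q$-binomial pairing on the basis $\mathbb{B}_m$, now run on the $\E$-side of the pairing, exactly dual to the induction that underpins the proof of Proposition \ref{prop:basis_int-on-laur-mons}.
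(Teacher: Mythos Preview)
Your proof is correct and follows essentially the same approach as the paper: both arguments shift the support of $t$ into $\NN^J$ by multiplying with a Laurent monomial (justified via the Hopf--coalgebra structure of $(\E_{m,\ZZ})^\circ$), then extract the coefficients one by one by pairing against $q$--binomial test elements and exploiting the triangularity recorded in \eqref{pair_q-bin_Laur-mon}--\eqref{pair_q-bin_non-zero}, inducting on the size of the support. The only cosmetic differences are that the paper treats the nonnegative-support case first and then reduces to it, and it pairs against the slightly simpler element ${\left({X_J\atop\underline{\zeta}^\uparrow}\right)}_{\!\underline p}$ rather than the full basis element $\beta_{\underline{\lambda}_0}$; neither change affects the substance.
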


\pf
 By definition  $ \, {\Big(\! {\big( \E_{m,\ZZ} \big)}^\circ \Big)}^{\!\circ} \!\supseteq \E_{m,\ZZ} \; $,
 we have to prove the converse inclusion.
 \vskip3pt
   Let  $ \; t \in {\Big(\! {\big( \E_{m,\ZZ} \big)}^\circ \Big)}^{\!\circ} \, $  and expand it with respect to the  $ \QQ(p) $--basis
   of  $ \E_m $  made of the Laurent monomials  $ \, \chi_{\underline{\zeta}} := {\textstyle \prod_{j \in J}} \chi_j^{\,\zeta_j} \, $
   (with  $ \, \underline{\zeta} := {\big( \zeta_j \big)}_{j \in J} \in \ZZ^J \, $)  in the  $ \chi_j $'s.  This means writing  $ t $  as
   $ \; t \, = \, \sum_{\underline{\zeta} \in \ZZ^J} c_{\,\underline{\zeta}} \, \chi_{\underline{\zeta}} \; $  for suitable
   $ \, c_{\,\underline{\zeta}} \in \QQ(p) \, $,  almost all being zero: we denote by  $ \, n(t) \in \NN \, $  the number of
   all such non-zero coefficients.  We must show that  $ \, t \in \E_{m,\ZZ} \, $,  i.e.\ all the  $ c_{\,\underline{\zeta}} $'s
   belong to  $ \Zppm \, $;  we do it by induction on  $ n(t) \, $.
                                                         \par
   As a first step, we assume that for all  $ \, \underline{\zeta} := {\big(\zeta\big)}_{j \in J} \, $  such that  $ \, c_{\,\underline{\zeta}} \neq 0 \, $
   we have  $ \, \zeta_j \geq 0 \, $  for all  $ \, j \in J \, $.  Then choose a  $ \, \underline{\zeta\,}^\uparrow \in \ZZ^J \, $
   such that  $ \, c_{\,\underline{\zeta\,}^\uparrow} \not= 0 \, $  and  $ \underline{\zeta\,}^\uparrow $  is  {\sl maximal\/}
   for that property with respect to the standard product order on  $ \ZZ^J \, $;  in other words, there exists no
   $ \, \underline{\zeta} \not= \underline{\zeta\,}^\uparrow \, $  such that  $ \, c_{\,\underline{\zeta}} \not= 0 \, $
   and  $ \, \underline{\zeta}_j \geq \underline{\zeta\,}^\uparrow_j \, $  for all  $ \, j \in J \, $.  Then we have
  $$  \Zppm  \; \ni \;  \left\langle {\left( {X_J \atop {\,\underline{\zeta\,}^\uparrow}} \right)}_{\!\!\underline{p}} \, , \, t \right\rangle  \; =
  \;  \sum_{\underline{\zeta} \in \ZZ^J} c_{\,\underline{\zeta}} \left\langle {\left( {X_J \atop {\,\underline{\zeta\,}^\uparrow}} \right)}_{\!\!\underline{p}}
  \, , \, \chi_{\underline{\zeta}}  \right\rangle  \; = \;
  \sum_{\underline{\zeta} \in \ZZ^J} c_{\,\underline{\zeta}} \prod_{j \in J} {\left( {{\zeta_j} \atop {\zeta^\uparrow_j}} \right)}_{\!\!p}  $$
by the maximality of  $ \, \underline{\zeta\,}^\uparrow \, $   --- and the properties of  $ q $--binomial  coefficients ---
we have  $ \; {\Big( {{\zeta_j} \atop {\zeta^\uparrow_j}} \Big)}_{\!\!p} = \delta_{\underline{\zeta} \, , \, \underline{\zeta\,}^\uparrow} \; $,  \,
thus the above eventually gives  $ \; c_{\underline{\zeta\,}^\uparrow} \in \Zppm \; $,  \, q.e.d.  Now look at
$ \; t' \, := \, t - c_{\underline{\zeta\,}^\uparrow} \chi_{\underline{\zeta\,}^\uparrow} \; $:  by construction we have
$ \, n\big(t'\big) = n(t) - 1 \lneqq n(t) \, $,  hence we may assume by induction that  $ \, t' \in \E_{m,\ZZ} \, $.
Then  $ \; t \, = \, t' + c_{\underline{\zeta\,}^\uparrow} \chi_{\underline{\zeta\,}^\uparrow} \, \in \, \E_{m,\ZZ} \; $  too, q.e.d.
                                                         \par
   At last, notice that  $ \, {\Big(\! {\big( \E_{m,\ZZ} \big)}^\circ \Big)}^{\!\circ} \, $  is a  $ \Zppm $--subalgebra:  in fact,
   this follows at once from the fact that  $ \, {\big( \E_{m,\ZZ} \big)}^\circ \, $  is a  $ \Zppm $--coalgebra   ---
   and we have the perfect Hopf pairing  $ \, \langle\,\ ,\ \rangle \, $  between  $ \, \E_m \, $  and  $ \, \bE_m \, $.
   As clearly all the  $ \chi_{\,\underline{\zeta}} $'s  do belong to  $ {\Big(\! {\big( \E_{m,\ZZ} \big)}^\circ \Big)}^{\!\circ} $
   and are invertible in it, it follows that for any  $ \, t \in \E_m \, $  and for any  $ \, \underline{\zeta}' \in \ZZ^J \, $  one has
   $ \, t \in {\Big(\! {\big( \E_{m,\ZZ} \big)}^\circ \Big)}^{\!\circ} \, $  if and only if  $ \; t\,\chi_{\,\underline{\zeta}'} \, \in \,
   {\Big(\! {\big( \E_{m,\ZZ} \big)}^\circ \Big)}^{\!\circ} \; $.  Now, choosing a proper  $ \, \underline{\zeta}' \in \ZZ^J \, $
   we can get  $ \, t\,\chi_{\,\underline{\zeta}'} \, $  such that in its  $ \QQ(p) $--linear  expansion in the  $ \chi_{\,\underline{\zeta}} $'s,
   say  $ \, t\,\chi_{\,\underline{\zeta}'} \, = \, \sum_{\underline{\zeta} \in \ZZ^J} c_{\,\underline{\zeta}} \, \chi_{\underline{\zeta}} \, $,
   for all the  $ \, \underline{\zeta} = {\big( \zeta_j \big)}_{j \in J} \, $'s  such that  $ \, c_{\,\underline{\zeta}} \not= 0 \, $  we have
   $ \, \zeta_j \geq 0 \, $  for all  $ \, j \in J \, $.  But then  $ \, t' := t\,\chi_{\,\underline{\zeta}'} \, $
  \hbox{is of the type we considered above, for which}
 we did prove that  $ \, t' := t\,\chi_{\,\underline{\zeta}'} \in \E_{m,\ZZ} \; $;  \,
 so the previous analysis gives  $ \, t \in \E_{m,\ZZ} \, $  too.
\epf

\smallskip

\begin{free text}{\bf The toral part of restricted MpQG's.}  \label{toral_MpQG's}
 The restricted integral form of a uniparameter quantum group  $ U_{q}(\lieg) $  was introduced by Lusztig as
 $ q $--analogue  of Chevalley's  $ \ZZ $--form  of  $ U(\lieg) \, $:  we consider here its modified version as in
 \cite{DL},  where specific changes were done in the choice of the toral part.
 The construction in  \cite{DL}  immediately extends to  $ \QEqcheck \, $,
 hence now we want to further extend it to the general case of any multiparameter  $ \QE \, $;
 nevertheless, a (mild) restriction on  $ \bq $  is necessary, in the following terms:
 \vskip7pt
   \dbend \ \ --- \  {\it From now on, all along the present section  $ \underline{\text{we assume that\/  $ \bq $
   is of  {\sl integral}}} $  {}
   $ \underline{\text{\sl type}} $  (as well as Cartan, as usual)},  say  $ \; \bq = {\big(\, q_{ij} = q^{\,b_{ij}} \big)}_{i, j \in I} \; $
   as in  \S \ref{multiparameters}.  {\sl Therefore (cf.\  \S \ref{ground-ring})  $ \Rbq \, $,  resp.\  $ \Rbqsq \, $,
   is just the subring of\/  $ \k $  generated by  $ q^{\pm 1} \, $,  resp.\  $ q^{\pm 1/2} \, $,  and  $ \Fbq \, $,
   resp.\  $ \Fbqsq \, $,  is the subfield of\/  $ \k $  generated by  $ q^{\pm 1} \, $,  resp.\  $ q^{\pm 1/2} \, $.}
 \vskip3pt
   In the following, whatever object we shall introduce that bear a structure of module over  $ \Rbq \, $,
   resp.\ over  $ \Fbq \, $,  will also have its natural counterpart defined over  $ \Rbqsq \, $,  resp.\ over
   $ \Fbqsq \, $,  that is also a scalar extension of the previous one.
 \vskip9pt
   In the following,  $ \QE $  will be the MpQG associated with  $ \bq $  as in
   Definition \ref{def:multiqgroup_ang}. Inside it   --- more precisely, inside its toral part ---
   we want to apply the construction presented in  \S \ref{q-bin_coeff_&_arithmetic},  for
   suitable choices of the  $ X_i $'s,  the  $ \chi_i $'s  and the  $ p_i $'s.

\vskip5pt

  Recall that  $ \, I := \{1,\dots,\theta\} \, $.  Define
  $ \, G_i^{\pm 1} := K_i^{\pm 1} L_i^{\mp 1} \, \big( \in U_\bq(\lieh \oplus \lieh) := U_\bq^0 \,\big) \, $  for all
  $ \, i \in I \, $,  and consider inside  $ U_\bq^0 $  the  $ \Fbq $--subalgebra  generated by the  $ K_i^{\pm 1} $'s
  and the  $ G_i^{\pm 1} $'s,  namely
  $ \; \bE_{\,2\theta} := \Fbq\big[ {\big\{ K_i^{\pm 1}, G_i^{\pm 1} \big\}}_{i,j \in I} \big] \; $;
  note also that taking the  $ L_i^{\pm 1} $'s  as generators instead of the  $ G_i^{\pm 1} $'s
  will give the same algebra.  As a matter of fact, since the  $ K_i^{\pm 1} $'s  and the  $ G_i^{\pm 1} $'s
  are group-like, this  $ \bE_{\,2\theta} $  is indeed a Hopf  $ \Fbq $--subalgebra of  $ \, U_\bq^0 \, $.
                                                     \par
   In the dual space  $ {\big(U_\bq^0\big)}^* $  we consider the  $ \Fbq $--algebra  morphisms  $ \dot{\kappa}_i^{\pm 1} $
   and  $ \gamma_i^{\pm 1} $   --- for  $ \, i \in I \, $  ---   uniquely defined by
  $$  \Big\langle K_i^{z_i} , \dot{\kappa}_j^{\zeta_j} \Big\rangle := q^{\,\delta_{ij} z_i \zeta_j} \; ,  \!\quad
      \Big\langle G_i^{z_i} , \dot{\kappa}_j^{\zeta_j} \Big\rangle := 1 \; ,  \;\quad
      \Big\langle K_i^{z_i} , \gamma_j^{\zeta_j} \Big\rangle := 1 \; ,  \!\quad
      \Big\langle G_i^{z_i} , \gamma_j^{\zeta_j} \Big\rangle := q_{ii}^{\,\delta_{ij} z_i \zeta_j}  $$
(cf.\ \S \ref{ground-ring})  for all  $ \, z_i , \zeta_j \in \ZZ \, $  and  $ \, i, j \in J \, $.  Setting also
$ \; \dot{\E}_{\,2\theta} := \Fbq\big[ {\big\{ \dot{\kappa}_i^{\pm 1}, \gamma_i^{\pm 1} \big\}}_{i,j \in I} \big] \; $
for the subalgebra in  $ {\big(U_\bq^0\big)}^* $  generated by the  $ \dot{\kappa}_i^{\pm 1} $'s  and the
$ \gamma_i^{\pm 1} $'s,  these formulas yield a non-degenerate  $ \Fbq $--pairing  between  $ \bE_{\,2\theta} $
and  $ \dot{\E}_{\,2\theta} \, $: in fact, the latter is a Hopf algebra (the  $ \dot{\kappa}_i^{\pm 1} $'s  and
$ \gamma_i^{\pm 1} $'s  being group-like), so this is actually a  {\sl Hopf\/}  pairing.
                                              \par
   Now  $ \bE_{\,2\theta} $  and  $ \dot{\E}_{\,2\theta} \, $,  paired as explained, can play the role of  $ \bE_m $
   and $ \E_m $  in  \S \ref{q-bin_coeff_&_arithmetic}  above, so we apply to them the construction presented there.
   Taking their corresponding  $ \Rbq $--integral  form of Laurent polynomials with coefficients in  $ \Rbq \, $,  namely
  $$  \bE_{\,2\theta,\Rbq} \, := \, \Rbq\big[{\big\{K_i^{\pm 1},G_i^{\pm 1}\big\}}_{i \in I}\big]  \qquad  \text{and}
  \qquad  \dot{\E}_{\,2\theta,\Rbq} := \Rbq\big[{\big\{ \dot{\kappa}_i^{\pm 1}, \gamma_i^{\pm 1} \big\}}_{i \in I}\big]  $$
 we have Hopf subalgebras over  $ \Rbq \, $,  and the previously given pairing restricts to a non-degenerate
 $ \Rbq $--valued  pairing among these two integral forms.
 \vskip5pt
   {\it Now assume in addition that the multiparameter  {\sl  $ \, \bq $  is of strongly integral type},  say
 $ \; \bq = {\big(\, q_{ij} = q^{d_i t^+_{ij}} = q^{d_j t^-_{ij}} \,\big)}_{i, j \in I} \; $}.
 Then besides the previous construction we can perform a second, parallel one, as follows.
                                                     \par
   Inside  $ \dot{\E}_{\,2\theta,\Rbq} $  we consider now  $ \, \kappa_i^{\pm 1} := \dot{\kappa}_i^{\pm d_i} \, $
   (for  $ \, i \in I \, $),  for which we have
  $$  \Big\langle K_i^{z_i} , \kappa_j^{\,\zeta_j} \Big\rangle := q^{\,\delta_{ij} d_i z_i \zeta_j}  \quad ,
  \qquad  \Big\langle G_i^{z_i} , \kappa_j^{\,\zeta_j} \Big\rangle := 1  $$
 for all  $ \, z_i , \zeta_j \in \ZZ \, $  and  $ \, i, j \in J \, $,  and set  $ \; \E_{\,2\theta} :=
\Fbq\big[ {\big\{ \kappa_i^{\pm 1}, \gamma_i^{\pm 1} \big\}}_{i,j \in I} \big] \; $  for the subalgebra in
$ \dot{\E}_{\,2\theta,\Rbq} $   --- hence in  $ {\big(U_\bq^0\big)}^* $  too ---   generated by the
$ \kappa_i^{\pm 1} $'s  and the  $ \gamma_i^{\pm 1} $'s.  Then  $ \E_{\,2\theta} $  is in fact a Hopf
$ \Fbq $--algebra  (the  $ \kappa_i^{\pm 1} $'s  being group-like, like the  $ \gamma_i^{\pm 1} $'s),
and the above formulas provides a non-degenerate Hopf pairing.  Taking now
  $$  \bE_{\,2\theta,\Rbq} \, := \, \Rbq\big[{\big\{K_i^{\pm 1},G_i^{\pm 1}\big\}}_{i \in I}\big]  \qquad  \text{and}
  \qquad  \E_{\,2\theta,\Rbq} := \Rbq\big[{\big\{\kappa_i^{\pm 1},\gamma_i^{\pm 1}\big\}}_{i \in I}\big]  $$
 we have Hopf subalgebras over  $ \Rbq \, $,  and a non-degenerate  $ \Rbq $--valued  pairing between
 them provided by restriction of the previous one.  Basing on all the above, we can now introduce the
 objects we are mainly interested into:
\end{free text}

\vskip5pt

\begin{definition}  \label{def-Uhatdotbq}  {\ }
 \vskip3pt
   \hskip-5pt   {\it (a)} \hskip3pt  $ \Uhatdot_\bq^{\,\raise-5pt\hbox{$ \scriptstyle 0 $}} \, :=
   {\big( \dot{\E}_{\,2\theta,\Rbq} \big)}^\circ
 = \,  \Big\{\, f \! \in \! \bE_{\,2\theta} \,\Big|\, \big\langle f ,
\dot{\E}_{\,2\theta,\Rbq} \big\rangle \subseteq \Rbq \Big\} \; $
 \hskip2pt  if  $ \bq $  is (just)  {\sl integral},
 \vskip2pt
   \hskip-5pt   {\it (b)} \hskip3pt  $ \Uhat_\bq^{\,\raise+1pt\hbox{$ \scriptstyle 0 $}} := {\big( \E_{\,2\theta,\Rbq} \big)}^\circ
 = \,  \Big\{\, f \! \in \! \bE_{\,2\theta} \,\Big|\, \big\langle f ,
\E_{\,2\theta,\Rbq} \big\rangle \subseteq \Rbq \Big\} \; $
%
%
 \hskip2pt  if  $ \bq $  is  {\sl strongly integral}.   \hskip1pt  $ \diamondsuit $
\end{definition}

\vskip7pt

   By the analysis and results in  \S \ref{q-bin_coeff_&_arithmetic},  applied to the present situation, we have

\smallskip

\begin{prop}  \label{prop:struct-Uhatq0}  {\ }
 \vskip4pt
   (a)\,  $ \; \Uhatdot_\bq^{\,\raise-5pt\hbox{$ \scriptstyle 0 $}} \, $  is a Hopf  $ \, \Rbq $--subalgebra  of
   $ \bE_{\,2\theta} \, $, generated by all the  $ {\Big(\! {{K_i \, ; \, c} \atop k} \Big)}_{\!q} $'s,  the  $ K_i^{\pm 1} $'s,
   the  $ {\Big(\! {{G_i^{-1} \, ; \, c} \atop g} \Big)}_{\!q_{ii}} $'s  and the  $ G_i^{\pm 1} $'s.
 \vskip4pt
   (b)\,  $ \; \Uhatdot_\bq^{\,\raise-5pt\hbox{$ \scriptstyle 0 $}} \, $  is a free  $ \, \Rbq $--module,  with basis   --- cf.\  \eqref{q-bin_coeff}  for notation ---
  $$  \Bigg\{ \prod_{i=1}^\theta {\left({K_i \atop k_i} \right)}_{\!\!q} \,
  K_i^{-\lfloor\, k_i/2 \rfloor} \prod_{i=1}^\theta {\left({G_i \atop g_i} \right)}_{\!\!q_{ii}} \,  G_i^{-\lfloor\, g_i/2 \rfloor} \;\Bigg|\;\, k_i, g_i \in \NN \;\;
  \forall \; i=1,\dots,\theta \,\Bigg\}  $$
 \vskip4pt
   (c)\,  $ \; \Uhatdot_\bq^{\,\raise-5pt\hbox{$ \scriptstyle 0 $}} \, $  is isomorphic to the Hopf\/  $ \Rbq $--algebra  with generators
 $ \; {\Big(\! {{K_i \, ; \, c} \atop k_i} \Big)}_{\!q} \, $,  $ \, K_i^{\pm 1} \, $,
 $ \; {\Big(\! {{G_i \, ; \, c} \atop g_i} \Big)}_{\!q_{ii}} \, $,  $ \, G_i^{\pm 1} \, $
 (for all  $ \, i \in I \, $,  $ \, k_i, g_i \in \NN \, $,  $ \ c \in \ZZ \, $)
and relations stating that all these generators commute with each other, plus all relations like in
Lemma \ref{commut_q-bin-coeff}  but with
 $ {\Big(\! {{K_i \, ; \, c} \atop k_i} \Big)}_{\!q} \, $,  $ K_i^{\pm 1} $,  $ q $  and
 $ {\Big(\! {{G_i \, ; \, c} \atop g_i} \Big)}_{\!q_{ii}} \, $,  $ G_i^{\pm 1} $,  $ q_{ii} $
replacing  $ {\Big(\! {{M \, ; \, c} \atop n} \Big)}_p \, $,  $ M^{\pm 1} $  and  $ p $  respectively, for all  $ \, i \in I \, $.
Accordingly, the Hopf structure of  $ \Uhatdot_\bq^{\,\raise-5pt\hbox{$ \scriptstyle 0 $}} $  is also given in terms of
generators by formulas as in  Lemma \ref{commut_q-bin-coeff}  now applied to the given generators.
 \vskip4pt
   (d)--(e)--(f)\;  When\/  $ \bq $  is  {\sl strongly integral},  similar claims hold true for
   $ \, \Uhat_\bq^{\,\raise+1pt\hbox{$ \scriptstyle 0 $}} \, $,  up to replacing everywhere each generator
   $ {\Big(\! {{K_i \, ; \, c} \atop k_i} \Big)}_{\!q} $  and the parameter  $ q $  with the corresponding generator
   $ {\Big(\! {{K_i \, ; \, c} \atop k_i} \Big)}_{\!q_i} $  and the parameter  $ \, q_i = q^{d_i} \, $,  respectively.
 \vskip2pt
   (g)\;  $ \, \Uhatdot_\bq^{\,\raise-5pt\hbox{$ \scriptstyle 0 $}} \, $  is a Hopf  $ \Rbq $--subalgebra  of
   $ \; \Uhat_\bq^{\,\raise+1pt\hbox{$ \scriptstyle 0 $}} \, $.
\end{prop}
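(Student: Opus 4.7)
The strategy is to reduce the entire proposition to the general result Proposition \ref{prop:basis_int-on-laur-mons} via a tensor-product factorisation of the toral pairing. Both ambient algebras split naturally over $\Fbq$ as tensor products of two rank-$\theta$ Laurent polynomial Hopf algebras,
\[
\bE_{\,2\theta} \, \cong \, \Fbq\bigl[K_i^{\pm 1}\bigr]_{i\in I} \otimes_{\Fbq} \Fbq\bigl[G_i^{\pm 1}\bigr]_{i\in I} \; , \qquad
\dot{\E}_{\,2\theta} \, \cong \, \Fbq\bigl[\dot{\kappa}_i^{\pm 1}\bigr]_{i\in I} \otimes_{\Fbq} \Fbq\bigl[\gamma_i^{\pm 1}\bigr]_{i\in I} \; ,
\]
and the integral structures factor in the same way. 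The defining formulas for the pairing show $\langle K_i^{z_i}, \gamma_j^{\zeta_j} \rangle = 1 = \langle G_i^{z_i}, \dot{\kappa}_j^{\zeta_j} \rangle$, so the full pairing is the tensor product of two independent non-degenerate pairings: one between $\Fbq[K]$ and $\Fbq[\dot{\kappa}]$ with parameter $p_i = q$ for every $i$, the other between $\Fbq[G]$ and $\Fbq[\gamma]$ with parameter $p_i = q_{ii}$.

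I then apply Proposition \ref{prop:basis_int-on-laur-mons} separately to each tensor factor. Because the pairing factors and $\dot{\E}_{\,2\theta,\Rbq}$ itself factors as $\Rbq[\dot{\kappa}] \otimes_{\Rbq} \Rbq[\gamma]$, the dual $\Uhatdot_\bq^{\,0}$ coincides with the tensor product of the two factor-wise duals; this commutation of $(\cdot)^\circ$ with $\otimes$ is the one point requiring scrutiny, but it is verified concretely by exhibiting the tensor product of the two $\Rbq$-bases of Proposition \ref{prop:basis_int-on-laur-mons}(a) and running the minimality-in-multi-indices argument from the proof of that proposition on $\bE_{\,2\theta}$. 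This immediately yields (a) and (b). For (c), the presentation is obtained by combining the two factor-wise presentations of Proposition \ref{prop:basis_int-on-laur-mons}(b) and adjoining commutativity between the two families of generators, which holds since all $K_i$'s and $G_j$'s commute in $\bE_{\,2\theta}$; no extra cross-relations appear because the cross pairings are trivial.

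Parts (d), (e), (f) follow by the verbatim same argument, substituting $\dot{\kappa}_i$ by $\kappa_i = \dot{\kappa}_i^{d_i}$ and, correspondingly, the parameter $q$ on the ``$K$-side'' by $q_i = q^{d_i}$; the strongly integral hypothesis on $\bq$ is exactly what ensures that $\kappa_i$ is a genuine Laurent monomial in $\dot{\kappa}_i$ and that the resulting pairing values remain in $\Rbq$. Finally, (g) is essentially tautological: since $\kappa_i = \dot{\kappa}_i^{d_i}$ lies in $\dot{\E}_{\,2\theta,\Rbq}$, we have $\E_{\,2\theta,\Rbq} \subseteq \dot{\E}_{\,2\theta,\Rbq}$, which on taking $\circ$-duals reverses to give $\Uhatdot_\bq^{\,0} = (\dot{\E}_{\,2\theta,\Rbq})^\circ \subseteq (\E_{\,2\theta,\Rbq})^\circ = \Uhat_\bq^{\,0}$; this is automatically a Hopf $\Rbq$-subalgebra inclusion since both sides inherit their Hopf structure from the ambient $\bE_{\,2\theta}$. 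The principal obstacle, such as it is, is keeping clean bookkeeping of the two parallel descriptions (integral vs.\ strongly integral) and of the tensor-product factorisation of duals; no genuinely new computation beyond Proposition \ref{prop:basis_int-on-laur-mons} is needed.
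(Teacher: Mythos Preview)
Your proposal is correct and follows essentially the same approach as the paper, which gives no explicit proof but simply records that the proposition follows from ``the analysis and results in \S\ref{q-bin_coeff_&_arithmetic}, applied to the present situation.'' Your tensor-factorisation viewpoint is a minor organisational variant: the paper's implicit argument applies Proposition~\ref{prop:basis_int-on-laur-mons} directly with $m=2\theta$ (the parameters $p_i$ there are allowed to vary with $i$, so one can take $p_i=q$ for the $K_i$-variables and $p_i=q_{ii}$ for the $G_i$-variables in a single shot), while you split into two rank-$\theta$ factors; both are equivalent, and your handling of~(g) via the inclusion $\E_{2\theta,\Rbq}\subseteq \dot{\E}_{2\theta,\Rbq}$ reversing under $(\cdot)^\circ$ is exactly the intended mechanism.
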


\vskip9pt
   One last important observation is in order:

\vskip11pt

\begin{rmk}  \label{rmk:nosqua-intform}
 Definitions imply that, beside all the generators
 $ G_i^{\pm 1} $,  $ {\Big(\! {{G_i \, ; \, c} \atop g_i} \Big)}_{\!q_{ii}} \, $,  $ K_i^{\pm 1} $  and
 $ {\Big(\! {{K_i \, ; \, c} \atop k_i} \Big)}_{\!q} $,  the algebra  $ \Uhatdot_\bq^{\,\raise-5pt\hbox{$ \scriptstyle 0 $}} $
 contains all  $ L_i^{\pm 1} $'s  and  $ {\Big(\! {{L_i \, ; \, c} \atop l_i} \Big)}_{\!q} \, $'s
 --- as they give values in  $ \Rbq $  when paired with  $ \E_{\,2\theta,\Rbq} \, $.
 This restores a perfect ``symmetry'' in the roles of the  $ K_i $'s  and the  $ L_i $'s,
 which is not apparent in the very definition of  $ \Uhatdot_\bq^{\,\raise-5pt\hbox{$ \scriptstyle 0 $}} \, $.
 Indeed, one can easily prove that  $ \Uhatdot_\bq^{\,\raise-5pt\hbox{$ \scriptstyle 0 $}} $
 can be also generated by generators built from  ``$ L $''  instead of  ``$ K $'';  and similarly for
 $ \Uhat_\bq^{\,\raise+1pt\hbox{$ \scriptstyle 0 $}} \, $.  So replacing  ``$ K $''  by  ``$ L $''
 everywhere yields a twin statement of  Proposition \ref{prop:struct-Uhatq0}.
\end{rmk}

\vskip7pt

\begin{free text}{\bf Restricted MpQG's.}  \label{restricted_MpQG's}
 We are now ready to introduce our generalization to MpQG's of the notion of restricted integral
 form introduced by Lusztig for  $ U_{q}(\lieg) $  (and later modified in  \cite{DL}).
 We keep the restriction that  $ \bq $  must be of  {\sl integral type},  say
 $ \; \bq = {\big(\, q_{ij} = q^{b_{ij}} \big)}_{i, j \in I} \; $  with  $ \, B = {\big( b_{ij} \big)}_{i,j \in I} \in M_\theta(\ZZ) \, $,
 as in  \S \ref{multiparameters}.
                                                   \par
   Again, hereafter  $ \QE $  denotes the MpQG associated with  $ \bq $  as in  Definition \ref{def:multiqgroup_ang}.
 \vskip5pt
   We recall from  Definition \ref{def:q-bin-coeff & q-div_pows}{\it (b)\/}  the notion of  {\it  $ q $--divided  powers\/}:
   given  $ \, i \in I \, $,  $ \, \alpha \in \Phi^+ \, $,  $ \, X_i \in \big\{ E_i \, , F_i \,\big\} \, $,
   $ \, Y_\alpha \in \big\{ E_\alpha \, , F_\alpha \,\big\} \, $  and  $ \, n \in \NN \, $,  we call  {\it  $ q $--divided  powers\/}
   the elements  $ \,\; X_i^{\,(n)} :=
 X_i^n \Big/ \! {(n)}_{q_{ii}\!}! \;\, $  and  $ \,\; Y_\alpha^{\,(n)} :=
 Y_\alpha^n \Big/\! {(n)}_{q_{\raise-2pt\hbox{$ \scriptscriptstyle \alpha \alpha $}}\!\!}! \;\, $  in  $ \QEq \, $.
\end{free text}

   The following result, about commutation relations between quantum binomial coefficients and
   quantum divided powers, is proved by straightforward induction:

\vskip13pt

\begin{lema}  \label{lemma:commut_q-div-pows}
 Let  $ \; \bq = {\big(\, q_{ij} = q^{\,b_{ij}} \big)}_{i, j \in I} \, $  be of integral type.
 \vskip4pt
   (a)\;  For any  $ \, i \in I \, $,  $ \, m, n, h \in \NN \, $,  $ \, c \in \ZZ \, $,  $ \, \kappa ,
   \lambda \in Q \, $,  $ \, X , Y \in \big\{ K_\kappa L_\lambda \,\big|\, \kappa , \lambda \in Q \,\big\} \, $  and
   $ \, G_i^{\pm 1} := K_i^{\pm 1} L_i^{\mp 1} $,  we have
  $$  \displaylines{
   {\bigg( {{K_\kappa \, L_\lambda \, ; \, c} \atop h} \bigg)}_{\!\!q} \, F_j^{\,(n)}  \,\; = \;\,  F_j^{\,(n)} \,
   {\bigg( {{K_\kappa \, L_\lambda \, ; \, c - n \, {\big( B^{\scriptscriptstyle \, T} \!
   \cdot \kappa - B \cdot \lambda \big)}_j} \atop h} \bigg)}_{\!\!q}  \cr
   {\bigg( {{K_\kappa \, L_\lambda \, ; \, c} \atop h} \bigg)}_{\!\!q} \, E_j^{\,(n)}  \,\; = \;\,  E_j^{\,(n)} \,
   {\bigg( {{K_\kappa \, L_\lambda \, ; \, c + n \, {\big( B^{\scriptscriptstyle \, T} \! \cdot \kappa -
   B \cdot \lambda \big)}_j} \atop h} \bigg)}_{\!\!q}  \cr
   {\bigg( {{G_i \, ; \, c} \atop h} \bigg)}_{\!\!q_{ii}} \! F_j^{\,(n)}  = \,  F_j^{\,(n)}
   {\bigg( {{G_i \, ; \, c - n a_{ij}} \atop h} \!\bigg)}_{\!\!q_{ii}}  \; , \quad
    {\bigg( {{G_i \, ; \, c} \atop h} \bigg)}_{\!\!q_{ii}} \! E_j^{\,(n)}  = \,  E_j^{\,(n)}
    {\bigg( {{G_i \, ; \, c + n a_{ij}} \atop h} \!\bigg)}_{\!\!q_{ii}}  \cr
    E_i^{\,(m)} \, F_i^{\,(n)}  \; = \;  {\textstyle\sum_{s=0}^{m \wedge n}} F_i^{\,(n-s)} \, q_{ii}^{\,s} \,
    {\bigg( {{G_i \, ; \, 2\,s - m - n} \atop s} \bigg)}_{\!\!q_{ii}} \! L_i^{\,s} \, E_i^{\,(m-s)}  }  $$
where  $ \; {\big( B^{\scriptscriptstyle \, T} \! \cdot \kappa - B \cdot \lambda \big)}_j =
\sum_{i \in I} \! \big(\, b_{ij} \kappa_i - b_{ji} \lambda_i \big) \; $  for  $ \; \kappa = \sum_{i \in I} \kappa_i \, \alpha_i \, $,
$ \, \lambda = \sum_{i \in I} \lambda_i \, \alpha_i \, $.
 \vskip4pt
   Moreover, for the Hopf structure, on  $ q $--divided  powers we have formulas
  $$  \displaylines{
   \Delta\Big(\! E_i^{(n)} \Big) = {\textstyle \sum\limits_{s=0}^n} E_i^{\,(n-s)} K_i^{\,s} \otimes E_i^{\,(s)}  \, ,  \;\;
   \epsilon\Big(\! E_i^{\,(n)} \Big) = \delta_{n,0}  \, ,  \;\;  \SS\Big(\! E_i^{\,(n)} \Big) = {(-1)}^n q_{ii}^{+{n \choose 2}} K_i^{\,-n} E_i^{\,(n)}   \cr
   \Delta\Big(\! F_i^{(n)} \Big) = {\textstyle \sum\limits_{s=0}^n} F_i^{\,(n-s)} \otimes F_i^{\,(s)} L_i^{\,n-s}  \, ,  \;\;
   \epsilon\Big(\! F_i^{\,(n)} \Big) = \delta_{n,0}  \, ,  \;\;  \SS\Big(\! F_i^{\,(n)} \Big) = {(-1)}^n q_{ii}^{-{n \choose 2}} F_i^{\,(n)} L_i^{\,-n} }  $$
while for  $ K_i^{\pm 1} $,  $ G_i^{\pm 1} $  and  $ q $--binomial  coefficients
$ {\Big( {{K_i \, ; \, c} \atop h} \Big)}_{\!\!q} \, $,  $ {\Big( {{G_i \, ; \, c} \atop h} \Big)}_{\!\!q_{ii}} $
we have formulas like in  Lemma \ref{commut_q-bin-coeff},  with  $ M $  and  $ p $  replaced by  $ K_i $
and  $ q $  or by  $ G_i $  and  $ q_{ii} \, $.
 \vskip4pt
   (b)\;  In addition,  {\sl if  $ \, \bq $  is of strongly integral type},  say
 $ \; \bq = {\big(\, q_{ij\!} = q^{d_i t^+_{ij}} \! = q^{d_j t^-_{ij}} \big)}_{i, j \in I} \, $, \,
 then besides all the above formulas we also have
  $$  \displaylines{
   {\bigg(\! {{K_i \, ; \, c} \atop h} \bigg)}_{\!\!q_i} \! F_j^{\,(n)}  \, = \,  F_j^{\,(n)} \, {\bigg(\! {{K_i \, ; \, c - n \, t^+_{ij}} \atop h} \bigg)}_{\!\!q_i}  \; ,  \quad
    {\bigg(\! {{K_i \, ; \, c} \atop h} \bigg)}_{\!\!q_i} \! E_j^{\,(n)}  \, = \,  E_j^{\,(n)} \, {\bigg(\! {{K_i \, ; \, c + n \, t^+_{ij}} \atop h} \bigg)}_{\!\!q_i}
 }  $$
  $$  \displaylines{
   \, {\bigg(\! {{L_i \, ; \, c} \atop h} \bigg)}_{\!\!q_i} \! F_j^{\,(n)}  \, = \,  F_j^{\,(n)} \, {\bigg(\! {{L_i \, ; \, c + n \, t^-_{ji}} \atop h} \bigg)}_{\!\!q_i}  \; ,  \quad \;
    {\bigg(\! {{L_i \, ; \, c} \atop h} \bigg)}_{\!\!q_i} \! E_j^{\,(n)}  \, = \,  E_j^{\,(n)} \, {\bigg(\! {{L_i \, ; \, c - n \, t^-_{ji}} \atop h} \bigg)}_{\!\!q_i}  }  $$
and formulas for the Hopf structure on  $ q $--binomial  coefficients  $ {\Big( {{K_i \, ; \, c} \atop n} \Big)}_{\!\!q_i} \, $  and
$ {\Big( {{L_i \, ; \, c} \atop n} \Big)}_{\!\!q_i} \, $  like in  Lemma \ref{commut_q-bin-coeff},  with  $ M $  and  $ p $
replaced by  $ K_i $  and  $ q_i $  or by  $ L_i $  and  $ q_i \, $.   \hfill
\end{lema}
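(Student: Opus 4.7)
The plan is to verify all stated formulas by straightforward induction, reducing everything to the defining relations of $\QEq$ in Definition \ref{def:multiqgroup_ang} and to the arithmetic of $q$-binomial coefficients in Lemma \ref{commut_q-bin-coeff}. First I would establish the base commutation $K_\kappa L_\lambda \, E_j = q^{(B^T\kappa - B\lambda)_j} E_j \, K_\kappa L_\lambda$ by taking products of the generating relations $K_i E_j K_i^{-1} = q^{b_{ij}} E_j$ and $L_i E_j L_i^{-1} = q^{-b_{ji}} E_j$. Since $q$ is not a root of unity, the quantum factorial $(n)_{q_{ii}}!$ is invertible in $\k$, so $E_j^{(n)}$ is a scalar multiple of $E_j^n$ and satisfies the analogous relation with $q^{n(B^T\kappa - B\lambda)_j}$ replacing $q^{(B^T\kappa - B\lambda)_j}$. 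Substituting $X = K_\kappa L_\lambda$ into $\binom{X;c}{h}_q = \prod_s (q^{c+1-s} X - 1)/(q^s-1)$ and pulling $E_j^{(n)}$ through from left to right shifts each $X$-factor by the scalar $q^{n(B^T\kappa - B\lambda)_j}$, which is precisely the substitution $c \mapsto c + n(B^T\kappa - B\lambda)_j$. This delivers the first stated formula, with the $F_j^{(n)}$-version obtained by sign reversal; the two $G_i$-formulas then follow from $G_i = K_i L_i^{-1}$ (so $\kappa = \alpha_i$, $\lambda = -\alpha_i$), using $b_{ij} + b_{ji} = 2 d_i a_{ij}$ together with $q^{2 d_i} = q_{ii}$ to rewrite the shifts in base $q_{ii}$.

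The exchange formula relating $E_i^{(m)}$ and $F_i^{(n)}$ is the technical core and I would handle it by induction on $m + n$. The base case is the defining relation $[E_i, F_i] = q_{ii}(K_i - L_i)/(q_{ii}-1)$, which one rewrites (using $G_i L_i = K_i$) as $q_{ii} \binom{G_i; 0}{1}_{q_{ii}} L_i / (q_{ii}-1) \cdot (q_{ii}-1)$, matching the $s = 1$ summand for $m = n = 1$. The inductive step will use the already-established commutation of $G_i$-binomials and $L_i^s$ through $E_i^{(m')}$ and $F_i^{(n')}$, plus the two standard recursion identities from Lemma \ref{commut_q-bin-coeff} linking $\binom{G_i;c}{s}_{q_{ii}}$ to $\binom{G_i;c \pm 1}{s}_{q_{ii}}$. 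The main obstacle here is the bookkeeping of the shift $2s - m - n$ and the $q_{ii}^s$-coefficient: these emerge from a telescoping argument identical to the uniparameter case treated in \cite{Lu} and \cite{DL}, since $[E_i, F_i]$ in $\QEq$ differs from its analogue in $\QEqcheck$ only by the scalar $q_{ii}/(q_{ii}-1)$, which does not interact with the $K_i, L_i$-commutations. Alternatively, one may invoke Theorem \ref{thm:sigma_2-cocy} together with the comparison formulas of \S \ref{subsubsec:comp-formulas} to transfer the known identity in $\QEqcheck$ across the cocycle deformation.

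For the Hopf structure, the coproduct on $E_i^{(n)}$ follows from $\Delta(E_i) = E_i \otimes 1 + K_i \otimes E_i$ together with the observation that, in $\QEq \otimes \QEq$, one has $(K_i \otimes E_i)(E_i \otimes 1) = q_{ii} \, (E_i \otimes 1)(K_i \otimes E_i)$. The quantum binomial theorem, valid because $q_{ii}$ is not a root of unity, then gives $\Delta(E_i^n) = \sum_s \binom{n}{s}_{q_{ii}} E_i^{n-s} K_i^s \otimes E_i^s$; dividing by $(n)_{q_{ii}}!$ yields the stated expression. The analogous calculation for $F_i$, starting from $\Delta(F_i) = F_i \otimes L_i + 1 \otimes F_i$, works out after tracking the opposite direction of $q$-commutation and using the identity $\binom{n}{s}_{q^{-1}} = q^{-s(n-s)} \binom{n}{s}_q$ to absorb a compensating factor. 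The counit and antipode formulas for $E_i^{(n)}, F_i^{(n)}$ are immediate inductions on $n$ from $\epsilon(E_i) = \epsilon(F_i) = 0$ and the antipode values on the generators. The Hopf-algebraic formulas for the $q$-binomial coefficients in $K_i$ and $G_i$ are direct specializations of Lemma \ref{commut_q-bin-coeff} to $(M, p) = (K_i, q)$ and $(G_i, q_{ii})$. Part (b) is finally only a notational variant: under the strongly integral hypothesis $b_{ij} = d_i t^+_{ij} = d_j t^-_{ij}$, the relations $K_i E_j K_i^{-1} = q_i^{t^+_{ij}} E_j$ and $L_i E_j L_i^{-1} = q_i^{-t^-_{ji}} E_j$ allow exactly the same shift argument to be carried out in base $q_i$ in place of $q$, and the Hopf formulas are again instances of Lemma \ref{commut_q-bin-coeff}.
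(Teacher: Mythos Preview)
Your proposal is correct and takes essentially the same approach as the paper: the paper itself gives no detailed proof of this lemma, stating only that it ``is proved by straightforward induction,'' which is precisely what you carry out. Your reduction to the generating relations of Definition \ref{def:multiqgroup_ang}, the shift argument for the $q$-binomial coefficients, the induction on $m+n$ for the exchange formula, and the quantum binomial theorem for the coproduct are all the expected ingredients, and your remarks on part (b) and on the $G_i$-case via $b_{ij}+b_{ji}=2d_i a_{ij}$ are accurate.
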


\vskip7pt

   We can now extend Lusztig's definition of  ``{\sl restricted\/}  quantum universal enveloping algebra''.
   Indeed, a straightforward extension requires that  $ \bq $  be strongly integral; nevertheless,
   we consider also a more general definition when  $ \bq $  is just integral.

\vskip11pt

\begin{definition}  \label{def:int-form_hat-MpQG}
  Let  $ \, U_\bq(\hskip0,8pt\lieg) \, $  be a MpQG over the field  $ \Fbq $  as in  Definition \ref{def:multiqgroup_ang}.
  We define a bunch of  $ \Rbq $--subalgebras  of  $ U_\bq(\hskip0,8pt\lieg) \, $,  with a specific a set of generators,
as follows:
 \vskip4pt
   {\it (a)}\,  {\sl If  $ \bq $  is of integral type},  we set
  $$  \displaylines{
   \Uhatdot_\bq^{\,\raise-5pt\hbox{$ \scriptstyle - $}}  \; := \;  \Big\langle\, F_i^{\,(n)} \,\Big\rangle_{i \in I , \, n \in \NN}  \quad ,
   \qquad  \Uhatdot_\bq^{\,\raise-5pt\hbox{$ \scriptstyle + $}}  \; := \;
\Big\langle\, E_i^{\,(n)} \,\Big\rangle_{i \in I , \; n \in \NN}  \cr
   \Uhatdot_\bq^{\raise-5pt\hbox{$ \scriptstyle \,-,0 $}}  \, := \,  \bigg\langle\, L_i^{\pm 1} \, ,
   \, {\bigg( {L_i \atop n} \bigg)}_{\!\!q} \,\bigg\rangle_{\! i \in I \, , \, n \in \NN}  \;\; ,
     \quad  \Uhatdot_\bq^{\,\raise-5pt\hbox{$ \scriptstyle \leq $}}  \, := \,  \bigg\langle\, F_i^{\,(n)} \, ,
     \, L_i^{\pm 1} \, , \, {\bigg( {L_i \atop n} \bigg)}_{\!\!q} \,\bigg\rangle_{\! i \in I \, , \, n \in \NN}  \cr
   \Uhatdot_\bq^{\raise-5pt\hbox{$ \scriptstyle \,+,0 $}}  \, := \,  \bigg\langle\, K_i^{\pm 1} \, ,
   \, {\bigg( {K_i \atop n} \bigg)}_{\!\!q} \,\bigg\rangle_{\! i \in I \, , \, n \in \NN}  \;\; ,
     \quad  \Uhatdot_\bq^{\,\raise-5pt\hbox{$ \scriptstyle \geq $}}  \, := \,  \bigg\langle\, K_i^{\pm 1} \, ,
     \, {\bigg( {K_i \atop n} \bigg)}_{\!\!q} \, , \, E_i^{\,(n)} \,\bigg\rangle_{\! i \in I \, , \, n \in \NN}  \cr
   \Uhatdot_\bq  \, = \,  \Uhatdot_\bq(\hskip0,8pt\lieg)  \; := \;
 \bigg\langle\, \Uhatdot_\bq^{\,\raise-5pt\hbox{$ \scriptstyle 0 $}} \,\;{\textstyle \bigcup}\;
{\Big\{ F_i^{\,(n)} , \, E_i^{\,(n)} \Big\}}_{\! i \in I \, , \, n \in \NN} \,\bigg\rangle  }  $$
 \vskip4pt
   {\it (b)}\,  {\sl If  $ \bq $  is of  {\it strongly}  integral type},  we set
  $$  \displaylines{
   \Uhat_\bq^{\,-}  \; := \;  \Big\langle\, F_i^{\,(n)} \,\Big\rangle_{i \in I , \, n \in \NN}  \;\;
   \Big( = \;  \Uhatdot_\bq^{\raise-5pt\hbox{$ \scriptstyle \,- $}} \;\Big)  \quad ,
 \qquad  \Uhat_\bq^{\,+}  \; := \;  \Big\langle\, E_i^{\,(n)} \,\Big\rangle_{i \in I , \; n \in \NN}  \;\;
 \Big( = \;  \Uhatdot_\bq^{\raise-5pt\hbox{$ \scriptstyle \,+ $}} \;\Big)  \cr
   \Uhat_\bq^{\,-,0}  \; := \;  \bigg\langle\, L_i^{\pm 1} \, ,
\, {\bigg( {L_i \atop n} \bigg)}_{\!\!q_i} \,\bigg\rangle_{\! i \in I \, , \, n \in \NN}  \;\; ,
 \qquad  \Uhat_\bq^{\,\leq}  \; := \;  \bigg\langle\, F_i^{\,(n)} \, , \, L_i^{\pm 1} \, ,
\, {\bigg( {L_i \atop n} \bigg)}_{\!\!q_i} \,\bigg\rangle_{\! i \in I \, , \, n \in \NN}  \cr
   \Uhat_\bq^{\,+,0}  \; := \;  \bigg\langle\, K_i^{\pm 1} \, ,
\, {\bigg( {K_i \atop n} \bigg)}_{\!\!q_i} \,\bigg\rangle_{\! i \in I \, , \, n \in \NN}  \;\; ,
 \qquad  \Uhat_\bq^{\,\geq}  \; := \;  \bigg\langle\, K_i^{\pm 1} \, , \,
{\bigg( {K_i \atop n} \bigg)}_{\!\!q_i} \, , \, E_i^{\,(n)}
\,\bigg\rangle_{\! i \in I \, , \, n \in \NN}  \cr
   \Uhat_\bq  \, = \,  \Uhat_\bq(\hskip0,8pt\lieg)  \; := \;
 \bigg\langle\, \Uhat_\bq^{\,\raise-5pt\hbox{$ \scriptstyle 0 $}} \,\;{\textstyle \bigcup}\;
{\Big\{ F_i^{\,(n)} , \, E_i^{\,(n)} \Big\}}_{\! i \in I \, , \, n \in \NN} \,\bigg\rangle  }  $$
 \vskip5pt
   In the sequel, we shall refer to all these objects as to  {\sl restricted\/}  MpQG's.   \hfill  $ \diamondsuit $
\end{definition}

\bigskip

   The ``restricted'' MpQG's introduced in  Definition \ref{def:int-form_hat-MpQG}
   admit a presentation by generators and relations, which generalizes the one in the canonical case  (cf.\ \cite{DL}):

\medskip

\begin{theorem}  \label{thm:pres_Uhatgdq_gens-rels}  {\ }
 \vskip5pt
   (a)\,  Let  $ \, \bq := {\big(\hskip0,7pt q_{ij} = q^{\,b_{ij}} \big)}_{i,j \in I} \, $  be of integral type.
   Then  $ \, \Uhatdot_\bq = \Uhatdot_\bq(\hskip0,8pt\lieg) \, $  is (isomorphic to) the associative, unital
   $ \Rbq $--algebra  with the following presentation by generators and relations
   .  The generators are all elements of  $ \; \Uhatdot_\bq^{\,\raise-5pt\hbox{$ \scriptstyle 0 $}} $
   as well as all elements  $ \, F_i^{\,(n)} \, $,  $ \, E_i^{\,(n)} $  (for all  $ \, i \in I $,  $ n \in \NN$),
   and the relations holding true inside  $ \Uhatdot_\bq^{\,\raise-5pt\hbox{$ \scriptstyle 0 $}} $
   as well as the following ones:
  $$  K_i^{\pm 1} \, E_j^{\,(n)} \, = \, q^{\pm n b_{ij}} E_j^{\,(n)} \, K_i^{\pm 1}  \quad ,
\qquad  K_i^{\pm 1} \, F_j^{\,(n)} \, = \, q^{\mp n b_{ij}} F_j^{\,(n)} \, K_i^{\pm 1}  $$
  $$  L_i^{\pm 1} \, E_j^{\,(n)} \, = \, q^{\mp n b_{ji}} E_j^{\,(n)} \, L_i^{\pm 1}  \quad ,
\qquad  L_i^{\pm 1} \, F_j^{\,(n)} \, = \, q^{\pm n b_{ji}} F_j^{\,(n)} \, L_i^{\pm 1}  $$
  $$  G_i^{\pm 1} \, E_j^{\,(n)} \, = \, q_{ii}^{\pm n a_{ij}} E_j^{\,(n)} \, G_i^{\pm 1}  \quad ,  \qquad
  G_i^{\pm 1} \, F_j^{\,(n)} \, = \, q_{ii}^{\mp n a_{ij}} F_j^{\,(n)} \, G_i^{\pm 1}  $$
  $$  {\bigg( {{\!K_i \, ; \, c} \atop h} \bigg)}_{\!\!q} \, E_j^{\,(n)} = E_j^{\,(n)} {\bigg( {{\!K_i \, ; \, c + n b_{ij}\!} \atop h} \bigg)}_{\!\!q}  \; ,  \;\;
   {\bigg( {{\!K_i \, ; \, c} \atop h} \bigg)}_{\!\!q} \, F_j^{\,(n)} = F_j^{\,(n)} {\bigg( {{\!K_i \, ; \, c - n b_{ij}\!} \atop h} \bigg)}_{\!\!q}  $$
  $$  {\bigg( {{\!L_i \, ; \, c} \atop h} \bigg)}_{\!\!q} \, E_j^{\,(n)} = E_j^{\,(n)} {\bigg( {{\!L_i \, ; \, c - n b_{ji}\!} \atop h} \bigg)}_{\!\!q}  \; ,  \;\;
   {\bigg( {{\!L_i \, ; \, c} \atop h} \bigg)}_{\!\!q} \, F_j^{\,(n)} = F_j^{\,(n)} {\bigg( {{\!L_i \, ; \, c + n b_{ji}\!} \atop h} \bigg)}_{\!\!q}  $$
  $$  {\bigg(\! {{G_i \, ; \, c} \atop h} \bigg)}_{\!\!q_{ii}} \! E_j^{\,(n)} = E_j^{\,(n)} {\bigg(\! {{G_i \, ; \, c + n a_{ij}} \atop h} \!\bigg)}_{\!\!q_{ii}}  \, , \;\;
   {\bigg(\! {{G_i \, ; \, c} \atop h} \bigg)}_{\!\!q_{ii}} \! F_j^{\,(n)} = F_j^{\,(n)} {\bigg(\! {{G_i \, ; \, c - n a_{ij}} \atop h} \!\bigg)}_{\!\!q_{ii}}  $$
  $$  X_i^{\,(r)} X_i^{\,(s)} = {\left( r+s \atop r \right)}_{\!\!q_{ii}} X_i^{\,(r+s)}  \quad ,  \qquad  X_i^{\,(0)} = 1
  \eqno  \forall \;\; X \in \big\{ E \, , F \big\}  \qquad  $$
  $$  \hskip-9pt   \sum_{\hskip11pt r+s=1-a_{ij}} \hskip-19pt {(-1)}^s \, q_{ii}^{s \choose 2} q_{ij}^{\,s} \, E_i^{\,(r)} E_j^{\,(1)} E_i^{\,(s)} \, = \; 0  \;\; ,   \hskip-7pt   \sum_{\hskip11pt r+s=1-a_{ij}} \hskip-19pt {(-1)}^s \, q_{ii}^{r \choose 2} q_{ij}^{\,r} \, F_i^{\,(r)} F_j^{\,(1)} F_i^{\,(s)} \, = \; 0   \;\;\quad  \big(\, i \neq j \,\big)  $$
  $$  E_i^{\,(m)} \, F_i^{\,(n)}  \; = \,\;  \sum_{s=0}^{m \wedge n} \, F_i^{\,(n-s)} \, q_{ii}^{\,s} \,
  {\bigg( {{G_i \, ; \, 2\,s - m - n} \atop s} \bigg)}_{\!\!q_{ii}} \! L_i^{\,s} \, E_i^{\,(m-s)}  $$
  Moreover, with respect to this presentation  $ \Uhatdot_\bq $  is endowed with the Hopf algebra structure (over  $ \Rbq \, $)  uniquely given by
  $$  \Delta\Big(\! E_i^{(n)} \Big) = {\textstyle \sum\limits_{s=0}^n} E_i^{\,(n-s)} K_i^{\,s} \otimes E_i^{\,(s)}  \, ,
  \;\;  \epsilon\Big(\! E_i^{\,(n)} \Big) = \delta_{n,0}  \, ,  \;\;  \SS\Big(\! E_i^{\,(n)} \Big) = {(-1)}^n q_{ii}^{+{n \choose 2}} K_i^{\,-n} E_i^{\,(n)}  $$
  $$  \Delta\Big(\! F_i^{(n)} \Big) = {\textstyle \sum\limits_{s=0}^n} F_i^{\,(n-s)} \otimes F_i^{\,(s)} L_i^{\,s}  \, ,  \;\;
  \epsilon\Big(\! F_i^{\,(n)} \Big) = \delta_{n,0}  \, ,  \;\;  \SS\Big(\! F_i^{\,(n)} \Big) = {(-1)}^n q_{ii}^{-{n \choose 2}} F_i^{\,(n)} L_i^{\,-n}  $$
and formulas for  $ \Delta \, , \epsilon \, , S $  in  Lemma \ref{commut_q-bin-coeff}  with  $ \, (M,p\,) \in \big\{ (L_i\,,q) \, , (K_i\,,q) \, , (G_i\,,q_{ii}) \big\} \, $.
%
%
 \vskip5pt
   (b)  Let  $ \, \bq := {\big(\hskip0,7pt q_{ij} = q^{\,d_i\,t_{ij}^+} = q^{\,d_j\,t_{ij}^-} \big)}_{i,j \in I} \, $
   be of  {\sl strongly}  integral type.  Then  $ \, \Uhat_\bq = \Uhat_\bq(\hskip0,8pt\lieg) \, $  is
   (isomorphic to) the Hopf algebra over\/  $ \Rbq $  with the following presentation by generators and relations.
   The generators are all elements of  $ \; \Uhat_\bq^{\,\raise-5pt\hbox{$ \scriptstyle 0 $}} $  as well as all
   elements  $ \, F_i^{\,(n)} \, $,  $ \, E_i^{\,(n)} \, $  (for all  $ \, i \in I \, $,  $ \, n \in \NN\, $),  \, and the relations are
  $$  K_i^{\pm 1} \, E_j^{\,(n)} \, = \, q_i^{\pm n t^+_{ij}} E_j^{\,(n)} \, K_i^{\pm 1}  \quad ,
\qquad  K_i^{\pm 1} \, F_j^{\,(n)} \, = \, q_i^{\mp n t^+_{ij}} F_j^{\,(n)} \, K_i^{\pm 1}  $$
  $$  L_i^{\pm 1} \, E_j^{\,(n)} \, = \, q_i^{\mp n t^-_{ji}} E_j^{\,(n)} \, L_i^{\pm 1}  \quad ,
\qquad  L_i^{\pm 1} \, F_j^{\,(n)} \, = \, q_i^{\pm n t^-_{ji}} F_j^{\,(n)} \, L_i^{\pm 1}  $$
  $$  G_i^{\pm 1} \, E_j^{\,(n)} \, = \, q_{ii}^{\pm n a_{ij}} E_j^{\,(n)} \, G_i^{\pm 1}  \quad ,  \qquad
   G_i^{\pm 1} \, F_j^{\,(n)} \, = \, q_{ii}^{\mp n a_{ij}} F_j^{\,(n)} \, G_i^{\pm 1}  $$
  $$  {\bigg( {{\!K_i \, ; \, c} \atop h} \bigg)}_{\!\!q_i} E_j^{\,(n)} = E_j^{\,(n)} {\bigg( {{\!K_i \, ; \, c + n t^+_{ij}} \atop h} \bigg)}_{\!\!q_i}  \; ,  \;\;
   {\bigg( {{\!K_i \, ; \, c} \atop h} \bigg)}_{\!\!q_i} F_j^{\,(n)} = F_j^{\,(n)} {\bigg( {{\!K_i \, ; \, c - n t^+_{ij}} \atop h} \bigg)}_{\!\!q_i}  $$
  $$  {\bigg( {{\!L_i \, ; \, c} \atop h} \bigg)}_{\!\!q_i} E_j^{\,(n)} = E_j^{\,(n)} {\bigg( {{\!L_i \, ; \, c - n t^-_{ji}} \atop h} \bigg)}_{\!\!q_i}  \; ,  \;\;
   {\bigg( {{\!L_i \, ; \, c} \atop h} \bigg)}_{\!\!q_i} F_j^{\,(n)} = F_j^{\,(n)} {\bigg( {{\!L_i \, ; \, c + n t^-_{ji}} \atop h} \bigg)}_{\!\!q_i}  $$
  $$  {\bigg( {{G_i \, ; \, c} \atop h} \bigg)}_{\!\!q_{ii}} \! E_j^{\,(n)} = E_j^{\,(n)} {\bigg( {{G_i \, ; \, c + n a_{ij}} \atop h} \!\bigg)}_{\!\!q_{ii}}  \, , \;\;
   {\bigg( {{G_i \, ; \, c} \atop h} \bigg)}_{\!\!q_{ii}} \! F_j^{\,(n)} = F_j^{\,(n)} {\bigg( {{G_i \, ; \, c - n a_{ij}} \atop h} \!\bigg)}_{\!\!q_{ii}}  $$
  $$  X_i^{\,(r)} X_i^{\,(s)} = {\left( r+s \atop r \right)}_{\!\!q_{ii}} X_i^{\,(r+s)}  \quad ,  \qquad  X_i^{\,(0)} = 1   \eqno
  \forall \;\; X \in \big\{ E \, , F \big\}  \qquad  $$
  $$  \hskip-9pt   \sum_{\hskip11pt r+s=1-a_{ij}} \hskip-19pt {(-1)}^s \, q_{ii}^{s \choose 2} q_{ij}^{\,s} \, E_i^{\,(r)} E_j^{\,(1)} E_i^{\,(s)} \, = \; 0  \;\; ,   \hskip-7pt   \sum_{\hskip11pt r+s=1-a_{ij}} \hskip-19pt {(-1)}^s \, q_{ii}^{r \choose 2} q_{ij}^{\,r} \, F_i^{\,(r)} F_j^{\,(1)} F_i^{\,(s)} \, = \; 0   \;\;\quad  \big(\, i \neq j \,\big)  $$
  $$  E_i^{\,(m)} \, F_i^{\,(n)}  \; = \,\;  \sum_{s=0}^{m \wedge n} \, F_i^{\,(n-s)} \, q_{ii}^{\,s} \,
  {\bigg( {{G_i \, ; \, 2\,s - m - n} \atop s} \bigg)}_{\!\!q_{ii}} \! L_i^{\,s} \, E_i^{\,(m-s)}  $$
endowed with the Hopf algebra structure (over  $ \Rbq \, $)  uniquely given by
  $$  \Delta\Big(\! E_i^{(n)} \Big) = {\textstyle \sum\limits_{s=0}^n} E_i^{\,(n-s)} K_i^{\,s} \otimes E_i^{\,(s)}  \, ,  \;\;
  \epsilon\Big(\! E_i^{\,(n)} \Big) = \delta_{n,0}  \, ,  \;\;  \SS\Big(\! E_i^{\,(n)} \Big) = {(-1)}^n q_{ii}^{+{n \choose 2}} K_i^{\,-n} E_i^{\,(n)}  $$
  $$  \Delta\Big(\! F_i^{(n)} \Big) = {\textstyle \sum\limits_{s=0}^n} F_i^{\,(n-s)} \otimes F_i^{\,(s)}
  L_i^{\,n-s}  \, ,  \;\;  \epsilon\Big(\! F_i^{\,(n)} \Big)
  = \delta_{n,0}  \, ,  \;\;  \SS\Big(\! F_i^{\,(n)} \Big) = {(-1)}^n q_{ii}^{-{n \choose 2}} F_i^{\,(n)} L_i^{\,-n}  $$
  $$  \text{formulas for  $ \Delta \, , \epsilon \, , S $  in  Lemma \ref{commut_q-bin-coeff}  for}
  \;\;  (M,p\,) \in \big\{ (L_i\,,q_i) \, , (K_i\,,q_i) \, , (G_i\,,q_{ii}) \big\} \; .  $$
 \vskip1pt

   (c)\;  $ \, \Uhatdot_\bq \, $  is a Hopf  $ \, \Rbq $--subalgebra  of  $ \; \Uhat_\bq \, $.
 \vskip7pt
   (d)\,  Similar statements occur for the various {\sl restricted}  multiparameter
 quantum subgroups considered in  Definition \ref{def:int-form_hat-MpQG}.
\end{theorem}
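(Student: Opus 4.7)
The plan is to use the cocycle realization $U_\bq(\liegd)\cong (\QEqcheck)_\sigma$ from Theorem \ref{thm:sigma_2-cocy} to transfer the analogous presentation result from the canonical one-parameter case (essentially due to Lusztig, with the modification to $G_i$-type generators as in De Concini–Lyubashenko), and then to pin down the precise bookkeeping of the cocycle twist via the comparison formulas of \S\ref{subsubsec:comp-formulas}. The proof has two stages: first verify that all the listed relations hold in $\Uhatdot_\bq$ (resp.\ $\Uhat_\bq$), then show they are sufficient by a PBW-style straightening argument.

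For stage one, relations involving only toral generators are those of $\Uhatdot_\bq^0$ (resp.\ $\Uhat_\bq^0$) described in Proposition \ref{prop:struct-Uhatq0}. All commutation identities between toral elements and the divided powers $E_j^{(n)}, F_j^{(n)}$ are exactly those in Lemma \ref{lemma:commut_q-div-pows}(a), once one substitutes $q_{ij}=q^{b_{ij}}$, $q_{ii}=q^{2d_i}$, etc.\ (and analogously Lemma \ref{lemma:commut_q-div-pows}(b) for the strongly integral case). The product law $X_i^{(r)}X_i^{(s)}=\binom{r+s}{r}_{q_{ii}} X_i^{(r+s)}$ is immediate from the definition of divided powers and the identity $(r+s)_p!/\bigl((r)_p!\,(s)_p!\bigr)=\binom{r+s}{r}_p$. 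The quantum Serre relations for divided powers follow by dividing the Serre relations (f)–(g) of Definition \ref{def:multiqgroup_ang} by the appropriate quantum factorial. The key $EF$-formula is the last identity of Lemma \ref{lemma:commut_q-div-pows}(a), and the Hopf structure on divided powers follows by $n$-fold iteration of the coproduct formulas of Definition \ref{def:multiqgroup_ang}, using the group-like nature of $K_i$ and $L_i$.

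For stage two, let $\widetilde U$ denote the abstract $\Rbq$-algebra defined by the presentation and $\pi\colon\widetilde U\twoheadrightarrow \Uhatdot_\bq$ the obvious surjection. Using the listed relations, one defines inside $\widetilde U$ divided powers $E_{\beta^k}^{(e)}$, $F_{\beta^k}^{(f)}$ of quantum root vectors for each $\beta^k\in\Phi^+$ via iterated brackets (as in \S\ref{rvec-MpQG}), and a straightening argument—commuting toral binomials past divided powers via the listed commutation rules, and reordering $EF$ products via the $EF$-formula—shows that $\widetilde U$ is spanned over $\Rbq$ by the ordered monomials
\[
\prod_{k=N}^1 F_{\beta^k}^{(f_k)}\,\cdot\, T \,\cdot\, \prod_{h=1}^N E_{\beta^h}^{(e_h)},
\]
where $T$ ranges over the $\Rbq$-basis of $\Uhatdot_\bq^0$ from Proposition \ref{prop:struct-Uhatq0}(b). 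Linear independence of the $\pi$-images of these monomials is proved by combining Proposition \ref{prop: proport_root-vects} (each quantum root vector in $\QE$ differs from its canonical counterpart in $\QEqcheck$ by a unit of $\Rbqsq$) with the comparison formulas of \S\ref{subsubsec:comp-formulas} and the known restricted PBW theorem in the canonical case, then descending to $\Rbq$-coefficients thanks to the PBW theorem (Theorem \ref{thm:PBW_MpQG}) for $\QE$ over $\Fbq$.

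Part (b) is proved by the same scheme, with $q$ systematically replaced by $q_i$ in the $K_i$- and $L_i$-binomials, since strong integrality of $\bq$ makes the exponents $t_{ij}^\pm$ integral so that the relevant binomials remain in the algebra. Part (c) is then immediate: the generators of $\Uhatdot_\bq$ appear among those of $\Uhat_\bq$, the inclusion on the toral level is Proposition \ref{prop:struct-Uhatq0}(g), and the relations of (a) are obtained from those of (b) by writing each $\Uhatdot_\bq$-relation as a consequence of the $\Uhat_\bq$-relations. Part (d) is handled by retaining only the relevant subset of generators and the relations not involving the excluded letters; the Borel, triangular, and toral subalgebras inherit their presentations essentially for free. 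The principal obstacle is the straightening step: one must be sure that the listed commutation, Serre, and $EF$ relations are rich enough to bring every word into normal form without hidden identities being required. Here Proposition \ref{prop: proport_root-vects}, which realizes quantum root vectors as iterated $c$-brackets, is the key, since it reduces the straightening of arbitrary monomials to the manipulations already accounted for by Lemma \ref{lemma:commut_q-div-pows} and the formulas of \S\ref{subsubsec:comp-formulas}.
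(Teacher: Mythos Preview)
Your proposal is correct and follows the paper's approach, which is terse: ``Everything is proved like in the canonical case (cf.\ \cite{DL}), taking Lemma~\ref{lemma:commut_q-div-pows} into account, but for {\it (c)}, which follows from definitions and Proposition~\ref{prop:struct-Uhatq0}.'' You flesh this out with a two-stage argument (verify relations via Lemma~\ref{lemma:commut_q-div-pows}, then prove completeness by straightening plus linear independence), and for the linear-independence step you invoke the cocycle/comparison machinery of \S\ref{subsubsec:comp-formulas} and Proposition~\ref{prop: proport_root-vects}; this is precisely the technique the paper deploys a few pages later in proving Theorem~\ref{thm:PBW_hat-MpQG}, so in effect you are merging the presentation and PBW arguments into one pass rather than deferring the PBW content to \cite{DL}.

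One caution on your opening framing: saying you will ``transfer the presentation result from the canonical case via the cocycle realization'' is potentially misleading, because (as the paper explicitly remarks just after Theorem~\ref{thm:PBW_hat-MpQG}) the restricted integral forms do \emph{not} match under the cocycle identification, i.e.\ $\Uhatdot_\bq^{\,\scriptscriptstyle\sqrt{\ }} \neq \Uhatdot_{\check{\bq}}^{\,\scriptscriptstyle\sqrt{\ }}$ in general. Your actual argument avoids this trap --- you use the cocycle only to compare individual PBW monomials up to units in $\Rbqsq$, then descend to $\Rbq$ via the field-level PBW theorem --- but the opening sentence suggests a more naive transfer that would fail.
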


\pf
 Everything is proved like in the canonical case  (cf.\  \cite{DL}),  taking
 Lemma \ref{lemma:commut_q-div-pows}  into account,
 but for  {\it (c)},  which follows from definitions and  Proposition \ref{prop:struct-Uhatq0}.
\epf

\medskip

   As a first, direct consequence we have the following:

\medskip

\begin{prop}  \label{prop:Uhat_subHopf}  {\ }
 \vskip3pt
   (a)  $ \; \Uhatdot_\bq = \Uhatdot_\bq(\hskip0,8pt\lieg) \, $,  resp.\  $ \Uhatdot_\bq^{\raise-5pt\hbox{$ \scriptstyle \,\leq $}} \, $,
   resp.\  $ \Uhatdot_\bq^{\raise-5pt\hbox{$ \scriptstyle \,0 $}} \, $,  resp.\  $ \, \Uhatdot_\bq^{\raise-5pt\hbox{$ \scriptstyle \,-,0 $}} \, $,
   resp.\  $ \, \Uhatdot_\bq^{\raise-5pt\hbox{$ \scriptstyle \,0 $}} \, $,  resp.\  $ \, \Uhatdot_\bq^{\raise-5pt\hbox{$ \scriptstyle \,+,0 $}} \, $,
   resp.\  $ \Uhatdot_\bq^{\raise-5pt\hbox{$ \scriptstyle \,\geq $}} \, $,  is a Hopf  $ \, \Rbq $--subalgebra (hence, it is an  $ \, \Rbq $--integral  form,
   as a Hopf algebra) of  $ \, U_\bq(\hskip0,8pt\lieg) \, $,  resp.\ of  $ \, U_\bq^\leq \, $,  resp.\ of  $ \, U_\bq^{\,-,0} \, $,  resp.\ of  $ \, U_\bq^{\,0} \, $,
   resp.\ of  $ \, U_\bq^{\,+,0} \, $,  resp.\ of  $ \, U_\bq^\geq \, $.
                                                           \par
   Similarly,  $ \; \Uhatdot_\bq^{\raise-5pt\hbox{$ \scriptstyle \,- $}} \, $,  resp.\
   $ \, \Uhatdot_\bq^{\raise-5pt\hbox{$ \scriptstyle \,+ $}} \, $,  is an  $ \, \Rbq $--subalgebra
   --- hence, it is an  $ \, \Rbq $--integral  form, as an algebra ---   of  $ \, U_\bq^{\,-} \, $,  resp.\ of  $ \, U_\bq^{\,+} \, $.
 \vskip3pt
   (b)  \; If  $ \, \bq $  is strongly integral, similar results   --- as in  (a) and (b) ---
   hold true as well when  ``$ \; \Uhatdot $''  is replaced with  ``$ \; \Uhat $''.
\end{prop}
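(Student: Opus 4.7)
The plan is to reduce both parts (a) and (b) almost entirely to the explicit presentation by generators and relations already established in Theorem \ref{thm:pres_Uhatgdq_gens-rels}, supplemented by the structural analysis of the toral part in Proposition \ref{prop:struct-Uhatq0}. I will treat $ \Uhatdot_\bq $ first, and observe that every argument restricts verbatim to the various ``Borel'', ``Cartan'' and ``unipotent'' pieces listed in Definition \ref{def:int-form_hat-MpQG}; the strongly-integral case (b) is then a direct rewriting obtained by substituting the presentation from Theorem \ref{thm:pres_Uhatgdq_gens-rels}(b).

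First I would verify that $ \Uhatdot_\bq $ is an $ \Rbq $-subalgebra of $ U_\bq(\hskip0,8pt\liegd) $. This is tautological from Definition \ref{def:int-form_hat-MpQG}: $ \Uhatdot_\bq $ is defined as the $ \Rbq $-subalgebra of $ U_\bq(\hskip0,8pt\liegd) $ generated by the prescribed set, so the only content is that the generators do live inside $ U_\bq(\hskip0,8pt\liegd) $, which is clear since the $ E_i^{(n)}, F_i^{(n)} $ are rescalings of $ E_i^n, F_i^n $ by scalars in $ \Fbq $ and the toral generators lie in $ \Uhatdot_\bq^{\,0} $, a Hopf $ \Rbq $-subalgebra of $ U_\bq(\liehd) $ by Proposition \ref{prop:struct-Uhatq0}.

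Next I would check closure under the Hopf operations $ \Delta, \epsilon, S $. By the formulas collected in Theorem \ref{thm:pres_Uhatgdq_gens-rels}(a),
\[
  \Delta\!\big( E_i^{(n)} \big)  \,=\,  {\textstyle\sum_{s=0}^n}\, E_i^{\,(n-s)} K_i^{\,s} \otimes E_i^{\,(s)} \;,
\qquad
  \Delta\!\big( F_i^{(n)} \big)  \,=\,  {\textstyle\sum_{s=0}^n}\, F_i^{\,(n-s)} \otimes F_i^{\,(s)} L_i^{\,n-s} \;,
\]
both of which visibly lie in $ \Uhatdot_\bq \otimes_{\Rbq} \Uhatdot_\bq $; and similarly for the antipode and counit formulas. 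For the toral generators, the corresponding formulas from Lemma \ref{commut_q-bin-coeff} (with $(M,p) \in \{ (K_i,q), (L_i,q), (G_i,q_{ii})\}$) express $ \Delta, \epsilon, S $ on any $ \Rbq $-generator as a polynomial combination of other $ \Rbq $-generators of $ \Uhatdot_\bq^{\,0} $, hence of $ \Uhatdot_\bq $. Since $ \Delta, \epsilon, S $ are algebra (anti-)morphisms and the Hopf operations on all generators stay inside $ \Uhatdot_\bq $ (resp.\ its tensor square), the subalgebra is Hopf. Restricting the same verification to the generator subsets of Definition \ref{def:int-form_hat-MpQG} yields the analogous statements for $ \Uhatdot_\bq^{\,\geq}, \Uhatdot_\bq^{\,\leq}, \Uhatdot_\bq^{\,0}, \Uhatdot_\bq^{\,\pm,0} $; the unipotent pieces $ \Uhatdot_\bq^{\,\pm} $ are only $ \Rbq $-subalgebras, as claimed, because the coproducts of $ E_i^{(n)} $ and $ F_i^{(n)} $ involve the toral generators $ K_i^{s} $ and $ L_i^{n-s} $.

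Finally, I would verify the integral-form property, i.e.\ $ \Fbq \otimes_{\Rbq} \Uhatdot_\bq \,\cong\, U_\bq(\hskip0,8pt\liegd) $. Since $ {(n)}_{q_{ii}}! \in \Fbq^\times $ (as $ q $ is not a root of unity in the present setting), $ E_i = {(1)}_{q_{ii}}! \, E_i^{(1)} $ and $ F_i = {(1)}_{q_{ii}}! \, F_i^{(1)} $ lie in the scalar extension, and Proposition \ref{prop:struct-Uhatq0} guarantees that $ \Fbq \otimes_{\Rbq} \Uhatdot_\bq^{\,0} $ contains all $ K_i^{\pm 1} $ and $ L_i^{\pm 1} $ (through $ K_i $ and $ G_i $, hence $ L_i = G_i^{-1} K_i $, cf.\ Remark \ref{rmk:nosqua-intform}). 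Thus all generators of $ U_\bq(\hskip0,8pt\liegd) $ are recovered after scalar extension, proving surjectivity; injectivity follows because $ \Uhatdot_\bq $ is already realised as an $ \Rbq $-submodule of $ U_\bq(\hskip0,8pt\liegd) $. Part (b) is proved by the same three-step argument, using Theorem \ref{thm:pres_Uhatgdq_gens-rels}(b) in place of (a). The main technical obstacle is not any single step but keeping the toral bookkeeping clean: one must ensure that the binomial-coefficient generators appearing in the coproduct formulas in Lemma \ref{commut_q-bin-coeff} can indeed be expressed within the generating set specified in Definition \ref{def:int-form_hat-MpQG}, and this is precisely the content of Proposition \ref{prop:struct-Uhatq0}(c) together with Remark \ref{rmk:nosqua-intform}.
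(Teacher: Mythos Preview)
Your proposal is correct and follows essentially the same approach as the paper: both deduce the Hopf-subalgebra property from the explicit presentation and coproduct/antipode formulas in Theorem~\ref{thm:pres_Uhatgdq_gens-rels} (together with Proposition~\ref{prop:struct-Uhatq0} for the toral part), and then observe that scalar extension to $\Fbq$ recovers $U_\bq(\hskip0,8pt\liegd)$ because the original generators $E_i, F_i, K_i^{\pm1}, L_i^{\pm1}$ are $\Fbq$-multiples of the restricted generators. The paper compresses all of this into two sentences, whereas you spell out the three verification steps; your added remark that injectivity of the scalar-extension map is automatic from $\Uhatdot_\bq$ sitting inside the $\Fbq$-vector space $U_\bq$ is a useful clarification that the paper leaves implicit under ``just by definition''.
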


\pf
 Indeed,  Theorem \ref{thm:pres_Uhatgdq_gens-rels}  tells us that  $ \Uhatdot_\bq $  is a Hopf subalgebra
 (over  $ \Rbq \, $)  of  $ U_\bq \, $;  moreover, the scalar extension of  $ \, \Uhatdot_\bq \, $  from  $ \Rbq $  to
$ \, \Fbq \, $ yields  $ U_\bq $  as an $ \Fbq $--module,
 just by definition: thus  $ \Uhatdot_\bq $  is an integral  $ \Rbq $--form  of  $ U_\bq \, $,  as claimed.
 The same argument applies to  $ \Uhatdot_\bq^{\raise-5pt\hbox{$ \scriptstyle \,\leq $}} \, $,
 $ \Uhatdot_\bq^{\raise-5pt\hbox{$ \scriptstyle \,0 $}} \, $,  etc., as well as to
 $ \Uhat_\bq \, $,  $ \Uhat_\bq^{\raise-5pt\hbox{$ \scriptstyle \,\leq $}} \, $,  $ \Uhat_\bq^{\raise-5pt\hbox{$ \scriptstyle \,0 $}} \, $,  etc.
\epf

\vskip13pt

   An easier result, direct consequence of  Lemma \ref{lemma:commut_q-div-pows}  above,
   is the following, about the existence of ``triangular decompositions'' for these restricted MpQG's over  $ \Rbq \, $:

\vskip15pt

\begin{prop}  \label{prop:triang-decomps_Uhat}
 {\sl (triangular decompositions for restricted MpQG's)}
 \vskip3pt
 The multiplication in  $ \, \Uhatdot_\bq $  provides  $ \, \Rbq $--module  isomorphisms
  $$  \displaylines{
   \Uhatdot_\bq^{\raise-5pt\hbox{$ \scriptstyle \,-,0 $}} \mathop{\otimes}_{\,\Rbq} \Uhatdot_\bq^{\raise-5pt\hbox{$ \scriptstyle \,0 $}} \;
   \cong \; \Uhatdot_\bq^{\raise-5pt\hbox{$ \scriptstyle \,\leq $}} \; \cong \; \Uhatdot_\bq^{\raise-5pt\hbox{$ \scriptstyle \,0 $}}
   \mathop{\otimes}_{\,\Rbq} \Uhat_\bq^{\raise-5pt\hbox{$ \scriptstyle \,-,0 $}}  \quad ,
\qquad  \Uhatdot_\bq^{\raise-5pt\hbox{$ \scriptstyle \,+,0 $}} \mathop{\otimes}_{\,\Rbq} \Uhat_\bq^{\raise-5pt\hbox{$ \scriptstyle \,0 $}}
\; \cong \; \Uhatdot_\bq^{\raise-5pt\hbox{$ \scriptstyle \,\geq $}} \; \cong \; \Uhatdot_\bq^{\raise-5pt\hbox{$ \scriptstyle \,0 $}}
\mathop{\otimes}_{\,\Rbq} \Uhatdot_\bq^{\raise-5pt\hbox{$ \scriptstyle \,+,0 $}}  \cr
   \Uhatdot_\bq^{\raise-5pt\hbox{$ \scriptstyle \,+,0 $}} \mathop{\otimes}_{\,\Rbq} \Uhatdot_\bq^{\raise-5pt\hbox{$ \scriptstyle \,-,0 $}}  \,
   \cong \;  \Uhatdot_\bq^{\raise-5pt\hbox{$ \scriptstyle \,0 $}}  \,
   \cong \;  \Uhatdot_\bq^{\raise-5pt\hbox{$ \scriptstyle \,-,0 $}} \mathop{\otimes}_{\,\Rbq} \Uhatdot_\bq^{\raise-5pt\hbox{$ \scriptstyle \,+,0 $}}  \quad ,
 \qquad
  \Uhatdot_\bq^{\raise-5pt\hbox{$ \scriptstyle \,\leq $}} \, \mathop{\otimes}_{\,\Rbq} \, \Uhatdot_\bq^{\raise-5pt\hbox{$ \scriptstyle \,\geq $}}  \,
  \cong \;  \Uhatdot_\bq  \, \cong \;  \Uhatdot_\bq^{\raise-5pt\hbox{$ \scriptstyle \,\geq $}} \, \mathop{\otimes}_{\,\Rbq} \,
  \Uhatdot_\bq^{\raise-5pt\hbox{$ \scriptstyle \,\leq $}}
\cr
   \Uhatdot_\bq^{\raise-5pt\hbox{$ \scriptstyle \,+ $}} \!\mathop{\otimes}_{\,\Rbq}\! \Uhatdot_\bq^{\raise-5pt\hbox{$ \scriptstyle \,0 $}}
   \!\mathop{\otimes}_{\,\Rbq}\! \Uhatdot_\bq^{\raise-5pt\hbox{$ \scriptstyle \,- $}}  \, \cong \;  \Uhatdot_\bq  \, \cong \;
   \Uhatdot_\bq^{\raise-5pt\hbox{$ \scriptstyle \,- $}} \!\mathop{\otimes}_{\,\Rbq}\! \Uhatdot_\bq^{\raise-5pt\hbox{$ \scriptstyle \,0 $}}
   \!\mathop{\otimes}_{\,\Rbq}\! \Uhatdot_\bq^{\raise-5pt\hbox{$ \scriptstyle \,+ $}}  }  $$
 and similarly with  ``$ \; \Uhatdot $''  replaced by  ``$ \; \Uhat $''  if  $ \, \bq $  is strongly integral.
\end{prop}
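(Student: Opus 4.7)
The plan is to reduce each asserted isomorphism to a PBW-type basis theorem for the restricted integral form, by combining the commutation/straightening identities of Lemma \ref{lemma:commut_q-div-pows} (for surjectivity of multiplication) with the inclusion $\Uhatdot_\bq \subseteq U_\bq$ and the PBW Theorem \ref{thm:PBW_MpQG} over $\k$ (for injectivity). First I would establish the central claim that the set
\[
\mathcal{B} \;:=\; \bigg\{\, {\textstyle \prod_{k=N}^1}\! F_{\beta^k}^{(f_k)} \,\cdot\, b^{\,0} \,\cdot\, {\textstyle \prod_{h=1}^N}\! E_{\beta^h}^{(e_h)} \;\Big|\; f_k, e_h \in \NN,\; b^{\,0} \in \mathcal{B}^{\,0} \,\bigg\}
\]
is a free $\Rbq$-basis of $\Uhatdot_\bq$, where $\mathcal{B}^{\,0}$ denotes the basis of $\Uhatdot_\bq^{\,0}$ built from the $K$-side and $G$-side binomials supplied by Proposition \ref{prop:struct-Uhatq0} (and analogously its strongly-integral sibling for $\Uhat_\bq$). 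Once $\mathcal{B}$ is a basis, all seven displayed isomorphisms fall out immediately, since each tensor factor admits an analogous PBW basis of the same ``restricted'' shape.

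For the spanning half, I would proceed by the standard straightening argument. Any element of $\Uhatdot_\bq$ is an $\Rbq$-polynomial in the generators listed in Definition~\ref{def:int-form_hat-MpQG}; the commutation identities in Lemma~\ref{lemma:commut_q-div-pows} (together with the divided-power Serre relations from Theorem \ref{thm:pres_Uhatgdq_gens-rels}) let one push all $E$-divided powers to the right and all $F$-divided powers to the left, modulo new terms of strictly lower total degree in the $E_j^{(n)}$'s and $F_j^{(n)}$'s (the key identity being the explicit formula $E_i^{(m)} F_i^{(n)} = \sum_{s} F_i^{(n-s)} q_{ii}^{\,s} \binom{G_i\,;\,2s-m-n}{s}_{q_{ii}}\! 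L_i^{\,s}\, E_i^{(m-s)}$). A double induction on the root-height and on the degree gives that $\mathcal{B}$ spans $\Uhatdot_\bq$ over $\Rbq$. For the toral decomposition $\Uhatdot_\bq^{+,0} \otimes_{\Rbq}\! \Uhatdot_\bq^{-,0} \cong \Uhatdot_\bq^{\,0}$, one rewrites the $G$-side basis elements via $G_i = K_i L_i^{-1}$ and then applies Proposition~\ref{prop:struct-Uhatq0} together with Remark~\ref{rmk:nosqua-intform}, which gives the twin description using $L$-binomials.

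For the linear independence half, I would use that $\Uhatdot_\bq$ embeds into $\QEq$, and extend scalars to the field $\Fbq$. Each element of $\mathcal{B}$ is a $\k^\times$-scalar multiple (the scalars being products of the $q$-factorials appearing in Definition \ref{def:q-bin-coeff & q-div_pows}(b)) of the corresponding ordered PBW monomial of Theorem~\ref{thm:PBW_MpQG}; hence $\mathcal{B}$ is $\Fbq$-linearly independent in $\QEq$, and therefore $\Rbq$-linearly independent in $\Uhatdot_\bq$. The toral basis from Proposition \ref{prop:struct-Uhatq0} is already established to be free over $\Rbq$, so the toral factor contributes no new dependence. Combining spanning and independence yields the $\Rbq$-basis $\mathcal{B}$ and, reading off the three factor shapes, produces all the displayed triangular decompositions. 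The strongly-integral case ($\Uhat_\bq$ in place of $\Uhatdot_\bq$) is identical once the $q_i$-based generators of $\Uhat_\bq^{\,\pm,0}$ are used and the stronger version of Proposition~\ref{prop:struct-Uhatq0} is invoked.

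The main obstacle is the surjectivity/straightening step for the full algebra $\Uhatdot_\bq$: the commutation of a divided power $E_j^{(n)}$ past a $q$-binomial in $K_i$, $L_i$ or $G_i$ produces via Lemma~\ref{lemma:commut_q-div-pows} a shifted binomial that must still lie in $\Uhatdot_\bq^{\,0}$, and one must likewise control the ``cross'' commutations between $E_j^{(n)}$ and $F_i^{(m)}$ so that the residual terms re-enter $\mathrm{span}_{\Rbq}(\mathcal{B})$. Organising the induction so that these correction terms are genuinely of lower rank in a well-founded order is the delicate bookkeeping; once that is in place, the rest of the proof is formal.
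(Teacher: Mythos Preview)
Your core argument --- straightening via the commutation identities of Lemma~\ref{lemma:commut_q-div-pows} for surjectivity of the multiplication map, and injectivity inherited from the PBW theorem for $U_\bq$ over the field $\Fbq$ --- is exactly the paper's approach, and it suffices for the proposition as stated.

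Where you diverge is in trying to prove more: you aim to establish that the set $\mathcal{B}$ of \emph{ordered} root-vector divided-power monomials is an $\Rbq$-basis of $\Uhatdot_\bq$, and then read the triangular decompositions off of that. The paper deliberately separates these two steps. For the proposition, it only shows that any monomial in the generators can be rewritten as a sum of products of the form $\mathcal{M}^+ \cdot \mathcal{M}^0 \cdot \mathcal{M}^-$ with $\mathcal{M}^\pm$ an \emph{arbitrary} (not ordered) monomial in the $E_j^{(n)}$'s, resp.\ $F_j^{(m)}$'s; this is all one needs for surjectivity of $\Uhatdot_\bq^+ \otimes \Uhatdot_\bq^0 \otimes \Uhatdot_\bq^- \to \Uhatdot_\bq$, and it follows purely from Lemma~\ref{lemma:commut_q-div-pows}. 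The full PBW theorem (your claim about $\mathcal{B}$) is the content of the \emph{next} result, Theorem~\ref{thm:PBW_hat-MpQG}, whose part~(a) --- that ordered products $\prod_h E_{\beta^h}^{(e_h)}$ span $\Uhatdot_\bq^+$ over $\Rbq$ --- is not accessible from the tools you cite: Lemma~\ref{lemma:commut_q-div-pows} and the Serre relations only govern commutations among \emph{simple}-root divided powers and give no control over reordering $E_{\beta^k}^{(n)} E_{\beta^h}^{(m)}$ for arbitrary positive roots. The paper obtains that spanning by reducing to the canonical case $\bq = \check{\bq}$ (Lusztig's result) via the comparison formulas of \S\ref{subsubsec:comp-formulas}, an ingredient your proposal omits. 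So your plan proves the proposition, but your stronger intermediate claim about $\mathcal{B}$ has a genuine gap; the paper's ordering (triangular decomposition first, PBW afterward using both the decomposition and the canonical case) is the cleaner route.
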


\pf
 We consider the case of  $ \Uhatdot_\bq $  and of the left-hand side isomorphism, namely the case
 $ \; \Uhatdot_\bq^{\raise-5pt\hbox{$ \scriptstyle \,+ $}} \!\mathop{\otimes}_{\,\Rbq}\!
 \Uhatdot_\bq^{\raise-5pt\hbox{$ \scriptstyle \,0 $}} \!\mathop{\otimes}_{\,\Rbq}\!
 \Uhatdot_\bq^{\raise-5pt\hbox{$ \scriptstyle \,- $}} \cong \, \Uhatdot_\bq \; $,
 \, all other cases being similar.
                                                       \par
   By definition  $ \Uhatdot_\bq $  is spanned over  $ \Rbq $  by monomials whose factors can be
   freely chosen among the elements of  $ \Uhatdot_\bq^{\raise-5pt\hbox{$ \scriptstyle \,0 $}} \, $,
   the  $ F_i^{(m)} $'s  and the  $ E_j^{(n)} $'s;  moreover, thanks to
   Proposition \ref{prop:struct-Uhatq0}{\it (b)\/}  we can replace these monomials with
   other monomials, say  $ \mathcal{M} \, $,  in the  $ {\Big(\!{K_i \atop k} \!\Big)}_{\!\!q} $'s,
   the  $ K_i^{-\kappa} $'s,  the  $ {\Big({G_i \atop g_i} \!\Big)}_{\!\!q_{ii}} $'s,  the  $ G_j^{-\gamma} $'s,
 the  $ F_i^{(m)} $'s  and the  $ E_j^{(n)} $'s.
                                                       \par
   Now, by repeated use of the commutation relations among factors of this type given in
   Lemma \ref{lemma:commut_q-div-pows}   --- plus those stating that the  $ {\Big(\!{K_i \atop k} \!\Big)}_{\!\!q} $'s,
   the  $ K_i^{-\kappa} $'s,  the  $ {\Big({G_i \atop g_i} \!\Big)}_{\!\!q_{ii}} $'s  and  the  $ G_j^{-\gamma} $'s
   all commute with each other ---   (or by the corresponding relations given in
   Theorem \ref{thm:pres_Uhatgdq_gens-rels})  one can easily see that the following holds.
   Each one of these monomials, say  $ \mathcal{M} \, $,  can be expanded into an  $ \Rbq $--linear
   combination of new monomials, say  $ \mathcal{M}_s \, $,  of the same type but having the following
   additional property: each of them has the form
   $ \, \mathcal{M}_s = \mathcal{M}_s^+ \cdot \mathcal{M}_s^0 \cdot \mathcal{M}_s^- \, $,
   where  $ \mathcal{M}_s^+ $  is a monomial in the  $ E_j^{(n)} $'s,  $ \mathcal{M}_s^0 $
   is a monomial in the  $ {\Big(\!{K_i \atop k} \!\Big)}_{\!\!q} $'s,  the  $ K_i^{-\kappa} $'s,  the
   $ {\Big({G_i \atop g_i} \!\Big)}_{\!\!q_{ii}} $'s  and the  $ G_j^{-\gamma} $'s,  and  $ \mathcal{M}_s^- $
   is a monomial in the  $ F_j^{(m)} $'s.  This means that
 \vskip1pt
   \centerline{ $ \; \mathcal{M}_s = \mathcal{M}_s^+ \cdot \mathcal{M}_s^0 \cdot \mathcal{M}_s^- \,\; \in \;\,
\Uhatdot_\bq^{\raise-5pt\hbox{$ \scriptstyle \,+ $}} \cdot\, \Uhatdot_\bq^{\raise-5pt\hbox{$ \scriptstyle \,0 $}} \cdot\,
\Uhatdot_\bq^{\raise-5pt\hbox{$ \scriptstyle \,- $}} \; $ }
 \vskip4pt
\noindent
 hence the multiplication map
 $ \; \Uhatdot_\bq^{\raise-5pt\hbox{$ \scriptstyle \,+ $}} \!\mathop{\otimes}_{\,\Rbq}\!
 \Uhatdot_\bq^{\raise-5pt\hbox{$ \scriptstyle \,0 $}} \!\mathop{\otimes}_{\,\Rbq}\! \Uhatdot_\bq^{\raise-5pt\hbox{$ \scriptstyle \,- $}}  \!\!\relbar\joinrel\longrightarrow \Uhatdot_\bq \; $
is onto.  On the other hand, the PBW  Theorem \ref{thm:PBW_MpQG}  for  $ \, U_\bq =  \QEA \, $
directly implies that this map is 1:1 as well.
\epf

\medskip

   The previous result is improved by the following ``PBW Theorem'' for our restricted MpQG's
(and their quantum subgroups as well):

\medskip

\begin{theorem}  \label{thm:PBW_hat-MpQG}
 {\sl (PBW theorem for restricted quantum groups and subgroups}
 \vskip3pt
 Let quantum root vectors in  $ \QEq $  be fixed as in  \S \ref{rvec-MpQG}.  Then the following holds:
 \vskip5pt
   (a) \,  The set of ordered monomials
  $$  \bigg\{\; {\textstyle \prod\limits_{k=N}^1} F_{\beta^k}^{\,(f_k)} \;\bigg|\; f_k \in \NN \;\bigg\}  \quad ,  \qquad  \text{resp.} \quad
      \bigg\{\; {\textstyle \prod\limits_{h=1}^N} \, E_{\beta^h}^{\,(e_h)} \;\bigg|\; e_h \in \NN \;\bigg\}  \quad ,  $$
is an  $ \, \Rbq $--basis  of  $ \, \Uhatdot_\bq^{\raise-5pt\hbox{$ \scriptstyle \,- $}} \, $,  resp.\ of
$ \, \Uhatdot_\bq^{\raise-5pt\hbox{$ \scriptstyle \,+ $}} \, $;
in particular, both  $ \, \Uhatdot_\bq^{\raise-5pt\hbox{$ \scriptstyle \,- $}} $  and  $ \Uhatdot_\bq^{\raise-5pt\hbox{$ \scriptstyle \,+ $}} $
are free  $ \, \Rbq $--modules.  The same holds true for
$ \; \Uhat_\bq^{\,\pm} \; \Big(\! = \Uhatdot_\bq^{\raise-5pt\hbox{$ \scriptstyle \,\pm $}} \,\Big) \, $
in the strongly integral case.
 \vskip5pt
   (b) \,  The set of ordered monomials
  $$  \bigg\{\, {\textstyle \prod\limits_{j \in I}} \, {\bigg( {L_j \atop l_j} \bigg)}_{\!\!q} {L_j}^{- \lfloor l_j/2 \rfloor} \;\bigg|\; l_j \in \NN \,\bigg\}  \;\; ,
  \quad  \text{resp.\ }  \;\;
   \bigg\{\, {\textstyle \prod\limits_{i \in I}} \,
   {\bigg( {K_i \atop k_i} \bigg)}_{\!\!q} {K_i}^{- \lfloor k_i/2 \rfloor} \;\bigg|\; k_i \in \NN \,\bigg\}  $$
is an  $ \, \Rbq $--basis  of  $ \, \Uhatdot_\bq^{\raise-5pt\hbox{$ \scriptstyle \,-,0 $}} \, $,  resp.\ of
$ \, \Uhatdot_\bq^{\raise-5pt\hbox{$ \scriptstyle \,+,0 $}} \, $.  Similarly, the sets
  $$  \displaylines{
   \phantom{\text{and}}  \qquad
   \bigg\{\, {\textstyle \prod\limits_{j \in I}} \, {\bigg( {L_j \atop l_j} \bigg)}_{\!\!q} {L_j}^{- \lfloor l_j / 2 \rfloor} \,
   {\textstyle \prod\limits_{i \in I}} \, {\bigg( {G_i \atop g_i} \bigg)}_{\!\!q_{ii}} \! {G_i}^{- \lfloor g_i / 2 \rfloor} \;\bigg|\; l_j , g_i \in \NN \,\bigg\}  \cr
   \text{and}  \qquad
   \bigg\{\, {\textstyle \prod\limits_{i \in I}} \, {\bigg( {G_i \atop g_i} \bigg)}_{\!\!q_{ii}} \! {G_i}^{- \lfloor g_i / 2 \rfloor} \,
   {\textstyle \prod\limits_{j \in I}} \, {\bigg( {K_j \atop k_j} \bigg)}_{\!\!q} {K_j}^{- \lfloor k_j / 2 \rfloor} \;\bigg|\; g_i , k_j \in \NN \,\bigg\}  }  $$
are  $ \, \Rbq $--bases  of  $ \, \Uhatdot_\bq^{\raise-5pt\hbox{$ \scriptstyle \,0 $}} \, $.  In particular, all
$ \, \Uhatdot_\bq^{\raise-5pt\hbox{$ \scriptstyle \,-,0 $}} \, $,  $ \, \Uhatdot_\bq^{\raise-5pt\hbox{$ \scriptstyle \,+,0 $}} \, $
and  $ \, \Uhatdot_\bq^{\raise-5pt\hbox{$ \scriptstyle \,0 $}} \, $  are free  $ \, \Rbq $--modules.
 \vskip5pt
   (c) \,  The sets of ordered monomials
  $$  \displaylines{
   \phantom{resp.\ }   \hfill
   \bigg\{\; {\textstyle \prod\limits_{k=N}^1} F_{\beta^k}^{\,(f_k)}
  \, {\textstyle \prod\limits_{j \in I}} \, {\bigg( {L_j \atop l_j} \bigg)}_{\!\!q}
  {L_j}^{- \lfloor l_j/2 \rfloor} \;\bigg|\; f_k, l_j \in \NN \;\bigg\} \;\; \phantom{,}   \hfill  \cr
   \text{and\;\;\ }   \hfill
   \bigg\{\; {\textstyle \prod\limits_{j \in I}} \, {\bigg( {L_j \atop l_j} \bigg)}_{\!\!q}
  {L_j}^{- \lfloor l_j/2 \rfloor} \, {\textstyle \prod\limits_{k=N}^1} F_{\beta^k}^{\,(f_k)}
   \;\bigg|\; f_k, l_j \in \NN \;\bigg\} \;\; ,   \hfill
 }  $$
  $$  \displaylines{
 \text{resp.\ }   \hfill
   \bigg\{\; {\textstyle \prod\limits_{j \in I}} \,
{\bigg( {K_j \atop k_j} \bigg)}_{\!\!q} {K_j}^{- \lfloor k_j/2 \rfloor} \,
{\textstyle \prod\limits_{h=1}^N} E_{\beta^h}^{\,(e_h)} \;\bigg|\; k_j, e_h \in \NN \;\bigg\} \;\; \phantom{,}   \hfill  \cr
   \text{and\;\;\ }   \hfill
   \bigg\{\; {\textstyle \prod\limits_{h=1}^N} E_{\beta^h}^{\,(e_h)} \, {\textstyle \prod\limits_{j \in I}} \,
   {\bigg( {K_j \atop k_j} \bigg)}_{\!\!q} {K_j}^{- \lfloor k_j/2 \rfloor} \;\bigg|\; k_j, e_h \in \NN \;\bigg\} \;\; ,   \hfill  }  $$
are  $ \, \Rbq $--bases  of  $ \, \Uhatdot_\bq^{\raise-5pt\hbox{$ \scriptstyle \,\leq $}} \, $,
resp.\  $ \, \Uhatdot_\bq^{\raise-5pt\hbox{$ \scriptstyle \,\geq $}} \, $;  in particular,  $ \, \Uhatdot_\bq^{\raise-5pt\hbox{$ \scriptstyle \,\leq $}} $
and  $ \, \Uhatdot_\bq^{\raise-5pt\hbox{$ \scriptstyle \,\geq $}} $  are free  $ \, \Rbq $--modules.
 \vskip5pt
   (d) \,  The sets of ordered monomials
  $$  \displaylines{
   \bigg\{\; {\textstyle \prod\limits_{k=N}^1} F_{\beta^k}^{\,(f_k)}
  \, {\textstyle \prod\limits_{j \in I}} \, {\bigg( {L_j \atop l_j} \bigg)}_{\!\!q}
{L_j}^{- \lfloor l_j/2 \rfloor}
  \, {\textstyle \prod\limits_{i \in I}} \, {\bigg( {G_i \atop g_i} \bigg)}_{\!\!q_{ii}} \! {G_i}^{- \lfloor g_i/2 \rfloor} \, {\textstyle \prod\limits_{h=1}^N} E_{\beta^h}^{\,(e_h)}
    \;\bigg|\; f_k, l_j, g_i, e_h \in \NN \;\bigg\}  \cr
   \bigg\{\; {\textstyle \prod\limits_{h=1}^N} E_{\beta^h}^{\,(e_h)}
  \, {\textstyle \prod\limits_{j \in I}} \, {\bigg( {L_j \atop l_j} \bigg)}_{\!\!q}
{L_j}^{- \lfloor l_j/2 \rfloor}
  \, {\textstyle \prod\limits_{i \in I}} \, {\bigg( {G_i \atop g_i} \bigg)}_{\!\!q_{ii}} \! {G_i}^{- \lfloor g_i/2 \rfloor} \, {\textstyle \prod\limits_{k=N}^1} F_{\beta^k}^{\,(f_k)}
    \;\bigg|\; f_k, l_j, g_i, e_h \in \NN \;\bigg\}  \cr
   \bigg\{\; {\textstyle \prod\limits_{k=N}^1} F_{\beta^k}^{\,(f_k)}
  \, {\textstyle \prod\limits_{i \in I}} \, {\bigg( {G_i \atop g_i} \bigg)}_{\!\!q_{ii}} \! {G_i}^{- \lfloor g_i/2 \rfloor}
  \, {\textstyle \prod\limits_{j \in I}} \, {\bigg( {K_j \atop k_j} \bigg)}_{\!\!q}
{K_j}^{- \lfloor k_j/2 \rfloor} \, {\textstyle \prod\limits_{h=1}^N}
  E_{\beta^h}^{\,(e_h)} \;\bigg|\; f_k, g_i, k_j, e_h \in \NN \;\bigg\}  \cr
   \bigg\{\; {\textstyle \prod\limits_{h=1}^N} E_{\beta^h}^{\,(e_h)}
  \, {\textstyle \prod\limits_{i \in I}} \, {\bigg( {G_i \atop g_i} \bigg)}_{\!\!q_{ii}} \! {G_i}^{- \lfloor g_i/2 \rfloor}
  \, {\textstyle \prod\limits_{j \in I}} \, {\bigg( {K_j \atop k_j} \bigg)}_{\!\!q}
{K_j}^{- \lfloor k_j/2 \rfloor} \, {\textstyle \prod\limits_{k=N}^1} F_{\beta^k}^{\,(f_k)}
    \;\bigg|\; f_k, g_i, k_j, e_h \in \NN \;\bigg\}  }  $$
are  $ \, \Rbq $--bases  of  $ \, \Uhatdot_\bq = \Uhatdot_\bq(\hskip0,8pt\lieg) \, $;  in particular,
$ \, \Uhatdot_\bq = \Uhatdot_\bq(\hskip0,8pt\lieg) \, $  is a free  $ \, \Rbq $--module.
 \vskip5pt
   (e) \,  In addition, when  $ \bq $  is strongly integral similar results   --- akin to (b), (c) and (d) ---   hold true if\,  ``$ \; \Uhatdot $''  is replaced
   with  ``$ \; \Uhat \, $''  and every  $ \, {\Big( {L_j \atop l_j} \!\Big)}_{\!q} \, $,  resp.\  $ \, {\Big( {K_j \atop k_j} \!\Big)}_{\!q} \, $,  is replaced
   with  $ \, {\Big( {L_j \atop l_j} \!\Big)}_{\!q_j} \, $,  resp.\  $ \, {\Big( {K_j \atop k_j} \!\Big)}_{\!q_j} \, $.
\end{theorem}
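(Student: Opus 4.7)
The proof has a natural top-down structure: part (a) is the substantive content; parts (b)--(d) are then essentially corollaries obtained by combining (a), the structure theorem for $\Uhatdot_\bq^{\,0}$ already proved in Proposition \ref{prop:struct-Uhatq0}, and the triangular decompositions of Proposition \ref{prop:triang-decomps_Uhat}; part (e) is proved by repeating the same argument with $\Uhat$ in place of $\Uhatdot$ and the obvious replacements of the scalars $q\leadsto q_i$ in the toral generators. Throughout, the $\Rbq$--linear independence of the claimed PBW monomials is immediate from the field-theoretic PBW Theorem \ref{thm:PBW_MpQG}: those monomials are even $\Fbq$--linearly independent in $\QEq$, hence $\Rbq$--linearly independent in any $\Rbq$--submodule that contains them. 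The real content is therefore the $\Rbq$--spanning statement in each case.

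For part (a) (say on the positive side; the negative side is symmetric), I would argue in two steps. First, every divided power $E_\beta^{\,(n)}$ of a non-simple root vector must be shown to lie in the $\Rbq$--subalgebra $\Uhatdot_\bq^{+}$ generated by the simple divided powers $\{E_i^{\,(n)}\}_{i\in I,\,n\in\NN}$. I would prove this by induction on the length of $\beta$, using the multiparameter Lusztig braid operators $T_j$ of Heckenberger--Yamane (\cite{HY}, Theorem 4.1): since $T_j(E_i)=\ad_c^{-a_{ji}}(E_j)(E_i)$ is an iterated $c$--bracket in $E_j,E_i$ alone, one verifies step by step that $T_j$ sends simple divided powers to $\Rbq$--linear combinations of divided-power monomials. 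Second, once all $E_{\beta^k}^{\,(n)}$ are known to sit in $\Uhatdot_\bq^+$, an arbitrary monomial in the $E_i^{\,(n)}$'s can be reordered into the PBW form by the commutation relations among divided powers of root vectors (standard Levendorskii--Soibelman-style relations, whose proof in the present multiparameter setting is a finite book-keeping calculation using Lemma \ref{lemma:commut_q-div-pows}).

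The principal obstacle is the first step: to check that the $T_j$ preserve the $\Rbq$--lattice generated by divided powers. In the uniparameter canonical case this is classical (Lusztig), and my alternative route would be to \emph{transfer} the result via the cocycle-deformation isomorphism $\QEq\cong(\QEqcheck)_{\sigma}$ of Theorem \ref{thm:sigma_2-cocy} combined with Proposition \ref{prop: proport_root-vects} and the comparison formulas collected in \S\ref{subsubsec:comp-formulas}. Those formulas express any $*$--ordered product of divided powers of root vectors as a scalar multiple of the corresponding $\check{\cdot}$--ordered product in $\QEqcheck$, the scalar being a monomial in the $q_i$'s, the $m_\alpha^{\pm}$'s and the $q_{ij}^{\pm 1/2}$'s. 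The delicate point is that the ambient ground ring for restricted forms is $\Rbq$ (generated by $q^{\pm 1}$), not $\Rbqsq$; one must therefore verify that in every comparison identity actually needed (namely those starting or ending at elements of the $\Rbq$--lattices on both sides), the half-integer exponents combine to give an integer, so that the transfer yields a genuine $\Rbq$--basis rather than only an $\Rbqsq$--basis. This is a parity-type check on $\binom{n}{2}$ and on $n_j n_k b_{ij}$ type exponents, and this is exactly where I expect the main technical effort to go.

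Granting (a), the remaining claims follow cleanly. Part (b) is immediate from Proposition \ref{prop:struct-Uhatq0}: the proposed bases of $\Uhatdot_\bq^{\,\pm,0}$ are precisely the ``half'' of the basis displayed there, while the two bases of $\Uhatdot_\bq^{\,0}$ are the two ordered versions of that basis, the reordering between them being handled by the fact that all the $K_i$'s and $G_j$'s commute pairwise. Parts (c) and (d) are then obtained by tensoring bases across the factorizations $\Uhatdot_\bq^{\,\leq}\cong \Uhatdot_\bq^{\,-}\otimes_{\Rbq}\Uhatdot_\bq^{\,-,0}$, $\Uhatdot_\bq^{\,\geq}\cong \Uhatdot_\bq^{\,+,0}\otimes_{\Rbq}\Uhatdot_\bq^{\,+}$ and $\Uhatdot_\bq\cong \Uhatdot_\bq^{\,-}\otimes_{\Rbq}\Uhatdot_\bq^{\,0}\otimes_{\Rbq}\Uhatdot_\bq^{\,+}$ of Proposition \ref{prop:triang-decomps_Uhat}: any ordered product in the resulting factors is a basis of the tensor product, and the alternate orderings listed in the statement are obtained by rewriting using Lemma \ref{lemma:commut_q-div-pows} in a unitriangular way (the leading term of each reordering is the desired monomial, the correction terms involving only products of strictly lower total degree, which then yields the basis property by a standard Gaussian-elimination argument). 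Finally, (e) is the verbatim translation of the above to the strongly integral setting, the only change being the replacement of the toral generators $\binom{K_j}{k_j}_{\!q}$, $\binom{L_j}{l_j}_{\!q}$ with $\binom{K_j}{k_j}_{\!q_j}$, $\binom{L_j}{l_j}_{\!q_j}$, and the appeal to the second half of Proposition \ref{prop:struct-Uhatq0} and Lemma \ref{lemma:commut_q-div-pows}.
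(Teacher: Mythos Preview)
Your structural outline for parts (b)--(e) matches the paper exactly: (b) is Proposition \ref{prop:struct-Uhatq0} (via Proposition \ref{prop:basis_int-on-laur-mons}), and (c)--(d) follow by combining (a), (b) and the triangular decompositions of Proposition \ref{prop:triang-decomps_Uhat}; (e) is the verbatim translation. The substantive issue is part (a), and here your plan diverges from the paper in a significant way.

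For (a), you correctly identify the transfer via the cocycle deformation $\QEq\cong(\QEqcheck)_\sigma$ and the comparison formulas of \S\ref{subsubsec:comp-formulas} as the natural route. But your proposed ``parity-type check'' --- verifying directly that all half-integer exponents of $q^{1/2}$ conspire to yield integers so that the transfer lands in $\Rbq$ --- is \emph{not} what the paper does, and it is not clear it would succeed without a great deal of case-by-case bookkeeping (indeed the Remark following the theorem shows that for mixed toral/nilpotent products the coefficients genuinely leave $\Rbqsq$). The paper instead uses a clean ring-theoretic trick that bypasses all parity considerations. One first works over the enlarged ring $\Rbqsq$: there the comparison formulas show each $\bq$--PBW monomial $\prod F_{\beta^k}^{*(f_k)}$ differs from the corresponding canonical $\check{\bq}$--PBW monomial by a \emph{unit} of $\Rbqsq$, so Lusztig's classical result transfers immediately to give that these monomials form an $\Rbqsq$--basis of $\big(\Uhatdot_\bq^{\,-}\big)^{\!\sqrt{\phantom{I}}}$. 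On the other hand, by the field-level PBW Theorem \ref{thm:PBW_MpQG} the same monomials form an $\Fbq$--basis of $U_\bq^{\,-}$. Now for any $u\in\Uhatdot_\bq^{\,-}$ one has two expansions in the same basis, one with coefficients in $\Rbqsq$ and one with coefficients in $\Fbq$; uniqueness forces the coefficients into $\Rbqsq\cap\Fbq=\Rbq$. This gives $\Uhatdot_\bq^{\,-}\subseteq\Rbq\text{--span of PBW monomials}$, and the reverse inclusion (together with faithful flatness of $\Rbqsq$ over $\Rbq$) closes the argument. Your direct ``Route 1'' via the multiparameter braid operators is also not pursued in the paper and would require independently checking that the $T_j$ preserve the $\Rbq$--lattice, which the paper never does.
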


\pf
   {\it (a)}\,  It is a classical result, due to Lusztig, that the claim holds true for
   $ \Uhatdot_{\check{\bq}}^{\raise-5pt\hbox{$ \scriptstyle \,- $}} \, $,  i.e.\ the latter is free as an
   $ \Rbq $--module  with PBW-type basis given by the ordered monomials in the  $ {\check{F}}_\beta^{(f)} $'s,
   taken with respect to the product  ``$ \, \cdot \, $''  in $ \, \Uhatdot_{\check{\bq}}^{\raise-5pt\hbox{$ \scriptstyle \,- $}} \, $;
   the same monomials then form a PBW-like  $ \Rbqsq $--basis  of
   $ \; {\Big( \Uhatdot_{\check{\bq}}^{\raise-5pt\hbox{$ \scriptstyle \,- $}} \Big)}^{\!\scriptscriptstyle {\sqrt{\ }}} \!\! :=
   \Rbqsq \!\otimes_\Rbq \! \Uhatdot_{\check{\bq}}^{\raise-5pt\hbox{$ \scriptstyle \,- $}} \, $  as well.
                                                            \par
   Now, the formulas in  \S \ref{subsubsec:comp-formulas}  show that the above mentioned ``restricted'' PBW monomials in
   $ \, \Big( {\Big( \Uhatdot_{\check{\bq}}^{\raise-5pt\hbox{$ \scriptstyle \,- $}} \Big)}^{\!\scriptscriptstyle {\sqrt{\ }}} , \, \cdot \, \Big) \, $
   are proportional, by a coefficient which is a power of  $ q^{\pm 1/2} \, $  (hence invertible in the ground ring  $ \Rbqsq \, $)
   to their ``counterparts'' (with the same exponents for each root vector) in
   $ \; {\Big( \Uhatdot_\bq^{\raise-5pt\hbox{$ \scriptstyle \,- $}} \Big)}^{\!\scriptscriptstyle {\sqrt{\ }}} \! \subseteq
   {\big( \QEq \big)}^{\scriptscriptstyle {\sqrt{\ }}} = \, \Big(\! {\big( \QEqcheck \big)}^{\!\scriptscriptstyle {\sqrt{\ }}} \, , \,
   \smallast \,\Big) \, $,  \, i.e.\ with respect to the ``deformed'' product  ``$ \, \smallast \, $''  in
   $ \, {\big( \QEqcheck \big)}^{\!\scriptscriptstyle {\sqrt{\ }}} \!\! := \Rbqsq \!\otimes_\Rbq \! \QEqcheck  \, $.
   In other words, using notation of  \S \ref{subsubsec:comp-formulas}  we can write in short
 \vskip-10pt
  $$  {\textstyle \prod\limits_{k=N}^1} \check{F}_{\!\beta^k}^{\,\check{\cdot}\,(f_k)}  \; = \;
q^{\,z/2} {\textstyle \prod\limits_{k=N}^1} F_{\!\beta^k}^{\,*(f_k)}  $$
 \vskip-5pt
\noindent
 for some  $ \, z \in \ZZ \, $.  Therefore, as the PBW monomials
 $ \, {\textstyle \prod\limits_{k=N}^1} \check{F}_{\!\beta^k}^{\,\check{\cdot}\,(f_k)} \, $  form an
 $ \Rbqsq $--basis  of  $ \, {\Big( \Uhatdot_{\check{\bq}}^{\raise-5pt\hbox{$ \scriptstyle \,- $}} \Big)}^{\!\scriptscriptstyle {\sqrt{\ }}} \, $
 we can argue that the  $ \, {\textstyle \prod\limits_{k=N}^1} F_{\!\beta^k}^{\,*(f_k)} \, $'s  form an  $ \Rbqsq $--basis  of
 $ \, {\Big( \Uhatdot_\bq^{\raise-5pt\hbox{$ \scriptstyle \,- $}} \Big)}^{\!\scriptscriptstyle {\sqrt{\ }}} \, $  too.
                                          \par
   On the other hand, as direct consequence of  Theorem \ref{thm:PBW_MpQG}  we have that the same
   $ \, {\textstyle \prod\limits_{k=N}^1} F_{\!\beta^k}^{\,*(f_k)} \, $'s  also form a  $ \Fbq $--basis  of  $ \, U_\bq^{\raise-5pt\hbox{$ \scriptstyle \,- $}} \, $.
   Thus any  $ \, u_- \in {\Big( \Uhatdot_\bq^{\raise-5pt\hbox{$ \scriptstyle \,- $}} \Big)}^{\!\scriptscriptstyle {\sqrt{\ }}} \, $
   will have a unique expansion as  $ \Rbqsq $--linear  combination of the
   $ \, {\textstyle \prod\limits_{k=N}^1} F_{\!\beta^k}^{\,*(f_k)} \, $'s,  but also a unique expansion as  $ \Fbq $--linear
   combination of the same ``restricted'' PBW-like monomials.  Then the coefficients in both expansions must coincide,
   and since $ \, \Rbqsq \cap \Fbq = \Rbq \, $  they must belong to  $ \Rbq \, $;  so the
   $ \, {\textstyle \prod\limits_{k=N}^1} F_{\!\beta^k}^{\,*(f_k)} \, $'s  form an  $ \Rbq $--basis  of
   $ \, U_\bq^{\raise-5pt\hbox{$ \scriptstyle \,- $}} \, $,  as claimed.
                                          \par
   The same argument applies for the part of the claim concerning  $ \Uhatdot_\bq^{\raise-5pt\hbox{$ \scriptstyle \,+ $}} \, $.
 \vskip5pt
   {\it (b)}\,  This follows by construction together with  Proposition \ref{prop:basis_int-on-laur-mons}.
 \vskip5pt
   {\it (c)--(d)}\,  These follow at once from claims  {\it (a)--(b)\/}  along with the existence of triangular
   decompositions as given in  Proposition \ref{prop:triang-decomps_Uhat}.
 \vskip5pt
   {\it (e)}\,  This is proved by the same arguments used for claims  {\it (a)\/}  through  {\it (d)}.
\epf

\vskip9pt

\begin{rmk}
 It is worth stressing that the construction of restricted MpQG's does not ``match well'' with the
 process of cocycle deformation, even if one extends scalars from  $ \Rbq $  to  $ \Rbqsq $
 --- and from  $ \Fbq $  to  $ \Fbqsq $  accordingly.  In fact, if we label every MpQG's over  $ \Rbqsq $  or
 $ \Fbqsq $  by a superscript  ``$ \, {}^{\sqrt{\,\ }} \, $'',  what happens is that, although
 $ \; U_\bq^{\,\scriptscriptstyle{\sqrt{\,\ }}}\!(\hskip0,8pt\lieg) = \!
 {\big( U_{\check{\bq}}^{\,\scriptscriptstyle{\sqrt{\,\ }}}\!(\hskip0,8pt\lieg) \big)}_{\!\sigma} \! =
 U_{\check{\bq}}^{\,\scriptscriptstyle{\sqrt{\,\ }}}\!(\hskip0,8pt\lieg) \; $  as  $ \Rbqsq $--modules,
 for integral forms one has in general
  $ \; \Uhatdot_\bq^{\,\raise-3pt\hbox{$ \scriptscriptstyle{\sqrt{\,\ }} $}}\!(\hskip0,8pt\lieg) \! \not=
\Uhatdot_{\check{\bq}}^{\,\raise-3pt\hbox{$ \scriptscriptstyle{\sqrt{\,\ }} $}}\!(\hskip0,8pt\lieg) \; $
  as  $ \Rbqsq $--modules;  and  {\it a fortiori}  $ \; \Uhatdot_\bq(\hskip0,8pt\lieg) \not=
  \Uhatdot_{\check{\bq}}(\hskip0,8pt\lieg) \; $.
  Similarly holds for all ``quantum subgroups''.
                                                     \par
   In order to see that, let us consider an element of
   $ \, \Uhatdot_\bq^{\,\raise-3pt\hbox{$ \scriptscriptstyle{\sqrt{\,\ }} $}}\!(\hskip0,8pt\lieg) =
   \Big(\, \Uhatdot_{\check{\bq}}^{\,\raise-3pt\hbox{$ \scriptscriptstyle{\sqrt{\,\ }} $}}\!(\hskip0,8pt\lieg) \, ,
   \, \smallast \,\Big) \, $
  of the form  $ \; E_\alpha^{\,*(n)} \smallast \, {\Big(\! {K_j \atop m} \!\Big)}_{\!\!q}^{\!\!*} \; $:  \,
   from  \S \ref{subsubsec:comp-formulas}  we have the formula
  $$  E_\alpha^{\,*(n)} \smallast {\bigg(\! {K_j \atop m} \bigg)}_{\!\!q}^{\!\!*}  \! = \,  q_\alpha^{+{n \choose 2}} {\big( m^+_\alpha \big)}^{\!n}
  \sum_{c=0}^m q^{-c(m-c)} \prod_{s=1}^c {{\, q^{1-s} {\big( q_{\alpha\,\alpha_j}^{+1/2} \big)}^{\!n} \! - 1 \,} \over {\, q^{\,s} - 1 \,}} \,
  \check{E}_\alpha^{\;\check{\cdot}\, (n)} {\bigg(\! {K_j \atop {m\!-\!c}} \bigg)}_{\!\!q}^{\!\check{\cdot}} \! K_j^{\;\check{\cdot}\, c}  $$
here, the right-hand side term is the expansion of  $ \, E_\alpha^{\,*(n)} \smallast {\Big(\! {X_j \atop m} \Big)}_{\!\!p_j}^{\!\!*} \, $
into an  $ \Fbq $--linear combination of the elements of
  $$  \bigg\{\; {\textstyle \prod\limits_{h=1}^N} \check{E}_{\beta^h}^{\,\check{\cdot}(e_h)}
  \, {\textstyle \prod\limits_{i \in I}} \, {\textstyle {\Big( {G_i \atop g_i} \!\Big)}_{\!\!q_{ii}}} \! {G_i}^{- \lfloor g_i/2 \rfloor}
  \, {\textstyle \prod\limits_{j \in I}} \, {\textstyle {\Big( {K_j \atop k_j} \!\Big)}_{\!\!q}}
{K_j}^{- \lfloor k_j/2 \rfloor} \, {\textstyle \prod\limits_{k=N}^1} \check{F}_{\beta^k}^{\,\check{\cdot}\,(f_k)}
    \;\bigg|\; f_k \, , g_i \, , k_j \, , e_h \in \NN \;\bigg\}  $$
which, being an  $ \Rbqsq $--basis  of  $ \, \Uhatdot_\bq^{\,\raise-3pt\hbox{$ \scriptscriptstyle{\sqrt{\,\ }} $}}\!(\hskip0,8pt\lieg) \, $
--- by  Theorem \ref{thm:PBW_hat-MpQG}{\it (d)\/}  above ---   is also an  $ \Fbqsq $--basis  of
$ \, \Fbqsq \otimes_\Rbqsq \Uhatdot_\bq^{\,\raise-3pt\hbox{$ \scriptscriptstyle{\sqrt{\,\ }} $}}\!(\hskip0,8pt\lieg) =
U_\bq^{\,\raise3pt\hbox{$ \scriptscriptstyle{\sqrt{\,\ }} $}}\!(\hskip0,8pt\lieg) \, $.
Now, in the above expansion the coefficients
  $$  q_\alpha^{+{n \choose 2}} {\big( m^+_\alpha \big)}^{\!n} \, q^{-c(m-c)} \prod_{s=1}^c {{\, q^{1-s}
  {\big( q_{\alpha\,\alpha_j}^{+1/2} \big)}^{\!n} \! - 1 \,} \over {\, q^{\,s} - 1 \,}}  $$
in general  {\sl do not belong to  $ \Rbqsq \, $}: therefore, we have
 $ \; E_\alpha^{\,*(n)} \smallast \, {\Big(\! {K_j \atop m} \!\Big)}_{\!\!q}^{\!\!*} \not\in
\Uhatdot_{\check{\bq}}^{\,\raise-3pt\hbox{$ \scriptscriptstyle{\sqrt{\,\ }} $}}\!(\hskip0,8pt\lieg) \;  $
 whereas  $ \; E_\alpha^{\,*(n)} \smallast \, {\Big(\! {K_j \atop m} \!\Big)}_{\!\!q}^{\!\!*} \in
\Uhatdot_\bq^{\,\raise-3pt\hbox{$ \scriptscriptstyle{\sqrt{\,\ }} $}}\!(\hskip0,8pt\lieg) \; $  by definition.
This shows that  $ \; \Uhatdot_\bq^{\,\raise-3pt\hbox{$ \scriptscriptstyle{\sqrt{\,\ }} $}}\!(\hskip0,8pt\lieg) \not=
\Uhatdot_{\check{\bq}}^{\,\raise-3pt\hbox{$ \scriptscriptstyle{\sqrt{\,\ }} $}}\!(\hskip0,8pt\lieg) \; $  inside
$ \, \QEqsq = \QEqchecksq \, $  (as  $ \Fbq $--modules); in fact, it even proves that
$ \, {\Big( \Uhatdot_\bq^{\raise-5pt\hbox{$ \scriptstyle \,\geq $}} \Big)}^{\!\raise1pt\hbox{$ \scriptscriptstyle{\sqrt{\,\ }} $}} \!\! \not=
{\Big( \Uhatdot_{\check{\bq}}^{\raise-5pt\hbox{$ \scriptstyle \,\geq $}} \Big)}^{\!\raise1pt\hbox{$ \scriptscriptstyle{\sqrt{\,\ }} $}} \, $,
and similarly one shows that  $ \; {\Big( \Uhatdot_\bq^{\raise-5pt\hbox{$ \scriptstyle \,\leq $}} \Big)}^{\!\raise1pt\hbox{$ \scriptscriptstyle{\sqrt{\,\ }} $}}
\!\! \not= {\Big( \Uhatdot_{\check{\bq}}^{\raise-5pt\hbox{$ \scriptstyle \,\leq $}} \Big)}^{\!\raise1pt\hbox{$ \scriptscriptstyle{\sqrt{\,\ }} $}} \; $
 too.
\end{rmk}

\medskip

\subsection{Integral forms of ``unrestricted'' type}  \label{Utilde}  \
 \vskip7pt
   Beside Lusztig's ``restricted'' integral form, a second integral form of  $ U_q(\lieg) $
   was introduced by De Concini, Kac and Procesi: the ground ring in that case was  $ \k\big[\,q,q^{-1}\big] \, $,
   but one can easily prove   --- using the analogue (in that context) of  Proposition \ref{duality_x_PBW-bases}  ---
   that their definition does work the same over  $ \Zqqm $  too, so it yields  {\sl an integral form over  $ \, \Zqqm \, $},
   with suitable PBW-like basis, etc. Their construction can be easily extended to  $ \QEqcheck $;
   hereafter, we extend this (obvious) generalization to any MpQG such as  $ \QEq \, $.
                                                                           \par
   Let us fix a multiparameter matrix  $ \, \bq := {\big(\, q_{ij} \big)}_{i,j \in I} \, $  and the corresponding MpQG
   $ \QEq $  as in  \S \ref{def-MpQG};  then fix the special parameter  $ q $  (depending on $ \bq \, $)  and the
   ``canonical'' multiparameter  $ \, \check{\bq} := {\big(\, \check{q}_{ij} := q^{\, d_i a_{ij}} \big)}_{i,j \in I} \, $  as in
   \S \ref{deform-MpQG}.  Finally, assume quantum root vectors  $ E_\alpha \, $,  $ F_\alpha \, $
   (for all  $ \, \alpha \in \Phi^+ \, $)  have been fixed, as in  \S \ref{rvec-MpQG},
   and consider for them the following ``renormalizations'' (where  $ q_{\alpha\,\alpha} $  is defined as in
   \S \ref{deform-MpQG})
\begin{equation}  \qquad
  \quad   \Ebar_\alpha  \, := \,  \big(\, q_{\alpha\,\alpha} \! - 1 \big) \, E_\alpha  \quad ,
  \qquad   \Fbar_\alpha  \, := \,  \big(\, q_{\alpha\,\alpha} \! - 1 \big) \, F_\alpha
\quad \qquad  \forall \;\; \alpha \in \Phi^+   \qquad
\end{equation}

\smallskip

   Mimicking the construction in  \cite{DP},  we introduce the following definition:

\medskip

\begin{definition}  \label{def:int-form_tilde-Uqqm}
 For any multiparameter  $ \, \bq := {\big(\, q_{ij} \big)}_{i,j \in I} \, $  as in  \S \ref{multiparameters},
 fix modified quantum root vectors  $ \Ebar_\alpha $  and  $ \Fbar_\alpha $
 (for all  $ \, \alpha \in \Phi^+ \, $)  of  $ \QEq $  as above.  Then define in  $ \QEq $  the following  $ \Rbq $--subalgebras:
  $$  \displaylines{
   \Utilde_\bq^{\,-}  \; := \;  \big\langle\, \Fbar_\alpha \,\big\rangle_{\alpha \in \Phi^+}
\quad ,  \qquad  \Utilde_\bq^{\,0}  \; := \;  \big\langle\, {L_i}^{\pm 1} , \, K_i^{\,\pm 1} \,\big\rangle_{i \in I}  \quad ,
\qquad  \Utilde_\bq^{\,+}  \; := \;  \big\langle\, \Ebar_\alpha \,\big\rangle_{\alpha \in \Phi^+}  \cr
   \Utilde_\bq^{\,\leq}  \; := \;  \big\langle\, \Fbar_\alpha \, , \, {L_i}^{\pm 1} \,\big\rangle_{\alpha \in \Phi^+ , \, i \in I}  \;\; \quad ,  \;\;
   \qquad  \Utilde_\bq^{\,\geq}  \; := \;  \big\langle\, K_i^{\,\pm 1} , \, \Ebar_\alpha \,\big\rangle_{i \in I , \, \alpha \in \Phi^+}  \cr
%
%
   \Utilde_\bq^{\,-,0}  \! :=  \big\langle {L_i}^{\pm 1} \big\rangle_{i \in I}  \, ,  \,\;
 \Utilde_\bq(\lieg) = \Utilde_\bq  := \,  \big\langle\, \Fbar_\alpha \, , \, {L_i}^{\pm 1} , \, K_i^{\,\pm 1} ,
 \, \Ebar_\alpha \,\big\rangle_{i \in I , \, \alpha \in \Phi^+}  \, ,  \,\;
 \Utilde_\bq^{\,+,0}  \! :=  \big\langle K_i^{\,\pm 1} \big\rangle_{i \in I}  }  $$
   \indent   In the following, we shall refer to this kind of MpQG as  {\sl unrestricted}.   \hfill  $ \diamondsuit $
\end{definition}

\vskip9pt

   Contrary to the case of restricted integral forms, if we extend scalars to  $ \Rbqsq $
   then all unrestricted ones are indeed  $ 2 $--cocycle  deformations of their canonical counterparts,
   just like it happens with MpQG's over  $ \Fbq \, $.  This follows from direct analysis through the formulas in
   \S \ref{subsubsec:comp-formulas},  as the following shows:

\vskip11pt

\begin{prop}  \label{prop:Utilde-cocy-def}
 The Hopf algebra  $ \; \Utilde_\bq^{\,\raise-3pt\hbox{$ \scriptscriptstyle{\sqrt{\,\ }} $}} \! =
 \Utilde_\bq^{\,\raise-3pt\hbox{$ \scriptscriptstyle{\sqrt{\,\ }} $}}\!(\hskip0,8pt\lieg) := \Rbqsq \! \otimes_\Rbq \! \Utilde_\bq(\hskip0,8pt\lieg) \; $
 is a\/  $ 2 $--cocycle  deformation of its canonical counterpart, namely
  $$  \Utilde_\bq^{\,\raise-3pt\hbox{$ \scriptscriptstyle{\sqrt{\,\ }} $}}\!(\hskip0,8pt\lieg)  \;
  = \;  {\big(\, \Utilde_{\check{\bq}}^{\,\raise-3pt\hbox{$ \scriptscriptstyle{\sqrt{\,\ }} $}}\!(\hskip0,8pt\lieg) \big)}_\sigma  \; =
  \;  {\big(\, \Utilde_{\check{\bq}}^{\,\raise-3pt\hbox{$ \scriptscriptstyle{\sqrt{\,\ }} $}}\!(\hskip0,8pt\lieg) \big)}^{\!(\tilde{\varphi})}  $$
(see  Theorem \ref{thm:sigma_2-cocy}  and  Proposition \ref{prop:def-sigma=def-c}  for notation).
 Similarly   --- using a superscript  ``$ \, {}^{\sqrt{\,\ }} \, $''  to denote scalar extension to  $ \Rbqsq \, $  ---
 $ \, {\big( \Utilde_\bq^\leq \big)}^{\!\scriptscriptstyle{\sqrt{\,\ }}} $,  resp.\
$ \, {\big( \Utilde_\bq^{\,0} \big)}^{\!\scriptscriptstyle{\sqrt{\,\ }}} $,  resp.\
$ \, {\big( \Utilde_\bq^\geq \big)}^{\!\scriptscriptstyle{\sqrt{\,\ }}} $,  is a  $ 2 $--cocycle  deformation of
$ \; {\big( \Utilde_{\check{\bq}}^\geq \big)}^{\!\scriptscriptstyle{\sqrt{\,\ }}} $,  resp.\ of
$ \; {\big( \Utilde_{\check{\bq}}^{\,0} \big)}^{\!\scriptscriptstyle{\sqrt{\,\ }}} $,  resp.\ of  $ \; {\big( \Utilde_{\check{\bq}}^\leq \big)}^{\!\scriptscriptstyle{\sqrt{\,\ }}} $.
%
%
 In particular,
 $ \, \Utilde_\bq^{\,\raise-3pt\hbox{$ \scriptscriptstyle{\sqrt{\,\ }} $}}\!(\hskip0,8pt\lieg) = \Utilde_{\check{\bq}}^{\,\raise-3pt\hbox{$ \scriptscriptstyle{\sqrt{\,\ }} $}}\!(\hskip0,8pt\lieg) \, $,
 $ \, {\big( \Utilde_\bq^{\,\leq} \big)}^{\!\scriptscriptstyle{\sqrt{\,\ }}} \! = {\big( \Utilde_{\check{\bq}}^{\,\leq} \big)}^{\!\scriptscriptstyle{\sqrt{\,\ }}} $,
 $ \, {\big( \Utilde_\bq^{\,0} \big)}^{\!\scriptscriptstyle{\sqrt{\,\ }}} \! = {\big( \Utilde_{\check{\bq}}^{\,0} \big)}^{\!\scriptscriptstyle{\sqrt{\,\ }}} $
 and  $ \, {\big( \Utilde_\bq^{\,\geq} \big)}^{\!\scriptscriptstyle{\sqrt{\,\ }}} \! =
 {\big( \Uhat_{\check{\bq}}^{\,\geq} \big)}^{\!\scriptscriptstyle{\sqrt{\,\ }}} $  as  $ \, \Rbqsq $--coalgebras,
 and  $ \; {\big( \Utilde_\bq^{\,\pm} \big)}^{\!\scriptscriptstyle{\sqrt{\,\ }}} \! =
{\big( \Utilde_{\check{\bq}}^{\,\pm} \big)}^{\!\scriptscriptstyle{\sqrt{\,\ }}} \, $  as  $ \Rbqsq $--modules.
                                                            \par
   It follows that all of  $ \, \Utilde_\bq^{\,\scriptscriptstyle{\sqrt{\,\ }}} $,
   $ \, {\big( \Utilde_\bq^{\,\leq} \big)}^{\!\scriptscriptstyle{\sqrt{\,\ }}} $,
   $ {\big( \Utilde_\bq^{\,0} \big)}^{\!\scriptscriptstyle{\sqrt{\,\ }}} $,
   $ {\big( \Utilde_\bq^{\,\geq} \big)}^{\!\scriptscriptstyle{\sqrt{\,\ }}} $,
   $ {\big( \Utilde_\bq^{\,-} \big)}^{\!\scriptscriptstyle{\sqrt{\,\ }}} $  and
   $ \, {\big( \Utilde_\bq^{\,+} \big)}^{\!\scriptscriptstyle{\sqrt{\,\ }}} $  are independent of the choice of quantum root vectors  $ E_\beta $  and  $ F_\beta $  (for  $ \, \beta \in \Phi^+ $).
%
%
\end{prop}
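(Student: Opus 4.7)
My plan is to work inside the common ambient Hopf $\Rbqsq$-algebra $\QEqsq$, which by Theorem~\ref{thm:sigma_2-cocy} and Proposition~\ref{prop:def-sigma=def-c} is $\bigl(\QEqchecksq\bigr)_\sigma = \bigl(\QEqchecksq\bigr)^{(\tilde\varphi)}$, and to prove that the two unrestricted integral forms $\Utilde_\bq(\liegd)^{\sqrt{\,\ }}$ and $\Utilde_{\check\bq}(\liegd)^{\sqrt{\,\ }}$ coincide as $\Rbqsq$-submodules of this ambient algebra. Once that submodule identification is established, the asserted cocycle-deformation Hopf structure (and its $(\tilde\varphi)$-version) is automatic: by construction the $*$-multiplication that $\Utilde_\bq(\liegd)^{\sqrt{\,\ }}$ inherits from $\QEqsq$ is the $\sigma$-deformation of the $\check\cdot$-multiplication that $\Utilde_{\check\bq}(\liegd)^{\sqrt{\,\ }}$ inherits from $\QEqchecksq$, while the comultiplication is unaffected by the deformation. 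The same reasoning, applied to each triangular piece and to the two Borel halves, will give all the particular identifications listed after the main statement (as coalgebras in the Hopf cases, as modules in the $\pm$ cases).

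\textbf{Step 1 --- matching the PBW generators.} First I would verify the identity
\[ q_{\alpha\alpha} \;=\; \check q_{\alpha\alpha} \qquad\forall\;\alpha\in\Phi^+. \]
Writing $\alpha = \sum_i \alpha_i\,\alpha_i$ and expanding $q_{\alpha\alpha} = \prod_{i,j} q_{ij}^{\alpha_i\alpha_j}$, the Cartan-type relations $q_{ij}q_{ji} = q_{ii}^{a_{ij}}$ together with $q_{ii} = q^{2d_i} = \check q_{ii}$ collapse the expansion to $q^{(\alpha,\alpha)} = \check q_{\alpha\alpha}$. Combined with Proposition~\ref{prop: proport_root-vects}, this gives $\bar E_\alpha = m_\alpha^+\,\check{\bar E}_\alpha$ and $\bar F_\alpha = m_\alpha^-\,\check{\bar F}_\alpha$, where $m_\alpha^\pm$ are Laurent monomials in the $q_{ij}^{\pm 1/2}$'s, hence units in $\Rbqsq$. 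The toral generators $K_i^{\pm 1}$ and $L_i^{\pm 1}$ are literally the same elements of $\QEqsq$ for both parameter matrices; being group-like, a direct inspection of the formula for $m_\sigma$ shows that $*$ agrees with $\check\cdot$ on them, so the toral subalgebras coincide even as $\Rbqsq$-algebras.

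\textbf{Step 2 --- PBW-level upgrade.} To pass from the generators to the whole integral forms I would invoke the explicit comparison formulas collected in \S\ref{subsubsec:comp-formulas}. They express every ordered PBW-type monomial in the $\bar E_\alpha, K_i^{\pm 1}, L_i^{\pm 1}, \bar F_\alpha$ computed with the $*$-product as a scalar multiple of the corresponding monomial computed with the $\check\cdot$-product, the scalar being a Laurent monomial in the $q_{ij}^{\pm 1/2}$'s (possibly multiplied by products of the units $m_\alpha^\pm$), and hence itself a unit in $\Rbqsq$. By the PBW Theorem~\ref{thm:PBW_MpQG} applied to both $\QEq$ and $\QEqcheck$ (extended to $\Rbqsq$), these ordered monomials form $\Rbqsq$-bases of $\Utilde_\bq(\liegd)^{\sqrt{\,\ }}$ and $\Utilde_{\check\bq}(\liegd)^{\sqrt{\,\ }}$ respectively, and the change-of-basis matrix between them is diagonal with unit entries; hence the two $\Rbqsq$-submodules of $\QEqsq$ coincide. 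Running the argument separately on the three triangular factors and on the two Borel halves (using the triangular decompositions of Proposition~\ref{prop:triang-decomps_U}) yields the analogous equalities $\bigl(\Utilde_\bq^\pm\bigr)^{\sqrt{\,\ }} = \bigl(\Utilde_{\check\bq}^\pm\bigr)^{\sqrt{\,\ }}$, $\bigl(\Utilde_\bq^0\bigr)^{\sqrt{\,\ }} = \bigl(\Utilde_{\check\bq}^0\bigr)^{\sqrt{\,\ }}$, $\bigl(\Utilde_\bq^\leq\bigr)^{\sqrt{\,\ }} = \bigl(\Utilde_{\check\bq}^\leq\bigr)^{\sqrt{\,\ }}$, $\bigl(\Utilde_\bq^\geq\bigr)^{\sqrt{\,\ }} = \bigl(\Utilde_{\check\bq}^\geq\bigr)^{\sqrt{\,\ }}$. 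Finally, the independence from the choice of quantum root vectors is then transferred, via these identifications, from the (classical) corresponding fact for the canonical De~Concini--Kac--Procesi integral form $\Utilde_{\check\bq}(\liegd)^{\sqrt{\,\ }}$.

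\textbf{Main obstacle.} The delicate point is the bookkeeping in Step~2: one must verify that \emph{every} scalar appearing in the comparison formulas of \S\ref{subsubsec:comp-formulas} is actually a unit in $\Rbqsq$. Here the identity $q_{\alpha\alpha} = \check q_{\alpha\alpha}$ established in Step~1 is essential, because it is precisely what makes the two renormalizing factors $(q_{\alpha\alpha}-1)$ hidden inside $\bar E_\alpha$ and $(\check q_{\alpha\alpha}-1)$ hidden inside $\check{\bar E}_\alpha$ --- neither of which is a unit in $\Rbqsq$ --- cancel cleanly against one another in the expansion, leaving only invertible monomial scalars in the $q_{ij}^{\pm 1/2}$'s.
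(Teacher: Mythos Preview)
Your approach matches the paper's --- compare PBW monomials via the formulas of \S\ref{subsubsec:comp-formulas} to see they differ by units in $\Rbqsq$, whence the two $\Rbqsq$-submodules of $\QEqsq$ coincide --- and your explicit verification that $q_{\alpha\alpha}=\check q_{\alpha\alpha}$ (so that the renormalizing factors cancel) is a useful detail the paper leaves implicit.

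There is one slip in Step~2: you invoke Theorem~\ref{thm:PBW_MpQG} to claim that the ordered monomials form an $\Rbqsq$-basis of $\Utilde_\bq(\hskip0,8pt\liegd)^{\sqrt{\,\ }}$, but that theorem only gives a basis over the \emph{field} $\Fbq$; the integral statement is Theorem~\ref{thm:PBW_tilde-MpQG}, which comes \emph{after} this proposition and relies on it. The repair is that you only need a PBW basis on the \emph{canonical} side, which is available from \cite{DP}. Once $\Utilde_{\check\bq}^{\,\sqrt{\,\ }}$ is known to be the $\Rbqsq$-span of the $\check\cdot$-PBW monomials, your comparison-formula argument shows it equals the $\Rbqsq$-span of the $*$-PBW monomials, and these visibly lie in $\Utilde_\bq^{\,\sqrt{\,\ }}$, giving one inclusion. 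For the reverse, note that all generators of $\Utilde_\bq^{\,\sqrt{\,\ }}$ are $(Q\times Q)$-homogeneous, so by formula~\eqref{c-deformed-product} every $*$-word in them is a unit in $\Rbqsq$ times the corresponding $\check\cdot$-word; hence the $*$-subalgebra they generate is contained in the $\check\cdot$-subalgebra generated by the same elements, which (by Step~1) is $\Utilde_{\check\bq}^{\,\sqrt{\,\ }}$. This is precisely what the paper means by ``the same analysis as in the proof of Theorem~\ref{thm:PBW_hat-MpQG}{\it (a)}''.
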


\pf
 The same analysis as in the proof of  Theorem \ref{thm:PBW_hat-MpQG}{\it (a)\/}  shows
 --- looking at the proper formulas from  \S \ref{subsubsec:comp-formulas}  ---   that
the identities
  $ \; {\big( \Utilde_\bq^{\,-} \big)}^{\!\scriptscriptstyle{\sqrt{\,\ }}} \! =
  {\big( \Utilde_{\check{\bq}}^{\,-} \big)}^{\!\scriptscriptstyle{\sqrt{\,\ }}} $,
  $ \; {\big( \Utilde_\bq^{\,+} \big)}^{\!\scriptscriptstyle{\sqrt{\,\ }}} \! =
  {\big( \Utilde_{\check{\bq}}^{\,+} \big)}^{\!\scriptscriptstyle{\sqrt{\,\ }}} $  hold as
  $ \Rbqsq $--modules,  and  $ \, {\big( \Utilde_\bq^{\,0} \big)}^{\!\scriptscriptstyle{\sqrt{\,\ }}} \! =
  {\big( \Utilde_{\check{\bq}}^{\,0} \big)}^{\!\scriptscriptstyle{\sqrt{\,\ }}} $  as  $ \, \Rbqsq $--coalgebras;
  more precisely, the latter identity can be read as
  $ \, {\big( \Utilde_\bq^{\,0} \big)}^{\!\scriptscriptstyle{\sqrt{\,\ }}} \! =
  {\Big(\! {\big(\, \Utilde_{\check{\bq}}^{\,0} \big)}^{\!\scriptscriptstyle{\sqrt{\,\ }}} \,\Big)}_{\!\sigma} =
  {\Big(\! {\big(\, \Utilde_{\check{\bq}}^{\,0} \big)}^{\!\scriptscriptstyle{\sqrt{\,\ }}} \,\Big)}^{\!(\tilde{\varphi})} \, $,
  by the very definitions and thanks to  Theorem \ref{thm:sigma_2-cocy}  and  Proposition \ref{prop:def-sigma=def-c}.
                                                            \par
   The same argument proves also
   $ \, \Utilde_\bq^{\,\scriptscriptstyle{\sqrt{\,\ }}}\!(\hskip0,8pt\lieg) = \Utilde_{\check{\bq}}^{\,\scriptscriptstyle{\sqrt{\,\ }}}\!(\hskip0,8pt\lieg) \, $,
   $ \, {\big( \Utilde_\bq^{\,\leq} \big)}^{\!\scriptscriptstyle{\sqrt{\,\ }}} \! = {\big( \Utilde_{\check{\bq}}^{\,\leq} \big)}^{\!\scriptscriptstyle{\sqrt{\,\ }}} \, $
   and  $ \, {\big( \Utilde_\bq^{\,\geq} \big)}^{\!\scriptscriptstyle{\sqrt{\,\ }}} \! = {\big( \Uhat_{\check{\bq}}^{\,\geq} \big)}^{\!\scriptscriptstyle{\sqrt{\,\ }}} $
   as  $ \, \Rbqsq $--coalgebras;  more precisely, one has
   $ \, \Utilde_\bq^{\,\scriptscriptstyle{\sqrt{\,\ }}}\!(\hskip0,8pt\lieg) =
   {\big(\, \Utilde_{\check{\bq}}^{\,\scriptscriptstyle{\sqrt{\,\ }}}\!(\hskip0,8pt\lieg) \big)}_\sigma =
   {\big(\, \Utilde_{\check{\bq}}(\hskip0,8pt\lieg) \big)}^{(\tilde{\varphi})} \, $,
   $ \; {\big( \Utilde_\bq^{\,\leq} \big)}^{\!\scriptscriptstyle{\sqrt{\,\ }}} \! =
   {\Big(\! {\big(\, \Utilde_{\check{\bq}}^{\,\leq} \big)}^{\!\scriptscriptstyle{\sqrt{\,\ }}} \,\Big)}_{\!\sigma} \! =
   {\Big(\! {\big(\, \Utilde_{\check{\bq}}^{\,\leq} \big)}^{\!\scriptscriptstyle{\sqrt{\,\ }}} \,\Big)}^{\!(\tilde{\varphi})} \, $
   and  $ \; {\big( \Utilde_\bq^{\,\geq} \big)}^{\!\scriptscriptstyle{\sqrt{\,\ }}} =
   {\Big(\! {\big(\, \Utilde_{\check{\bq}}^{\,\geq} \big)}^{\!\scriptscriptstyle{\sqrt{\,\ }}} \,\Big)}_{\!\sigma} =
   {\Big(\! {\big(\, \Utilde_{\check{\bq}}^{\,\geq} \big)}^{\!\scriptscriptstyle{\sqrt{\,\ }}} \,\Big)}^{\!(\tilde{\varphi})} \, $.
\epf

\vskip9pt

   As for restricted MpQG's, we have a PBW theorem for unrestricted ones too:

\vskip13pt

\begin{theorem}  \label{thm:PBW_tilde-MpQG}
 {\sl (PBW theorem for unrestricted MQG's   --- and subgroups)}
 \vskip4pt
   (a) \,  The set of ordered monomials
  $$  \bigg\{\; {\textstyle \prod\limits_{k=N}^1} \Fbar_{\beta^k}^{\;f_k} \;\bigg|\; f_k \in \NN \;\bigg\}  \quad ,  \qquad  \text{resp.} \quad
      \bigg\{\; {\textstyle \prod\limits_{h=1}^N} \, \Ebar_{\beta^h}^{\;e_h} \;\bigg|\; e_h \in \NN \;\bigg\}  \quad ,  $$
is an  $ \, \Rbq $--basis  of  $ \, \Utilde_\bq^- \, $,  resp.\ of  $ \, \Utilde_\bq^+ \, $;  in particular, both these are free  $ \, \Rbq $--modules.
 \vskip4pt
   (b) \,  The set of ordered monomials
  $$  \bigg\{\, {\textstyle \prod\limits_{j \in I}} {L_j}^{a_j} \;\bigg|\; a_j \in \ZZ \,\bigg\}
\, ,  \;\;  \text{resp.\ }  \;
   \bigg\{\, {\textstyle \prod\limits_{i \in I}} {K_i}^{b_i} \;\bigg|\; b_i \in \ZZ \,\bigg\}
\, ,  \;\;  \text{resp.\ } \;
   \bigg\{\; {\textstyle \prod\limits_{j \in I}} {L_j}^{a_j} {K_i}^{b_i} \;\bigg|\; a_j, b_i \in \ZZ \,\bigg\}  $$
is an  $ \, \Rbq $--basis  of  $ \, \Utilde_\bq^{\,-,0} \, $,  resp.\ of  $ \, \Utilde_\bq^{\,+,0} \, $,  resp.\ of  $ \, \Utilde_\bq^{\,0} \, $,  hence all these are free  $ \, \Rbq $--modules.
 \vskip4pt
   (c) \,  The sets of ordered monomials
  $$  \displaylines{
   \phantom{\text{resp.\ }}  \quad  {\bigg\{\; {\textstyle \prod\limits_{k=N}^1} \Fbar_{\beta^k}^{\;f_k}
  \, {\textstyle \prod\limits_{j \in I}} {L_j}^{a_j} \bigg\}}_{\! f_k \in \NN \, , \, a_j \in \ZZ}  \quad  \text{and\ }
  \qquad  {\bigg\{\; {\textstyle \prod\limits_{j \in I}} {L_j}^{a_j}
  {\textstyle \prod\limits_{k=N}^1} \Fbar_{\beta^k}^{\;f_k} \bigg\}}_{\! f_k \in \NN \, , \, a_j \in \ZZ} \;\; ,  \cr
   \text{resp.\ }  \quad  {\bigg\{\; {\textstyle \prod\limits_{i \in I}} \, {K_i}^{b_i} \, {\textstyle \prod\limits_{h=1}^N}
   \Ebar_{\beta^h}^{\;e_h} \bigg\}}_{\! b_i \in \ZZ \, , \, e_h \in \NN}  \quad  \text{and\ }  \qquad  {\bigg\{\; {\textstyle \prod\limits_{h=1}^N}
   \Ebar_{\beta^h}^{\;e_h} \, {\textstyle \prod\limits_{i \in I}} \, {K_i}^{b_i} \bigg\}}_{\! b_i \in \ZZ \, , \, e_h \in \NN} \;\; ,  }  $$
are  $ \, \Rbq $--bases  of  $ \, \Utilde_\bq^\leq \, $,  resp.\ of  $ \, \Utilde_\bq^\geq \, $;  in particular, $ \, \Utilde_\bq^\leq $
and  $ \, \Utilde_\bq^\geq $  are free  $ \, \Rbq $--modules.
 \vskip4pt
   (d) \,  The sets of ordered monomials
  $$  \displaylines{
   \phantom{and}  \qquad
   \bigg\{\; {\textstyle \prod\limits_{k=N}^1} \Fbar_{\beta^k}^{\;f_k} \,
 {\textstyle \prod\limits_{j \in I}} \, {L_j}^{a_j} \, {\textstyle \prod\limits_{i \in I}}
 \, {K_i}^{b_i} \, {\textstyle \prod\limits_{h=1}^N} \, \Ebar_{\beta^h}^{\;e_h}
 \;\bigg|\; f_k, e_h \in \NN \, , \, a_j, b_i \in \ZZ \;\bigg\}  \cr
   \text{and}  \qquad\,
   \bigg\{\; {\textstyle \prod\limits_{h=1}^N} \, \Ebar_{\beta^h}^{\;e_h} \,
 {\textstyle \prod\limits_{j \in I}} \, {L_j}^{a_j} \, {\textstyle \prod\limits_{i \in I}}
 \, {K_i}^{b_i} \, {\textstyle \prod\limits_{k=N}^1} \Fbar_{\beta^k}^{\;f_k}
 \;\bigg|\; f_k, e_h \in \NN \, , \, a_j, b_i \in \ZZ \;\bigg\}  }  $$
 are  $ \, \Rbq $--bases  of  $ \, \Utilde_\bq \, $;  in particular,  $ \, \Utilde_\bq = \Utilde_\bq(\lieg) \, $
itself is a free  $ \, \Rbq $--module.
\end{theorem}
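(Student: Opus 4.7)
My plan is to mirror the proof of Theorem \ref{thm:PBW_hat-MpQG} for the restricted case: establish (a) first, read off (b) immediately, and deduce (c)--(d) via a triangular decomposition for $\Utilde_\bq$. The building blocks are: the classical De Concini--Kac--Procesi PBW theorem for the canonical unrestricted form $\Utilde_{\check{\bq}}$ (which goes through verbatim over $\Rbq$, as its proof only uses Levendorskii--Soibelman reordering relations with $\Rbq$-coefficients, cf.\ \cite{DP}); the identifications from Proposition \ref{prop:Utilde-cocy-def}; the comparison formulas of \S\ref{subsubsec:comp-formulas}; Proposition \ref{prop: proport_root-vects} relating multiparameter to canonical root vectors; and the general PBW Theorem \ref{thm:PBW_MpQG}.

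For part (a), the classical theorem gives an $\Rbq$-basis of $\Utilde_{\check{\bq}}^{\,-}$ consisting of ordered monomials in the renormalized canonical root vectors; after scalar extension an $\Rbqsq$-basis of ${\big(\Utilde_{\check{\bq}}^{\,-}\big)}^{\!\scriptscriptstyle{\sqrt{\,\ }}}$. By Proposition \ref{prop:Utilde-cocy-def} we have ${\big(\Utilde_\bq^{\,-}\big)}^{\!\scriptscriptstyle{\sqrt{\,\ }}} = {\big(\Utilde_{\check{\bq}}^{\,-}\big)}^{\!\scriptscriptstyle{\sqrt{\,\ }}}$ as $\Rbqsq$-modules, and applying the comparison formulas of \S\ref{subsubsec:comp-formulas} (together with Proposition \ref{prop: proport_root-vects}) to ordered products of the $\Fbar_\alpha$'s shows these differ from their canonical counterparts by unit factors in $\Rbqsq$, so the ordered monomials in the $\Fbar_{\beta^k}$'s form an $\Rbqsq$-basis of ${\big(\Utilde_\bq^{\,-}\big)}^{\!\scriptscriptstyle{\sqrt{\,\ }}}$ as well. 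On the other hand, by Theorem \ref{thm:PBW_MpQG} these same monomials are $\Fbq$-linearly independent in $U_\bq^{\,-}$. A descent argument—any $x \in \Utilde_\bq^{\,-}$ has unique expansions both over $\Rbqsq$ and over $\Fbq$, which must coincide, and $\Rbqsq \cap \Fbq = \Rbq$—then delivers the $\Rbq$-basis statement; the case of $\Utilde_\bq^{\,+}$ is entirely symmetric.

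Part (b) is immediate: $\Utilde_\bq^{\,\pm,0}$ and $\Utilde_\bq^{\,0}$ are generated by mutually commuting invertible group-like elements satisfying no further relations, and the Laurent-monomial linear independence follows from Theorem \ref{thm:PBW_MpQG} via the inclusion into $U_\bq^{\,0}$. For parts (c) and (d), I first establish an $\Rbq$-module triangular decomposition $\Utilde_\bq^{\,+} \otimes_{\Rbq} \Utilde_\bq^{\,0} \otimes_{\Rbq} \Utilde_\bq^{\,-} \cong \Utilde_\bq$ and its symmetric and Borel variants, following the strategy of Proposition \ref{prop:triang-decomps_Uhat}: surjectivity is proven by reordering, invoking the toral-vs-root relations (c)--(d) of Definition \ref{def:multiqgroup_ang} (whose coefficients lie in $\Rbq$) together with the Levendorskii--Soibelman-type cross brackets $[\Ebar_\alpha, \Fbar_\beta]$, the crucial point being that the renormalization $\Ebar_\alpha := (q_{\alpha\alpha}-1)E_\alpha$, $\Fbar_\alpha := (q_{\alpha\alpha}-1)F_\alpha$ is designed precisely to clear the denominator $(q_{ii}-1)^{-1}$ of relation~(e) of Definition \ref{def:multiqgroup_ang}; injectivity follows from Theorem \ref{thm:PBW_MpQG}. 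Combining the decomposition with (a) and (b) yields the bases claimed in (c) and (d).

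The principal technical difficulty is verifying that \emph{all} Levendorskii--Soibelman-type reordering relations—both among the $\Fbar_\alpha$'s themselves and between the $\Ebar_\alpha$'s and $\Fbar_\beta$'s—have coefficients in $\Rbq$ rather than merely in $\Rbqsq$. I propose to transfer the canonical-case integrality of these relations through the cocycle deformation of Proposition \ref{prop:Utilde-cocy-def}, carefully tracking via the formulas of \S\ref{subsubsec:comp-formulas} and Proposition \ref{prop: proport_root-vects} the unit correction factors that appear, and exploiting that the factor $(q_{\alpha\alpha}-1)$ in the renormalization absorbs both the denominators coming from relation~(e) and the half-integer twists introduced by the cocycle, so that the final reordering identities are defined over $\Rbq$.
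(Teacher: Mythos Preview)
Your approach to parts (a) and (b) matches the paper's exactly: reduce to the canonical case via Proposition~\ref{prop:Utilde-cocy-def} and the comparison formulas, then descend from $\Rbqsq$ to $\Rbq$ using $\Rbqsq \cap \Fbq = \Rbq$.

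For parts (c) and (d), however, you take an unnecessarily hard road. You propose to \emph{first} establish the triangular decomposition of $\Utilde_\bq$ by reordering, and \emph{then} combine it with (a)--(b). As you correctly identify, this requires verifying that all Levendorskii--Soibelman reordering relations (especially the cross-brackets $[\Ebar_\alpha,\Fbar_\beta]$ for non-simple $\alpha,\beta$) have coefficients in $\Rbq$ --- a genuine technical task that you plan to attack by transferring through the cocycle and tracking the half-integer twists. The paper sidesteps this entirely: it simply re-runs the descent argument of (a) directly at the level of $\Utilde_\bq^{\,\leq}$, $\Utilde_\bq^{\,\geq}$, and $\Utilde_\bq$ themselves. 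Proposition~\ref{prop:Utilde-cocy-def} already gives ${\big(\Utilde_\bq^{\,\leq}\big)}^{\!\scriptscriptstyle{\sqrt{\,\ }}} = {\big(\Utilde_{\check{\bq}}^{\,\leq}\big)}^{\!\scriptscriptstyle{\sqrt{\,\ }}}$ (and likewise for $\geq$ and the full algebra) as $\Rbqsq$-modules, so the canonical PBW basis of \cite{DP} --- rescaled to the multiparameter monomials by $\Rbqsq$-units via \S\ref{subsubsec:comp-formulas} --- is an $\Rbqsq$-basis; comparison with the $\Fbq$-basis from Theorem~\ref{thm:PBW_MpQG} and $\Rbqsq \cap \Fbq = \Rbq$ finishes the job. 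The triangular decompositions (Proposition~\ref{prop:triang-decomps_Utilde}) are then a \emph{corollary} of the PBW theorem, not an ingredient. Your route would ultimately work, but the paper's ordering is cleaner precisely because it never needs the individual reordering relations over $\Rbq$.
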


\pf
 {\it (a)}\,  Entirely similar to the proof of  Theorem \ref{thm:PBW_hat-MpQG}.
 \vskip7pt
   {\it (b)}\,  This is obvious from definitions.
 \vskip7pt
   {\it (c)}\,  We can apply once more the same ideas as for  Theorem \ref{thm:PBW_hat-MpQG},  thus finding that
 $ \; \mathbb{B} := {\bigg\{\; {\textstyle \prod\limits_{k=N}^1} \Fbar_{\beta^k}^{\;f_k} \,
 {\textstyle \prod\limits_{j \in I}} {L_j}^{a_j} \bigg\}}_{\!f_k \in \NN \, , \, a_j \in \ZZ} \; $
 is an  $ \Rbq $--basis  of  $ \, \Utilde_\bq^\leq \, $,  the case for  $ \, \Utilde_\bq^\geq \, $  being entirely similar.
 The claim is true when  $ \, \bq = \check{\bq} \, $,  by the results in  \cite{DP};  moreover, by
 Proposition \ref{prop:Utilde-cocy-def}  we have
 $ \, {\big( \Utilde_\bq^{\,\leq} \big)}^{\!\scriptscriptstyle{\sqrt{\,\ }}} \! =
 {\big( \Utilde_{\check{\bq}}^{\,\leq} \big)}^{\!\scriptscriptstyle{\sqrt{\,\ }}} $
 as  $ \, \Rbqsq $--coalgebras,  so  $ \mathbb{B} $  is also an  $ \Rbqsq $--basis  of
 $ {\big( \Utilde_\bq^{\,\leq} \big)}^{\!\scriptscriptstyle{\sqrt{\,\ }}} $.
 On the other hand, it follows from  Theorem \ref{thm:PBW_MpQG}  that  $ \mathbb{B} $  is also an
 $ \Fbq $--basis  of  $ {\big( \Utilde_\bq^{\,\leq} \big)}^{\!\scriptscriptstyle{\sqrt{\,\ }}} $.
 Thus any
 $ \, u \in \Utilde_\bq^{\,\leq} \; \Big(\! \subseteq {\big( \Utilde_\bq^{\,\leq} \big)}^{\!\scriptscriptstyle{\sqrt{\,\ }}} \! \cap
 \Utilde_\bq^{\,\leq} \Big) \; $  uniquely expands as an  $ \Rbqsq $--linear  combination of elements in  $ \mathbb{B} $
 but also uniquely expands as an  $ \Fbq $--linear combination of such elements: we conclude that the coefficients in
 these expansions belong to  $ \, \Rbqsq \cap \Fbq = \Rbq \, $,  \,q.e.d.
 \vskip7pt
   {\it (d)}\,  This is proved by the same arguments as  {\it (c)\/}  above.
\epf

\vskip9pt

   A direct fallout of the previous result is the following:

\vskip13pt

\begin{prop}  \label{prop:triang-decomps_Utilde}
 {\sl (triangular decompositions for unrestricted MpQG's)}
 \vskip1pt
 The multiplication in  $ \, \Utilde_\bq $  provides  $ \, \Rbq $--module  isomorphisms
  $$  \displaylines{
   \Utilde_\bq^- \mathop{\otimes}_{\,\Rbq} \Utilde_\bq^{\,0} \;
     \cong \; \Utilde_\bq^{\,\leq} \; \cong \; \Utilde_\bq^{\,0} \mathop{\otimes}_{\,\Rbq} \Utilde_\bq^-  \quad ,
 \qquad  \Utilde_\bq^+ \mathop{\otimes}_{\,\Rbq} \Utilde_\bq^{\,0} \; \cong \; \Utilde_\bq^{\,\geq} \;
     \cong \; \Utilde_\bq^{\,0} \mathop{\otimes}_{\,\Rbq} \Utilde_\bq^+  \cr
   \Utilde_\bq^{\,+,0} \!\mathop{\otimes}_{\,\Rbq}\! \Utilde_\bq^{\,-,0}  \, \cong \;  \Utilde_\bq^{\,0}  \,
     \cong \;  \Utilde_\bq^{\,-,0} \!\mathop{\otimes}_{\,\Rbq}\! \Utilde_\bq^{\,+,0}  \; ,
 \quad  \Utilde_\bq^+ \!\mathop{\otimes}_{\,\Rbq}\! \Utilde_\bq^{\,0} \!\mathop{\otimes}_{\,\Rbq}\! \Utilde_\bq^-  \,
     \cong \;  \Utilde_\bq  \, \cong \;  \Utilde_\bq^- \!\mathop{\otimes}_{\,\Rbq}\! \Utilde_\bq^{\,0} \!\mathop{\otimes}_{\,\Rbq}\! \Utilde_\bq^+  }  $$
\end{prop}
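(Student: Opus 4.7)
The plan is to derive Proposition \ref{prop:triang-decomps_Utilde} as a direct consequence of the PBW Theorem \ref{thm:PBW_tilde-MpQG}, exploiting the fact that each factor appearing on the left-hand side of the asserted isomorphisms is itself a free $\Rbq$-module with an explicit ordered-monomial basis, and that the target algebra on the right admits a corresponding ordered-monomial $\Rbq$-basis whose elements factorise (in the prescribed order) as products of basis elements of the factors.

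Concretely, fix for instance the last isomorphism $\;\Utilde_\bq^+ \otimes_\Rbq \Utilde_\bq^0 \otimes_\Rbq \Utilde_\bq^- \,\cong\, \Utilde_\bq\;$. By Theorem \ref{thm:PBW_tilde-MpQG}(a), the monomials $\prod_{h=1}^N \Ebar_{\beta^h}^{\,e_h}$ and $\prod_{k=N}^1 \Fbar_{\beta^k}^{\,f_k}$ form $\Rbq$-bases of $\Utilde_\bq^+$ and $\Utilde_\bq^-$ respectively, and by (b) the monomials $\prod_{j} L_j^{a_j} \prod_i K_i^{b_i}$ form an $\Rbq$-basis of $\Utilde_\bq^0$. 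Hence the set of ``pure tensors'' built from these three bases is an $\Rbq$-basis of the tensor product $\Utilde_\bq^+ \otimes_\Rbq \Utilde_\bq^0 \otimes_\Rbq \Utilde_\bq^-$. The multiplication map sends such a pure tensor to the corresponding ordered monomial $\prod_h \Ebar_{\beta^h}^{\,e_h}\,\prod_j L_j^{a_j}\,\prod_i K_i^{b_i}\,\prod_k \Fbar_{\beta^k}^{\,f_k}$, and Theorem \ref{thm:PBW_tilde-MpQG}(d) asserts that these latter monomials form an $\Rbq$-basis of $\Utilde_\bq$. Therefore the multiplication map carries a basis bijectively to a basis, so it is an $\Rbq$-module isomorphism. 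The reverse ordering $\Utilde_\bq^- \otimes_\Rbq \Utilde_\bq^0 \otimes_\Rbq \Utilde_\bq^+ \cong \Utilde_\bq$ follows from the second basis listed in Theorem \ref{thm:PBW_tilde-MpQG}(d).

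The remaining isomorphisms are treated by exactly the same mechanism, matching each claim with the appropriate basis from Theorem \ref{thm:PBW_tilde-MpQG}. Specifically, the two isomorphisms involving $\Utilde_\bq^\leq$ use the two bases listed in part (c) (with $\Fbar$'s); those involving $\Utilde_\bq^\geq$ use the two bases in (c) (with $\Ebar$'s); and the decompositions $\Utilde_\bq^{+,0} \otimes_\Rbq \Utilde_\bq^{-,0} \cong \Utilde_\bq^0 \cong \Utilde_\bq^{-,0} \otimes_\Rbq \Utilde_\bq^{+,0}$ follow from part (b), since the $K_i$'s and $L_j$'s commute with each other (they lie in the toral subalgebra) and the three bases listed there are mutually compatible orderings of the same underlying product basis.

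There is no genuine obstacle here: once the PBW Theorem \ref{thm:PBW_tilde-MpQG} is in hand, every triangular decomposition reduces to a purely bookkeeping check that the basis elements of the tensor product are in bijective correspondence, via multiplication, with a basis of the target. The only mild point worth noting is that one must ensure the ordering of factors in the PBW basis of the target matches the ordering of tensor factors on the source, which is why Theorem \ref{thm:PBW_tilde-MpQG} was stated with both orderings available in parts (c) and (d); this justifies simultaneously the ``left-to-right'' and ``right-to-left'' versions of each isomorphism in the statement.
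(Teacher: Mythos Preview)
Your proposal is correct and matches the paper's approach exactly: the paper's entire proof reads ``Direct from Theorem \ref{thm:PBW_tilde-MpQG} above,'' and you have simply spelled out the bookkeeping that this one-liner compresses.
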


\pf
 Direct from  Theorem \ref{thm:PBW_tilde-MpQG}  above.
\epf

\vskip9pt

   Here is a second consequence:

\vskip13pt

\begin{prop}  \label{prop:Utilde_subHopf}  {\ }
 \vskip3pt
   (a)  $ \; \Utilde_\bq = \Utilde_\bq(\hskip0,8pt\lieg) \, $,  resp.\  $ \Utilde_\bq^\leq \, $,  resp.\
   $ \Utilde_\bq^{\,0} \, $,  resp.\  $ \Utilde_\bq^\geq \, $,  is a Hopf  $ \, \Rbq $--subalgebra  (hence is an
   $ \, \Rbq $--integral  form as a Hopf algebra) of  $ \, U_\bq(\hskip0,8pt\lieg) \, $,  resp.\ of  $ \, U_\bq^\leq \, $,
   resp.\ of  $ \, U_\bq^{\,0} \, $,  resp.\ of  $ \, U_\bq^\geq \, $.
 \vskip3pt
   (b)  $ \; \Utilde_\bq^\pm $  is an  $ \, \Rbq $--subalgebra  (hence an  $ \, \Rbq $--integral  form, as an algebra) of
   $ \, U_\bq^\pm \, $.
\end{prop}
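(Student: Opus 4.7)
The plan is to reduce everything to two already-established ingredients: the fact that the statement is classically known in the canonical case $\bq=\check{\bq}$ (after De Concini--Kac--Procesi, whose argument works verbatim over $\Rbq$ thanks to Proposition~\ref{duality_x_PBW-bases} and the usual Levendorskii--Soibelman formulas), and the $2$--cocycle identification of Proposition~\ref{prop:Utilde-cocy-def} between our unrestricted forms and their canonical counterparts (after base change to $\Rbqsq$). The statement that each object is an $\Rbq$--subalgebra is immediate from the very definition; the statement that it is an $\Rbq$--integral form, i.e.\ that $\Fbq\otimes_{\Rbq}(-)$ recovers the correct object of $\QEq$, is an immediate consequence of the PBW Theorem~\ref{thm:PBW_tilde-MpQG}, since all exhibited $\Rbq$--bases, upon extension of scalars to $\Fbq$, become $\Fbq$--bases of the corresponding subalgebras of $\QEq$ via Theorem~\ref{thm:PBW_MpQG} (using that each scaling factor $q_{\alpha\,\alpha}-1$ is invertible in $\Fbq$). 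Thus part~(b) is proved, and for part~(a) it only remains to verify closure under the coproduct $\Delta$ and the antipode $S$.

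For $\Utilde_\bq^{\,0}$ this is trivial: the generators $K_i^{\pm 1}, L_i^{\pm 1}$ are group-like. For the three remaining algebras, the essential point is that the coalgebra structure of $U_\bq(\hskip0,8pt\liegd)$ is, by the cocycle construction of \S\ref{cotwist-defs_1}, the same as that of $\QEqcheck$ (after the identification supplied by Theorem~\ref{thm:sigma_2-cocy}); equivalently, after base change to $\Rbqsq$ Proposition~\ref{prop:Utilde-cocy-def} gives equalities of $\Rbqsq$--coalgebras
$$
\Utilde_\bq^{\,\scriptscriptstyle \sqrt{\,\ }}\;=\;\Utilde_{\check{\bq}}^{\,\scriptscriptstyle\sqrt{\,\ }}\,,\qquad
{\bigl(\Utilde_\bq^{\leq}\bigr)}^{\!\scriptscriptstyle\sqrt{\,\ }}={\bigl(\Utilde_{\check{\bq}}^{\leq}\bigr)}^{\!\scriptscriptstyle\sqrt{\,\ }}\,,\qquad
{\bigl(\Utilde_\bq^{\geq}\bigr)}^{\!\scriptscriptstyle\sqrt{\,\ }}={\bigl(\Utilde_{\check{\bq}}^{\geq}\bigr)}^{\!\scriptscriptstyle\sqrt{\,\ }}\,.
$$
Since each right-hand side is known (from the canonical case) to be closed under $\Delta$ and $S$, the same is true of each left-hand side, as a statement over $\Rbqsq$.

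To descend this closure from $\Rbqsq$ to $\Rbq$, take any $x\in\Utilde_\bq^{\sharp}$ (where $\sharp\in\{\leq,\geq,\;\}$) and compute $\Delta(x)$ inside $U_\bq\otimes_{\Fbq}U_\bq$. Using the PBW Theorems~\ref{thm:PBW_tilde-MpQG} and~\ref{thm:PBW_MpQG} we expand $\Delta(x)$ in the tensor product of the $\Rbq$--PBW basis of $\Utilde_\bq^{\sharp}$ with itself: the resulting coefficients are uniquely determined and, by the preceding paragraph applied after base change, lie in $\Rbqsq$, while by the unique $\Fbq$--PBW expansion in $U_\bq^{\sharp}\otimes_{\Fbq}U_\bq^{\sharp}$ they lie in $\Fbq$; hence they lie in $\Rbqsq\cap\Fbq=\Rbq$. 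This proves $\Delta\bigl(\Utilde_\bq^{\sharp}\bigr)\subseteq\Utilde_\bq^{\sharp}\otimes_{\Rbq}\Utilde_\bq^{\sharp}$. The same PBW-descent argument, applied to $S(x)$ expanded in the PBW basis, yields $S\bigl(\Utilde_\bq^{\sharp}\bigr)\subseteq\Utilde_\bq^{\sharp}$, while $\epsilon\bigl(\Utilde_\bq^{\sharp}\bigr)\subseteq\Rbq$ is obvious.

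The main subtlety I foresee is the verification that PBW monomials for $\Utilde_\bq^{\sharp}$ over $\Rbq$ remain simultaneously $\Rbqsq$--bases of ${\bigl(\Utilde_\bq^{\sharp}\bigr)}^{\!\scriptscriptstyle\sqrt{\,\ }}$ and $\Fbq$--bases of $U_\bq^{\sharp}$: this is exactly what Theorem~\ref{thm:PBW_tilde-MpQG} together with flat base change guarantees, so no additional work is required there. The only genuine obstacle is the first reduction step, namely making sure that in the canonical case $\bq=\check{\bq}$ the classical De Concini--Kac--Procesi closure arguments (which rely on the Levendorskii--Soibelman coproduct formulas for higher root vectors) go through over the present ground ring $\Rbq=\ZZ\bigl[q^{\pm1}\bigr]$ rather than a field; this indeed holds because those formulas involve only $q$--commutators with coefficients that are polynomials in $q,q^{-1}$, and the renormalisation $\Ebar_\alpha=(q_{\alpha\alpha}-1)E_\alpha$ was designed precisely so that those coefficients remain in $\Rbq$.
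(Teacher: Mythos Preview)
Your proposal is correct and follows essentially the same strategy as the paper's proof: reduce to the canonical case via Proposition~\ref{prop:Utilde-cocy-def} to obtain closure under $\Delta$ and $S$ over $\Rbqsq$, then descend to $\Rbq$ by expanding in the PBW basis of Theorem~\ref{thm:PBW_tilde-MpQG} and using $\Rbqsq\cap\Fbq=\Rbq$. Your treatment is in fact slightly more explicit than the paper's about why the objects are integral forms (via PBW and scalar extension), but the architecture of the argument is the same.
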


\pf
 Claim  {\it (b)\/}  is obvious, by construction, and similarly also claim  {\it (a)\/}  for
 $ \Utilde_\bq^0 \, $;  the other cases are similar, so we restrict ourselves to one of them,
 say that of  $ \Utilde_\bq \, $.
                                                  \par
   Once again, the canonical case (i.e.\  $ \, \bq = \check{\bq} \, $)  follows from the results
   in  \cite{DP},  suitably adapted to the present context; then by  Proposition \ref{prop:Utilde-cocy-def}
   above the same result also holds true for  $ \Utilde_\bq^{\scriptscriptstyle \sqrt{\,\ }} $  with  {\sl any\/}
   possible  $ \bq $   --- that is,  $ \Utilde_\bq^{\scriptscriptstyle \sqrt{\,\ }} $  is a Hopf
   $ \, \Rbqsq $--subalgebra  of
   $ \, U_\bq^{\scriptscriptstyle \sqrt{\,\ }} \! := U_\bq^{\scriptscriptstyle \sqrt{\,\ }}\!(\hskip0,8pt\lieg) \, $,
   for any possible  $ \bq \, $.  In particular  $ \Utilde_\bq^{\scriptscriptstyle \sqrt{\,\ }} $  is an
   $ \Rbqsq $--subcoalgebra  of  $ U_\bq^{\scriptscriptstyle \sqrt{\,\ }} \, $,  hence given any
   $ \, u \in \Utilde_\bq \; \big( \subseteq \Utilde_\bq^{\scriptscriptstyle \sqrt{\,\ }} \,\big) \, $  we have
   $ \, \Delta(u) \in \Utilde_\bq^{\scriptscriptstyle \sqrt{\,\ }} \otimes_\Rbqsq
   \Utilde_\bq^{\scriptscriptstyle \sqrt{\,\ }} \, $.  By  Theorem \ref{thm:PBW_tilde-MpQG}  the
   $ \Rbqsq $--module  $ \, \Utilde_\bq^{\scriptscriptstyle \sqrt{\,\ }} \otimes_\Rbqsq
   \Utilde_\bq^{\scriptscriptstyle \sqrt{\,\ }} \, $  is free with a basis made of homogeneous tensors
   $ \, v' \otimes v'' \, $  in which both  $ v' $  and  $ v'' $  are PBW monomials as given in
   Theorem \ref{thm:PBW_tilde-MpQG}{\it (d)\/}:  thus  $ \Delta(u) $  has a unique expansion of the form
   $ \; \Delta(u) = \sum_s c_s \, v'_s \otimes v''_s \; $  for some  $ \, c_s \in \Rbqsq \, $.
   On the other hand, the same set of ``PBW homogeneous tensors'' of the form  $ \, v' \otimes v'' \, $
   as above is also an  $ \Fbq $--basis  of  $ \, U_\bq \otimes_\Fbq U_\bq \, $:  hence, since  $ U_\bq $
   is an  $ \Fbq $--coalgebra  and  $ \, u \in \Utilde_\bq \subseteq U_\bq \, $,  we have also a unique
   $ \Fbq $--linear expansion of  $ \Delta(u) $  into  $ \; \Delta(u) = \sum_s a_s \, v'_s \otimes v''_s \; $.
   Comparing both expansion inside  $ \, U_\bq^{\scriptscriptstyle \sqrt{\,\ }} \otimes_\Fbqsq
   U_\bq^{\scriptscriptstyle \sqrt{\,\ }} \, $   --- which also has the set of all ``PBW homogeneous tensors''
   $ \, v' \otimes v'' \, $  as  $ \Fbqsq $--monomials  ---  we find
   $ \; c_s = a_s \, \in \, \Rbqsq \cap \Fbq \, = \, \Rbq \; $  for all  $ s \, $,  which means that
   $ \; \Delta(u) \, \in \, \Utilde_\bq \otimes_\Rbq \Utilde_\bq \; $.  So  $ \Utilde_\bq $  is an
   $ \Rbq $--subcoalgebra  of  $ U_\bq \, $,  and similar arguments prove it is stable by the antipode,
   hence is a Hopf  $ \Rbq $--subalgebra.
\epf

\medskip

\subsection{Integral forms for MpQG's with larger torus}  \label{int-form-&-larger-torus}  \
 \vskip7pt
   In  \S \ref{MpQGs-larger-torus}  we introduced generalized MpQG's, denoted
   $ \, U_{\bq,\varGamma_{\!\bullet}} \equiv U_{\bq,\varGamma_{\!\bullet}}(\hskip0,8pt\lieg) \, $,
   whose toral part is the group algebra of any lattice  $ \, \varGamma_{\!\bullet} = \varGamma_+ \times \varGamma_- \, $
   with  $ \varGamma_\pm $  being rank  $ \theta $  lattices such that  $ \, Q \leq \varGamma_\pm \leq \QQ{}Q \, $;
%
%
 in particular,  {\sl this required additional assumptions on the ground field  $ \k \, $,  namely that  $ \k $
 contain suitable roots of the  $ q_{ij} $'s,  see \S \ref{larger-MpQG's}}.  We shall now consider one such
 generalized MpQG, say  $ U_{\bq,\varGamma_{\!\bullet}} \, $,  making assumptions on  $ \k $  as mentioned above,
 and introduce integral forms for it, quickly explaining the few changes one needs in the previously described
 treatment of integral forms for  $ \, U_\bq \, $,  that is the case  $ \, \varGamma_{\!\bullet} = Q \times Q \, $.

\vskip11pt

\begin{free text}  \label{restr-int-forms_x_MpQG's_lrg-torus}
 {\bf Restricted integral forms for MpQG's with larger torus.}\,   {\sl Assume that\/  $ \bq $  is of  {\it integral}  type}.
 Then a  $ \ZZ $--bilinear  form  $ \, {(\,\ ,\ )}_{\!{}_B} \, $  is defined on  $ \QQ{}Q \, $,
 and we have well-defined sublattices  $ {\dot{Q}}^{(\ell)} $  and  $ {\dot{Q}}^{(r)} $  in  $ \QQ{}Q $
 (notation of  \S \ref{MpQG-larg-tor_int-case}).
%
%
%
 {\sl We assume in addition that  $ \, \varGamma_+ \subseteq {\dot{Q}}^{(\ell)} \, $  and
 $ \, \varGamma_- \subseteq {\dot{Q}}^{(r)} \, $}.
 Then
 we can define a ``restricted integral form''  $ \Uhatdot_{\bq\,,\varGamma_{\!\bullet}} $  for
 $ U_{\bq\,,\varGamma_{\!\bullet}} \, $,  akin to  $ \Uhatdot_\bq $   ---
 so that  $ \, \varGamma_\pm = Q \, $  yields  $ \, \Uhatdot_{\bq\,,\varGamma_{\!\bullet}} = \Uhatdot_\bq \, $  ---
 as follows.
                                                             \par
   Let  $ \, {\big\{ \gamma^\pm_i \big\}}_{i \in I} \, $  be a basis of  $ \varGamma_\pm \, $:
   then  {\sl in  Definition \ref{def:int-form_hat-MpQG}{\it (a)},  replace every occurrence of
   ``$ \, K_i^{\pm 1} \, $''  with  ``$ \, {K_{\gamma_i^+}}^{\hskip-9pt\pm 1} \, $''
   and every occurrence of  ``$ \, L_i^{\pm 1} \, $''  with  ``$ \, {L_{\gamma_i^-}}^{\hskip-9pt\pm 1} \, $''
   --- so each  $ q $--binomial  coefficient  $ \, {\Big(\! {{K_i\,;\,c} \atop n} \Big)}_{\!\!q} \, $  is replaced by
   $ \, {\Big(\! {{K_{\gamma_i^+}\,;\,c} \atop {\;\;\;n}} \Big)}_{\!\!q} \, $,  etc.; this yields the very
   {\bf definition}  of  $ \, \Uhatdot_{\bq\,,\varGamma_{\!\bullet}} \, $}.
 \vskip3pt
   Basing on this definition, one easily finds that  {\it all results presented in  \S \ref{Uhat}
   above for  $ \, \Uhatdot_\bq $  have their direct counterpart for  $ \Uhatdot_{\bq\,,\varGamma_{\!\bullet}} $  as well}.
   Moreover, the natural embedding  $ \, U_\bq \subseteq U_{\bq\,,\varGamma_{\!\bullet}} \, $
   between MpQG's   --- induced by the inclusion  $ \, Q \times Q \subseteq \varGamma_{\!\bullet} \, $  ---
   clearly restricts to a similar embedding  $ \, \Uhatdot_\bq \subseteq \Uhatdot_{\bq\,,\varGamma_{\!\bullet}} \, $
   of integral forms.  Similar comments apply to the various subalgebras of  $ \Uhatdot_\bq $
   for their natural counterparts in  $ \Uhatdot_{\bq\,,\varGamma_{\!\bullet}} \, $.
 \vskip5pt
   Similarly,  {\sl assume now that\/  $ \bq $  is of\/  {\it strictly integral}  type},  so that the sublattices
   $ Q^{(\ell)} $  and  $ Q^{(r)} $  are defined in  $ \QQ{}Q $  (cf.\ \S \ref{MpQG-larg-tor_int-case});
   concerning  $ \varGamma_\pm \, $,  this time  {\sl we assume in addition that
   $ \, \varGamma_+ \subseteq Q^{(\ell)} \, $  and  $ \, \varGamma_- \subseteq Q^{(r)} \, $}.
   Then we can define a second ``restricted integral form''  $ \Uhat_{\bq\,,\varGamma_{\!\bullet}} $
   for  $ U_{\bq\,,\varGamma_{\!\bullet}} \, $,  direct analogue to  $ \Uhat_\bq \, $,  as follows.
                                                             \par
   Given bases  $ \, {\big\{ \gamma^\pm_i \big\}}_{i \in I} \, $  of  $ \varGamma_\pm \, $ as above,
   {\sl in  Definition \ref{def:int-form_hat-MpQG}{\it (b)\/}  replace every occurrence of  ``$ \, K_i^{\pm 1} \, $''
   with  ``$ \, {K_{\gamma_i^+}}^{\hskip-9pt\pm 1} \, $''  and every occurrence of  ``$ \, L_i^{\pm 1} \, $''  with
   ``$ \, {L_{\gamma_i^-}}^{\hskip-9pt\pm 1} \, $'':  in particular, every  $ q_i $--divided  binomial coefficient
   $ \, {\Big(\! {{K_i\,;\,c} \atop n} \Big)}_{\!\!q_i} \, $  is replaced by
   $ \, {\Big(\! {{K_{\gamma_i^+}\,;\,c} \atop {\;\;\;n}} \Big)}_{\!\!q_i} \, $,  etc.;
   then read the outcome, by assumption, as the very  {\bf definition}  of  $ \, \Uhat_{\bq\,,\varGamma_{\!\bullet}} \, $}.
                                                             \par
   In force of this definition, one can easily find that  {\it all results presented  in  \S \ref{Uhat}
   about  $ \, \Uhat_\bq $  have their direct counterpart for  $ \Uhat_{\bq\,,\varGamma_{\!\bullet}} $  as well}.
   In addition, the embedding  $ \, U_\bq \subseteq U_{\bq\,,\varGamma_{\!\bullet}} \, $  restricts to an embedding
   $ \, \Uhat_\bq \subseteq \Uhat_{\bq\,,\varGamma_{\!\bullet}} \, $  between integral forms.  All this applies also to the
   natural counterparts in  $ \Uhat_{\bq\,,\varGamma_{\!\bullet}} $  of the different subalgebras of  $ \Uhat_\bq \, $.
\end{free text}

\vskip4pt

\begin{free text}  \label{unrestr-int-forms_x_MpQG's_lrg-torus}
 {\bf Unrestricted integral forms for MpQG's with larger torus.}\,   {\sl Let now\/  $ \bq $  be of general
 (though Cartan) type, and make no special assumptions on  $ \varGamma_\pm \, $}.
 Then we can define for  $ U_{\bq\,,\varGamma_{\!\bullet}} $  an ``unrestricted integral form''
 $ \Utilde_{\bq\,,\varGamma_{\!\bullet}} \, $,  akin to  $ \Utilde_\bq $   --- in that
 $ \, \Utilde_{\bq\,,\varGamma_{\!\bullet}} = \Utilde_\bq \, $  when  $ \, \varGamma_\pm = Q \, $  ---
 in the following, very simple way.
                                                             \par
   Let  $ \, {\big\{ \gamma^\pm_i \big\}}_{i \in I} \, $  be bases of  $ \varGamma_\pm \, $,  as before: now,
   {\sl in  Definition \ref{def:int-form_tilde-Uqqm},  replace every occurrence of  ``$ \, K_i^{\pm 1} \, $''  with
   ``$ \, {K_{\gamma_i^+}}^{\hskip-9pt\pm 1} \, $''  and every occurrence of  ``$ \, L_i^{\pm 1} \, $''  with
   ``$ \, {L_{\gamma_i^-}}^{\hskip-9pt\pm 1} \, $'';  then take the final outcome as the very  {\bf definition}  of
   $ \, \Utilde_{\bq\,,\varGamma_{\!\bullet}} \, $}.
                                                             \par
   Starting from this definition, one easily checks that  {\it all results presented in  \S \ref{Utilde}  for
   $ \, \Utilde_\bq $  have a direct counterpart for  $ \Utilde_{\bq\,,\varGamma_{\!\bullet}} $  too}.
   Also, the natural embedding  $ \, U_\bq \subseteq U_{\bq\,,\varGamma_{\!\bullet}} \, $
   between MpQG's implies by restriction a similar embedding
   $ \, \Utilde_\bq \subseteq \Utilde_{\bq\,,\varGamma_{\!\bullet}} \, $
   between the corresponding unrestricted integral forms.  Finally, similar comments apply to
   the natural counterparts in  $ \Utilde_{\bq\,,\varGamma_{\!\bullet}} $
   of the various subalgebras considered in  $ \Utilde_\bq \, $.
\end{free text}

\medskip

\subsection{Duality among integral forms}  \label{duality-int-forms}  \
 \vskip7pt
   If we take two quantum Borel subgroups  $ U_\bq^{\,\geq} $  and  $ U_\bq^{\,\leq} \, $,
   we know that they are in duality via a non-degenerate skew-Hopf pairing as in  \S \ref{H-duality}.
   Now,  {\sl assuming that  $ \bq $  is of integral type},  if we take on either side integral forms of
   opposite nature, say  $ \Uhatdot_\bq^{\raise-5pt\hbox{$ \scriptstyle \,\geq $}} \, $,  or
   $ \Uhat_\bq^{\,\geq} \, $,  and  $ \Utilde_\bq^{\,\leq} $   --- or  $ \Utilde_\bq^{\,\geq} $  and
   $ \Uhatdot_\bq^{\raise-5pt\hbox{$ \scriptstyle \,\leq $}} \, $,  or  $ \Uhat_\bq^{\,\leq} \, $  ---
   we find that they are ``dual to each other'' with respect to that pairing. To state this properly, we
   need to work with MpQG's with (suitably paired) larger tori.  The correct statement is the following:

\smallskip

\begin{prop}  \label{q-Borel_mut-dual}
 Let  $ \varGamma_\pm $  be rank  $ \theta $  sublattices of\/  $ \QQ{}Q $  containing  $ Q \, $,
 let  $ \, U_{\bq\,,\varGamma_+}^\geq $  and  $ \, U_{\bq\,,\varGamma_-}^\leq $
 be the associated Borel MpQG's, and let
%
%
 $ \, \eta : U_{\bq\,,\varGamma_+}^\geq \!\otimes U_{\bq\,,\varGamma_-}^\leq \!\!\relbar\joinrel\relbar\joinrel\longrightarrow \k \, $
 be the skew-Hopf pairing of\/  \S \ref{duality x larger MpQG's}.
 \vskip3pt
   (a)\,  Assume that\/  $ \, \bq = {\big(\, q^{b_{ij}} \big)}_{i,j \in I} \, $  is of  {\sl integral}  type,
   and (with notation of\/  \S \ref{MpQG-larg-tor_int-case})  that
%
%
 $ \, \varGamma_+ = \dot{\varGamma}_-^{(\ell)} \, $  and  $ \, \varGamma_- = \dot{\varGamma}_+^{(r)} \, $.
 Then
  $$  \displaylines{
   \Uhatdot_{\bq\,,\varGamma_+}^{\raise-5pt\hbox{$ \scriptstyle \,+,0 $}}  =
   \,  \Big\{\, u \in U_{\bq\,,\varGamma_+}^{+,0} \;\Big|\; \eta\Big( u \, , \Utilde_{\bq\,,\varGamma_-}^{-,0} \Big) \subseteq \Rbq \,\Big\}
%
%
  \cr
   \Utilde_{\bq\,,\varGamma_-}^{-,0}  = \,  \Big\{\, v \in U_{\bq\,,\varGamma_-}^{-,0} \;\Big|\;
   \eta\Big( \Uhatdot_{\bq\,,\varGamma_+}^{\raise-5pt\hbox{$ \scriptstyle \,+,0 $}} , v \Big) \subseteq \Rbq \,\Big\}
  \cr
   \Uhatdot_\bq^{\raise-5pt\hbox{$ \scriptstyle \,+ $}}  \,
   = \;  \Uhatdot_{\bq\,,\varGamma_+}^{\raise-5pt\hbox{$ \scriptstyle \,+ $}}
   = \,  \Big\{\, u \in U_{\bq\,,\varGamma_+}^{\,+} \;\Big|\; \eta\Big( u \, , \Utilde_{\bq\,,\varGamma_-}^{\,-} \Big) \subseteq \Rbq \,\Big\}
  \cr
   \; \Utilde_\bq^{\,-}  \, = \;
\Utilde_{\bq\,,\varGamma_-}^{\,-}  = \,  \Big\{\, v \in U_{\bq\,,\varGamma_-}^{\,-} \;\Big|\;
\eta\Big( \Uhatdot_{\bq\,,\varGamma_+}^{\raise-5pt\hbox{$ \scriptstyle \,+ $}} , v \Big) \subseteq \Rbq \,\Big\}
  \cr
   \Uhatdot_{\bq\,,\varGamma_+}^{\raise-5pt\hbox{$ \scriptstyle \,\geq $}}  =
   \,  \Big\{\, u \in U_{\bq\,,\varGamma_+}^{\,\geq} \;\Big|\; \eta\Big( u \, , \Utilde_{\bq\,,\varGamma_-}^{\,\leq} \Big) \subseteq \Rbq \,\Big\}  \cr
   \; \Utilde_{\bq\,,\varGamma_-}^{\,\leq}  = \,  \Big\{\, v \in U_{\bq\,,\varGamma_-}^{\,\leq} \;\Big|\;
   \eta\Big( \Uhatdot_{\bq\,,\varGamma_+}^{\raise-5pt\hbox{$ \scriptstyle \,\geq $}} , v \Big) \subseteq \Rbq \,\Big\}
  }  $$
 and similarly reversing the roles of  ``\,+''  and  ``\,-''  and of  ``$ \,\geq $''  and  ``$ \,\leq $''.
 \vskip3pt
   (b)\,  Assume that\/  $ \, \bq = {\big(\, q^{\,d_i{}t^+_{ij}} = q^{\,d_j{}t^-_{ij}} \big)}_{i,j \in I} \, $  is of
   {\sl strongly integral}  type  (cf. \S \ref{MpQG-larg-tor_int-case}  for notation).  If
   $ \, \varGamma_- = \varGamma_+^{(r)} \, $  --- cf.\  \eqref{dual-lattice_T+/-}  ---
 then
  $$  \displaylines{
   \Uhat_{\bq\,,\varGamma_+}^{+,0}  = \,  \Big\{\, u \in U_{\bq\,,\varGamma_+}^{+,0} \;\Big|\;
   \eta\Big( u \, , \Utilde_{\bq\,,\varGamma_-}^{-,0} \Big) \subseteq \Rbq \,\Big\}  \cr
   \Utilde_{\bq\,,\varGamma_-}^{-,0}  = \,  \Big\{\, v \in U_{\bq\,,\varGamma_-}^{-,0} \;\Big|\;
   \eta\Big( \Uhat_{\bq\,,\varGamma_+}^{+,0} , v \Big) \subseteq \Rbq \,\Big\}
 }  $$
  $$  \displaylines{
   \; \Uhat_\bq^{\,+}  \, = \;  \Uhat_{\bq\,,\varGamma_+}^{\,+}  = \,  \Big\{\, u \in U_{\bq\,,\varGamma_+}^{\,+}
\;\Big|\; \eta\Big( u \, , \Utilde_{\bq\,,\varGamma_-}^{\,-} \Big) \subseteq \Rbq \,\Big\}  \cr
   \; \Utilde_\bq^{\,-}  \, = \;  \Utilde_{\bq\,,\varGamma_-}^{\,-}  = \,  \Big\{\, v \in U_{\bq\,,\varGamma_-}^{\,-}
\;\Big|\; \eta\Big( \Uhat_{\bq\,,\varGamma_+}^{\,+} , v \Big) \subseteq \Rbq \,\Big\}  \cr
   \Uhat_{\bq\,,\varGamma_+}^{\,\geq}  = \,  \Big\{\, u \in U_{\bq\,,\varGamma_+}^{\,\geq} \;\Big|\;
   \eta\Big( u \, , \Utilde_{\bq\,,\varGamma_-}^{\,\leq} \Big) \subseteq \Rbq \,\Big\}  \cr
   \; \Utilde_{\bq\,,\varGamma_-}^{\,\leq}  = \,  \Big\{\, v \in U_{\bq\,,\varGamma_-}^{\,\leq} \;\Big|\;
   \eta\Big( \Uhat_{\bq\,,\varGamma_+}^{\,\geq} , v \Big) \subseteq \Rbq \,\Big\}  }  $$
   \indent   If instead  $ \, \varGamma_+ = \varGamma_-^{(\ell)} \, $  --- cf.\  \eqref{dual-lattice_T+/-}  again --- then
  $$  \displaylines{
   \Uhat_{\bq\,,\varGamma_-}^{-,0}  = \,  \Big\{\, v \in U_{\bq\,,\varGamma_-}^{-,0} \;\Big|\;
   \eta\Big( \Utilde_{\bq\,,\varGamma_+}^{+,0} , v \Big) \subseteq \Rbq \,\Big\}  \cr
   \, \Utilde_{\bq\,,\varGamma_+}^{+,0}  = \,  \Big\{\, u \in U_{\bq\,,\varGamma_+}^{+,0} \;\Big|\;
   \eta\Big( u \, , \Uhat_{\bq\,,\varGamma_-}^{-,0} \Big) \subseteq \Rbq \,\Big\}  \cr
   \Uhat_\bq^{\,-}  \, = \;  \Uhat_{\bq\,,\varGamma_-}^{\,-}  = \,  \Big\{\, v \in U_{\bq\,,\varGamma_-}^{\,-} \;\Big|\;
   \eta\Big( \Utilde_{\bq\,,\varGamma_+}^{\,+} , v \Big) \subseteq \Rbq \,\Big\}  \cr
   \Utilde_\bq^{\,+}  \, = \;  \Utilde_{\bq\,,\varGamma_+}^{\,+}  = \,  \Big\{\, u \in U_{\bq\,,\varGamma_+}^{\,+}
\;\Big|\; \eta\Big( u \, , \Uhat_{\bq\,,\varGamma_-}^{\,-} \Big) \subseteq \Rbq \,\Big\}  \cr
   \Uhat_{\bq\,,\varGamma_-}^{\,\leq}  = \,  \Big\{\, v \in U_{\bq\,,\varGamma_-}^{\,\leq} \;\Big|\;
\eta\Big( \Utilde_{\bq\,,\varGamma_+}^{\,\geq} , v \Big) \subseteq \Rbq \,\Big\}  \cr
   \; \Utilde_{\bq\,,\varGamma_+}^{\,\geq}  = \,  \Big\{\, u \in U_{\bq\,,\varGamma_+}^{\,\geq} \;\Big|\; \eta\Big( u \, , \Uhat_{\bq\,,\varGamma_-}^{\,\leq} \Big) \subseteq \Rbq \,\Big\}  }  $$
\end{prop}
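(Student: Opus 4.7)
The strategy is to reduce the proof to three separate dualities --- one on the nilpotent part, one on the toral part, and finally one on the full Borel --- using the triangular decompositions of Propositions \ref{prop:triang-decomps_U}, \ref{prop:triang-decomps_Uhat}, \ref{prop:triang-decomps_Utilde} (and their analogues in the larger-torus setting, cf.\ \S \ref{int-form-&-larger-torus}) together with the fact that the skew-Hopf pairing $\eta$ splits multiplicatively across such decompositions, as recorded in Proposition \ref{sk-H_pair} and Remark \ref{rmk:duality-x-Borel-mpqsbgrp's}.

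For the nilpotent part, the plan is to apply Proposition \ref{duality_x_PBW-bases} directly to the two PBW bases available: the restricted one $\big\{\prod_k E_{\beta^k}^{(e_k)}\big\}$ of $\Uhatdot_\bq^{\raise-5pt\hbox{$ \scriptstyle +$}}\! = \Uhat_\bq^{\,+}$ coming from Theorem \ref{thm:PBW_hat-MpQG}(a), and the unrestricted one $\big\{\prod_k \Fbar_{\beta^k}^{\,f_k}\big\}$ of $\Utilde_\bq^{\,-}$ coming from Theorem \ref{thm:PBW_tilde-MpQG}(a). A direct calculation, using $E_{\beta^k}^{(e_k)} = E_{\beta^k}^{e_k}\big/(e_k)_{\subd{q}{\beta^k}{\beta^k}}!$ and $\Fbar_{\beta^k}^{\,f_k} = (\subd{q}{\beta^k}{\beta^k}-1)^{f_k} F_{\beta^k}^{f_k}$, collapses the pairing formula of Proposition \ref{duality_x_PBW-bases} to
\[
\eta\!\left(\prod_{k} E_{\beta^k}^{(e_k)}, \prod_{k} \Fbar_{\beta^k}^{\,f_k}\right) \,=\, \prod_k \delta_{e_k,f_k}\, \bigl(-\subd{q}{\beta^k}{\beta^k}\bigr)^{e_k},
\]
a unit of $\Rbq$. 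This makes the two PBW bases ``biorthogonal up to units of $\Rbq$''; hence each is characterized as the set of elements in the corresponding $\Fbq$-form whose pairing against the opposite integral form lands in $\Rbq$. This yields the $\Uhat/\Utilde$ duality for the nilpotent parts in both (a) and (b).

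For the toral parts, the plan is to identify the pairing with the formal model $\langle\,\,,\,\rangle: \bE_{2\theta} \times \E_{2\theta} \to \Fbq$ of \S \ref{q-bin_coeff_&_arithmetic}, via the generators $K_{\gamma_i^+}, L_{\gamma_j^-}$ and the explicit values $\eta(K_{\gamma_i^+},L_{\gamma_j^-})=q^{(\gamma_i^+,\gamma_j^-)_B}$ (resp.\ $q_i^{\langle\gamma_i^+,\gamma_j^-\rangle_{T^\pm}}$ in the strongly integral case). The duality hypotheses $\varGamma_+ = \dot{\varGamma}_-^{(\ell)}$, $\varGamma_- = \dot{\varGamma}_+^{(r)}$ (resp.\ $\varGamma_\pm = \varGamma_\mp^{(r/\ell)}$) are precisely what is needed to make this pairing compatible with the integrality: the construction of $\Uhatdot^{+,0}_{\bq,\varGamma_+}$ (resp.\ $\Uhat^{+,0}_{\bq,\varGamma_+}$) via $q$-binomial coefficients in the $K_{\gamma_i^+}$'s is exactly the ``dual lattice'' construction $({\E}_{2\theta,\Rbq})^\circ$ of Definition \ref{def-Uhatdotbq}, while the Laurent polynomial ring in the $L_{\gamma_j^-}^{\pm 1}$'s is $\E_{2\theta,\Rbq}$. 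The desired duality then becomes a direct application of Proposition \ref{prop:basis_int-on-laur-mons} together with the bidual identification of Proposition \ref{prop:bidual}.

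The final step is to assemble the two dualities into the Borel statement: given $u \in U^{\,\geq}_{\bq,\varGamma_+}$, expand $\Delta(u)$ in the triangular basis to reduce the pairing with an element $v\in U^{\,\leq}_{\bq,\varGamma_-}$ to a sum of products of a toral pairing and a nilpotent pairing; multiplicativity of $\eta$ (Proposition \ref{sk-H_pair}, last formula) plus $\Rbq$-freeness of the PBW bases then forces $u$ to lie in $\Uhatdot^{\,\geq}_{\bq,\varGamma_+}$ (resp.\ $\Uhat^{\,\geq}$) if and only if every factor lies in the appropriate integral form. I expect the main technical difficulty to be the toral step: verifying cleanly that under the hypothesis $\varGamma_+ = \dot\varGamma_-^{(\ell)}$ the pairing $\eta$ on $U^{+,0}_{\bq,\varGamma_+}\otimes U^{-,0}_{\bq,\varGamma_-}$ identifies isometrically with the model pairing $\bE_{2\theta}\times \E_{2\theta}\to \Fbq$, so that Propositions \ref{prop:basis_int-on-laur-mons} and \ref{prop:bidual} apply verbatim; this is where the precise lattice-duality assumption is crucial and cannot be weakened.
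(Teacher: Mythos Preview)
Your proposal is correct and follows essentially the same three-step approach as the paper's proof: nilpotent duality via Proposition~\ref{duality_x_PBW-bases} and the PBW bases (your explicit pairing value should read $\prod_k\bigl((-1)^{h(\beta_k)}\subd{q}{\beta^k}{\beta^k}\bigr)^{e_k}$, still a unit of $\Rbq$), toral duality by identifying $\eta$ with the model pairing of \S\ref{q-bin_coeff_&_arithmetic} after choosing mutually dual bases of $\varGamma_\pm$ so that $\eta(K_{\gamma_h^+},L_{\gamma_k^-})=q^{\delta_{h,k}}$ and then invoking Propositions~\ref{prop:basis_int-on-laur-mons} and~\ref{prop:bidual}, and finally assembly for the full Borel via the triangular decompositions. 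One cosmetic point on the last step: you expand $u$ itself in the triangular basis and use the factorization $\eta(EK,FL)=\eta(E,F)\,\eta(K,L)$ from Proposition~\ref{sk-H_pair} directly, rather than expanding $\Delta(u)$.
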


\pf
 {\it (a)}\,  The assumptions imply  $ \, {\big( \varGamma_+ \, , \varGamma_- \big)}_{\!{}_B} \!\subseteq \ZZ \, $,
 hence  $ \; \eta\big( K_{\gamma_i^+\!} \, , L_{\gamma_j^-} \big) \, = \, q^{{( \gamma_i^+ , \gamma_j^- )}_{\!{}_B}} \; $
 --- cf.\  \S \ref{duality x larger MpQG's}  ---   that in turn implies
 $ \,\; \displaystyle{ \eta\Bigg(\! {\bigg( {{K_{\gamma_i^+\!} \, ; 0} \atop n} \bigg)}_{\!\!q} \, ,
 \, L_{\gamma_j^-} \Bigg) = {\bigg( {{( \gamma_i^+ , \gamma_j^- )}_{\!{}_B} \atop n} \!\bigg)}_{\!\!q} } \, \in \, \Rbq \;\, $.
 Taking PBW bases on both sides, this is enough to prove  $ \; \eta\Big( \Uhatdot_{\bq\,,\varGamma_+}^{\raise-5pt\hbox{$ \scriptstyle \,+,0 $}} \, ,
 \Utilde_{\bq\,,\varGamma_-}^{-,0} \Big) \, \subseteq \, \Rbq \; $;  therefore we get
 $ \; \Uhatdot_{\bq\,,\varGamma_+}^{\raise-5pt\hbox{$ \scriptstyle \,+,0 $}} \subseteq
 \Big\{\, u \in U_{\bq\,,\varGamma_+}^{+,0} \;\Big|\; \eta\Big( u \, , \Utilde_{\bq\,,\varGamma_-}^{-,0} \Big) \subseteq \Rbq \,\Big\} \; $
 and on the other hand also
 $ \; \Utilde_{\bq\,,\varGamma_-}^{-,0} \subseteq \Big\{\, v \in U_{\bq\,,\varGamma_-}^{-,0}
 \;\Big|\; \eta\Big( \Uhatdot_{\bq\,,\varGamma_+}^{\raise-5pt\hbox{$ \scriptstyle \,+,0 $}} , v \Big) \subseteq \Rbq \,\Big\} \; $.
 This proves ``half'' the result we claimed true, thus we still need some additional work to do.
 \vskip1pt
   Since  $ \, \varGamma_+ = \dot{\varGamma}_-^{(\ell)} \, $  and  $ \, \varGamma_- = \dot{\varGamma}_+^{(r)} \, $,
   we can fix bases  $ {\big\{ \gamma_i^\pm \big\}}_{i \in I} $  of  $ \varGamma_\pm $
   that are dual to each other, namely  $ \; {\big( \gamma_h^+ \, , \gamma_k^-  \big)}_{\!{}_B} = \, \delta_{h,k} \; $
   for all  $ \, h, k \in I \, $;  \,so we get
   $ \; \eta\,\Big( K_{\gamma_h^+}^{\,z_h^+} \, , L_{\gamma_k^-}^{z_k^-} \Big) = q^{\delta_{h,k}\,z_h^+\,z_k^-} \; $.
   As a consequence, the arguments used for  Proposition \ref{prop:basis_int-on-laur-mons} and
   Proposition \ref{prop:bidual} apply again (with  $ \, \eta \, $  replacing the pairing  $ \, \big\langle\,\ ,\ \big\rangle \, $
   and the  $ K_{\gamma_h^+} $'s,  resp.\ the  $ L_{\gamma_k^-} $'s,  playing the role of the  $ X_i $'s,
   resp.\ of the  $ \chi_j $'s) now proving claim  {\it (a)}.  Indeed, the analysis developed for those results now shows
 that  $ \Uhatdot_{\bq\,,\varGamma_+}^{\raise-5pt\hbox{$ \scriptstyle \,+,0 $}} $  and
 $ \Utilde_{\bq\,,\varGamma_-}^{-,0} $  contain bases that, up to invertible coefficients (powers of  $ q \, $),
 are dual to each other, and that is enough to conclude.
 \vskip1pt
%
%
   The claim about  $ \, \Uhatdot_\bq^{\raise-5pt\hbox{$ \scriptstyle \,+ $}} \! =
   \Uhatdot_{\bq\,,\varGamma_+}^{\raise-5pt\hbox{$ \scriptstyle \,+ $}} \, $  and  $ \, \Utilde_\bq^{\,-} \! =
\Utilde_{\bq\,,\varGamma_-}^{\,-} $  (both independent of  $ \varGamma_\pm \, $)
is a consequence of PBW theorems for both sides and of  Proposition \ref{duality_x_PBW-bases}.
Then from this result, the one for  $ \Uhatdot_{\bq\,,\varGamma_+}^{\raise-5pt\hbox{$ \scriptstyle \,+,0 $}} $  and
$ \Utilde_{\bq\,,\varGamma_-}^{-,0} $  and the triangular decompositions in
Proposition \ref{prop:triang-decomps_Uhat}  and  Proposition \ref{prop:triang-decomps_Utilde},
we finally get the statement concerning  $ \Uhatdot_{\bq\,,\varGamma_+}^{\raise-5pt\hbox{$ \scriptstyle \,\geq $}} $
and  $ \Utilde_{\bq\,,\varGamma_-}^{\,\leq} $  as well.
 \vskip1pt
   The statement with switched  ``\,+''  and  ``\,-''  or  ``$ \,\geq\, $''  and  ``$ \,\leq\, $''  goes the same way.
 \vskip4pt
   {\it (b)}\,   Up to minimal changes, this is proved much like claim  {\it (a)}.
\epf

\vskip9pt

\begin{rmk}  \label{rmk:applic_q-Borel_duality}
 One can use the previous result to deduce properties of a (Hopf) algebra on either side
 --- e.g.\  $ \Utilde_{\bq\,,\varGamma_-}^{\,\leq} $,  say ---   out of properties on the other side   ---
 $ \Uhatdot_{\bq\,,\varGamma_+}^{\raise-5pt\hbox{$ \scriptstyle \,\geq $}} $  or  $ \Uhat_{\bq\,,\varGamma_+}^{\,\geq} $
 in the example.  For instance,  $ \Utilde_{\bq\,,\varGamma_-}^{\,\leq} $  is an  $ \Rbq $--algebra
 (hard to prove directly!) because  $ \Uhat_{\bq\,,\varGamma_+}^{\,\geq} $  is an  $ \Rbq $--coalgebra
 (that follows from its definition).  Similarly, we deduce that  $ \Utilde_\bq^{\,+} $
 {\sl is independent of any choice of quantum root vectors\/}  (that do enter in the definition!) because
 it is ``the dual'' of  $ \Uhat_\bq^{\,-} $  and  {\sl the latter  {\it is}  independent},
   \hbox{by definition, of any such choice.}
%
\end{rmk}

\medskip

\subsection{ Integral forms of ``mixed'' type}  \label{subsec:mixed-int-forms}  \
 \vskip7pt
   Let us consider two quantum Borel subgroups  $ U_{\bq\,,\varGamma_+}^{\,\geq} $  and
   $ U_{\bq\,,\varGamma_-}^{\,\leq} $  as in  \S \ref{duality-int-forms}  above, with  $ \bq $  {\sl integral},
   linked by the skew-Hopf pairing  $ \eta $  of  \S \ref{duality x larger MpQG's}.  Assuming in addition that the lattices
   $ \varGamma_\pm $  fit the conditions required in  Theorem \ref{q-Borel_mut-dual}  (according to whether $ \bq $
   is strongly integral or not), that theorem tells us that the pairing  $ \eta $  yields by restriction  $ \Rbq $--valued
   skew-Hopf pairings, still denoted  $ \eta \, $,  for the pairs of  $ \Rbq $--Hopf  algebras
   $ \, \Big(\, \Uhatdot_{\bq\,,\varGamma_+}^{\raise-5pt\hbox{$ \scriptstyle \,\geq $}} \, ,
   \, \Utilde_{\bq\,,\varGamma_-}^{\,\leq} \Big) \, $  and  $ \, \Big(\, \Utilde_{\bq\,,\varGamma_+}^{\,\geq} \, ,
   \, \Uhatdot_{\bq\,,\varGamma_-}^{\raise-5pt\hbox{$ \scriptstyle \,\leq $}} \Big) \, $,  or the pairs
   $ \, \Big(\, \Uhat_{\bq\,,\varGamma_+}^{\raise-5pt\hbox{$ \scriptstyle \,\geq $}} \, ,
   \, \Utilde_{\bq\,,\varGamma_-}^{\,\leq} \Big) \, $  and
   $ \, \Big(\, \Utilde_{\bq\,,\varGamma_+}^{\,\geq} \, , \,
   \Uhat_{\bq\,,\varGamma_-}^{\raise-5pt\hbox{$ \scriptstyle \,\leq $}} \Big) \, $  when
   $ \bq $  is strongly integral.  Moreover, as the original pairing  $ \eta $  is non-degenerate,
   the same holds true for its restrictions to  $ \Rbq $--integral  forms of the original quantum Borel subgroups.
   Therefore, much like each MpQG  $ \QEq $  can be realized as Drinfeld double via the original pairing  $ \eta $
   (cf.\  Remark \S \ref{MpQG-largetor=qDouble}),  the restrictions of the latter lead us to define the following:

\vskip11pt

\begin{definition}  \label{def:mixed-int-forms}
 With assumption as above   --- thus  $ \bq $  {\sl is of integral type}  ---   we define the following Hopf algebras over
 $ \, \Rbq $  as  {\sl Drinfeld doubles\/}  (cf.\ \S \ref{conv-Hopf})
  $$  \displaylines{
   \dot{\overrightarrow{U}}_{\!\!\bq\,,\varGamma_{\!\bullet}} \, := \,
   \dot{\overrightarrow{U}}_{\!\bq\,,\varGamma_{\!\bullet}}(\hskip0,8pt\lieg)  \; = \;
   D\Big( \Uhatdot_{\bq\,,\varGamma_+}^{\raise-5pt\hbox{$ \scriptstyle \,\geq $}} \, , \,
   \Utilde_{\bq\,,\varGamma_-}^{\,\leq} \, , \, \eta \Big)  \cr
   \dot{\overleftarrow{U}}_{\!\bq\,,\varGamma_{\!\bullet}} \, := \,
   \dot{\overleftarrow{U}}_{\!\bq\,,\varGamma_{\!\bullet}}(\hskip0,8pt\lieg)  \; = \;
   D\Big( \Utilde_{\bq\,,\varGamma_+}^{\,\geq} \, , \, \Uhatdot_{\bq\,,\varGamma_-}^{\raise-5pt\hbox{$ \scriptstyle \,\leq $}} \, , \, \eta \Big)  \cr
  }  $$
 where  $ \, \varGamma_{\!\bullet} := \varGamma_+ \times \varGamma_- \; $.  If in addition the multiparameter
 $ \, \bq $  is also  {\it strongly integral},  then we define similarly also the Hopf  $ \Rbq $--algebras
 (again as Drinfeld doubles)
  $$  \displaylines{
   \overrightarrow{U}_{\!\!\bq\,,\varGamma_{\!\bullet}} \, := \, \overrightarrow{U}_{\!\!\bq\,,
   \varGamma_{\!\bullet}}(\hskip0,8pt\lieg)  \; = \;  D\Big( \Uhat_{\bq\,,\varGamma_+}^{\raise-5pt\hbox{$ \scriptstyle \,\geq $}} \, , \, \Utilde_{\bq\,,\varGamma_-}^{\,\leq} \, , \, \eta \Big)  \cr
   \hskip9pt \hfill   \overleftarrow{U}_{\!\!\bq\,,\varGamma_{\!\bullet}} \, := \, \overleftarrow{U}_{\!\!\bq\,,\varGamma_{\!\bullet}}(\hskip0,8pt\lieg)  \;
   = \;  D\Big( \Utilde_{\bq\,,\varGamma_+}^{\,\geq} \, , \, \Uhat_{\bq\,,\varGamma_-}^{\raise-5pt\hbox{$ \scriptstyle \,\leq $}} \, , \, \eta \Big)   \hfill  \diamondsuit  }  $$
\end{definition}

\vskip7pt

   The following claim points out the main properties of these new objects:

\vskip11pt

\begin{theorem}
 Keep assumptions and notations as above.  Then  $ \, \dot{\overrightarrow{U}}_{\!\!\bq\,,\varGamma_{\!\bullet}} \, $,
 resp.\  $ \dot{\overleftarrow{U}}_{\!\!\bq\,,\varGamma_{\!\bullet}} \, $,  is an  $ \Rbq $--integral  form (as Hopf algebra) of
 $ \, U_{\bq\,,\varGamma_{\!\bullet}} \, $,  with PBW-type basis
\begin{align*}
   &  \bigg\{\; {\textstyle \prod\limits_{h=1}^N} E_{\beta^h}^{\,(e_h)} \, {\textstyle \prod\limits_{j \in I}} \,
   {\bigg( {L_{\gamma^+_j} \atop l_j} \!\bigg)}_{\!\!q} L_{\gamma^+_j}^{-\lfloor l_j/2 \rfloor}
 \, {\textstyle \prod\limits_{i \in I}} \, K_{\!\gamma^-_i}^{\,k_i} \, {\textstyle \prod\limits_{t=N}^1}
 \Fbar_{\!\beta^t}^{\,f_t} \;\bigg|\; e_h, l_j, k_i, f_t \in \NN \;\bigg\}  \;\; ,   \quad  \cr
   \text{resp.} \qquad  &  \hfill   \bigg\{\; {\textstyle \prod\limits_{h=1}^N} \Ebar_{\beta^h}^{\,e_h} \,
   {\textstyle \prod\limits_{j \in I}} \, L_{\gamma^+_j}^{\,l_j} \, {\textstyle \prod\limits_{i \in I}} \,
   {\bigg( {K_{\!\gamma^-_i} \atop k_i} \!\bigg)}_{\!\!q} K_{\!\gamma^-_i}^{\,-\lfloor k_i/2 \rfloor} \,
   {\textstyle \prod\limits_{t=N}^1} F_{\beta^t}^{\,(f_t)} \;\bigg|\; e_h, l_j, k_i, f_t \in \NN \;\bigg\}   \quad
\end{align*}
(notation of  \S \ref{restr-int-forms_x_MpQG's_lrg-torus})  as well as variations of these, changing the
order of factors in the PBW monomials.  Similarly, if  $ \, \bq $  is  {\sl strongly integral}  then
$ \overrightarrow{U}_{\!\!\bq\,,\varGamma_{\!\bullet}} \, $,  resp.\  $ \overleftarrow{U}_{\!\!\bq\,,\varGamma_{\!\bullet}} \, $,
is an  $ \Rbq $--integral  form (as Hopf algebra) of  $ \, U_{\bq\,,\varGamma_{\!\bullet}} \, $,  with PBW-type basis
\begin{align*}
   \bigg\{\;  &  {\textstyle \prod\limits_{h=1}^N} E_{\beta^h}^{\,(e_h)} \, {\textstyle \prod\limits_{j \in I}} \,
   {\bigg( {L_{\gamma^+_j} \atop l_j} \!\bigg)}_{\!\!q_i} L_{\gamma^+_j}^{-\lfloor l_j/2 \rfloor}
 \, {\textstyle \prod\limits_{i \in I}} \, K_{\!\gamma^-_i}^{\,k_i} \, {\textstyle \prod\limits_{t=N}^1}
 \Fbar_{\!\beta^t}^{\,f_t} \;\bigg|\; e_h, l_j, k_i, f_t \in \NN \;\bigg\}  \;\; ,   \quad  \cr
   \text{resp.} \qquad  \bigg\{\;  &  {\textstyle \prod\limits_{h=1}^N} \Ebar_{\beta^h}^{\,e_h} \,
   {\textstyle \prod\limits_{j \in I}} \, L_{\gamma^+_j}^{\,l_j} \, {\textstyle \prod\limits_{i \in I}} \,
   {\bigg( {K_{\!\gamma^-_i} \atop k_i} \!\bigg)}_{\!\!q_i} K_{\!\gamma^-_i}^{\,-\lfloor k_i/2 \rfloor} \,
   {\textstyle \prod\limits_{t=N}^1} F_{\beta^t}^{\,(f_t)} \;\bigg|\; e_h, l_j, k_i, f_t \in \NN \;\bigg\}   \quad
\end{align*}
(as well as variations of these, changing the order of factors in the PBW monomials).
                                                              \par
   In addition,  $ \, \dot{\overrightarrow{U}}_{\!\!\bq\,,\varGamma_{\!\bullet}} \, $,  resp.\
   $ \, \dot{\overleftarrow{U}}_{\!\!\bq\,,\varGamma_{\!\bullet}} \, $,  resp.\
   $ \, \overrightarrow{U}_{\!\!\bq\,,\varGamma_{\!\bullet}} \, $,  resp.\  $ \, \overleftarrow{U}_{\!\!\bq\,,\varGamma_{\!\bullet}} \, $,
   coincides with the  $ \, \Rbq $--subalgebra  of  $ \, U_{\bq\,,\varGamma_{\!\bullet}} $  generated by
   $ \, \Uhatdot_{\bq\,,\varGamma_+}^{\raise-5pt\hbox{$ \scriptstyle \,\geq $}} \, $  and
   $ \, \Utilde_{\bq\,,\varGamma_-}^{\,\leq} \, $,  resp.\ by  $ \, \Utilde_{\bq\,,\varGamma_+}^{\,\geq} \, $
   and  $ \, \Uhatdot_{\bq\,,\varGamma_-}^{\raise-5pt\hbox{$ \scriptstyle \,\leq $}} \, $,  resp.\ by
   $ \, \Uhat_{\bq\,,\varGamma_+}^{\,\geq} \, $  and  $ \, \Utilde_{\bq\,,\varGamma_-}^{\,\leq} \, $,
   resp.\ by  $ \, \Utilde_{\bq\,,\varGamma_+}^{\,\geq} \, $  and  $ \, \Uhat_{\bq\,,\varGamma_-}^{\,\leq} \, $.
\end{theorem}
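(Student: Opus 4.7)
The plan is to identify the abstract Drinfeld double with the concrete $\Rbq$-subalgebra of $U_{\bq\,,\varGamma_{\!\bullet}}$ generated by the two Borel integral forms, and then read off the PBW basis. The key structural input is Remark \ref{MpQG-largetor=qDouble}, which says that $U_{\bq\,,\varGamma_{\!\bullet}}$ is itself the Drinfeld double of $U_{\bq\,,\varGamma_+}^{\,\geq}$ and $U_{\bq\,,\varGamma_-}^{\,\leq}$ with respect to the same pairing $\eta$. Since by Theorem \ref{q-Borel_mut-dual} (under the standing duality assumptions on $\varGamma_\pm$) the restriction of $\eta$ takes values in $\Rbq$ when evaluated on the pair $\Big(\Uhatdot_{\bq\,,\varGamma_+}^{\raise-5pt\hbox{$ \scriptstyle \,\geq $}}\,,\,\Utilde_{\bq\,,\varGamma_-}^{\,\leq}\Big)$, the defining double-cross relations of $\dot{\overrightarrow{U}}_{\!\!\bq\,,\varGamma_{\!\bullet}}$ make sense over $\Rbq$, and moreover they are already satisfied inside $U_{\bq\,,\varGamma_{\!\bullet}}$. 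This yields a canonical Hopf algebra morphism $\mu\colon \dot{\overrightarrow{U}}_{\!\!\bq\,,\varGamma_{\!\bullet}} \longrightarrow U_{\bq\,,\varGamma_{\!\bullet}}$ whose restriction to each of the two Hopf subalgebras is the natural inclusion.

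Next I would establish the PBW description. As an $\Rbq$-coalgebra, the double is by construction $\Uhatdot_{\bq\,,\varGamma_+}^{\raise-5pt\hbox{$ \scriptstyle \,\geq $}} \otimes_{\Rbq} \Utilde_{\bq\,,\varGamma_-}^{\,\leq}$, so combining the (larger-torus versions of the) PBW bases furnished by Theorem \ref{thm:PBW_hat-MpQG} for $\Uhatdot_{\bq\,,\varGamma_+}^{\raise-5pt\hbox{$ \scriptstyle \,\geq $}}$ and by Theorem \ref{thm:PBW_tilde-MpQG} for $\Utilde_{\bq\,,\varGamma_-}^{\,\leq}$ (as extended in \S\ref{int-form-&-larger-torus}) produces exactly the ordered monomial set displayed in the statement. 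In particular, $\dot{\overrightarrow{U}}_{\!\!\bq\,,\varGamma_{\!\bullet}}$ is $\Rbq$-free. Its image under $\mu$ is, by the multiplicative nature of $\mu$ on generators, precisely the $\Rbq$-subalgebra of $U_{\bq\,,\varGamma_{\!\bullet}}$ generated by $\Uhatdot_{\bq\,,\varGamma_+}^{\raise-5pt\hbox{$ \scriptstyle \,\geq $}}$ and $\Utilde_{\bq\,,\varGamma_-}^{\,\leq}$, which establishes the last sentence of the theorem.

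To conclude that $\mu$ is an $\Rbq$-integral-form embedding of Hopf algebras, I would proceed in two steps. Injectivity: the images of the PBW monomials listed above sit inside $U_{\bq\,,\varGamma_{\!\bullet}}$ as products of the form (PBW monomial in the positive part)$\cdot$(PBW monomial in the toral part)$\cdot$(PBW monomial in the negative part), and by the triangular decomposition / PBW theorem for $U_{\bq\,,\varGamma_{\!\bullet}}$ (Proposition \ref{prop:triang-decomps_U} extended to the larger-torus case via Remark \ref{rmk:q-root_vects_lrg-MpQG's}) they are $\Fbq$-linearly independent, hence a fortiori $\Rbq$-linearly independent, so $\mu$ is injective. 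Integral form property: after scalar extension, the map $\Fbq\otimes_{\Rbq}\dot{\overrightarrow{U}}_{\!\!\bq\,,\varGamma_{\!\bullet}} \longrightarrow U_{\bq\,,\varGamma_{\!\bullet}}$ is surjective, because its image contains the full Borel subalgebras $U_{\bq\,,\varGamma_+}^{\,\geq}$ and $U_{\bq\,,\varGamma_-}^{\,\leq}$ (as the $\Rbq$-integral forms become the whole Borels after localizing), and these generate $U_{\bq\,,\varGamma_{\!\bullet}}$ by definition of the latter as a smash/double product; combined with injectivity this gives the identification as an $\Rbq$-integral form.

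The main technical obstacle, I expect, will be verifying cleanly that all cross-relations of the abstract Drinfeld double lie in $\Rbq$ when evaluated on our integral forms; this is where the duality statements of Theorem \ref{q-Borel_mut-dual} do the heavy lifting, as they guarantee $\eta\big(\Uhatdot_{\bq\,,\varGamma_+}^{\raise-5pt\hbox{$ \scriptstyle \,\geq $}},\,\Utilde_{\bq\,,\varGamma_-}^{\,\leq}\big)\subseteq\Rbq$ and the analogous statements in the other three cases. The arguments for $\dot{\overleftarrow{U}}_{\!\bq\,,\varGamma_{\!\bullet}}$, and for $\overrightarrow{U}_{\!\!\bq\,,\varGamma_{\!\bullet}}$, $\overleftarrow{U}_{\!\!\bq\,,\varGamma_{\!\bullet}}$ in the strongly integral setting, are entirely parallel once the proper duality pair from Theorem \ref{q-Borel_mut-dual} is invoked; the PBW bases listed in the statement for those variants are obtained by the same tensor-product decomposition, now reading the toral PBW basis from Theorem \ref{thm:PBW_hat-MpQG}(e) with $q_i$-binomials in place of $q$-binomials.
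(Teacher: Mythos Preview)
Your proposal is correct and follows essentially the same approach as the paper, just with more detail spelled out. The paper's proof is very terse: it says the result ``follows at once by construction, together with the fact that $\Uhatdot_{\bq\,,\varGamma_+}^{\raise-5pt\hbox{$ \scriptstyle \,\geq $}}$, $\Utilde_{\bq\,,\varGamma_-}^{\,\leq}$, etc., actually are integral forms of the corresponding quantum Borel subgroups defined over $\k$, and with the PBW Theorems for them''; your argument unpacks precisely what ``by construction'' means here --- namely, that the Drinfeld double is a tensor product of the two Borel forms as an $\Rbq$-module, so one combines their PBW bases, and that the canonical map to $U_{\bq\,,\varGamma_{\!\bullet}}$ is injective (by linear independence of PBW monomials) and becomes surjective after extending scalars.
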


\pf
 Indeed, the result follows at once by construction, together with the fact that
 $ \, \Uhatdot_{\bq\,,\varGamma_+}^{\raise-5pt\hbox{$ \scriptstyle \,\geq $}} \, $,  $ \, \Utilde_{\bq\,,\varGamma_-}^{\,\leq} \, $,
 etc., actually are integral forms of the corresponding quantum Borel subgroups defined over  $ \k \, $,  and with the PBW Theorems for them.
\epf

\vskip11pt

   We are also interested in yet other mixed integral forms, defined as follows.  Inside
   $ \, \Uhatdot_{\bq\,,\varGamma_+}^{\raise-5pt\hbox{$ \scriptstyle \,\geq $}} \, $  (or inside
   $ \, \Uhat_{\bq\,,\varGamma_+}^{\raise-5pt\hbox{$ \scriptstyle \,\geq $}} \, $,  it is the same),
   denote by  $ \, \Uhat_{\bq\,,\varGamma_+}^{\,\geq\,,\,\uparrow} \, $  the  $ \Rbq $--subalgebra
   generated by  $ \Uhat_\bq^+ $  and  $ \Utilde_{\bq\,,\varGamma_+}^{\,0} \, $
 (recall that by Remark \ref{rmk:nosqua-intform}, one has
 $\Utilde_{\bq\,,\varGamma_+}^{\,0} \subseteq \Uhatdot_{\bq\,,\varGamma_+}^{\raise-5pt\hbox{$ \scriptstyle \, 0$}}
 \subseteq \Uhat_{\bq\,,\varGamma_+}^{\raise-3pt\hbox{$ \scriptstyle \, 0$}} $);
 this is indeed an  $ \Rbq $--integral  form of  $ U_{\bq\,,\varGamma_+}^{\,\geq} $  (as a Hopf subalgebra),
 and the non-degenerate skew-Hopf pairing  $ \; \eta : \Uhatdot_{\bq\,,\varGamma_+}^{\raise-5pt\hbox{$ \scriptstyle \,\geq $}}
 \! \otimes_\Rbq \! \Utilde_{\bq\,,\varGamma_-}^{\raise-5pt\hbox{$ \scriptstyle \,\leq $}}
 \hskip-5pt \relbar\joinrel\relbar\joinrel\longrightarrow \Rbq \; $  restricts to a similar pairing
 $ \; \eta : \Uhat_{\bq\,,\varGamma_+}^{\,\geq\,,\,\uparrow} \! \otimes_\Rbq \! \Utilde_{\bq\,,\varGamma_-}^{\raise-5pt\hbox{$ \scriptstyle \,\leq $}}
 \relbar\joinrel \relbar\joinrel\relbar\joinrel\longrightarrow \Rbq \; $.  Similarly, we consider the  $ \Rbq $--subalgebra
 $ \, \Uhat_{\bq\,,\varGamma_-}^{\,\leq\,,\,\uparrow} \, $  of
 $ \, \Uhatdot_{\bq\,,\varGamma_-}^{\raise-5pt\hbox{$ \scriptstyle \,\leq $}} \, $  (or of
 $ \, \Uhat_{\bq\,,\varGamma_-}^{\raise-5pt\hbox{$ \scriptstyle \,\leq $}} \, $)  generated by
 $ \Uhat_\bq^- $  and  $ \Utilde_{\bq\,,\varGamma_-}^{\,0} \, $,  which again is an  $ \Rbq $--integral  form of
 $ U_{\bq\,,\varGamma_-}^{\,\leq} \, $  for which we have a non-degenerate skew-Hopf pairing
 $ \; \eta : \Utilde_{\bq\,,\varGamma_+}^{\raise-5pt\hbox{$ \scriptstyle \,\geq $}} \!
 \otimes_\Rbq \! \Uhat_{\bq\,,\varGamma_-}^{\,\leq\,,\,\uparrow} \relbar\joinrel \relbar\joinrel\relbar\joinrel\longrightarrow \Rbq \; $
 induced by the original skew-Hopf pairing between our multiparameter quantum Borel subgroups over  $ \k \, $.
 {\sl In addition, we do not assume that the multiparameter  $ \, \bq $  be of integral type, nor we assume
 $ \varGamma_+ $  and  $ \varGamma_- $  to be in duality (as in  \S \ref{larger-MpQG's})}.  All this allows the following

\vskip13pt

\begin{definition}
 For  {\sl any\/}  multiparameter  $ \bq $  (of Cartan type) and  $ \; \varGamma_{\!\bullet} := \varGamma_+ \times \varGamma_- \; $,  we define the following Hopf algebras over  $ \, \Rbq $  as  {\sl Drinfeld doubles\/}  with respect to the above mentioned skew-Hopf pairings:
  $$  \displaylines{
   \check{U}_{\bq\,,\varGamma_{\!\bullet}}^{\,\scriptscriptstyle (+)} \, := \,
   \check{U}_{\bq\,,\varGamma_{\!\bullet}}^{\,\scriptscriptstyle (+)}\!(\hskip0,8pt\lieg)  \; = \;
   D\Big( \Uhat_{\bq\,,\varGamma_+}^{\,\geq\,,\,\uparrow} \, , \, \Utilde_{\bq\,,\varGamma_-}^{\raise-5pt\hbox{$ \scriptstyle \,\leq $}} \, , \, \eta \Big)  \cr
   \hfill   \check{U}_{\bq\,,\varGamma_{\!\bullet}}^{\,\scriptscriptstyle (-)} \, := \,
   \check{U}_{\bq\,,\varGamma_{\!\bullet}}^{\,\scriptscriptstyle (-)}\!(\hskip0,8pt\lieg)  \; = \;
   D\Big( \Utilde_{\bq\,,\varGamma_+}^{\raise-5pt\hbox{$ \scriptstyle \,\geq $}} \, , \, \Uhat_{\bq\,,\varGamma_-}^{\,\leq\,,\,\uparrow} \, , \, \eta \Big)   \hfill  \diamondsuit  }  $$
\end{definition}

\vskip5pt

   The main properties of these more Hopf algebras are summarized as follows:

\vskip11pt

\begin{theorem}
 Keep notation as above.
 \vskip5pt
   {\it (a)}\;  The Hopf algebras  $ \, \check{U}_{\bq\,,\varGamma_{\!\bullet}}^{\,\scriptscriptstyle (+)} \, $  and
   $ \, \check{U}_{\bq\,,\varGamma_{\!\bullet}}^{\,\scriptscriptstyle (-)} \, $  are both  $ \Rbq $--integral  forms
   (as Hopf algebras) of  $ \, U_{\bq\,,\varGamma_{\!\bullet}} \, $,  with PBW-type basis
\begin{align*}
   &  \bigg\{\; {\textstyle \prod\limits_{h=1}^N} E_{\beta^h}^{\,(e_h)} \, {\textstyle \prod\limits_{j \in I}} \,
   L_{\gamma^+_j}^{\;l_j} \, {\textstyle \prod\limits_{i \in I}} \, K_{\!\gamma^-_i}^{\,k_i} \,
   {\textstyle \prod\limits_{t=N}^1} \Fbar_{\!\beta^t}^{\,f_t} \;\bigg|\; e_h, l_j, k_i, f_t \in \NN \;\bigg\}  \;\; ,  \cr
   \text{and} \qquad  &  \hfill   \bigg\{\; {\textstyle \prod\limits_{h=1}^N} \Ebar_{\beta^h}^{\,e_h} \,
   {\textstyle \prod\limits_{j \in I}} \, L_{\gamma^+_j}^{\,l_j} \, {\textstyle \prod\limits_{i \in I}} \,
   K_{\!\gamma^-_i}^{\,k_i} \, {\textstyle \prod\limits_{t=N}^1} F_{\beta^t}^{\,(f_t)} \;\bigg|\; e_h, l_j, k_i, f_t \in \NN \;\bigg\}
\end{align*}
(notation of  \S \ref{restr-int-forms_x_MpQG's_lrg-torus})  for
$ \, \check{U}_{\bq\,,\varGamma_{\!\bullet}}^{\,\scriptscriptstyle (+)} \, $  and
$ \, \check{U}_{\bq\,,\varGamma_{\!\bullet}}^{\,\scriptscriptstyle (-)} \, $  respectively, as well as variations of
these (changing the order of factors in the PBW monomials).
 \vskip5pt
   {\it (b)}\;  $ \, \check{U}_{\bq\,,\varGamma_{\!\bullet}}^{\,\scriptscriptstyle (+)} \, $,  resp.\
   $ \, \check{U}_{\bq\,,\varGamma_{\!\bullet}}^{\,\scriptscriptstyle (-)} \, $,  coincides with the
   $ \, \Rbq $--subalgebra  of  $ \, U_{\bq\,,\varGamma_{\!\bullet}} $  generated by
   $ \, \Uhat_{\bq\,,\varGamma_{\!\bullet}}^{\,+} \, $,  $ \, \Utilde_{\bq\,,\varGamma_{\!\bullet}}^{\,0} \, $
   and  $ \, \Utilde_{\bq\,,\varGamma_{\!\bullet}}^{\,-} \, $,  resp.\ by
   $ \, \Utilde_{\bq\,,\varGamma_{\!\bullet}}^{\,+} \, $,  $ \, \Utilde_{\bq\,,\varGamma_{\!\bullet}}^{\,0} \, $  and
   $ \, \Uhat_{\bq\,,\varGamma_{\!\bullet}}^{\,-} \, $.
 \vskip5pt
   {\it (c)}\;  Both  $ \, \check{U}_{\bq\,,\varGamma_{\!\bullet}}^{\,\scriptscriptstyle (+)} \, $  and
   $ \, \check{U}_{\bq\,,\varGamma_{\!\bullet}}^{\,\scriptscriptstyle (-)} \, $  have obvious triangular decompositions
   analogous to those in  Propositions \ref{prop:triang-decomps_Uhat}  and  \ref {prop:triang-decomps_Utilde}.
\end{theorem}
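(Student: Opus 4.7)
\medskip

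\begin{proof}[Proof plan]
The strategy is to exploit the Drinfeld double structure and the PBW theorems already available for the four ``building blocks''. For concreteness I focus on $\check{U}_{\bq,\varGamma_{\!\bullet}}^{\,\scriptscriptstyle (+)}$; the case of $\check{U}_{\bq,\varGamma_{\!\bullet}}^{\,\scriptscriptstyle (-)}$ is entirely parallel.

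\smallskip

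First I would record the natural PBW-type $\Rbq$-bases of the two halves. For $\Uhat_{\bq,\varGamma_+}^{\,\geq,\,\uparrow}$ one combines the PBW basis of $\Uhat_\bq^{\,+}$ given by ordered monomials $\prod_{h=1}^N E_{\beta^h}^{(e_h)}$ (Theorem \ref{thm:PBW_hat-MpQG}) with the $\Rbq$-basis $\bigl\{L_{\gamma_j^+}^{l_j}\bigr\}$ of $\Utilde_{\bq,\varGamma_+}^{\,0}$ (unrestricted toral part); combining the defining triangular decomposition of the ``up-arrow'' subalgebra (which follows directly from the commutation relations between $E_i^{(n)}$'s and the $L_{\gamma_j^+}^{\pm 1}$'s) yields the basis $\bigl\{\prod_h E_{\beta^h}^{(e_h)}\prod_j L_{\gamma_j^+}^{l_j}\bigr\}$. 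For $\Utilde_{\bq,\varGamma_-}^{\,\leq}$ one has the basis $\bigl\{\prod_i K_{\gamma_i^-}^{k_i}\prod_{t=N}^1 \Fbar_{\beta^t}^{\,f_t}\bigr\}$ from Theorem \ref{thm:PBW_tilde-MpQG}(c).

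\smallskip

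Next I would invoke the general fact that, for a non-degenerate skew-Hopf pairing between two Hopf $\Rbq$-algebras $A$ and $B$, the Drinfeld double $D(A,B,\eta)$ is free as a left (or right) module over either factor, with the other factor providing a basis via multiplication; in particular $D(A,B,\eta)\cong A\otimes_{\Rbq}B$ as $\Rbq$-modules. Applied here with $A=\Uhat_{\bq,\varGamma_+}^{\,\geq,\,\uparrow}$ and $B=\Utilde_{\bq,\varGamma_-}^{\,\leq}$, this immediately yields the announced PBW basis of $\check{U}_{\bq,\varGamma_{\!\bullet}}^{\,\scriptscriptstyle (+)}$ by taking the outer product of the two bases from the previous step. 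Variations with reordered factors follow by applying the cross relations of the double (which are governed by the pairing) to rewrite one ordering into another; the corresponding coefficients are $\Rbq$-units, just as in the standard proof of triangular decompositions.

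\smallskip

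To establish the integral form property and part (b) simultaneously, I would construct a Hopf algebra morphism
\[\Phi:\check{U}_{\bq,\varGamma_{\!\bullet}}^{\,\scriptscriptstyle (+)}\longrightarrow U_{\bq,\varGamma_{\!\bullet}}\]
using the universal property of the Drinfeld double: since the inclusions $\Uhat_{\bq,\varGamma_+}^{\,\geq,\,\uparrow}\hookrightarrow U_{\bq,\varGamma_{\!\bullet}}$ and $\Utilde_{\bq,\varGamma_-}^{\,\leq}\hookrightarrow U_{\bq,\varGamma_{\!\bullet}}$ are Hopf algebra morphisms compatible with the pairing $\eta$ of \S\ref{duality x larger MpQG's} (this compatibility is exactly what encodes the cross relations in $U_{\bq,\varGamma_{\!\bullet}}$, as in Remark \ref{MpQG-largetor=qDouble}), they extend uniquely to such a $\Phi$. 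The image of $\Phi$ clearly equals the $\Rbq$-subalgebra generated by $\Uhat_{\bq,\varGamma_{\!\bullet}}^{\,+}$, $\Utilde_{\bq,\varGamma_{\!\bullet}}^{\,0}$, $\Utilde_{\bq,\varGamma_{\!\bullet}}^{\,-}$, proving (b) once injectivity is established. Injectivity follows by comparing PBW bases: $\Phi$ sends the basis of the preceding paragraph to a set of elements which, by the PBW theorem for $U_{\bq,\varGamma_{\!\bullet}}$ (Theorem \ref{thm:PBW_MpQG} and Remark \ref{rmk:q-root_vects_lrg-MpQG's}), becomes $\Fbq$-linearly independent after extending scalars. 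That the scalar extension $\Fbq\otimes_{\Rbq}\check{U}_{\bq,\varGamma_{\!\bullet}}^{\,\scriptscriptstyle (+)}$ equals $U_{\bq,\varGamma_{\!\bullet}}$ is then a direct consequence, since the image already contains a set of generators.

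\smallskip

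Finally, part (c) on triangular decompositions is a direct corollary of part (a): the explicit PBW bases displayed are obtained as ordered products across the three ``blocks'' $\Uhat_{\bq,\varGamma_{\!\bullet}}^{\,+}$, $\Utilde_{\bq,\varGamma_{\!\bullet}}^{\,0}$, $\Utilde_{\bq,\varGamma_{\!\bullet}}^{\,-}$ (and similarly, in reversed order, for the other case), so multiplication provides the desired $\Rbq$-module isomorphisms in both orders, in analogy with Propositions \ref{prop:triang-decomps_Uhat} and \ref{prop:triang-decomps_Utilde}. The main technical obstacle I anticipate is verifying cleanly that the skew-Hopf pairing $\eta$, restricted to $\Uhat_{\bq,\varGamma_+}^{\,\geq,\,\uparrow}\otimes_{\Rbq}\Utilde_{\bq,\varGamma_-}^{\,\leq}$, remains non-degenerate and $\Rbq$-valued without requiring $\bq$ to be integral or $\varGamma_\pm$ to be in duality: the values on toral generators are of the form $\eta(L_{\gamma_j^+},K_{\gamma_i^-})$, which a priori land in $\k$, and one must check these reduce to genuine $\Rbq$-units compatible with the $\Utilde$-side toral part. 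This is where the definition of the ``up-arrow'' subalgebra using the unrestricted toral part $\Utilde_{\bq,\varGamma_+}^{\,0}$ (rather than the restricted one) is essential.
\end{proof}
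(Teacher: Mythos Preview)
Your proof is correct and follows exactly the strategy the paper adopts; the paper's own proof is a one-line deferral to ``construction, through our previous results on integral forms of multiparameter quantum Borel subgroups,'' which your write-up simply unpacks in detail (Drinfeld double $\Rightarrow$ tensor decomposition $\Rightarrow$ PBW basis from the two halves, then universal property $\Rightarrow$ embedding into $U_{\bq,\varGamma_{\!\bullet}}$). Regarding the obstacle you flag about $\Rbq$-valuedness of the toral pairing $\eta\big(K_{\gamma_i^+},L_{\gamma_j^-}\big)$ when $\bq$ is not integral and $\varGamma_\pm$ are not in duality: this is handled by the paper's standing convention at the end of \S\ref{ground-ring}, where for MpQG's with larger tori the ground ring $\Rbq$ is silently replaced by the extension $\R_{\,\bq_c}$ generated by the $(c_+c_-)$-th roots of the $q_{ij}$'s, so the pairing values from \S\ref{duality x larger MpQG's} lie in the ground ring by construction.
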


\pf
 Here again, everything follows easily by construction, through our previous results on integral forms
 of multiparameter quantum Borel subgroups.
\epf

\vskip9pt

\begin{rmk}
 Defining  $ \, \check{U}_{\bq\,,\varGamma_{\!\bullet}}^{\,\scriptscriptstyle (+)} \, $  and  $ \, \check{U}_{\bq\,,\varGamma_{\!\bullet}}^{\,\scriptscriptstyle (-)} \, $,  as well as   $ \, \dot{\overrightarrow{U}}_{\!\!\bq\,,\varGamma_{\!\bullet}} \, $,  $ \, \dot{\overleftarrow{U}}_{\!\!\bq\,,\varGamma_{\!\bullet}} \, $,
 $ \, \overrightarrow{U}_{\!\!\bq\,,\varGamma_{\!\bullet}} \, $  and  $ \, \overleftarrow{U}_{\!\!\bq\,,\varGamma_{\!\bullet}} \, $,
 as  {\sl Drinfeld doubles\/}  provides great advantages, namely we get for free that\;
                                                      \par
   {\it (1)}\, they are  {\sl Hopf algebras},
                                                      \par
   {\it (2)}\, they have nice PBW bases (and triangular decompositions),
                                                      \par
   {\it (3)}\, they are  $ \Rbq $--integral  forms of  $ \, U_{\bq\,,\varGamma_{\!\bullet}} \, $
   --- since they are tensor products (as Drinfeld doubles!) of integral forms of multiparameter quantum Borel subgroups.
 \vskip5pt
   In fact, we already saw that these algebras coincide with suitable
   $ \Rbq $--subalgebras  in  $ \, U_{\bq\,,\varGamma_{\!\bullet}} \, $;  yet, proving properties
   {\it (1)--(3)\/}  by direct approach would  {\sl not\/}  be trivial.
\end{rmk}

\vskip11pt

\begin{free text}{\bf The link with the uniparameter case.}
 For the uniparameter quantum group  $ U_q(\lieg) $  of Drinfeld and Jimbo, possibly with larger torus,
 one can define  $ \Rbq $--integral  forms  $ \Uhat_\bq(\lieg) \, $,
 $ \Uhatdot_\bq(\lieg) $
 and  $ \Utilde_\bq(\lieg) $  much like we did with our MpQG's, constructing them as generated by
 quantum divided powers and binomial coefficients
 or by renormalized quantum root vectors;
 note that now  $ \, \Rbq = \Zqqm \, $.  Similarly, one can define another  $ \Rbq $--subalgebra  of
 $ U_q(\lieg) \, $,  denoted by  $ \check{U}_q^{\scriptscriptstyle {(-)}}(\lieg) \, $,  generated by
 $ \Uhat_q^- \, $,  $ \Utilde_q^0 $  and  $ \Utilde_q^+ \, $,  first introduced in \cite{HL}.
  This is again an  $ \Rbq $--integral  form of  $ \, U_q(\lieg) \, $,  for which triangular decomposition and PBW
  Theorems hold true, deduced from the similar results for  $ \, \Uhat_q(\lieg) $  and  $ \, \Utilde_q(\lieg) \, $.
  One also has its ``symmetric counterpart'', say  $ \check{U}_q^{\scriptscriptstyle {(+)}}(\lieg) \, $,  generated by
  $ \Utilde_q^- \, $,  $ \Utilde_q^0 $  and  $ \Uhat_q^+ \, $.
 \vskip3pt
   The construction of  $ \, \Rbq $--integral  forms (again with  $ \, \Rbq = \Zqqm \, $)  of restricted or
   unrestricted type also extends to the context of  {\sl twisted quantum groups
   $ U_{q,M}^\varphi(\lieg) $  \`a la Costantini-Varagnolo\/}  (see  \cite{CV1,CV2}),  still denoted
   $ \, \Uhat_{q,M}^{\,\varphi}(\lieg) \, $  and  $ \, \Utilde_{q,M}^{\,\varphi}(\lieg) \, $  in the restricted and the
   unrestricted case respectively.  Then one has also corresponding integral forms for the
%
%
 various relevant (Hopf) subalgebras (Borel, nilpotent, etc.),
 triangular decompositions, PBW bases, etc.   --- see  \cite{Gav} for details.
 Moreover, one can define also in this context  {\sl mixed integral forms}
 $ \, \check{U}_{q,M}^{{\scriptscriptstyle (-)} , \, \varphi}(\lieg) \, $  and
 $ \, \check{U}_{q,M}^{{\scriptscriptstyle (+)} , \, \varphi}(\lieg) \, $,  namely
 \vskip3pt
   {\it (a)}\;  $ \, \check{U}_{q,M}^{{\scriptscriptstyle (-)} , \, \varphi}(\lieg) \, $  is the
   $ \Rbq $--subalgebra  of  $ U_{q,M}^{\,\varphi}(\lieg) $
   generated by  $ \Uhat_q^- $,  $ \Utilde_{q,M}^{\,0} \, $,  $ \Utilde_q^+ $,
 \vskip2pt
   {\it (b)}\;  $ \, \check{U}_{q,M}^{{\scriptscriptstyle (+)} , \, \varphi}(\lieg) \, $  is the
   $ \Rbq $--subalgebra  of  $ U_{q,M}^{\,\varphi}(\lieg) $
   generated by  $ \Utilde_q^- $,  $ \Utilde_{q,M}^{\,0} \, $,  $ \Uhat_q^+ $,
 \vskip4pt
\noindent
 (note that the occurrence of  $ \varphi $  is irrelevant at the algebra level, so that
 $ \, \check{U}_{q,M}^{{\scriptscriptstyle (\pm)} , \, \varphi}(\lieg) = \check{U}_{q,M}^{\scriptscriptstyle (\pm)}(\lieg) \, $
 as  $ \Rbq $--algebras;  the coalgebra structure, on the contrary, is affected).
                                                         \par
   Using the properties of  $ \, \Uhat_q^\mp \, $,  $ \, \Utilde_{q,M}^{\,0} \, $  and  $ \, \Utilde_q^\pm \, $
   presented in  \cite{Gav},  one can prove that  $ \, \check{U}_{q,M}^{{\scriptscriptstyle (-)} , \, \varphi}(\lieg) \, $
   and  $ \, \check{U}_{q,M}^{{\scriptscriptstyle (+)} , \, \varphi}(\lieg) \, $  are again  $ \Rbq $--integral  forms
   --- as Hopf algebras --- of $ U_{q,M}^{\,\varphi}(\lieg) \, $.  In fact, the trivial case  $ \, \varphi = 0 \, $  gives
   $ \, U_{q,M}^{\varphi=0}(\lieg) = U_{q,M}(\lieg) \, $,  \,the standard uniparameter quantum group associated with  $ M $,
   so the case of  $ U_{q,M}^{\,\varphi}(\lieg) $  and its  $ \Rbq $--integral  forms (restricted, or unrestricted, or mixed)
   is a direct generalization of what occurs with  $ U_{q,M}(\lieg) \, $.
                                                         \par
   On the other hand, it is proved in  \cite{GG1}  that Costantini-Varagnolo's twisted quantum groups
   $ U_{q,M}^{\,\varphi}(\lieg) $
   are just quotients of MpQG's  $ \, U_{\bq , M \times Q}(\lieg) \, $  with  $ \bq $  ranging in a special
   subset of strongly integral type multiparameters.
   It follows that the same link exists among their integral forms of either type, including the mixed one.
\end{free text}

\vskip7pt

\begin{free text}{\bf Applications to topological invariants.}
 As mentioned above, the mixed form  $  \check{U}_q^{\scriptscriptstyle {(-)}}(\lieg) $  was introduced in  \cite{HL}.
 In that paper, the authors provide a construction of a ``universal quantum invariant'' of integral homology spheres,
 call it  $ J_M \, $:  this ``lifts'' the well-known Witt-Reshetikhin-Turaev (=WRT) knot invariant
 $ \tau_{\scriptscriptstyle M}^\lieg(\varepsilon) $  of  $ M $  associated with  $ \lieg $  and any root of unity
 $ \varepsilon \, $,  in that  $ \tau_{\scriptscriptstyle M}^\lieg(\varepsilon) $  is obtained by evaluation of
 $ J_M $  at  $ \varepsilon \, $.  Unlike the definition of the WRT invariant, the construction of this ``universal''
 invariant  $ J_M $  does not involve representations, so it provides a unified, representation-free definition of
 quantum invariants of integral homology spheres, performed in terms of the form
 $ \check{U}_q^{\scriptscriptstyle (-)}(\lieg) \, $.
 \vskip3pt
   Now, having introduced ``multiparameter mixed integral forms''
   $\, \check{U}_{q,M}^{{\scriptscriptstyle (\pm)} , \, \varphi}(\lieg) \, $  and even
   $ \, \check{U}_{\bq\,, \, M \times Q}^{\,\scriptscriptstyle (\pm)}(\hskip0,8pt\lieg) \, $,
 {\it we might expect that the construction of  $ J_M $  could be extended, starting from
 $ \, \check{U}_{q,M}^{{\scriptscriptstyle (\pm)} , \, \varphi}(\lieg) \, $  or even
 $ \, \check{U}_{\bq\,, \, M \times Q}^{\,\scriptscriptstyle (\pm)}(\hskip0,8pt\lieg) \, $
 instead of  $ \, \check{U}_q^{\scriptscriptstyle (-)}(\lieg) \, $,  thus providing entirely
 new topological invariants for knots (and links) and integral homology spheres}.
\end{free text}

\bigskip

\section{Specialization of MpQG's at 1}  \label{Spec-roots-1}

\vskip7pt

%
%
   In this section we study those MpQG's for which all the  $ q_{ii} $'s  are 1; in fact, as every  $ q_{ii} $
   is a power of a single  $ \, q \in \k^\times $,  requiring $ \, q_{ii} = 1 \, $  for all  $ i $  amounts to requiring  $ \, q = 1 \, $.
                                                                         \par
   Note that if  $ \, q_{ii} = 1 \, $  for some  $ i $,  the very definition of  $ \QEq $  makes no sense,
   so we have to be more subtle.  First we take  $ \QEq $  as defined over a ``generic'' multiparameter
   $ \, \bq := {\big(\, q_{ij} \big)}_{i, j \in I} \, $  of Cartan type; then we consider its  $ \ZZ $--forms
   $ \Uhatdotqgd \, $,  $ \Uhatqgd $  and  $ \Utildeqgd \, $,  defined over  $ \Rbq $  (under suitable
   ``integrality'' assumptions on  $ \bq $  for the first two cases); finally, for either form we specialize  $ q $
   --- hence all  the  $ q_{ii} $'s  ---   to a root of unity (or to 1, in particular), which will make sense just
   because our ground ring will be set to be  $ \Rbq \, $.

\medskip

\subsection{The ``generic'' ground rings}  \label{gen-ground-rings}  \
 \vskip7pt
   As a first step in the process sketched above, we formalize the loose ideas of
   ``generic parameter of Cartan type'' and of ``generic parameter of (a specific) integral type''. Indeed,
%
 this ``universal ring of multiparameters'' will be the ring of functions on the  $ \ZZ $--scheme  of all
%
 multiparameter
 $ \bq $  of Cartan type, or of (fixed) integral type.
                                                       \par
   Similarly, we introduce also the (universal) rings generated by
   ``square roots of indeterminate parameters'', both for the Cartan type and for the integral type case.

\vskip9pt

\begin{free text}  \label{univ-ring-params-Cartan_type}
 {\bf The universal ring of multiparameters (of Cartan type).}
 Let hereafter  $ \, \ZZ\big[\,\bx^{\pm 1}\big] := \ZZ\big[ {\big\{ x_{ij}^{\pm 1} \big\}}_{i, j \in I} \,\big] \, $
 be the ring of Laurent polynomials with coefficients in  $ \ZZ $  in the indeterminates  $ x_{ij} $  ($ \, i, j \in I \, $),  and
 let  $ \, A := {\big( a_{ij} \big)}_{i, j \in I} \, $  be an indecomposable Cartan matrix of finite type.
 Consider the quotient ring
  $$  \Zabqpm  \,\; := \;\,  \ZZ\big[\,\bx^{\pm 1}\big] \bigg/ \Big( {\big\{ x_{ij} \, x_{ji} - x_{ii}^{\,a_{ij}} \big\}}_{i, j \in I} \Big)  $$
in which we denote by  $ q_{ij} $  the image of every  $ x_{ij} $  (for  $ \, i, j \in I \, $).
This is the ring of global sections of an affine scheme over  $ \ZZ \, $,  call it  $ \mathfrak{C}_A \, $:
by definition, the set of  $ \k $--points  of this scheme (for any field  $ \k \, $)  is just the set of all matrices
$ \, \bq = {\big(\, q_{ij} \big)}_{i, j \in I} \, $  of parameters of Cartan type  $ A $  with entries in  $ \k $
as in \S \ref{multiparameters}.
                                                         \par
   From all the identities  $ \; q_{ij} \, q_{ji} = q_{ii}^{\,a_{ij}} \; $  in  $ \Zabqpm \, $,  one finds
   --- by direct inspection of different cases of possible Cartan matrices  $ \, A = {\big( a_{ij} \big)}_{i, j \in I} \, $
   ---   that there exists  $ \, j_{\raise-2pt\hbox{$ \scriptstyle 0 $}} \in I \, $  such that
   $ \, q_{ii} = q_{j_{\raise-2pt\hbox{$ \scriptscriptstyle 0 $}} \, j_{\raise-2pt\hbox{$ \scriptscriptstyle 0 $}}}^{\,n_i} \, $
   for some  $ \, n_i \in \NN \, $,  for all  $ \, i \in I \, $;  indeed, we can take  $ \, n_i = d_i $  ($ \, i \in I \, $)  as in
   \S \ref{multiparameters}.  From this and the relations between the  $ q_{ij} $'s, it is easy to argue that
   $ \mathfrak{C}_A $  is a torus, of dimension  $ \, {{\,\theta\,} \choose {\,2\,}} + 1 \, $:  in particular, it is irreducible.
   Then  $ \Zabqpm $  is a domain, so we can take its field of fractions, denoted by  $ \Qabq \, $;
   in the following, we denote again by  $ q_{ij} $  ($ \, i, j \in I \, $)  the image of  $ x_{ij} $  in  $ \Qabq $  too.
                                                         \par
   By construction, the matrix  $ \, \bq := {\big(\, q_{ij} \big)}_{i, j \in I} \, $  is a Cartan type matrix of parameters in
   $ \, \k := \Qabq \, $  in the sense of \S \ref{multiparameters};  in addition,  {\sl none of the  $ q_{ii} $'s  is a root of unity}.
                                                         \par
   Now consider the ring extension  $ \Rbq $  of  $ \, \Zabqpm \, $  generated by a (formal) square root of
   $ \, q_{j_{\raise-2pt\hbox{$ \scriptscriptstyle 0 $}} \, j_{\raise-2pt\hbox{$ \scriptscriptstyle 0 $}}} \, $
   --- hereafter denoted by
   $ \, q := q_{j_{\raise-2pt\hbox{$ \scriptscriptstyle 0 $}} \, j_{\raise-2pt\hbox{$ \scriptscriptstyle 0 $}}}^{\,1/2} \, $
   ---   namely
  $$  \Rbq  \,\; := \;\,  \big(\Zabqpm\big)[x] \bigg/ \Big(\, x^2 - q_{j_{\raise-2pt\hbox{$ \scriptscriptstyle 0 $}}
  \, j_{\raise-2pt\hbox{$ \scriptscriptstyle 0 $}}} \Big)   \quad \qquad  \text{so that}  \qquad  q_{j_{\raise-2pt\hbox{$ \scriptscriptstyle 0 $}}
  \, j_{\raise-2pt\hbox{$ \scriptscriptstyle 0 $}}}^{\,1/2} := \, \overline{x} \, \in \, \Rbq  $$
 and then let  $ \Fbq $  be its field of fractions, such that
 $ \; \Fbq \, \cong \big(\Qabq\big)[x] \bigg/ \! \Big(\, x^2 - \, q_{j_{\raise-2pt\hbox{$ \scriptscriptstyle 0 $}}
 \, j_{\raise-2pt\hbox{$ \scriptscriptstyle 0 $}}} \Big) \; $.
 We still denote by  $ q_{ij} $  the images in  $ \Rbq $  and in  $ \Fbq $  of the ``old'' elements with same name in
 $ \Zabqpm $  and  $ \Qabq \, $.  We shall also write  $ \, q_i^{\pm 1} := q^{\pm d_i} \, $  for all  $ \, i \in I \, $,
 so to be consistent with  \S \ref{multiparameters};  in particular,
 $ \; q_{j_{\raise-2pt\hbox{$ \scriptscriptstyle 0 $}} \, j_{\raise-2pt\hbox{$ \scriptscriptstyle 0 $}}}^{\,1/2} := q
 = q_{j_{\raise-2pt\hbox{$ \scriptscriptstyle 0 $}}} \; $.  Note in addition that we also have
  $$  \Rbq  \,\; \cong \;\,  \big( \ZZ\big[\, {\big\{ x_{ij}^{\pm 1} \big\}}_{i, j \in I} \,\big] \big)[x]
  \bigg/ \Big(\, {\big\{ x_{ij} \, x_{ji} - x_{ii}^{\,a_{ij}} \big\}}_{i, j \in I} \, \cup
  \big\{ x^2 - x_{j_{\raise-2pt\hbox{$ \scriptscriptstyle 0 $}} \, j_{\raise-2pt\hbox{$ \scriptscriptstyle 0 $}}} \big\} \Big)  $$
                                                                       \par
   In turn, we define also
  $$  \Rbqsq  \; := \;  \ZZ\big[ {\big\{ \xi_{ij}^{\pm 1/2} \big\}}_{i, j \in I} \,\big]\big[ \xi^{\pm 1/2} \big] \bigg/
   \Big( {\Big\{ \xi_{ij}^{1/2} \, \xi_{ji}^{1/2} \! - {\big( \xi_{i{}i}^{1/2\,} \big)}^{a_{ij}} \Big\}}_{i, j \in I} \! \cup
   \Big\{\! {\big(\xi^{1/2\,}\big)}^2 \! - \xi_{j_{\raise-2pt\hbox{$ \scriptscriptstyle 0 $}} \, j_{\raise-2pt\hbox{$ \scriptscriptstyle 0 $}}}^{\,1/2} \Big\} \Big)  $$
 \vskip0pt
\noindent
 which is again a domain, and  $ \, \Fbqsq \, $  as being the field of fractions of the former.
 In both cases, we denote by  $ q_{ij}^{\pm 1/2} $  and by  $ q^{\pm 1/2} $  the image of
 $ \xi_{ij}^{\pm 1/2} $  and  $ \xi^{\pm 1/2} $  respectively (in short, one reads these symbols
 as  ``$ \, \xi := \sqrt{x\,} \, $'').  Note that  $ \Rbqsq $  and  $ \Fbqsq $  are naturally isomorphic with
 $ \Rbq $  and  $ \Fbq $  respectively, but we rather see the formers as ring or field extensions of the
 latters via the natural embeddings
%
%
 $ \; \Rbq \lhook\joinrel\relbar\joinrel\longrightarrow \Rbqsq \; $  and
 $ \; \Fbq \lhook\joinrel\relbar\joinrel\longrightarrow \Fbqsq \; $
 given   --- in both cases ---   by  $ \, q_{ij}^{\pm 1} \mapsto {\big(\, q_{ij}^{\pm 1/2\,} \big)}^2 \, $  and
 $ \, q^{\pm 1} \mapsto {\big(\, q^{\pm 1/2\,} \big)}^2 \, $.
 \vskip3pt
   Finally, observe also that the ring  $ \Zqqm \, $,  resp.\  $ \Zqqmsq \, $,  of Laurent polynomial
 in the indeterminate  $ q \, $,  resp.\  $ q^{1/2} $,  naturally embeds in  $ \Rbq \, $,  resp.\  in  $ \Rbqsq \, $,
 and the same occurs with their corresponding fields of fractions.  Then each module over  $ \Zqqm \, $,
 resp.\  $ \Zqqmsq \, $,  turns into a module over  $ \Rbq \, $,  resp.\  $ \Rbqsq \, $,  by scalar extension.
 \vskip3pt
  The very reason for introducing the above definitions, which explains the ``universality'' of both  $ \Rbq $  and
  $ \Rbqsq \, $,  is the following.  If  $ \k $  is any field and  $ \, \bar{\bq} := {\big( {\bar{q}}_{ij} \big)}_{i, j \in I} \, $
  is any multiparameter of Cartan type  $ A $  chosen in  $ \k $
  --- i.e., all the  $ {\bar{q}}_{ij} $'s  belong to  $ \k $  ---   then there exist unique ring morphisms
  $ \, \Rbq \!\longrightarrow \k \, $  and  $ \, \Rbqsq \!\longrightarrow \k \, $  given by
  $ \, q_{ij}^{\,\pm 1} \mapsto {\bar{q}}_{ij}^{\,\pm 1} \, $  and  $ \, q_{ij}^{\,\pm 1/2} \mapsto {\bar{q}}_{ij}^{\,\pm 1/2} \, $
  (for all  $ \, i, j \in I \, $)  respectively   --- and similarly if  $ \Rbq $  and  $ \Rbqsq $  are replaced by the fields
  $ \Fbq $  and  $ \Fbqsq \, $.  The images of these morphisms are the subrings  $ \mathcal{R}_{\bar\bq} $  and
  $ \mathcal{R}_{\bar\bq}^{\scriptscriptstyle {\sqrt{\phantom{I}}}} $  of  $ \k $  respectively generated by the
  $ {\bar{q}}_{ij} $'s  and by the  $ {\bar{q}}_{ij}^{\,1/2} $'s,  like in  \S \ref{ground-ring}   --- or the corresponding fields,
  if one starts with  $ \Fbq $  and  $ \Fbqsq \, $.
\end{free text}

\vskip7pt

\begin{free text}  \label{univ-ring-params-integr_type}
 {\bf The universal ring of multiparameters of integral type.}
 Let  $ \, A := {\big( a_{ij} \big)}_{i, j \in I} \, $  be a fixed indecomposable Cartan matrix of finite type as in
 \S \ref{univ-ring-params-Cartan_type},  and let  $ \, B := {\big( b_{ij} \big)}_{i, j \in I} \, $  be a matrix with
 entries in  $ \ZZ $  like in  \S \ref{multiparameters}. We consider the rings
  $$  \RbqB  \, := \,  \Zqqm  \quad  ,   \qquad  \RbqBsq  \, := \,  \Zqqmsq  $$
and the corresponding fields of fractions
 $ \, \FbqB := \QQ(q) \, $  and  $ \, \FbqBsq := \QQ\big(q^{1/2}\big) \, $,
together with the natural ring embeddings
 $ \; \RbqB \lhook\joinrel\relbar\joinrel\relbar\joinrel\longrightarrow \RbqBsq \; $
and
 $ \; \FbqB \lhook\joinrel\relbar\joinrel\relbar\joinrel\longrightarrow \FbqBsq \; $
given (in both cases) by  $ \, q^{\pm 1} \mapsto {\big( q^{\pm 1/2\,} \big)}^2 \, $.  In all these rings,
we consider the elements  $ \; q_{ij} := q^{\,b_{ij}} \in \RbqB \subseteq \FbqB \; $  and
$ \; q_{ij}^{\pm 1/2} := {\big( q^{\pm 1/2\,} \big)}^{b_{ij}} \in \RbqBsq \subseteq \FbqBsq \; $
for all  $ \, i, j \in I \, $.
 \vskip5pt
  Much like for the previous case of  $ \Rbq $  and  $ \Rbqsq \, $,  the rings  $ \RbqB $  and  $ \RbqBsq $  are
  ``universal'' among all those generated by multiparameters of type  $ B $  in any field  $ \k \, $,  in the following sense.
  If  $ \k $  is any field and  $ \, \bar{\bq} := {\big( {\bar{q}}_{ij} \big)}_{i, j \in I} \, $  is any multiparameter of integral type
  $ B $  in  $ \k \, $,  so that  $ \, {\bar{q}}_{ij} = {\bar{q}}^{\,b_{ij}} \, $  for some
  $ \, \bar{q} \in \k \, $  (for all  $ \, i, j \in I \, $),  then there exist unique ring morphisms  $ \, \RbqB \longrightarrow \k \, $
  and  $ \, \RbqBsq \longrightarrow \k \, $  given by  $ \, q^{\pm 1} \mapsto {\bar{q}}^{\,\pm 1} \, $  and
  $ \, q^{\pm 1/2} \mapsto {\bar{q}}^{\,\pm 1/2} \, $,  \,so that
  $ \, q_{ij} := q^{\,b_{ij}} \mapsto {\bar{q}}^{\,b_{ij}} = {\bar{q}}_{ij} \, $  and
  $ \, q_{ij}^{\,\pm 1/2} \mapsto {\big( {\bar{q}}^{\,\pm 1/2\,} \big)}^{b_{ij}} = {\bar{q}}_{ij}^{\,\pm 1/2} \, $  ($ \, i, j \in I \, $);
  similarly with the fields  $ \FbqB $  and  $ \FbqBsq $  replacing  $ \RbqB $  and  $ \RbqBsq \, $.
  The images of these morphisms are the subrings  $ \mathcal{R}_{\bar\bq} $  and
  $ \mathcal{R}_{\bar\bq}^{\scriptscriptstyle \sqrt{\phantom{I}}} $  (independent of  $ B \, $)  of  $ \k $
  re\-spective\-ly generated by the  $ {\bar{q}}_{ij}^{\,\pm 1} $'s  and by the  $ {\bar{q}}_{ij}^{\,\pm 1/2} $'s,
  i.e.\ by  $ \bar{q}^{\,\pm 1} $  and by  $ \bar{q}^{\,\pm 1/2} $  respectively, like in  \S \ref{ground-ring}
  (or the corresponding fields, if we deal with  $ \FbqB $  and  $ \FbqBsq \, $).
 \vskip7pt
   Finally, notice that we have a natural, ``hierarchical'' link between our universal rings (or fields)
   of Cartan or integral type: namely, there exist unique epimorphisms
 \vskip5pt
   \centerline{ $ \; \Rbq \relbar\joinrel\twoheadrightarrow \RbqB \;\; \big(\, q^{\pm 1} \! \mapsto q^{\pm 1} \,\big) $
\qquad  and  \qquad
                $ \; \Rbqsq \relbar\joinrel\twoheadrightarrow \RbqBsq \;\; \big(\, q^{\pm 1/2} \! \mapsto q^{\pm 1/2} \,\big) $ }
 \vskip3pt
\noindent
 so
 $ \; \Rbq \!\bigg/\! \Big( {\big\{ q_{ij} - q^{\,b_{ij}} \big\}}_{i, j \in I} \Big) \,\cong\, \RbqB \; $  and
 $ \; \Rbqsq \!\bigg/\! \Big( \Big\{ q_{ij}^{\,1/2} - {\big( q^{\pm 1/2} \big)}^{b_{ij}} \Big\}_{i, j \in I\,} \Big) \,\cong\, \RbqBsq \; $.
\end{free text}

\medskip

\subsection{Specialization at 1}  \label{spec-1}  \
 \vskip7pt
   Let  $ \bq \, $,  $ \Rbq $  and  $ \Rbqsq $  be fixed as in  \S \ref{gen-ground-rings}  above
   together with all related notation.
                                                          \par
   We consider the quotient ring  $ \; \Rbquno := \Rbq \Big/ (\,q-\!1) \, \Rbq \; $;  by construction,
   the latter is generated by invertible elements  $ \, y^{\pm 1}_{ij} := q^{\pm 1}_{ij} \!\mod (q-1)\,\Rbq \, $
   which obey only the relations  $ \, y_{j_0{}j_0}^{\pm 1} = 1 \, $  and  $ \, y_{ij}^{\pm 1} \, y_{ji}^{\pm 1} = 1 \, $,
   so that  $ \, y_{ji}^{\pm 1} = y_{ij}^{\mp 1} \, $.  It follows that  $ \Rbquno $  is just the ring of Laurent polynomials in the
   $ \big( {\theta \atop 2} \big) $  indeterminates  $ \, y_{ij}^{1/2} \, $,  $ \, i < j \, $.  In the sequel, we write
   $ \, y_{\alpha\,\gamma} \, $  for an element in  $ \Rbquno $  defined like in  \S \ref{deform-MpQG}
   but for using the  $ y_{ij} $'s  instead of the  $ q_{ij} $'s.
                                                          \par
   It is clear that   $ \Rbquno $  is also an  $ \Rbq $--algebra by scalar restriction through the canonical ring
   epimorphism  $ \, \Rbq \relbar\joinrel\relbar\joinrel\twoheadrightarrow \Rbq \Big/ (\,q-\!1) \, \Rbq \, =: \, \Rbquno \, $.
 \vskip3pt
   For every matrix  $ \, B := {\big( b_{ij} \big)}_{i, j \in I} \, $  with entries in  $ \ZZ $  as in  \S \ref{multiparameters},
   we define the ring  $ \; \RbqBuno := \RbqB \Big/ (\,q-\!1) \, \RbqB \; $.  Note that $ \; \RbqBuno \cong \ZZ \; $
   since  $ \, \RbqB \cong \Zqqm \, $,  \,and the epimorphism  $ \, \Rbq \!\relbar\joinrel\twoheadrightarrow \RbqB \, $
   induces a similar epimorphism  $ \, \Rbquno \!\relbar\joinrel\twoheadrightarrow \RbqBuno \, $  at  $ \, q = 1 \, $.
 \vskip5pt
   Similarly, the  {\it ``specialization at  $ \, q^{1/2} = 1 \, $''}  of both  $ \Rbqsq $  and  $ \RbqBsq \, $  will be
 $ \; \Rbqsquno := \Rbqsq \Big/ \big(\, q^{\,1/2} \! - \! 1 \big) \, \Rbqsq \; $  and
 $ \; \RbqBsquno := \RbqBsq \Big/ \big(\, q^{1/2} \! - \! 1 \big) \, \RbqBsq \; $;
 we write  $ \, y_{i{}j}^{\pm 1/2} \, $  for the image of  $ \, q_{i{}j}^{\pm 1/2} \, $  in  $ \Rbqsquno $  and
 $ \RbqBsquno \, $,  and overall
 $ \, \by \! := \! {\big( y_{i{}j} \big)}_{i, \, j \in I} \, $,  $ \, \by^{1/2} \! := \! {\big( y_{i{}j}^{1/2} \,\big)}_{i, \, j \in I} \, $.
 Again, we have an epimorphism  $ \, \Rbqsquno \!\relbar\joinrel\relbar\joinrel\twoheadrightarrow \RbqBsquno \, $
 induced by $ \, \Rbqsq \!\relbar\joinrel\relbar\joinrel\twoheadrightarrow \RbqBsq \, $.
   Finally, the ring extensions  $ \, \Rbq \!\lhook\joinrel\longrightarrow\! \Rbqsq \, $  and
   $ \, \RbqB \!\lhook\joinrel\longrightarrow\! \RbqBsq \, $  yield extensions
   $ \; \Rbquno \!\lhook\joinrel\longrightarrow\! \Rbqsquno \; $  and
   $ \; \RbqBuno \!\lhook\joinrel\longrightarrow\! \RbqBsquno \; $;  \, in fact,  {\it the latter is actually an isomorphism}
  $ \; \RbqBuno \,{\buildrel \cong \over {\lhook\joinrel\relbar\joinrel\relbar\joinrel\twoheadrightarrow}}\, \RbqBsquno \;\, \big( \cong \ZZ \big) \, $.

\vskip7pt

   Before going on and studying specializations of our objects at  $ \, q = 1 \, $,
   we recall some well-known facts of quantization theory:

\vskip11pt

\begin{free text}  \label{Pois/coPois-structs}
{ \bf (Co-)Poisson structures on semiclassical limits.}\,
Let  $ A $  be any (commutative unital) ring, let $ \, p \in A \, $  be  {\sl non-invertible\/}  in  $ A \, $,
and  $ \, A_{p=0} := A \Big/\! (p) = A \Big/ p\,A \, $.  Whatever follows applies to  $ \, A \in \big\{ \Rbq \, , \RbqB \big\} \, $
and  $ \, p := q - 1 \, $  or  $ \, A \in \big\{ \Rbqsq \, , \RbqBsq \,\big\} \, $  and  $ \, p := q^{1/2} - 1 \, $.
                                                                      \par
   Consider an  $ A $--module  $ H \, $,  and let  $ \; H_{p=0} \, := \, H \Big/ p \, H \; $  be its  {\sl specialization\/}
   at  $ \, p = 0 \, $;  clearly the latter is automatically an  $ A_{p=0\,} $--module.
%
%
 If in addition  $ H $  has a structure of  $ A $--algebra,  or of a  $ A $--coalgebra,
 or of a bialgebra or Hopf algebra over  $ A \, $,  then the  $ H_{p=0} $  also inherits the same kind of (quotient)
 structure over  $ A_{p=0} \, $.
                                                                      \par
   Furthermore, the following holds (see, e.g.,  \cite[Chapter 6]{CP}):
 \vskip4pt
   {\it (a)}\,  If  $ H $  has a structure of (unital, associative)  $ A $--algebra  such that  $ H_{p=0} $  is
   {\sl commutative},  then  $ H_{p=0} $  bears a canonically structure of (unital, associative)  {\sl Poisson algebra\/}
   over  $ A_{p=0} \, $,  whose Poisson bracket is uniquely given by
%
   $$  \{x,y\} \, := \, {{\;x'\,y' - y'\,x'\;} \over {\;p\;}}  \mod p\,H   \qquad \qquad  \forall \quad  x \, , \; y \, \in \, H_{p=0}  $$
%
%
 for any  $ \, x', y' \in H \, $  such that  $ \; x := \, x' \mod p H \, $,  $ \; y := \, y' \mod p\,H \; $.
                                                                      \par
   If in addition  $ H $  is a  {\sl bialgebra\/}  or  {\sl Hopf algebra\/}  over  $ A \, $,  then the above Poisson
   bracket together with the quotient structure of bialgebra or Hopf algebra (over  $ A_{p=0} \, $)  make
   $ H_{p=0} $  into a  {\sl Poisson bialgebra\/}  or  {\sl Poisson Hopf algebra\/}  over  $ A_{p=0} \, $.
 \vskip4pt
   {\it (b)}\,  If  $ H $  has a structure of (counital, coassociative)  $ A $--coalgebra  such that  $ H_{p=0} $  is
   {\sl cocommutative},  then  $ H_{p=0} $  bears a canonically structure of (counital, coassociative)
   {\sl co-Poisson algebra\/}  over  $ A_{p=0} \, $,  whose Poisson cobracket is uniquely given by
%
  $$  \nabla(x) \, := \, {{\;\Delta(x') - \Delta^{\text{op}}(x')\;} \over {\;p\;}}  \mod p\,H
  \qquad \qquad  \forall \quad  x \, \in \, H_{p=0}  $$
%
%
 for any  $ \, x' \in H \, $  such that  $ \; x := \, x' \mod p\,H \; $.
                                                                      \par
   If in addition  $ H $  is a  {\sl bialgebra\/}  or  {\sl Hopf algebra\/}  over  $ A \, $,
   then the above Poisson cobracket together with the quotient structure of bialgebra or Hopf algebra
   (over  $ A_{p=0} \, $)  make  $ H_{p=0} $  into a  {\sl co-Poisson bialgebra\/}  or  {\sl co-Poisson Hopf algebra\/}
   over  $ A_{p=0} \, $.
 \vskip5pt
   As a last remark, we recall that if  $ \mathfrak{l} $  is a Lie algebra and the Hopf algebra  $ U(\mathfrak{l}) $
   is actually a  {\sl co-Poisson\/}  Hopf algebra, then  $ \mathfrak{l} $  canonically inherits a structure of Lie bialgebra,
   with the original Lie bracket and the Lie cobracket given by restriction of the Poisson cobracket in
   $ U(\mathfrak{l}) \, $.  As a consequence, if  $ \, H_{p=0} \cong U(\mathfrak{l}) \, $  as Hopf algebras
   ($ H $  as above) for some Lie algebra  $ \mathfrak{l} \, $,  then the latter is bears a Lie  {\sl bialgebra\/}  structure,
   induced by  $ H $  as explained.
\end{free text}

\vskip7pt

   Now we fix  $ \Fbq \, $,  $ \Rbq \, $,  and  $ \FbqB \, $,  $ \RbqB \, $,  as in \S \ref{gen-ground-rings}.
   Note that their generators  $ \, q_{ij} \, $  (for all $ i $  and  $ j \, $)  form inside either field  $ \Fbq $  and  $ \FbqB $
   a multiparameter matrix  $ \, \bq := {\big( q_{ij} \big)}_{i,j \in I} \, $  of Cartan type, and even of  {\sl integral\/}
   type (namely, type  $ B \, $)  in the case of  $ \FbqB \, $.  We consider then the associated MpQG's defined over
   $ \Fbq $  and over  $ \FbqB \, $,  both denoted by  $ \QEq \, $;  nevertheless,
   we shall loosely distinguish the two cases
 by saying that we are ``in the  {\sl general},  resp.\  {\sl integral},  case'' when the ground ring is
 $ \Fbq $  or  $ \Rbq \, $,  resp.\  $ \FbqB $  or  $ \RbqB \, $.
 \vskip3pt
   In the general case, we consider in  $ \QEq $  the unrestricted integral form  $ \Utilde_\bq(\hskip0,8pt\lieg) \, $,
   defined over the ring  $ \Rbq $  as in  \S \ref{Utilde}.  In the integral case instead, we pick in  $ \QEq $
   the restricted integral forms  $ \Uhatdot_\bq(\hskip0,8pt\lieg) $  and   --- in the strictly integral case ---
   $ \Uhat_\bq(\hskip0,8pt\lieg) \, $,  defined over $ \RbqB $ as in  \S \ref{Uhat},  and the unrestricted form
   $ \Utilde_\bq(\hskip0,8pt\lieg) $  too   ---  over  $ \RbqB $  again.

\vskip7pt

   We can now introduce the first type of specialization we are interested into:

\vskip13pt

\begin{definition}  \label{def_spec-1}  \
 \vskip4pt
   {\it (a)}\,  {\sl Let  $ \, \bq $  be of integral type}.  We call  {\it specialization of  $ \, \Uhatdotqgd \, $
   at  $ \, q = 1 \, $}  the quotient
 \vskip-9pt
  $$  \Uhatdotqunogd  \; := \;  \Uhatdotqgd \Big/ (q-\!1) \, \Uhatdotqgd  $$
endowed with its natural (quotient) structure of Hopf algebra over  $ \, \RbqBuno \; \big(\! = \ZZ \,\big) \, $.
                                                            \par
   As a matter of notation, setting  $ \, \Uhatdot_\bq := \Uhatdotqgd \, $  we shall denote
  $$  \displaylines{
   \bigg(\, {{\kdotrm_i \, ; \, c} \atop n} \bigg) := {\bigg( {{K_i \, ; \, c} \atop n} \bigg)}_{\!\!q}  \hskip-3pt \mod (q-\!1) \, \Uhatdot_\bq  \;\; ,   \quad
 \bigg(\, {\kdotrm_i \atop n} \bigg) := {\bigg( {K_i \atop n} \bigg)}_{\!\!q}  \hskip-3pt \mod (q-\!1) \, \Uhatdot_\bq  \cr
   \bigg(\, {{\ldotrm_i \, ; \, c} \atop n} \bigg) := {\bigg( {{L_i \, ; \, c} \atop n} \bigg)}_{\!\!q}  \hskip-3pt \mod (q-\!1) \, \Uhatdot_\bq  \;\; ,   \quad
 \bigg(\, {\ldotrm_i \atop n} \bigg) := {\bigg( {L_i \atop n} \bigg)}_{\!\!q}  \hskip-3pt \mod (q-\!1) \, \Uhatdot_\bq  \cr
   \bigg(\, {{\hrm_i \, ; \, c} \atop n} \bigg) := {\bigg( {{G_i \, ; \, c} \atop n} \bigg)}_{\!\!q_{ii}}  \hskip-8pt \mod (q-\!1) \, \Uhatdot_\bq  \;\; ,
 \quad  \bigg(\, {\hrm_i \atop n} \bigg) := {\bigg( {G_i \atop n} \bigg)}_{\!\!q_{ii}}  \hskip-8pt \mod (q-\!1) \, \Uhatdot_\bq  \cr
   \kdotrm_i := \bigg(\, {\kdotrm_i \atop 1} \bigg)  \quad ,   \;\qquad
 \ldotrm_i := \bigg(\, {\ldotrm_i \atop 1} \bigg)  \quad ,   \quad \qquad
 \hrm_i := \bigg(\, {\hrm_i \atop 1} \bigg)  \cr
   \text{e}_\alpha^{\,(n)} := E_\alpha^{\,(n)}  \mod (q-\!1) \, \Uhatdot_\bq \quad ,  \quad \qquad
 \text{f}_\alpha^{\;(n)} := F_\alpha^{\,(n)}  \mod (q-\!1) \, \Uhatdot_\bq  }  $$
 for all  $ \, i \in I \, $,  $ \, c \in \ZZ \, $,  $ \, n \in \NN\, $,  $ \, \alpha \in \Phi^+ \, $.
 \vskip3pt
   {\sl If  $ \, \bq $  is of  {\it strongly integral}  type},  then we call  {\it specialization of  $ \, \Uhatqgd \, $
   at  $ \, q = 1 \, $}  the quotient
 \vskip-19pt
  $$  \Uhatqunogd  \; := \;  \Uhatqgd \Big/ (q-\!1) \, \Uhatqgd  $$
endowed with its natural (quotient) structure of Hopf algebra over  $ \, \RbqBuno \; \big(\! = \ZZ \,\big) \, $.
Like above, setting  $ \, \Uhat_\bq := \Uhatqgd \, $  we shall write
(for  $ \, i \in I \, $,  $ \, c \in \ZZ \, $,  $ \, n \in \NN\, $,  $ \, \alpha \in \Phi^+ \, $)
  $$  \displaylines{
    \bigg(\, {{\krm_i \, ; \, c} \atop n} \bigg) := {\bigg( {{K_i \, ; \, c} \atop n} \bigg)}_{\!\!q_i}  \hskip-3pt \mod (q-\!1) \, \Uhat_\bq \quad ,   \;\quad
 \bigg(\, {\krm_i \atop n} \bigg) := {\bigg( {K_i \atop n} \bigg)}_{\!\!q_i}  \hskip-3pt \mod (q-\!1) \, \Uhat_\bq  \cr
    \bigg(\, {{\lrm_i \, ; \, c} \atop n} \bigg) := {\bigg( {{L_i \, ; \, c} \atop n} \bigg)}_{\!\!q_i}  \hskip-3pt \mod (q-\!1) \, \Uhat_\bq \quad ,   \;\quad
 \bigg(\, {\lrm_i \atop n} \bigg) := {\bigg( {L_i \atop n} \bigg)}_{\!\!q_i}  \hskip-3pt \mod (q-\!1) \, \Uhat_\bq  \cr
    \bigg(\, {{\hrm_i \, ; \, c} \atop n} \bigg) := {\bigg( {{G_i \, ; \, c} \atop n} \bigg)}_{\!\!q_{ii}}  \hskip-8pt \mod (q-\!1) \, \Uhat_\bq  \;\; ,   \quad
 \bigg(\, {\hrm_i \atop n} \bigg) := {\bigg( {G_i \atop n} \bigg)}_{\!\!q_{ii}}  \hskip-8pt \mod (q-\!1) \, \Uhat_\bq  \cr
    \krm_i := \bigg(\, {\krm_i \atop 1} \bigg)  \quad ,   \;\qquad
 \lrm_i := \bigg(\, {\lrm_i \atop 1} \bigg)  \quad ,   \quad \qquad
 \hrm_i := \bigg(\, {\hrm_i \atop 1} \bigg)  \cr
   \text{e}_\alpha^{\,(n)} := E_\alpha^{\,(n)}  \mod (q-\!1) \, \Uhat_\bq \quad ,  \quad \qquad
 \text{f}_\alpha^{\;(n)} := F_\alpha^{\,(n)}  \mod (q-\!1) \, \Uhat_\bq  }  $$
 \vskip4pt
   {\it (b)}\,  Let  $ \, \bq $  be arbitrary (of Cartan type).  We call  {\it specialization of  $ \, \Utildeqgd $
   --- defined over either  $ \, \Rbq $  or  $ \, \RbqB $  ---   at  $ \, q = 1 \, $}  the quotient
 \vskip-5pt
  $$  \Utildequnogd  \; := \;  \Utildeqgd \Big/ (q-\!1) \, \Utildeqgd  $$
endowed with its natural (quotient) structure of Hopf algebra   --- over  $ \, \Rbquno $  or  $ \RbqBuno $  respectively.
As a matter of notation, we shall denote (for all  $ \, \alpha \in \Phi^+ \, $,  $ \, i \in I \, $)
   $$  \displaylines{
    f_\alpha := \Fbar_\alpha  \mod (q-\!1) \, \Utilde_\bq(\hskip0,8pt\lieg)  \quad ,   \qquad
 e_\alpha := \Ebar_\alpha  \mod (q-\!1) \, \Utildeqgd  \cr
    \hskip9pt \hfill   l_i^{\,\pm 1} := L_i^{\pm 1}  \mod (q-\!1) \, \Utildeqgd  \;\; ,   \;\quad
 k_i^{\,\pm 1} := K_i^{\pm 1}  \mod (q-\!1) \, \Utildeqgd   \hfill  \diamondsuit  }  $$
\end{definition}

\smallskip

\begin{rmk}  \label{spec-1_ext}
 Note that the specializations introduced above can be also realized, alternatively, as scalar extensions, namely
  $ \,\; \Uhatdotqunogd := \RbqBuno \mathop{\otimes}\limits_{\;\RbqB} \Uhatdotqgd \; $,  $ \,\;
\Uhatqunogd \, := \, \RbqBuno \mathop{\otimes}\limits_{\;\RbqB} \Uhatqgd \;\, $  and
  $ \,\; \Utildequnogd \, := \, \RbqBuno \mathop{\otimes}\limits_{\;\RbqB} \Utildeqgd \;\, $  or
  --- according to what is the chosen ground ring for  $ \Utildeqgd $  ---   also
  $ \,\; \Utildequnogd := \Rbquno \mathop{\otimes}\limits_{\;\Rbq} \Utildeqgd \;\, $.
\end{rmk}

\medskip

   Our first, key result about specialization at  $ \, q = 1 \, $  is the following:

\medskip

\begin{theorem}  \label{thm:specUqhat_q=1}
 Let  $ \, \bq := {\big(\, q_{ij} = q^{b_{ij}} \,\big)}_{i, j \in I} \, $  be as above, with
 $ \, B := {\big( b_{ij} \big)}_{i, j \in I} \in M_\theta(\ZZ) \, $  such that  $ \, B + B^t = DA \, $.  Then the following holds:
 \vskip3pt
   {\it (a)}\,  $ \Uhatdotqunogd $  is a (cocommutative) co-Poisson Hopf algebra, which is isomorphic to
   $ U_\ZZ\big(\hskip0,8pt\liegdotb\big) $   --- cf.\ Definition \ref{def_Kostant-forms_U(g)}  ---
   the latter being endowed with the Poisson co-bracket uniquely induced by the Lie cobracket of
   $ \, \liegdotb $ ---  cf.\ Definition \ref{def_liegdotb-lieghatb-liegtildeb}{\it (a)}.  Indeed, an explicit isomorphism
   $ \; \Uhatdotqunogd \;{\buildrel \cong \over {\lhook\joinrel\relbar\joinrel\relbar\joinrel\relbar\joinrel\twoheadrightarrow}}\; U_\ZZ\big(\hskip0,8pt\liegdotb\big) \; $  is given by
  $$  \bigg(\, {\kdotrm_i \atop n} \bigg) \mapsto \bigg(\, {\kdotrm_i \atop n} \bigg)  \; ,
   \;\;  \bigg(\, {\ldotrm_i \atop n} \bigg) \mapsto \bigg(\, {\ldotrm_i \atop n} \bigg)  \; ,
   \;\;\bigg(\, {\hrm_i \atop n} \bigg) \mapsto \bigg(\, {\hrm_i \atop n} \bigg)  \; ,  \;\;
   \text{\rm e}_\alpha^{\,(n)} \mapsto \, \text{\rm e}_\alpha^{\,(n)}  \; ,  \;\;  \text{\rm f}_\alpha^{\,(n)} \mapsto \, \text{\rm f}_\alpha^{\,(n)}  $$
   \indent   Similar statements hold true for the specialization at  $ \, q = 1 $  of
   $ \, \Uhatdot{}_\bq^{\,\geq} \, $,  $ \Uhatdot{}_\bq^{\,\leq} \, $,  $ \Uhatdot{}_\bq^{\,0} \, $,  etc.
 \vskip3pt
   {\it (b)}\,  $ \Uhatqunogd $  is a (cocommutative) co-Poisson Hopf algebra, which is isomorphic to
   $ U_\ZZ\big(\hskip0,8pt\lieghatb\big) $   --- cf.\ Definition \ref{def_Kostant-forms_U(g)} ---
   the latter being endowed with the Poisson co-bracket uniquely induced by the Lie cobracket of  $ \, \lieghatb $   --- cf.\ Definition \ref{def_liegdotb-lieghatb-liegtildeb}{\it (c)\/}.  Indeed, an explicit isomorphism  $ \; \Uhatqunogd \;{\buildrel \cong \over {\lhook\joinrel\relbar\joinrel\relbar\joinrel\relbar\joinrel\twoheadrightarrow}}\;
   U_\ZZ\big(\hskip0,8pt\lieghatb\big) \; $  is given by
  $$  \bigg(\, {\krm_i \atop n} \bigg) \mapsto \bigg(\, {\krm_i \atop n} \bigg)  \; ,  \;\;  \bigg(\, {\lrm_i \atop n} \bigg) \mapsto
  \bigg(\, {\lrm_i \atop n} \bigg)  \; ,  \;\;\bigg(\, {\hrm_i \atop n} \bigg) \mapsto \bigg(\, {\hrm_i \atop n} \bigg)  \; ,  \;\;
  \text{\rm e}_\alpha^{\,(n)} \mapsto \, \text{\rm e}_\alpha^{\,(n)}  \; ,  \;\;  \text{\rm f}_\alpha^{\,(n)} \mapsto \, \text{\rm f}_\alpha^{\,(n)}  $$
   \indent   Similar statements hold true for the specialization at  $ \, q = 1 $  of  $ \, \Uhat_\bq^{\,\geq} \, $,  $ \Uhat_\bq^{\,\leq} \, $,  $ \Uhat_\bq^{\,0} \, $,  etc.
\end{theorem}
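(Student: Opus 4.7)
\smallskip

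\noindent\textbf{Proof plan.} My strategy will be to leverage the explicit presentations by generators and relations of the restricted integral forms furnished by Theorem \ref{thm:pres_Uhatgdq_gens-rels}, together with the PBW bases provided by Theorem \ref{thm:PBW_hat-MpQG} and the abstract formalism of semiclassical structures recalled in \S\ref{Pois/coPois-structs}. I shall write up the argument for part (a); part (b) will follow by an entirely parallel discussion, after replacing $q$ with $q_i$ and $\kdotrm_i, \ldotrm_i$ with $\krm_i, \lrm_i$ throughout.

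\smallskip

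\noindent\emph{Step 1: the assignment defines a surjective Hopf algebra morphism.} I shall verify that the correspondence on generators displayed in the statement respects every defining relation of $\Uhatdotqunogd$ as listed in Theorem \ref{thm:pres_Uhatgdq_gens-rels}(a). The key observation is that $K_i \equiv 1 \equiv L_i \equiv G_i \pmod{(q-1)\,\Uhatdot_\bq}$, since $K_i-1 = (q-1)\bigl({{K_i;0}\atop 1}\bigr)_{\!q}$ (and similarly for $L_i$, $G_i$) via the identity of Lemma \ref{commut_q-bin-coeff}. Consequently the ``toral commutation'' relations (those between $K_i^{\pm 1}, L_i^{\pm 1}, G_i^{\pm 1}$ and the $q$-divided powers or $q$-binomials) all collapse to the classical commutation relations of $U_\ZZ(\liegdotdq)$, while the $q$-binomial relations of Lemma \ref{commut_q-bin-coeff} become the usual binomial identities in any commuting algebra. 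The quantum Serre relations degenerate to classical Serre relations (since $q_{ii}^{{k\choose 2}}q_{ij}^k \to 1$), the divided-power product rule $X_i^{(r)}X_i^{(s)} = {r+s\choose r}_{q_{ii}}X_i^{(r+s)}$ becomes the classical one, and the commutation formula for $E_i^{(m)}F_i^{(n)}$ specializes to the standard Kostant-form identity with binomials $\bigl({\hrm_i;2s-m-n \atop s}\bigr)$ in place of $\bigl({G_i;2s-m-n \atop s}\bigr)_{q_{ii}}$. Thus a well-defined algebra epimorphism $\Uhatdotqunogd \twoheadrightarrow U_\ZZ(\liegdotdq)$ exists. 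The coproduct, counit and antipode formulas in Theorem \ref{thm:pres_Uhatgdq_gens-rels}(a) specialize to those of the Kostant integral form (noting in particular $K_i^s \otimes E_i^{(s)} \mapsto 1 \otimes \mathrm{e}_i^{(s)}$ at $q=1$), so the map is Hopf.

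\smallskip

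\noindent\emph{Step 2: the morphism is an isomorphism.} Here I will use the PBW bases. Theorem \ref{thm:PBW_hat-MpQG}(d) exhibits an $\Rbq$-basis of $\Uhatdot_\bq$ whose reduction mod $(q-1)$ yields a spanning family of $\Uhatdotqunogd$ over $\ZZ$; under my map, these PBW monomials are sent to the classical PBW monomials which, by the standard Kostant/DL theory (cf.\ Remark \ref{int-forms_of_U(g_D)}), form a $\ZZ$-basis of $U_\ZZ(\liegdotdq)$. Hence the map is surjective; and because the target is $\ZZ$-free with a basis that lifts to the PBW basis of $\Uhatdot_\bq$, a simple cardinality/faithful-flatness argument (or equivalently, comparison after tensoring up to $\QQ$) shows that the reductions of the PBW monomials remain $\ZZ$-linearly independent, so the map is also injective.

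\smallskip

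\noindent\emph{Step 3: the co-Poisson structure.} Since $K_i, L_i \equiv 1 \pmod{(q-1)}$, the coproduct formulas in Theorem \ref{thm:pres_Uhatgdq_gens-rels}(a) together with cocommutativity of $\Uhatdot_\bq^{\,0}$ modulo $(q-1)$ show that $\Uhatdotqunogd$ is cocommutative. The general principle recalled in \S\ref{Pois/coPois-structs}(b) therefore endows it canonically with a co-Poisson Hopf algebra structure, whose cobracket $\nabla$ is determined on generators by $\nabla(x) \equiv (\Delta(\tilde x) - \Delta^{\mathrm{op}}(\tilde x))/(q-1) \pmod{(q-1)\,\Uhatdot_\bq\otimes\Uhatdot_\bq}$. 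A direct computation on $\text{e}_i$ using $\Delta(E_i) = E_i\otimes 1 + K_i\otimes E_i$ yields
\[
\nabla(\text{e}_i) \,=\, \tfrac{K_i-1}{q-1}\otimes E_i - E_i\otimes\tfrac{K_i-1}{q-1} \bmod (q-1) \,=\, \kdotrm_i\otimes\text{e}_i - \text{e}_i\otimes\kdotrm_i,
\]
matching the cobracket of Definition \ref{def_liegdotdq-liegdq-liegtildedq}(a); the analogous calculations for $\text{f}_i$ and for $\kdotrm_i, \ldotrm_i, \hrm_i$ (the latter three being group-like/primitive up to $(q-1)$, hence $\nabla$-central) confirm that the induced co-Poisson Hopf algebra is $U_\ZZ(\liegdotdq)$ with the cobracket stipulated in Definition \ref{def_liegdotdq-liegdq-liegtildedq}(a).

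\smallskip

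\noindent\emph{Anticipated obstacle.} The main technical subtlety lies in Step 1: one must carefully track the expansion $K_\lambda - 1 = (q-1)\sum_i \lambda_i\,\bigl({K_i;0\atop 1}\bigr)_{\!q} + O\bigl((q-1)^2\bigr)$ and its analogues for $L$ and $G$, in order to see that the ``twisting'' exponents $b_{ij}, t^{\pm}_{ij}, a_{ij}$ appearing in the commutation relations of Theorem \ref{thm:pres_Uhatgdq_gens-rels} survive the specialization in exactly the combinations dictated by Definition \ref{def_liegdotdq-liegdq-liegtildedq} (and hence by Definition \ref{def_Kostant-forms_U(g)}). The subalgebra variants $\Uhatdot_\bq^{\,\geq}, \Uhatdot_\bq^{\,\leq}, \Uhatdot_\bq^{\,0}$ etc.\ are handled by restricting the same map and invoking the PBW decompositions of Proposition \ref{prop:triang-decomps_Uhat}; the analogous statement (b) for $\Uhat_\bq$ in the strongly integral case follows verbatim via Theorem \ref{thm:pres_Uhatgdq_gens-rels}(b) and its accompanying PBW Theorem \ref{thm:PBW_hat-MpQG}(e).
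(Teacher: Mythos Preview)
Your proposal is correct and follows essentially the same approach as the paper: both use the presentation by generators and relations from Theorem~\ref{thm:pres_Uhatgdq_gens-rels} to verify that the map is a well-defined Hopf algebra morphism (exploiting $K_i, L_i, G_i \equiv 1 \pmod{(q-1)}$), invoke the PBW bases of Theorem~\ref{thm:PBW_hat-MpQG} to deduce that the map is an isomorphism, and then read off cocommutativity and the co-Poisson structure from the coproduct formulas. Your treatment is in fact somewhat more explicit than the paper's, which contents itself with one sample relation computation and the phrase ``sheer calculations show''.
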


\begin{proof}
 By the definitions and the structure results for  $ \Uhatdotqunogd $  and  $ \Uhatqunogd $   in  \S \ref{Uhat}  (in particular,  Theorem \ref{thm:pres_Uhatgdq_gens-rels})  the proof is a straightforward check.  Indeed, from the presentation of  $ \Uhatdotqgd $  and  $ \Uhatqgd $  in Theorem \ref{thm:pres_Uhatgdq_gens-rels}  we get similar presentations of  $ \Uhatdotqunogd $  and  $ \Uhatqunogd \, $:  comparing these presentations with those mentioned in  Remark \ref{int-forms_of_U(g_D)}{\it (a)\/}  for  $ \, U_\ZZ\big(\hskip0,8pt\liegdotb\big) \, $  and  $ U_\ZZ\big(\hskip0,8pt\lieghatb\big) \, $,  sheer calculations show that the formulas in the above statement provide well-defined isomorphisms, as claimed.
                                               \par
   Hereafter we give a sample of these ``sheer calculations''.  Out of the commutation formulas among generators of  $ \, \Uhatdotqgd \, $   --- cf.\  Theorem \ref{thm:pres_Uhatgdq_gens-rels}{\it (a)}  ---   we get
  $$  \displaylines{
   {\bigg( {K_i \atop 1} \bigg)}_{\!\!q} \, E_j^{\,(n)}  \,\; = \;\,  E_j^{\,(n)} \,
   {\bigg( {{K_i \; ; \, n\,b_{ij}} \atop 1} \bigg)}_{\!\!q}  \,\; = \;\,  E_j^{\,(n)} \,
   \bigg( {\bigg( {K_i \atop 1} \bigg)}_{\!\!q} \, + \, {\big( n\,b_{ij} \big)}_q \, K_i \bigg)  \,\; =   \hfill  \cr
   \hfill   = \;\,  E_j^{\,(n)} \; {\bigg( {K_i \atop 1} \bigg)}_{\!\!q} \, + \, {\big( n \, b_{ij} \big)}_q \, E_j^{\,(n)} \, K_i  }  $$
 Then, when we specialize this formula at  $ \, q = 1 \, $   --- that is, we take it modulo  $ \, (q\!-1) \, \Uhatdotqgd \, $  ---   the left-hand side and right-hand side become, respectively,
%
%
  $$  \displaylines{
   {\bigg( {K_i \atop 1} \bigg)}_{\!\!q} \, E_j^{\,(n)}  \,\; \equiv \;\,  \bigg(\, {{\kdotrm_i} \atop 1} \bigg) \, \erm_j^{\,(n)}   \qquad \quad  \Big(\hskip-7pt \mod (q - \! 1) \, \Uhatdotqgd \,\Big)  \cr
   E_j^{\,(n)} \, {\bigg(\! {K_i \atop 1} \bigg)}_{\!\!q} \, + \, {\big( n\,b_{ij} \big)}_q \, E_j^{\,(n)} K_j  \,\; \equiv \;\,  \erm_j^{\,(n)}  \bigg(\, {{\kdotrm_i} \atop 1} \bigg) \, + \, n \, b_{ij} \, \erm_j^{\,(n)}   \;\quad  \Big(\hskip-9pt \mod (q - \! 1) \, \Uhatdotqgd \Big)  }  $$
%
 because  $ \; K_i \, = \, 1 \, + \, (q - \! 1) \, {\displaystyle {\bigg(\! {K_i \atop 1} \bigg)}_{\!\!q}} \, \equiv \, 1  \mod (q - \! 1) \, \Uhatdotqgd \; $.  This shows that the relation
  $$  {\bigg( {K_i \atop 1} \bigg)}_{\!\!q} \, E_j^{\,(n)}  \,\; = \;\,  E_j^{\,(n)} \;
  {\bigg( {K_i \atop 1} \bigg)}_{\!\!q} \, + \, {\big( n\,b_{ij} \big)}_q \, E_j^{\,(n)} \, K_i  $$
involving some generators of  $ \Uhatdotqgd \, $,  through the specialization process turns into
  $$  \bigg(\, {{\kdotrm_i} \atop 1} \bigg) \, \erm_j^{\,(n)}  \,\; = \;\,  \erm_j^{\,(n)}  \bigg(\, {{\kdotrm_i} \atop 1} \bigg) \, + \, n \, b_{ij} \, \erm_j^{\,(n)}  $$
among the corresponding elements in  $ U_\ZZ\big(\hskip0,8pt\liegdotb\big) \, $;
but this relation is indeed one of those occurring in the presentation of  $ U_\ZZ\big(\hskip0,8pt\liegdotb\big) $  itself by generators and relations.
                                    \par
   With a similar analysis, one sees that the generators in  $ \Uhatdotqunogd $  do respect all relations that hold true among the same name generators of  $ \, U_\ZZ\big(\hskip0,8pt\liegdotb\big) \, $.  In addition,  {\sl there are no extra relations\/}  because we have PBW bases for  $ \Uhatdotqgd $  which specialize to similar bases for  $ \Uhatdotqunogd \, $,  and the latter correspond to PBW bases of  $ U_\ZZ\big(\hskip0,8pt\liegdotb\big) \, $.
                                        \par
   Finally, since  $ \,\; K_i \, = \, 1 \, + \, (q - \! 1) \, {\displaystyle {\bigg( {K_i \atop 1} \bigg)}_{\!\!q}} \, \equiv \, 1  \mod (q - \! 1) \, \Uhatdotqgd \;\, $  and also  $ \,\; L_i \, = \, 1 \, + \, (q - \! 1) \, {\displaystyle {\bigg( {L_i \atop 1} \bigg)}_{\!\!q}} \, \equiv \, 1 \; \mod (q - \! 1) \, \Uhatdotqgd \;\, $,  it follows from Theorem \ref{thm:pres_Uhatgdq_gens-rels}  that
   $ \; \com\Big(\! E_i^{(n)} \Big) \,\equiv\, {\textstyle \sum\limits_{s=0}^n} E_i^{\,(n-s)} \otimes E_i^{\,(s)} \; $  and
   $ \; \com\Big(\! F_i^{(n)} \Big) \,\equiv\, {\textstyle \sum\limits_{s=0}^n} F_i^{\,(n-s)} \otimes F_i^{\,(s)} \; $  modulo
   $ \; \bigg( (q - \! 1) \, \Uhatdotqgd \, \otimes \Uhatdotqgd \,+\, \Uhatdotqgd\ot (q - \! 1) \, \Uhatdotqgd \bigg) \;\, $.
 This implies that  $ \, \Uhatdotqunogd \, $  is a cocommutative Hopf algebra.
\end{proof}

\vskip9pt

\begin{rmk}
 In sight of  Theorem \ref{thm:specUqhat_q=1}  above, the fact that the  $ \liegdotb $'s,  resp.\  the
 $ \lieghatb $'s,  for different  $ \bq $'s  be all isomorphic
 {\sl as Lie coalgebras}   --- cf.\ Remarks \ref{int-forms_of_g_B}{\it (c)}  ---   is a direct consequence of the fact that all the
 $ \Uhatdotqgd $'s,  resp.\  the  $ \Uhatqgd $'s,  for different  $ \bq $  are isomorphic  {\sl as coalgebras},
 as this happens for the  $ U_\bq $'s.
\end{rmk}

\vskip7pt

   Next, we study the structure of  $ \; \Utildequnogd \, := \, \Utilde_\bq(\hskip0,8pt\lieg) \Big/ (q - \! 1) \, \Utilde_\bq(\hskip0,8pt\lieg) \; $.
 For the first results, the multiparameter  $ \, \bq $  is assumed to be generic, i.e.\ just of Cartan type.
 \vskip5pt
   Let  $ \; \Utilde_\bq^{\scriptscriptstyle {\sqrt{\phantom{I}}}} \! := \Rbqsq \otimes_{\raise-3pt \hbox{$ \scriptstyle \Rbq $}} \hskip-5pt \Utilde_\bq \; $,  and let  $ \, \Utilde_{\bq\hskip1pt,1}^{\scriptscriptstyle {\sqrt{\phantom{I}}}}(\hskip0,8pt\lieg) := \Utilde_\bq^{\scriptscriptstyle {\sqrt{\phantom{I}}}} \Big/ \big(\, q^{1/2} \! - \! 1 \big) \, \Utilde_\bq^{\scriptscriptstyle {\sqrt{\phantom{I}}}} \; $
   be the specia\-lization  of $ \, \Utilde_\bq^{\scriptscriptstyle {\sqrt{\phantom{I}}}} \, $  at  $ \; q^{1/2} = 1 \, $.  For any affine Poisson group-scheme  $ \widetilde{G}^{\,*}_{\!\scriptscriptstyle DA} $  over
 $ \, \mathcal{R}_{\,\check{\bq}\hskip1pt,1}^{\,\scriptscriptstyle {\sqrt{\phantom{I}}}} $  dual to
 $ \, \tilde{\lieg}_{\raise-2pt\hbox{$ \scriptscriptstyle DA $}} \, $,  i.e.\
 $ \, \text{\sl Lie}\big( \widetilde{G}^{\,*}_{\!\scriptscriptstyle DA} \,\big)
 \cong \tilde{\lieg}_{\raise-2pt\hbox{$ \!\scriptscriptstyle DA $}}^{\,*} \; $,
%
%
 \,we let  $ \, \Oc^{\raise1pt \hbox{$ \scriptscriptstyle {\sqrt{\phantom{I}}} $}}\!\big(\widetilde{G}^{\,*}_{\!\scriptscriptstyle DA}\big) \, $  be its representing Hopf algebra.

\vskip13pt

\begin{prop}  \label{Utildesquno_2-coc-def}  {\ }
 \vskip3pt
 $ \, \Utilde_{\bq\hskip1pt,1}^{\scriptscriptstyle{\sqrt{\phantom{I}}}}(\hskip0,8pt\lieg) \, $  is a  $ 2 $--cocycle  deformation of  $ \, \Oc^{\raise1pt \hbox{$ \scriptscriptstyle {\sqrt{\phantom{I}}} $}}\!\big(\widetilde{G}^{\,*}_{\!\scriptscriptstyle DA}\big) \, $  for some (uniquely defined) connected, simply connected affine Poisson group-scheme  $ \widetilde{G}^{\,*}_{\!\scriptscriptstyle DA} $  over  $ \, \mathcal{R}_{\,\check{\bq}\hskip1pt,1}^{\,\scriptscriptstyle {\sqrt{\phantom{I}}}} $  dual to  $ \, \tilde{\lieg}_{\raise-2pt\hbox{$ \scriptscriptstyle DA $}} \, $  (as above).
\end{prop}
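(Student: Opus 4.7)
The plan is to leverage Proposition \ref{prop:Utilde-cocy-def} together with the canonical (uniparameter) case, which is essentially the classical De Concini--Procesi result adapted to $\liegd$. The key observation is that the cocycle $\sigma$ that realizes $\Utildeqgd^{\scriptscriptstyle\sqrt{\phantom{I}}}$ as a deformation of $\Utilde_{\check\bq}^{\scriptscriptstyle\sqrt{\phantom{I}}}(\liegd)$ takes values in the ring $\Rbqsq$ (indeed in the subring generated by the $q_{ij}^{\pm 1/2}$'s), hence it survives the specialization process at $q^{1/2}=1$.

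First I would treat the canonical case. By the (straightforward) adaptation to $\QEqcheck$ of the classical De Concini--Procesi construction, one shows that the specialization $\Utilde_{\check\bq,1}^{\scriptscriptstyle\sqrt{\phantom{I}}}(\liegd)$ is a commutative Hopf algebra. Applying the recipe recalled in \S\ref{Pois/coPois-structs}{\it (a)} to the pre-image $\Utilde_{\check\bq}^{\scriptscriptstyle\sqrt{\phantom{I}}}(\liegd)$ (using $p=q^{1/2}-1$) endows $\Utilde_{\check\bq,1}^{\scriptscriptstyle\sqrt{\phantom{I}}}(\liegd)$ with a Poisson Hopf algebra structure. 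The PBW Theorem \ref{thm:PBW_tilde-MpQG}, applied in the canonical case, provides free generators $\overline{\check E}_\alpha,\overline{\check F}_\alpha, \check K_i^{\pm 1},\check L_i^{\pm 1}$ whose classes at $q^{1/2}=1$ generate a polynomial/Laurent-polynomial algebra in (twice as many as) $\dim\liegd$ variables. This identifies $\Utilde_{\check\bq,1}^{\scriptscriptstyle\sqrt{\phantom{I}}}(\liegd)$ with $\mathcal{O}^{\sqrt{\phantom{I}}}\!\big(\widetilde G^{\,*}_{\Dpicc,\check\bq}\big)$ for a uniquely determined connected, simply-connected affine Poisson group-scheme; computing the Poisson bracket on the generators using commutation in $\Utilde_{\check\bq}^{\scriptscriptstyle\sqrt{\phantom{I}}}$ one checks, via the cotangent Lie bialgebra at the identity, that $\mathrm{Lie}\big(\widetilde G^{\,*}_{\Dpicc,\check\bq}\big)\cong \tilde\lieg^{\,*}_{\Dpicc,\check\bq}$ as Lie bialgebras (where the Lie cobracket on $\tilde\lieg_{\Dpicc,\check\bq}$ is the one of Definition \ref{def_liegdotdq-liegdq-liegtildedq}{\it (c)}, specialized to $\bq=\check\bq$).

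Next I would pass to the general case by transport of the cocycle. By Proposition \ref{prop:Utilde-cocy-def}, $\Utildeqgd^{\scriptscriptstyle\sqrt{\phantom{I}}}={\big(\Utilde_{\check\bq}^{\scriptscriptstyle\sqrt{\phantom{I}}}(\liegd)\big)}_\sigma$, with $\sigma$ as in Definition \ref{def-sigma} and Proposition \ref{prop:def-sigma=def-c}. Since $\sigma$ takes values in $\Rbqsq$ and is group-like-supported on the toral part, it induces a well-defined $2$-cocycle $\bar\sigma$ on the coalgebra $\Utilde_{\check\bq,1}^{\scriptscriptstyle\sqrt{\phantom{I}}}(\liegd)=\mathcal{O}^{\sqrt{\phantom{I}}}\!\big(\widetilde G^{\,*}_{\Dpicc,\check\bq}\big)$ with values in $\mathcal{R}_{\check\bq,1}^{\scriptscriptstyle\sqrt{\phantom{I}}}$; this amounts to checking that the cocycle identity and normalization are preserved under reduction modulo $(q^{1/2}-1)$, which is immediate because both sides belong to $\Rbqsq$. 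Since cocycle twisting commutes with taking quotients by Hopf ideals (here $(q^{1/2}-1)$ is central and grouplike-trivial), we obtain a Hopf algebra identification
\[
 \Utilde_{\bq,1}^{\scriptscriptstyle\sqrt{\phantom{I}}}(\liegd)\;\cong\;{\big(\mathcal{O}^{\sqrt{\phantom{I}}}\!\big(\widetilde G^{\,*}_{\Dpicc,\check\bq}\big)\big)}_{\bar\sigma},
\]
which is exactly the claim.

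The main obstacle I expect is the canonical-case step, namely showing that $\Utilde_{\check\bq,1}^{\scriptscriptstyle\sqrt{\phantom{I}}}(\liegd)$ is not merely a commutative Hopf algebra but the coordinate ring of a well-defined connected simply-connected Poisson group-scheme whose Lie bialgebra is precisely $\tilde\lieg^{\,*}_{\Dpicc,\check\bq}$. The commutativity and the PBW freeness follow from Theorem \ref{thm:PBW_tilde-MpQG} and the fact that commutators $[\overline E_\alpha,\overline E_\beta]$ lie in $(q-1)\Utilde_{\check\bq}$; but identifying the precise Lie bialgebra structure on the cotangent space requires carefully computing the leading term of the commutators in $\Utilde_{\check\bq}^{\scriptscriptstyle\sqrt{\phantom{I}}}(\liegd)$ at order $(q^{1/2}-1)$, using the explicit form of the quantum root vectors and the relations \`a la Levendorskii--Soibelman. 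Once this identification is pinned down, the transport of structure via the $2$-cocycle $\sigma$ in the second step is essentially formal.
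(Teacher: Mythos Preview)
Your proposal is correct and follows essentially the same approach as the paper: invoke Proposition \ref{prop:Utilde-cocy-def} to realize $\Utilde_\bq^{\scriptscriptstyle\sqrt{\phantom{I}}}(\liegd)$ as $(\Utilde_{\check\bq}^{\scriptscriptstyle\sqrt{\phantom{I}}}(\liegd))_\sigma$, observe that $\sigma$ survives specialization at $q^{1/2}=1$ to give a cocycle $\sigma_1$ on the quotient, and handle the canonical case by lifting the De Concini--Procesi result for $U_q(\lieg)$ to the quantum double $\QEqcheck$. The paper treats the canonical-case identification (your ``main obstacle'') simply by citing \cite{DP} and stating that the lift to $\liegd$ is routine, whereas you outline what that computation would entail; but the architecture of the two arguments is the same.
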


\pf
 Having taken the largest ground ring  $ \Rbqsq $  instead of  $ \Rbq \, $,  Proposition \ref{prop:Utilde-cocy-def}  applies, giving us
 $ \; \Utilde_\bq^{\scriptscriptstyle {\sqrt{\phantom{I}}}}(\hskip0,8pt\lieg) =
 {\Big( \Utilde_{\check{\bq}}^{\scriptscriptstyle {\sqrt{\phantom{I}}}}(\hskip0,8pt\lieg) \!\Big)}_{\!\sigma} \; $
 for a specific  $ 2 $--cocycle  $ \sigma $  as in Definition \ref{def-sigma}    --- in particular, depending only on the  $ q_{i{}j}^{\pm 1/2} \, $'s.  By its very construction this  $ \sigma $  induces, modding out  $ \, \big(\, q^{1/2} \! - 1 \big) \, $,  a similar  $ 2 $--cocycle,  denoted  $ \sigma_{\raise-2pt\hbox{$ \scriptstyle 1 $}} \, $,  of the specialized Hopf algebra
 $ \; \Utilde_{\check{\bq}\hskip1pt,1}^{\scriptscriptstyle {\sqrt{\phantom{I}}}}(\hskip0,8pt\lieg) := \Utilde_{\check{\bq}}(\hskip0,8pt\lieg) \Big/ \big(\, q^{1/2} \! - \! 1 \big) \, \Utilde_{\check{\bq}}(\hskip0,8pt\lieg) \; $;
 \,therefore we get
\begin{equation}
   \Utilde_\bq^{\scriptscriptstyle {\sqrt{\phantom{I}}}}(\hskip0,8pt\lieg) \Big/ \big(\, q^{1/2} \! - \! 1 \big) \,
   \Utilde_\bq^{\scriptscriptstyle {\sqrt{\phantom{I}}}}(\hskip0,8pt\lieg)  \;\; = \;\;
   {\Big(\; \Utilde_{\check{\bq}}^{\scriptscriptstyle {\sqrt{\phantom{I}}}}(\hskip0,8pt\lieg) \Big/ \big(\, q^{1/2} \! - \! 1 \big) \,
   \Utilde_{\check{\bq}}^{\scriptscriptstyle {\sqrt{\phantom{I}}}}(\hskip0,8pt\lieg) \Big)}_{\sigma_{\raise-2pt\hbox{$ \scriptscriptstyle 1 $}}}
   \label{Utqsq1 = Utqsqch1-sigma}
\end{equation}
   \indent   Now, for the usual one-parameter quantum group  $ U_q(\lieg) $  in  \cite{DP}   --- cf.\ also  \cite{Gav}  ---   one has a similar construction for  $ \Utilde_q(\lieg) $   --- which is nothing but the quotient of  $ \Utilde_{\check{\bq},1}(\hskip0,8pt\lieg) $  modulo  $ \, \big( L_i \! - \! K_i^{-1} \big) \, $  for all  $ i \, $  ---   for which one has
\begin{equation}  \label{spec_Utilde-DP}
  \Utilde_1(\lieg)  \,\; := \;\,  \Utilde_q(\lieg) \Big/ \big(\, q^{1/2} \! - 1 \big) \, \Utilde_q(\lieg)  \,\; = \;\,  \Oc\big(\widetilde{G}^*\big)
\end{equation}
for some (uniquely defined) connected, simply connected affine Poisson group-scheme
 $ \widetilde{G}^* $  whose cotangent Lie bialgebra is such that
 $ \; \Lie\big(\widetilde{G}^*\big) \, \cong \, {\Big(\; \tilde{\lieg}_{\raise-2pt\hbox{$ \scriptscriptstyle DA $}} \,
\Big/ {\big(\, \krm_i \! + \lrm_i \big)}_{i \in I} \Big)}^* \; $  as Lie bialgebras.  Once more, this result can be
easily ``lifted'' to the level of the quantum double of  $ U_q(\lieg) $,  which is nothing but  $ \QEqcheck $:
the resulting construction is exactly that of the integral form  $ \Utilde_{\check{\bq}}(\hskip0,8pt\lieg) $  within  $ \QEqcheck $,  and the results in  \cite{DP}  then turn into (sort of) a ``quantum double version'' of  (\ref{spec_Utilde-DP}),  namely
  $$  \Utilde_{\check{\bq},1}(\hskip0,8pt\lieg)  \,\; := \;\,  \Utilde_{\check{\bq}}(\hskip0,8pt\lieg) \Big/ (q - \! 1) \, \Utilde_{\check{\bq}}(\hskip0,8pt\lieg)  \,\; = \;\,  \Oc\big(\widetilde{G}^{\,*}_{\!\scriptscriptstyle DA}\big)  $$
 with  $ \widetilde{G}^{\,*}_{\!\scriptscriptstyle DA} $  a connected Poisson group-scheme whose cotangent Lie bialgebra is  $ \tilde{\lieg}_{\raise-2pt\hbox{$ \scriptscriptstyle DA $}} \, $.  Extending scalars from  $ \Rbquno $  to  $ \Rbqsquno $  this yields
\begin{equation}
  \Utilde_{\check{\bq},1}^{\scriptscriptstyle {\sqrt{\phantom{I}}}}(\hskip0,8pt\lieg)  \,\; := \;\,
  \Utilde_{\check{\bq}}^{\scriptscriptstyle {\sqrt{\phantom{I}}}}(\hskip0,8pt\lieg) \Big/ \big(\, q^{1/2} \! - \! 1 \big) \,
  \Utilde_{\check{\bq}}^{\scriptscriptstyle {\sqrt{\phantom{I}}}}(\hskip0,8pt\lieg)  \,\; = \;\,
  \Oc^{\raise1pt \hbox{$ \scriptscriptstyle {\sqrt{\phantom{I}}} $}}\!\big(\widetilde{G}^{\,*}_{\!\scriptscriptstyle DA}\big)   \label{spec_Utildesq-prim}
\end{equation}
 Finally, matching  (\ref{Utqsq1 = Utqsqch1-sigma})  and  (\ref{spec_Utildesq-prim})  the claim is proved.
\epf

\medskip

   The previous result can be reformulated as follows:  {\sl up to scalar extension   --- from  $ \Rbquno $  to  $ \Rbqsquno $  ---   the Hopf algebra  $ \, \Utildequnogd $  is a  $ 2 $--cocycle  deformation of the Hopf algebra  $ \, \Oc\big(\widetilde{G}^{\,*}_{\!\scriptscriptstyle DA}\big) \, $}.  Actually, we can provide the following, more precise statement:

\medskip

\begin{theorem}  \label{struct-Utildequno}
 \  $ \Utilde_{\bq\hskip1pt,1}(\hskip0,8pt\lieg) $  is a  $ \, \by $--polynomial  and Laurent
 $ \, \by $--polynomial  algebra over  $ \, \Rbquno $  (with  $ \, \by $  as in  \S \ref{spec-1}),  namely
  $$  \Utildequnogd  \;\; \cong \;\,  \Rbquno \Big[{\big\{ f_\alpha , l_i^{\pm 1} , k_i^{\pm 1} , e_\alpha \big\}}_{\alpha \in \Phi^+}^{i \in I} \Big]  $$
where the indeterminates  $ \mathbf{y} $--commute  among them in the following sense:
  $$  \displaylines{
   f_{\alpha'} \, f_{\alpha''} \, = \, y_{\alpha'' \alpha'} \, f_{\alpha''} \, f_{\alpha'}  \;\; ,  \qquad
e_{\alpha'} \, f_{\alpha''} \, = \, f_{\alpha''} \, e_{\alpha'}  \;\; ,  \qquad
e_{\alpha'} \, e_{\alpha''} \, = \, y_{\alpha' \alpha''} \, e_{\alpha''} \, e_{\alpha'}  \cr
   k_i^{\pm 1} \, e_\alpha \, = \, y_{\alpha_i \, \alpha}^{\,\pm 1} \, e_\alpha \, k_i^{\pm 1}  \quad ,
   \quad \qquad  l_i^{\pm 1} \, e_\alpha \, = \, y_{\alpha \, \alpha_i}^{\,\mp 1} \, e_\alpha \, l_i^{\pm 1}  \cr
   k_i^{\pm 1} \, f_\alpha \, = \, y_{\alpha_i \, \alpha}^{\,\mp 1} \, f_\alpha \, k_i^{\pm 1}  \quad ,
   \quad \qquad  l_i^{\pm 1} \, f_\alpha \, = \, y_{\alpha \, \alpha_i}^{\,\pm 1} \, f_\alpha \, l_i^{\pm 1}  \cr
   k_i^{\pm 1} \, k_j^{\pm 1} \, = \, k_j^{\pm 1} \, k_i^{\pm 1}  \;\; ,  \qquad  k_i^{\pm 1} \, l_j^{\pm 1} \,
   = \, l_j^{\pm 1} \, k_i^{\pm 1}  \;\; ,  \qquad  k_i^{\pm 1} \, k_j^{\pm 1} \, = \, k_j^{\pm 1} \, k_i^{\pm 1}  }  $$
\end{theorem}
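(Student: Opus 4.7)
\emph{Plan.} The strategy combines the PBW basis of $\Utildeqgd$ with the cocycle-deformation identification provided by Propositions \ref{prop:Utilde-cocy-def} and \ref{Utildesquno_2-coc-def}: PBW will deliver the generators and freeness, while cocycle-twisting a commutative ``function algebra'' target will dictate the $\by$-commutation rules.

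I would first establish freeness and generation. By Theorem \ref{thm:PBW_tilde-MpQG}(d) the ordered monomials in $\Fbar_\beta$, $L_j^{\pm 1}$, $K_i^{\pm 1}$, $\Ebar_\beta$ form a free $\Rbq$-basis of $\Utildeqgd$; tensoring with $\Rbquno$ over $\Rbq$ preserves freeness, so the images $f_\alpha$, $l_i^{\pm 1}$, $k_i^{\pm 1}$, $e_\alpha$ generate $\Utildequnogd$ and their ordered monomials form a free $\Rbquno$-basis. Once this is in place, any surjection from the asserted $\by$-polynomial algebra onto $\Utildequnogd$ respecting the PBW filtration is automatically an isomorphism, and it only remains to verify the listed commutation relations. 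The toral relations lift straight from (a)--(b) of Definition \ref{def:multiqgroup_ang}; the $k_i$--$e_\alpha$ and symmetric cross-relations reduce to $Q$-homogeneity, which gives $K_i \Ebar_\alpha = q_{\alpha_i,\alpha}\,\Ebar_\alpha K_i$ in $\Utilde_\bq$ and hence $k_i e_\alpha = y_{\alpha_i,\alpha}\, e_\alpha k_i$ after specialization; finally, relation (e) of Definition \ref{def:multiqgroup_ang} yields $[\Ebar_i,\Fbar_j]=\delta_{ij}(q_{jj}-1)q_{ii}(K_i-L_i) \in (q-1)\Utilde_\bq$ (since $q_{jj}-1=q^{2d_j}-1$ is divisible by $q-1$), so $[e_i,f_j]=0$ at the specialization.

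The genuinely non-trivial part, and the main obstacle, is the $\by$-commutativity among the $e_\alpha$'s, among the $f_\beta$'s, and between $e_\alpha$ and $f_\beta$ at non-simple roots: a direct approach through Levendorskii--Soibelman-type identities is notationally heavy, since one must track how many factors of $q-1$ are absorbed upon renormalizing $E_\alpha \rightsquigarrow \Ebar_\alpha$. I would sidestep this by leveraging the cocycle-deformation package. Extending scalars to $\Rbqsq$ and invoking Proposition \ref{prop:Utilde-cocy-def}, one has $\Utilde_\bq^{\scriptscriptstyle\sqrt{\ }}\cong \bigl(\Utilde_{\check{\bq}}^{\scriptscriptstyle\sqrt{\ }}\bigr)_{\sigma}$ as coalgebras, with $\sigma$ the bicharacter-type $2$-cocycle of Definition \ref{def-sigma}; specializing at $q^{1/2}=1$, Proposition \ref{Utildesquno_2-coc-def} identifies the canonical factor $\Utilde_{\check{\bq},1}^{\scriptscriptstyle\sqrt{\ }}$ with the commutative Hopf algebra $\Oc^{\scriptscriptstyle\sqrt{\ }}\bigl(\widetilde{G}^{\,*}_{\Dpicc,\check{\bq}}\bigr)$. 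Thus $\Utilde_{\bq,1}^{\scriptscriptstyle\sqrt{\ }}$ is the $\bar{\sigma}$-twist of this commutative algebra, and twisting a commutative algebra by a bimultiplicative $2$-cocycle forces the commutation law $x\cdot y = \bar{\sigma}(\deg x,\deg y)\,\bar{\sigma}(\deg y,\deg x)^{-1}\, y\cdot x$. Evaluating $\bar{\sigma}$ on the weights of $\Ebar_\alpha$ and $\Fbar_\beta$ recovers precisely the $y$-coefficients of the statement, and shows they actually land in $\by$ rather than merely $\by^{1/2}$; descent from $\Rbqsquno$ back to $\Rbquno$ is then forced by the freeness established in the first step, completing the proof.
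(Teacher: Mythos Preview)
Your approach is essentially the same as the paper's: both rely on the De Concini--Procesi commutativity of $\Utilde_{\check{\bq}}$ modulo $(q-1)$, the cocycle identification $\Utilde_\bq^{\scriptscriptstyle\sqrt{\ }}=(\Utilde_{\check{\bq}}^{\scriptscriptstyle\sqrt{\ }})_\sigma$ of Proposition~\ref{prop:Utilde-cocy-def}, and PBW (Theorem~\ref{thm:PBW_tilde-MpQG}) for both freeness and the descent from $\Rbqsq$ to $\Rbq$. The only organizational difference is that the paper works at the level of the integral form and tracks the scalar coefficients explicitly through the product-comparison formulas of \S\ref{subsubsec:comp-formulas} (obtaining $q_{\alpha',\alpha''}^{+1/2}\,\check{q}_{\alpha',\alpha''}\,q_{\alpha'',\alpha'}^{-1/2}=q_{\alpha',\alpha''}$ before specializing), whereas you specialize first and then invoke the abstract ``twist of a commutative algebra'' principle.

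One caution on your abstract step: the commutation rule you quote, $x\cdot_\sigma y=\bar\sigma(\deg x,\deg y)\,\bar\sigma(\deg y,\deg x)^{-1}\,y\cdot_\sigma x$, does \emph{not} follow for an arbitrary Hopf $2$--cocycle on a commutative Hopf algebra (the iterated coproduct of $\Ebar_\alpha$ for non-simple $\alpha$ is complicated and $\sigma$ sees more than a single degree). It \emph{does} follow from the second construction \eqref{c-deformed-product} once you pass through Proposition~\ref{prop:def-sigma=def-c}, using that $\Ebar_\alpha$ is $(Q\times Q)$--bihomogeneous of degree $(0,\alpha)$ and $\Fbar_\alpha$ of degree $(\alpha,0)$; then the coefficient involves both components of the bidegree, and evaluating gives exactly $y_{\alpha',\alpha''}$ for the $e$--$e$ relation, $y_{\alpha'',\alpha'}$ for $f$--$f$, and trivially $1$ for $e$--$f$. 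So your plan is correct, but the justification should point to \eqref{c-deformed-product} and Proposition~\ref{prop:def-sigma=def-c} rather than to the Hopf $2$--cocycle formula directly.
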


\pf
 All formulas but those in the first line are direct consequence of definitions, so we can dispose of them, and we are left with proving the first three.
                                                                \par
   We begin with the mid formula  $ \; e_{\alpha'} \, f_{\alpha''} \, = \, f_{\alpha''} \, e_{\alpha'} \, $,  \,
   for which we have to compare  $ \; \Ebar_{\alpha'} \, \Fbar_{\alpha''} \; $  with
   $ \; \Fbar_{\alpha''} \, \Ebar_{\alpha'} \; $  within  $ \Utilde_\bq(\hskip0,8pt\lieg) \, $;
   and in order to do that, we shall compare these products with the similar product taken inside
   $ \Utilde_{\check{\bq}}(\hskip0,8pt\lieg) \, $,  where the multiplication is deformed by a  $ 2 $--cocycle
   $ \sigma $  as in  Proposition \ref{prop:Utilde-cocy-def}.
                                                                \par
   Indeed, in the rest of the proof we extend scalars from  $ \Rbq $  to  $ \, \Rbqsq $  and thus work with
   $ \, \Utilde_\bq^{\scriptscriptstyle \sqrt{\ }}(\hskip0,8pt\lieg) \, $  and
   $ \, \Utilde_{\check{\bq}}^{\scriptscriptstyle \sqrt{\ }}(\hskip0,8pt\lieg) \, $;  for the former we identify
   $ \, \Utilde_\bq^{\scriptscriptstyle \sqrt{\ }}(\hskip0,8pt\lieg) = {\big(\, \Utilde_{\check{\bq}}^{\scriptscriptstyle \sqrt{\ }}(\hskip0,8pt\lieg) \big)}_\sigma \, $
   as  $ \Rbqsq $--modules,  which is correct by  Proposition \ref{Utildesquno_2-coc-def}.  In particular, inside
   $ \, \Utilde_{\check{\bq}}^{\scriptscriptstyle \sqrt{\ }}(\hskip0,8pt\lieg) \, $  we shall consider the original product of
   $ \Utilde_{\check{\bq}}^{\scriptscriptstyle \sqrt{\ }}(\hskip0,8pt\lieg) \, $,  hereafter denoted
   by  ``$ \, \check{\cdot} \; $'',  and the  $ \sigma $--deformed  product --- yielding the product in
   $ \Utilde_{\check{\bq}}^{\scriptscriptstyle \sqrt{\ }}(\hskip0,8pt\lieg) $  ---   denoted by  ``$ \, \smallast \, $''.
 \vskip3pt
   By the results in  \cite{DP}   --- cf.\ also  \cite{Gav}  ---   suitably adapted to the present ``quantum double setup'', we know that  $ \Utilde_{\check{\bq}}(\hskip0,8pt\lieg) $  is commutative modulo  $ (q-\!1) \, $:  this implies that  $ \; \check{\!\Ebar}_{\alpha'} \; \check{\cdot} \;\,\check{\!\Fbar}_{\alpha''} \; $ in  $ \Utilde_{\check{\bq}}(\hskip0,8pt\lieg) $  can be written as
\begin{equation}  \label{Ea-comm-Fa_Utbqcheck}
  \check{\!\Ebar}_{\alpha'} \; \check{\cdot} \;\,\check{\!\Fbar}_{\alpha''}  \,\; = \;\,  \check{\!\Fbar}_{\alpha''} \; \check{\cdot} \;\,
  \check{\!\Ebar}_{\alpha'} \, + \, (q-\!1) \, {\textstyle \sum_s} \, c_s \; {\overline{\mathcal{M}}}^{\;\check{\cdot}}_s
\end{equation}
for some  $ \, c_s \in \Rbq \, $,  where  $ \,\check{\!\Ebar}_{\alpha'} $  and  $ \,\check{\!\Fbar}_{\alpha''} $  are (renormalised) quantum root vectors in  $ \Utilde_{\check{\bq}}(\hskip0,8pt\lieg) $  and the
$ \, {\overline{\mathcal{M}}}^{\;\check{\cdot}}_s \, $'s  are PBW monomials in a PBW basis of
$ \Utilde_{\check{\bq}}(\hskip0,8pt\lieg) $  like in  Theorem \ref{thm:PBW_tilde-MpQG}{\it (d)}.
                                                                \par
   Let us look now for the counterpart of formula  (\ref{Ea-comm-Fa_Utbqcheck})  in  $ \Utilde_\bq(\hskip0,8pt\lieg) \, $
   --- thought of as embedded into  $ \, \Utilde_\bq^{\scriptscriptstyle \sqrt{\ }}(\hskip0,8pt\lieg) =
   {\big(\, \Utilde_{\check{\bq}}^{\scriptscriptstyle \sqrt{\ }}(\hskip0,8pt\lieg) \big)}_\sigma \; $.
   Thanks to  Proposition \ref{prop: proport_root-vects}  we have
  $$  \Ebar_{\alpha'} \, = \, m^+_{\alpha'}\big( \bq^{\pm 1/2} \big) \, \check{\!\Ebar}_{\alpha'}  \quad ,  \qquad  \Fbar_{\alpha''} \,
  = \, m^-_{\alpha''}\big( \bq^{\pm 1/2} \big) \, \check{\!\Fbar}_{\alpha''}  $$
for suitable Laurent monomials  $ \, m^+_{\alpha'}\big( \bq^{\pm 1/2} \big) \, $  and  $ \, m^-_{\alpha''}\big( \bq^{\pm 1/2} \big) \, $
in the  $ q_{ij}^{1/2} \, $'s  (each of which is trivial if the corresponding root is simple);
 Now, the formulas in  \S \ref{subsubsec:comp-formulas}  give
  $$  \check{\!\Ebar}_{\alpha'} \,\smallast\; \check{\!\Fbar}_{\alpha''} \, = \, \check{\!\Ebar}_{\alpha'} \;\check{\cdot}\;\,
  \check{\!\Fbar}_{\alpha''}  \quad ,  \qquad  \check{\!\Fbar}_{\alpha''} \,\smallast\; \check{\!\Ebar}_{\alpha'} \,
  = \, \check{\!\Fbar}_{\alpha''} \;\check{\cdot}\;\, \check{\!\Ebar}_{\alpha'}  $$
(by the same analysis as that before  Proposition \ref{prop:def-sigma=def-c});  on the other hand, again by
Proposition \ref{prop: proport_root-vects}  and by  \S \ref{cotwist-defs_2}
we have that every PBW monomial in  $ \Utilde_{\check{\bq}}(\hskip0,8pt\lieg) \, $,  say
$ {\overline{\mathcal{M}}}^{\;\check{\cdot}} \, $,  has the form
%
%
 $ \; {\overline{\mathcal{M}}}^{\;\check{\cdot}} = m_{\,\raise-1pt\hbox{$ \scriptscriptstyle \overline{\mathcal{M}} $}} \big( \bq^{\pm 1/2} \big) \, \overline{\mathcal{M}}^{\,\smallast} \; $
 where  $ m_{\,\raise-1pt\hbox{$ \scriptscriptstyle \overline{\mathcal{M}} $}}\big( \bq^{\pm 1/2} \big) $
is a suitable Laurent monomial in the  $ q_{ij}^{\pm 1/2} $'s.  Tidying everything up, from
(\ref{Ea-comm-Fa_Utbqcheck})  and the identities here above   --- writing  $ \, m^\pm_\alpha :=
m^\pm_\alpha\big( \bq^{\pm 1/2} \big) \, $  and
$ \, m_{\,\raise-1pt\hbox{$ \scriptscriptstyle \overline{\mathcal{M}} $}} := m_{\,\raise-1pt\hbox{$ \scriptscriptstyle \overline{\mathcal{M}} $}}
\big( \bq^{\pm 1/2} \big) \, $  ---   we find
  $$  \displaylines{
   \Ebar_{\alpha'} \,\smallast\; \Fbar_{\alpha''}  \,\; = \;\,
m^+_{\alpha'} \, m^-_{\alpha''} \;\,
 \check{\!\Ebar}_{\alpha'} \,\smallast\; \check{\!\Fbar}_{\alpha''}  \; = \;\,
m^+_{\alpha'} \, m^-_{\alpha''} \;\,
 \check{\!\Ebar}_{\alpha'} \;\check{\cdot}\;\, \check{\!\Fbar}_{\alpha''}  \; =   \hfill  \cr
   = \;\,  m^+_{\alpha'} \, m^-_{\alpha''} \;
\Big(\;\, \check{\!\Fbar}_{\alpha''} \; \check{\cdot} \;\, \check{\!\Ebar}_{\alpha'} \, + \, (q-\!1) \, {\textstyle \sum_s} \, c_s \;
{\overline{\mathcal{M}}}^{\;\check{\cdot}}_s \,\Big)  \; =   \qquad \qquad \qquad  \cr
   \hfill   = \;\,  m^+_{\alpha'} \, m^-_{\alpha''} \;
\Big( {\big( m^+_{\alpha'} \big)}^{-1} \, {\big( m^-_{\alpha''} \big)}^{-1} \, \Fbar_{\alpha''} \;\smallast\;\, \Ebar_{\alpha'} \,
+ \, (q-\!1) \, {\textstyle \sum_s} \, c_s \; m_{\,\raise-1pt\hbox{$ \scriptscriptstyle {\overline{\mathcal{M}}}_s $}} \,
{\overline{\mathcal{M}}}^{\,\smallast}_s \,\Big)  \; =   \quad  \cr
   \hfill   = \;\,  \Fbar_{\alpha''} \;\smallast\; \Ebar_{\alpha'} \, + \, (q-\!1) \, {\textstyle \sum_s} \, c_s \; m^+_{\alpha'}
   \, m^-_{\alpha''} \, m_{\,\raise-1pt\hbox{$ \scriptscriptstyle {\overline{\mathcal{M}}}_s $}} \,
   {\overline{\mathcal{M}}}^{\,\smallast}_s  }  $$
that is, in the end,
\begin{equation}  \label{Ea'-Fa"_with sqrt}
  \Ebar_{\alpha'} \;\smallast\; \Fbar_{\alpha''}  \,\; = \;\,  \Fbar_{\alpha''} \;\smallast\; \Ebar_{\alpha'} \, + \, (q - \! 1) \,
  {\textstyle \sum_s} \, c_s \; m^+_{\alpha'} \, m^-_{\alpha''} \, m_{\,\raise-1pt\hbox{$ \scriptscriptstyle {\overline{\mathcal{M}}}_s $}}
  \, {\overline{\mathcal{M}}}^{\,\smallast}_s
\end{equation}
which is almost what we need, as the right-hand side belongs to
$ \, \Utilde_\bq^{\scriptscriptstyle \sqrt{\ }}(\hskip0,8pt\lieg) \, $
but possibly not to  $ \, \Utilde_\bq(\hskip0,8pt\lieg) \, $.  To fix this detail, we take the expansion of
$ \; \Ebar_{\alpha'} \;\smallast\; \Fbar_{\alpha''} \; $  as an  $ \, \Rbq $--linear  combination of the PBW basis of the  $ \,  {\overline{\mathcal{M}}}^{\,\smallast}_r \, $'s  (which includes  $ \, \Fbar_{\alpha''} \;\smallast\; \Ebar_{\alpha'} \, $  too), namely  $ \,\; \Ebar_{\alpha'} \;\smallast\; \Fbar_{\alpha''} \, = \, {\textstyle \sum_r} \, \kappa_r \; {\overline{\mathcal{M}}}^{\,\smallast}_r \; $  for some  $ \, \kappa_r \in \Rbq \, $;  comparing the latter with  \eqref{Ea'-Fa"_with sqrt}  we get  $ \; c_s \; m^+_{\alpha'} \, m^-_{\alpha''} \, m_{\,\raise-1pt\hbox{$ \scriptscriptstyle {\overline{\mathcal{M}}}_s $}} \! \in \Rbq \; $  for every  $ s \, $.  Then  \eqref{Ea'-Fa"_with sqrt}  is an identity in  $ \Utildeqgd \, $,  which implies
  $$  \Ebar_{\alpha'} \;\smallast\; \Fbar_{\alpha''}  \; = \; \Fbar_{\alpha''} \;\smallast\; \Ebar_{\alpha'}   \qquad  \mod (q - \! 1) \, \Utildeqgd  $$
whence eventually  $ \; e_{\alpha'} \, f_{\alpha''} \, = \, f_{\alpha''} \, e_{\alpha'} \; $,  \, q.e.d.
 \vskip5pt
   We turn now to proving the identity  $ \; e_{\alpha'} \, e_{\alpha''} \, = \, y_{\alpha' \alpha''} \, e_{\alpha''} \, e_{\alpha'} \, $,  \,
   for which we need to compare  $ \; \Ebar_{\alpha'} \, \Ebar_{\alpha''} \; $  with  $ \; \Ebar_{\alpha''} \, \Ebar_{\alpha'} \; $
   within  $ \Utilde_\bq(\hskip0,8pt\lieg) \, $.  To begin with,
 from the results  \cite[\S\S 9, 10 and 12]{DP}   --- suitably adapted, as usual, to the present, ``quantum double framework'' ---
 in the standard case of  $ \Utilde_{\check{\bq}}(\hskip0,8pt\lieg) $  we have
\begin{equation}  \label{Etildea'-comm-Etildea''_Utbq-prel}
  \check{\!\Ebar}_{\alpha'} \;\check{\cdot}\;\, \check{\!\Ebar}_{\alpha''}  \,\; = \;\,  \check{q}_{\alpha',\alpha''} \;
  \check{\!\Ebar}_{\alpha''} \;\check{\cdot}\;\, \check{\!\Ebar}_{\alpha'} \, + \, (q - \! 1) \, {\textstyle \sum_{\underline{\alpha}}} \,
  \check{c}_{\underline{\alpha}} \; \check{\!\Ebar}_{\underline{\alpha}}^{\;\raise-3pt\hbox{$ \scriptstyle e_{\underline{\alpha}} $}}
\end{equation}
for all  $ \, \alpha', \alpha'' \in \Phi^+ \, $,  where  $ \, \check{q}_{\alpha',\alpha''} = q^{(\alpha',\alpha'')/2} \, $
by definition,  $ \, \check{c}_{\underline{\alpha}} \in \Zqqm \; \big(\! \subseteq \Rbq \big) \, $  for all  $ \underline{\alpha} $
and the  $ \, \check{\!\Ebar}_{\underline{\alpha}}^{\;\raise-3pt\hbox{$ \scriptstyle e_{\underline{\alpha}} $}} \,$'s  are
PBW monomials in the  $ \, \check{\!\Ebar}_\alpha $'s  alone.
                                                       \par
   Now from  \eqref{Etildea'-comm-Etildea''_Utbq-prel}  we deduce a parallel identity in  $ \Utildeqgd \, $.
   Namely, acting like in the first part of the proof   --- basing again on the formulas in  \S \ref{subsubsec:comp-formulas}  ---   we find
  $$  \displaylines{
   \Ebar_{\alpha'} \;\smallast\; \Ebar_{\alpha''}  \; = \;
m^+_{\alpha'} \, m^+_{\alpha''} \;\, \check{\!\Ebar}_{\alpha'} \,\smallast\; \check{\!\Ebar}_{\alpha''}  \; =
\;  m^+_{\alpha'} \, m^-_{\alpha''} \, q_{\alpha',\alpha''}^{\,+1/2} \;\; \check{\!\Ebar}_{\alpha'} \;\check{\cdot}\;\, \check{\!\Ebar}_{\alpha''}  \; =   \hfill  \cr
   \hfill   = \;\,  m^+_{\alpha'} \, m^-_{\alpha''} \, q_{\alpha',\alpha''}^{\,+1/2} \;\, \Big(\; \check{q}_{\alpha',\alpha''} \;
   \check{\!\Ebar}_{\alpha''} \;\check{\cdot}\;\, \check{\!\Ebar}_{\alpha'} \, + \, (q - \! 1) \, {\textstyle \sum_{\underline{\alpha}}} \,
   \check{c}_{\underline{\alpha}} \; \check{\!\Ebar}_{\underline{\alpha}}^{\;\raise-3pt\hbox{$ \scriptstyle e_{\underline{\alpha}} $}} \,\Big)  \; =   \quad  \cr
   \qquad   = \;\,  m^+_{\alpha'} \, m^+_{\alpha''} \, q_{\alpha',\alpha''}^{\,+1/2} \; \check{q}_{\alpha',\alpha''} \,
   {\big( m^+_{\alpha''} \big)}^{-1} \, {\big( m^+_{\alpha'} \big)}^{-1} \, q_{\alpha'',\alpha'}^{\,-1/2} \; \Ebar_{\alpha''} \;\smallast\; \Ebar_{\alpha'} \,\; +   \hfill  \cr
   \hfill   + \;\, (q - \! 1) \, {\textstyle \sum_{\underline{\alpha}}} \, \check{c}_{\underline{\alpha}} \; m^+_{\alpha'} \, m^+_{\alpha''} \,
   q_{\alpha',\alpha''}^{\,+1/2} \; \mu_{\underline{\alpha}} \; \Ebar_{\underline\alpha}^{\;\raise-3pt\hbox{$ \scriptstyle e_{\underline{\alpha}} $}}  \,\;
   =   \quad  \cr
   \hfill   = \;\,  q_{\alpha',\alpha''}^{\,+1/2} \; \check{q}_{\alpha',\alpha''} \, q_{\alpha'',\alpha'}^{\,-1/2} \; \Ebar_{\alpha''}
   \;\smallast\; \Ebar_{\alpha'} \; + \, (q-\!1) \, {\textstyle \sum_{\underline{\alpha}}} \, \check{c}_{\underline{\alpha}} \;
   m^+_{\alpha'} \, m^+_{\alpha''} \, q_{\alpha',\alpha''}^{\,+1/2} \; \mu_{\underline{\alpha}} \;
   \Ebar_{\underline\alpha}^{\;\raise-3pt\hbox{$ \scriptstyle e_{\underline{\alpha}} $}}  }  $$
where  $ \mu_{\underline{\alpha}} $  is yet another Laurent monomial in the  $ q_{ij}^{\pm 1/2} $'s  and each
$ \Ebar_{\underline\alpha}^{\;\raise-3pt\hbox{$ \scriptstyle e_{\underline{\alpha}} $}} $  is the unique PBW monomial in the
$ \Ebar_\alpha $'s  that corresponds (in an obvious sense) to
$ \check{\!\Ebar}_{\underline{\alpha}}^{\;\raise-3pt\hbox{$ \scriptstyle e_{\underline{\alpha}} $}} \, $. Thus
\begin{equation}  \label{Ea'-Ea"_with sqrt}
  \Ebar_{\alpha'} \;\smallast\; \Ebar_{\alpha''}  \,\; = \;\,  q_{\alpha',\alpha''}^{\,+1/2} \; \check{q}_{\alpha',\alpha''} \, q_{\alpha'',\alpha'}^{\,-1/2} \;
  \Ebar_{\alpha''} \;\smallast\; \Ebar_{\alpha'} \; + \, (q - \! 1) \, {\textstyle \sum_{\underline{\alpha}}} \, c_{\underline{\alpha}} \;
  \Ebar_{\underline\alpha}^{\;\raise-3pt\hbox{$ \scriptstyle e_{\underline{\alpha}} $}}
\end{equation}
where  $ \; c_{\underline{\alpha}} \, := \, \check{c}_{\underline{\alpha}} \; m^+_{\alpha'} \, m^+_{\alpha''} \, q_{\alpha',\alpha''}^{\,+1/2} \;
\mu_{\underline{\alpha}} \, \in \, \Rbqsq \; $.  But we also know that the  $ \Ebar_{\underline\alpha} $'s  form a
PBW basis over  $ \Rbq $  for  $ \Utildeqgd \, $,  hence  $ \; \Ebar_{\alpha'} \,\smallast\; \Ebar_{\alpha''} \; $  uniquely expands
into an  $ \Rbq $--linear  combination of these monomials: comparing such an expansion with  \eqref{Ea'-Ea"_with sqrt}
we find that all coefficients  $ c_{\underline{\alpha}} $  therein necessarily belong to  $ \Rbq \, $:  then  \eqref{Ea'-Ea"_with sqrt}
itself is an identity in  $ \Utildeqgd $   --- i.e., not only in  $ \Utilde_\bq^{\scriptscriptstyle \sqrt{\ }}\!(\hskip0,8pt\lieg) \, $.
Therefore, from  \eqref{Ea'-Ea"_with sqrt}  we deduce
\begin{equation}  \label{Ea'-Ea"_modulo_(q-1)}
  \Ebar_{\alpha'} \;\smallast\; \Ebar_{\alpha''}  \,\; \cong \;\,  q_{\alpha',\alpha''}^{\,+1/2} \; \check{q}_{\alpha',\alpha''} \,
  q_{\alpha'',\alpha'}^{\,-1/2} \; \Ebar_{\alpha''} \;\smallast\; \Ebar_{\alpha'}   \qquad  \mod (q-\!1) \, \Utildeqgd
\end{equation}
   \indent   Finally, since  $ \, \bq := {\big( q_{ij} \big)}_{i, j \in I} \, $  and  $ \, \check{q}_{ij} := q^{d_i a_{ij}} = q^{d_j a_{ji}} \, $  for all  $ \, i, j \in I \, $,
   we just compute that  $ \,\; q_{\alpha',\alpha''}^{\,+1/2} \; \check{q}_{\alpha',\alpha''} \, q_{\alpha'',\alpha'}^{\,-1/2} \, = \, q_{\alpha',\alpha''} \; $,  \,
   whose coset in  $ \; \Rbquno := \Rbq \Big/ (q - \! 1)\,\Rbq \; $  is just  $ y_{\alpha'{}\alpha''} \, $;  therefore  \eqref{Ea'-Ea"_modulo_(q-1)}
   yields  $ \; e_{\alpha'} \, e_{\alpha''} \, = \, y_{\alpha'{}\alpha''} \, e_{\alpha''} \, e_{\alpha'} \; $  as claimed.
 \vskip5pt
   A similar procedure shows that  $ \; f_{\alpha'} \, f_{\alpha''}  = \, y_{\alpha''{}\alpha'} \, f_{\alpha''} \, f_{\alpha'} \, $,
  \, which ends the proof.
\epf

\vskip9pt

\begin{rmks}
 In  \cite[\S 3]{An4}, a different construction eventually leads to a result comparable with  Theorem \ref{struct-Utildequno}  above, although slightly weaker.  In general, we prefer to follow a different approach, because it exploits an independent argument and is more consistent with our global approach in the present work, mostly based on the fact that  $ \; U_\bq(\hskip0,8pt\lieg) = {\big( U_{\check{\bq}}(\hskip0,8pt\lieg) \big)}_\sigma \; $.  In addition, some results of  \cite{An4}  cannot be directly applied to our context of integral forms and specializations, so we must resort to an alternative strategy.
\end{rmks}

\vskip9pt

   When the multiparameter  $ \bq $  is of  {\sl integral\/}  type the last two previous results get a stronger importance from a geometrical point of view.  In fact, the following is a refinement of  Proposition \ref{Utildesquno_2-coc-def}  but {\sl we provide for it an independent proof}.

\vskip9pt

\begin{theorem}  \label{thm: Utildequno_q-int-type}  {\ }
 \vskip3pt
   Let  $ \, \bq $  be of  {\sl integral type},  and  $ \, \Utildequnogd $  defined over  $ \, \RbqB = \Zqqm \, $. Then  $ \, \Utildequnogd $  is (isomorphic to) the representing Hopf algebra  $ \, \Oc\big( \Gtildebstar \big) $  of a connected affine Poisson group-scheme over  $ \, \ZZ $  whose cotangent Lie bialgebra is  $ \, \liegtildeb $  as described in  Definition \ref{def_liegdotb-lieghatb-liegtildeb}.
                                                                     \par
   Similar statements hold true for the specialization at  $ \, q = 1 $  of  $ \, \Utilde_\bq^{\,\geq} \, $,  $ \Utilde_\bq^{\,\leq} \, $,  $ \Utilde_\bq^{\,0} \, $,  etc.
\end{theorem}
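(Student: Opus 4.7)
The plan is to use Theorem \ref{struct-Utildequno} to realize $\Utildequnogd$ concretely as a commutative Hopf algebra over $\ZZ$, interpret it as $\Oc\big(\Gtildeqstar\big)$ for a connected affine group-scheme, endow the latter with its Poisson structure via \S \ref{Pois/coPois-structs}, and identify the cotangent Lie bialgebra with $\liegtildq$ by a direct calculation of Poisson brackets and comultiplication on generators.

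First, in the integral case the ground ring becomes $\RbqB$, and the specialization ring is $\RbqBuno \cong \ZZ$; moreover each $q_{ij} = q^{b_{ij}}$ specializes to $1$, so every parameter $y_{ij}$ appearing in Theorem \ref{struct-Utildequno} collapses to $1$. The $\by$-commutation relations listed there therefore become ordinary commutativity, and one obtains
\[
 \Utildequnogd \;\cong\; \ZZ\big[\{\,l_i^{\pm 1}, k_i^{\pm 1}\,\}_{i \in I} \, ; \, \{\,f_\alpha, e_\alpha\,\}_{\alpha \in \Phi^+}\big]
\]
as a commutative Hopf algebra, i.e.\ a Laurent-polynomial-tensor-polynomial algebra over $\ZZ$. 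Being a commutative, finitely generated Hopf algebra that is also an integral domain, it represents a (geometrically) integral, hence connected, affine group-scheme $\Gtildeqstar$ over $\ZZ$. Moreover, since $\Utildeqgd$ is a Hopf $\RbqB$-algebra whose specialization at $q=1$ is commutative, \S \ref{Pois/coPois-structs}(a) canonically endows $\Utildequnogd$ with a Poisson Hopf algebra structure via $\{x, y\} := (\tilde{x}\tilde{y} - \tilde{y}\tilde{x})/(q-1) \bmod (q-1)$, making $\Gtildeqstar$ into an affine Poisson group-scheme.

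For the identification of the cotangent Lie bialgebra, I would compute the Poisson brackets on the generators of $\Utildequnogd$ directly from commutators in $\Utildeqgd$. From the proof of Theorem \ref{struct-Utildequno}, one has $\Ebar_\alpha\, \Ebar_\beta - q_{\alpha\beta}\, \Ebar_\beta\, \Ebar_\alpha \in (q-1)\,\Utildeqgd$; since $q_{\alpha\beta} = q^{(\alpha,\beta)_B}$ (with $(\,\,,\,)_B$ the pairing of \S \ref{MpQG-larg-tor_int-case}), taking the quotient by $(q-1)$ yields $\{e_\alpha, e_\beta\} = (\alpha,\beta)_B\, e_\beta e_\alpha$. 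Analogous formulas hold for $\{f_\alpha, f_\beta\}$, the cross-bracket $\{e_\alpha, f_\alpha\}$ (coming from the quantum Serre relation $[E_i, F_i] = q_{ii}(K_i - L_i)/(q_{ii}-1)$), and the brackets involving the toral generators. The rescaling $\Ebar_\alpha = (q_{\alpha\alpha} - 1) E_\alpha$ produces, in the limit, the factor $2d_\alpha = (\alpha,\alpha)_B$, matching precisely the generators $\echeckrm_\alpha = 2 d_\alpha \erm_\alpha$ and $\fcheckrm_\alpha = 2 d_\alpha \frm_\alpha$ of $\liegtildq$ in Definition \ref{def_liegdotdq-liegdq-liegtildedq}(c); the Lie cobracket on $\liegtildq$ corresponds dually to the group product on $\Gtildeqstar$ linearized at the identity.

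The main obstacle is this last identification: matching all combinatorial coefficients in the Poisson brackets with the Lie cobracket data of $\liegtildq$ requires careful bookkeeping. A cleaner route is to invoke Proposition \ref{Utildesquno_2-coc-def}: after extension of scalars to $\Rbqsquno$ (which, in the integral case, is again isomorphic to $\ZZ$), the canonical case yields the Poisson group $\widetilde{G}^{\,*}_{\Dpicc, \check{\bq}}$ with cotangent bialgebra $\widetilde{\lieg}_{\Dpicc, \check{\bq}}$; the $2$-cocycle $\sigma$ specializes to a cocycle trivial modulo $(q-1)$, so it does not alter the underlying group structure, only the Poisson bracket---equivalently the Lie cobracket of the cotangent bialgebra---by a controlled twist that, as Remark \ref{int-forms_of_g_D} records, converts $\widetilde{\lieg}_{\Dpicc, \check{\bq}}$ into $\liegtildq$ (with unchanged underlying Lie algebra and modified cobracket reflecting the multiparameter $\bq$). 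Finally, the analogous statements for $\Utilde_\bq^\geq$, $\Utilde_\bq^\leq$, $\Utilde_\bq^0$ etc.\ follow by applying the same argument, restricted to each Hopf subalgebra via the triangular decomposition of Proposition \ref{prop:triang-decomps_Utilde}.
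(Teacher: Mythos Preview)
Your overall architecture is the paper's: use Theorem~\ref{struct-Utildequno} (with $y_{ij}=1$ in the integral case) to get commutativity, hence an affine group-scheme; endow it with the Poisson structure from \S\ref{Pois/coPois-structs}{\it (a)}; then identify the cotangent Lie bialgebra $\mathfrak{m}_e/\mathfrak{m}_e^2$ with $\liegtildq$.  The gap is in that last identification.

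Your formula $\{e_\alpha,e_\beta\}=(\alpha,\beta)_B\,e_\beta e_\alpha$ is incomplete.  From the proof of Theorem~\ref{struct-Utildequno} one has
\[
\Ebar_\alpha\,\Ebar_\beta - \Ebar_\beta\,\Ebar_\alpha \;=\; (q_{\alpha\beta}-1)\,\Ebar_\beta\,\Ebar_\alpha \;+\; (q-1)\!\sum_{\underline{\gamma}} c_{\underline{\gamma}}\,\Ebar_{\underline{\gamma}}^{\,e_{\underline{\gamma}}}\,,
\]
and the ``extra'' sum does \emph{not} vanish modulo $(q-1)^2$.  After dividing by $(q-1)$ and reducing, your displayed term $(\alpha,\beta)_B\,e_\beta e_\alpha$ lies in $\mathfrak{m}_e^2$ and dies in the cotangent; it is precisely the surviving linear piece of the extra sum that gives the Lie bracket $[\overline{e_\alpha},\overline{e_\beta}]$ in $\mathfrak{m}_e/\mathfrak{m}_e^2$.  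Your computation as written would yield zero there, which is wrong whenever $\alpha+\beta\in\Phi^+$.

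The paper extracts that linear piece by bringing in the \emph{restricted} form: since $\Uhatdot_\bq$ specializes at $q=1$ to $U_\ZZ(\liegdotdq)$ (Theorem~\ref{thm:specUqhat_q=1}), one knows $[E_\alpha,E_\beta]=c_{\alpha,\beta}\,E_{\alpha+\beta}+(q-1)\,\mathfrak{E}$ with $\mathfrak{E}\in\Uhat_\bq^+$.  Multiplying by $(q_{\alpha\alpha}-1)(q_{\beta\beta}-1)$ converts this into a relation in $\Utilde_\bq^+$, and a PBW comparison shows $(q_{\alpha\alpha}-1)(q_{\beta\beta}-1)\,\mathfrak{E}$ contributes only to $\mathfrak{m}_e^2$ or to $(q-1)\Utilde_\bq$.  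The outcome is $[\overline{e_\alpha},\overline{e_\beta}]=2\,d_\delta\,c_{\alpha,\beta}\,\overline{e_{\alpha+\beta}}$ (with $d_\delta\in\{d_\alpha,d_\beta\}$ the ``other'' root length), matching $[\echeckrm_\alpha,\echeckrm_\beta]$ in $\liegtildq$.  Your mention of the rescaling factor $2d_\alpha$ points in this direction but does not replace this argument.

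Your ``cleaner route'' via Proposition~\ref{Utildesquno_2-coc-def} does not close the gap either.  It is true that $\sigma_1$ is trivial, so the group-schemes coincide; and it is true that the Poisson bracket (hence the cotangent bracket) depends on $\bq$ through $\sigma_\bq$ at the \emph{quantum} level.  But Remark~\ref{int-forms_of_g_D} only records that the various $\liegtildq$'s share the same Lie bracket and differ in cobracket; it does not show that the $2$-cocycle deformation of the quantization induces precisely the passage $\tilde{\lieg}_{\Dpicc,\check{\bq}}\rightsquigarrow\liegtildq$ on cotangents.  (Indeed, on $\mathfrak{m}_e/\mathfrak{m}_e^2$ it is the \emph{bracket} that varies with $\bq$ while the cobracket is fixed, and the match with $\liegtildq$ only works because the isomorphism $\phi$ itself---sending $\overline{k_i-1}\mapsto\kdotrm_i=\sum_j b_{ij}\trm_j^+$---depends on $\bq$.)  The paper uses Proposition~\ref{Utildesquno_2-coc-def} only to get connectedness and the identification of underlying group-schemes; for the cotangent Lie bialgebra it gives the direct computation sketched above.
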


\pf
 First of all, when  $ \bq $  is of integral type, so  $ \, q_{ij} = q^{\,b_{ij}} \, $  (for all  $ i, j \, $),  we have
  $$  y_{ij}  \; := \;  q_{ij} \!\mod (q - \! 1)  \; = \;  q^{\,b_{ij}} \!\!\mod (q - \! 1)
 \; = \;  1^{\,b_{ij}}  \; = \;  1   \eqno  \forall \;\; i , j \in I  \qquad  $$
 therefore  Theorem \ref{struct-Utildequno}  tell us that  $ \Utildequnogd $  is a  {\sl commutative\/}  Hopf algebra
(of Laurent polynomials); it follows then that  $ \; \Utildequnogd = \Oc(\mathcal{G}\,) \; $  for some affine group-scheme, say  $ \mathcal{G} \, $.  Moreover, from  Proposition \ref{Utildesquno_2-coc-def}  (with notation as in its proof) we know that  $ \; \Oc(\mathcal{G}\,) = \Utildequnogd = {\Oc\big( {\widetilde{G}^{\,\raise2pt\hbox{$ \scriptstyle * $}}_{\!\scriptscriptstyle DA}} \,\big)}_{\sigma_{\raise-2pt\hbox{$ \scriptscriptstyle 1 $}}} \, $  where the group-scheme  $ {\widetilde{G}^{\,\raise2pt\hbox{$ \scriptstyle * $}}_{\!\scriptscriptstyle DA}} $  is connected   --- in other words,  $ \, \Oc\big( {\widetilde{G}^{\,\raise2pt\hbox{$ \scriptstyle * $}}_{\!\scriptscriptstyle DA}} \,\big) \, $  has no non-trivial idempotents.  Now, as  $ \, q_{ij} = q^{b_{ij}} \, $  for  $ \, q = 1 \, $  the ``specialized'' cocycle  $ \sigma_1 $ is trivial   --- namely,  $ \, \sigma_1 = \epsilon \otimes \epsilon \, $  ---   which implies that  $ \; \Oc(\mathcal{G}\,) = {\Oc\big( {\widetilde{G}^{\,\raise2pt\hbox{$ \scriptstyle * $}}_{\!\scriptscriptstyle DA}} \,\big)}_{\sigma_{\raise-2pt\hbox{$ \scriptscriptstyle 1 $}}} \! = {\Oc\big( {\widetilde{G}^{\,\raise2pt\hbox{$ \scriptstyle * $}}_{\!\scriptscriptstyle DA}} \,\big)} \, $,  \,hence  $ \, \mathcal{G} = {\widetilde{G}^{\,\raise2pt\hbox{$ \scriptstyle * $}}_{\!\scriptscriptstyle DA}} \, $  as group-schemes.  In addition, by  Remark \ref{Pois/coPois-structs}{\it (a)\/}  the Hopf algebra  $ \, \Oc(\mathcal{G}\hskip0,5pt) = \Utildequnogd \, $,  being commutative, inherits from  $ \Utildeqgd $  a  {\sl Poisson\/}  structure, hence it is a Poisson Hopf algebra: thus  $ \mathcal{G} $  itself is in fact a  {\sl Poisson\/}  group-scheme.
                                                              \par
   We point out that the Poisson structure on  $ \, \Oc(\mathcal{G}\hskip0,5pt) = \Utildequnogd \, $  is induced by the multiplication in  $ \, \Utildeqgd = {\big(\, \Utilde_{\check{\bq}}(\hskip0,8pt\lieg) \big)}_{\sigma_{\raise-1pt\hbox{$ \scriptscriptstyle \bq $}}} \, $,  \,which in turn depends on  $ \bq \, $.  Thus  $ \, \mathcal{G} \, $  and  $ \, {\widetilde{G}^{\,\raise2pt\hbox{$ \scriptstyle * $}}_{\!\scriptscriptstyle DA}} \, $,  although coinciding as group-schemes, do not share, in general, the same Poisson structure.
                                                                         \par
   What is still missing for having  $ \, \mathcal{G} = \Gtildebstar \, $  is proving that the cotangent Lie bialgebra of  $ \mathcal{G} $  is isomorphic to  $ \liegtildeb \, $,  defined as in  Definition \ref{def_liegdotb-lieghatb-liegtildeb}.
                                                                          \par
   First we recall the definition of the cotangent Lie bialgebra of  $ \mathcal{G} \, $.  If  $ \, \mathfrak{m}_e := \Ker\big( \epsilon_{\Oc(\mathcal{G}\hskip0,5pt)} \big) \, $  is the augmentation ideal of  $ \Oc(\mathcal{G}\hskip0,5pt) \, $,  the quotient  $ \, \mathfrak{m}_e \Big/ \mathfrak{m}_e^{\,2} \, $  has a canonical structure of Lie coalgebra, such that its linear dual is the tangent Lie algebra of  $ \, \mathcal{G} \, $.  In addition, the properties of the Poisson bracket in  $ \Oc(\mathcal{G}\hskip0,5pt) $  imply that  $ \mathfrak{m}_e $  is a Lie subalgebra (even a Lie ideal, indeed) of the Lie algebra  $ \, \big( \Oc(\mathcal{G}\hskip0,5pt) \, , \{\,\ ,\ \} \big) \, $,  and  $ \mathfrak{m}_e^{\,2} $  is a Lie ideal in  $ \, \big(  \mathfrak{m}_e \, , \{\,\ ,\ \} \big) \, $,  whence  $ \, \mathfrak{m}_e \Big/ \mathfrak{m}_e^{\,2} \, $  has a quotient Lie algebra structure; together with the Lie coalgebra structure, the latter makes  $ \, \mathfrak{m}_e \Big/ \mathfrak{m}_e^{\,2} \, $  into a Lie bialgebra.  As a matter of notation, we set  $ \, \overline{x} \in \mathfrak{m}_e \Big/ \mathfrak{m}_e^{\,2} \, $  to denote the coset in  $ \, \mathfrak{m}_e \Big/ \mathfrak{m}_e^{\,2} \, $  of any  $ \, x \in \mathfrak{m}_e \; $.
                                                                          \par
   As a consequence of the PBW Theorem for  $ \Utildeqgd $   --- i.e.\
   Theorem \ref{thm:PBW_tilde-MpQG},  or directly of  Theorem \ref{struct-Utildequno}  ---
   taking into account that the  $ e_\alpha $'s,  the  $ f_\alpha $'s,  the  $ (k_i \! - \! 1) $'s  and the
   $ (\,l_i \! - \! 1) $'s,  with  $ \, \alpha \in \Phi^+ $,  $ \, i \in I $,  all lie in  $ \mathfrak{m}_e \, $,
   one has that a basis of  $ \, \mathfrak{m}_e \Big/ \mathfrak{m}_e^{\,2} \, $  is given by the
   $ \overline{e_\alpha} \, $'s,  the  $ \overline{f_\alpha} \, $'s,  the  $ \overline{(k_i \! - \! 1)} \, $'s
   and the  $ \overline{(\,l_i \! - \! 1)} \, $'s  altogether.  Our aim now is to prove the following
 \vskip3pt
   {\it  $ \underline{\text{Claim}} $:}\,  there exists a Lie bialgebra isomorphism
   $ \; \phi : \mathfrak{m}_e \Big/ \mathfrak{m}_e^{\,2} \;{\buildrel \cong \over
   {\lhook\joinrel\relbar\joinrel\relbar\joinrel\relbar\joinrel\twoheadrightarrow}}\; \liegtildeb \; $
   given by  $ \; \phi : \, \overline{e_\alpha} \mapsto \etilderm_\alpha \, $,
   $ \; \overline{f_\alpha} \mapsto \ftilderm_\alpha \, $,  $ \; \overline{(k_i - \! 1)} \mapsto \kdotrm_i \; $
   and  $ \; \overline{(l_i - \! 1)} \mapsto \ldotrm_i \, $,  \, for all  $ \, \alpha \in \Phi^+ $,  $ \, i \in I \, $.
 \vskip7pt
   To begin with, given  $ \, \alpha , \beta \in \Phi^+ \, $,  we show that
   $ \; \phi\big( \big[ \overline{e_\alpha} \, , \overline{e_\beta} \big] \big) = \big[ \phi(\overline{e_\alpha}) ,
   \phi(\overline{e_\beta}) \big] \; $.  First observe that our root vectors  $ \, \erm_\gamma $  in  $ \lieg $
   come from the simple ones via a construction \`a la Chevalley (see  \cite[Chapter\ II, \S 25.2]{Hu}),  so that
   $ \; [\,\erm_\alpha \, , \erm_\beta] = c_{\alpha,\beta} \, \erm_{\alpha+\beta} \; $  for suitable
   $ \, c_{\alpha,\beta} \in \ZZ \, $.  Moreover, since (under our assumption that  $ \lieg $  be simple)
   there are only two possible root lengths, we have  $ \, d_{\alpha+\beta} \in \{ d_\alpha \, , d_\beta \} \, $;
   so if  $ \, d_\alpha = d_\beta \, $ we write  $ \, d_\delta := d_\alpha \, (= d_\beta) \, $  and if
   $ \, d_\alpha \not= d_\beta \, $  we call  $ \, d_\delta \, $  the unique element of
   $ \, \{ d_\alpha \, , d_\beta \} \setminus \{ d_{\alpha+\beta} \} \, $.  Then recall that
   (for all  $ \, \gamma \in \Phi^+ \, $)
  $$  \displaylines{
   e_\gamma \, := \;\, \Ebar_\gamma \mod (q-\!1) \, \Utilde_\bq   \qquad ,  \qquad \quad   \erm_\gamma \,
   := \;\, E_\gamma \mod (q-\!1) \, \Uhatdot_\bq  \cr
   \big[ \overline{e_\alpha} \, , \overline{e_\beta} \big]  \, :=  \;
   \big\{ e_\alpha \, , e_\beta \big\} \hskip-3pt  \mod \mathfrak{m}_e^{\,2}   \,\; ,
   \quad   \big\{ e_\alpha \, , e_\beta \big\}  \, := \; {(q-\!1)}^{-1} \big[\, \Ebar_\alpha \, ,
   \Ebar_\beta \,\big]  \hskip-3pt  \mod \! (q-\!1) \, \Utilde_\bq  }  $$
\noindent
 Second, since  $ \Utilde_\bq $  is commutative modulo  $ (q-\!1) \, $,  we have
 $ \; \big[\, \Ebar_\alpha \, , \Ebar_\beta \,\big] \, = \, (q-\!1) \, \E \; $  for some
 $ \, \E \in \Utilde_\bq \cap U_\bq^+ = \Utilde_\bq^+ \, $   ---
 so that  $ \; \big\{ e_\alpha \, , e_\beta \big\} := \, \E \hskip-3pt \mod \! (q-\!1) \, \Utilde_\bq \; $.
 On the other hand, from $ \; [\,\erm_\alpha \, , \erm_\beta] = c_{\alpha,\beta} \, \erm_{\alpha+\beta} \; $
 (see above) and $ \; \erm_\gamma := \, E_\gamma \hskip-4pt \mod (q-\!1) \, \Uhatdot_\bq \; $
 together we get  $ \; \big[\, E_\alpha \, , E_\beta \,\big] \, = \, c_{\alpha,\beta} \, E_{\alpha+\beta} + (q-\!1) \,
 \mathfrak{E} \; $  for some  $ \, \mathfrak{E} \in \Uhat_\bq \cap U_\bq^+ = \Uhat_\bq^+ \, $.
 The latter implies
  $$  \displaylines{
   \big[\, \Ebar_\alpha \, , \Ebar_\beta \,\big]  \; = \;
\big( q_{\alpha\alpha} - 1 \big) \big( q_{\beta\beta} - 1 \big) \, \big[\, E_\alpha \, , E_\beta \,\big]  \; =   \hfill  \cr
   = \;  c_{\alpha,\beta} \,
\big( q_{\alpha\alpha} - 1 \big) \big( q_{\beta\beta} - 1 \big)
 \, E_{\alpha+\beta} + (q-\!1) \,
\big( q_{\alpha\alpha} - 1 \big) \big( q_{\beta\beta} - 1 \big)
 \, \mathfrak{E}  \; =  \cr
   \hfill   = \;  c_{\alpha,\beta} \,
\big( q_{\delta\delta} - 1 \big)
 \, \Ebar_{\alpha+\beta} + (q-\!1) \,
\big( q_{\alpha\alpha} - 1 \big) \big( q_{\beta\beta} - 1 \big)
 \, \mathfrak{E}  }  $$
 and comparing the last term with the previous identity  $ \; \big[\, \Ebar_\alpha \, , \Ebar_\beta \,\big] \, = \, (q-\!1) \, \E \; $
 yields
  $$  \E  \; = \;  c_{\alpha,\beta} \, {\big(2\,d_\delta\big)}_{\!q} \, \Ebar_{\alpha+\beta} + \big(\, q_{\alpha\alpha} - 1 \big)
  \big(\, q_{\beta\beta} - 1 \big) \, \mathfrak{E}  $$
                                                   \par
   Then expanding  $ \, \mathfrak{E} \, $  w.r.t.\ the  $ \Rbq $--PBW basis  of  $ \Uhat_\bq^+ $
   (made of ordered products of  $ q $--divided  powers  $ E_\gamma^{(n_\gamma)} $'s)
   and comparing with the expansion of
 $ \,
\big( q_{\alpha\alpha} - 1 \big) \big( q_{\beta\beta} - 1 \big) \,
\mathfrak{E} \, $   --- which must necessarily belong to  $ \Utilde_\bq^+ $  ---   in terms of the  $ \Rbq $--PBW basis
of  $ \Utilde_\bq^+ $
(made of ordered monomials in the  $ \Ebar_\gamma $'s)  we eventually find that
  $$
%
%
\big( q_{\alpha\alpha} - 1 \big) \big( q_{\beta\beta} - 1 \big)
 \, \mathfrak{E}  \,\; =
  \; {\textstyle \sum\limits_{k \geq 2}} \,\; {\textstyle \sum\limits_{\gamma_1 , \dots , \gamma_k \in \Phi^+}}
  \hskip-5pt c'_{\,\gamma_1 , \dots , \gamma_k} \, \Ebar_{\gamma_1} \! \cdots \Ebar_{\gamma_k}  \, + \,  (q-\!1) \, \E'  $$
for some  $ \, c'_{\,\gamma_1 , \dots , \gamma_k} \in \Rbq \, $  and some  $ \, \E' \in \Utilde_\bq^+ \, $.  Therefore
  $$  \E  \,\; = \;\,  c_{\alpha,\beta} \,
%
%
{\big(2\,d_\delta\big)}_{\!q} \, \Ebar_{\alpha+\beta} \; + \,
  {\textstyle \sum\limits_{k \geq 2}} \,\; {\textstyle \sum\limits_{\gamma_1 , \dots , \gamma_k \in \Phi^+}}
  \hskip-5pt c'_{\gamma_1 , \dots , \gamma_k} \, \Ebar_{\gamma_1} \! \cdots \Ebar_{\gamma_k}  \, + \,  (q-\!1) \, \E'  $$
which in turn implies
  $$  \displaylines{
   \E  \hskip-4pt  \mod \! (q-\!1) \, \Utilde_\bq  \,\; = \;\,  \Big(\, c_{\alpha,\beta} \,
%
%
{\big(2\,d_\delta\big)}_{\!q} \, \Ebar_{\alpha+\beta}  \hskip-4pt  \mod \! (q-\!1) \, \Utilde_\bq \;\Big)  \; +   \qquad \qquad \qquad  \cr
   \qquad   + \; \Big(\; {\textstyle \sum\limits_{k \geq 2}} \,\; {\textstyle \sum\limits_{\gamma_1 , \dots , \gamma_k \in \Phi^+}}
   \hskip-5pt c'_{\gamma_1 , \dots , \gamma_k} \, \Ebar_{\gamma_1} \! \cdots \Ebar_{\gamma_k}  \hskip-4pt  \mod \! (q-\!1) \, \Utilde_\bq \;\Big)  \; =  \cr
   \hfill   = \;\,  c_{\alpha,\beta} \; 2 \; d_\delta \; e_{\alpha+\beta}  \; + \;  {\textstyle \sum\limits_{k \geq 2}} \,\;
   {\textstyle \sum\limits_{\gamma_1 , \dots , \gamma_k \in \Phi^+}} \hskip-5pt c_{\,\gamma_1 , \dots , \gamma_k}
   \; e_{\gamma_1} \cdots e_{\gamma_k}  }  $$
%
%
 with  $ \; c_{\,\gamma_1 , \dots , \gamma_k} \, := \, \big(\, c'_{\,\gamma_1 , \dots , \gamma_k} \hskip-1pt  \mod \! (q-\!1) \, \Rbq \,\big) \, \in \, \Rbq \Big/ (q-\!1) \, \Rbq \, = \Rbquno \; $.  This yields
  $$  \displaylines{
   \big[\, \overline{e_\alpha} \, , \overline{e_\beta} \,\big]  \, =  \Big( \big\{ e_\alpha \, , e_\beta \big\}  \hskip-4pt  \mod \mathfrak{m}_e^{\,2} \,\Big)  \, =   \hfill  \cr
   = \,  \Big( \Big( {(q-\!1)}^{-1} \big[\, \Ebar_\alpha \, , \Ebar_\beta \,\big]  \hskip-4pt  \mod \! (q-\!1) \, \Utilde_\bq \,\Big)  \hskip-4pt \mod \mathfrak{m}_e^{\,2} \,\Big)  \, =  \cr
   \hfill   = \,  \Big( \Big(\, \E  \hskip-4pt  \mod \! (q-\!1) \, \Utilde_\bq \,\Big)  \hskip-4pt \mod \mathfrak{m}_e^{\,2} \,\Big)  \, =  \cr
   = \;\,  \Big( \Big(\; c_{\alpha,\beta} \; 2 \; d_\delta \; e_{\alpha+\beta}  \; + \;  {\textstyle \sum\limits_{k \geq 2}} \,\;
   {\textstyle \sum\limits_{\gamma_1 , \dots , \gamma_k \in \Phi^+}} \hskip-5pt c_{\,\gamma_1 , \dots , \gamma_k} \;
   e_{\gamma_1} \cdots e_{\gamma_k} \Big)  \hskip-4pt \mod \mathfrak{m}_e^{\,2} \,\Big)  \; = \;  c_{\alpha,\beta} \; 2 \; d_\delta \; \overline{e_{\alpha+\beta}}  }  $$
that is in short  $ \; \big[\, \overline{e_\alpha} \, , \overline{e_\beta} \,\big] \, = \, 2 \; d_\delta \; c_{\alpha,\beta} \; \overline{e_{\alpha+\beta}} \;\, $.
   Now, from the last identity we compute
\begin{equation}  \label{phi(bracket)}
   \phi\big( \big[\, \overline{e_\alpha} \, , \overline{e_\beta} \,\big] \big)  \; = \;  \phi\big(\, 2 \; d_\delta \; \overline{c}_{\alpha,\beta}
   \; \overline{e_{\alpha+\beta}} \,\big)  \; = \;  2 \; d_\delta \; \overline{c}_{\alpha,\beta} \; \phi\big(\, \overline{e_{\alpha+\beta}} \,\big)  \;
   = \;  2 \; d_\delta \; \overline{c}_{\alpha,\beta} \; {\check{\erm}}_{\alpha+\beta}
\end{equation}
by definition of  $ \phi \, $.  On the other hand, we have also
  $$  \big[ \phi(\overline{e_\alpha}) \, , \phi(\overline{e_\beta}) \big]  \, = \,
  [ {\check{\erm}}_\alpha \, , {\check{\erm}}_\beta ]  \, = \,
  2 \, d_\alpha \, 2 \, d_\beta \, [\, \erm_\alpha \, , \erm_\beta \,]  \, = \,
  2 \, d_\alpha \, 2 \, d_\beta \, c_{\alpha,\beta} \, \erm_{\alpha+\beta}  \, = \,
  2 \, d_\delta \, c_{\alpha,\beta} \, {\check{\erm}}_{\alpha+\beta}  $$
comparing this with  \eqref{phi(bracket)}  eventually gives  $ \; \phi\big( \big[\, \overline{e_\alpha} \, , \overline{e_\beta} \,\big] \big) \,
= \, \big[ \phi(\overline{e_\alpha}) \, , \phi(\overline{e_\beta}) \big] \; $,  q.e.d.

 \vskip5pt
   Acting in the same way, one finds also
  $$  \displaylines{
   \big[\, \overline{k_i - \! 1} \, , \overline{e_\alpha} \,\big]  \; = \;  \Big(\, \big\{ k_i - \! 1 \, , e_\alpha \big\} \!\! \mod \mathfrak{m}_e^{\,2} \,\Big)  \; =   \hfill  \cr
   = \,  \Big( \Big( {(q - \! 1)}^{-1} \big( (K_i - \! 1) \Ebar_\alpha - \Ebar_\alpha (K_i - \! 1) \big) \!\!\! \mod \! (q - \! 1) \,
   \Utildeqgd \Big) \!\!\! \mod \mathfrak{m}_e^{\,2} \,\Big)  \, =  \cr
   \hfill   = \;  \Big(\, \Big(\, {\big( d^+_{i,\alpha} \big)}_{\!q} \, \Ebar_\alpha K_i \! \mod (q - \! 1) \, \Utildeqgd \,\Big) \! \mod \mathfrak{m}_e^{\,2} \,\Big)  \; = \;  \cr
   \hfill   = \;  \Big(\, d^+_{i,\alpha} \, e_\alpha k_i \! \mod \mathfrak{m}_e^{\,2} \,\Big)  \; = \;  d^+_{i,\alpha} \, \overline{e_\alpha}  }  $$
where  $ \, d^+_{i,\alpha} := +\sum_{j \in I} b_{ij} c_j \, $  with  $ \, \alpha = \sum_{j \in I} c_j \alpha_j \, $,  so in the end
\begin{equation}  \label{comm-rel_k-e_alpha-Utilde_q=1}
  \big[\, \overline{k_i - \! 1} \, , \overline{e_\alpha} \,\big]  \; = \;  d^+_{i,\alpha} \, \overline{e_\alpha}   \qquad \qquad  \forall \;\; i \in I \, , \; \alpha \in \Phi^+
\end{equation}
Similarly, one finds also
\begin{equation}  \label{comm-rel_l-e_alpha-Utilde_q=1}
  \big[\, \overline{l_i - \! 1} \, , \overline{e_\alpha} \,\big]  \; = \;  d^-_{i,\alpha} \, \overline{e_\alpha}   \qquad \qquad  \forall \;\; i \in I \, , \; \alpha \in \Phi^+
\end{equation}
with  $ \, d^-_{i,\alpha} := -\sum_{j \in I} b_{ji} c_j \, $  for  $ \, \alpha = \sum_{j \in I} c_j \alpha_j \, $.
Likewise, parallel formulas to  (\ref{comm-rel_k-e_alpha-Utilde_q=1})  and  (\ref{comm-rel_l-e_alpha-Utilde_q=1})
hold true when the  $ \overline{e_\alpha} $'s  are replaced by the  $ \overline{f_\alpha} $'s.
 \vskip3pt
   Finally, comparing the Lie brackets (inside  $ \, \mathfrak{m}_e \Big/ \mathfrak{m}_e^{\,2} \, $)  given explicitly in
   (\ref{comm-rel_k-e_alpha-Utilde_q=1})  and  (\ref{comm-rel_l-e_alpha-Utilde_q=1}),  and the similar ones where the  $ \overline{f_\gamma} $'s
   are replaced by the  $ \overline{e_\gamma} $'s, with the analogue brackets inside  $ \liegtildeb $  of the corresponding elements through the map
   $ \phi $  as given in the  {\it Claim},  one easily sees that the latter map is indeed a Lie algebra morphism.  In addition, it is invertible because it
   maps a basis to a basis.  Moreover, this is also an isomorphism of Lie  {\sl bi\/}algebras  because the formulas for the Lie cobracket do correspond
   on either side on all elements of the form  $ \; \overline{e_i} \, $,  $ \overline{f_i} \, $,  $ \overline{k_i - \! 1} \, $  and  $ \overline{l_i - \! 1} \, $  (with  $ \, i \in I \, $),
   which is enough to conclude   --- cf.\  Remarks \ref{int-forms_of_g_B}.  In fact, this is again a matter of
%
%
 bookkeeping: for instance,
%
%
 writing  $ \, \mathfrak{m}_\otimes^{[2]} :=  \mathfrak{m}_e \otimes \mathfrak{m}_e^{\,2} +  \mathfrak{m}_e^{\,2} \otimes  \mathfrak{m}_e \, $,  \,one has
  $$  \displaylines{
   \delta(\,\overline{e_i}\,)  \; = \;  \Big(\, \big( \Delta(e_i) - \Delta^{\text{op}}(e_i) \big) \!\! \mod \mathfrak{m}_\otimes^{[2]} \,\Big)  \; =   \hfill  \cr
   \hfill   = \;  \Big(\, \Big(\, \big( \Delta\big(\Ebar_i\big) - \Delta^{\text{op}}\big(\Ebar_i\big) \big) \! \mod (q-\!1) \, \Utildeqgd^{\otimes 2} \,\Big) \! \mod
   \mathfrak{m}_\otimes^{[2]} \,\Big)  \; =  \cr
   = \;  \Big( \Big( \big( (K_i - 1) \otimes \Ebar_i - \Ebar_i \otimes (K_i - 1) \big) \! \mod (q-\!1) \, \Utildeqgd^{\otimes 2} \,\Big) \! \mod \mathfrak{m}_\otimes^{[2]} \,\Big)  \; =  \cr
   \qquad \qquad   = \;  \Big(\, \big( (k_i - \! 1) \otimes e_i - e_i \otimes (k_i - \! 1) \big) \! \mod \mathfrak{m}_\otimes^{[2]} \,\Big)  \; =  \cr
   \hfill   = \;  \overline{(k_i - \! 1)} \otimes \overline{e_i} \, - \, \overline{e_i} \otimes \overline{(k_i - \! 1)}  }  $$
which means  $ \; \delta(\,\overline{e_i}\,) \, = \, \overline{(k_i - \! 1)} \otimes \overline{e_i} \, - \, \overline{e_i} \otimes \overline{(k_i - \! 1)} \; $.
Through the formulas given in the  {\it Claim},  this last identity corresponds to
$ \; \delta(\etilderm_i) \, = \, \kdotrm_i \otimes \etilderm_i \, - \, \etilderm_i \otimes \kdotrm_i \; $  given in
Definition \ref{def_liegdotb-lieghatb-liegtildeb}{\it (b)\/}  for  $ \liegtildeb \, $.  Likewise it holds for the other cases.
\epf

\bigskip

\section{Specialization of MpQG's at roots of unity}  \label{sec:spec-eps}
 \vskip7pt
   In this section we study MpQG's for which all parameters  $ q_{ii} $  are roots of unity.
 Once again, this
 amounts to requiring  $ q $  itself to be a root of unity, or just 1.  As we already considered the case  $ \, q = 1 \, $,
 we assume this root to be different from 1 itself.

\medskip

\subsection{Specialization at roots of unity}  \label{spec-eps}  {\ }
 \vskip7pt

   Let again  $ \Rbq $  and  $ \RbqB $  be fixed as in  \S \ref{gen-ground-rings};  fix also a positive, odd integer  $ \ell $
   {\sl which is coprime with all the  $ d_i $'s  ($ \, i \in I \, $)  given in  \S \ref{gen-ground-rings}},
   and let  $ p_\ell(x) $  be the  $ \ell $--th  cyclotomic poynomial in  $ \ZZ[x] \, $.  We consider the special element
   $ \, q \in \Rbq \, $  and the quotient ring  $ \; \Rbqeps := \, \Rbq \Big/ p_\ell(q) \, \Rbq \; $,  and we call  $ \varepsilon $
   the image of  $ q $  in  $ \Rbqeps \; $.
                                                                    \par
   By construction, the ring  $ \Rbqeps $  is generated by invertible elements  $ \, \varepsilon_{ij}^{\,\pm 1} \, $  each of
   whom is the image in  $ \Rbqeps $  of the corresponding generator  $ q_{ij}^{\pm 1} $  of  $ \, \Rbq \, $;  since
   $ \, \varepsilon_{i{}i}^{\,\ell} = 1 \, $  for all  $ i \, $,  all these generators only obey the relations
   $ \, {\big( \varepsilon_{ij}^{\,\pm 1} \, \varepsilon_{ji}^{\,\pm 1} \big)}^\ell = 1 \, $.  We denote by
   $ \, \varepsilon_{\alpha\,\gamma} \, $  the element in  $ \Rbqeps $  defined like in  \S \ref{deform-MpQG}
   but for using the
   $ \varepsilon_{ij} $'s  instead of the  $ q_{ij}^{\pm 1} $'s,  so that  $ \, \varepsilon_{\alpha\,\gamma}^{\pm 1} \, $
   is nothing but the image in  $ \Rbqeps $  of  $ \, q_{\alpha\,\gamma}^{\pm 1} \in \Rbq \, $.  Finally,  $ \Rbqeps $  is an
   $ \Rbq $--algebra  by scalar restriction via the canonical epimorphism
   $ \, \Rbq \relbar\joinrel\relbar\joinrel\twoheadrightarrow \Rbqeps \, $.
                                                          \par
   Replacing  $ \Rbq $  with  $ \RbqB $  everywhere, we set  $ \; \RbqBeps := \, \RbqB \Big/ p_\ell(q) \, \RbqB \; $,  for
   which we use again such notation as  $ \, \varepsilon \, $,  $ \, \varepsilon_{ij} \, $,  etc., noting in addition that now
   $ \, \varepsilon_{ij} = \varepsilon^{\,b_{ij}} \, $.
   Then the natural epimorphisms  $ \, \Rbq \relbar\joinrel\relbar\joinrel\twoheadrightarrow \RbqB \, $
   yields a similar one  $ \, \Rbqeps \relbar\joinrel\relbar\joinrel\twoheadrightarrow \RbqBeps \; $.
                                                                   \par
   Furthermore, it is worth stressing that the isomorphism  $ \, \RbqB \cong \Zqqm \, $
   induces in turn  $ \; \RbqBeps \, \cong \, \Zqqm \Big/ p_\ell(q)\,\Zqqm \, =: \, \ZZ[\varepsilon] \, $,
   the latter being the ring extension of  $ \ZZ $  by any (formal) primitive  $ \ell $--th  root of unity  $ \varepsilon \, $.
 \vskip5pt
   Similarly, we define  $ \; \Rbqsqeps := \, \Rbqsq \Big/ p_\ell\big(q^{\,1/2}\big) \, \Rbqsq \; $
   and denote by  $ \varepsilon^{1/2} $,  $ \varepsilon_{i{}j}^{\,1/2} $,  etc., the image of
   $ q^{1/2} $,  $ q_{i{}j}^{\,1/2} $,  etc.,  in  $ \Rbqsqeps \; $;  and likewise for
   $ \; \RbqBsqeps := \, \RbqBsq \Big/ p_\ell\big(q^{\,1/2}\big) \, \RbqBsq \; $,  for which we have in addition
   $ \, \RbqBsqeps \cong \ZZ\big[\varepsilon^{\,1/2}\big] \, $  where  $ \varepsilon^{\,1/2} $  is again a primitive
   $ \ell $--th  root of unity.  The projection  $ \, \Rbqsq \!\relbar\joinrel\relbar\joinrel\twoheadrightarrow \! \RbqBsq \, $
   induces an epimorphism  $ \, \Rbqsqeps \!\relbar\joinrel\relbar\joinrel\twoheadrightarrow \! \RbqBsqeps \, $,
   while the embeddings  $ \, \Rbq \!\lhook\joinrel\relbar\joinrel\relbar\joinrel\rightarrow \! \Rbqsq \, $  and
   $ \, \Rbqeps \!\lhook\joinrel\relbar\joinrel\relbar\joinrel\rightarrow \! \Rbqsqeps \, $  induce embeddings
   $ \, \RbqB \!\lhook\joinrel\relbar\joinrel\relbar\joinrel\rightarrow \! \RbqBsq \, $  and
   $ \, \RbqBeps \!\lhook\joinrel\relbar\joinrel\relbar\joinrel\rightarrow \! \RbqBsqeps \, $  respectively.  {\sl In addition},
   for the last map the following holds:

\vskip13pt

\begin{lema}  \label{lemma: RbqBeps = RbqBsqeps}
 The morphism  $ \; \RbqBeps \! \lhook\joinrel\relbar\joinrel\relbar\joinrel\rightarrow \! \RbqBsqeps \, $,  given by
 $ \, \varepsilon^{\pm 1} \!\mapsto\! {\big( \varepsilon^{\pm 1/2} \big)}^2 \, $,  is an isomorphism, whose inverse
 $ \; \RbqBsqeps \! \lhook\joinrel\relbar\joinrel\relbar\joinrel\twoheadrightarrow \RbqBeps \, $  is given by
 $ \, \varepsilon^{\,\pm 1/2} \mapsto \varepsilon^{\pm (\ell+1)/2} \, $.
\end{lema}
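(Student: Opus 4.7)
The plan is to identify both rings as explicit cyclotomic rings of integers and then verify directly that the two proposed homomorphisms compose to the identity on generators. As noted immediately before the statement, one has $\RbqBeps \cong \ZZ[\varepsilon]$ and $\RbqBsqeps \cong \ZZ\big[\varepsilon^{1/2}\big]$, so each ring is (abstractly) generated over $\ZZ$ by a primitive $\ell$--th root of unity: namely $\varepsilon$ is a primitive $\ell$--th root in the source and $\varepsilon^{1/2}$ is a primitive $\ell$--th root in the target of the map stated in the lemma.

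The key arithmetic observation I would use is that, because $\ell$ is odd, $(\ell+1)/2$ is a positive integer, and moreover $2\cdot (\ell+1)/2 = \ell+1 \equiv 1 \pmod{\ell}$, so $(\ell+1)/2$ is coprime to $\ell$. It then follows that the element $\varepsilon^{(\ell+1)/2} \in \RbqBeps$ is itself a primitive $\ell$--th root of unity, and in particular is a root of the $\ell$--th cyclotomic polynomial $p_\ell$. Consequently the assignment $\varepsilon^{1/2} \mapsto \varepsilon^{(\ell+1)/2}$ extends uniquely to a ring homomorphism $\phi: \RbqBsqeps \to \RbqBeps$, since $\RbqBsqeps = \ZZ\big[\,q^{1/2}, q^{-1/2}\big]\big/ p_\ell\big(q^{1/2}\big)$ and the defining relation is respected.

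It then remains to check that $\phi$ is a two-sided inverse of the canonical inclusion $\iota: \RbqBeps \hookrightarrow \RbqBsqeps$ given by $\varepsilon \mapsto \big(\varepsilon^{1/2}\big)^2$. Because both rings are generated as $\ZZ$--algebras by their distinguished roots of unity, it suffices to verify the two compositions on those generators. Using $\varepsilon^\ell = 1$ in $\RbqBeps$ and $\big(\varepsilon^{1/2}\big)^\ell = 1$ in $\RbqBsqeps$, the calculations are immediate:
\begin{align*}
   \phi \circ \iota \;:\;  &  \varepsilon \,\longmapsto\, \big(\varepsilon^{1/2}\big)^2 \,\longmapsto\, \big(\varepsilon^{(\ell+1)/2}\big)^{\!2} \,=\, \varepsilon^{\ell+1} \,=\, \varepsilon \;, \\
   \iota \circ \phi \;:\;  &  \varepsilon^{1/2} \,\longmapsto\, \varepsilon^{(\ell+1)/2} \,\longmapsto\, \big(\varepsilon^{1/2}\big)^{\ell+1} \,=\, \varepsilon^{1/2} \;.
\end{align*}
This proves the claim. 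There is essentially no genuine obstacle: the proof reduces to the elementary fact that $(\ell+1)/2$ is a unit modulo $\ell$ whenever $\ell$ is odd, and this is precisely the place where the hypothesis that $\ell$ be odd enters the argument.
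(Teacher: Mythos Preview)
Your proof is correct and is exactly the natural argument. The paper itself states this lemma without proof, treating it as an elementary observation; your write-up supplies precisely the computation one would expect, hinging on the fact that $(\ell+1)/2$ is the multiplicative inverse of $2$ modulo $\ell$ when $\ell$ is odd.
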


\vskip7pt

We introduce now the ``specialization at  $ \, q = \varepsilon \, $''  of the integral forms
   $ \Uhatdot_\bq(\hskip0,8pt\lieg) \, $,  $ \Uhat_\bq(\hskip0,8pt\lieg) $  and  $ \Utilde_\bq(\hskip0,8pt\lieg) \, $
   --- over the ring  $ \RbqB $  or  $ \Rbq $  ---   of our MpQG's  $ U_\bq(\hskip0,8pt\lieg) \, $.

\vskip15pt

\begin{definition}  \label{def_spec-eps}  \
 \vskip5pt
   {\it (a)}\,  Let  $ \bq $  be a multiparameter matrix of Cartan type: given  $ \Utilde_\bq(\hskip0,8pt\lieg) $
   over the ground ring
   $ \Rbq \, $,  we call  {\it specialization of  $ \, \Utilde_\bq(\hskip0,8pt\lieg) $  at  $ \, q = \varepsilon \, $}  the quotient
  $$  \Utildeqepsgd  \; := \;  \Utilde_\bq(\hskip0,8pt\lieg) \Big/ p_\ell(q) \, \Utilde_\bq(\hskip0,8pt\lieg)  \,\; \cong \;\,
  \Rbqeps \otimes_{\raise-3pt\hbox{$ \scriptstyle \Rbq $}} \! \Utilde_\bq(\hskip0,8pt\lieg)  $$
endowed with its natural (quotient) structure of Hopf algebra over  $ \Rbqeps \; $.
 \vskip4pt
   {\it (b)}\,  Let  $ \bq $  in addition be of  {\sl integral type}   --- hence  $ \, \RbqB = \Zqqm \, $.  Then:
 \vskip2pt
   \quad ---  {\it (b.1)}\,  given  $ \Utilde_\bq(\hskip0,8pt\lieg) $  over the ground ring  $ \RbqB \, $,  we call  {\it specialization of  $ \, \Utilde_\bq(\hskip0,8pt\lieg) $  at  $ \, q = \varepsilon \, $}  the quotient
  $$  \Utildeqepsgd  \; := \;  \Utilde_\bq(\hskip0,8pt\lieg) \Big/ p_\ell(q) \, \Utilde_\bq(\hskip0,8pt\lieg)  \,\; \cong \;\,
  \RbqBeps \otimes_{\raise-3pt\hbox{$ \scriptstyle \RbqB $}} \Utilde_\bq(\hskip0,8pt\lieg)  $$
endowed with its natural (quotient) structure of Hopf algebra over  $ \RbqBeps \; $;
 \vskip2pt
   \quad ---  {\it (b.2)}\,  we call  {\it specialization of  $ \, \Uhatdot_\bq(\hskip0,8pt\lieg) $  at  $ \, q = \varepsilon \, $}  the quotient
  $$  \Uhatdot_{\bq\hskip1pt,\hskip1pt{}\varepsilon}(\hskip0,8pt\lieg)  \; := \;  \Uhatdot_\bq(\hskip0,8pt\lieg) \Big/ p_\ell(q) \,
  \Uhatdot_\bq(\hskip0,8pt\lieg)  \,\; \cong \;\,  \RbqBeps \otimes_{\raise-3pt\hbox{$ \scriptstyle \RbqB $}} \Uhatdot_\bq(\hskip0,8pt\lieg)  $$
endowed with its natural (quotient) structure of Hopf algebra over  $ \RbqBeps \; $;
 \vskip2pt
   \quad ---  {\it (b.3)}\,  if  $ \bq $  is of  {\sl strongly integral type},  we call
   {\it specialization of  $ \, \Uhat_\bq(\hskip0,8pt\lieg) $  at  $ \, q = \varepsilon \, $}  the quotient
  $$  \Uhat_{\bq\hskip1pt,\hskip1pt{}\varepsilon}(\hskip0,8pt\lieg)  \; := \;
  \Uhat_\bq(\hskip0,8pt\lieg) \Big/ p_\ell(q) \, \Uhat_\bq(\hskip0,8pt\lieg)  \,\; \cong \;\,
  \RbqBeps \otimes_{\raise-3pt\hbox{$ \scriptstyle \RbqB $}} \Uhat_\bq(\hskip0,8pt\lieg)  $$
endowed with its natural (quotient) structure of Hopf algebra over  $ \RbqBeps \; $.
 \vskip4pt
   {\sl Note that},  using the isomorphism  $ \, \RbqBsqeps \cong \RbqBeps \, $  of  Lemma \ref{lemma: RbqBeps = RbqBsqeps},  {\sl all the above mentioned specializations of MpQG's at  $ \, q = \varepsilon \, $  can be also considered as Hopf algebras over the ring  $ \RbqBsqeps \, $},  by scalar extension: hereafter we shall freely do that.   \ \hfill  $ \diamondsuit $
\end{definition}

\vskip9pt

   The above definitions and our results in  \S \ref{int-forms_mpqgs}  yield the following:

\medskip

\begin{theorem}  \label{PBW-mpqgs_q=eps}
 The PBW bases (over  $ \, \Rbq $  or  $ \, \RbqB \, $)  of  $ \; \Utildeqepsgd \, $,  resp.\ of
 $ \; \Uhatdot_{\bq\hskip1pt,\hskip1pt{}\varepsilon}(\hskip0,8pt\lieg) \, $,  resp.\ of
 $ \; \Uhat_{\bq\hskip1pt,\hskip1pt{}\varepsilon}(\hskip0,8pt\lieg) \, $   ---  cf.\
 Theorems \ref{thm:PBW_hat-MpQG}  and  \ref{thm:PBW_tilde-MpQG}  ---   yield, through
 the specialization process, similar PBW-bases (over  $ \, \Rbqeps $  or  $ \, \RbqBeps \, $)  of
 $ \, \Utildeqepsgd \, $,  resp.\ of  $ \, \Uhatdot_{\bq\hskip1pt,\hskip1pt{}\varepsilon}(\hskip0,8pt\lieg) \, $,
 resp.\ of  $ \, \Uhat_{\bq\hskip1pt,\hskip1pt{}\varepsilon}(\hskip0,8pt\lieg) \, $.
\end{theorem}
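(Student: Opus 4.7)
The plan is essentially to exploit the general principle that free modules behave well under base change, so that a basis survives scalar extension. Concretely, by Theorem \ref{thm:PBW_tilde-MpQG}, the unrestricted integral form $\Utilde_{\bq}(\liegd)$ is a free module over its ground ring (either $\Rbq$ or $\RbqB$, depending on the setup) with an explicit PBW basis consisting of ordered monomials in the $\Fbar_{\beta^k}$'s, $L_j^{\pm 1}$'s, $K_i^{\pm 1}$'s, and $\Ebar_{\beta^h}$'s; similarly, by Theorem \ref{thm:PBW_hat-MpQG}, the restricted integral forms $\Uhatdot_\bq(\liegd)$ and (in the strongly integral case) $\Uhat_\bq(\liegd)$ are free $\RbqB$-modules with PBW bases involving quantum divided powers of root vectors and quantum binomial coefficients in toral elements.

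First I would use the alternative realization of specialization recorded right after Definition \ref{def_spec-eps} (and which mirrors Remark \ref{spec-1_ext}), namely that $\Utildeqepsgd \cong \Rbqeps \otimes_\Rbq \Utilde_\bq(\liegd)$, and analogously for $\Uhatdot_{\bq,\varepsilon}(\liegd)$ and $\Uhat_{\bq,\varepsilon}(\liegd)$ as scalar extensions from $\RbqB$ to $\RbqBeps$. Since each of the integral forms is free as a module over its ground ring with PBW basis $\mathcal{B}$, the elementary fact that scalar extension of a free module with basis $\mathcal{B}$ produces a free module over the extended ring with basis $1 \otimes \mathcal{B}$ applies immediately. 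Thus the image of every PBW monomial under the canonical projection $\pi_\varepsilon : \Utilde_\bq(\liegd) \twoheadrightarrow \Utildeqepsgd$ (and likewise for the restricted forms) yields a free basis of the specialized algebra over $\Rbqeps$ or $\RbqBeps$.

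What remains is essentially bookkeeping: one must check that the ``specialized root vectors''---call them $\bar{e}_\alpha^\varepsilon := \Ebar_\alpha \mod p_\ell(q)$, and similarly $\bar{f}_\alpha^\varepsilon$, $l_i^{\pm 1}$, $k_i^{\pm 1}$ in the unrestricted case, and the analogous specialized divided powers and binomial coefficients in the restricted cases---are well-defined elements of the specialized algebra, which is automatic since the defining generators already live in the integral form. Then the ordered monomials listed in Theorems \ref{thm:PBW_hat-MpQG} and \ref{thm:PBW_tilde-MpQG} specialize term by term, and one records the resulting PBW-type bases of $\Utildeqepsgd$, $\Uhatdot_{\bq,\varepsilon}(\liegd)$, and $\Uhat_{\bq,\varepsilon}(\liegd)$ over the appropriate specialized ground ring.

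There is no real obstacle: the only potential subtlety is guaranteeing that the specialization process does not introduce extra relations destroying linear independence, but this is precisely ruled out by freeness of the integral form, so the argument reduces to invoking the statement ``base change of a free module along a ring map is free, with basis the image of the original basis.'' In summary, the theorem is a direct corollary of the PBW theorems for the integral forms combined with the realization of specialization as scalar extension.
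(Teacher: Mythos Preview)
Your proposal is correct and matches the paper's approach exactly: the paper states this theorem without proof, introducing it with ``our results in \S \ref{int-forms_mpqgs} about PBW bases of integral forms yield immediately the following,'' which is precisely the free-module-under-base-change argument you spell out. You have simply made explicit the one-line reasoning the paper leaves implicit.
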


\medskip

   Basing on the remark at the end of  Definition \ref{def_spec-eps},  consider now  both  $ \Utildeqepsgd $  and
   $ \, \Utilde_{\check{\bq}\hskip1pt,\hskip1pt{}\varepsilon}(\hskip0,8pt\lieg) $  as algebras over  $ \RbqBsqeps \, $.
   Let  $ \sigma_\varepsilon $  be the unique  $ 2 $--cocycle  of  $ \, \Utilde_{\check{\bq}\hskip1pt,\hskip1pt{}\varepsilon}(\hskip0,8pt\lieg) $
   naturally induced by the  $ 2 $--cocycle  $ \sigma $  of  $ \, \Utilde_{\check{\bq}}(\hskip0,8pt\lieg) $  as given in  Definition \ref{def-sigma}:
   that is,  $ \; \sigma_{\eps} : \Utilde_{\check{\bq}\hskip1pt,\hskip1pt{}\varepsilon}(\hskip0,8pt\lieg)\otimes
 \Utilde_{\check{\bq}\hskip1pt,\hskip1pt{}\varepsilon}(\hskip0,8pt\lieg) \relbar\joinrel\longrightarrow \Rbqeps \; $
 is the unique  $ \Rbqeps $--linear  map given by
%
%
  $$  \sigma_{\eps}(x,y) \, := \, \eps^{\;1/2}_{\mu\,\nu}  \;\;\;\text{\bf \ if \ }\;\;  x = K_\mu  \text{\;\ or \ }
x = L_\mu \; ,  \!\quad\text{and \ }  y = K_\nu  \text{\;\ or \ }  y = L_\nu  $$
and  $ \; \sigma_{\eps}(x,y) \, := \, 0 \; $  otherwise.  The results in  \S \ref{int-forms_mpqgs}  then lead us to the following

\medskip

\begin{theorem}  \label{Mpqg-roots-of-1_2-coc-def}
\
 Let  $ \bq $  be a multiparameter matrix of Cartan type.  Then
 \vskip2pt
   {\it (a)}\,  The Hopf  $ \, \Rbqeps $
   --algebra  $ \; \Utildeqepsgd $  is a  $ 2 $--cocycle  deformation of
   $ \, \Utilde_{\check{\bq}\hskip1pt,\hskip1pt{}\varepsilon}(\hskip0,8pt\lieg) \, $,  namely
   $ \; \Utilde_{\bq\hskip1pt,\hskip1pt{}\varepsilon}(\hskip0,8pt\lieg) \cong
   {\big(\, \Utilde_{\check{\bq}\hskip1pt,\hskip1pt{}\varepsilon}(\hskip0,8pt\lieg) \big)}_{\sigma_\varepsilon} \; $.
 \vskip2pt
   {\it (b)}\,  Assume that  $ \, \bq $  is of  {\sl integral type\/};  then the Hopf  $ \, \RbqBeps $--algebra  $ \; \Utildeqepsgd $  is a
   $ 2 $--cocycle  deformation of  $ \; \Utilde_{\check{\bq}\hskip1pt,\hskip1pt{}\varepsilon}(\hskip0,8pt\lieg) \, $,
namely $ \; \Utilde_{\bq\hskip1pt,\hskip1pt{}\varepsilon}(\hskip0,8pt\lieg) \cong
 {\big(\, \Utilde_{\check{\bq}\hskip1pt,\hskip1pt{}\varepsilon}(\hskip0,8pt\lieg) \big)}_{\sigma_\varepsilon} \; $.
%
%
\end{theorem}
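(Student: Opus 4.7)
The plan is to derive this result by specialization from its unspecialized version, namely Proposition \ref{prop:Utilde-cocy-def}, which states that $\Utilde_\bq^{\,\scriptscriptstyle{\sqrt{\,\ }}}(\hskip0,8pt\liegd) \cong \big(\Utilde_{\check{\bq}}^{\,\scriptscriptstyle{\sqrt{\,\ }}}(\hskip0,8pt\liegd)\big)_\sigma$ as Hopf algebras over $\Rbqsq$. The key observation is that the $2$--cocycle $\sigma$ of Definition \ref{def-sigma} takes values in $\Rbqsq$ and has a very rigid form, being a monomial in the $q_{ij}^{\pm 1/2}$'s on products of group-like toral generators and zero elsewhere; this rigidity ensures that $\sigma$ behaves well under any quotient of the base ring.

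First I would extend scalars from $\Rbq$ (resp.\ $\RbqB$, in the integral case) to $\Rbqsq$ (resp.\ $\RbqBsq$), placing us in the setting of Proposition \ref{prop:Utilde-cocy-def}. Then, quotienting by the ideal generated by $p_\ell\big(q^{1/2}\big)$, the cocycle $\sigma: \Utilde_{\check{\bq}}^{\,\scriptscriptstyle{\sqrt{\,\ }}}(\hskip0,8pt\liegd)^{\otimes 2} \longrightarrow \Rbqsq$ descends uniquely to an $\Rbqsqeps$--linear map $\sigma_\varepsilon: \Utilde_{\check{\bq}\hskip1pt,\hskip1pt{}\varepsilon}^{\,\scriptscriptstyle{\sqrt{\,\ }}}(\hskip0,8pt\liegd)^{\otimes 2} \longrightarrow \Rbqsqeps$. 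This descent is automatic because $\sigma$ is $\Rbqsq$--linear (so it sends $\big(p_\ell(q^{1/2})\big) \cdot \Utilde_{\check{\bq}}^{\,\scriptscriptstyle{\sqrt{\,\ }}}(\hskip0,8pt\liegd)^{\otimes 2}$ into $\big(p_\ell(q^{1/2})\big) \cdot \Rbqsq$); moreover $\sigma_\varepsilon$ is again a normalized $2$--cocycle, as the cocycle and counit identities are $\Rbqsq$--linear and therefore preserved by the quotient. Crucially, the cocycle deformation operation $H \rightsquigarrow H_\sigma$ is defined entirely via the $\Rbq$--linear convolution $\sigma * m * \sigma^{-1}$ and involves only the coproduct of $\Utilde_{\check{\bq}}^{\,\scriptscriptstyle{\sqrt{\,\ }}}(\hskip0,8pt\liegd)$: since both the coproduct and this convolution commute with the quotient by the principal ideal $\big(p_\ell(q^{1/2})\big)$, the deformation process itself commutes with specialization. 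This yields the desired identification $\Utilde_{\bq\hskip1pt,\hskip1pt{}\varepsilon}^{\,\scriptscriptstyle{\sqrt{\,\ }}}(\hskip0,8pt\liegd) \cong \big(\Utilde_{\check{\bq}\hskip1pt,\hskip1pt{}\varepsilon}^{\,\scriptscriptstyle{\sqrt{\,\ }}}(\hskip0,8pt\liegd)\big)_{\sigma_\varepsilon}$ as Hopf algebras over $\Rbqsqeps$, from which claim \emph{(a)} follows by restriction (using that $\sigma_\varepsilon$ has its values in the appropriate subring as prescribed in Definition \ref{def_spec-eps}).

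For part \emph{(b)}, when $\bq$ is of integral type, Lemma \ref{lemma: RbqBeps = RbqBsqeps} shows that the natural inclusion $\RbqBeps \hookrightarrow \RbqBsqeps$ is actually an isomorphism. Consequently the identification obtained above descends directly to the level of $\RbqBeps$, with the cocycle $\sigma_\varepsilon$ taking values in $\RbqBeps$ itself (the square roots $\varepsilon_{ij}^{\pm 1/2}$ being automatically identified with elements of $\RbqBeps$ under the isomorphism of Lemma \ref{lemma: RbqBeps = RbqBsqeps}). No extension of scalars is required in this case.

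The only genuinely delicate point in this argument is verifying that the quotient by the cyclotomic ideal interacts correctly with the cocycle deformation construction, i.e.\ that taking the specialization of a $2$--cocycle deformation produces the $2$--cocycle deformation of the specialization by the induced cocycle. I expect this to be the main (though still routine) obstacle: it reduces to checking compatibilities of the form $\sigma_\varepsilon\big(\bar x, \bar y\big) \equiv \sigma(x,y) \mod p_\ell\big(q^{1/2}\big)$ on homogeneous generators, which is immediate from the explicit form of $\sigma$ given in Definition \ref{def-sigma}. Once this compatibility is in hand, everything else follows directly from Proposition \ref{prop:Utilde-cocy-def} without any new computation.
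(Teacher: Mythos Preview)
Your approach is essentially the same as the paper's: invoke Proposition \ref{prop:Utilde-cocy-def}, observe that the cocycle-deformation construction commutes with base change along the specialization $\Rbqsq \twoheadrightarrow \Rbqsqeps$, and conclude. The paper compresses this into a single chain of equalities
\[
\Utilde_{\bq,\varepsilon}(\hskip0,8pt\liegd) \;=\; \Rbqeps \otimes_{\Rbq} \Utilde_\bq(\hskip0,8pt\liegd) \;=\; \Rbqeps \otimes_{\Rbq} \big(\Utilde_{\check{\bq}}(\hskip0,8pt\liegd)\big)_\sigma \;=\; \big(\Rbqeps \otimes_{\Rbq} \Utilde_{\check{\bq}}(\hskip0,8pt\liegd)\big)_{\sigma_\varepsilon} \;=\; \big(\Utilde_{\check{\bq},\varepsilon}(\hskip0,8pt\liegd)\big)_{\sigma_\varepsilon}
\]
and handles \emph{(b)} by the remark ``and likewise''. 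Your version is slightly more explicit in that you pass through the square-root extension $\Rbqsq$ first (which is where Proposition \ref{prop:Utilde-cocy-def} is actually stated) and then invoke Lemma \ref{lemma: RbqBeps = RbqBsqeps} to descend for \emph{(b)}; the paper leaves these passages implicit. The one step you label as ``delicate'' --- that specialization of a cocycle deformation equals the cocycle deformation of the specialization --- is exactly the content of the third equality in the paper's displayed chain, and both treatments regard it as immediate from $\Rbq$-linearity of the convolution formula.
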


\pf
 Directly from definitions along with  Proposition \ref{prop:Utilde-cocy-def},  we get claim  {\it (a)\/}  from
%
%
  $$  \Utilde_{\bq\hskip1pt,\hskip1pt{}\varepsilon}(\hskip0,8pt\lieg)  =  \Rbqeps \otimes_{\raise-3pt\hbox{$ \scriptstyle \Rbq $}} \!\! \Utilde_\bq(\hskip0,8pt\lieg)  =  \Rbqeps \otimes_{\raise-3pt\hbox{$ \scriptstyle \Rbq $}} \!\! {\big(\, \Utilde_{\check{\bq}}(\hskip0,8pt\lieg) \big)}_\sigma  =  {\Big(\, \Rbqeps \otimes_{\raise-3pt\hbox{$ \scriptstyle \Rbq $}} \!\! \Utilde_{\check{\bq}}(\hskip0,8pt\lieg) \!\Big)}_{\sigma_\varepsilon}  \! =  {\big(\, \Utilde_{\check{\bq}\hskip1pt,\hskip1pt{}\varepsilon}(\hskip0,8pt\lieg) \big)}_{\sigma_\varepsilon}  $$
 and likewise we prove claim {\it (b)\/}  as well.
\epf

\medskip

\subsection{Quantum Frobenius morphisms for MpQG's}  \label{q-Frob}  \
 \vskip7pt
 When dealing with uniparameter quantum groups, the so-called ``quantum Frobenius morphisms'' set a strong link between specializations of these quantum groups (either restricted or unrestricted) at 1 and specializations at roots of unity.
                                                         \par
   When one chooses the  {\sl restricted\/}  and the  {\sl unrestricted\/}  integral forms, these quantum Frobenius morphisms (for uniparameter quantum groups) look as
  $$  \widehat{\hbox{\it F{\hskip0,5pt}r\,}}_{\!\raise-1pt\hbox{$ \scriptstyle \ell $}} \, : \; \Uhat_{\varepsilon}(\lieg) \,
  \relbar\joinrel\relbar\joinrel\relbar\joinrel\relbar\joinrel\relbar\joinrel\twoheadrightarrow \, \ZZ[\varepsilon] \otimes_{\raise-3pt\hbox{$ \scriptstyle \ZZ $}} \! \Uhat_1(\lieg)
 \eqno \text{({\sl restricted\/}  case)}  \quad  $$
and
  $$  \widetilde{\hbox{\it F{\hskip0,5pt}r\,}}_{\!\raise-1pt\hbox{$ \scriptstyle \ell $}} \, : \;
  \ZZ[\varepsilon] \otimes_{\raise-3pt\hbox{$ \scriptstyle \ZZ $}} \! \Utilde_1(\lieg) \,
  \lhook\joinrel\relbar\joinrel\relbar\joinrel\relbar\joinrel\relbar\joinrel\longrightarrow \, \Utilde_{\varepsilon}(\lieg)
  \eqno \text{({\sl unrestricted\/}  case)}  \quad  $$
where  $ \; \ZZ[\varepsilon] := \Zqqm \Big/ p_\ell(q) \, \Zqqm \; $,  $ \,\; \Uhat_s(\lieg) \, := \, \Uhat_q(\lieg) \Big/ \big(\, p_\ell(q) \, \Uhat_q(\lieg) \big)
%
%
 \;\, $  and similarly also  $ \,\; \Utilde_s(\lieg) \, := \, \Utilde_q(\lieg) \Big/ \big(\, p_\ell(q) \, \Utilde_q(\lieg) \big)
%
%
 \; $,  \, for  $ \, s \in \{ 1 \, , \varepsilon \,\} \, $.  Roughly speaking,  $ \widehat{\hbox{\it F{\hskip0,5pt}r\,}}_{\!\raise-1pt\hbox{$ \scriptstyle \ell $}} $  is given by taking  ``$ \ell $-th  roots'' of algebra generators of  $ \Uhat_{\eps}(\lieg) \, $,  namely quantum divided powers and quantum binomial coefficients, while (dually, in a sense)  $ \widetilde{\hbox{\it F{\hskip0,5pt}r\,}}_{\!\raise-1pt\hbox{$ \scriptstyle \ell $}} $  is given by raising to the  ``$ \ell $-th  power'' the algebra generators of  $ \Utilde_{\eps}(\lieg) \, $,  i.e.\ quantum root vectors and toral generators.
 \vskip5pt
   In the present subsection we shall show that similar quantum Frobenius morphisms do exist for MpQG's as well,
 with a similar description.

\medskip

\begin{free text}  \label{q-Frob-morph_Uhat}
 {\bf Quantum Frobenius morphisms in the restricted case.}  We start by considering quantum Frobenius
 morphisms in the restricted case, i.e.\ for the specializations at roots of unity of  $ \Uhatdotqgd $  and
 $ \Uhatqgd \, $.  Like in the uniparameter case, they will map any specialization at a root of unity
 $ \varepsilon $  onto a specialization at 1.
 \vskip7pt
   The following provides our  {\it quantum Frobenius morphisms\/}  for  {\sl restricted\/}  MpQG's:
\end{free text}

\vskip11pt

\begin{theorem}  \label{thm:qFrob-Uhat}
 Let  $ \, \bq := {\big(\,q_{ij}\big)}_{i,j \in I} \, $  be a multiparameter matrix  {\sl of integral type}.  Then there exists a Hopf algebra epimorphism (over  $ \; \RbqBeps \cong \, \ZZ[\varepsilon] \, $)
\begin{equation}  \label{eq:Frob-Uhatdoteps}
  {\dot{\widehat{\hbox{\it F{\hskip0,5pt}r\,}}}}_{\!\raise-1pt\hbox{$ \scriptstyle \ell $}} \, :
  \; \Uhatdot_{\bq\hskip1pt,\hskip1pt{}\varepsilon}(\hskip0,8pt\lieg) \, \relbar\joinrel\relbar\joinrel\relbar\joinrel\relbar\joinrel\twoheadrightarrow \,
  \RbqBeps \otimes_{\raise-3pt\hbox{$ \scriptstyle \RbqBuno $}} \!\! \Uhatdotqunogd \, \cong \, \ZZ[\varepsilon] \otimes_{\raise-3pt\hbox{$ \scriptstyle \ZZ $}} U_\ZZ\big(\hskip0,8pt\liegdotb\big)
\end{equation}
(cf.\ Theorem \ref{thm:specUqhat_q=1}{\it (a)\/})  given on generators by
\begin{align}
    E_i^{\,(n)}  \; \mapsto \;\,
       \begin{cases}
          \; \erm_i^{\,(n/\ell)}  &  \text{if}  \quad  \ell \,\Big|\, n  \\
          \hskip15pt  0  &  \text{if}  \quad  \ell \!\not\Big|\, n
       \end{cases}
 \quad  ,  \qquad  &  \quad
    F_i^{\,(n)}  \; \mapsto \;\,
       \begin{cases}
          \; \frm_i^{\,(n/\ell)}  &  \text{if}  \quad  \ell \,\Big|\, n  \\
          \hskip15pt  0  &  \text{if}  \quad  \ell \!\not\Big|\, n
       \end{cases}
 \\
   {\bigg( {{K_i \, ; c} \atop n} \bigg)}_{\!\!\varepsilon}  \mapsto
       \begin{cases}
          {\displaystyle \bigg(\, {{\kdotrm_i + c} \atop {n \big/ \ell}} \bigg)}  &  \!\!\text{if}\!  \quad  \ell \,\Big|\, n  \\
          \hskip17pt  0  &  \!\!\text{if}\!  \quad  \ell \!\not\Big|\, n
       \end{cases}
 \,  ,  \quad  &
    {\bigg( {{L_i \, ; c} \atop n} \bigg)}_{\!\!\varepsilon}  \mapsto
       \begin{cases}
          {\displaystyle \bigg(\, {{\ldotrm_i + c} \atop {n \big/ \ell}} \bigg)}  &  \!\!\text{if}\!  \quad  \ell \,\Big|\, n  \\
          \hskip17pt  0  &  \!\!\text{if}\!  \quad  \ell \!\not\Big|\, n
       \end{cases}
 \\
    {\bigg( {{G_i \, ; c} \atop n} \bigg)}_{\!\!\varepsilon_{i{}i}}  \mapsto \;
       \begin{cases}
          {\displaystyle \bigg(\, {{\hrm_i + c} \atop {n \big/ \ell}} \bigg)}  &  \text{if}  \quad  \ell \,\Big|\, n  \\
          \hskip17pt  0  &  \text{if}  \quad  \ell \!\not\Big|\, n
       \end{cases}
 \,  ,  &  \quad \qquad
    K_i^{\pm 1}  \, \mapsto \,  1  \;\;\; ,  \;\;\quad   L_i^{\pm 1}  \, \mapsto \,  1
\end{align}
   \indent
  Moreover, the image  $ \, \text{\sl Im}\Big( {\dot{\widehat{\hbox{\it F{\hskip0,5pt}r\,}}}}_{\!\raise-1pt\hbox{$ \scriptstyle \ell $}} \Big) \, $
   is co-central in  $ \; \RbqBeps \otimes_{\raise-3pt\hbox{$ \scriptstyle \RbqBuno $}} \!\! \Uhatdotqunogd \, $,  that is
\begin{equation}  \label{Frhatdot_cocentral}
  \big( \Delta - \Delta^{\text{\rm op}} \big)(u) \, \in \,
  \text{\sl Ker}\Big( {\dot{\widehat{\hbox{\it F{\hskip0,5pt}r\,}}}}_{\!\raise-1pt\hbox{$ \scriptstyle \ell $}} \Big)
  \otimes \text{\sl Ker}\Big( {\dot{\widehat{\hbox{\it F{\hskip0,5pt}r\,}}}}_{\!\raise-1pt\hbox{$ \scriptstyle \ell $}} \Big)  \qquad
  \text{for all}  \quad  u \in \Uhatdot_{\bq\hskip1pt,\hskip1pt{}\varepsilon}(\hskip0,8pt\lieg)
\end{equation}

 \vskip-5pt
   In addition,  when  $ \bq $  is of  {\sl strongly integral}  type, there exists yet another Hopf algebra epimorphism  (over  $ \; \RbqBeps \cong \, \ZZ[\varepsilon] \, $)
\begin{equation}  \label{eq:Frob-Uhateps}
  {\widehat{\hbox{\it F{\hskip0,5pt}r\,}}}_{\!\raise-1pt\hbox{$ \scriptstyle \ell $}} \, : \;
\Uhat_{\bq\hskip1pt,\hskip1pt{}\varepsilon}(\hskip0,8pt\lieg) \, \relbar\joinrel\relbar\joinrel\relbar\joinrel\relbar\joinrel\twoheadrightarrow \,
\RbqBeps \otimes_{\raise-3pt\hbox{$ \scriptstyle \RbqBuno $}} \!\! \Uhatqunogd \, \cong \, \ZZ[\varepsilon] \otimes_{\raise-3pt\hbox{$ \scriptstyle \ZZ $}} U_\ZZ\big(\hskip0,8pt\lieghatb\big)
\end{equation}
(cf.\ Theorem \ref{thm:specUqhat_q=1}{\it (b)\/})  for which
similar properties and a similar description hold true
with each  $ \, {\bigg(\! \displaystyle{{L_j \, ; c} \atop l_j} \bigg)}_{\!\!\varepsilon} \, $,  resp.\
$ \, {\bigg( \displaystyle{{K_j \, ; c} \atop k_j} \bigg)}_{\!\!\varepsilon} \, $,  replaced by
$ \, {\bigg( \displaystyle{{L_j \, ; c} \atop l_j} \bigg)}_{\!\!\varepsilon_j} \, $,  resp.\  $ \, {\bigg( \displaystyle{{K_j \, ; c} \atop k_j} \bigg)}_{\!\!\varepsilon_j} \, $.
\end{theorem}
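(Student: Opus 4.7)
The plan is to define the homomorphism $\dot{\widehat{\hbox{\it F{\hskip0,5pt}r\,}}}_{\!\ell}$ by the formulas given on the generators of $\Uhatdot_{\bq\hskip1pt,\hskip1pt{}\varepsilon}(\hskip0,8pt\liegd)$ (taken from the presentation in Theorem~\ref{thm:pres_Uhatgdq_gens-rels}) and verify, relation by relation, that this recipe extends to a well-defined Hopf algebra map onto $\RbqBeps \otimes U_\ZZ\big(\liegdotdq\big)$. Once the algebra structure is checked, the coproduct compatibility will follow by testing on the generators with the coproduct formulas already recorded in Theorem~\ref{thm:pres_Uhatgdq_gens-rels} and Lemma~\ref{commut_q-bin-coeff}; surjectivity is immediate; and co-centrality is then proved by a direct inspection on generators, exploiting the triangular decomposition from Proposition~\ref{prop:triang-decomps_Uhat}.

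The verification of the defining relations splits naturally into four blocks. \emph{Toral relations}: these are preserved trivially, since the target Hopf algebra $U_\ZZ\big(\liegdotdq\big)$ has $\kdotrm_i , \ldotrm_i , \hrm_i$ commuting with their various binomial coefficients and with one another. \emph{Cross-commutation}: a typical relation such as $K_i^{\pm 1} E_j^{(n)} = q^{\pm n b_{ij}} E_j^{(n)} K_i^{\pm 1}$ becomes, after applying $\dot{\widehat{\hbox{\it F{\hskip0,5pt}r\,}}}_{\!\ell}$, the tautology $\erm_j^{(n/\ell)} = \varepsilon^{\pm n b_{ij}} \, \erm_j^{(n/\ell)}$ for $\ell \mid n$ (and $0 = 0$ otherwise), which holds because $\ell \mid n$ forces $\varepsilon^{\pm n b_{ij}} = 1$; likewise the commutation of the $\varepsilon$-binomials with the divided powers reduces to the shift relations for classical binomials of $\kdotrm_i$, $\ldotrm_i$, $\hrm_i$. \emph{Serre relations}: trivial, since $E_j^{(1)} \mapsto 0$ whenever $\ell > 1$, so each summand vanishes. \emph{Divided-power products}: $X_i^{(r)} X_i^{(s)} = \binom{r+s}{r}_{q_{ii}} X_i^{(r+s)}$ reduces, when $\ell$ divides both $r$ and $s$, to the classical Lucas identity $\binom{\ell r' + \ell s'}{\ell r'}_\varepsilon = \binom{r' + s'}{r'}$, and vanishes identically in the remaining cases.

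The only truly delicate step is the $E$-$F$ commutation formula. Applied at $\varepsilon$, the sum
\begin{equation*}
 E_i^{(m)} F_i^{(n)} \; = \; \sum_{s=0}^{m \wedge n} F_i^{(n-s)} \, q_{ii}^{\,s} \, {\bigg( {{G_i \, ; \, 2s-m-n} \atop s} \bigg)}_{\!\varepsilon_{ii}} L_i^{\,s} \, E_i^{(m-s)}
\end{equation*}
only contributes summands with $\ell \mid s$ (the $\varepsilon_{ii}$-binomial of parameter $s$ vanishing otherwise), and for the left-hand side to survive one needs $\ell \mid m$ and $\ell \mid n$; writing $s = \ell k$, $M := m/\ell$, $N := n/\ell$, and using $\varepsilon_{ii}^{\ell k} = 1$ together with $L_i \mapsto 1$, the image identity collapses to the classical divided-power commutation rule in $U_\ZZ\big(\liegdotdq\big)$. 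The key input here is Lusztig's $\ell$-th divided-power identity at $\varepsilon$ (after adaptation to the $G_i$-binomials), for which the coprimality assumption $\gcd(\ell, d_i) = 1$ is essential to guarantee that the effective order of $\varepsilon_{ii} = \varepsilon^{\,2 d_i}$ is exactly $\ell$. This step is the main obstacle and requires the most care; everything else follows the canonical pattern.

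Co-centrality is checked on generators: $\Delta(E_i^{(n)}) - \Delta^{\text{op}}(E_i^{(n)})$ equals $\sum_s \big( E_i^{(n-s)} K_i^{\,s} \otimes E_i^{(s)} - E_i^{(s)} \otimes E_i^{(n-s)} K_i^{\,s} \big)$ in which each tensor factor lies in $\text{Ker}\big(\dot{\widehat{\hbox{\it F{\hskip0,5pt}r\,}}}_{\!\ell}\big)$ whenever it is not a scalar (either because the divided-power exponent is not a multiple of $\ell$, or because $K_i^{\,s} - 1$ is annihilated); an analogous analysis on $F_i^{(n)}$ and on the toral binomials (whose coproducts modulo $\text{Ker} \otimes \text{Ker}$ become symmetric by direct inspection from Lemma~\ref{commut_q-bin-coeff}) finishes the argument. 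The parallel statement for $\Uhat_{\bq,\varepsilon}(\hskip0,8pt\liegd)$ in the strongly integral case is obtained verbatim, with every $\varepsilon$-binomial $\binom{K_j;c}{n}_\varepsilon$ replaced by $\binom{K_j;c}{n}_{\varepsilon_j}$ and using that $\gcd(\ell, d_j) = 1$ makes $\varepsilon_j$ again of order exactly $\ell$.
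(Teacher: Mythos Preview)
Your overall strategy---use the presentation of Theorem~\ref{thm:pres_Uhatgdq_gens-rels} and verify each relation block-by-block, with the $E$--$F$ commutation as the only substantive step---is exactly the paper's approach, and your treatment of that commutation (reducing to the classical divided-power formula via Lucas-type identities for $\varepsilon_{ii}$-binomials, using $\gcd(\ell,d_i)=1$) matches the paper's sample computation.

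There is, however, a genuine gap in your co-centrality argument. Your claim that in
\[
\sum_s\Big(E_i^{(n-s)}K_i^{\,s}\otimes E_i^{(s)}-E_i^{(s)}\otimes E_i^{(n-s)}K_i^{\,s}\Big)
\]
``each tensor factor lies in $\Ker\big(\dot{\widehat{\hbox{\it F{\hskip0,5pt}r\,}}}_{\!\ell}\big)$ whenever it is not a scalar'' fails precisely for the summands with $\ell\mid n$, $\ell\mid s$, and $0<s<n$: then $E_i^{(s)}\mapsto \erm_i^{(s/\ell)}\neq 0$ and $E_i^{(n-s)}K_i^{\,s}\mapsto \erm_i^{((n-s)/\ell)}\neq 0$, so neither factor lies in the kernel. (Already for $n=\ell$ the boundary terms $s=0$ and $s=\ell$ combine to give $(K_i^{\,\ell}-1)\otimes E_i^{(\ell)}-E_i^{(\ell)}\otimes(K_i^{\,\ell}-1)$, which sits in $\Ker\otimes A+A\otimes\Ker$ but not in $\Ker\otimes\Ker$.) The rewriting via $K_i^{\,s}-1$ that you allude to does not rescue this: it only moves part of each term into the kernel, leaving residues of the form $E_i^{(n-s)}\otimes E_i^{(s)}-E_i^{(s)}\otimes E_i^{(n-s)}$ with both factors outside the kernel.

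The paper avoids this difficulty entirely: for both the Hopf-morphism property and co-centrality it simply observes that the cocycle deformation leaves the coalgebra structure untouched, so the coproduct formulas on generators coincide with those in the canonical case $\bq=\check{\bq}$, and the quantum Frobenius is given by the same formulas; hence the result follows directly from the uniparameter case (specifically \cite[Proposition~6.4]{DL}). You should adopt this reduction rather than attempt a direct verification.
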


\begin{proof}
 We present the proof for  $ \, {\dot{\widehat{\hbox{\it F{\hskip0,5pt}r\,}}}}_{\!\raise-1pt\hbox{$ \scriptstyle \ell $}} \, $  and
 $ \, \Uhatdot_{\bq\hskip1pt,\hskip1pt{}\varepsilon}(\hskip0,8pt\lieg) \, $,
 the rest being similar.
%
                                                  \par
   By  Theorem \ref{thm:pres_Uhatgdq_gens-rels}{\it (a)},  we have a presentation of  $ \, \Uhatdot_{\bq\hskip1pt,\hskip1pt{}\varepsilon} := \Uhatdot_{\bq\hskip1pt,\hskip1pt{}\varepsilon}(\hskip0,8pt\lieg) \, $  by generators and relations. Then this also yields a similar presentation for  $ \, \RbqBeps \otimes_{\raise-3pt\hbox{$ \scriptstyle \RbqBuno $}} \!\!\! \Uhatdotqunogd \, $,  which is isomorphic to  $ \, \ZZ[\varepsilon] \otimes_{\raise-3pt\hbox{$ \scriptstyle \ZZ $}} U_\ZZ\big(\hskip0,8pt\liegdotb\big) \, $  as a Hopf algebra, by  Theorem \ref{thm:specUqhat_q=1}{\it (a)}.  Now, a moment's check shows that under the prescriptions given in the claim each relation in the presentation of  $ \Uhatdotqeps $  is mapped by  $ \, {\dot{\widehat{\hbox{\it F{\hskip0,5pt}r\,}}}}_{\!\raise-1pt\hbox{$ \scriptstyle \ell $}} \, $  onto either a similar relation in  $ \Uhatdotquno $  or zero, hence they do provide a well-defined algebra morphism as required.
                                                       \par
   To show what happens in a specific example, let us consider the relations
  $$  E_i^{\,(m)} \, F_i^{\,(n)}  \, = \;  \sum_{s=0}^{m \wedge n} \, F_i^{\,(n-s)} \, q_{ii}^{\,s} \,
  {\bigg( {{G_i \, ; \, 2\,s - m - n} \atop s} \bigg)}_{\!\!q_{ii}} \! L_i^{\,s} \, E_i^{\,(m-s)}   \eqno  \quad \forall \;\;  m, n \in \NN  \quad  $$
for every index  $ i \, $,  holding true in  $ \Uhatdot_\bq $  (cf.\ Theorem \ref{thm:pres_Uhatgdq_gens-rels}).
By specialization, these yield in  $ \Uhatdotqeps $  the relations
\begin{equation}  \label{comm-rel_E_vs_F_Ueps}
  E_i^{\,(m)} \, F_i^{\,(n)}  = \,  \sum_{s=0}^{m \wedge n} \, F_i^{\,(n-s)} \, \varepsilon_{ii}^{\,s} \,
  {\bigg( {{G_i \, ; \, 2\,s - m - n} \atop s} \bigg)}_{\!\!\varepsilon_{ii}} \! L_i^{\,s} \, E_i^{\,(m-s)}   \;\quad  \big(\, m, n \in \NN \,\big)
\end{equation}
and likewise in  $ \, \Uhatdotquno \cong U_\ZZ\big(\hskip0,8pt\liegdotb\big) \, $  the relations (cf.\
Definition \ref{def_spec-1} and Theorem \ref{thm:specUqhat_q=1})
\begin{equation}  \label{comm-rel_E_vs_F_Uuno}
  \erm_i^{\,(m)} \, \frm_i^{\,(n)}  \, = \;  \sum_{s=0}^{m \wedge n} \, \frm_i^{\,(n-s)} \, \bigg( {{\hrm_i \, + (2\,s - m - n)} \atop s} \bigg) \, \erm_i^{\,(m-s)}
  \qquad  \big(\, m, n \in \NN \,\big)
\end{equation}
where one uses a bit of arithmetic of  $ p $--binomial  coefficients (namely, the sixth line identity in the list of
Lemma \ref{commut_q-bin-coeff})  and of (classical) binomial coefficients to realize that specializing
$ \, {\Big(\! {{G_i \, ; \, 2\,s - m - n} \atop s} \!\Big)}_{\!\!q_{ii}} $  at  $ \, q = 1 \, $  eventually yields
$ \, \Big( {{\hrm_i \, + (2\,s - m - n)} \atop s} \!\Big) \, $.
                                                        \par
   Now, a moment's thought shows that if in left-hand side of  \eqref{comm-rel_E_vs_F_Ueps}  either  $ m $  or  $ n $
   is  {\sl not\/}  divisible by  $ \ell \, $,  then for each summand in right-hand side all of  $ (n\!-\!s) \, $,  $ s $  and
   $ (m\!-\!s) $  are not divisible either; hence our prescriptions for
   $ \, {\dot{\widehat{\hbox{\it F{\hskip0,5pt}r\,}}}}_{\!\raise-1pt\hbox{$ \scriptstyle \ell $}} \, $
   actually do map both sides of  \eqref{comm-rel_E_vs_F_Ueps}  to zero.  If instead both  $ m $  and
   $ n $  {\sl are\/}  divisible by  $ \ell \, $,  then there are also summands in right-hand side for which all
   of  $ (n\!-\!s) \, $,  $ s $  and  $ (m\!-\!s) $  are divisible as well; more explicitly, if  $ \, m = h\,\ell \, $  and
   $ \, n = k\,\ell \, $,  say, then the ``relevant'' summands on right-hand side are exactly those with index  $ \, s = r\,\ell \, $
   for all  $ \, r \in \big\{ 0 \, , 1 \, , \dots , h \wedge k \big\} \, $.  In this case, our prescriptions for
   $ \, {\dot{\widehat{\hbox{\it F{\hskip0,5pt}r\,}}}}_{\!\raise-1pt\hbox{$ \scriptstyle \ell $}} \, $
   map the left-hand side of  \eqref{comm-rel_E_vs_F_Ueps}  to  $ \; \erm_i^{\,(m/\ell)} \frm_i^{\,(n/\ell)} \! = \erm_i^{\,(h)} \frm_i^{\,(k)} \; $
   and the right-hand side to
  $$  {\textstyle \sum\limits_{r=0}^{h \wedge k} \, \frm_i^{\,((k\,\ell-r\,\ell)/\ell)} \, \Big( {{\hrm_i \, + (2\,r\,\ell - h\,\ell - k\,\ell)/\ell} \atop {r\,\ell/\ell}} \Big) \,
  \erm_i^{\,((h\,\ell-r\,\ell)/\ell)}  \; = \;  \sum\limits_{r=0}^{h \wedge k} \, \frm_i^{\,(k-r)} \, \Big( {{\hrm_i \, + (2\,r - h - k)} \atop r} \Big) \, \erm_i^{\,(h-r)} }  $$
where the right-hand side is equal to  $ \; \erm_i^{\,(h)} \frm_i^{\,(k)} \, $,  \, by  \eqref{comm-rel_E_vs_F_Uuno}  for  $ \, m := h \, $  and  $ \, n := k \, $.
 \vskip5pt
   Therefore the given formulas do provide a well-defined morphism of algebras
   $ \, {\dot{\widehat{\hbox{\it F{\hskip0,5pt}r\,}}}}_{\!\raise-1pt\hbox{$ \scriptstyle \ell $}} \, $  as required.
 By construction  $ {\dot{\widehat{\hbox{\it F{\hskip0,5pt}r\,}}}}_{\!\raise-1pt\hbox{$ \scriptstyle \ell $}} $  is clearly onto, as the generators of
 $ \, \RbqBeps \otimes_{\raise-3pt\hbox{$ \scriptstyle \RbqBuno $}} \!\!\! \Uhatdotqunogd \, \cong \, \ZZ[\varepsilon] \otimes_{\raise-3pt\hbox{$ \scriptstyle \ZZ $}} U_\ZZ\big(\hskip0,8pt\liegdotb\big) \, $  are the images via  $ {\dot{\widehat{\hbox{\it F{\hskip0,5pt}r\,}}}}_{\!\raise-1pt\hbox{$ \scriptstyle \ell $}} $  of the corresponding generators of  $ \, \Uhatdot_\bq \, $.
 \vskip3pt
   Finally, we must prove that  $ {\dot{\widehat{\hbox{\it F{\hskip0,5pt}r\,}}}}_{\!\raise-1pt\hbox{$ \scriptstyle \ell $}} $  is also a  {\sl Hopf algebra\/}  morphism and that its image is co-central.  This follows from the uniparameter case, as the coalgebra structure of the integral form of these MpQG's (cf.\ Theorem \ref{thm:pres_Uhatgdq_gens-rels}{\it (a)\/}  and \cite[Proposition 6.4]{DL})  is the same as in the canonical case (the cocycle deformation process does not change the coalgebra structure), and our quantum Frobenius morphism is described by the same formulas.
\end{proof}

\vskip9pt

\begin{free text}  \label{q-Frob-morph_Utilde}
 {\bf The unrestricted case: quantum Frobenius morphisms for  $ \Utildeqgd \, $.}
%
%
 In the unrestricted case, i.e.\ that of  $ \Utildeqgd \, $,  quantum Frobenius morphisms, in comparison with the restricted case,
 ``go the other way round''.  Indeed, like in the uniparameter case, we shall find them mapping the specialization at 1
 (of the given unrestricted integral form of a MpQG) into any specialization at a root of unity  $ \varepsilon \, $.
                                                                    \par
   The very construction of such quantum Frobenius morphisms requires some preparation.  Mimicking what was found
   in  \cite{DP}  for the canonical case, the first ingredient is the subalgebra of  $ \Utildeqepsgd $ generated
   by the  $ \ell $--th  powers of its generators.
\end{free text}

\smallskip

\begin{definition}  \label{def:Z_0}
 We define  $ Z_0 $  to be the  $ \Rbqeps $--subalgebra
  $$  Z_0  \,\; := \;\,  {\Big\langle\, \fbar_\alpha^{\;\ell} \, , \; l_i^{\,\pm \ell} \, , \; k_i^{\,\pm \ell} \, ,
  \, \ebar_\alpha^{\;\ell} \,\Big\rangle}_{\! \alpha \in Q \, , \, i \in I}  $$
of  $ \, \Utildeqepsgd \, $  generated by the  $ \ell $--th  powers of the generators of  $ \Utildeqepsgd \, $.   \hfill  $ \diamondsuit $
\end{definition}

\smallskip

   {\sl N.B.:}\,  the original definition of  $ Z_0 $  given in  \cite[Chapter 5, \S 19.1]{DP}  reads different, but it is also proved
   --- still in  [{\it loc. cit.}]  ---   to be equivalent to the one given above.

\smallskip

   The main properties of  $ Z_0 $  were investigated in  \cite[ \S 4]{An4}, with a slightly more general approach.
   The main outcome reads as follows:

\smallskip

\begin{prop}  \label{prop: struct-Z_0}
 {\sl (cf.\  \cite[\S 4]{An4})}
 \vskip2pt
   (a)\,  $ Z_0 $  is  $ \boldsymbol{\varepsilon} $--central  in  $ \Utildeqepsgd \, $,  i.e., for each monomial  $ b $  in a PBW basis of
   $ \, \Utildeqepsgd $  as in  Theorem \ref{PBW-mpqgs_q=eps}  and each generator
   $ \, z \in {\big\{\, \fbar_\alpha^{\;\ell} \, , \; l_i^{\,\pm \ell} \, , \; k_i^{\,\pm \ell} \, , \, \ebar_\alpha^{\;\ell} \,\big\}}_{\! \alpha \in Q \, , \, i \in I} \, $
   of  $ \, Z_0 $  there exists a (Laurent) monomial  $ \, m_{z,b}\big( \boldsymbol{\varepsilon}^{\pm \ell} \big) \, $  in the  $ \varepsilon_{ij}^{\pm \ell} $'s
 such that
  $$  z \, b \; = \; m_{z,b}\big( \boldsymbol{\varepsilon}^{\pm \ell} \big) \, b \, z  $$
%
 In particular, when\/  $ \bq $  is of  {\sl integral}  type  $ Z_0 $  is  {\sl central},
hence  {\sl normal},  in  $ \Utildeqepsgd \, $.
 \vskip2pt
   (b)\,  $ Z_0 $  is a Hopf subalgebra of  $ \, \Utildeqepsgd \, $,  which is isomorphic as an algebra over  $ \Rbqeps $  to a partially Laurent
   $ \boldsymbol{\varepsilon} $--polynomial  algebra, namely
  $$  Z_{0}  \,\; \cong \;\,  \Rbqeps \Big[{\big\{ f_\alpha^{\;\ell} , l_i^{\pm \ell} , k_i^{\pm \ell} , e_\alpha^{\;\ell} \big\}}_{\alpha \in \Phi^+}^{i \in I} \Big]  $$
where the indeterminates  {\boldmath{$ \varepsilon $}}--commute  (notation as in  \S \ref{spec-1})  among them
 i.e.
  $$  \displaylines{
   f_{\alpha'}^{\;\ell} \, f_{\alpha''}^{\;\ell} \, = \, \varepsilon_{\raise-1pt\hbox{$ \scriptstyle \alpha'' \alpha' $}}^{\,\ell^{\,\raise1pt\hbox{$ \scriptscriptstyle 2 $}}}
   \, f_{\alpha''}^{\;\ell} \, f_{\alpha'}^{\;\ell}  \;\; ,  \qquad
e_{\alpha'}^{\;\ell} \, f_{\alpha''}^{\;\ell} \, = \, f_{\alpha''}^{\;\ell} \, e_{\alpha'}^{\;\ell}  \;\; ,  \qquad
e_{\alpha'}^{\;\ell} \, e_{\alpha''}^{\;\ell} \, = \, \varepsilon_{\raise-1pt\hbox{$ \scriptstyle \alpha' \alpha'' $}}^{\,\ell^{\,\raise1pt\hbox{$ \scriptscriptstyle 2 $}}}
\, e_{\alpha''}^{\;\ell} \, e_{\alpha'}^{\;\ell}  \cr
   k_i^{\pm \ell} \, e_\alpha^{\;\ell} \, = \, \varepsilon_{\raise-1pt\hbox{$ \scriptstyle \alpha_i \, \alpha $}}^{\,\pm \ell^{\,\raise1pt\hbox{$ \scriptscriptstyle 2 $}}}
   \, e_\alpha^{\;\ell} \, k_i^{\pm \ell}  \quad ,  \quad \qquad  l_i^{\pm \ell} \, e_\alpha^{\;\ell} \, =
   \, \varepsilon_{\raise-1pt\hbox{$ \scriptstyle \alpha \, \alpha_i $}}^{\,\mp \ell^{\,\raise1pt\hbox{$ \scriptscriptstyle 2 $}}} \, e_\alpha^{\;\ell} \, l_i^{\pm \ell}  \cr
   k_i^{\pm \ell} \, f_\alpha^{\;\ell} \, = \, \varepsilon_{\raise-1pt\hbox{$ \scriptstyle \alpha_i \, \alpha $}}^{\,\mp \ell^{\,\raise1pt\hbox{$ \scriptscriptstyle 2 $}}}
   \, f_\alpha^{\;\ell} \, k_i^{\pm \ell}  \quad ,  \quad \qquad  l_i^{\pm \ell} \, f_\alpha^{\;\ell} \, =
   \, \varepsilon_{\raise-1pt\hbox{$ \scriptstyle \alpha \, \alpha_i $}}^{\,\pm \ell^{\,\raise1pt\hbox{$ \scriptscriptstyle 2 $}}} \, f_\alpha^{\;\ell} \, l_i^{\pm \ell}  \cr
   k_i^{\pm \ell} \, k_j^{\pm \ell} \, = \, k_j^{\pm \ell} \, k_i^{\pm \ell}  \;\; ,  \qquad  k_i^{\pm \ell} \, l_j^{\pm \ell} \, = \, l_j^{\pm \ell} \, k_i^{\pm \ell}  \;\; ,
   \qquad  k_i^{\pm \ell} \, k_j^{\pm \ell} \, = \, k_j^{\pm \ell} \, k_i^{\pm \ell}  }  $$
 In particular, if  $ \, \bq $  is of  {\sl integral}  type   --- hence  $ \, \Rbqeps = \ZZ[\varepsilon]] \, $  ---   then  $ \, \Utildeqepsgd $  is a  {\sl commutative}  Hopf algebra of partially Laurent polynomials.
 \vskip2pt
   (c)\;  $ \Utildeqepsgd \, $  is a free (left or right)  $ Z_0 $--module  of rank  $ \ell^{\,\raise2pt\hbox{$ \scriptstyle \text{\sl dim}(\hskip0,8pt\lieg) $}} \, $.
\end{prop}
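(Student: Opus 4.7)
The plan is to reduce everything as much as possible to already-established facts for the canonical (uniparameter) case of \cite{DP} by exploiting the cocycle-deformation identification $\Utildeqepsgd \cong \big(\Utilde_{\check{\bq},\varepsilon}(\hskip0,8pt\liegd)\big)_{\sigma_\varepsilon}$ of Theorem \ref{Mpqg-roots-of-1_2-coc-def}, after scalar extension to $\RbqBsqeps$ (using Lemma \ref{lemma: RbqBeps = RbqBsqeps} to descend back). In the canonical case it is classical (De Concini--Procesi, suitably lifted to the quantum double setup already exploited in the proof of Proposition \ref{Utildesquno_2-coc-def}) that $\check{Z}_0 := \big\langle \check{\!\Fbar}_\alpha^{\;\ell}, K_i^{\pm\ell}, L_i^{\pm\ell}, \check{\!\Ebar}_\alpha^{\;\ell}\big\rangle$ is a \emph{central} Hopf subalgebra of $\Utilde_{\check{\bq},\varepsilon}(\hskip0,8pt\liegd)$, with the stated polynomial structure, and that the ambient algebra is free of rank $\ell^{\dim \liegd}$ over it. Thus the task is to transfer these three facts through the $2$--cocycle twist.

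For part (a), fix a generator $z$ of $Z_0$ and a PBW monomial $b$ as in Theorem \ref{PBW-mpqgs_q=eps}. The comparison formulas of \S\ref{subsubsec:comp-formulas} show that $z$ and $b$ differ from their canonical counterparts $\check{z}, \check{b}$ only by Laurent monomials in the $q_{ij}^{\pm 1/2}$; then applying again the product-rescaling formulas in \S\ref{subsubsec:comp-formulas} to the products $z \smallast b$ and $b \smallast z$ produces factors which, because $z$ carries an exponent divisible by $\ell$, are Laurent monomials in the $q_{ij}^{\pm\ell/2}$. By centrality of $\check{z}$ in the canonical algebra one then deduces $z \smallast b = m_{z,b}(\bq^{\pm\ell/2}) \, b \smallast z$ for a specific such monomial; specializing at $q = \varepsilon$ and using $\varepsilon^{\ell/2 \cdot 2} = \varepsilon^\ell$ yields an identity with a monomial in the $\varepsilon_{ij}^{\pm\ell}$, as claimed. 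When $\bq$ is of integral type, $\varepsilon_{ij}^{\pm\ell} = \varepsilon^{\pm b_{ij}\ell} = 1$, so the scalar is trivial and $Z_0$ is central (hence normal).

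For part (b), the Hopf subalgebra property follows by the same transfer: the coalgebra structure of $\Utildeqepsgd$ coincides with that of $\Utilde_{\check{\bq},\varepsilon}(\hskip0,8pt\liegd)$ (cocycle twists do not affect the coproduct), and for the canonical algebra the formula $\Delta\big(\check{\!\Ebar}_\alpha^{\;\ell}\big) = \check{\!\Ebar}_\alpha^{\;\ell}\otimes 1 + K_\alpha^\ell \otimes \check{\!\Ebar}_\alpha^{\;\ell}$ (and its $\Fbar$-analogue), obtained from the $\varepsilon$--quantum binomial theorem together with the vanishing of $\binom{\ell}{m}_{\varepsilon_{\alpha\alpha}}$ for $0 < m < \ell$, places $\Delta(z) \in \check{Z}_0 \otimes \check{Z}_0 = Z_0 \otimes Z_0$. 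The explicit polynomial presentation is then obtained by raising to the $\ell$-th power the relations of Theorem \ref{struct-Utildequno} rewritten at $q = \varepsilon$: each $y_{\alpha'\alpha''}$-commutation relation in $\Utildequnogd$ lifts, via the PBW theorem and the comparison formulas, to a relation $x \, y = q_{\alpha'\alpha''} \, y\,x + (q - 1)(\text{lower})$ in $\Utildeqgd$; raising to the $\ell$-th power produces a leading term $q_{\alpha'\alpha''}^{\,\ell^2}$ and ``error'' terms whose total $\ell$-divisibility, combined with PBW-linear-independence arguments as in the proof of Theorem \ref{struct-Utildequno}, force them to vanish modulo $p_\ell(q)\,Z_0$.

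For part (c), one invokes the PBW basis of $\Utildeqepsgd$ given by Theorem \ref{PBW-mpqgs_q=eps}: every PBW exponent $n$ writes uniquely as $n = \ell\,q + r$ with $0 \le r < \ell$, so that every PBW monomial factorizes (using the $\boldsymbol{\varepsilon}$-centrality of part (a) to move factors) as a product of a monomial in the generators of $Z_0$ and a ``small'' PBW monomial whose exponents all lie in $\{0,\dots,\ell-1\}$. This gives a $Z_0$-basis of $\Utildeqepsgd$ of cardinality $\ell^{2N+2\theta} = \ell^{\dim\liegd}$, where $N = |\Phi^+|$. The main obstacle I foresee is the careful bookkeeping in part (b) of the ``error'' terms arising when raising $\smallast$-commutation relations to their $\ell$-th powers: one must show that all non-leading PBW contributions vanish at $q = \varepsilon$, and this is precisely where one needs to combine the $[\ell]_{q_\alpha}!$-divisibility (as in \cite{DP}) with the monomial factors introduced by the cocycle twist, which is delicate because the two come from essentially independent sources.
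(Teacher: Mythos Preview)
Your approach is genuinely different from the paper's. The paper does not reduce to the canonical De Concini--Procesi case at all: it simply cites \cite[\S 4]{An4}, where Angiono proves the analogous statements directly in the multiparameter (pre-Nichols) setting, and then adds one line of bookkeeping explaining that Angiono's coefficient $\varepsilon_{\ell\alpha'',\,\ell\alpha'}$ equals $\varepsilon_{\alpha''\alpha'}^{\,\ell^2}$ by bimultiplicativity. So the paper's ``proof'' is essentially a citation plus a symbol translation.

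Your route --- transfer from $\check{Z}_0$ via the cocycle identification of Theorem~\ref{Mpqg-roots-of-1_2-coc-def} --- is more in keeping with the paper's overall philosophy and is more self-contained, but part~(b) as you describe it is muddled. The idea of ``raising the relations of Theorem~\ref{struct-Utildequno} to the $\ell$-th power'' and then controlling error terms is not the right mechanism, and you correctly flag it as the obstacle. A cleaner line is available within your own framework: first observe from the comparison formulas in \S\ref{subsubsec:comp-formulas} that each $\smallast$-power $\Ebar_\alpha^{\,*\ell}$ is a nonzero scalar multiple of the corresponding $\check{\cdot}$-power $\check{\Ebar}_\alpha^{\;\check{\cdot}\,\ell}$, so that $Z_0$ and $\check{Z}_0$ coincide as $\Rbqeps$-submodules. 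Then the Hopf-subalgebra property follows immediately (coproduct unchanged, $\check{Z}_0$ already a Hopf subalgebra), and the explicit $\varepsilon^{\ell^2}$-commutation relations follow directly from part~(a) applied with $b$ itself an $\ell$-th power, computing the scalar via the bigraded cocycle formula of \S\ref{cotwist-defs_2}; no ``error-term'' analysis is needed. With that repair your argument goes through; the trade-off is that it reproves by hand what \cite{An4} already supplies.
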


\pf
 Almost everything is proved in   \cite[\S 4]{An4},  so we just stress a single detail concerning claim  {\it (c)}.
 Indeed,  in  \cite[Proposition 4.1]{An4}  yields claim  {\it (b)\/}  as well as  {\it (c)},  but for the latter the involved
 coefficients read differently, for instance one has
 $ \; f_{\alpha'}^{\;\ell} \, f_{\alpha''}^{\;\ell} \, = \, \varepsilon_{\raise-1pt\hbox{$ \scriptstyle \ell\,\alpha'' ,
 \, \ell\,\alpha' $}} \, f_{\alpha''}^{\;\ell} \, f_{\alpha'}^{\;\ell} \; $.  But the symbol  $ \, \varepsilon_{\raise-1pt\hbox{$ \scriptstyle \alpha , \, \beta $}} \, $
 is bimultiplicative in  $ \alpha $  and  $ \beta $   --- i.e., it is a bicharacter on  $ Q \times Q \, $
 ---   hence  $ \; \varepsilon_{\raise-1pt\hbox{$ \scriptstyle \ell\,\alpha'' , \, \ell\,\alpha' $}} =
 \varepsilon_{\raise-1pt\hbox{$ \scriptstyle \alpha'' \alpha' $}}^{\,\ell^{\,\raise1pt\hbox{$ \scriptscriptstyle 2 $}}} \; $,  and we are done.
\epf

\vskip11pt

   We shall now compare the subalgebra  $ Z_0 \, $,  a sub-object inside  $ \Utildeqepsgd \, $,  which is the specialisation of  $ \Utildeqgd $  at  $ \, q = \varepsilon \, $,  with the specialization at  $ \, q = 1 \, $,  that is  $ \, \Utildequnogd \, $.  This leads to find a special morphism, which we call  {\sl quantum Frobenius morphism for  $ \Utildeqgd \, $},  which links  $ \Utildequnogd $  with  $ \Utildeqepsgd $ --- once again generalising what occurs in the uniparameter case.  In order to formalise this, we need to make  $ \Utildequnogd $  into a Hopf algebra  {\sl over  $ \Rbqeps \, $},  so that we can compare it with  $ \Utildeqepsgd \, $.
 \vskip3pt
   Let us consider the unique ring embedding
\begin{equation}  \label{Frobenius-for-Rings}
   {\ } \hskip-57pt
  \Rbquno \!\lhook\joinrel\relbar\joinrel\longrightarrow \Rbqsqeps \, \big( \cong \Rbqeps \,\big)  \quad ,
  \qquad  y_{ij}^{\pm 1} \mapsto \varepsilon_{\raise-1pt\hbox{$ \scriptstyle i j $}}^{\,\ell^{\,\raise1pt\hbox{$ \scriptscriptstyle 2 $}}} \;
  \Big( \cong {\big(\, \varepsilon_{ij}^{\pm 1/2} \,\big)}^{\!2\,\ell^2} \,\Big)
\end{equation}
where in right-hand side we take into account the isomorphism  $ \, \Rbqsqeps \cong \Rbqeps \, $  given by  Lemma \ref{lemma: RbqBeps = RbqBsqeps};
we use this embedding to perform scalar extension from  $ \Rbquno $  to  $ \Rbqeps $  for  $ \Utildequnogd \, $,  so to make
$ \, \Rbqeps \otimes_{\raise-3pt\hbox{$ \scriptstyle \Rbquno $}} \!\!\! \Utildequnogd \, $  into a (Hopf) algebra over  $ \Rbqeps \, $.
                                                                   \par
   Besides, recall   --- from  Proposition \ref{prop: proport_root-vects}  ---   that for any  $ \, \alpha \in \Phi^+ \, $  there exists suitable (Laurent) monomials
   $ \, m^+_\alpha\big( \bq^{\pm 1/2} \big) \, $  and  $ \, m^-_\alpha\big( \bq^{\pm 1/2} \big) \, $  in the  $ q_{ij}^{\pm 1/2} $'s  such that
  $$  E_\alpha  \, = \,  m^+_\alpha\big( \bq^{\pm 1/2} \big) \, \check{E}_\alpha  \quad ,  \qquad  F_\alpha = m^-_\alpha\big( \bq^{\pm 1/2} \big) \, \check{F}_\alpha  $$
where  $ \check{E}_\alpha \, $,  resp.\  $ \check{F}_\alpha \, $,  is the quantum root vector associated with  $ \, \alpha \in \Phi^+ \, $,  resp.\
$ \, -\alpha \in \Phi^- \,$,  in  $ \QEqcheck $  and  $ E_\alpha \, $,  resp.\  $ F_\alpha \, $,  is the similar vector in  $ \, \QEq = {\big( \QEqcheck \big)}_\sigma \, $.
                                                                   \par
   As a direct consequence, we have similar relations among quantum root vectors in
   $ \, \Utildequnogd = {\big(\, \Rbquno \otimes_{\raise-2pt\hbox{$ \scriptstyle \ZZ[q,\hskip1pt{}q^{-1}] $}}
   \Utilde_{\check{\bq}\hskip1pt,1}(\hskip0,8pt\lieg) \big)}_\sigma \, $  and  $ \, \Utildeqepsgd
   = {\big(\, \Rbqeps \otimes_{\raise-2pt\hbox{$ \scriptstyle \ZZ[\varepsilon,\hskip1pt{}\varepsilon^{-1}] $}}
   \Utilde_{\check{\bq}\hskip1pt,\hskip1pt\varepsilon}(\hskip0,8pt\lieg) \big)}_\sigma \, $, namely
\begin{equation}
  \ebar_\alpha  \, = \,  m^+_\alpha\big(\, \by^{\pm 1/2} \big) \, \check{\ebar}_\alpha  \quad ,  \qquad
  \fbar_\alpha = m^-_\alpha\big( \by^{\pm 1/2} \big) \, \check{\fbar}_\alpha  \qquad\;  \text{in}  \quad
  \Utildequnogd   \label{prop_root-vects_Utildequno}
\end{equation}
and
\begin{equation}
  \ebar_\alpha  \, = \,  m^+_\alpha\big(\, \boldsymbol{\varepsilon}^{\pm 1/2} \big) \,
  \check{\ebar}_\alpha  \quad ,  \qquad  \fbar_\alpha = m^-_\alpha\big( \boldsymbol{\varepsilon}^{\pm 1/2} \big) \, \check{\fbar}_\alpha
  \qquad\;  \text{in}  \quad  \Utildeqepsgd   \label{prop_root-vects_Utildeqeps}
\end{equation}
 \vskip7pt
   Our main result in this subsection is the existence of  {\it quantum Frobenius morphisms\/}  for {\sl unrestricted MpQG's},  that are the monomorphisms mentioned below:

\vskip11pt

\begin{theorem}  \label{thm: unrestr_q-Frob-morph}
 There exists a Hopf algebra monomorphism
  $$  \widetilde{\hbox{\it F{\hskip0,5pt}r\,}}_{\!\raise-1pt\hbox{$ \scriptstyle \ell $}} \, : \;
   \Rbqeps \otimes_{\raise-3pt\hbox{$ \scriptstyle \Rbquno $}} \!\! \Utildequnogd \,
   \lhook\joinrel\relbar\joinrel\relbar\joinrel\relbar\joinrel\relbar\joinrel\longrightarrow \,
   \Utildeqepsgd  $$
uniquely determined (still identifying  $ \, \Rbqsqeps \cong \Rbqeps \, $)  for all  $ \, \alpha \in \Phi^+ \, , \, i \in I \, $,  by
  $$  {\ } \hskip-17pt   \fbar_\alpha \mapsto
  {m^-_\alpha\big( \text{\boldmath{$ \varepsilon $}}^{\pm 1/2} \big)}^{\! \ell^{\,\raise1pt\hbox{$ \scriptscriptstyle 2 $}} - \ell} \,
  \fbar_\alpha^{\;\ell}  \; ,  \;\;  l_i^{\,\pm 1} \mapsto l_i^{\,\pm \ell}  \; ,  \;\;  k_i^{\,\pm 1} \mapsto k_i^{\,\pm \ell}  \; ,
  \;\;  \ebar_\alpha \mapsto {m^+_\alpha\big( \text{\boldmath{$ \varepsilon $}}^{\pm 1/2} \big)}^{\! \ell^{\,\raise1pt\hbox{$ \scriptscriptstyle 2 $}} - \ell}
  \, \ebar_\alpha^{\;\ell}   \eqno (\star)  $$
whose image is the  {\boldmath{$ \varepsilon $}}--central  Hopf subalgebra  $ Z_0 $  of  $ \, \Utilde_{\bq\hskip1pt,\varepsilon}(\hskip0,8pt\lieg) \, $;
as a consequence, the Hopf algebra  $ Z_0 $  itself is isomorphic to  $ \, \Rbqeps \otimes_{\raise-3pt\hbox{$ \scriptstyle \Rbquno $}} \!\! \Utildequnogd \, $.
                                                                  \par
   In particular,  {\sl when  $ \bq $  is integral}  the morphism  $ \widetilde{\hbox{\it F{\hskip0,5pt}r\,}}_{\!\raise-1pt\hbox{$ \scriptstyle \ell $}} $
   is described by the simpler formulas  (for all  $ \, \alpha \in \Phi^+ \, , \, i \in I \, $)
  $$  \fbar_\alpha \mapsto \fbar_\alpha^{\;\ell}  \quad ,  \qquad  l_i^{\,\pm 1} \mapsto l_i^{\,\pm \ell}  \quad ,  \qquad  k_i^{\,\pm 1} \mapsto k_i^{\,\pm \ell}  \quad ,
  \qquad  \ebar_\alpha \mapsto \ebar_\alpha^{\;\ell}  \quad .  $$
\end{theorem}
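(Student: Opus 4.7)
\smallskip

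\noindent\textbf{Proof plan.}\,  The strategy is to reduce to the canonical case via the $2$--cocycle deformation mechanism, which links our multiparameter objects with their uniparameter ($\bq = \check{\bq}$) counterparts.  By Proposition~\ref{prop:Utilde-cocy-def} and Theorem~\ref{Mpqg-roots-of-1_2-coc-def}, after scalar extension from $\Rbq$ to $\Rbqsq$ we have identifications
$\, \Utilde_\bq^{\,\scriptscriptstyle\sqrt{\ }}\!(\liegd) \cong \bigl(\Utilde_{\check\bq}^{\,\scriptscriptstyle\sqrt{\ }}\!(\liegd)\bigr)_{\sigma} \, $
and similarly at $ \, q = \eps \, $ and $ \, q = 1 \, $ with induced cocycles $ \, \sigma_\eps \, $, $ \, \sigma_1 \, $.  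In the canonical situation, the existence of a Hopf algebra monomorphism $ \widetilde{\hbox{\it Fr\,}}^\vee_{\!\ell} $ mapping $ \check\ebar_\alpha \mapsto \check\ebar_\alpha^{\,\ell} $, $ k_i^{\pm 1} \mapsto k_i^{\pm \ell} $, $ l_i^{\pm 1} \mapsto l_i^{\pm \ell} $, $ \check\fbar_\alpha \mapsto \check\fbar_\alpha^{\,\ell} $, with image the (central) Hopf subalgebra generated by $ \ell $--th powers, is classical (De Concini--Procesi, adapted to the quantum double setting); its image is exactly the canonical analogue of $ Z_0 \, $.  The prescribed formulas for $ \widetilde{\hbox{\it Fr\,}}_{\!\ell} $ in the multiparameter case are precisely what the canonical morphism becomes once one substitutes $ \check\ebar_\alpha = m^+_\alpha(\bq^{\pm 1/2})^{-1} \, \ebar_\alpha $ and $ \check\fbar_\alpha = m^-_\alpha(\bq^{\pm 1/2})^{-1} \, \fbar_\alpha $ on both sides via \eqref{prop_root-vects_Utildequno}--\eqref{prop_root-vects_Utildeqeps}: the exponent $ \ell^2 - \ell $ arises as the difference between $ m^\pm_\alpha(\by^{\pm 1/2}) \mapsto m^\pm_\alpha(\bold{\varepsilon}^{\pm 1/2})^{\ell^2} $ under the scalar embedding~\eqref{Frobenius-for-Rings} and the $\ell$--th power $ m^\pm_\alpha(\bold{\varepsilon}^{\pm 1/2})^{\ell} $ that results from $ \ebar_\alpha^{\,\ell} = m^+_\alpha(\bold{\varepsilon}^{\pm 1/2})^\ell \, \check\ebar_\alpha^{\,\ell} $.

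\smallskip

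\noindent   The first substantive step is to check that $ (\star) $ gives a well-defined algebra morphism.  By Theorem~\ref{struct-Utildequno}, the source $ \, \Rbqeps \otimes_{\Rbquno} \Utildequnogd \, $ has the presentation of a $ \by $--commutative polynomial/Laurent algebra in the $ f_\alpha $'s, $ l_i^{\pm 1} $'s, $ k_i^{\pm 1} $'s and $ e_\alpha $'s, with commutation coefficients $ y_{\alpha',\alpha''} $, $ y_{\alpha_i,\alpha}^{\pm 1} $, etc.  Under the embedding $ y_{ij}^{\pm 1} \mapsto \varepsilon_{ij}^{\pm \ell^2} $, these coefficients become $ \varepsilon_{\alpha',\alpha''}^{\,\ell^2} $, $ \varepsilon_{\alpha_i,\alpha}^{\pm \ell^2} $, etc., which are precisely the $ \bold{\varepsilon} $--commutation coefficients among the $ \ell $--th powers in $ Z_0 $, as read off from Proposition~\ref{prop: struct-Z_0}{\it (b)}.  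Taking into account that the invertible scalar factors $ m^\pm_\alpha(\bold{\varepsilon}^{\pm 1/2})^{\ell^2 - \ell} $ commute with everything modulo further invertible scalars   --- and that these scalar contributions cancel out correctly on either side of each commutation relation (a routine but bookkeeping-heavy computation, most easily verified by invoking the cocycle deformation identification above) ---   one concludes that the relations of $ \Utildequnogd $ are respected.

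\smallskip

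\noindent   Once well-definedness is established, $ \widetilde{\hbox{\it Fr\,}}_{\!\ell} $ factors through $ Z_0 $ by inspection of generators, and is surjective onto $ Z_0 $ because the $ m^\pm_\alpha(\bold{\varepsilon}^{\pm 1/2})^{\ell^2 - \ell} $ are invertible in $ \Rbqeps $, so each generator $ \fbar_\alpha^{\,\ell} $, $ \ebar_\alpha^{\,\ell} $, $ l_i^{\pm\ell} $, $ k_i^{\pm\ell} $ of $ Z_0 $ (cf.\ Definition~\ref{def:Z_0}) lies in the image.  Injectivity is then immediate by comparing PBW bases: the source has the $ \Rbqeps $--basis of ordered monomials inherited from Theorem~\ref{struct-Utildequno}, and these map to an $ \Rbqeps $--basis of $ Z_0 $ read from Proposition~\ref{prop: struct-Z_0}{\it (b)}, up to the invertible normalizing scalars.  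For the Hopf algebra property, the coproduct, counit and antipode in $ \Utildequnogd $ reduce modulo $ (q-1) $ to the primitive/group-like structures (since $ K_\alpha \equiv 1 $, $ L_\alpha \equiv 1 $ modulo $ (q-1) $ and the higher terms in $ \Delta(\Ebar_\alpha) $, $ \Delta(\Fbar_\alpha) $ carry $ (q-1) $--factors); on the target side, the coproduct of $ \ebar_\alpha^{\,\ell} $ in $ \Utildeqepsgd $ computes, via the quantum binomial identity at roots of unity, to $ \ebar_\alpha^{\,\ell} \ot 1 + K_\alpha^\ell \ot \ebar_\alpha^{\,\ell} $ modulo terms that vanish in $ Z_0 $ (this is the classical DP computation transported by the cocycle deformation), and likewise for $ \fbar_\alpha^{\,\ell} $.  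Since $ K_\alpha^\ell $, $ L_\alpha^\ell $ correspond under $ \widetilde{\hbox{\it Fr\,}}_{\!\ell} $ to the (specialized) group-like images of $ K_\alpha $, $ L_\alpha $, the Hopf morphism property follows on generators, hence globally.

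\smallskip

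\noindent   The main obstacle I anticipate is the cocycle bookkeeping in the well-definedness step: one must carefully track the half-integer exponents in $ m^\pm_\alpha(\bold{\varepsilon}^{\pm 1/2}) $ and verify that raising to $ \ell^2 - \ell $ (an even multiple of $ \ell $, since $ \ell $ is odd) produces coefficients that lie in $ \Rbqeps $ and cancel correctly in each commutation relation.  The integral-type simplification in the second part of the claim is then a direct consequence: if $ \, \bq = {(q^{b_{ij}})}_{i,j\in I} \, $, then $ \, m^\pm_\alpha(\bold{\varepsilon}^{\pm 1/2}) = \varepsilon^{c/2} \, $ for some $ \, c \in \ZZ \, $, so $ \, m^\pm_\alpha(\bold{\varepsilon}^{\pm 1/2})^{\ell^2 - \ell} = \varepsilon^{\,c \ell (\ell-1)/2} = 1 \, $ because $ (\ell-1)/2 \in \ZZ $ and $ \varepsilon^\ell = 1 $, reducing $ (\star) $ to the simple formulas displayed at the end of the statement.
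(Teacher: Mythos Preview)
Your overall strategy --- reduce to the canonical De~Concini--Procesi Frobenius $\widetilde{\hbox{\it Fr\,}}^\vee_{\!\ell}$ and transport it along the cocycle deformation, tracking the scalar discrepancy $m^\pm_\alpha(\boldsymbol{\varepsilon}^{\pm 1/2})^{\ell^2-\ell}$ via \eqref{prop_root-vects_Utildequno}--\eqref{prop_root-vects_Utildeqeps} and the ring embedding~\eqref{Frobenius-for-Rings} --- is the paper's strategy too, and your derivation of the exponent $\ell^2-\ell$ and of the integral-type simplification at the end is correct.

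Where you diverge from the paper is in the \emph{mechanism} for proving the Hopf morphism property, and your version carries some imprecision.  The paper exploits the cocycle deformation more fully: since $\Utildequnogd = \bigl(\Rbquno \otimes \Utilde_{\check\bq,1}(\liegd)\bigr)_{\sigma_1}$ and $\Utildeqepsgd = \bigl(\Rbqeps \otimes \Utilde_{\check\bq,\varepsilon}(\liegd)\bigr)_{\sigma_\varepsilon}$ as \emph{coalgebras}, the scalar extension of $\widetilde{\hbox{\it Fr\,}}^\vee_{\!\ell}$ is automatically a coalgebra morphism for the multiparameter objects.  The algebra morphism property for the deformed products then follows from a single identity, namely
\[
  \sigma_\varepsilon\bigl(\widetilde{\hbox{\it Fr\,}}_{\!\ell}(x),\,\widetilde{\hbox{\it Fr\,}}_{\!\ell}(y)\bigr) \,=\, \sigma_1(x,y)
\]
(checked on toral generators using $\varepsilon_{ij}^{\ell^2/2} = y_{ij}^{1/2}$ under~\eqref{Frobenius-for-Rings}), which immediately gives $\widetilde{\hbox{\it Fr\,}}_{\!\ell}(x \cdot_{\sigma_1} y) = \widetilde{\hbox{\it Fr\,}}_{\!\ell}(x) \cdot_{\sigma_\varepsilon} \widetilde{\hbox{\it Fr\,}}_{\!\ell}(y)$ by expanding the deformed product and using that $\widetilde{\hbox{\it Fr\,}}^\vee_{\!\ell}$ preserves the canonical product and coproduct.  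This bypasses entirely the ``bookkeeping-heavy computation'' you flag for well-definedness, and also your direct coproduct verification.

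Your direct approach is in principle workable, but two points need correction.  First, ``modulo terms that vanish in $Z_0$'' is malformed: $Z_0$ is a \emph{subalgebra} of $\Utildeqepsgd$, not a quotient, so the coproduct computation must show that $\Delta(\ebar_\alpha^{\,\ell})$ lands exactly in $Z_0 \otimes Z_0$ with the expected form --- which is a nontrivial DP-type result for non-simple $\alpha$, not a formal consequence of the quantum binomial identity alone.  Second, your description of the source coproduct (``reduces to primitive/group-like structures since $K_\alpha \equiv 1$'') is off: in $\Utildequnogd$ the cosets $k_i$ are genuine nontrivial group-likes, not $1$, so $\ebar_i$ is $(1,k_i)$-skew-primitive there, and the coproduct on non-simple $\ebar_\alpha$ is correspondingly more involved.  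The paper's cocycle-compatibility argument sidesteps both issues.
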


\pf
 To begin with, the morphism in  (\ref{Frobenius-for-Rings})  maps every
 $ \, y_{\alpha , \beta}^{\pm 1/2} \in \Rbqsquno \cong \Rbquno \, $  into the corresponding
 $ \, \varepsilon_{\alpha , \beta}^{\pm \ell^{\,\raise1pt\hbox{$ \scriptscriptstyle 2 $}} \! / \, 2} \in \Rbqsqeps \cong \Rbqeps \, $.  Moreover, for  $ \, \bq = \check{\bq} \, $  the analysis in  \cite{DP}  yields a (``quantum Frobenius'') morphism
 $ \; \widetilde{\hbox{\it F{\hskip0,5pt}r\,}}^\vee_{\!\raise-1pt\hbox{$ \scriptstyle \ell $}} \, : \,
 \Utilde_{\check{\bq}\hskip1pt,1}(\hskip0,8pt\lieg) \, \lhook\joinrel\relbar\joinrel\relbar\joinrel\relbar\joinrel\longrightarrow \,
 \Utilde_{\check{\bq}\hskip1pt,\hskip1pt\varepsilon}(\hskip0,8pt\lieg) \; $  {\sl of Hopf algebras\/}
 which is determined by the formulas given for the integral case in the above statement when
 $ \, \fbar_\alpha = \check{\fbar}_\alpha \, $,  etc., that is
 $ \, \widetilde{\hbox{\it F{\hskip0,5pt}r\,}}^\vee_{\!\raise-1pt\hbox{$ \scriptstyle \ell $}} \big( \check{\fbar}_{\!\alpha} \big)
 = \check{\fbar}_{\!\alpha}^{\;\raise-3pt\hbox{$ \scriptstyle \ell $}} \, $  and so on; in particular, this
 $ \widetilde{\hbox{\it F{\hskip0,5pt}r\,}}^\vee_{\!\raise-1pt\hbox{$ \scriptstyle \ell $}} $  preserves the coproduct.
 Now, extending scalars, we obtain yet another Hopf algebra monomorphism
  \,\;  $ \widetilde{\hbox{\it F{\hskip0,5pt}r\,}}_{\!\raise-1pt\hbox{$ \scriptstyle \ell $}} \, : \,
\Rbqeps \otimes_{\raise-2pt\hbox{$ \scriptstyle \ZZ[q,\hskip1pt{}q^{-1}] $}}
\Utilde_{\check{\bq}\hskip1pt,1}(\hskip0,8pt\lieg) \,
\lhook\joinrel\relbar\joinrel\relbar\joinrel\relbar\joinrel\relbar\joinrel\longrightarrow \,
\Rbqeps \otimes_{\raise-2pt\hbox{$ \scriptstyle \ZZ[\varepsilon,\hskip1pt{}\varepsilon^{-1}] $}}
\Utilde_{\check{\bq}\hskip1pt,\hskip1pt{}\varepsilon}(\hskip0,8pt\lieg) \;\, $
 that fits into the commutative diagram
 \vskip-15pt
  $$  \xymatrix{
   \Utilde_{\check{\bq}\hskip1pt,1}(\hskip0,8pt\lieg)_{\raise-9pt\hbox{\phantom{|}}} \hskip-5pt
   \ar@{^{(}->}[rr]^{\widetilde{\hbox{\it F{\hskip0,5pt}r\,}}^\vee_{\!\raise-1pt\hbox{$ \scriptstyle \ell $}}}
   \ar@{_{(}->}[d]_{}
   &  &  \hskip5pt \Utilde_{\check{\bq}\hskip1pt,\hskip1pt\varepsilon}(\hskip0,8pt\lieg)_{\raise-9pt\hbox{\phantom{|}}}  \ar@{^{(}->}[dd]^{}  \\
   \Rbquno \otimes_{\raise-2pt\hbox{$ \scriptstyle \ZZ[q,\hskip1pt{}q^{-1}] $}}
   \Utilde_{\check{\bq}\hskip1pt,1}(\hskip0,8pt\lieg)_{\raise-9pt\hbox{\phantom{|}}}  \ar@{_{(}->}[d]_{}  &  &  \\
  \Rbqeps \otimes_{\raise-2pt\hbox{$ \scriptstyle \Rbquno $}} \!
  \Big(\, \Rbquno \otimes_{\raise-2pt\hbox{$ \scriptstyle \ZZ[q,\hskip1pt{}q^{-1}] $}} \Utilde_{\check{\bq}\hskip1pt,1}(\hskip0,8pt\lieg) \Big)  \hskip7pt
  \ar@{_{(}->}[rr]_{\hskip37pt \widetilde{\hbox{\it F{\hskip0,5pt}r\,}}_{\!\raise-1pt\hbox{$ \scriptstyle \ell $}}}  &  &  \hskip5pt  \Rbqeps
  \otimes_{\raise-2pt\hbox{$ \scriptstyle \ZZ[\varepsilon,\hskip1pt{}\varepsilon^{-1}] $}}
  \Utilde_{\check{\bq}\hskip1pt,\hskip1pt{}\varepsilon}(\hskip0,8pt\lieg)  }  $$
 \vskip5pt
   Let us check that  $ \widetilde{\hbox{\it F{\hskip0,5pt}r\,}}_{\!\raise-1pt\hbox{$ \scriptstyle \ell $}} $
   satisfies the equalities in  $ (\star) \, $.  First, as by  (\ref{prop_root-vects_Utildequno})
   we have  $ \; \ebar_\alpha \; = {m^+_\alpha\big(\, \by^{\pm 1/\,2} \big)} \, \check{\ebar}_\alpha $
we find that
  $$  \displaylines{
   \quad   \widetilde{\hbox{\it F{\hskip0,5pt}r\,}}_{\!\raise-1pt\hbox{$ \scriptstyle \ell $}}\big(\ebar_\alpha \big)
   \, = \,  \widetilde{\hbox{\it F{\hskip0,5pt}r\,}}_{\!\raise-1pt\hbox{$ \scriptstyle \ell $}}\big( {m^+_\alpha\big(\, \by^{\pm 1/\,2} \big)}\, \check{\ebar}_\alpha \big)
   \, = \,  {m^+_\alpha\big(\, \text{\boldmath{$ \varepsilon $}}^{\pm 1/\,2} \big)}^{\ell^{\,\raise1pt\hbox{$ \scriptscriptstyle 2 $}}} \,
   \widetilde{\hbox{\it F{\hskip0,5pt}r\,}}_{\!\raise-2pt\hbox{$ \scriptstyle \ell $}}\big(  \check{\ebar}_\alpha \big)  \, =   \hfill  \cr
   \hfill   = \,  {m^+_\alpha\big(\, \text{\boldmath{$ \varepsilon $}}^{\pm 1/\,2} \big)}^{\ell^{\,\raise1pt\hbox{$ \scriptscriptstyle 2 $}}} \,
   \widetilde{\hbox{\it F{\hskip0,5pt}r\,}}^{\vee}_{\!\raise-2pt\hbox{$ \scriptstyle \ell $}}\big(  \check{\ebar}_\alpha \big)
   \, = \,  {m^+_\alpha\big(\, \text{\boldmath{$ \varepsilon $}}^{\pm 1/\,2} \big)}^{\ell^{\,\raise1pt\hbox{$ \scriptscriptstyle 2 $}}} \, \check{\ebar}_\alpha^{\ell}  \,
   =   \hfill  \cr
   \hfill   = \,  {m^+_\alpha\big(\, \text{\boldmath{$ \varepsilon $}}^{\pm 1/\,2} \big)}^{\ell^{\,\raise1pt\hbox{$ \scriptscriptstyle 2 $}}}
   \, {m^+_\alpha\big(\, \text{\boldmath{$ \varepsilon $}}^{\pm 1/\,2} \big)}^{ - \ell} \, \ebar_\alpha^{\,\ell}
   \, = \,  {m^+_\alpha\big(\, \text{\boldmath{$ \varepsilon $}}^{\pm 1/\,2} \big)}^{\ell^{\,\raise1pt\hbox{$ \scriptscriptstyle 2 $}} - \ell} \, \ebar_\alpha^{\,\ell}   \quad }  $$
--- thanks to  (\ref{prop_root-vects_Utildeqeps})  ---   i.e.\
  $ \; \widetilde{\hbox{\it F{\hskip0,5pt}r\,}}_{\!\raise-1pt\hbox{$ \scriptstyle \ell $}}\big(\ebar_\alpha \big) =
  {m^+_\alpha\big(\, \text{\boldmath{$ \varepsilon $}}^{\pm 1/\,2} \big)}^{\ell^{\,\raise1pt\hbox{$ \scriptscriptstyle 2 $}} - \ell} \, \ebar_\alpha^{\,\ell} \; $
 for every root vector  $ \, {\ebar}_\alpha \, $  in  $ \Utildequnogd \, $;  similarly one finds
  $ \widetilde{\hbox{\it F{\hskip0,5pt}r\,}}_{\!\raise-1pt\hbox{$ \scriptstyle \ell $}}\big(\fbar_\alpha \big) =
{m^-_\alpha\big(\, \text{\boldmath{$ \varepsilon $}}^{\pm 1/\,2} \big)}^{\ell^{\,\raise1pt\hbox{$ \scriptscriptstyle 2 $}} - \ell} \, \fbar_\alpha^{\,\ell} \; $
 when dealing with the  $ \, {\fbar}_\alpha $'s,  and
  $ \widetilde{\hbox{\it F{\hskip0,5pt}r\,}}_{\!\raise-1pt\hbox{$ \scriptstyle \ell $}}\big(l_i^{\pm 1} \big) = \,  l_i^{\,\pm \ell} \; $,
  $\widetilde{\hbox{\it F{\hskip0,5pt}r\,}}_{\!\raise-1pt\hbox{$ \scriptstyle \ell $}}\big(k_i^{\pm 1} \big) = \, k_i^{\,\pm\ell} \; $
  for all  $ \, i \in I \, $.
                                                                     \par
   Now recall that we have identifications of coalgebras
  $$  \Utildequnogd  \, =  {\Big(\, \Rbquno \otimes_{\raise-2pt\hbox{$ \scriptstyle \ZZ[q,\hskip1pt{}q^{-1}] $}}
   \Utilde_{\check{\bq}\hskip1pt,1}(\hskip0,8pt\lieg) \Big)}_{\!\sigma_{\raise-2pt\hbox{$ \scriptscriptstyle 1 $}}}  \; ,  \!\quad
   \Utildeqepsgd  \, =  {\Big(\, \Rbqeps \otimes_{\raise-2pt\hbox{$ \scriptstyle \ZZ[\varepsilon,\hskip1pt{}\varepsilon^{-1}] $}}
   \Utilde_{\check{\bq}\hskip1pt,\hskip1pt{}\varepsilon}(\hskip0,8pt\lieg) \Big)}_{\!\sigma_\varepsilon}  $$
hence the monomorphism $ \, \widetilde{\hbox{\it F{\hskip0,5pt}r\,}}_{\!\raise-1pt\hbox{$ \scriptstyle \ell $}} \, $
defines also a monomorphism of  {\sl coalgebras\/}   --- over  $ \Rbqeps $
---   from  $ \; \Rbqeps \otimes_{\raise-2pt\hbox{$ \scriptstyle \Rbquno $}} \! \Utildequnogd \; $ to  $ \, \Utildeqepsgd \, $.
To prove that  $ \, \widetilde{\hbox{\it F{\hskip0,5pt}r\,}}_{\!\raise-1pt\hbox{$ \scriptstyle \ell $}} \, $  is also a  {\sl Hopf algebra\/}
morphism, it is enough to prove that
$ \, \widetilde{\hbox{\it F{\hskip0,5pt}r\,}}_{\!\raise-1pt\hbox{$ \scriptstyle \ell $}} (\, x\cdot_{\sigma_{1}} y) =
\widetilde{\hbox{\it F{\hskip0,5pt}r\,}}_{\!\raise-1pt\hbox{$ \scriptstyle \ell $}}(x)\cdot_{\sigma_{\eps}} \!
\widetilde{\hbox{\it F{\hskip0,5pt}r\,}}_{\!\raise-1pt\hbox{$ \scriptstyle \ell $}}(y) \, $  for all  $ x $,  $ y $  in
$ \, \Rbqeps \otimes_{\raise-2pt\hbox{$ \scriptstyle \ZZ[q,\hskip1pt{}q^{-1}] $}} \Utilde_{\check{\bq}\hskip1pt,1}(\hskip0,8pt\lieg) \, $.
This in turn follows from the fact that
\begin{equation}  \label{s_eps Fr_ell = s_1}
  \sigma_{\eps}\Big(\widetilde{\hbox{\it F{\hskip0,5pt}r\,}}_{\!\raise-1pt\hbox{$ \scriptstyle \ell $}}(x) \, ,
  \widetilde{\hbox{\it F{\hskip0,5pt}r\,}}_{\!\raise-1pt\hbox{$ \scriptstyle \ell $}}(y) \Big)  \, = \,  \sigma_{1}(x,y)
  \qquad \text{for all} \quad  x, y \, \in \, \Rbqeps \otimes_{\raise-2pt\hbox{$ \scriptstyle \ZZ[q,\hskip1pt{}q^{-1}] $}}
  \Utilde_{\check{\bq}\hskip1pt,1}(\hskip0,8pt\lieg)
\end{equation}
which can be checked by direct computation with  $ x $  and  $ y $  being generators of
$ \Utilde_{\check{\bq}\hskip1pt,1}(\hskip0,8pt\lieg) $
and using the ring embedding \eqref{Frobenius-for-Rings}; for example, one has
  $$  \sigma_{\eps}\Big(\widetilde{\hbox{\it F{\hskip0,5pt}r\,}}_{\!\raise-1pt\hbox{$ \scriptstyle \ell $}}(k_{i}) \, ,
  \widetilde{\hbox{\it F{\hskip0,5pt}r\,}}_{\!\raise-1pt\hbox{$ \scriptstyle \ell $}}(k_{j}) \Big)  \; =
  \;  \sigma_{\eps}\Big(k_{i}^{\ell} \, ,k_{j}^{\ell} \Big)  \, = \;  \eps_{ij}^{\ell^{\,2}/2}  \, = \;  y_{ij}^{1/2}  \, =
  \;  \sigma_1(k_i,k_j)   \quad  \text{for all}  \;\; i, j \in I \; .  $$
 In fact, from  \eqref{s_eps Fr_ell = s_1}  we get
\begin{align*}
  \widetilde{\hbox{\it F{\hskip0,5pt}r\,}}_{\!\raise-1pt\hbox{$ \scriptstyle \ell $}}  &  \big( x \cdot_{\sigma_1} y \big)  \; = \;
  \widetilde{\hbox{\it F{\hskip0,5pt}r\,}}_{\!\raise-1pt\hbox{$ \scriptstyle \ell $}}
  \big( \sigma_1\big( x_{(1)} , y_{(1)} \big) \, x_{(2)} \, y_{(2)} \, \sigma_1^{-1}\big( x_{(3)} , y_{(3)} \big) \big)  \; =  \\
%
%
  &  \; = \;  \sigma_1\big( x_{(1)} , y_{(1)} \big) \,
  \widetilde{\hbox{\it F{\hskip0,5pt}r\,}}_{\!\raise-1pt\hbox{$ \scriptstyle \ell $}}\big( x_{(2)} \big) \,
\widetilde{\hbox{\it F{\hskip0,5pt}r\,}}_{\!\raise-1pt\hbox{$ \scriptstyle \ell $}}\big( y_{(2)} \big) \,
\sigma_1^{-1}\big( x_{(3)} , y_{(3)} \big)  \; =  \\
%
%
  &  \; = \;  \sigma_\eps\Big( {\widetilde{\hbox{\it F{\hskip0,5pt}r\,}}_{\!\raise-1pt\hbox{$ \scriptstyle \ell $}}(x)}_{(1)} \, ,
  {\widetilde{\hbox{\it F{\hskip0,5pt}r\,}}_{\!\raise-1pt\hbox{$ \scriptstyle \ell $}}(y)}_{(1)} \Big) \,
  {\widetilde{\hbox{\it F{\hskip0,5pt}r\,}}_{\!\raise-1pt\hbox{$ \scriptstyle \ell $}}(x)}_{(2)} \,
{\widetilde{\hbox{\it F{\hskip0,5pt}r\,}}_{\!\raise-1pt\hbox{$ \scriptstyle \ell $}}(y)}_{(2)} \,
\sigma_\eps^{-1}\Big( {\widetilde{\hbox{\it F{\hskip0,5pt}r\,}}_{\!\raise-1pt\hbox{$ \scriptstyle \ell $}}(x)}_{(3)} \,
,{\widetilde{\hbox{\it F{\hskip0,5pt}r\,}}_{\!\raise-1pt\hbox{$ \scriptstyle \ell $}}(y)}_{(3)} \Big)  \; =  \\
  &  \; = \;  \widetilde{\hbox{\it F{\hskip0,5pt}r\,}}_{\!\raise-1pt\hbox{$ \scriptstyle \ell $}}(x) \cdot_{\sigma_\eps} \,
  \widetilde{\hbox{\it F{\hskip0,5pt}r\,}}_{\!\raise-1pt\hbox{$ \scriptstyle \ell $}}(y)
\end{align*}
thus the proof is completed.
\epf

\medskip

\subsection{Small multiparameter quantum groups}  \label{small-MpQG's}  \
 \vskip7pt
   In the study of uniparameter quantum groups, a relevant role is played by the so called ``small quantum groups''.  These are usually introduced as  {\sl Hopf subalgebras of\/}  the  {\sl restricted\/}  quantum groups at roots of unity; nonetheless, they can also be realized as  {\sl Hopf algebra quotients of\/}  the  {\sl unrestricted\/}  quantum groups at roots of unity.  In this subsection we extend their construction to the multiparameter context.

\vskip9pt

\begin{free text}  \label{small-MpQG's_HAT-version}
 {\bf Small MpQG's: the ``restricted realization''.}  Let  $ \bq $  be a multiparameter of integral type, hence possibly of strongly integral type.  Correspondingly, we consider the restricted MpQG's  $ \Uhatdotqeps $  and  $ \Uhatqeps $  at a root of unity  $ \varepsilon \, $,  like in  \S \ref{spec-eps}.  Inside them, we consider the following subalgebras, defined by generating sets:
 \vskip-5pt
\begin{equation}  \label{def-small_MpQGs-dot}
  \uhatdotqeps  \, = \,  \uhatdotqepsgd  \; := \;
\bigg\langle\, F_i^{\,(n)} \, , \, L_i^{\pm 1} \, , \,
{\bigg( {L_i \atop n} \bigg)}_{\!\!\varepsilon} \, , \, K_i^{\pm 1} \, , \,
{\bigg( {K_i \atop n} \bigg)}_{\!\!\varepsilon} \, , \, E_i^{\,(n)}
\,\bigg\rangle_{\! i \in I}^{\! 0 \leq n \lneqq \ell}
\end{equation}
as a  $ \Rbqeps $--subalgebra  of  $ \, \Uhatdotqeps = \Uhatdotqepsgd \, $,  when  $ \bq $  is integral, and
\begin{equation}  \label{def-small_MpQGs}
  \uhatqeps  \, = \,  \uhatqepsgd  \; := \;
\bigg\langle\, F_i^{\,(n)} \, , \, L_i^{\pm 1} \, , \,
{\bigg( {L_i \atop n} \bigg)}_{\!\!\varepsilon_i} \, , \, K_i^{\pm 1} \, , \,
{\bigg( {K_i \atop n} \bigg)}_{\!\!\varepsilon_i} \, , \, E_i^{\,(n)}
\,\bigg\rangle_{\! i \in I}^{\! 0 \leq n \lneqq \ell}
\end{equation}
 as a  $ \Rbqeps $--subalgebra  of  $ \, \Uhatqeps = \Uhatqepsgd \, $,  when  $ \bq $  is  {\sl strongly\/} integral.  Similarly, one defines  $ \uhatdotqeps^{\,\raise-3pt\hbox{$ \scriptstyle \,\pm $}} \, $,
$ \uhatdotqeps^{\,\raise-3pt\hbox{$ \scriptstyle \pm,0 $}} \, $,  $ \uhatdotqeps^{\,\raise-3pt\hbox{$ \scriptstyle \,0 $}} \, $,  $ \uhatdotqeps^{\,\raise-3pt\hbox{$ \scriptstyle \,\leq $}} $  and  $ \uhatdotqeps^{\,\raise-3pt\hbox{$ \scriptstyle \,\geq $}} $  inside  $ \uhatdotqeps \, $,  and similarly
$ \uhatqeps^{\,\raise-3pt\hbox{$ \scriptstyle \,\pm $}} \, $,  $ \uhatqeps^{\,\raise-3pt\hbox{$ \scriptstyle \pm,0 $}} \, $,  etc., inside  $ \uhatqeps \, $,  just mimicking  Definition \ref{def:int-form_hat-MpQG}  {\sl but\/}  working inside  $ \Uhatdot_\bq $  or  $ \Uhat_\bq \, $,  respectively,  {\sl and\/}  imposing the restriction  ``$ \, n \lneqq \ell \, $''  everywhere.  All these objects will be called  {\sl ``restricted small multiparameter quantum (sub)groups\/''}.
                                                                 \par
   Note that for  $ \, \bq = \check{\bq} \, $  the canonical multiparameter, the small quantum group  $ \uhatqcheckeps $  is a quantum double version of the one-parameter small quantum group by Lusztig.

\vskip9pt

   Our first result is a structural one:

\vskip11pt

\begin{theorem}  \label{thm: uhat(dot)qeps_Hopf-subalg_&_presentation}
 For any\/  $ \bq $  of integral type,  $ \uhatdotqeps $  is a Hopf\,
$ \RbqBeps $--subalgebra
  of  $ \, \Uhatdotqeps = \Uhatdotqepsgd \, $.  In addition, if\/  $ \bq $  of strongly integral type, then  $ \, \uhatqeps $  is a Hopf\,
$ \RbqBeps $--subalgebra
  of  $ \, \Uhatqeps \! = \Uhatqepsgd \, $.  Moreover,  $ \uhatdotqeps $  admits a presentation by generators
  and relations that is the same as in  Theorem \ref{thm:pres_Uhatgdq_gens-rels}  (with  $ q $ specialized to  $ \varepsilon $)  but for the bound on generators   --- i.e., they must have  $ \, 0 \leq n \lneqq \ell \, $  as in  \eqref{def-small_MpQGs-dot}
 ---   and for the additional relations
\begin{equation}  \label{nilpotency-in-small-MpQGs}
   \quad   X_i^{(n)} X_i^{(m)} = \, 0   \;\; ,  \!\quad   {\bigg(\! {{M \,; \, c} \atop n} \bigg)}_{\!\!\varepsilon}
   {\bigg(\! {{M \,; \, c-n} \atop m} \!\bigg)}_{\!\!\varepsilon} = \, 0   \qquad \hfill  \forall \;\; n, m \lneqq \ell \, : \, n+m \geq \ell
\end{equation}
for all  $ \, X \in \{F,E\} \, $,  $ \, M \in \{K,L\} \, $,  $ \, c \in \ZZ \, $.
Similar statements hold true for all the other restricted small MpQG's, namely
 $ \; \uhatdotqeps^{\,\raise-3pt\hbox{$ \scriptstyle \,\pm $}} \, $,
 $ \uhatdotqeps^{\,\raise-3pt\hbox{$ \scriptstyle \pm,0 $}} \, $,
 $ \uhatdotqeps^{\,\raise-3pt\hbox{$ \scriptstyle \,0 $}} \, $,
 $ \uhatdotqeps^{\,\raise-3pt\hbox{$ \scriptstyle \,\leq $}} \, $  and
 $ \; \uhatdotqeps^{\,\raise-3pt\hbox{$ \scriptstyle \,\geq $}} \, $,
and   --- in the  {\sl strongly}  integral case ---
 $ \, \uhatqeps \, $,
 $ \uhatqeps^{\,\raise-3pt\hbox{$ \scriptstyle \,\pm $}} \, $,
 $ \uhatqeps^{\,\raise-3pt\hbox{$ \scriptstyle \pm,0 $}} \, $,
 $ \uhatqeps^{\,\raise-3pt\hbox{$ \scriptstyle \,0 $}} \, $,
 $ \uhatqeps^{\,\raise-3pt\hbox{$ \scriptstyle \,\leq $}} \, $  and
 $ \; \uhatqeps^{\,\raise-3pt\hbox{$ \scriptstyle \,\geq $}} \, $.
\end{theorem}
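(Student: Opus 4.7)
The plan is to argue at two levels: first establish that $\uhatdotqeps$ (and its parent $\uhatqeps$) is a Hopf subalgebra of $\Uhatdotqeps$ (resp.\ of $\Uhatqeps$), then derive the presentation.

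For the Hopf subalgebra part, I would invoke the explicit formulas for $\Delta$, $\epsilon$, $S$ on generators of $\Uhatdotqeps$ from Theorem \ref{thm:pres_Uhatgdq_gens-rels}. These have the crucial index-preservation property: $\Delta\big(E_i^{(n)}\big) = \sum_{s=0}^{n} E_i^{(n-s)} K_i^{s} \otimes E_i^{(s)}$ features factors with indices $\leq n$, and similarly for $F_i^{(n)}$ and for the Hopf structure on the divided binomials given in Lemma \ref{commut_q-bin-coeff}. Since our generating set \eqref{def-small_MpQGs-dot} is determined by the constraint $0 \leq n \lneqq \ell$, applying $\Delta$, $\epsilon$, $S$ to any generator keeps us within $\uhatdotqeps$. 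Closure under multiplication is more delicate: whenever $E_i^{(n)} E_i^{(m)}$ (or the $F$-analogue) arises with $n+m \geq \ell$ and $n,m < \ell$, the relation $X_i^{(n)} X_i^{(m)} = {\binom{n+m}{n}}_{\!\varepsilon_{ii}} X_i^{(n+m)}$ forces the product to vanish, because Lucas' theorem for quantum binomials at a primitive $\ell$-th root of unity — valid since $\ell$ is odd and coprime to each $d_i$, so $\varepsilon_{ii} = \varepsilon^{2d_i}$ is itself a primitive $\ell$-th root of unity — reduces ${\binom{n+m}{n}}_{\!\varepsilon_{ii}}$ to ${\binom{r}{n}}_{\!\varepsilon_{ii}}$ with $r := n+m-\ell < n$, which is $0$. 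The same mechanism delivers the binomial-nilpotency half of \eqref{nilpotency-in-small-MpQGs}. Combined with the cross-commutation relations of Theorem \ref{thm:pres_Uhatgdq_gens-rels}, which always output index-preserving monomials, multiplicative closure follows.

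For the presentation, let $\mathcal{A}$ denote the abstract $\RbqBeps$-algebra with the stated generators and relations. The universal property yields a surjective algebra map $\pi\colon \mathcal{A} \twoheadrightarrow \uhatdotqeps$, since every listed relation holds in $\uhatdotqeps \subseteq \Uhatdotqeps$ (the inherited ones by specialization and restriction of Theorem \ref{thm:pres_Uhatgdq_gens-rels}, and the nilpotency ones by the Lucas argument above). To prove $\pi$ is injective, I would construct a PBW-type spanning set of $\mathcal{A}$ consisting of ordered monomials whose divided-power exponents lie in $\{0,1,\dots,\ell-1\}$, built by using the commutation and quantum Serre relations to push the $E$-factors to the right, the toral factors to the middle, and the $F$-factors to the left — the same inductive procedure that underlies the proof of the PBW Theorem \ref{thm:PBW_hat-MpQG} for $\Uhatdotqeps$. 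The image of this spanning set in $\uhatdotqeps$ coincides with the restriction of the PBW $\RbqBeps$-basis of $\Uhatdotqeps$ (Theorem \ref{thm:PBW_hat-MpQG}{\it (d)}) to monomials with all exponents $<\ell$, which is $\RbqBeps$-linearly independent; hence $\pi$ is injective, giving the presentation. The statements for the various sub-objects $\uhatdotqeps^{\pm}$, $\uhatdotqeps^{\pm,0}$, $\uhatdotqeps^{0}$, $\uhatdotqeps^{\leq}$, $\uhatdotqeps^{\geq}$ then follow by the same argument combined with the triangular decomposition (Proposition \ref{prop:triang-decomps_Uhat}), and the strongly integral variant $\uhatqeps$ is entirely parallel, with the base of the toral binomials changed from $\varepsilon$ to $\varepsilon_i$.

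The main obstacle I anticipate is propagating vanishing from simple to non-simple quantum root vectors. For a simple root Lucas gives the nilpotency relation immediately, but for non-simple roots $E_\alpha$, $F_\alpha$ are defined via the braid group action (equivalently, iterated $c$-brackets), and one must verify that $E_\alpha^{(n)} E_\alpha^{(m)} = 0$ whenever $n+m \geq \ell$ and $n,m < \ell$ — both inside $\uhatdotqeps$ (needed for closure under multiplication, since the PBW-type ordering procedure for $\mathcal{A}$ will generate such products) and as a consequence of the presentation. For the canonical multiparameter this is Lusztig's classical result, and in the general case I would exploit Proposition \ref{prop: proport_root-vects}, which says $E_\alpha$ and $\check E_\alpha$ differ by a (Laurent) monomial in the $q_{ij}^{\pm 1/2}$'s. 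However, translating the identities between the multiplications of $\QEq$ and $\QEqcheck$ (cf.\ \S \ref{subsec:spec-prod_def-mult}) requires care, since — as emphasised in the remark following Theorem \ref{thm:PBW_hat-MpQG} — restricted integral forms do \emph{not} behave well under the $2$-cocycle deformation, so one cannot just import the relevant identity from the canonical case by scalar extension. This is the point at which the bulk of the technical work lies.
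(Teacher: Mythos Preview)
Your approach is essentially the paper's, carried out with more care. The paper's proof is very brief: it simply invokes the presentation of the big restricted MpQG (Theorem~\ref{thm:pres_Uhatgdq_gens-rels}), observes that specializing $q\to\varepsilon$ kills the relevant $q$-binomial coefficients $\binom{n+m}{n}_{q_{ii}}$ and $\binom{n+m}{n}_{q}$ (giving the nilpotency relations~\eqref{nilpotency-in-small-MpQGs}), and appeals to the Hopf-structure formulas of Lemma~\ref{lemma:commut_q-div-pows} for closure under $\Delta$, $\epsilon$, $S$. The paper does \emph{not} carry out an explicit injectivity argument for the map $\mathcal{A}\to\uhatdotqeps$, nor does it address the non-simple-root issue you raise; that analysis is deferred to the separate PBW Theorem~\ref{thm: PBW_small-uhat}, where the reduction to Lusztig's canonical case via the comparison formulas of \S\ref{subsubsec:comp-formulas} is done in detail. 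So your anticipated obstacle is genuine, but in the paper's organization it belongs to the subsequent PBW theorem rather than to the presentation theorem itself, and you need not tackle it here.
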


\pf
 The claim follows from the very definitions together with  Theorem \ref{thm:pres_Uhatgdq_gens-rels}
 --- noting in particular that all relations between generators given there do ``fit properly'' with the bound  $ \, n \lneqq \ell \, $
 on generators of the small MpQG.  In particular, the additional relations in  \eqref{nilpotency-in-small-MpQGs}
 are direct consequence of the relations
  $$  X_i^{(n)} X_i^{(m)} = \, {\bigg( {{n+m} \atop n} \bigg)}_{\!\!q_{ii}} \, X_i^{(n+m)}   \;\; ,
   \quad   {\bigg(\! {{M \,; \, c} \atop n} \bigg)}_{\!\!q} {\bigg(\! {{M \,; \, c-n} \atop m} \!\bigg)}_{\!\!q} = {{n+m} \choose n}_{\!q} \,
   {\bigg( {{M \,; \, c} \atop {n+m}} \bigg)}_{\!\!q}  $$
 (for all  $ \, X \in \{F,E\} \, $,  $ \, M \in \{K,L\} \, $  and  $ \, c \in \ZZ \, $)  holding true in our restricted MpQG's for every  $ \, n, m \in \NN \, $,  that for  $ \, n, m \lneqq \ell \, $  such that  $ \, n+m \geq \ell \, $  yield  \eqref{nilpotency-in-small-MpQGs}  because then  $ \, \displaystyle{{n+m} \choose n}_{\!\!q_{ii}} \! = 0 \, $  and  $ \, \displaystyle{{{n+m} \choose n}_{\!q} \! = 0} \, $  for $ \, q = \varepsilon \, $.
 \vskip3pt
   Similarly, the result about the Hopf structure follows from the explicit formulas for the coalgebra structure of  $ \, \Uhatdotqgd \, $  or  $ \, \Uhatqgd \, $
 coming from  Lemma \ref{lemma:commut_q-div-pows}.
\epf

\vskip7pt

   Our second result yields triangular decompositions for restricted small MpQG's:
\vskip13pt

\begin{prop}  \label{prop:triang-decomps_uhat}
 {\sl (triangular decompositions for restricted small MpQG's)}
 \vskip3pt
 The multiplication in  $ \, \uhatdotqeps $  provides
$ \, \RbqBeps $--module
  isomorphisms
  $$  \displaylines{
   \uhatdotqeps^{\,\raise-3pt\hbox{$ \scriptstyle - $}} \mathop{\otimes}_\Rbqeps \!\! \uhatdotqeps^{\,\raise-3pt\hbox{$ \scriptstyle \,0 $}} \,
   \cong \; \uhatdotqeps^{\,\raise-3pt\hbox{$ \scriptstyle \,\leq $}} \,
  \cong \; \uhatdotqeps^{\,\raise-3pt\hbox{$ \scriptstyle \,0 $}} \mathop{\otimes}_\Rbqeps \!\! \uhatdotqeps^{\,\raise-3pt\hbox{$ \scriptstyle - $}}  \;\; ,
 \quad  \uhatdotqeps^{\,\raise-3pt\hbox{$ \scriptstyle + $}} \mathop{\otimes}_\Rbqeps \!\! \uhatdotqeps^{\,\raise-3pt\hbox{$ \scriptstyle \,0 $}} \,
 \cong \; \uhatdotqeps^{\,\raise-3pt\hbox{$ \scriptstyle \,\geq $}} \,
  \cong \; \uhatdotqeps^{\,\raise-3pt\hbox{$ \scriptstyle \,0 $}} \mathop{\otimes}_\Rbqeps \!\! \uhatdotqeps^{\,\raise-3pt\hbox{$ \scriptstyle + $}}  \cr
   \uhatdotqeps^{\,\raise-3pt\hbox{$ \scriptstyle +,0 $}} \mathop{\otimes}_\Rbqeps \!\! \uhatdotqeps^{\,\raise-3pt\hbox{$ \scriptstyle -,0 $}}  \,
   \cong \;  \uhatdotqeps^{\,\raise-3pt\hbox{$ \scriptstyle \,0 $}}  \,
  \cong \;  \uhatdotqeps^{\,\raise-3pt\hbox{$ \scriptstyle -,0 $}} \mathop{\otimes}_\Rbqeps \!\! \uhatdotqeps^{\,\raise-3pt\hbox{$ \scriptstyle +,0 $}}  \;\; ,
 \quad  \uhatdotqeps^{\,\raise-3pt\hbox{$ \scriptstyle \,\leq $}} \mathop{\otimes}_\Rbqeps \! \uhatdotqeps^{\,\raise-3pt\hbox{$ \scriptstyle \,\geq $}}  \,
  \cong \;  \uhatdotqeps  \, \cong \;  \uhatdotqeps^{\,\raise-3pt\hbox{$ \scriptstyle \,\geq $}} \mathop{\otimes}_\Rbqeps \!
  \uhatdotqeps^{\,\raise-3pt\hbox{$ \scriptstyle \,\leq $}}  \cr
   \uhatdotqeps^{\,\raise-3pt\hbox{$ \scriptstyle \,+ $}} \mathop{\otimes}_\Rbqeps \!
   \uhatdotqeps^{\,\raise-3pt\hbox{$ \scriptstyle \,0 $}} \mathop{\otimes}_\Rbqeps \! \uhatdotqeps^{\,\raise-3pt\hbox{$ \scriptstyle \,- $}}  \;
  \cong \;\,  \uhatdotqeps  \; \cong \;\,  \uhatdotqeps^{\,\raise-3pt\hbox{$ \scriptstyle \,- $}} \mathop{\otimes}_\Rbqeps \!
  \uhatdotqeps^{\,\raise-3pt\hbox{$ \scriptstyle \,0 $}} \mathop{\otimes}_\Rbqeps \! \uhatdotqeps^{\,\raise-3pt\hbox{$ \scriptstyle \,+ $}}  }  $$
 and similarly with  ``$ \; \dot{\widehat{\mathfrak{u}}} $''  replaced by  ``$ \; \widehat{\mathfrak{u}} $''  if  $ \, \bq $  is  {\sl strongly}  integral.
\end{prop}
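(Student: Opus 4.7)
The plan is to mimic the proof of Proposition \ref{prop:triang-decomps_Uhat} for the ambient restricted MpQG $\Uhatdotqgd$, adapted to the small setting. As in that proof, the argument splits into two parts: surjectivity of each multiplication map, and linear independence of an explicit, PBW-like basis of the small subalgebras.

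For surjectivity I would work from the presentation of $\uhatdotqeps$ recorded in Theorem \ref{thm: uhat(dot)qeps_Hopf-subalg_&_presentation}, which inherits by specialization at $q=\varepsilon$ the commutation rules of Lemma \ref{lemma:commut_q-div-pows}. These rules let one push any $\varepsilon$-binomial coefficient or any $K_i^{\pm1},L_i^{\pm1}$ past an $E_j^{(n)}$ or $F_j^{(n)}$ at the cost of a shift in the parameter $c$, and to convert any product $E_i^{(m)}F_i^{(n)}$ into a sum of normal-ordered terms of the form $F_i^{(n-s)}\cdot(\text{toral})\cdot E_i^{(m-s)}$ via the Chevalley-like identity from that lemma; the case $i\neq j$ is trivial since $E_i$ and $F_j$ commute. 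Iterating yields the desired triangular normal form. The bound $n<\ell$ is preserved throughout, because the Chevalley-like identity only decreases root-vector exponents and the extra nilpotency relations \eqref{nilpotency-in-small-MpQGs} annihilate any contributions where the bound would be exceeded. For injectivity I would use the inclusion $\uhatdotqeps\hookrightarrow\Uhatdotqeps$ together with the PBW bases of $\Uhatdotqeps$ provided by Theorem \ref{thm:PBW_hat-MpQG}: the corresponding ``truncated'' PBW monomials --- all root-vector divided-power exponents and all binomial-coefficient exponents constrained to be strictly less than $\ell$ --- form a subset of such a basis, hence are automatically $\RbqBeps$-linearly independent; spanning of these truncated monomials inside $\uhatdotqeps$ and inside each of its nilpotent, toral, and Borel factors then follows from the reordering of the previous step.

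The main obstacle is verifying that the renormalized quantum root vectors $E_\alpha^{(n)}$ and $F_\alpha^{(n)}$ for non-simple $\alpha\in\Phi^+$ and $0\leq n<\ell$ actually lie in $\uhatdotqeps^{\pm}$ as defined, i.e.\ in the subalgebra generated only by the simple-root divided powers $E_j^{(k)},F_j^{(k)}$ with $k<\ell$. In the canonical uniparameter case $\bq=\check{\bq}$ this is classical and essentially due to Lusztig, and it rests on the braid-group realization of root vectors from \S \ref{rvec-MpQG} together with standard divided-power identities inside $U_q(\lieg)$ at roots of unity. To transfer the statement to an arbitrary integral-type $\bq$, I would extend scalars to $\Rbqsqeps$ and invoke the $2$-cocycle deformation identity $U_\bq\cong(U_{\check{\bq}})_\sigma$ of Theorem \ref{thm:sigma_2-cocy}, which descends to the specialization at $q=\varepsilon$, combined with Proposition \ref{prop: proport_root-vects} relating root vectors on the two sides by a Laurent monomial in the $q_{ij}^{\pm1/2}$; descent from $\Rbqsqeps$ back to $\RbqBeps$ is automatic via Lemma \ref{lemma: RbqBeps = RbqBsqeps}. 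Once the three-factor decomposition $\uhatdotqeps^{+}\otimes_{\RbqBeps}\uhatdotqeps^{0}\otimes_{\RbqBeps}\uhatdotqeps^{-}\cong\uhatdotqeps$ is in hand, the remaining isomorphisms --- the Borel decompositions $\uhatdotqeps^{\leq}\cong\uhatdotqeps^{-}\otimes\uhatdotqeps^{0}$ and $\uhatdotqeps^{\geq}\cong\uhatdotqeps^{0}\otimes\uhatdotqeps^{+}$, the split $\uhatdotqeps^{0}\cong\uhatdotqeps^{+,0}\otimes\uhatdotqeps^{-,0}$, the double decomposition $\uhatdotqeps\cong\uhatdotqeps^{\leq}\otimes\uhatdotqeps^{\geq}$, and the reversal of factor order in each of them --- all follow at once from the very same commutation identities, which, when reordering factors, only introduce invertible coefficients in $\RbqBeps$ and terms already lying in the target tensor product. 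The proof for $\uhatqeps$ in the strongly integral case is entirely parallel, with $q_i$ replacing $q$ in each $q$-binomial coefficient.
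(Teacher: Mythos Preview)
Your approach is correct and parallels the paper's, but you do considerably more work than the paper does at this point. The paper's proof is two sentences: the commutation relations recorded in the presentation of Theorem~\ref{thm: uhat(dot)qeps_Hopf-subalg_&_presentation} allow the same reordering argument as in Proposition~\ref{prop:triang-decomps_Uhat}, giving surjectivity; injectivity follows because the small multiplication map is a restriction of the (specialized) triangular-decomposition isomorphism for the ambient $\Uhatdotqeps$. No knowledge of PBW bases for the small factors is invoked.

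Your ``main obstacle''---showing that $E_\alpha^{(n)}, F_\alpha^{(n)}$ for non-simple $\alpha$ and $n<\ell$ lie in $\uhatdotqeps^{\pm}$---is exactly the content of the paper's \emph{next} result, Theorem~\ref{thm: PBW_small-uhat}, whose proof in turn uses the present proposition. So you have effectively merged Proposition~\ref{prop:triang-decomps_uhat} and Theorem~\ref{thm: PBW_small-uhat} into one argument, reversing the paper's logical order. Both orderings are legitimate: yours is more self-contained and makes the freeness of the small factors explicit before using it, while the paper's separates the triangular decomposition cleanly from the PBW statement and keeps the proof here minimal. One small citation slip: the PBW bases you need for $\Uhatdotqeps$ at $q=\varepsilon$ are those of Theorem~\ref{PBW-mpqgs_q=eps}, not Theorem~\ref{thm:PBW_hat-MpQG} (which is for the integral form before specialization).
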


\pf
 This is proved like for restricted MpQG's: one observes that the presentation of (restricted) small MpQG's given in  Theorem \ref{thm: uhat(dot)qeps_Hopf-subalg_&_presentation}  above presents the same special features that were exploited for the proof of  Proposition \ref{prop:triang-decomps_Uhat},  so the same arguments apply again.  A quicker argument is the following: the isomorphisms of  Proposition \ref{prop:triang-decomps_Uhat}  restricts to maps for small quantum groups which are linear isomorphisms by  Theorem \ref{thm: uhat(dot)qeps_Hopf-subalg_&_presentation}.
\epf

\vskip7pt

   The third result is a PBW-like theorem for these restricted small MpQG's:

\vskip11pt

\begin{theorem}  \label{thm: PBW_small-uhat}
 {\sl (PBW theorem for restricted small MpQG's)}
 \vskip3pt
 Every restricted small MpQG is a free
$ \RbqBeps $--module
  with
$ \RbqBeps $--basis
  the subset of a PBW basis   --- as given in Theorem \ref{thm:PBW_hat-MpQG}  ---
  of the corresponding specialized restricted MpQG made by those PBW-like monomials in
  which the degree of each factor is less than $ \ell \, $.  For instance,  $ \uhatdotqeps \, $  has
$ \RbqBeps $--basis
  $$  \bigg\{\; {\textstyle \prod\limits_{k=N}^1} F_{\beta^k}^{\,(f_k)} \, {\textstyle \prod\limits_{j \in I}}
  \, {\bigg( {L_j \atop l_j} \bigg)}_{\!\!q} {L_j}^{- \lfloor l_j/2 \rfloor} \, {\textstyle \prod\limits_{i \in I}} \,
  {\bigg( {G_i \atop g_i} \bigg)}_{\!\!q_{ii}} \! {G_i}^{- \lfloor g_i/2 \rfloor} \, {\textstyle \prod\limits_{h=1}^N}
  E_{\beta^h}^{\,(e_h)} \;\bigg|\; 0 \leq f_k, l_j, g_i, e_h < \ell \;\bigg\}  $$
and similarly holds for  $ \; \uhatdotqeps^{\,\raise-3pt\hbox{$ \scriptstyle \pm $}} \, $,
 $ \uhatdotqeps^{\,\raise-3pt\hbox{$ \scriptstyle \pm,0 $}} \, $,
 $ \uhatdotqeps^{\,\raise-3pt\hbox{$ \scriptstyle 0 $}} \, $,
 $ \uhatdotqeps^{\,\raise-3pt\hbox{$ \scriptstyle \leq $}} \, $  and
 $ \; \uhatdotqeps^{\,\raise-3pt\hbox{$ \scriptstyle \geq $}} \, $,
as well as   --- in the  {\sl strongly}  integral case ---   for
 $ \; \uhatqeps \, $,
 $ \uhatqeps^{\,\raise-3pt\hbox{$ \scriptstyle \pm $}} \, $,
 $ \uhatqeps^{\,\raise-3pt\hbox{$ \scriptstyle \pm,0 $}} \, $,
 $ \uhatqeps^{\,\raise-3pt\hbox{$ \scriptstyle 0 $}} \, $,
 $ \uhatqeps^{\,\raise-3pt\hbox{$ \scriptstyle \leq $}} \, $  and
 $ \; \uhatqeps^{\,\raise-3pt\hbox{$ \scriptstyle \geq $}} \, $.
\end{theorem}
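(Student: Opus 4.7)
The plan is to prove the claim for $\uhatdotqeps$ in detail; the cases of $\uhatqeps$ (in the strongly integral setting) and of all the various sub-objects (Borel, nilpotent, toral parts) will follow by the same reasoning, with $q_i$ replacing $q$ where appropriate, and by restriction/triangular decomposition. Throughout, I would exploit the ambient inclusion $\uhatdotqeps \hookrightarrow \Uhatdotqeps$ that is built into the definition.

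First I would dispatch linear independence, which is essentially free: the proposed ``small'' PBW monomials form a subset of the PBW basis of $\Uhatdotqeps$ over $\RbqBeps$ obtained by specializing Theorem \ref{thm:PBW_hat-MpQG} at $q = \varepsilon$, hence they are $\RbqBeps$-linearly independent inside $\Uhatdotqeps$, and a fortiori inside the subalgebra $\uhatdotqeps$. Next, by Proposition \ref{prop:triang-decomps_uhat}, it suffices to establish a spanning set for each of $\uhatdotqeps^{\,-}$, $\uhatdotqeps^{\,0}$ and $\uhatdotqeps^{\,+}$ separately. The toral factor $\uhatdotqeps^{\,0}$ is direct from Proposition \ref{prop:struct-Uhatq0}: its generators commute, the Laurent parts $K_i^{\pm 1}, L_i^{\pm 1}$ are just Laurent monomials, and the nilpotency relations \eqref{nilpotency-in-small-MpQGs} truncate products of binomial coefficients so that only those of degree $< \ell$ survive. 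The negative part is handled symmetrically to the positive one, so the real work is in $\uhatdotqeps^{\,+}$.

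The main obstacle is showing that the divided power root vectors $E_\beta^{(e)}$ with $e < \ell$ actually lie in $\uhatdotqeps^{\,+}$, i.e.\ in the $\RbqBeps$-subalgebra generated by $\{\,E_i^{(n)} : i \in I,\ 0 \le n < \ell\,\}$. These root vectors are produced via the braid operators $T_i$, and \emph{a priori} the iterated formulas $E_\beta = T_{i_1}\cdots T_{i_{k-1}}(E_{i_k})$ involve products of $E_i$'s raised to powers that could exceed $\ell-1$. The strategy I would follow is to use the description recalled in \S\ref{rvec-MpQG} that $T_j(E_i) = \ad_c^{-a_{ji}}(E_j)(E_i)$, so each $E_\beta$ is an iterated $c$-bracket involving only the simple $E_i$. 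Combined with divided-power identities and the nilpotency relations \eqref{nilpotency-in-small-MpQGs}, this expresses $E_\beta^{(e)}$ for $e < \ell$ as a polynomial in the $E_i^{(n)}$ with $n < \ell$. In the canonical case $\bq = \check{\bq}$ this is Lusztig's classical calculation for small quantum groups; for general $\bq$ of (strongly) integral type one then transports the result along the $2$-cocycle identification $U_\bq \cong (U_{\check{\bq}})_\sigma$ of Theorem \ref{thm:sigma_2-cocy}, using Proposition \ref{prop: proport_root-vects} and the comparison formulas of \S\ref{subsubsec:comp-formulas} to control the scalar distortion (which lies in $\Rbqsq$, hence is harmless after specializing to $\Rbqsqeps \cong \RbqBeps$ by Lemma \ref{lemma: RbqBeps = RbqBsqeps}).

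Once $E_\beta^{(e)} \in \uhatdotqeps^{\,+}$ for $e < \ell$ is secured, spanning is finished as follows: every product of such generators can be brought into ordered PBW form using the commutation relations between the $E_{\beta}^{(e)}$'s valid in $\Uhatdot_\bq^{\,+}$, and any intermediate monomial containing a factor $E_\gamma^{(n)}$ with $n \ge \ell$ is killed on the nose by \eqref{nilpotency-in-small-MpQGs}. Thus the $\RbqBeps$-span of the small ordered monomials is a subalgebra containing all generators, hence equals $\uhatdotqeps^{\,+}$. Combining with the toral and negative analogues and Proposition \ref{prop:triang-decomps_uhat} yields the claimed basis of $\uhatdotqeps$. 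The cases of $\uhatqeps$ and of the various quantum Borel/toral/nilpotent small subgroups then follow from identical arguments applied to the corresponding PBW bases of Theorem \ref{thm:PBW_hat-MpQG} and the presentations of Theorem \ref{thm: uhat(dot)qeps_Hopf-subalg_&_presentation}.
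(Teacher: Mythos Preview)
Your overall architecture matches the paper's: reduce via the triangular decomposition of Proposition~\ref{prop:triang-decomps_uhat} to the nilpotent and toral pieces, handle the toral part by the binomial-coefficient identities and the truncation relations~\eqref{nilpotency-in-small-MpQGs}, and for $\uhatdotqeps^{\,+}$ invoke Lusztig's canonical-case result and transport it to general integral $\bq$ through the cocycle identification, using the comparison formulas of \S\ref{subsubsec:comp-formulas} and Lemma~\ref{lemma: RbqBeps = RbqBsqeps} to control the scalar discrepancies. The linear-independence step and the containment $E_\beta^{(e)}\in\uhatdotqeps^{\,+}$ for $e<\ell$ are fine.

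The gap is in your spanning argument. You claim that after reordering in $\Uhatdotqeps^{\,+}$, ``any intermediate monomial containing a factor $E_\gamma^{(n)}$ with $n\ge\ell$ is killed on the nose by~\eqref{nilpotency-in-small-MpQGs}''. But~\eqref{nilpotency-in-small-MpQGs} only asserts $X_i^{(n)}X_i^{(m)}=0$ for a \emph{single simple} index $i$ with $n,m<\ell$ and $n+m\ge\ell$; it says nothing about an ordered PBW monomial $\prod_h E_{\beta^h}^{(e_h)}$ with some $e_h\ge\ell$, which is a nonzero PBW basis element of the ambient $\Uhatdotqeps^{\,+}$. When you straighten a word in the $E_i^{(n)}$'s using the commutation relations of the big algebra, such high-exponent monomials can and do appear in the PBW expansion, and there is no nilpotency relation that annihilates them individually. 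What you must show is that the \emph{total} coefficient of each high-exponent PBW monomial vanishes --- and that is exactly Lusztig's (nontrivial) small-quantum-group theorem in the canonical case, not a consequence of~\eqref{nilpotency-in-small-MpQGs} alone. The paper closes this gap by transporting the spanning statement itself, not just the root vectors: any unordered word $E_{i_1}^{*(s_1)}\cdots E_{i_k}^{*(s_k)}$ with all $s_j<\ell$ differs from its canonical counterpart $E_{i_1}^{\check{\cdot}(s_1)}\cdots E_{i_k}^{\check{\cdot}(s_k)}$ by a unit in $\RbqBeps$ (via~\eqref{eq: PBW-mon-E-q vs PBW-mon-E-qcheck}), Lusztig's theorem expands the latter in small canonical PBW monomials, and a second application of the comparison formulas converts those back to small $\bq$-PBW monomials. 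Replacing your nilpotency step by this two-way transport argument makes the proof complete and identical to the paper's.
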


\pf
 First we discuss the case of  $ \, \uhatdotqeps^{\,\raise-3pt\hbox{$ \scriptstyle + $}} \, $,  whose ``candidate''
$ \RbqBeps $--basis
  is the set of ``truncated'' (ordered) PBW monomials
  $ \,\; B_\bq^+ := \Big\{ \prod_{h=1}^N \! E_{\beta^h}^{\,(e_h)} \,\Big|\; 0 \leq e_1 , \dots , e_N < \ell \,\Big\} \; $.
                                                  \par
   In the canonical case  $ \, \bq = \check{\bq} \, $  the required property (i.e.,  $ B_\bq^+ $  is an
$ \RbqBeps $--basis
  of  $ \uhatdotqeps^{\,\raise-3pt\hbox{$ \scriptstyle + $}} \, $)  is proved by Lusztig (cf.\ \cite{Lu}  and references therein).
  For general  $ \bq \, $,  we deduce the claim from the canonical case, arguing like in the proof of  Theorem \ref{thm:PBW_hat-MpQG}{\it (a)}.
                                                  \par
   Let us consider a quantum root vector  $ E_\beta $  for  $ \, \beta \in \Phi^+ \, $  in  $ \uhatdotqeps^{\,\raise-3pt\hbox{$ \scriptstyle + $}} \, $,
   coming (through specialization) from the same name quantum root vector in  $ \Uhatdot_\bq^{\,\raise-5pt\hbox{$ \scriptstyle + $}} \, $.
   We want to prove that  $ \, E_\beta^{(n)} \! \in \uhatdotqeps^{\,\raise-3pt\hbox{$ \scriptstyle + $}} \, $  for all  $ \, 0 \leq n < \ell \, $:
   indeed, once we have  $ \, E_\beta^{(n)} \! \in \uhatdotqeps^{\,\raise-3pt\hbox{$ \scriptstyle + $}} \, $  for all  $ \, \beta \in \Phi^+ $
   and  $ \, 0 \leq n < \ell \, $  we argue that all of  $ \, \mathcal{S} := \text{\sl Span}_{\,\Rbqeps}\!\big(B_\bq^+\big) \, $  is included in
   $ \uhatdotqeps^{\,\raise-3pt\hbox{$ \scriptstyle + $}} \, $.
                                                  \par
   Let us resume notation as in  \S  \ref{subsubsec:comp-formulas}.  As we said, the claim being true in the canonical case implies
   $ \, E_\beta^{\,\check{\cdot}\,(n)} \! \in {\dot{\widehat{\mathfrak{u}}}}_{\,\check{\bq}\hskip1pt,\hskip1pt\varepsilon}^{\,\raise-3pt\hbox{$ \scriptstyle + $}} \, $,
   so that $ E_\beta^{\,\check{\cdot}\,(n)} $  can be written as a non-commutative polynomial   --- with coefficients in
$ \RbqBeps $
  ---   in the  $ E_i^{\,\check{\cdot}\,(c)} $'s  with  $ \, i \in I \, $  and  $ \, c < \ell \, $,  say
\begin{equation}  \label{eq:E_beta-divpow=polyn}
   E_\beta^{\,\check{\cdot}\,(n)}  \, = \;  P\big( \big\{ E_i^{\,\check{\cdot}\,(c)} \big\}_{i \in I , c < \ell} \,\big)
\end{equation}
 Now, the formulas in  \S  \ref{subsubsec:comp-formulas}  tell us that  $ \; E_\beta^{\,*(n)} \! = {\big( \varepsilon^{1/2} \big)}^{z_\beta} \,
 E_\beta^{\,\check{\cdot}\,(n)} \; $  and
\begin{equation}  \label{eq: PBW-mon-E-q vs PBW-mon-E-qcheck}
   E_{i_1}^{\,*(s_1)} * E_{i_2}^{\,*(s_2)} * \cdots * E_{i_k}^{\,*(s_k)}  \, = \;  {\big( \varepsilon^{1/2} \big)}^{z_{\underline{i},\underline{s}}} \,
   E_{i_1}^{\,\check{\cdot}\,(s_1)} \, \check{\cdot} \, E_{i_2}^{\,\check{\cdot}\,(s_2)} \, \check{\cdot} \, \cdots \, \check{\cdot} \, E_{i_k}^{\,\check{\cdot}\,(s_k)}
\end{equation}
 for some  $ \, z_\beta , z_{\underline{i},\underline{s}} \in \ZZ \, $   --- with  $ \, \beta \in \Phi^+ \, $,  $ \, k \in \NN \, $,  $ \, \underline{i} := \big( i_1 , i_2 , \dots , i_k \big) \in I^k \, $  and  $ \, \underline{s} := \big( s_1 , s_2 , \dots , s_k \big) \in \NN^k \, $  ---   where  $ \, \varepsilon^{1/2} $  arises as specialization of  $ q^{1/2} $  but identifies with  $ \, \varepsilon^{(\ell + 1)/2} \in
\RbqBeps \, $.  These identities together with  \eqref{eq:E_beta-divpow=polyn}  lead us in turn
to write
  $$  E_\beta^{\,*(n)}  \, = \;  P_\bullet\big( \big\{ E_i^{\,*(c)} \big\}_{i \in I , c < \ell} \,\big)  $$
where  $ P_\bullet $  is again a non-commutative polynomial in the  $ E_i^{\,*(c)} $'s  with coefficients in
$ \RbqBeps \, $,  so that  $ \, E_\beta^{\,*(n)} \in \uhatdotqeps^{\,\raise-3pt\hbox{$ \scriptstyle + $}} \, $,  q.e.d.
 \vskip4pt
   We have seen above that  $ \; \mathcal{S}_\bq := \text{\sl Span}_{\,\Rbqeps}\!\big(B_\bq^+\big) \, \subseteq \, \uhatdotqeps^{\,\raise-3pt\hbox{$ \scriptstyle + $}} \; $,  \,now we prove the converse.  First of all, by construction  $ \uhatdotqeps^{\,\raise-3pt\hbox{$ \scriptstyle + $}} $  is spanned over  $ \Rbqeps $  by monomials in the  $ E_i^{(n)} $'s  of the form  $ \; E_{\underline{i}}^{\,*(\underline{s})} := \, E_{i_1}^{\,*(s_1)} * E_{i_2}^{\,*(s_2)} * \cdots * E_{i_k}^{\,*(s_k)} \; $  that can also be re-written as
   $ \; E_{\underline{i}}^{\,*(\underline{s})} = \; {\big( \varepsilon^{1/2} \big)}^{z_{\underline{i},\underline{s}}} \, E_{i_1}^{\,\check{\cdot}\,(s_1)} \,
   \check{\cdot} \, E_{i_2}^{\,\check{\cdot}\,(s_2)} \, \check{\cdot} \, \cdots \, \check{\cdot} \, E_{i_k}^{\,\check{\cdot}\,(s_k)} =:
   \, {\big( \varepsilon^{1/2} \big)}^{z_{\underline{i},\underline{s}}} \, E_{\underline{i}}^{\,\check{\cdot}\,(\underline{s})} \; $
   --- with notation as above; we aim to prove that each such  $ E_{\underline{i}}^{\,*(\underline{s})} $  belongs to  $ \mathcal{S}_\bq \, $,  as this will then entail at once that  $ \, \uhatdotqeps^{\,\raise-3pt\hbox{$ \scriptstyle + $}} \subseteq \mathcal{S}_\bq \, $.  The claim is true in the canonical case, so  $ \, E_{\underline{i}}^{\,\check{\cdot}\,(\underline{s})} \in \uhatdotqeps^{\,\raise-3pt\hbox{$ \scriptstyle + $}} = \mathcal{S}_{\check{\bq}} \, $  hence  $ E_{\underline{i}}^{\,\check{\cdot}\,(\underline{s})} $  expands as  $ \,\; E_{\underline{i}}^{\,\check{\cdot}\,(\underline{s})} = \, \sum_{E_{\underline{\alpha}}^{\,\check{\cdot}\,(\underline{c})} \in B_{\check{\bq}}^+} \kappa_{\underline{c}} \, E_{\underline{\alpha}}^{\,\check{\cdot}\,(\underline{c})} \;\, $  for suitable  $ \, \kappa_{\underline{c}} \in \mathcal{R}_{\,\check{\bq}\hskip1pt,\hskip1pt{}\varepsilon} = \RbqBeps \, $.
Using  \eqref{eq: PBW-mon-E-q vs PBW-mon-E-qcheck}  again we get  $ \,\; E_{\underline{i}}^{\,*(\underline{s})} = \, \sum_{E_{\underline{\alpha}}^{\,*(\underline{c})} \in B_\bq^+} {\big( \varepsilon^{1/2} \big)}^{z_{\underline{c}}} \,
\kappa_{\underline{c}} \, E_{\underline{\alpha}}^{\,*(\underline{c})} \;\, $  for suitable  $ \, z_{\underline{c}} \in \ZZ \, $,  \,so that  $ \, E_{\underline{i}}^{\,*(\underline{s})} \in \mathcal{S}_{\check{\bq}} \, $,  \,q.e.d.
 \vskip4pt
   Just like for  $ \uhatdotqeps^{\,\raise-3pt\hbox{$ \scriptstyle + $}} \, $,  the same arguments prove the claim is true for  $ \uhatdotqeps^{\,\raise-3pt\hbox{$ \scriptstyle - $}} $  as well.
 \vskip4pt
   As to  $ \uhatdotqeps^{\,\raise-3pt\hbox{$ \scriptstyle -,0 $}} \, $,  $ \uhatdotqeps^{\,\raise-3pt\hbox{$ \scriptstyle +,0 $}} $  and  $ \uhatdotqeps^{\,\raise-3pt\hbox{$ \scriptstyle 0 $}} \, $,  the claim follows at once from the analogous PBW theorem for  $ \Uhatdotqeps^{\,\raise-5pt\hbox{$ \scriptstyle 0 $}} \, $,  together with the relations  $ \; \displaystyle{{\bigg(\! {{M \,; \, c} \atop n} \bigg)}_{\!\!\varepsilon} {\bigg(\! {{M \,; \, c-n} \atop m} \!\bigg)}_{\!\!\varepsilon} = \, 0} \; $  for all  $ \, n, m \lneqq \ell \, $  such that  $ \, n+m \geq \ell \, $  when  $ \, M \in \big\{ K_i \, , L_i \big\}_{i \in I} \, $  (cf.\  Theorem \ref{thm: uhat(dot)qeps_Hopf-subalg_&_presentation}).
 \vskip4pt
   Finally, the claim for  $ \uhatdotqeps^{\,\raise-3pt\hbox{$ \scriptstyle \geq $}} \, $,  for  $ \uhatdotqeps^{\,\raise-3pt\hbox{$ \scriptstyle \leq $}} $
   and for  $ \uhatdotqeps $  follows from the previous results together with triangular decompositions
   (cf.\  Proposition \ref{prop:triang-decomps_uhat}).
 \vskip4pt
   The cases where  ``$ \, \dot{\widehat{\mathfrak{u}}} \, $''  is replaced by  ``$ \, \widehat{\mathfrak{u}} \, $''  are treated similarly.
\epf

\vskip5pt

   {\sl \dbend \  $ \underline{\text{Note}} $:  from now on, for the rest of the present discussion of restricted small MpQG's, we extend our ground ring from  $ \, \RbqBeps \! = \ZZ[\varepsilon] \, $  to  $ \, \QQ_\varepsilon \, $},  the latter being the  $ \ell $--th  cyclotomic field over  $ \QQ $   --- i.e., the field extension of  $ \QQ $  generated by a primitive  $ \ell $--th  root of unity  $ \varepsilon \, $.  Thus,  {\sl all our MpQG's at a root of unity will be considered}   --- via scalar extension from  $ \RbqBeps $  to  $ \, \QQ_\varepsilon \, $  ---   as Hopf algebras  {\sl defined over\/  $ \QQ_\varepsilon \, $}.

\vskip13pt

   A first, elementary result follows easily from definitions:

\vskip13pt

\begin{prop}  \label{prop: uhatdotqeps = uhatqeps}
 Assume that\/  $ \bq $  is of  {\sl strongly}  integral type.  Then we have
 \vskip3pt
   (a)\,  $ \; \Uhatdotqeps = \Uhatqeps \;\, $,  $ \,\; \Uhatdotqeps^{\,\raise-7pt\hbox{$ \scriptstyle \pm $}} =
   \Uhatqeps^{\,\pm} \;\, $,  $ \,\; \Uhatdotqeps^{\,\raise-7pt\hbox{$ \scriptstyle \geq $}} = \Uhatqeps^{\,\geq} \;\, $,
   $ \,\; \Uhatdotqeps^{\,\raise-7pt\hbox{$ \scriptstyle \leq $}} = \Uhatqeps^{\,\leq} \;\, $,
   $ \,\; \Uhatdotqeps^{\,\raise-7pt\hbox{$ \scriptstyle 0 $}} = \Uhatqeps^{\,0} \;\, $  and
   $ \,\; \Uhatdotqeps^{\,\raise-7pt\hbox{$ \scriptstyle \pm , 0 $}} = \Uhatqeps^{\,\pm , 0} \;\, $  \,via natural identifications;
 \vskip3pt
   (b)\,  $ \; \uhatdotqeps = \uhatqeps \; $,  \,and both algebras are generated by  $ \; {\big\{ E_i \, , L_i^{\pm 1} , K_i^{\pm 1} , F_i \big\}}_{i \in I} \; $;
 \vskip3pt
   (c)\,  $ \; \uhatdotqeps^{\,\raise-3pt\hbox{$ \scriptstyle \pm $}} = \uhatqeps^{\,\raise-3pt\hbox{$ \scriptstyle \pm $}} \; $,  \,
   and both algebras are generated respectively by  $ \; {\big\{ E_i \big\}}_{i \in I} \; $   --- for the ``$ + $''  case ---  and by
   $ \; {\big\{ F_i \big\}}_{i \in I} \; $   --- for the ``$ - $''  case;
 \vskip3pt
   (d)\,  $ \; \uhatdotqeps^{\,\raise-3pt\hbox{$ \scriptstyle \pm,0 $}} = \uhatqeps^{\,\raise-3pt\hbox{$ \scriptstyle \pm,0 $}} \; $,  \,
   and both algebras are generated respectively by  $ \; {\big\{ K_i^{\pm 1} \big\}}_{i \in I} \; $   --- for the ``$ + $''  case ---
   and by  $ \; {\big\{ L_i^{\pm 1} \big\}}_{i \in I} \; $   --- for the ``$ - $''  case;
 \vskip3pt
   (e)\,  $ \; \uhatdotqeps^{\,\raise-3pt\hbox{$ \scriptstyle 0 $}} = \uhatqeps^{\,\raise-3pt\hbox{$ \scriptstyle 0 $}} \; $,  \,
   resp.\  $ \; \uhatdotqeps^{\,\raise-3pt\hbox{$ \scriptstyle \leq $}} = \uhatqeps^{\,\raise-3pt\hbox{$ \scriptstyle \leq $}} \; $,  \,
   resp.\  $ \; \uhatdotqeps^{\,\raise-3pt\hbox{$ \scriptstyle \geq $}} = \uhatqeps^{\,\raise-3pt\hbox{$ \scriptstyle \geq $}} \; $,  \,
   and both algebras are generated by  $ \; {\big\{ K_i^{\pm 1} , L_i^{\pm 1} \big\}}_{i \in I} \; $,  \, resp.\ by
   $ \; {\big\{ L_i^{\pm 1} , F_i \big\}}_{i \in I} \; $,  \, resp.\ by  $ \; {\big\{ E_i \, , K_i^{\pm 1} \big\}}_{i \in I} \; $.
\end{prop}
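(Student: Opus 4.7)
The plan is to exploit the hypothesis that $\ell$ is odd and coprime with each $d_i$, so that $\varepsilon$, $\varepsilon_i = \varepsilon^{d_i}$ and $\varepsilon_{ii} = \varepsilon^{2d_i}$ are all primitive $\ell$-th roots of unity. Consequently, for every $1 \leq n < \ell$, the quantum integers $(n)_\varepsilon$, $(n)_{\varepsilon_i}$, $(n)_{\varepsilon_{ii}}$ are nonzero, hence invertible in the cyclotomic field $\QQ_\varepsilon$. This single invertibility fact drives essentially the entire proof: ``capped'' divided powers and binomial coefficients collapse to polynomial expressions in the simple generators $E_i, F_i, K_i^{\pm 1}, L_i^{\pm 1}$.

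I would start with claim (b). For $0 \leq n < \ell$ the factorial $(n)_{\varepsilon_{ii}}!$ is a unit of $\QQ_\varepsilon$, so $E_i^{(n)} = E_i^{\,n}/(n)_{\varepsilon_{ii}}!$ and $F_i^{(n)} = F_i^{\,n}/(n)_{\varepsilon_{ii}}!$ are nonzero $\QQ_\varepsilon$-scalar multiples of $E_i^{\,n}$ and $F_i^{\,n}$. Likewise, by the product formula of Definition \ref{def:q-bin-coeff & q-div_pows}, each of the binomials $\binom{K_i}{n}_\varepsilon$, $\binom{K_i}{n}_{\varepsilon_i}$ (and the $L_i$ analogues) is, for $n < \ell$, a Laurent polynomial in $K_i^{\pm 1}$ (resp.\ $L_i^{\pm 1}$) with coefficients in $\QQ_\varepsilon$, since the relevant denominators are units. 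Since the defining generators of $\uhatdotqeps$ and of $\uhatqeps$ are exactly those capped at $n < \ell$, both small MpQGs are contained in the algebra generated by $\{E_i, F_i, K_i^{\pm 1}, L_i^{\pm 1}\}_{i\in I}$, and the reverse inclusion is immediate. This settles (b), and restriction of this argument to the relevant generating subsets yields (c), (d), (e). In the strongly integral case the nilpotent parts $\Uhatdotqeps^{\pm}$ and $\Uhatqeps^{\pm}$ of (a) coincide on the nose (they are defined by the same generators $E_i^{(n)}$, resp.\ $F_i^{(n)}$, for all $n$), so by the triangular decompositions of Proposition \ref{prop:triang-decomps_Uhat} the equality $\Uhatdotqeps = \Uhatqeps$ reduces to $\Uhatdotqeps^{\,0} = \Uhatqeps^{\,0}$, and similarly for the Borel-type subalgebras.

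For (a), the inclusion $\Uhatdotqeps^{\,0} \subseteq \Uhatqeps^{\,0}$ is obtained by scalar extension of Proposition \ref{prop:struct-Uhatq0}(g). For the converse, it suffices to show $\binom{K_i}{n}_{\varepsilon_i} \in \Uhatdotqeps^{\,0}$ (and the $L_i$ analogue) for every $n \geq 0$. For $n < \ell$ these lie in $\QQ_\varepsilon[K_i^{\pm 1}] \subseteq \Uhatdotqeps^{\,0}$ by the invertibility argument above. For $n \geq \ell$, write $n = m\ell + r$ with $0 \leq r < \ell$ and argue by induction on $n$, applying repeatedly the multiplicative identity
\[
\binom{M\,;\,c}{t+s}_{\!\!p} \; = \; {\textstyle\binom{t+s}{t}}_{\!p}^{\!-1}\,\binom{M\,;\,c}{t}_{\!\!p}\,\binom{M\,;\,c-t}{s}_{\!\!p}
\]
from Lemma \ref{commut_q-bin-coeff} (applicable in both $\Uhatdotqeps^{\,0}$ and $\Uhatqeps^{\,0}$), together with the compatibility with the quantum Frobenius morphisms of Theorem \ref{thm:qFrob-Uhat}: under $\widehat{\mathrm{Fr}}_\ell$ the ``Frobenius-type'' generator $\binom{K_i}{\ell}_{\varepsilon_i}$ maps to $k_i \in U_\ZZ(\liegdq)$, while $\binom{K_i}{\ell}_\varepsilon$ maps under $\dot{\widehat{\mathrm{Fr}}}_\ell$ to $\dot{k}_i = d_i k_i \in U_\ZZ(\liegdotdq)$, the two targets being identified over $\QQ_\varepsilon$ through the invertible scalar $d_i$. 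Combined with the equality $\uhatdotqeps = \uhatqeps$ of the respective Frobenius kernels (from (b)), this identifies $\binom{K_i}{\ell}_{\varepsilon_i}$ with $\binom{K_i}{\ell}_\varepsilon$ modulo $\Uhatdotqeps^{\,0}$, completing the induction.

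The principal obstacle is the case $n \geq \ell$ of (a): the elements $\binom{K_i}{\ell}_\varepsilon$ and $\binom{K_i}{\ell}_{\varepsilon_i}$ are genuinely new generators (not polynomial in $K_i^{\pm 1}$ over $\QQ_\varepsilon$), and identifying them modulo the appropriate small subalgebra forces one to go through the Frobenius-kernel structure rather than a direct substitution. By contrast, the small-MpQG statements (b)--(e) and the $n < \ell$ portion of (a) reduce transparently to the invertibility of certain cyclotomic integers in $\QQ_\varepsilon$.
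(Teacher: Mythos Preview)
For parts (b)--(e) your argument is the same as the paper's: both rest on the observation that $(n)_{\varepsilon}!$, $(n)_{\varepsilon_i}!$ and $(n)_{\varepsilon_{ii}}!$ are units in $\QQ_\varepsilon$ for $0\le n<\ell$, so the capped divided powers and binomial coefficients are polynomial in $E_i,F_i,K_i^{\pm1},L_i^{\pm1}$.

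For part (a), however, the paper's route is quite different and considerably shorter. It does \emph{not} split into $n<\ell$ versus $n\ge\ell$, nor does it invoke the quantum Frobenius at all. After reducing via triangular decomposition to the toral part, it argues that the two families $\binom{X}{n}_{\varepsilon}$ and $\binom{X}{n}_{\varepsilon_i}$ generate the same algebra over $\QQ_\varepsilon$ for \emph{all} $n$, by exhibiting a direct scalar relation of the shape
\[
  {X \choose n}_{\!\varepsilon} \;=\; \Big(\textstyle\prod_{s=1}^n (d_i)_{\varepsilon^s}\Big)\, {X \choose n}_{\!\varepsilon_i}\,,
\]
the coefficient being a unit in $\QQ_\varepsilon$ since $\gcd(d_i,\ell)=1$ forces each $(d_i)_{\varepsilon^s}=\frac{\varepsilon_i^{\,s}-1}{\varepsilon^{\,s}-1}$ to be nonzero. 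That single stroke handles all $n$ at once.

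Your inductive approach for $n\ge\ell$ has a real gap. The identity from Lemma~\ref{commut_q-bin-coeff} reads $\binom{M;c}{t}_{p}\binom{M;c-t}{s}_{p}=\binom{t+s}{t}_{\!p}\binom{M;c}{t+s}_{p}$; to extract $\binom{M;c}{t+s}_{p}$ you must invert $\binom{t+s}{t}_{\!p}$, but for $p=\varepsilon_i$ and $t+s\ge\ell$ this Gaussian coefficient \emph{vanishes} (e.g.\ $\binom{\ell}{1}_{\varepsilon_i}=(\ell)_{\varepsilon_i}=0$), so the recursion cannot be run exactly where you need it. Your Frobenius comparison is also not well-posed: knowing that $\binom{K_i}{\ell}_{\varepsilon_i}$ and a scalar multiple of $\binom{K_i}{\ell}_{\varepsilon}$ have the same image under the (a priori different) Frobenius epimorphisms does not place $\binom{K_i}{\ell}_{\varepsilon_i}$ inside $\Uhatdotqeps^{\,0}$ --- the two elements live in different algebras, and the phrase ``modulo $\Uhatdotqeps^{\,0}$'' is not a meaningful reduction when containment \emph{in} $\Uhatdotqeps^{\,0}$ is precisely what must be shown. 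The paper's direct scalar-identity argument bypasses all of this.
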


\pf
 As to claim  {\it (a)},  by construction it is enough to show that  $ \; \Uhatdotqeps^{\,\raise-7pt\hbox{$ \scriptstyle 0 $}} = \Uhatqeps^{\,0} \; $
 or more precisely  $ \; \Uhatdotqeps^{\,\raise-7pt\hbox{$ \scriptstyle \pm , 0 $}} = \Uhatqeps^{\,\pm , 0} \; $.  In turn, the latter identity follows
 from definitions together with the following formal identity among quantum binomial coefficients
  $$  {X \choose n}_{\!\varepsilon}  \; = \;\,  {\textstyle \prod\limits_{s=1}^n} {\big( d_i \big)}_{\varepsilon^s} \, {X \choose n}_{\!\varepsilon_i}  $$
which proves that the  $ \varepsilon $--binomial  coefficients and the  $ \varepsilon_i $--binomial
coefficients generate over  $ \QQ_\varepsilon $  the same algebra, since
$ \, {\textstyle \prod\limits_{s=1}^n} {\big( d_i \big)}_{\varepsilon^s} \, $  is invertible in the field  $ \QQ_{\,\varepsilon} \, $.
 \vskip5pt
   As to the remaining claims, everything follows again from a simple remark.  Namely, definitions give
  $$  {\textstyle \prod\limits_{r=1}^n} \big( X_i \, \varepsilon_i^{1-r} \! - 1 \big) \, =
  \, {\textstyle \prod\limits_{r=1}^n} \big( \varepsilon_i^r - 1 \big) \, {\bigg( {X_i \atop n} \bigg)}_{\!\!\varepsilon_i} \;\; ,  \quad
      {\textstyle \prod\limits_{r=1}^n} \big( X_i \, \varepsilon^{1-r} - 1 \big) \, =
      \, {\textstyle \prod\limits_{r=1}^n} \big( \varepsilon^r \! - 1 \big) \, {\bigg( {X_i \atop n} \bigg)}_{\!\!\varepsilon}  $$
for all  $ \, X \in \{ K , L \} \, $,  $ \, i \in I \, $  and  $ \, 0 \leq n \leq \ell - 1 \, $,  and similarly
$ \; Z_i^{\,n} \, = \, {[n]}_{\varepsilon_i\!}! \, Z_i^{(n)} \; $  for all  $ \, Z \in \{ F , E \} \, $,  $ \, i \in I \, $
and  $ \, 0 \leq n \leq \ell - 1 \, $.  Now, the condition  $ \, n \leq \ell - 1 \, $  implies that all the coefficients
$ \; \prod\limits_{r=1}^n \big( \varepsilon_i^r - 1 \big) \; $,  $ \; \prod\limits_{r=1}^n \big( \varepsilon^r - 1 \big) \; $
and  $ \; {[n]}_{\varepsilon_i\!}! \; $  that occur above are non-zero elements in  $ \QQ_\varepsilon \, $,
whence we deduce at once our claim.
\epf
\end{free text}

\vskip7pt

\begin{rmk}
 From the PBW  Theorem \ref{thm: PBW_small-uhat}  and  Proposition \ref{prop: uhatdotqeps = uhatqeps}
 it follows that  $ \; \uhatdotqeps = \uhatqeps \; $ is a finite-dimensional $\QQ_\varepsilon$-Hopf algebra of dimension
$ \ell^{\,\dim (\liegd)} \, $.
\end{rmk}

\vskip7pt

   Next result yields a strict link (a multiparameter version of a well-known result) between small MpQG's and quantum Frobenius morphisms for restricted MpQG's; indeed, one could take it as an alternative way to introduce small MpQG's.

\vskip11pt

\begin{theorem}  \label{thm: restr-small_MpQG = Hopf-cleft}
%
%
   Let  $ \, \bq := {\big(\hskip1pt{}q_{ij}\big)}_{i,j \in I} \, $  be of integral type, let
 $ \; \Uhatdot_{\bq\hskip1pt,\hskip1pt{}\varepsilon}(\hskip0,8pt\lieg) \,{\buildrel
 {{\dot{\widehat{\text{\it F{\hskip0,5pt}r\,}}}}_{\!\raise-1pt\hbox{$ \scriptscriptstyle \ell $}}} \over
 {\relbar\joinrel\relbar\joinrel\rightarrow}}\,
U_{\QQ_\varepsilon}\!\big(\hskip0,8pt\liegdotb\hskip0,5pt\big) \; $
 be the scalar extension of the quantum Frobenius morphism of Theorem \ref{thm:qFrob-Uhat}  and finally let
 $ \; \uhatdotqeps \,{\buildrel \iota \over {\relbar\joinrel\relbar\joinrel\rightarrow}}\, \Uhatdotqeps \, $
 the natural embedding of  $ \, \uhatdotqeps $  into  $ \, \Uhatdotqeps \, $.
 Then
 \vskip-7pt
  $$  1 \relbar\joinrel\relbar\joinrel\relbar\joinrel\rightarrow\;
 \uhatdotqeps \;{\buildrel \iota \over {\relbar\joinrel\relbar\joinrel\relbar\joinrel\rightarrow}}\;
 \Uhatdot_{\bq\hskip1pt,\hskip1pt{}\varepsilon}(\hskip0,8pt\lieg) \;{\buildrel
 {{\dot{\widehat{\text{\it F{\hskip0,5pt}r\,}}}}_{\!\raise-1pt\hbox{$ \scriptscriptstyle \ell $}}}
 \over {\relbar\joinrel\relbar\joinrel\relbar\joinrel\relbar\joinrel\rightarrow}}\;
 U_{\QQ_\epsilon}\big(\hskip0,8pt\liegdotb\big)
 \,\relbar\joinrel\relbar\joinrel\relbar\joinrel\rightarrow\, 1  $$
 \vskip3pt
\noindent
 is an exact sequence of Hopf\/  $ \QQ_\epsilon$--algebras  which is cleft.
                                                         \par
   A similar statement holds true for  $ \, \uhatqeps \, $  and the scalar extension of the quantum Frobenius morphism
 $ \,\; \Uhat_{\bq\hskip1pt,\hskip1pt{}\varepsilon}(\hskip0,8pt\lieg)
 \;{\buildrel {{\widehat{\text{\it F{\hskip0,5pt}r\,}}}_{\!\raise-1pt\hbox{$ \scriptscriptstyle \ell $}}}
 \over {\relbar\joinrel\relbar\joinrel\relbar\joinrel\relbar\joinrel\relbar\joinrel\rightarrow}}\;
 U_{\QQ_\epsilon}\big(\hskip0,8pt\lieghatb\big) \; $
 when  $ \bq $  is strongly integral.
\end{theorem}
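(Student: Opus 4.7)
The overall strategy is to reduce every verification to a calculation with the PBW-type bases furnished by Theorems \ref{thm:PBW_hat-MpQG} and \ref{thm: PBW_small-uhat}, and then transfer the resulting structural statements from the canonical case $\bq=\check\bq$ (where the analogous result is known, as a quantum-double variant of Lusztig's construction in \cite{Lu}, \cite{DL}) to the general integral-type case via the coalgebra-preserving $2$-cocycle deformation relating the two.

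I would begin by checking the exactness conditions of Definition \ref{def:exseq}. Injectivity of $\iota$ is tautological and surjectivity of $\dot{\widehat{\text{\it Fr}}}_\ell$ is the content of Theorem \ref{thm:qFrob-Uhat}. The explicit formulas \eqref{eq:Frob-Uhatdoteps} show that every generator of $\uhatdotqeps$ listed in \eqref{def-small_MpQGs-dot} either maps to $0$ or to $1$, hence $\uhatdotqeps^+\subseteq\Ker\bigl(\dot{\widehat{\text{\it Fr}}}_\ell\bigr)$ and therefore $\Uhatdotqeps\cdot\uhatdotqeps^+\subseteq\Ker\bigl(\dot{\widehat{\text{\it Fr}}}_\ell\bigr)$. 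For the reverse inclusion I would argue using the PBW basis of $\Uhatdotqeps$ provided by Theorem \ref{thm:PBW_hat-MpQG}: each basis element factors uniquely (up to commutation by scalars in $\QQ_\varepsilon$) as a product $u_{<\ell}\cdot u_{\ell\mid}$, where the exponents in $u_{<\ell}$ are strictly less than $\ell$ (so $u_{<\ell}\in\uhatdotqeps$) and the exponents in $u_{\ell\mid}$ are divisible by $\ell$; by the definition of $\dot{\widehat{\text{\it Fr}}}_\ell$ the image of this basis element is nonzero iff $u_{<\ell}=1$, in which case it lands on the corresponding PBW monomial of $U_{\QQ_\varepsilon}(\liegdotdq)$ described in Definition \ref{def_Kostant-forms_U(g)}. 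This decomposition gives simultaneously the identifications $\Ker\bigl(\dot{\widehat{\text{\it Fr}}}_\ell\bigr)=\Uhatdotqeps\cdot\uhatdotqeps^+$ and $\uhatdotqeps={}^{\copr\dot{\widehat{\text{\it Fr}}}_\ell}\Uhatdotqeps$ (the latter by combining the co-centrality \eqref{Frhatdot_cocentral} of the image with the PBW factorization).

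Next, for cleftness I would exhibit an explicit $U_{\QQ_\varepsilon}(\liegdotdq)$-colinear convolution-invertible section $\gamma$ of $\dot{\widehat{\text{\it Fr}}}_\ell$ by lifting PBW monomials. After fixing the order used in Theorem \ref{thm:PBW_hat-MpQG}(d), the map $\gamma$ is declared on PBW generators by
\[
\erm_\alpha^{(n)}\;\mapsto\;E_\alpha^{(n\ell)}\;,\qquad \frm_\alpha^{(n)}\;\mapsto\;F_\alpha^{(n\ell)}\;,\qquad \binom{\kdotrm_i}{n}\;\mapsto\;\binom{K_i}{n\ell}_{\!\!\varepsilon}\;,\qquad \binom{\ldotrm_i}{n}\;\mapsto\;\binom{L_i}{n\ell}_{\!\!\varepsilon}\;,
\]
and extended to ordered PBW monomials factor by factor. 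By construction $\dot{\widehat{\text{\it Fr}}}_\ell\circ\gamma=\id_{U_{\QQ_\varepsilon}(\liegdotdq)}$. To check right $U_{\QQ_\varepsilon}(\liegdotdq)$-colinearity of $\gamma$, i.e.\ $(\gamma\otimes\id)\circ\Delta_{U}=(\id\otimes\dot{\widehat{\text{\it Fr}}}_\ell)\circ\Delta\circ\gamma$, I would reduce to the canonical case via Theorem \ref{thm:sigma_2-cocy} and Proposition \ref{prop:def-sigma=def-c}: the coalgebra structures of $\Uhatdotqeps$ and $\Uhatdot_{\check\bq,\varepsilon}(\liegd)$ coincide because the $2$-cocycle $\sigma$ (resp.\ the bicharacter $\tilde\varphi$) deforms only the product, and the coproduct formulas of Theorem \ref{thm:pres_Uhatgdq_gens-rels} for the generators above are manifestly independent of $\bq$; hence the colinearity check is literally the same as in the canonical setting, where it is classical.

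The main obstacle is the convolution-invertibility of $\gamma$ together with the verification that $\gamma$ is well-defined on all of $U_{\QQ_\varepsilon}(\liegdotdq)$ (and not merely on the chosen PBW basis). Concerning invertibility, since $U_{\QQ_\varepsilon}(\liegdotdq)$ is cocommutative and $\gamma$ is unital, the convolution inverse is given by $\gamma^{-1}:=\gamma\circ S_{U}$, provided this indeed inverts $\gamma$ in the convolution algebra $\Hom(U_{\QQ_\varepsilon}(\liegdotdq),\Uhatdotqeps)$; this is again checked on generators and transferred from the canonical case by the coalgebra coincidence above, since any discrepancy between $\gamma\star\gamma^{-1}$ and $\epsilon\cdot 1$ lies in $\Ker\bigl(\dot{\widehat{\text{\it Fr}}}_\ell\bigr)$ and is detected by PBW. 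Well-definedness of $\gamma$ reduces to showing that the relations of $U_{\QQ_\varepsilon}(\liegdotdq)$ (presented as in Remark \ref{int-forms_of_U(g_D)}(a)) are mapped to relations already holding in $\Uhatdotqeps$; for this I would again appeal to Theorem \ref{thm:pres_Uhatgdq_gens-rels} applied with quantum binomial coefficients and divided powers of indices $n\ell$, whose specialization relations coincide with the classical ones through the arithmetic identities of Lemma \ref{commut_q-bin-coeff}. The strongly-integral case for $\uhatqeps$ and $\widehat{\text{\it Fr}}_\ell$ is handled by the same argument, replacing throughout the use of Theorem \ref{thm:pres_Uhatgdq_gens-rels}(a) by (b) and the Lie bialgebra $\liegdotdq$ by $\liegdq$.
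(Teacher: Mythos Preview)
For exactness your PBW-based argument is along the lines the paper sketches (deferring details to \cite[Lemma 3.4.2]{A}); note however that your factorization $u_{<\ell}\cdot u_{\ell\mid}$ and its behaviour under $\dot{\widehat{\text{\it Fr}}}_\ell$ tacitly assume the Frobenius sends $E_\alpha^{(n)}$ to $\erm_\alpha^{(n/\ell)}$ or $0$ for \emph{every} positive root $\alpha$, which goes beyond the simple-root formulas recorded in Theorem~\ref{thm:qFrob-Uhat} and needs a separate compatibility of the Frobenius with the Lusztig automorphisms. For cleftness your route genuinely differs from the paper's: rather than build an explicit section, the paper invokes the equivalence ``cleft $\Leftrightarrow$ Galois with normal basis'' from \cite{DT}, observes that Hopf-algebra exact sequences are automatically Galois by \cite[Remark 1.6]{Sch2}, and deduces the normal basis property from Schneider's criterion \cite[4.3]{Sch1} after showing that $\Uhatdotqeps$ is \emph{pointed} via an explicit coalgebra filtration whose degree-zero piece is spanned by group-likes. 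This structural argument is short and uniform in $\bq$, and bypasses any section construction.

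Your explicit-section approach is in principle legitimate (the paper itself remarks this in Remarks~\ref{rmks: comments on Uhat-cleft}(a), pointing to \cite[Lemma 3.4.3]{A}), but the execution has a real gap at convolution invertibility. The formula $\gamma^{-1}=\gamma\circ S_U$ is \emph{not} a consequence of $U_{\QQ_\varepsilon}(\liegdotdq)$ being cocommutative: for $\gamma(x_{(1)})\,\gamma(S_U(x_{(2)}))=\epsilon(x)1$ to hold one would need $\gamma$ to be an algebra map, which it is not. And your fallback---transfer from the canonical case via ``coalgebra coincidence''---cannot work as stated, because the convolution product in $\Hom\big(U_{\QQ_\varepsilon}(\liegdotdq),\Uhatdotqeps\big)$ uses the \emph{product} of the target $\Uhatdotqeps$, which is precisely what the $2$-cocycle deformation alters; convolution invertibility for $\check\bq$ therefore does not automatically transport to general $\bq$. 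The same obstruction undermines the colinearity transfer: your $\gamma$ is defined through PBW monomials in root vectors, themselves constructed from the (deformed) algebra structure, so $\gamma_\bq$ and $\gamma_{\check\bq}$ are not identified under the coalgebra isomorphism you invoke.
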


\pf
 By Theorem \ref{thm: PBW_small-uhat},  $ \Uhatdot_{\bq\hskip1pt,\hskip1pt{}\varepsilon}(\hskip0,8pt\lieg) $  is free over  $ \uhatdotqeps \, $.  So, to show that the sequence is exact it is enough to prove that  $ \, \Ker\Big( {\dot{\widehat{\text{\it F{\hskip0,5pt}r\,}}}}_{\!\raise-1pt\hbox{$ \scriptscriptstyle \ell $}} \Big) = \Uhatdot_{\bq\hskip1pt,\hskip1pt{}\varepsilon}(\hskip0,8pt\lieg) \, \uhatdotqeps^{\;\raise-3pt\hbox{$ \scriptstyle + $}} \; $.  This follows along the same lines as for the canonical case (proved in
\cite[Lemma 3.4.2]{A}),  so we skip it.
                                                           \par
   To prove that the extension is cleft, we use the well-known fact that an extension of algebras is cleft if and only if it is Galois and has a normal basis (see, e.g.,  \cite{DT}).  Since the extension is a Hopf algebra extension, it follows that it is Galois, see \cite[Remark 1.6]{Sch2}.  The normal basis property follows from  \cite[4.3]{Sch1}  since  $ \Uhatdot_{\bq\hskip1pt,\hskip1pt{}\varepsilon}(\hskip0,8pt\lieg) $  is pointed.  Indeed, by the PBW  Theorem \ref{PBW-mpqgs_q=eps}  one may define an algebra filtration  $ U_n $  of  $ \Uhatdot_{\bq\hskip1pt,\hskip1pt{}\varepsilon}(\hskip0,8pt\lieg) $  such that  $ U_0 $  is the subalgebra generated by  $ K_i^{\pm 1} $,  $ L_i^{\pm 1} $  ($ \, i \in I \, $),  and  $ E_i^{(n)} $,  $ F_i^{(n)} $,  $ \displaystyle{\bigg(\! {{M \,; \, c \,} \atop n} \!\bigg)}_{\!\!\varepsilon} \in U_n \, $  ($ \, i \in I \, $,  $ \, n \in \NN \, $).  By  Theorem \ref{thm:pres_Uhatgdq_gens-rels} and  Lemma \ref{commut_q-bin-coeff},  this is a coalgebra filtration, so the coradical of  $ \Uhatdot_{\bq\hskip1pt,\hskip1pt{}\varepsilon}(\hskip0,8pt\lieg) $  is contained in  $ U_0 \, $.  As the latter is the linear span of group-like elements, it follows that  $ \Uhatdot_{\bq\hskip1pt,\hskip1pt{}\varepsilon}(\hskip0,8pt\lieg) $  is pointed.
\epf

\vskip9pt

\begin{rmks}  \label{rmks: comments on Uhat-cleft}  {\ }
 \vskip3pt
    {\it (a)}\, The proof that the Hopf algebra extension above is cleft also follows by the proof of the canonical case given in  \cite[Lemma 3.4.3]{A}.  On the other hand, let us point out that the normal basis property means that  $ \Uhatdot_{\bq\hskip1pt,\hskip1pt{}\varepsilon}(\hskip0,8pt\lieg) $  is isomorphic to  $ \, \uhatdotqeps \otimes U_{\QQ_\epsilon}\big(\hskip0,8pt\liegdotb\big) \, $  as left  $ \uhatdotqeps $--module  and right  $ U_{\QQ_\epsilon}\big(\hskip0,8pt\liegdotb\big) $--comodule.  Hence, the MpQG at a root of unity  $ \, \Uhatdotqepsgd = \Uhatqepsgd \, $  can be seen as a ``blend'' of a restricted small MpQG, namely  $ \, \uhatdotqepsgd = \uhatqepsgd \, $,  \,and a ``classical'' geometrical object, namely  $ \; U_{\QQ_\varepsilon}\big(\hskip0,8pt\liegdotb\big) = U_{\QQ_\varepsilon}\big(\hskip0,8pt\lieghatb\big) = U_{\QQ_\varepsilon}\big(\hskip0,8pt\liegb\big) \; $.
 \vskip3pt
    {\it (b)}\,  Beside the canonical case, some variations of the quantum Frobenius homorphism are treated in the literature.  For example, Lentner  \cite{Le}  studies the quantum Frobenius map for the positive Borel algebras at small roots of unity, which are in fact Nichols algebras.  Another is in  \cite{Mc},  which provides the construction of the quantum Frobenius homomorphism for the positive part using Hall algebras.  As in the Hopf algebra case, the quantum Frobenius map is used to study exact sequences of Nichols algebras.  In  \cite{AAR2}  it is shown how Nichols algebras give rise to positive parts of semisimple Lie algebras as images of the quantum Frobenius morphism.
\end{rmks}

\vskip5pt

\begin{free text}  \label{small-MpQG's_TILDE-version}
 {\bf Small MpQG's: the ``unrestricted realization''.}  We introduce now a second type of small MpQG's, defined in terms of unrestricted MpQG's.  As in the restricted case, these are defined for a multiparameter of integral type  $ \bq \, $.  We shall eventually see that these ``unrestricted'' small MpQG's actually do coincide with the ``restricted'' ones.
 \vskip7pt
   Let  $ \bq $  be a multiparameter of integral type, hence possibly of strongly integral type.
   Let  $ \,\; \widetilde{\hbox{\it F{\hskip0,5pt}r\,}}_{\!\raise-1pt\hbox{$ \scriptstyle \ell $}} : \ZZ[\varepsilon] \otimes_{\raise-3pt\hbox{$ \scriptscriptstyle \ZZ $}} \Utildequnogd \, \cong \, \RbqBeps \otimes_{\raise-3pt\hbox{$ \scriptstyle \Rbquno $}} \!\! \Utildequnogd \, \lhook\joinrel\relbar\joinrel\relbar\joinrel\relbar\joinrel\relbar\joinrel\longrightarrow \, \Utildeqepsgd \;\; $
 be the  {\sl unrestricted quantum Frobenius morphism\/}  introduced in  Theorem \ref{thm: unrestr_q-Frob-morph}, a Hopf algebra monomorphism whose image is the central Hopf subalgebra  $ Z_0 $  of $ \Utildeqepsgd $  given in Definition \ref{def:Z_0}.  We consider the Hopf cokernel of  $ \, \widetilde{\hbox{\it F{\hskip0,5pt}r\,}}_{\!\raise-1pt\hbox{$ \scriptstyle \ell $}} \, $,  i.e.\  the quotient Hopf algebra
  $$  \utildeqeps  \; := \;  \utildeqepsgd  \; := \;  \Utildeqepsgd \Big/\, \Utildeqepsgd \, {Z_0}^+  $$
--- where  $ {Z_0}^+ $  denotes the augmentation ideal of  $ Z_0 $  --- and similarly the
cokernels of the restrictions of  $ \, \widetilde{\hbox{\it F{\hskip0,5pt}r\,}}_{\!\raise-1pt\hbox{$ \scriptstyle \ell $}} \, $
to all various relevant multiparameter quantum subgroups of  $ \, \utildeqepsgd \, $:  for instance,
$ \; {\widetilde{\mathfrak{u}}}_{\bq,\varepsilon}^{\,\geq} \, := \, \Utilde_{\bq,\varepsilon}^{\,\geq}
\Big/\, \Utilde_{\bq,\varepsilon}^{\,\geq} \, {\big( Z_0^\geq \big)}^+ \; $,  \, and so on and so forth.
We call all these objects  {\sl ``unrestricted small multiparameter quantum (sub)groups''}.
When  $ \, \bq = \check{\bq} \, $  is the canonical multiparameter, this definition coincides with the
one for the one-parameter small quantum group associated with  $ \lieg $  given in  \cite[III.6.4]{BG}.
                                                                 \par
   Since, by  Proposition \ref{prop: struct-Z_0},  $ \Utildeqepsgd$ is a free  $ \Utildequnogd $--module  of rank
   $ \ell^{\,\dim (\liegd)} $,  it follows that  $ \utildeqeps $  is a finite-dimensional Hopf algebra of dimension
   $ \ell^{\,\dim (\liegd)} \, $;  indeed, we shall show that it actually coincides with $ \, \uhatdotqeps = \uhatqeps \, $.

\vskip9pt

   As direct consequence of definitions and previous results, we find  structure results for unrestricted small MpQG's.
   The first one is about triangular decompositions:
\end{free text}

\vskip13pt

\begin{prop}  \label{prop:triang-decomps_utilde}
 {\sl (triangular decompositions for unrestricted small MpQG's)}
 \vskip3pt
 The multiplication in  $ \, \utildeqeps $  provides  $ \, \Rbqeps $--module  isomorphisms
  $$  \displaylines{
   \utildeqeps^{\,\raise-3pt\hbox{$ \scriptstyle - $}} \mathop{\otimes}_\Rbqeps \!\! \utildeqeps^{\,\raise-3pt\hbox{$ \scriptstyle \,0 $}} \, \cong \;
   \utildeqeps^{\,\raise-3pt\hbox{$ \scriptstyle \,\leq $}} \,
  \cong \; \utildeqeps^{\,\raise-3pt\hbox{$ \scriptstyle \,0 $}} \mathop{\otimes}_\Rbqeps \!\! \utildeqeps^{\,\raise-3pt\hbox{$ \scriptstyle - $}}  \;\; ,
 \quad  \utildeqeps^{\,\raise-3pt\hbox{$ \scriptstyle + $}} \mathop{\otimes}_\Rbqeps \!\! \utildeqeps^{\,\raise-3pt\hbox{$ \scriptstyle \,0 $}} \, \cong \;
 \utildeqeps^{\,\raise-3pt\hbox{$ \scriptstyle \,\geq $}} \,
  \cong \; \utildeqeps^{\,\raise-3pt\hbox{$ \scriptstyle \,0 $}} \mathop{\otimes}_\Rbqeps \!\! \utildeqeps^{\,\raise-3pt\hbox{$ \scriptstyle + $}}  \cr
   \utildeqeps^{\,\raise-3pt\hbox{$ \scriptstyle +,0 $}} \mathop{\otimes}_\Rbqeps \!\! \utildeqeps^{\,\raise-3pt\hbox{$ \scriptstyle -,0 $}}  \, \cong \;
   \utildeqeps^{\,\raise-3pt\hbox{$ \scriptstyle \,0 $}}  \,
  \cong \;  \utildeqeps^{\,\raise-3pt\hbox{$ \scriptstyle -,0 $}} \mathop{\otimes}_\Rbqeps \!\! \utildeqeps^{\,\raise-3pt\hbox{$ \scriptstyle +,0 $}}  \;\; ,
 \quad  \utildeqeps^{\,\raise-3pt\hbox{$ \scriptstyle \,\leq $}} \mathop{\otimes}_\Rbqeps \! \utildeqeps^{\,\raise-3pt\hbox{$ \scriptstyle \,\geq $}}  \,
  \cong \;  \utildeqeps  \, \cong \;  \utildeqeps^{\,\raise-3pt\hbox{$ \scriptstyle \,\geq $}}
  \mathop{\otimes}_\Rbqeps \! \utildeqeps^{\,\raise-3pt\hbox{$ \scriptstyle \,\leq $}}  \cr
   \utildeqeps^{\,\raise-3pt\hbox{$ \scriptstyle \,+ $}} \mathop{\otimes}_\Rbqeps \!
   \utildeqeps^{\,\raise-3pt\hbox{$ \scriptstyle \,0 $}} \mathop{\otimes}_\Rbqeps \! \utildeqeps^{\,\raise-3pt\hbox{$ \scriptstyle \,- $}}  \;
  \cong \;\,  \utildeqeps  \; \cong \;\,  \utildeqeps^{\,\raise-3pt\hbox{$ \scriptstyle \,- $}}
  \mathop{\otimes}_\Rbqeps \! \utildeqeps^{\,\raise-3pt\hbox{$ \scriptstyle \,0 $}} \mathop{\otimes}_\Rbqeps
  \! \utildeqeps^{\,\raise-3pt\hbox{$ \scriptstyle \,+ $}}  }  $$
\end{prop}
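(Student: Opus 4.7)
My plan is to deduce the isomorphisms from the corresponding triangular decompositions for $\Utildeqepsgd$ (got by scalar extension from Proposition \ref{prop:triang-decomps_Utilde}) by quotienting out the ideal $\Utildeqepsgd\cdot Z_0^+$. The linchpin is that $Z_0$ itself respects the triangular decomposition of $\Utildeqepsgd$.

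First, I would observe that the quantum Frobenius morphism $\widetilde{\hbox{\it F{\hskip0,5pt}r\,}}_{\!\ell}$ of Theorem \ref{thm: unrestr_q-Frob-morph} sends the generators of the ``\,$-$'', ``$0$'', ``\,$+$'' parts of $\Utildequnogd$ to the $\ell$-th powers of, respectively, the negative root vectors, the toral generators, and the positive root vectors in $\Utildeqepsgd$. Setting $Z_0^{\,-}$, $Z_0^{\,0}$, $Z_0^{\,+}$ (and $Z_0^{\,\leq}$, $Z_0^{\,\geq}$, $Z_0^{\,\pm,0}$) to be the images of the corresponding subalgebras of $\Utildequnogd$ under $\widetilde{\hbox{\it F{\hskip0,5pt}r\,}}_{\!\ell}$, Proposition \ref{prop: struct-Z_0}(b) combined with the triangular decomposition already proved for $\Utildequnogd$ in Theorem \ref{struct-Utildequno} yields triangular decompositions for $Z_0$ inside $\Utildeqepsgd$, e.g.\ $Z_0^{\,-}\otimes Z_0^{\,0}\otimes Z_0^{\,+}\cong Z_0$, and similarly for the Borel and Cartan pieces.

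Next, the triangular decomposition $\Utildeqepsgd\cong\Utildeqeps^{\,-}\otimes\Utildeqeps^{\,0}\otimes\Utildeqeps^{\,+}$ from Proposition \ref{prop:triang-decomps_Utilde} (obtained by scalar extension to $\Rbqeps$) refines to the statement that $Z_0^{\,\pm}\subseteq\Utildeqeps^{\,\pm}$ and $Z_0^{\,0}\subseteq\Utildeqeps^{\,0}$ with each inclusion compatible with the factor-wise multiplication. Using the $\boldsymbol{\varepsilon}$--centrality of $Z_0$ in $\Utildeqepsgd$ from Proposition \ref{prop: struct-Z_0}(a)---which allows me to move elements of $Z_0$ through PBW monomials at the cost of scalar factors that are units in $\Rbqeps$---I can write the augmentation ideal as
\[
\Utildeqepsgd\cdot Z_0^{\,+} \;=\; \Utildeqeps^{\,-}(Z_0^{\,-})^{+}\,\Utildeqeps^{\,0}\,\Utildeqeps^{\,+} \;+\; \Utildeqeps^{\,-}\,\Utildeqeps^{\,0}(Z_0^{\,0})^{+}\,\Utildeqeps^{\,+} \;+\; \Utildeqeps^{\,-}\,\Utildeqeps^{\,0}\,\Utildeqeps^{\,+}(Z_0^{\,+})^{+}\,,
\]
which under the triangular isomorphism corresponds to the sum of the three obvious tensor-subideals.

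Passing to the quotient then gives
\[
\utildeqeps \;\cong\; \bigl(\Utildeqeps^{\,-}\!/\Utildeqeps^{\,-}(Z_0^{\,-})^{+}\bigr) \otimes \bigl(\Utildeqeps^{\,0}\!/\Utildeqeps^{\,0}(Z_0^{\,0})^{+}\bigr) \otimes \bigl(\Utildeqeps^{\,+}\!/\Utildeqeps^{\,+}(Z_0^{\,+})^{+}\bigr) \;=\; \utildeqeps^{\,-}\otimes\utildeqeps^{\,0}\otimes\utildeqeps^{\,+}\,,
\]
and the same argument with the opposite ordering, or restricted to the Borel or Cartan pieces, produces all the remaining isomorphisms in the statement. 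The main obstacle I expect is the rigorous verification of the displayed identity for $\Utildeqepsgd\cdot Z_0^{\,+}$: one has to check that when an element of $(Z_0^{\,0})^{+}$ or $(Z_0^{\,+})^{+}$ is pushed to the correct slot by PBW reordering, the $\boldsymbol{\varepsilon}$--commutation scalars produced by Proposition \ref{prop: struct-Z_0}(a) are indeed invertible in $\Rbqeps$, so that no new elements enter the ideal. Once this bookkeeping is in hand, the PBW Theorem \ref{PBW-mpqgs_q=eps} for $\Utildeqepsgd$ guarantees that the tensor-product map is a bijection, and all stated isomorphisms follow at once.
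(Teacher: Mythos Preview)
Your approach is correct and is precisely one natural way to ``deduce from the latter'' as the paper suggests; indeed, the paper's own proof is a one-liner (``This can be proved like the similar result for unrestricted MpQG's, or can be deduced from the latter: details are left to the reader''), and your argument supplies exactly those details. One minor simplification: since the small unrestricted MpQG's are only defined for $\bq$ of \emph{integral} type (cf.\ \S\ref{small-MpQG's_TILDE-version}), Proposition~\ref{prop: struct-Z_0}(a) gives that $Z_0$ is genuinely central in $\Utildeqepsgd$, not merely $\boldsymbol{\varepsilon}$--central, so the ``bookkeeping'' you flag as the main obstacle is in fact trivial --- no commutation scalars arise at all, and the displayed identity for $\Utildeqepsgd\cdot Z_0^{\,+}$ follows immediately from the triangular decomposition of $Z_0$ and of $\Utildeqepsgd$.
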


\pf
 This can be proved like the similar result for unrestricted MpQG's, or can be deduced from the latter: details are left to the reader.
\epf

\vskip7pt

   The second result is a PBW-like theorem for unrestricted small MpQG's:

\vskip11pt

\begin{theorem}  \label{thm: PBW_small-utilde}
 {\sl (PBW theorem for unrestricted small MpQG's)}
 \vskip3pt
 Every unrestricted small MpQG is a free
$ \Rbqeps $--module
  with
$ \Rbqeps $--basis
  made by the cosets of all PBW monomials   --- in the subset of a PBW basis (as given in
  Theorem \ref{thm:PBW_tilde-MpQG})
  of the corresponding specialized unrestricted MpQG ---   in which the degree of each factor is less than $ \ell \, $.
  For instance,  $ \utildeqeps \, $  has
$ \Rbqeps $--basis
  $$  \bigg\{\; {\textstyle \prod\limits_{k=N}^1} \Fbar_{\beta^k}^{\,f_k}
  \, {\textstyle \prod\limits_{j \in I}} \, L_j^{l_j}
  \, {\textstyle \prod\limits_{i \in I}} \, K_i^{c_i}
  \, {\textstyle \prod\limits_{h=1}^N} \Ebar_{\beta^h}^{\,e_h}
    \;\bigg|\; 0 \leq f_k, l_j, c_i, e_h < \ell \;\bigg\}  $$
and similarly holds for
 $ \; \utildeqeps^{\,\raise-3pt\hbox{$ \scriptstyle \pm $}} \, $,
 $ \utildeqeps^{\,\raise-3pt\hbox{$ \scriptstyle \pm,0 $}} \, $,
 $ \utildeqeps^{\,\raise-3pt\hbox{$ \scriptstyle 0 $}} \, $,
 $ \utildeqeps^{\,\raise-3pt\hbox{$ \scriptstyle \leq $}} \, $  and
 $ \; \utildeqeps^{\,\raise-3pt\hbox{$ \scriptstyle \geq $}} \, $.
\end{theorem}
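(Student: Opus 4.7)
The plan is to deduce the PBW theorem for the unrestricted small MpQG's directly from the PBW theorem for the unrestricted MpQG $\Utildeqepsgd$ (Theorem \ref{PBW-mpqgs_q=eps}) together with the structural results on $Z_0$ in Proposition \ref{prop: struct-Z_0}. The main idea is that modding out by $\Utildeqepsgd \cdot Z_0^+$ amounts to quotienting by the $\ell$--th powers of the generators (taken together with their scalar multiples coming from $Z_0$), which precisely eliminates the ``higher'' PBW monomials and leaves the truncated ones.

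First, I would establish the key factorization: let $\mathcal{S}$ denote the $\Rbqeps$--span inside $\Utildeqepsgd$ of the ``truncated'' PBW monomials, i.e.\ those where each exponent lies in $\{0,1,\dots,\ell-1\}$. Using the PBW Theorem \ref{PBW-mpqgs_q=eps} and the ring identities $n = \ell\, q + r$ (with $0 \leq r < \ell$), combined with the $\boldsymbol{\varepsilon}$--centrality of $Z_0$ in $\Utildeqepsgd$ from Proposition \ref{prop: struct-Z_0}(a), one sees that every PBW monomial $\prod \Fbar_{\beta^k}^{\,f_k} \prod L_j^{\,l_j}\prod K_i^{\,c_i} \prod \Ebar_{\beta^h}^{\,e_h}$ in $\Utildeqepsgd$ can be rewritten, up to an invertible scalar in $\Rbqeps$, as a product of an element of $Z_0$ (a monomial in $\ell$--th powers of the generators) and a truncated PBW monomial. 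This gives a multiplication map $\mathcal{S} \otimes_{\Rbqeps} Z_0 \twoheadrightarrow \Utildeqepsgd$ that is surjective; a count of basis elements, using the PBW bases of $\mathcal{S}$ and of $Z_0$ (Proposition \ref{prop: struct-Z_0}(b)) and comparing with the PBW basis of $\Utildeqepsgd$ itself, shows that the map is an isomorphism of right $Z_0$--modules. In particular $\Utildeqepsgd$ is free as a right $Z_0$--module with basis the truncated PBW monomials, in accordance with rank $\ell^{\dim(\liegd)}$ of Proposition \ref{prop: struct-Z_0}(c).

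Having this factorization, the computation of the quotient $\utildeqeps = \Utildeqepsgd \big/ \Utildeqepsgd \cdot Z_0^+$ is essentially automatic. Indeed, under the isomorphism $\Utildeqepsgd \cong \mathcal{S} \otimes_{\Rbqeps} Z_0$, the right ideal $\Utildeqepsgd \cdot Z_0^+$ corresponds to $\mathcal{S} \otimes_{\Rbqeps} Z_0^+$, so
\[
\utildeqeps \;\cong\; \mathcal{S} \otimes_{\Rbqeps} \bigl( Z_0 \big/ Z_0^+ \bigr) \;\cong\; \mathcal{S},
\]
whence the cosets of the truncated PBW monomials form an $\Rbqeps$--basis of $\utildeqeps$. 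The same argument, applied using the restrictions $Z_0^{\pm}$, $Z_0^{0}$, $Z_0^{\pm,0}$, $Z_0^{\geq}$, $Z_0^{\leq}$ of $Z_0$ to the corresponding quantum subgroups (which are again generated by $\ell$--th powers of the relevant generators), and invoking the triangular decompositions of Proposition \ref{prop:triang-decomps_utilde}, yields the analogous result for $\utildeqeps^{\pm}$, $\utildeqeps^{0}$, $\utildeqeps^{\pm,0}$, $\utildeqeps^{\geq}$ and $\utildeqeps^{\leq}$.

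The main subtlety, and the step where care is needed, is the factorization $\Utildeqepsgd \cong \mathcal{S} \otimes_{\Rbqeps} Z_0$: because $Z_0$ is only $\boldsymbol{\varepsilon}$--central (not genuinely central, when $\bq$ is not integral), one has to track the invertible scalars $m_{z,b}(\boldsymbol{\varepsilon}^{\pm \ell})$ produced when moving $Z_0$--factors past truncated PBW factors, and verify that the rewriting of an arbitrary PBW monomial into the form (truncated monomial)$\cdot$($Z_0$--monomial) is well-defined and invertible. Once this bookkeeping is carried out (it is essentially the same estimate already used implicitly in the proof of Proposition \ref{prop: struct-Z_0}(c) of \cite{An4}), the rest of the argument is formal.
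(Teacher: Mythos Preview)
Your proposal is correct and follows essentially the same approach as the paper, which simply says the result ``follows at once from definitions and from Proposition \ref{prop: struct-Z_0}''; you have merely unpacked the content of that one-line proof by making explicit the freeness $\Utildeqepsgd \cong \mathcal{S} \otimes_{\Rbqeps} Z_0$ (which is what Proposition \ref{prop: struct-Z_0}(c) amounts to, with the truncated PBW monomials as the free basis) and then tensoring down along $Z_0 \to Z_0/Z_0^+$. One small remark: in the setting where the unrestricted small MpQG's are actually defined (see \S\ref{small-MpQG's_TILDE-version}), the multiparameter $\bq$ is assumed of integral type, so by Proposition \ref{prop: struct-Z_0}(a) the subalgebra $Z_0$ is genuinely central and your bookkeeping of the $\boldsymbol{\varepsilon}$--commutation scalars $m_{z,b}(\boldsymbol{\varepsilon}^{\pm\ell})$ is not needed --- though of course it does no harm.
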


\pf
 This follows at once from definitions and from  Proposition \ref{prop: struct-Z_0}.
\epf

\vskip11pt

   The results in  \S \ref{int-forms_mpqgs}  and  Theorem \ref{Mpqg-roots-of-1_2-coc-def}  lead us to the following theorem.

\vskip13pt

\begin{theorem}  \label{thm: utildeqeps as 2-coc deform}
%
%
 The Hopf  $ \, \Rbqeps $--algebra  $ \, \utildeqeps $  is a  $ 2 $--cocycle  deformation of  $ \; \utildeqcheckeps \, $.
\end{theorem}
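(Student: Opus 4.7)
The plan is to deduce the statement by passing the $2$-cocycle deformation relation of Theorem \ref{Mpqg-roots-of-1_2-coc-def}(a), namely $\Utildeqepsgd \cong \big(\Utilde_{\check{\bq}\hskip1pt,\hskip1pt\varepsilon}(\liegd)\big)_{\sigma_\varepsilon}$, to the finite-dimensional Hopf algebra quotients that define the small MpQGs. Since $\utildeqeps$ and $\utildeqcheckeps$ are defined as cokernels of the unrestricted quantum Frobenius morphism $\widetilde{\hbox{\it F{\hskip0,5pt}r\,}}_{\!\raise-1pt\hbox{$ \scriptstyle \ell $}}$, the argument reduces to two tasks: (i) match the Hopf images $Z_0 \subseteq \Utildeqepsgd$ and its counterpart $Z_0^{\scriptscriptstyle \vee} \subseteq \Utilde_{\check{\bq}\hskip1pt,\hskip1pt\varepsilon}(\liegd)$ as $\Rbqeps$-submodules of the common underlying coalgebra; and (ii) descend the cocycle $\sigma_\varepsilon$ to a well-defined $2$-cocycle $\bar\sigma_\varepsilon$ on $\utildeqcheckeps$.

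For step (i), I would invoke Proposition \ref{prop: proport_root-vects} (which gives $\Ebar_\alpha = m^+_\alpha(\bq^{\pm 1/2})\,\check{\Ebar}_\alpha$, and analogously for $\Fbar_\alpha$) together with the multi-factor conversion formulas in \S\ref{subsubsec:comp-formulas} to compute
\[ \ebar_\alpha^{*\ell} \;=\; \varepsilon_{\alpha,\alpha}^{\ell(\ell-1)/2}\,\big(m^+_\alpha(\boldsymbol{\varepsilon}^{\pm 1/2})\big)^{\ell}\, \check{\ebar}_\alpha^{\,\check\cdot\,\ell} \, , \]
and likewise for $\fbar_\alpha^{*\ell}$, $l_i^{*\pm\ell}$, $k_i^{*\pm\ell}$. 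Since $\ell$ is odd and coprime to each $d_i$, the factor $\varepsilon_{\alpha,\alpha}^{\ell(\ell-1)/2} = (\varepsilon^\ell)^{d_\alpha(\ell-1)}$ equals $1$, while $(m^+_\alpha)^\ell$ is an invertible element of $\Rbqeps$. Hence the generating sets of $Z_0$ and $Z_0^{\scriptscriptstyle \vee}$ agree up to invertible scalars, and the two subalgebras coincide as $\Rbqeps$-submodules of the common coalgebra.

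For step (ii), I would exploit identity \eqref{s_eps Fr_ell = s_1} established in the proof of Theorem \ref{thm: unrestr_q-Frob-morph}, namely $\sigma_\varepsilon\big(\widetilde{\hbox{\it F{\hskip0,5pt}r\,}}_{\!\raise-1pt\hbox{$ \scriptstyle \ell $}}(x),\widetilde{\hbox{\it F{\hskip0,5pt}r\,}}_{\!\raise-1pt\hbox{$ \scriptstyle \ell $}}(y)\big) = \sigma_1(x,y)$, which controls the values of $\sigma_\varepsilon$ on the central subalgebra $Z_0$. Since $\sigma_\varepsilon$ is supported on pairs of toral generators, the descent condition
\[ \sigma_\varepsilon(K_{\ell\lambda},\, L_\nu) \;=\; 1 \;=\; \sigma_\varepsilon(K_\mu,\, L_{\ell\rho}) \qquad \forall\; \lambda, \mu, \nu, \rho \in Q \]
amounts to the vanishing on the $Q\times Q$ toral part of $\Utildequnogd$ of the specialized cocycle $\sigma_1$, which in the integral case is immediate since $\sigma_1$ is then the trivial cocycle. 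Granting this, $\sigma_\varepsilon$ factors uniquely through $\utildeqcheckeps \otimes \utildeqcheckeps$ to produce $\bar\sigma_\varepsilon$, and the general principle that cocycle deformation commutes with quotient by a normal Hopf subalgebra on which the cocycle is trivializable yields $\utildeqeps \cong (\utildeqcheckeps)_{\bar\sigma_\varepsilon}$, as required.

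The main obstacle will be step (ii) in the general Cartan (non-integral) setting: $\sigma_\varepsilon$ takes values involving formal square roots $\varepsilon_{ij}^{\pm 1/2}$ lying in $\Rbqsqeps$, and the individual $\varepsilon_{ij}$ need not be $\ell$-th roots of unity, so the descent to a cocycle with values in $\Rbqeps$ is not automatic. To make it precise one must track carefully which scalar factors survive in $\Rbqeps$ (rather than only in $\Rbqsqeps$), using the matching identity $\sigma_\varepsilon\circ\big(\widetilde{\hbox{\it F{\hskip0,5pt}r\,}}_{\!\raise-1pt\hbox{$ \scriptstyle \ell $}}\otimes\widetilde{\hbox{\it F{\hskip0,5pt}r\,}}_{\!\raise-1pt\hbox{$ \scriptstyle \ell $}}\big)=\sigma_1$ as the organizing principle, together with the explicit PBW bookkeeping of step (i). Once this technicality is settled, the remainder of the argument is formal.
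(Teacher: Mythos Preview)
Your approach is correct and follows the same strategy as the paper: show that $\sigma_\varepsilon$ descends to a $2$-cocycle $\bar\sigma_\varepsilon$ on the quotient $\utildeqcheckeps$, and identify the resulting deformation with $\utildeqeps$. Two remarks. First, the paper handles your step (i) more economically: rather than computing $\ebar_\alpha^{*\ell}$ explicitly via the conversion formulas of \S\ref{subsubsec:comp-formulas}, it observes directly that
\[
\sigma_\varepsilon\big|_{\Utilde_{\check{\bq},\varepsilon}(\hskip0,8pt\liegd)\,\otimes\,\check{Z}_0\,+\,\check{Z}_0\,\otimes\,\Utilde_{\check{\bq},\varepsilon}(\hskip0,8pt\liegd)} \;=\; \epsilon \otimes \epsilon
\]
(since $\sigma_\varepsilon$ is nonzero only on pairs of toral monomials, and there the presence of an $\ell$-th power forces the value $1$ in the integral case). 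This immediately yields $Z_0 = (\check{Z}_0)_{\sigma_\varepsilon} = \check{Z}_0$ as Hopf subalgebras, bypassing the root-vector bookkeeping. Second, your closing worry about the general Cartan (non-integral) setting is moot: the unrestricted small MpQG $\utildeqeps$ is introduced only for $\bq$ of integral type (see \S\ref{small-MpQG's_TILDE-version}), and the theorem is stated and proved under that standing hypothesis.
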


\pf
 Denote by  $ \check{Z}_0 $  the subalgebra of  $ \, \Utilde_{\check{\bq}\hskip1pt,\hskip1pt{}\varepsilon}(\hskip0,8pt\lieg) \, $  that defines  $ \, \utildeqcheckeps \, $.  Since $ \bq $  is of integral type,  $ Z_0 $  and  $ \check{Z}_0 $  are both central Hopf subalgebras of  $ \Utildeqepsgd $  and  $ \Utilde_{\check{\bq}\hskip1pt,\hskip1pt{}\varepsilon}(\hskip0,8pt\lieg) \, $,  respectively.  By Theorem \ref{Mpqg-roots-of-1_2-coc-def}{\it (a)},  we know that the Hopf  $ \, \Rbqeps $--algebra  $ \; \Utildeqepsgd $  is a  $ 2 $--cocycle  deformation of  $ \, \Utilde_{\check{\bq}\hskip1pt,\hskip1pt{}\varepsilon}(\hskip0,8pt\lieg) \, $.  As the  $ 2 $--cocycle giving the deformation is
   $$  \displaylines{
    \sigma_{\eps}(x,y)  \; := \;  \eps^{\;1/2}_{\mu\,\nu}   \qquad  \text{if}  \quad
         x = K_\mu  \text{\;\ or \ }  x = L_\mu \; ,  \!\quad  y = K_\nu  \text{\;\ or \ }  y = L_\nu  \cr
    \sigma_{\eps}\big( \QEqcheck \, , {\QEqcheck}^\oplus \big) \; := \; 0 \; =: \;  \sigma_{\eps}\big( {\QEqcheck}^\oplus \, , \QEqcheck \big)  }  $$
 it follows that $ \; \sigma_{\eps}{\big|}_{\Utilde_{\check{\bq},\varepsilon}(\hskip0,8pt\lieg) \,\otimes\, \check{Z}_0 + \check{Z}_0 \otimes\, \Utilde_{\check{\bq},\varepsilon}(\hskip0,8pt\lieg)} = \epsilon \otimes \epsilon \; $,  \,the trivial  $ 2 $--cocycle,  with  $ \epsilon $  the counit of  $ \Utilde_{\check{\bq},\varepsilon}(\hskip0,8pt\lieg) \, $;  \,in particular,  $ \, Z_0 = {(\check{Z}_0)}_{\sigma_\varepsilon} = \check{Z}_0 \, $  as Hopf algebras.  Finally, if we define  $ \; \bar{\sigma}_\varepsilon : \utildeqcheckeps \otimes \utildeqcheckeps \!\relbar\joinrel\longrightarrow \RbqBeps \; $  by  $ \, \bar{\sigma}_\varepsilon\big( \bar{x} , \bar{y} \big) := \sigma_\varepsilon(x,y) \, $  for  $ x $,  $ y \in \Utilde_{\check{\bq}\hskip1pt,\hskip1pt{}\varepsilon}(\hskip0,8pt\lieg) \, $,  a straightforward calculation shows that  $ \bar{\sigma}_\varepsilon $  is a  $ 2 $--cocycle  for  $ \utildeqcheckeps $ and  $ \; {\big( \utildeqcheckeps \big)}_{\sigma_{\varepsilon}} \cong \, \utildeqeps \; $.
\epf

\vskip9pt

   \dbend \  {\sl Now we extend the ground ring from  $ \, \RbqBeps = \ZZ[\varepsilon] \, $  to the cyclotomic field  $ \QQ_\varepsilon $  generated over  $ \QQ $  by an  $ \ell $--th  root of unity\/}:  all algebras then will be taken as defined over  $ \QQ_\varepsilon $  (via scalar extension), even though we keep the same notation.  In this case, we have the following structural result:

\vskip11pt

\begin{prop}  \label{prop: struct_Utildeqeps-utildeqeps}
 Let us consider  $ \Utildeqeps $  and  $ \utildeqeps \, $,  as well as their quantum subgroups, as defined over  $ \QQ_\varepsilon $  (via scalar extension).  Then we have:
 \vskip3pt
   (a)\,  $ \; \Utildeqeps $  is generated by  $ \; {\big\{\, \Ebar_i \, , L_i^{\pm 1} , K_i^{\pm 1} , \Fbar_i \big\}}_{i \in I} \; $,  and  $ \, \utildeqeps $
   is generated by the corresponding set of cosets;
 \vskip3pt
   (b)\,  $ \; \Utildeqeps^{\,\raise-3pt\hbox{$ \scriptstyle + $}} \, $  and  $ \, \Utildeqeps^{\,\raise-3pt\hbox{$ \scriptstyle - $}} \, $  are
   generated respectively by  $ \; {\big\{\, \Ebar_i \big\}}_{i \in I} \; $  and by  $ \; {\big\{\, \Fbar_i \big\}}_{i \in I} \; $,  and similarly
   $ \, \utildeqeps^{\,\raise-3pt\hbox{$ \scriptstyle + $}} $  and  $ \, \utildeqeps^{\,\raise-3pt\hbox{$ \scriptstyle - $}} \, $  are generated
   by the corresponding sets of cosets;
 \vskip3pt
   (c)\,  $ \; \Utildeqeps^{\,\raise-3pt\hbox{$ \scriptstyle +,0 $}} \, $,  $ \, \Utildeqeps^{\,\raise-3pt\hbox{$ \scriptstyle -,0 $}} \, $  and
   $ \; \Utildeqeps^{\,\raise-3pt\hbox{$ \scriptstyle 0 $}} \, $  are generated respectively by  $ \; {\big\{ K_i^{\pm 1} \big\}}_{i \in I} \; $,
   by  $ \; {\big\{ L_i^{\pm 1} \big\}}_{i \in I} \; $  and by  $ \; {\big\{ K_i^{\pm 1} , L_i^{\pm 1} \big\}}_{i \in I} \; $,  and similarly
   $ \, \utildeqeps^{\,\raise-3pt\hbox{$ \scriptstyle +,0 $}} \, $,  $ \, \utildeqeps^{\,\raise-3pt\hbox{$ \scriptstyle -,0 $}} \, $  and
   $ \; \utildeqeps^{\,\raise-3pt\hbox{$ \scriptstyle 0 $}} \, $  are generated by the corresponding sets of cosets;
 \vskip3pt
   (d)\,  $ \; \Utildeqeps^{\,\raise-3pt\hbox{$ \scriptstyle \leq $}} \, $,  \,resp.\  $ \, \Utildeqeps^{\,\raise-3pt\hbox{$ \scriptstyle \geq $}} \, $,  \,is generated by  $ \, {\big\{ L_i^{\pm 1} , \Fbar_i \big\}}_{i \in I} \; $,  \,resp.\ by  $ \, {\big\{\,  \Ebar_i \, , K_i^{\pm 1} \big\}}_{i \in I} \; $;  \,similarly  $ \, \utildeqeps^{\,\raise-3pt\hbox{$ \scriptstyle \leq $}} \, $,  \,resp.\  $ \, \utildeqeps^{\,\raise-3pt\hbox{$ \scriptstyle \geq $}} \, $,  \,is generated by the corresponding set of cosets;
 \vskip3pt
   (e)\,  in all claims (a) through (d) above, one can freely replace any  $ \Ebar_i $  or  $ \Fbar_j $  with  $ E_i $  or  $ F_j $  respectively, and still
   have a set of generators.
\end{prop}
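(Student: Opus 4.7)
The plan is to handle the six assertions in a logical order, starting from the trivial and bootstrapping up. Claim (c) is immediate from Definition \ref{def:int-form_tilde-Uqqm}, since the toral subalgebras $\Utildeqeps^{\,\raise-3pt\hbox{$ \scriptstyle \pm,0 $}}$ and $\Utildeqeps^{\,\raise-3pt\hbox{$ \scriptstyle 0 $}}$ are defined exactly as the subalgebras generated by the $K_i^{\pm 1}$'s and/or $L_i^{\pm 1}$'s, and specialization then scalar extension preserves this. The pivot of the whole proof is claim (e) restricted to simple indices. By definition $\Ebar_i = (q_{ii}-1)\,E_i$, and in the integral setting of \S\ref{multiparameters} one has $q_{ii} = q^{\,2d_i}$, so the specialized coefficient is $(\varepsilon_{ii}-1) = (\varepsilon^{\,2d_i}-1)$. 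Since $\ell$ is taken odd and coprime to each $d_i$, the integer $2d_i$ is coprime to $\ell$, so $\varepsilon^{\,2d_i}$ is itself a primitive $\ell$-th root of unity and $(\varepsilon_{ii}-1)$ is nonzero in $\QQ_\varepsilon$, hence a unit. The same applies to $\Fbar_i = (q_{ii}-1)F_i$. Thus over $\QQ_\varepsilon$ the sets $\{\Ebar_i\}_{i\in I}$ and $\{E_i\}_{i\in I}$ generate the same subalgebra, and similarly for $\Fbar_i$ versus $F_i$, which gives (e) for all the subalgebras in (a)--(d) simultaneously.

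For claim (b), I would use the fact recalled in \S\ref{rvec-MpQG} (cf.\ \eqref{q-root_vects-1}, and the remark following it) that every $E_\alpha$ with $\alpha\in\Phi^+$ is produced from the simple $E_i$'s by iterated application of the Heckenberger braid operators $T_j$, equivalently by iterated braided $c$-commutators $\ad_c^{-a_{ji}}(E_j)$. Consequently $E_\alpha$ has a fixed expression as a non-commutative polynomial in $E_1,\dots,E_\theta$ with coefficients lying in $\Rbq$ extended by denominators $(q_i^s-1)$ with $s \leq -a_{ji}\leq 3$; the coprimality of $\ell$ with the $d_i$'s (and with the small integers $s$ involved in type G, B, C, F) ensures that these denominators remain units upon specialization at $\varepsilon$ and scalar extension to $\QQ_\varepsilon$. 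Combined with (e), this places every $E_\alpha$ and hence every $\Ebar_\alpha = (\varepsilon_{\alpha\alpha}-1)E_\alpha$ into the $\QQ_\varepsilon$-subalgebra generated by the $\Ebar_i$'s. The reverse inclusion is trivial from the definition of $\Utildeqeps^{\,\raise-3pt\hbox{$ \scriptstyle + $}}$ in Definition \ref{def:int-form_tilde-Uqqm}; the argument for $\Utildeqeps^{\,\raise-3pt\hbox{$ \scriptstyle - $}}$ is symmetric.

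Claim (a) then follows from (b) and (c) together with the triangular decomposition $\Utildeqeps \cong \Utildeqeps^{\,\raise-3pt\hbox{$ \scriptstyle - $}}\otimes_{\QQ_\varepsilon}\Utildeqeps^{\,\raise-3pt\hbox{$ \scriptstyle 0 $}}\otimes_{\QQ_\varepsilon}\Utildeqeps^{\,\raise-3pt\hbox{$ \scriptstyle + $}}$ obtained by specialization and scalar extension from Proposition \ref{prop:triang-decomps_Utilde}; likewise (d) follows from (b), (c), and the partial triangular decompositions in that same proposition. For the small MpQG counterparts $\utildeqeps$, $\utildeqeps^{\,\raise-3pt\hbox{$ \scriptstyle \pm $}}$, $\utildeqeps^{\,\raise-3pt\hbox{$ \scriptstyle 0 $}}$, etc., the result transfers mechanically: these are quotients of the corresponding $\Utildeqeps$-versions by $\Utildeqeps\cdot Z_0^+$ (cf.\ \S\ref{small-MpQG's_TILDE-version}), and a generating set of a Hopf algebra descends to a generating set of any Hopf quotient; the triangular decompositions of Proposition \ref{prop:triang-decomps_utilde} then guarantee that the statement for each subalgebra is compatible with passage to the quotient.

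The principal obstacle is the step inside claim (b) that expresses each non-simple quantum root vector $E_\alpha$ as a polynomial in the simple $E_i$'s whose coefficients remain well-defined over $\QQ_\varepsilon$. In principle the $T_j$-formulas involve quantum integers in the denominators, so one must verify that the specific coprimality hypotheses on $\ell$ (odd, coprime to each $d_i$) are sufficient for all the relevant $q$-integers to specialize to units in $\QQ_\varepsilon$; once this is checked case-by-case using that $-a_{ji}\leq 3$, the rest of the argument is essentially bookkeeping with the triangular decompositions.
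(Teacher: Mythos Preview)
Your argument is correct and matches the paper's: pass between the $\Ebar_\alpha$'s, the $E_\alpha$'s, the $E_i$'s and the $\Ebar_i$'s using that $(\varepsilon_{\alpha\alpha}-1)$ is a unit in $\QQ_\varepsilon$, and invoke that each $E_\alpha$ is an iterated $c$-bracket of the simple $E_i$'s (as recorded at the end of \S\ref{rvec-MpQG}). The ``principal obstacle'' you flag is not actually present: an iterated braided commutator $\ad_c^{-a_{ji}}(E_j)(\cdots)$ is by definition a non-commutative polynomial in the $E_i$'s whose coefficients are Laurent monomials in the $q_{kl}$'s and hence already lie in $\Rbq$ with no $(q_i^s-1)$-type denominators, so the coprimality hypotheses on $\ell$ are needed only for the unit step $(\varepsilon_{\alpha\alpha}-1)\in\QQ_\varepsilon^\times$, not for expressing $E_\alpha$ through the $E_i$'s.
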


\pf
 It is enough to prove claim  {\it (a)},  as the other are similar.  By construction,  $ \, \Utildeqeps \, $  is generated by (the specialization of) all the  $ K_i^{\pm 1} $'s,  all the  $ L_i^{\pm 1} $'s  and all the quantum root vectors  $ \Ebar_\alpha $  and  $ \Fbar_\alpha $.  Now,  $ \; \Ebar_\alpha = \big( \varepsilon_{\alpha,\alpha} - 1 \big) \, E_\alpha \; $  so the  $ \Ebar_\alpha $'s  can be replaced with the  $ E_\alpha $'s,  because  $ \, \big( \varepsilon_{\alpha,\alpha} - 1 \big) \, $  is invertible in  $ \QQ_\varepsilon \, $.  Moreover, each quantum root vector  $ E_\alpha $  can be expressed, by construction (cf.\  \S \ref{rvec-MpQG}),  as a suitable  $ q $--iterated  quantum bracket of some of the  $ E_i $'s;  as  $ \, E_i = {\big( \varepsilon_i^{\,2} - 1 \big)}^{-1} \, \Ebar_i \, $,  \,the  $ \Ebar_i $'s  alone are enough to generate all the  $ \Ebar_\alpha $'s  over  $ \QQ_\varepsilon \, $.  A similar argument works for the  $ \Fbar_\alpha $'s,  hence the claim for  $ \Utildeqeps $  follows, and that for  $ \utildeqeps $  is an obvious consequence.  Claim  {\it (e)\/}  is clear as well from the above analysis.
\epf

\vskip11pt

   By construction, the projection  $ \, \pi \, $  from  $ \Utildeqeps $  to  $ \utildeqeps $  and the scalar extension of the quantum Frobenius morphism  $ \, \widetilde{\hbox{\it F{\hskip0,5pt}r\,}}_{\!\raise-1pt\hbox{$ \scriptstyle \ell $}} \, $  match together to yield a short exact sequence of Hopf   $ \QQ_\varepsilon $--algebras.  As before, this sequence allows to reconstruct the unrestricted MpQG  $ \, \Utildeqeps \, $
 as a  {\it cleft extension},  as the following shows:

\vskip13pt

\begin{theorem}  \label{thm: unrestr-small_MpQG = Hopf-cleft}
 \ Let  $ \, \bq := {\big(\hskip1pt{}q_{ij}\big)}_{i,j \in I} \, $  be a multiparameter of integral type.
                                                                            \par
  Let
 $ \; \Utildequnogd
  \;{\buildrel {\widetilde{F{\hskip0,7pt}r\,}_{\!\!\raise-2pt\hbox{$ \scriptscriptstyle \ell $}}} \over
  {\lhook\joinrel\relbar\joinrel\relbar\joinrel\relbar\joinrel\relbar\joinrel\longrightarrow}}\;
   \Utildeqepsgd \; $
 be the scalar extension
to  $ \QQ_\varepsilon $
  of the  unrestricted quantum Frobenius morphism
of  Theorem \ref{thm: unrestr_q-Frob-morph}  and let
 \vskip-10pt
  $$  \utildeqepsgd  \,\; := \;\,  \Utildeqepsgd \Big/\, \Utildeqepsgd \, { \Utildequnogd}^+  $$
be the quotient Hopf algebra.
 Then
\begin{equation}  \label{eq: cleft-ext_Utildeqeps}
  1 \relbar\joinrel\relbar\joinrel\relbar\joinrel\relbar\joinrel\rightarrow\;
%
%
   \Utildequnogd
\;{\buildrel {\widetilde{F{\hskip0,7pt}r\,}_{\!\!\raise-2pt\hbox{$ \scriptscriptstyle \ell $}}} \over
{\relbar\joinrel\relbar\joinrel\relbar\joinrel\relbar\joinrel\relbar\joinrel\rightarrow}}\;
   \Utildeqepsgd
\;{\buildrel \pi \over {\relbar\joinrel\relbar\joinrel\relbar\joinrel\relbar\joinrel\rightarrow}}\;
   \utildeqepsgd
\,\relbar\joinrel\relbar\joinrel\relbar\joinrel\relbar\joinrel\rightarrow\,
   1
\end{equation}
 \vskip3pt
\noindent
 is a central exact sequence of Hopf\/  $ \QQ_\epsilon$--algebras  which is cleft.
\end{theorem}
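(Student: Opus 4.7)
The plan is to verify the four axioms of an exact sequence of Hopf algebras in the sense of Definition \ref{def:exseq}, to check centrality, and finally to establish cleftness via the normal basis property. Throughout I identify the image of $\widetilde{F{\hskip0,7pt}r\,}_{\!\raise-1pt\hbox{$ \scriptstyle \ell $}}$ with the Hopf subalgebra $Z_0 \subseteq \Utildeqepsgd$, via the isomorphism $\Utildequnogd \cong Z_0$ proved in Theorem \ref{thm: unrestr_q-Frob-morph}.

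First, injectivity of $\widetilde{F{\hskip0,7pt}r\,}_{\!\raise-1pt\hbox{$ \scriptstyle \ell $}}$ is part of Theorem \ref{thm: unrestr_q-Frob-morph}, and surjectivity of $\pi$ is immediate from the very definition of $\utildeqepsgd$ as the quotient $\Utildeqepsgd \big/ \Utildeqepsgd \cdot Z_0^{\,+}$. The equality $\ker(\pi) = \Utildeqepsgd \cdot Z_0^{\,+}$ is again by definition. Centrality of the sequence is a direct consequence of Proposition \ref{prop: struct-Z_0}(a): when $\bq$ is of integral type, $Z_0$ is \emph{central} (not merely $\boldsymbol\varepsilon$--central) in $\Utildeqepsgd$, so the image of $\widetilde{F{\hskip0,7pt}r\,}_{\!\raise-1pt\hbox{$ \scriptstyle \ell $}}$ lies in the center. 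The genuinely nontrivial exactness condition to check is $Z_0 = {}^{\co\pi}\Utildeqepsgd$. For this I would invoke faithfully flat descent along the central Hopf subalgebra inclusion $Z_0 \hookrightarrow \Utildeqepsgd$: by Proposition \ref{prop: struct-Z_0}(c) the bigger algebra is in fact \emph{free} of rank $\ell^{\dim \liegd}$ over $Z_0$, so in particular faithfully flat; then the standard Schneider-type descent result (see, e.g., \cite{Sch2}) gives at once ${}^{\co\pi}\Utildeqepsgd = Z_0$. Alternatively one can compare the explicit PBW $\QQ_\varepsilon$--basis of $\Utildeqepsgd$ coming from Theorem \ref{PBW-mpqgs_q=eps} with that of $\utildeqepsgd$ from Theorem \ref{thm: PBW_small-utilde}: the cosets modulo $Z_0^{\,+}$ of the ``small'' PBW monomials (with exponents $<\ell$) form a $Z_0$--basis of $\Utildeqepsgd$, so the coinvariants are visibly generated (over the base) by the $Z_0$--factor, i.e.\ coincide with $Z_0$.

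For cleftness, the strategy is to check the normal basis property, after which Galois-ness is automatic for central Hopf algebra extensions and combining the two gives cleftness (see \cite{DT} and \cite[Remark 1.6]{Sch2}). I would argue as in the proof of Theorem \ref{thm: restr-small_MpQG = Hopf-cleft}: invoke Schneider's theorem \cite[4.3]{Sch1}, which grants the normal basis property for any Hopf algebra extension whose total space is \emph{pointed}. The pointedness of $\Utildeqepsgd$ is established by the usual filtration argument. Namely, the PBW Theorem \ref{thm:PBW_tilde-MpQG} provides a filtration $\{U_n\}$ of $\Utildeqepsgd$ with $U_0$ equal to the subalgebra generated by the group-likes $K_i^{\pm 1}, L_i^{\pm 1}$ ($i\in I$), while the skew-primitive generators $\Ebar_i$, $\Fbar_i$ together with the toral generators satisfy coproduct formulas that respect this filtration (as in Theorem \ref{thm:pres_Uhatgdq_gens-rels}, transposed to the unrestricted setting via the renormalized root vectors). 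Hence $\{U_n\}$ is also a coalgebra filtration, the coradical is contained in $U_0$, and $U_0$ is itself the group algebra of a free abelian group. So $\Utildeqepsgd$ is pointed, Schneider's result applies, and the sequence \eqref{eq: cleft-ext_Utildeqeps} is cleft.

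The main obstacle I anticipate is the verification that $Z_0 = {}^{\co\pi}\Utildeqepsgd$: although the faithfully flat descent argument disposes of it cleanly, one must be careful that the hypotheses of descent are met in our setup, which relies crucially on the centrality of $Z_0$ (only guaranteed in the integral case, where $\boldsymbol\varepsilon$--centrality degenerates to centrality because $\varepsilon_{ij}^{\,\ell} = 1$ and the bicharacter terms $m_{z,b}(\boldsymbol\varepsilon^{\pm\ell})$ in Proposition \ref{prop: struct-Z_0}(a) become trivial). A secondary technical point is that after the scalar extension from $\RbqBeps$ to $\QQ_\varepsilon$ all the PBW and freeness statements persist unchanged, since $\QQ_\varepsilon$ is flat over $\RbqBeps$; I would record this at the outset of the argument.
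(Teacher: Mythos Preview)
Your proposal is correct and follows essentially the same route as the paper: freeness over $Z_0$ plus centrality give exactness (the paper cites \cite[Proposition 3.4.3]{Mo} rather than Schneider's descent, but the substance is the same), and cleftness is obtained via Galois plus normal basis, the latter from pointedness of $\Utildeqepsgd$. The only notable streamlining in the paper is the pointedness step: rather than building a coalgebra filtration, it simply observes that $\Utildeqepsgd$ is generated by group-likes and skew-primitives, which already forces the coradical to sit in the group algebra part.
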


\pf
 By  Proposition \ref{prop: struct-Z_0}  we know that  $ \Utildeqepsgd $  is a free  $ \Utildequnogd $--module  of rank  $ \ell^{\,\dim (\liegd)} \, $.
 Since  $ \, \Utildequnogd \, $  is central and  $ \; \utildeqepsgd := \, \Utildeqepsgd \Big/\, \Utildeqepsgd \, { \Utildequnogd}^+ \, $,
 by \cite[Proposition 3.4.3]{Mo}  we have that  $ \, \Utildequnogd = \Utildeqepsgd^{\co \pi} \, $  and the sequence is exact.
 As we did before for the restricted case, to prove that the extension is cleft we show
 that it is Galois and has a normal basis.  Since the extension is a Hopf algebra extension, it follows that it is Galois, see  \cite[Remark 1.6]{Sch2}.
 The normal basis property follows from  \cite[4.3]{Sch1}  as  $ \Utildeqepsgd $  is a pointed Hopf algebra, since it is generated
 by group-like and skew-primitive elements.
\epf

\vskip11pt

\begin{rmks}  \label{rmks: comments on Utilde-cleft}
 {\it (a)}\,  By the normal basis property,  $ \Utildeqepsgd $  is isomorphic to  $ \, \Utildequnogd \otimes \utildeqepsgd \, $  as left $ \Utildequnogd $--module  and right  $ \utildeqepsgd $--comodule.  Hence, the MpQG at a root of unity  $ \, \Utildeqepsgd \, $  can be understood as a ``blend'' of a  {\sl classical\/}  geometrical object   --- namely  $ \, \Utildequnogd \, $,  which is  $ \, \Oc\big( \Gtildebstar \big) \, $  since  $ \bq $  is of integral type, see  Theorem \ref{thm: Utildequno_q-int-type}  ---   and a  {\sl quantum\/}  one   --- the unrestricted small MpQG  $ \, \utildeqepsgd \; $.
 \vskip3pt
   {\it (b)}\,  Borrowing language from geometry   ---  {\sl without claiming to be precise, by no means}  ---   the exact sequence  \eqref{eq: cleft-ext_Utildeqeps}  can be interpreted as follows:  $ \Utildeqepsgd $  defines a principal bundle of Hopf  $ \QQ_\varepsilon $--algebras  over the Poisson group  $ \; \text{\sl Spec}(Z_0) = \text{\sl Spec}\big(\, \Utildequnogd \big) = \text{\sl Spec}\big( \Oc\big( \Gtildebstar \big) \big) = \Gtildebstar \, $,  \,and, as the extension is  {\sl cleft},  that bundle is  {\sl globally trivializable}.
\end{rmks}

\vskip13pt

\begin{free text}  \label{small-MpQG's_identification}
 {\bf Small MpQG's: identifying the two realizations.}  So far we considered small MpQG's of two kinds,
 namely restricted and unrestricted ones.
 We will show now that these two types  {\sl over}  $ \, \QQ_\varepsilon $  actually coincide, up to isomorphism:
\end{free text}

\vskip7pt

\begin{theorem}  \label{thm: small-MpQG_hat=tilde}
 Consider the associated small MpQG's of either type over the ground ring\/  $ \QQ_\varepsilon $  (via scalar extension from  $ \, \RbqBeps = \ZZ[\varepsilon] \, $).
                                                                 \par
   Then  $ \,\; \utildeqepsgd \, \cong \, \uhatdotqepsgd \;\, \big( = \uhatqepsgd \big) \;\, $  as Hopf algebras over\/  $ \QQ_\varepsilon \, $.
                                                                 \par
   A similar statement holds true for the various (small) quantum subgroups, namely
 $ \,\; \utildeqeps^{\,\raise-3pt\hbox{$ \scriptstyle \geq $}} \cong \, \uhatdotqeps^{\,\raise-3pt\hbox{$ \scriptstyle \geq $}} \, \big( = \uhatqeps^{\,\raise-3pt\hbox{$ \scriptstyle \geq $}} \big) \;\, $,  \,etc.
\end{theorem}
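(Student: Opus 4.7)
The strategy is to exhibit an explicit Hopf algebra morphism $\phi:\uhatdotqepsgd\to\utildeqepsgd$ and to verify that it is an isomorphism by a PBW dimension count, bypassing any detailed presentation-by-generators argument.

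The central observation is that, after extending scalars to $\QQ_\varepsilon$, the restricted small MpQG $\uhatdotqepsgd$ --- which is a priori only a Hopf subalgebra of $\Uhatdot_{\bq\hskip1pt,\hskip1pt\varepsilon}(\hskip0,8pt\liegd)$ --- actually sits inside $\Utilde_{\bq\hskip1pt,\hskip1pt\varepsilon}(\hskip0,8pt\liegd)$ as well, both being identified with Hopf subalgebras of the ambient MpQG $U_{\bq\hskip1pt,\hskip1pt\varepsilon}(\hskip0,8pt\liegd)$ defined over $\QQ_\varepsilon$. Indeed, every defining generator of $\uhatdotqepsgd$ involves an exponent strictly bounded by $\ell$, which means that every denominator that appears when rewriting it --- namely $[n]_{\varepsilon_i}!$ for the divided powers $E_i^{(n)}$, $F_i^{(n)}$, and the factors $\varepsilon^{s}-1$ for $1\le s\le n$ inside the quantum binomials ${\bigg(\!{{K_i\,;\,c}\atop n}\bigg)}_{\!\!\varepsilon}$ and ${\bigg(\!{{L_i\,;\,c}\atop n}\bigg)}_{\!\!\varepsilon}$ --- is invertible in $\QQ_\varepsilon$. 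Rewriting $E_i=(\varepsilon_{i{}i}-1)^{-1}\Ebar_i$ (similarly for $F_i$) and expanding each binomial as an honest polynomial in $K_i^{\pm 1}$ or $L_i^{\pm 1}$ with coefficients in $\QQ_\varepsilon$ shows that each such generator lies in $\Utilde_{\bq\hskip1pt,\hskip1pt\varepsilon}(\hskip0,8pt\liegd)$. Composing the resulting inclusion with the canonical projection $\pi:\Utilde_{\bq\hskip1pt,\hskip1pt\varepsilon}(\hskip0,8pt\liegd)\twoheadrightarrow\utildeqepsgd$ produces the desired map $\phi$; it is automatically compatible with coproducts and antipodes because all three objects inherit the same Hopf structure from the ambient MpQG.

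Surjectivity of $\phi$ is then immediate: by Proposition~\ref{prop: struct_Utildeqeps-utildeqeps}(a,e), the elements $\phi(E_i^{(1)})=\bar E_i$, $\phi(F_i^{(1)})=\bar F_i$, $\phi(K_i^{\pm 1})=\bar K_i^{\pm 1}$, $\phi(L_i^{\pm 1})=\bar L_i^{\pm 1}$ already generate $\utildeqepsgd$ as a $\QQ_\varepsilon$-algebra. Injectivity follows from a dimension count: the PBW Theorems~\ref{thm: PBW_small-uhat} and~\ref{thm: PBW_small-utilde} give explicit bases showing that both $\uhatdotqepsgd$ and $\utildeqepsgd$ are finite-dimensional over $\QQ_\varepsilon$ of dimension $\ell^{\,\dim(\liegd)}$, so any surjection between them is bijective. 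The statements about the various small quantum subgroups follow by restricting $\phi$ to the appropriate sub-Hopf algebras and matching dimensions factor by factor via the triangular decompositions of Propositions~\ref{prop:triang-decomps_uhat} and~\ref{prop:triang-decomps_utilde}.

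The main obstacle is precisely the inclusion $\uhatdotqepsgd\subseteq\Utilde_{\bq\hskip1pt,\hskip1pt\varepsilon}(\hskip0,8pt\liegd)$ over $\QQ_\varepsilon$. This containment is emphatically false over the smaller ring $\RbqBeps$ (compare the warning remark after Theorem~\ref{thm:PBW_hat-MpQG} that restricted and unrestricted integral forms do not intertwine cleanly), and its validity over $\QQ_\varepsilon$ relies entirely on the strict inequality $n<\ell$ built into the very definition of the small quantum group, which excludes precisely those denominators ($\varepsilon^\ell-1$, $[\ell]_{\varepsilon_i}$) that would vanish. Verifying this containment explicitly for each class of generator, and confirming that the Hopf structures really agree under $\phi$, is the technical heart of the argument.
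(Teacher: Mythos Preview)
Your proposal has a genuine gap: the ``ambient MpQG $U_{\bq\hskip1pt,\hskip1pt\varepsilon}(\hskip0,8pt\liegd)$'' you invoke does not exist in this paper's framework, and the claimed inclusion $\uhatdotqepsgd \hookrightarrow \Utilde_{\bq\hskip1pt,\hskip1pt\varepsilon}(\hskip0,8pt\liegd)$ is simply false. The issue is not denominator invertibility (which, as you correctly note, is fine for $n<\ell$) but the \emph{singular relations}. In $\uhatdotqepsgd$ one has $E_i^{(r)} E_i^{(s)} = 0$ whenever $r,s<\ell$ and $r+s\geq\ell$ (see the relations \eqref{nilpotency-in-small-MpQGs} in Theorem~\ref{thm: uhat(dot)qeps_Hopf-subalg_&_presentation}). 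Under your proposed assignment $E_i^{(n)}\mapsto E_i^{\,n}/{(n)}_{\varepsilon_{ii}}!$ this would force $E_i^{\,r+s}=0$ in $\QQ_\varepsilon\otimes\Utilde_{\bq\hskip1pt,\hskip1pt\varepsilon}(\hskip0,8pt\liegd)$, which is nonsense: $\Ebar_i^{\,r+s}$ is a nonzero PBW basis element there. So there is no algebra map from $\uhatdotqepsgd$ to $\Utilde_{\bq\hskip1pt,\hskip1pt\varepsilon}(\hskip0,8pt\liegd)$ to compose with $\pi$.

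What \emph{is} true is that these singular relations hold after passing to the quotient $\utildeqepsgd$, since there $\Ebar_i^{\,\ell}\in Z_0^+$ is killed and hence $E_i^{\,r+s}=E_i^{\,\ell}\cdot E_i^{\,r+s-\ell}$ vanishes. So your map $\phi$ can be defined directly on generators with target $\utildeqepsgd$, but you must then verify \emph{all} relations of $\uhatdotqepsgd$ --- both the ``generic'' ones (inherited from Definition~\ref{def:multiqgroup_ang} at $q=\varepsilon$) and the singular ones $E_i^{\,\ell}=0$, $K_i^{\,\ell}=1$, etc.\ --- are satisfied in $\utildeqepsgd$. That is precisely what the paper does: it extracts explicit presentations of both sides over $\QQ_\varepsilon$ (generators $E_i, F_i, K_i^{\pm1}, L_i^{\pm1}$, relations of types (a) and (b)), observes they coincide, and then confirms bijectivity via the matching PBW bases \eqref{2nd_PBW-basis_uhatdotqeps} and \eqref{2nd_PBW-basis_utildeqeps}. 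Your surjectivity-plus-dimension-count step is fine and matches the paper's endgame; what is missing is the well-definedness of $\phi$, which cannot be obtained ``for free'' from an ambient inclusion but requires the relation-checking that constitutes the actual proof.
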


\pf
 We prove that  $ \,\; \utildeqepsgd \, \cong \, \uhatdotqepsgd \, \big( = \uhatqepsgd \big) \;\, $,  \, the rest being similar.
 \vskip5pt
   To begin with, from  Proposition \ref{prop: uhatdotqeps = uhatqeps}  we know that  $ \, \uhatdotqeps := \uhatdotqepsgd \, $
   --- when defined over the extended ground ring  $ \QQ_\varepsilon $  ---   is generated by
   $ \; {\big\{ E_i \, , L_i^{\pm 1} , K_i^{\pm 1} , F_i \big\}}_{i \in I} \; $.  Moreover, from
   Theorem \ref{thm: uhat(dot)qeps_Hopf-subalg_&_presentation}  we can deduce a complete set of relations
   for this generating set: indeed, these relations can be also described as being of two types:
 \vskip3pt
   {\it (a)}\,  {\sl the relations arising (through specialization) from those respected by the same-name elements
   --- i.e.,  $ \, E_i \, , L_i^{\pm 1} , K_i^{\pm 1} , F_i \, $  ($ \, i \in I \, $)  ---   inside the restricted MpQG
   $ \, \Uhatdotqeps $  (before specialization) just by formally writing  ``$ \, \varepsilon \, $''  instead of  ``$ \, q \, $''},
 \vskip3pt
   {\it (b)}\,  {\sl the ``singular'' relations
  $ \; E_i^{\,\ell} \, = \, 0 \, $,  $ \; L_i^{\,\ell} - 1 \, = \, 0 \, $,  $ \; K_i^{\,\ell} - 1 \, = \, 0 \, $,  $ \; F_i^{\,\ell} \, = \, 0 \; $  ($ \, i \in I \, $)
  that are induced
from the relations in  $ \Uhatdotqeps $
  $$  X_i^{\,\ell} \; = \; {[\ell]}_{q_i\!}! \, X_i^{(\ell)}  \quad ,  \qquad  {\textstyle \prod\limits_{s=0}^{\ell-1}} {\bigg(\, {{Y_i \,; -s} \atop 1} \,\bigg)}_{\!q} \;
  = \; {\textstyle \prod\limits_{c=1}^{\ell-1}} {\bigg(\, {{c+1} \atop c} \,\bigg)}_{\!q} \cdot {\bigg(\, {{Y_i \,; 1-\ell} \atop \ell} \,\bigg)}_{\!q}  $$
--- for all  $ \, i \in I \, $,  $ \, X \in \{ E \, , F \} \, $  and  $ \, Y \in \{ L \, , K \} \, $  ---
when specializing  $ q $  to  $ \varepsilon \, $.
 }
 \vskip3pt
   Overall, this provides another concrete, explicit presentation of  $ \uhatdotqeps $  {\sl over}  $ \QQ_\varepsilon $  by generators and relations (with less generators than that arising from  Theorem \ref{thm: uhat(dot)qeps_Hopf-subalg_&_presentation}).  In addition, as a byproduct we find   --- comparing with  Theorem \ref{thm: PBW_small-uhat}  ---   another PBW theorem for  $ \uhatdotqeps $  (over  $ \QQ_\varepsilon \, $),  stating that  $ \, \uhatdotqeps \, $  admits the following  $ \QQ_\varepsilon $--basis
\begin{equation}  \label{2nd_PBW-basis_uhatdotqeps}
  \Big\{\, {\textstyle \prod_{k=N}^1} F_{\beta^k}^{\,f_k} \, {\textstyle \prod_{j \in I}} L_j^{l_j} \, {\textstyle \prod_{i \in I}} K_i^{c_i} \,
  {\textstyle \prod_{h=1}^N} E_{\beta^h}^{\,e_h} \,\Big\}_{0 \leq f_k , \, l_j , \, c_i , \, e_h < \, \ell}
\end{equation}
 \vskip1pt
   On the other hand, we know by  Proposition \ref{prop: struct_Utildeqeps-utildeqeps}  that  $ \, \utildeqeps := \utildeqepsgd \, $
   is generated over  $ \QQ_\varepsilon $  by
 $ \; {\big\{ E_i \, , L_i \, , K_i \, , F_i \big\}}_{i \in I} \; $,
because  $ \, L_i^{\,\ell} = 1 = K_i^{\,\ell} \, $  in  $ \, \utildeqeps \, $,  by definition (so that we can get rid of  $ L_i^{-1} $  and  $ K_i^{-1} \, $);
in particular, from  Theorem \ref{thm: PBW_small-utilde}  we can deduce that another possible PBW
$ \QQ_\varepsilon $--basis  for  $ \utildeqeps $  is
\begin{equation}  \label{2nd_PBW-basis_utildeqeps}
  {\Big\{\, {\textstyle \prod_{k=N}^1} F_{\beta^k}^{\,f_k} \, {\textstyle \prod_{j \in I}} L_j^{l_j} \, {\textstyle
  \prod_{i \in I}} K_i^{c_i} \, {\textstyle \prod_{h=1}^N} E_{\beta^h}^{\,e_h} \,\Big\}}_{0 \leq f_k , \, l_j , \, c_i , \, e_h < \, \ell}
\end{equation}
   \indent   Now, the generators  $ \, E_i \, , L_i , K_i , F_i \, $  ($ \, i \in I \, $)  of  $ \utildeqeps $  do respect all
   relations that come by straightforward rescaling from the relations respected by the generators
   $ \, \Ebar_i \, , L_i \, , K_i \, , \Fbar_i \, $  ($ \, i \in I \, $).  In turn, the latter are of two types:
 \vskip3pt
   {\it (a)}\,  {\sl the relations arising (through specialization) from those respected by the same-name elements
   --- i.e.,  $ \, \Ebar_i \, , L_i^{\pm 1} , K_i^{\pm 1} , \Fbar_i \, $  ($ \, i \in I \, $)  ---
   inside the unrestricted MpQG  $ \, \Utildeqeps \, $  (before specialization) by formally writing  ``$ \, \varepsilon \, $''
   instead of  ``$ \, q \, $''},
 \vskip3pt
   {\it (b)}\,  {\sl the ``singular'' relations
  $ \; \Ebar_i^{\,\ell} \, = \, 0 \, $,  $ \; L_i^{\,\ell} - 1 \, = \, 0 \, $,  $ \; K_i^{\,\ell} - 1 \, = \, 0 \, $,  $ \; \Fbar_i^{\,\ell} \, = \, 0 \; $  ($ \, i \in I \, $)
 induced from the ``relations'' in  $ \Utildeqeps $
  $$  \overline{X}_i^{\,\ell} \; \equiv \; 0  \mod {(Z_0)}^+  \quad ,  \qquad  \overline{Y}_i^{\,\ell} \; \equiv \; 1  \mod {(Z_0)}^+  $$
--- for all  $ \, X \in \{ E \, , F \} \, $,  $ \, Y \in \{ L \, , K \} \, $,  $ \, i \in I \, $  ---
when one specializes  $ q $  to  $ \varepsilon \, $}.
 \vskip3pt
   The outcome is that all this yields an explicit presentation of  $ \utildeqeps $  over  $ \QQ_\varepsilon $  by generators
   --- namely  $ \, E_i \, , L_i \, , K_i \, , F_i \, $  ($ \, i \in I \, $)  ---   and relations.
 \vskip5pt
   Comparing the previous analyses, we find that  $ \uhatdotqeps $  and  $ \utildeqeps $  share
   identical presentation: more precisely,  {\sl mapping
%
%
  $ \; E_i \mapsto E_i \, $,  $ \; L_i \mapsto L_i \, $,  $ \; K_i \mapsto K_i \, $,  $ \; F_i \mapsto F_i \; \big(\, i \in I \,\big) \; $
 yields a well-defined isomorphism of\/  $ \QQ_\varepsilon $--algebras\/};  in addition,
tracking the whole construction one sees at once that  {\sl this is also a morphism of Hopf algebras}.
Finally, comparing  \eqref{2nd_PBW-basis_uhatdotqeps}  and  \eqref{2nd_PBW-basis_utildeqeps}  shows that
{\sl this is indeed an isomorphism},  q.e.d.
\epf

\vskip13pt

\begin{rmk}
 As an application of the previous result, even for the classical (uniparameter) small quantum groups one
 can always make use of either realization of them: the
(most widely used) restricted one, or the unrestricted one.
\end{rmk}

\bigskip
 \bigskip


\vskip25pt

\end{document}